\definecolor{allrefcolors}{rgb}{.05,.45,.6}
\theoremstyle{theorem}
\newtheorem{thm}{Theorem}[section]
\newtheorem{prop}[thm]{Proposition}
\newtheorem{lem}[thm]{Lemma}
\newtheorem{cor}[thm]{Corollary}
\theoremstyle{definition}
\newtheorem{defn}[thm]{Definition}
\newtheorem{con}[thm]{Construction}
\newtheorem{exmp}[thm]{Example}
\newtheorem{notn}[thm]{Notation}
\theoremstyle{remark}
\newtheorem{rem}[thm]{Remark}
\crefname{thm}{Theorem}{Theorems}
\crefname{prop}{Proposition}{Propositions}
\crefname{lem}{Lemma}{Lemmas}
\crefname{cor}{Corollary}{Corollaries}
\crefname{defn}{Definition}{Definitions}
\crefname{con}{Construction}{Constructions}
\crefname{exmp}{Example}{Examples}
\crefname{notn}{Notation}{Notations}
\crefname{asmpt}{Assumption}{Assumptions}
\crefname{rem}{Remark}{Remarks}
\crefname{warn}{Warning}{Warnings}
\crefname{section}{Section}{Sections}
\crefname{subsection}{Subsection}{Subsections}
\crefname{sec}{Section}{Sections}
\crefname{subsec}{Subsection}{Subsections}
\crefname{eqn}{Equation}{Equations}
\crefname{part}{Part}{Parts}
\newcommand{\IC}{\mathbb{C}}
\newcommand{\ID}{\mathbb{D}}
\newcommand{\IH}{\mathbb{H}}
\newcommand{\IK}{\mathbb{K}}
\newcommand{\IN}{\mathbb{N}}
\newcommand{\IP}{\mathbb{P}}
\newcommand{\IQ}{\mathbb{Q}}
\newcommand{\IR}{\mathbb{R}}
\newcommand{\IZ}{\mathbb{Z}}
\newcommand{\Ione}{\mathbbm{1}}
\newcommand{\sA}{\mathcal{A}}
\newcommand{\sB}{\mathcal{B}}
\newcommand{\sC}{\mathcal{C}}
\newcommand{\sD}{\mathcal{D}}
\newcommand{\sF}{\mathcal{F}}
\newcommand{\sH}{\mathcal{H}}
\newcommand{\sI}{\mathcal{I}}
\newcommand{\sJ}{\mathcal{J}}
\newcommand{\sK}{\mathcal{K}}
\newcommand{\sM}{\mathcal{M}}
\newcommand{\sO}{\mathcal{O}}
\newcommand{\sR}{\mathcal{R}}
\newcommand{\fo}{\mathfrak{o}}
\newcommand{\fc}{\mathfrak{c}}
\newcommand{\gap}{\hspace{15pt}}
\newcommand{\wt}[1]{\widetilde{#1}}
\DeclareMathOperator{\End}{End}
\DeclareMathOperator{\colim}{colim}
\DeclareMathOperator{\Hor}{Hor}
\DeclareMathOperator{\Ver}{Ver}
\DeclareMathOperator{\ind}{ind}
\DeclareMathOperator{\Ch}{Ch}
\DeclareMathOperator{\op}{op}
\DeclareMathOperator{\val}{val}
\DeclareMathOperator{\Tel}{Tel}
\DeclareMathOperator{\intrr}{int}
\DeclareMathOperator{\dist}{dist}
\DeclareMathOperator{\dg}{dg}
\begin{document}

\title{Sections and unirulings of families over $\IP^1$}

\author{Alex Pieloch}

\address{Department of Mathematics\\ Columbia University\\ 2990 Broadway \\
New York, NY 10027}
\email{pieloch@math.columbia.edu}

\thanks{The author was supported by the National Science Foundation Graduate Student Fellowship
Program through NSF grant DGE 16-44869.  The author was also partially supported by NSF grants DMS-1609148, and DMS1564172, and by the Simons Foundation through its ``Homological Mirror Symmetry'' Collaboration grant.}

\begin{abstract}
We consider morphisms $\pi: X \to \IP^1$ of smooth projective varieties over $\IC$.  We show that if $\pi$ has at most one singular fibre, then $X$ is uniruled and $\pi$ admits sections.  We reach the same conclusions, but with genus zero multisections instead of sections, if $\pi$ has at most two singular fibres, and the first Chern class of $X$ is supported in a single fibre of $\pi$.

To achieve these result, we use action completed symplectic cohomology groups associated to compact subsets of convex symplectic domains.  These groups are defined using Pardon's virtual fundamental chains package for Hamiltonian Floer cohomology.  In the above setting, we show that the vanishing of these groups implies the existence of unirulings and (multi)sections.
\end{abstract}

\maketitle

\tableofcontents

\section{Introduction}\label{sec:Introduction}

\subsection{Statement of Results}\label{subsec:StatementOfResults}

In this paper, we study complex projective varieties.  When we refer to a variety, we will always mean a variety over $\IC$.  The primary goal of this paper is to prove the following results.

\begin{thm}\label{thm:ActualMainTheorem}
If $\pi: \overline{M} \to \IP^1$ is a morphism of smooth projective varieties that is smooth\footnote{Here we mean \emph{smooth} in the algebro-geometric sense, that is, $\pi$ is a holomorphic submersion.} away from $\infty$, then $\overline{M}$ is uniruled and admits sections.
\end{thm}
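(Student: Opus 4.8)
The plan is to argue along the lines sketched in the abstract: exhibit a compact subset whose action completed symplectic cohomology vanishes, and then feed that vanishing into the curve-counting machinery. Fix a K\"ahler form $\omega$ on $\overline{M}$, put $D = \pi^{-1}(\infty)$, set $M := \overline{M}\setminus D = \pi^{-1}(\IP^1\setminus\{\infty\})$, and identify $\IP^1\setminus\{\infty\}$ with $\IC$. By hypothesis $\pi|_M\colon M\to\IC$ is a proper submersion with closed fibres, and since $\omega$ restricts to a fibrewise symplectic form, $\pi|_M$ is a \emph{Hamiltonian} fibration over $\IC$; as $\IC$ is simply connected its monodromy is trivial, so the end of $M$ (equivalently, a deleted neighbourhood of $D$) is smoothly a product $F\times S^1\times[0,\infty)$ with $F = \pi^{-1}(0)$. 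Fix a small closed disk $\overline{\mathbb{D}}\subset\IC$ and set $K := \pi^{-1}(\overline{\mathbb{D}})$, a compact subset of the (boundaryless, hence trivially convex) symplectic domain $(\overline{M},\omega)$; here $\partial K = \pi^{-1}(\partial\overline{\mathbb{D}})\cong F\times S^1$ carries a stable Hamiltonian structure rather than a contact one, which is precisely why the virtual machinery is needed.

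\textbf{Step 1 (vanishing).} I claim $\widehat{SH}(K) = 0$. Choosing $\overline{\mathbb{D}}$ small enough, it is Hamiltonianly displaceable inside $(\IC,\beta)$ for any area form $\beta$ of finite total area; pick a compactly supported Hamiltonian isotopy $\{\phi_t\}$ of $\IC$ with $\phi_1(\overline{\mathbb{D}})\cap\overline{\mathbb{D}} = \emptyset$. Because $\pi|_M$ is a Hamiltonian fibration, the horizontal lift of $\{\phi_t\}$ is a Hamiltonian isotopy of $(M,\omega)$ (after an auxiliary $T^*S^1$-stabilisation if one wishes to avoid fussing over the coupling form), supported over a compact subset of $\IC$ and hence extending by the identity to $\overline{M}$; its time-one map sends $K$ to $\pi^{-1}(\phi_1(\overline{\mathbb{D}}))$, disjoint from $K$. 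Thus $K$ is (stably) displaceable in $\overline{M}$, and by the general vanishing of action completed symplectic cohomology on such subsets, $\widehat{SH}(K) = 0$. The hypothesis that $\infty$ is the \emph{only} singular value is essential here: it is what lets $\overline{\mathbb{D}}$ be moved freely through the base while staying over the locus where $\pi$ is a Hamiltonian fibration.

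\textbf{Step 2 (detection).} It remains to convert $\widehat{SH}(K) = 0$ into unirulings and sections. The unit of $QH^*(\overline{M})\cong\widehat{SH}(\overline{M})$ then maps to $0$ in $\widehat{SH}(K)$; running this identity through the virtual chain--level continuation maps (Pardon's package) for Hamiltonians that grow across the separating hypersurface $\pi^{-1}(\partial\overline{\mathbb{D}})\cong F\times S^1$, and neck-stretching there, Gromov--SFT compactness produces a stable genus-zero holomorphic building through a prescribed generic point $p\in K$. Its pieces lying over $\overline{\mathbb{D}}$ project, by the maximum principle, to constants --- bounded holomorphic maps of punctured spheres into $\overline{\mathbb{D}}$ --- so they lie in the fibre $F_{\pi(p)}$; but a no-escape argument shows that vanishing forces the building to leave $K$, so some component $u$ has $\pi\circ u\colon\IP^1\to\IP^1$ nonconstant, hence surjective. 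Letting $p$ range over the open set $K$ this yields a nonzero genus-zero Gromov--Witten invariant of $\overline{M}$ with a point constraint in a class $\sigma$ with $\pi_*\sigma\neq 0$, so $\overline{M}$ is uniruled. For sections one examines the building over a punctured neighbourhood of $\infty$: since all of $\overline{M}\setminus K$ maps submersively away from $\infty$, no extra covering of the base is accumulated over $\IC$, and (together with genericity of $p$ and the neck data) this bounds the number of crossings of $\pi^{-1}(\partial\overline{\mathbb{D}})$ by one, forcing $\pi_*\sigma = [\IP^1]$; bubbling off the fibre components leaves a single component mapping isomorphically onto $\IP^1$, and nonvanishing of the count forces such a component to persist for the integrable complex structure, giving an algebraic section.

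\textbf{Main obstacle.} The substance lies entirely in Step 2: constructing $\widehat{SH}(K)$ and its structure maps for the Morse--Bott, non-contact hypersurface $F\times S^1$ inside Pardon's virtual fundamental chains framework, proving the no-escape comparison that turns vanishing into a nonvanishing curve count, and --- the genuinely delicate point, and where ``smooth away from $\infty$'' is used a second time --- controlling the degree over $\IP^1$ of the produced curves so as to obtain honest degree-one sections rather than only higher-degree genus-zero multisections.
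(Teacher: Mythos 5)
Your Step 1 is in the right spirit (the paper also gets vanishing from displaceability over the base: it symplectically embeds $M=\overline{M}\smallsetminus\pi^{-1}(\infty)$ into the product $\IC\times F$ and displaces $\ID_b\times F$ inside $\ID_c\times F$), although note that the paper never does Floer theory on the closed manifold $\overline{M}$: it works in convex symplectic domains $M_c$ with mapping-cylinder collars precisely so that an integrated maximum principle gives the filtration, the long exact sequence, and curves whose projection to the base covers a disk. The genuine gap is Step 2, which you yourself defer, and where your sketched route does not work as stated. First, the vanishing of $\widehat{SH}$ does not produce a nonzero genus-zero Gromov--Witten invariant: the objects detected are Floer trajectories from an orbit near $\pi^{-1}(\partial\ID_a)$ to the point class, and after ``turning off'' the Hamiltonian these become genus-zero holomorphic curves \emph{with boundary} on $\partial M_{a'}$, not closed curves with an invariant count. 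The detection itself is delicate (the point class is only hit up to higher-order Novikov terms, which could cancel when $J$ or the Morse data is deformed); the paper handles this by a chain-level valuation analysis using integrality of $\Omega$, and even then it only obtains \emph{existence} of bounded-energy curves for each compatible $J$ --- in particular for the integrable structures pushed forward from nearby fibres of a degeneration --- so no ``persistence of a count under deformation to the integrable structure'' is ever needed or available.

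Second, and most importantly, you have no mechanism for converting these open curves in $M$ into closed rational curves and sections of $\overline{M}$. Neck-stretching along $F\times S^1$ produces building components that are punctured curves asymptotic to orbits on that hypersurface; they cannot be capped off across $\pi^{-1}(\infty)$, which is an arbitrary singular fibre carrying no neck or normal-crossings structure, so ``$\pi\circ u\colon\IP^1\to\IP^1$ nonconstant, hence surjective'' is not justified. The paper's key device here is algebraic: degenerate $\overline{M}$ to the normal cone of $\pi^{-1}(\infty)$, run the Floer argument in a fixed convex model receiving symplectic embeddings of all nearby fibres $P_z\cong\overline{M}$, and apply Fish's target-local Gromov compactness as $z\to 0$ to obtain closed genus-zero curves in the central-fibre component birational to $\overline{M}$ that meet both $\pi^{-1}(0)$ and $\pi^{-1}(\infty)$. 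Likewise your degree bound (``at most one crossing of $\pi^{-1}(\partial\overline{\ID})$, hence $\pi_*\sigma=[\IP^1]$'') is unsubstantiated; the paper gets degree one from the product (K\"unneth) model, where the bounding cochain corresponds to a Reeb orbit winding once around the origin, a winding preserved through the Gromov--Floer limit and the degeneration. Without these ingredients the passage from $\widehat{SH}(K)=0$ to unirulings and honest sections is missing.
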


\begin{thm}\label{thm:ActualMainTheoremCY}
If $\pi: \overline{M} \to \IP^1$ is a morphism of smooth projective varieties that is smooth away from $0$ and $\infty$ and the first Chern class of $\overline{M} \smallsetminus \pi^{-1}(\infty)$ vanishes, then $\overline{M}$ is uniruled and admits genus zero multisections.
\end{thm}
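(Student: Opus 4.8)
The plan is to deduce the theorem from the vanishing of an action completed symplectic cohomology group attached to a large compact subset of a convex symplectic domain built out of $\overline{M}$, and then to feed that vanishing into the curve--producing mechanism that also yields \Cref{thm:ActualMainTheorem}. Fix a K\"ahler form on $\overline{M}$ and put $M := \overline{M}\smallsetminus\pi^{-1}(\infty)$ with its induced map $\pi\colon M\to\IA^1=\IC$. Over $\IC\smallsetminus\{0\}$ the morphism $\pi$ is a proper submersion, hence a locally trivial fibration; in particular over $\{|z|\ge R_0\}$ for $R_0\gg 0$. Pulling back the outward Liouville structure of $(\IC,\omega_{\mathrm{std}})$ along a connection for this fibration and adding a large multiple of it to the ambient form makes $M$ into a convex symplectic domain $\mathcal{M}$ whose cylindrical end is a neighbourhood of $\pi^{-1}(\infty)$, and inside which $K:=\pi^{-1}(\overline{D}_{R_0})$ is a compact subset containing the unique interior singular fibre $\pi^{-1}(0)$. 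The hypothesis that $c_1$ of $M$ vanishes enters here only to fix a trivialisation of the canonical bundle of $M$, and hence an honest $\IZ$--grading on the Floer complexes below; that grading is what keeps the Conley--Zehnder bookkeeping under control once orbits start winding around the second singular fibre.

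The core of the proof is that the action completed symplectic cohomology $\widehat{SH}{}^{*}(K)$ of $K$ in $\mathcal{M}$ vanishes. Near $\partial K=\pi^{-1}(\partial D_{R_0})$ one computes the relevant dynamics with a Hamiltonian depending only on $|z|$ in the base: its time--one orbits sit over circles $\{|z|=r\}$, wind some number $k\ge 1$ of times, and are therefore exactly the fixed points of the $k$--th iterate of the monodromy $\phi$ of the fibration around $0$. Raising the slope of the Hamiltonian, one shows the continuation maps in the telescope defining $\widehat{SH}{}^{*}(K)$ are eventually zero: with the $\IZ$--grading of the previous step in hand, the contributions of these orbits are pushed out into the convex end and cannot support a surviving class after completion, so $\widehat{SH}{}^{*}(K)=0$. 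This is where virtual fundamental chains are unavoidable, since the fibres of $\pi$ are arbitrary and transversality must be forced, and it is the main obstacle of the argument: the singular fibre $\pi^{-1}(0)$ sits \emph{inside} $K$, so the naive maximum principle and index formulas do not apply, and one must argue on the smooth locus while bounding the contribution of $\pi^{-1}(0)$ by means of the $c_1=0$ trivialisation and the action completion. (In \Cref{thm:ActualMainTheorem} the base $\IA^1$ carries no singular fibre, $\phi$ is isotopic to the identity, and no hypothesis on $c_1$ is needed.)

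Given $\widehat{SH}{}^{*}(K)=0$, the PSS--type map from the quantum cohomology of $\overline{M}$ to $\widehat{SH}{}^{*}(K)$ kills the unit, so the unit bounds in the Floer complex; exactly as in the proof of \Cref{thm:ActualMainTheorem}, Gromov compactness together with a neck--stretching along $\partial K$ converts the bounding chain into a configuration of pseudoholomorphic spheres in $\overline{M}$. One component passes through a prescribed generic point of $\overline{M}$, which gives uniruledness. Another component meets $\pi^{-1}(\infty)$ and hence is not contained in any fibre, so its normalisation is a genus zero curve mapping onto $\IP^1$ under $\pi$ --- a genus zero multisection. The reason we obtain only multisections here, in contrast with the sections of \Cref{thm:ActualMainTheorem}, is precisely the monodromy $\phi$ around the extra singular fibre $\pi^{-1}(0)$: the generating orbit must be a fixed point of $\phi^{k}$ rather than of $\phi$, so the curve it caps off meets the generic fibre in $k\ge 1$ points, and $k$ cannot in general be forced down to $1$.
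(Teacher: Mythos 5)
The decisive step of your argument is the vanishing of the action completed symplectic cohomology of a large compact set $K$ containing the singular fibre $\pi^{-1}(0)$, and for that step you give no mechanism: ``raising the slope\dots the contributions of these orbits are pushed out into the convex end and cannot support a surviving class after completion'' is an assertion, not a proof. By \cref{prop:PropertiesOfSH} \cref{prop:LES} the completed group always receives $H(M;\Lambda)$, so vanishing is genuinely nontrivial and cannot follow from Conley--Zehnder bookkeeping near $\partial K$ alone; if it did, the same reasoning would apply with several interior singular fibres or without the Calabi--Yau hypothesis, where the conclusion is false. In the paper the vanishing is established in two steps that your sketch omits entirely: first, McLean's theorem that subvarieties of K\"ahler manifolds are stably displaceable gives $\widehat{SH}\otimes\Lambda=0$ only for a \emph{small} neighbourhood of $\pi^{-1}(0)$; second, a rescaling isomorphism (\cref{lem:SHRescalingIso}) transports this vanishing out to a subdomain large enough to contain the (embedded image of) $M$, and that isomorphism is only available for cofinal sequences of Hamiltonians satisfying an index-bounded condition --- winding number controlled by Conley--Zehnder index --- which has to be constructed by hand from Hamiltonians adapted to a normal crossings resolution of $\pi^{-1}(0)$, using non-negativity of the discrepancies (\cref{sec:DivisorModels}). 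This is where $c_1(M)=0$ really enters: not merely ``to fix a $\IZ$-grading,'' but to make the rescaling map degree-wise bounded so that it survives the Novikov completion. Your proposal names the grading but supplies no analogue of either the stable displaceability input or the rescaling/index-bounded argument.

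There is a second gap in the passage from Floer theory to algebraic geometry. A bounding cochain for the unit (in the paper it is the point class that is shown to bound, which is what uniruledness needs) consists of Floer trajectories, not holomorphic curves; the paper must degenerate the Hamiltonians to zero and prove the pearl-type compactness \cref{prop:ConvergenceToPearls} to extract genuine holomorphic curves with boundary through a prescribed point, with energy bounds uniform in an \emph{arbitrary} compatible almost complex structure (\cref{prop:DiskUniruling}, \cref{lem:FloerUniruling}); uniformity matters because your convex structure on $M$ is a symplectic deformation of the K\"ahler form, so curves pseudoholomorphic for it are not a priori curves of $\overline{M}$, and the bounding only exists after completion, which forces the valuation/integrality analysis of \cref{sec:DiskUnirulings} to keep the point class hit when $J$ varies. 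Moreover, ``Gromov compactness together with a neck-stretching along $\partial K$'' does not turn these boundary-carrying curves in $M$ into closed rational curves of $\overline{M}$: some device is needed to cap them off across $\pi^{-1}(\infty)$. The paper does this by degenerating $\overline{M}$ to the normal cone of $\pi^{-1}(\infty)$ (\cref{lem:DegenerationSpace}) and applying Fish's target-local compactness (\cref{lem:FishCompactness}), so that the boundaries escape into the exceptional component $E$ while a closed component survives in $F$ (birational to $\overline{M}$) meeting $\pi^{-1}(\infty)$, which is exactly the multisection. No substitute for this closing-up mechanism appears in your sketch, so the final deduction of uniruledness and of genus zero multisections is unsupported as written.
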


These results hold in the category of complex algebraic varieties.  Except for some select cases (discussed below), there are no algebro-geometric proofs of these results.  Our proofs are symplectic in nature.

\begin{rem}
The motivation for our results comes from Hodge theory.  By the work of Griffiths \cite{Griffiths_PeriodMapHolomorphic}, given a projective morphism of smooth varieties $\pi: X \to S$, one obtains a holomorphic map $\Phi: \widetilde{S^*} \to D,$ where $D$ is a classifying space of polarized Hodge structures, which carries a complex structure and is referred to as a period domain, and $\widetilde{S^*}$ is the universal cover of the complement in $S$ of the singular values of $\pi$.  Roughly, $\Phi$ sends a point $\widetilde{s}$ to the marked polarized Hodge structure of the fibre $X_{\widetilde{s}}$.  If $X_{\widetilde{s}}$ is a smooth genus $g$ curve, then $D$ is the Seigel upper half-space $\IH_g$, which has negative holomorphic sectional curvature.  So in this case, when $S^*$ is a curve of non-negative genus, $\Phi$ is constant and, the variation of Hodge structures of the fibres over $S^*$ is trivial.  In general, $D$ can be either positively or negatively curved.  Nevertheless, a distance decreasing principle of Griffiths \cite{Griffiths_DistanceDecreasingPrincipal} implies the analogous result in general: the variation of Hodge structures of the fibres over $S^*$ is trivial when $S^*$ is a curve of non-negative genus.

In our case, $S = \IP^1$ and $\pi$ has at most two singular values.  So the variation of Hodge structures of the fibres of $\pi$ is trivial.  It was expected that this Hodge theoretic triviality should be witnessed by algebraic cycles (that is, sections) or complex geometric features.  By using symplectic methods, we construct such algebraic cycles and constrain the actual geometry (as opposed to Hodge theory) of these families.
\end{rem}

\begin{rem}\label{rem:RelatedResults}
We point out partially related work.  Graber, Harris, and Starr show that if a proper morphism from a projective variety to a smooth curve has rationally connected general fiber, then it has a section \cite{GraberHarrisStarr_FamiliesOfRationallyConnectedVarieties}.  This was extended to algebraically closed fields with non-zero characteristic by de Jong-Starr \cite{deJongStarr_FamiliesOfRationallyConnectedVarietiesCharP}.  Our results share some overlapping cases with this work; however, our methods are entirely orthogonal to theirs, ours being symplectic not algebro-geometric.

On the symplectic side, Seidel's construction of his eponymously named representation \cite{Seidel_SeidelRepresentation} implies that if $\pi$ (as in \cref{thm:ActualMainTheorem}) is smooth over all of $\IP^1$, then $\pi$ admits a section.  Seidel's original work assumed a relationship between the first Chern class and the symplectic form of the space.  Later, McDuff introduced virtual techniques to prove the result without this assumption \cite{McDuff_VirtualSeidelRepresentation}.  Similar ideas also appear in \cite{LalondeMcDuffPolterovich_RigidityOfhamiltonians}.  Our results generalize this corollary of Seidel's and McDuff's work.  However, our work does not generalize the main results of the above papers, which are on the Seidel representation.  Finally, while our proofs and the proofs of Seidel and McDuff are both symplectic, they are nevertheless vastly different.
\end{rem}

\begin{rem}
Finally, we note: if $\pi: \overline{M} \to \IP^1$ is a morphism of smooth, projective varieties, and $\overline{M}$ is Fano, then by \cite{KollarMiyaokaMori_RationalConnectednessAndBoundednessOfFanoManifolds} $\overline{M}$ is rationally connected.  This implies that $\pi$ is actually uniruled by genus zero multisections.
\end{rem}


\subsection{Sketch of proof}

We now sketch the proofs of our main results.

\emph{Degenerating to the normal cone}:  We consider the degeneration of $\IP^1$ to the normal cone of $\{\infty\}$ in $\IP^1$, that is, the blowup
\[ \beta_0: B \coloneqq Bl_{(0,\infty)}(\IC \times \IP^1) \to \IC \times \IP^1. \]
We have a projection $\pi_{{B}}: {B} \to \IC$ by composing $\beta_0$ with the projection to $\IC$.  $\pi_B^{-1}(z)$ for $z \neq 0$ is biholomorphic to $\IP^1$, and $\pi_B^{-1}(0)$ is a union of two curves $F_0 \cup E_0$, where $E_0$ is the exceptional divisor, and $F_0$ is a curve that is biholomorphic to $\IP^1$.  As $z \to 0$, the curves $\pi_B^{-1}(z)$ converge in the Gromov topology to the nodal curve $\pi_B^{-1}(0)$.  

Now consider the degeneration of $\overline{M}$ to the normal cone of $\pi^{-1}(\infty)$ in $\overline{M}$.\footnote{Technically speaking, if $\pi^{-1}(\infty)$ is not smooth, then the total space of the degeneration to the normal cone will not be smooth.  So instead we will actually work with a resolution of this variety.}  This is a quasi-projective variety $P$ along with a map $\wt{\pi}: P \to B$.  Let $\pi_P \coloneqq \wt{\pi} \circ \pi_B$.  $\pi_P^{-1}(z)$ for $z \neq 0$ is biholomorphic to $\overline{M}$, and $\wt{\pi}$ over $\pi_B^{-1}(z)$ is identified with $\pi: \overline{M} \to \IP^1$.  $\pi_P^{-1}(0) = F \cup E$, where $F$ is a subscheme that is birational to $\overline{M}$ and $E$ is an exceptional divisor.  $\wt{\pi}$ over $F_0$ is identified with $\pi$ up to composing with a birational morphism, and $\wt{\pi}^{-1}(E_0) = E$.  Roughly, we are degenerating the family $\pi: \overline{M} \to \IP^1$ into two families that meet along a fibre.  One family lives over $E_0$, and the other family lives over $F_0$, and can be identified with our original family.  

Consider a sequence of holomorphic disks $u_z: \ID \to \pi_P^{-1}(z)$ for $z \neq 0$ with uniformly bounded energies such that
\[ \wt{\pi} \circ u_z(\ID) = \{ z \in \IP^1 \mid |z| \leq 2\} \subset \IP^1 \cong \pi_B^{-1}(z). \]
As $z \to 0$ (with an appropriate choice of symplectic form on $B$), the $\wt{\pi}\circ u_z$ converge in the Gromov topology to a nodal holomorphic curve with boundary in $\pi_B^{-1}(0)$.  One component of the holomorphic curve has image given by $F_0$ and the other component is a holomorphic disk whose boundary is completely contained in $E_0 \smallsetminus (E_0 \cap F_0)$.  Now consider the sequence $u_z$ in $P$.  A Gromov-type compactness result (see \cref{lem:FishCompactness}) implies that these holomorphic curves with boundaries degenerate to a nodal holomorphic curve with boundary in $\pi_P^{-1}(0)$.  The boundary of the limit curve (as above) is completely contained in $E\smallsetminus (E \cap F)$.  This limit curve also has an irreducible component in $F$ whose projection to $F_0$ is non-constant.  This gives the (multi)section described in our main results since the projection from $F$ to $F_0$ may be identified with $\pi$.    With this procedure, we turn (sufficiently large) holomorphic disks in $\overline{M}$ into closed holomorphic curves in $\overline{M}$.  In actuality, the $u_z$ will be genus zero curves with boundaries; however, this will not change the argument.  We will run a similar degeneration for holomorphic curves (possibly with boundaries) in $\pi_P^{-1}(z)$ that have point constraints, which will give rise to the uniruling parts of our main results.

\emph{Arranging to do Floer theory}: The above procedure reduces our task to producing holomorphic disks in the $\pi_P^{-1}(z)$. The variety $P$ is K\"{a}hler, and, thus, gives rise to a K\"{a}hler form on each $\pi_P^{-1}(z)$.  Via symplectic parallel transport, we symplectically (but not biholomorphically) identify $\pi_P^{-1}(z)$ for $z \neq 0$ with $\pi_P^{-1}(1)$.  Pushing forward the almost complex structure on $\pi_P^{-1}(z)$ to $\pi_P^{-1}(1) \cong \overline{M}$, we can reduce our task to producing holomorphic disks inside of $\overline{M}$ for an arbitrary choice of compatible almost complex structure.

To produce these disks, we will do Floer theory in the complement $M = \overline{M} \smallsetminus \pi^{-1}(\infty)$.  A priori, an arbitrary K\"{a}hler form on $M$ does not need to be convex in any sense that is amenable to doing Floer theory.  So we show in \cref{part:SymplecticDeformations} that we can symplectically embed $M$ into a convex symplectic domain (see \cref{defn:ConvexSymplecticDomain}) that is diffeomorphic to $M_c \coloneqq \pi^{-1}(\ID_c)$ for some $c>0$.  A collar neighborhood of the boundary of $M_c$ will be modeled after a symplectic mapping cylinder.  It has well-defined Floer theory for appropriate choices of admissible Hamiltonians and almost complex structures.  Producing our needed embedding is rather non-trivial.  First, via a symplectic deformation that preserves the K\"{a}hler class, we ``push'' $M$ away from $\pi^{-1}(\infty)$ and into $M_c$, see \cref{sec:ComplementEmbeddings}.  Then we realize $\pi: M_c \to \ID_c$ as a Hamiltonian fibration with a singularity over $0$.  Using this description, we symplectically deform a collar neighborhood of the boundary of $M_c$, ``straightening it out'', to a symplectic mapping cylinder, see \cref{sec:HamFibs}.  To continue, let us assume that the constructed embedding has image lying in $M_a \subset M_c$.

\emph{Action completed symplectic cohomology}: The above embeddings will reduce our task to producing holomorphic disks (for arbitrary compatible almost complex structures) in $M_c$ that are sections over $\ID_a$.  To do this, we will use Floer theory.  Using Pardon's virtual fundamental chains package \cite{Pardon_VFC}, we define the action completed symplectic cohomology group of a compact subset $K$ inside of $M_c$ (and for compact subsets in more general convex symplectic domains, see \cref{subsec:ActionCompletedCohomology}).  For the moment, we denote this group by $\widehat{SH}(K \subset M_c)$ even though it depends on the choice of symplectic form and other data.  Roughly, $\widehat{SH}(K \subset M_c)$ is computed by taking a Novikov completion (see \cref{subsec:NovikovCompletions}) of the colimit of the Hamiltonian Floer chain complexes of an increasing sequence of Hamiltonians on $M_c$ that converge to zero on $K$ and diverge to infinity on $M_c \smallsetminus K$.  A useful feature of these groups is that if $K$ is stably displaceable inside of $M_c$, then $\widehat{SH}(K \subset M_c)\otimes \Lambda$ vanishes, where $\Lambda$ denotes the universal Novikov field, see \cref{subsec:VanishingForStDisp}.  A second feature is that when $K = M_a = \pi^{-1}(\ID_a) \subset M_c$, we have a long exact sequence, see \cref{prop:PropertiesOfSH} \cref{prop:LES},
\[ \xymatrix{ \ar[r] & H(M_c)  \ar[r] & \widehat{SH}(M_a \subset M_c) \otimes \Lambda \ar[r] & \widehat{SH}_+(M_a \subset M_c) \ar[r] & }, \]
where $H(M_c)$ denotes the cohomology of $M_c$ with coefficients in $\Lambda$, and $\widehat{SH}_+(M_a,\subset M_c)$ is a chain complex that is generated by orbits that correspond to Reeb orbits of a stable Hamiltonian structure associated to the collar of $\partial M_c$.  In particular, these are orbits that when projected by $\pi$ to $\ID_a$ wrap positively around $\partial \ID_a$.  This should be thought of as some action completed version of positive symplectic cohomology.  Notice that we work in a non-exact setting.  So this long exact sequence does not arise from an action filtration (as the action functional is multi-valued).  Instead we prove an integrated maximum principle, see \cref{sec:WeaklyConvexDomains}, for our convex symplectic domains and use this to topologically construct a filtration of our Floer chain complexes that gives rise to the above long exact sequence.

From these two features, one finds that if $M_a$ is stably displaceable inside of $M_c$, then the boundary homomorphism of this long exact sequence is an isomorphism over $\Lambda$.  Unwinding the definition of the boundary homomorphism, one finds that there exists a Floer trajectory connecting a Reeb orbit near the boundary of $M_a$ to an index zero critical point of a Hamiltonian that represents the unit inside of the cohomology of $M_c$.  This produce a Floer trajectory whose projection under $\pi$ covers $\ID_a$.  

However, a Floer trajectory is not a holomorphic curve, which is what we desire.  To remedy this, we prove a Gromov-Floer-type compactness result for sequences of Floer trajectories associated to sequences of Hamiltonians that converge to zero on compact subsets.  We show that these sequences of Floer trajectories converge to pearl necklaces\footnote{We warn the reader that we use different nomenclature in \cref{sec:Pearls} to state our result.}, that is, holomorphic curves adjoined by negative gradient trajectories of some background Morse function, see \cref{sec:Pearls}.  In particular, when the negative end of our Floer trajectory is an index zero critical point, the negative end of the pearl necklace must be a non-constant holomorphic curve and not a negative gradient trajectory.  The positive end of the necklace wraps positively around $\partial \ID_a$.

By ignoring the trajectories and curves in between the two ends of the necklace, this ``turning off'' of the Hamiltonian perturbation gives a genuine (possibly disconnected) holomorphic curve that has a component whose projection under $\pi$ covers $\ID_a$ and has a non-constant component with a point constraint.  Assuming that the image of $M$ under our symplectic embeddings above is completely contained in $M_a$, we can use these holomorphic curves to produce our desired disks.  Considering continuation maps associated to varying Hamiltonians and almost complex structures, one can show that the existence and energy of the resulting holomorphic curves above are independent of the choice of point constraint and almost complex structure.  In this manner, we will obtain our desired holomorphic curves with boundaries (assuming that $\widehat{SH}(M_a \subset M_c) \otimes \Lambda \equiv 0$).
	
\emph{Vanishing of action completed symplectic cohomology}: We now explain why $\widehat{SH}(M_a \subset M_c) \otimes \Lambda$ vanishes in our setup.  There are two cases to consider, each corresponding to one of our main theorems.

When $\pi$ is smooth over $0$, the symplectic embedding of $M$ into $M_c$ that we produce above can actually be made into a symplectic embedding into the product symplectic manifold $\ID_c \times F$, where $F$ is the smooth fibre of $\pi$.  Every compact subset of $\IC \times F$ is displaceable in some $M_c$ for $c$ sufficiently large.  In this manner, we obtain vanishing for $\widehat{SH}(M_a \subset M_c) \otimes \Lambda$.

When $M$ has vanishing first Chern class, we proceed as follows.  First, a result of McLean \cite[Corollary 6.21]{McLean_BirationalCalabiYauManifoldsHaveTheSameSmallQuantumProducts} implies that $\pi^{-1}(0)$ is stably displaceable inside of $M_c$.  Consequently, $\widehat{SH}(M_\varepsilon \subset M_c) \otimes \Lambda$ vanishes for some small $\varepsilon > 0$.  However, most likely $M_\varepsilon \subset M_a$.  So we consider a rescaling morphism
\[ \widehat{SH}(M_a \subset M_{c+a-\varepsilon}) \otimes \Lambda \to \widehat{SH}(M_{\varepsilon} \subset M_{c}) \otimes \Lambda. \]
Roughly, we radially rescale our Hamiltonian vector fields and identity the generators of the associated Hamiltonian Floer chain complexes.  This rescaling sends an orbit $x$ to $x \cdot T^{(a-\varepsilon) \cdot w(x)}$, where $T$ is the Novikov parameter, and $w(x)$ denotes the winding number of $x$ about the origin in $\IC^\times$.  In general this morphism is not a bounded map of the associated uncompleted complexes, and, thus, fails to be an isomorphism after completing our complexes.  To remedy this, we use that $c_1(M) = 0$ to obtain a grading on our Hamiltonian Floer chain complexes.  So to conclude that this rescaling morphism is an isomorphism, it now suffices to show that it is degree-wise bounded.  More explicitly, one need to show that if the Conley-Zehnder index of the orbit $x$ is bounded by $n$, then the winding number $w(x)$ is bounded by some constant $C_n$ that is independent of $x$.  To achieve this, we construct an explicit sequence of Hamiltonians that are adapted to a normal crossings resolution of the singular fibre $\pi^{-1}(0)$ that have this index-bounded property, see \cref{sec:DivisorModels}.  The Conley-Zehnder indices of the orbits for these Hamiltonians will be controlled by the winding numbers about the divisors in the normal crossings resolution and the discrepancy of the resolution.  So this sequence of Hamiltonians (when rescaled) will give rise to the desired isomorphism, and (after relabeling the value $c$) the vanishing of $\widehat{SH}(M_a \subset M_c) \otimes \Lambda$.


\subsection{Structure of the exposition}

\cref{part:FloerTheory} discusses Hamiltonian Floer cohomology and the proofs of our main results.  In \cref{sec:ConvexDomains}, we define  convex symplectic domains, which are a class of symplectic domains that satisfy convexity properties with respect to pseudo-holomorphic curves and are the spaces for which we define Hamiltonian Floer cohomology and its analogues.  In \cref{sec:AdmissibleFloerData}, we define admissible families of Hamiltonians and almost complex structures for convex symplectic domains.  In \cref{sec:FloerModuliSpaces}, we discuss moduli spaces of Floer trajectories and give some basic energy estimates for Floer trajectories.  In \cref{sec:HamiltonianFloerCohomology}, we import Pardon's virtual fundamental chains package \cite{Pardon_VFC} to define both Hamiltonian Floer cohomology and action completed symplectic cohomology for convex symplectic domains.  In \cref{sec:PropertiesOfActionCompletedSH}, we discuss some properties of action completed symplectic cohomology, deriving a long exact sequence that relates action completed symplectic cohomology with the Morse cohomology of the symplectic domain and discussing the vanishing of action completed symplectic cohomology groups associated to stably displaceable subsets.  In \cref{sec:DiskUnirulings}, we prove a condition, phrased in terms of the vanishing of action completed symplectic cohomology, that ensures the existence of genus zero, holomorphic curves with boundaries through every point in a domain.  We also show that the energies of these curves are well controlled under deformations of almost complex structures.  In \cref{sec:ProofOfTheorem}, we give the proofs of our main results.

The second part discusses the part of the proof of our main result that is specific to the Calabi-Yau case.  In \cref{sec:CZIndices}, we discuss Conley-Zehnder indices of contractible $1$-periodic orbits of Hamiltonians, and define pseudo Morse-Bott families of orbits of Hamiltonians.  In \cref{sec:RescalingIso}, we establish a rescaling isomorphism for the action completed symplectic cohomology of convex symplectic domains whose collar neighborhoods are symplectic mapping cylinders.  In \cref{sec:DivisorModels}, we discuss symplectic normal crossings divisors and their standard tubular neighborhoods, and using these neighborhoods, we construct sequences of Hamiltonians whose dynamics are well-adapted to the normal crossings structure.

The third part discusses Hamiltonian fibrations and establishes the symplectic deformations that are used to construct the ``modified parallel transport maps'' (the symplectic embeddings) in the proofs of our main results.  In \cref{sec:ComplementEmbeddings}, we prove a self-embedding result for complements of normal crossings divisors in K\"{a}hler manifolds.  In \cref{sec:HamFibs}, we discuss the relationship between Hamiltonian fibrations and convex symplectic domains.  In particular, we focus on Hamiltonian fibrations over $\IC^\times$, relating them to symplectic mapping cylinders.

The forth part establishes Gromov-Floer-type compactness results.  In \cref{sec:CompactnessForDegenerationsALaFish}, we prove a compactness result for sequences of holomorphic curves with boundaries in spaces of the form given by our degeneration to the normal cone setup.  In \cref{sec:Pearls}, we prove a Gromov-Floer type compactness result for sequences of Floer trajectories associated to Hamiltonians that degenerate to the zero Hamiltonian on a compact subset.

The fifth part is a string of appendices.  In \cref{sec:AppendixHA}, we discuss the requisite homological algebra (over the universal Novikov ring) that is needed to define action completed symplectic cohomology.  In \cref{sec:WeaklyConvexDomains}, we establish an integrated maximum principle for our convex symplectic domains.  In \cref{sec:DisDiv}, we discuss stable displaceability properties of neighborhoods of fibres of proper holomorphic maps to Riemann surfaces.


\subsection*{Acknowledgements}
I would like to thank my thesis advisor, Mohammed Abouzaid, for suggesting this project, for numerous conversations, and for his comments on drafts of this paper.  I would like to thank Daniel Litt for partially conjecturing our main results, and for conversations about their algebro-geometric motivation.  Finally, I would like to thank Mark McLean for conversations pertaining to this paper.


\part{Floer theory}\label{part:FloerTheory}

In this part, we define action completed symplectic cohomology groups associated to compact subsets of convex symplectic domains, and establish some of their properties.  The setting in which we do Floer theory is more general than most of the literature.  First, we do not assuming any relationship between our symplectic forms and first Chern classes.  Second, our symplectic domains do not necessarily have contact-type boundaries.  Instead, we consider symplectic domains whose boundaries admit stable Hamiltonian structures with admissible almost complex structures.  To deal with the first issue, we import Pardon's virtual fundamental chain package to define Hamiltonian Floer chain complexes \cite{Pardon_VFC}.  To deal with the second issue, we establish an integrated maximum principle for our convex symplectic domains that ensures that the images of pseudo-holomorphic curves are disjoint from the boundaries.


\section{Convex symplectic domains}\label{sec:ConvexDomains}

We work with compact symplectic manifolds whose boundaries admit stable Hamiltonian structures with admissible almost complex structures.  We refer to such symplectic domains as convex symplectic domains.  Our definition of convex symplectic domains is analogous to the definition of symplectic domains with contact-type boundaries; however, our notion of convexity is more general.  We briefly review stable Hamiltonian structures before giving the definition of convex symplectic domains.

\begin{defn}\label{defn:StableHamiltonianStructure}\cite{HoferZehnder_StableHamDefinition}
A \emph{stable Hamiltonian structure} on a manifold $Y$ of dimension $2n-1$ is a pair $(\omega,\alpha)$ that satisfies:
\begin{enumerate}
	\item $d\omega = 0$,
	\item $\alpha \wedge \omega^{n-1} > 0$, and
	\item $\ker(\omega) \subset \ker(d\alpha)$.
\end{enumerate}
\end{defn}

\begin{defn}\label{defn:HyperplaneDistribution}
The \emph{hyperplane distribution} of a stable Hamiltonian structure $(\omega,\alpha)$ on $Y$ is the distribution $\xi \coloneqq \ker(\alpha)$.  The \emph{Reeb vector field} of a stable Hamiltonian structure $(\omega,\alpha)$ on $Y$ is the (unique) vector field $\sR$ determined by $\iota_{\sR}\omega \equiv 0$ and $\iota_{\sR} \alpha = 1$.  The Reeb vector field is \emph{non-degenerate} if for all orbits $\gamma$ of $\sR$, the linearized return map
\[d(\varphi_T^\sR)_{\gamma(0)}|_{\xi_{\gamma(0)}}: \xi_{\gamma(0)} \to \xi_{\gamma(0)},\]
where $T$ is the period of $\gamma$ and $\varphi_T^\sR$ is the time $T$ flow of $\sR$, has no eigenvalues equal to $1$.
\end{defn}

When $\omega = d\alpha$, $Y$ is a contact manifold with contact $1$-form $\alpha$.  The hyperplane distribution and Reeb vector field in \cref{defn:HyperplaneDistribution} agree with the usual hyperplane distribution and Reeb vector field associated to the contact $1$-form $\alpha$.  Like with contact manifolds, we can take symplectizations of stable Hamiltonian structures and define a class of admissible almost complex structures on them.

\begin{defn}\label{defn:SymplectizationStableHamStructure}
The \emph{symplectization of a stable Hamiltonian structure} $(\omega,\alpha)$ on $Y$ is the symplectic manifold $((1-\varepsilon,1+\varepsilon) \times Y,\Omega)$ with
\[\Omega \coloneqq \omega+ d( (\varepsilon \cdot r) \alpha),\]
where $0 < \varepsilon$ is sufficiently small so that $\Omega$ is symplectic.
\end{defn}

After rescaling, we can (and always will) assume that $\varepsilon = 1$ in \cref{defn:SymplectizationStableHamStructure}.

\begin{defn}\label{defn:AdmissibleJOnSymplectization}
An almost complex structure $J$ on the symplectization of a stable Hamiltonian structure is \emph{admissible} if
\begin{enumerate}
	\item $J$ is $\Omega$-compatible,
	\item $-dr \circ J = r \cdot \alpha$,
	\item $d\alpha|_\xi(\cdot, J \cdot)$ is positive semi-definite on $\xi$, and
	\item $J(\xi) = \xi$.
\end{enumerate}
\end{defn}

\begin{rem}\label{rem:NonContractibilityOfJ}
The third condition in \cref{defn:AdmissibleJOnSymplectization} could make the resulting space of admissible almost complex structures non-contractible.  In essence, the contracting homotopy of the space of compatible almost complex structures (on $\xi$), which is produced by the polarization decomposition (see \cite{Silva_LecturesOnSymplecticGeometry}), a priori, does not need to preserve the subspace of compatible almost complex structures that make $d\alpha|_\xi(\cdot, J \cdot)$ positive semi-definite on $\xi$.  Regardless, this possible non-contractibility does not affect our setup.  See \cref{rem:AdmissibleJNotContractible} for further discussion. 
\end{rem}

\begin{defn}\label{defn:ConvexSymplecticDomain}
A compact symplectic manifold with boundary $(M,\Omega)$ is a \emph{convex symplectic domain} if
\begin{enumerate}
	\item there exists a collar neighborhood $N(\partial M)$ of the boundary that is symplectomorphic to the (bottom half of the) symplectization of a stable Hamiltonian structure $(\Omega|_{\partial M}, \alpha)$ on $\partial M$ for some $1$-form $\alpha$ on $\partial M$\footnote{In fact, by a symplectic neighborhood theorem, it suffices for there to exist a $1$-form $\alpha$ such that $(\Omega|_{\partial M}, \alpha)$ is a stable Hamiltonian structure on $\partial M$.}, and
	\item there exists an $\Omega$-compatible almost complex structure $J$ on $M$ such that $J|_{N(\partial M)}$ is admissible with respect to the symplectization.
\end{enumerate}
Using the identification with the symplectization, the \emph{radial/collar coordinate} is 
\[r: N(\partial M) \cong (0,1] \times \partial M \to (0,1].\]
The associated \emph{Liouville $1$-form} on $M$ is $\lambda \coloneqq r \cdot \alpha$.  Denote a convex symplectic domain by $(M,\Omega,\lambda)$.  Finally, an almost complex structure $J$ as above is \emph{admissible} for $(M,\Omega,\lambda)$.
\end{defn}

\begin{exmp}
Our key example, discussed in \cref{subsec:HamFibs_MappingTori} of a convex symplectic domain is a symplectic domain whose boundary is the mapping torus of a symplectomorphism.
\end{exmp}

\begin{notn}\label{notn:rInverse}
$M_a = r^{-1}([0,a])$, where $r$ is extended by zero outside of the collar.
\end{notn}

\begin{rem}
We stress two features of convex symplectic domains.  First, the dynamics of radial Hamiltonians on the collar $N(\partial M)$ are given by the dynamics of the Reeb vector field $\sR$ of the stable Hamiltonian structure, since the $\Omega$-dual of $-dr$ is given by the Reeb vector field $\sR$.  Second, convex symplectic domains satisfy an integrated maximum principle with respect to radial Hamiltonians.  This allows us to define Hamiltonian Floer cohomology for Hamiltonians that are radial near the boundary.  More importantly, this integrated maximum principle allows for the collar coordinate $r$ to give rise to filtrations of the associated Hamiltonian Floer chain complexes that are preserved by continuation maps.  Normally such filtrations are constructed using the action filtration, say for Liouville domains.  However, in the non-exact setting, the action functional is multi-valued, and not well-suited for defining filtrations.  The filtrations derived from the integrated maximum principle will essentially agree with the usual action filtration for Liouville domains.
\end{rem}

\begin{notn}\label{notn:MaximumPrincipleSetup}
\begin{enumerate}
	\item Let $(M,\Omega,\lambda)$ be a convex symplectic domain.
	\item Let $J_s$ be an $\IR$-family of admissible almost complex structures for $(M,\Omega,\lambda)$.
	\item Let $H: \IR \times S^1 \times M \to \IR$ be a family of Hamiltonians such that $H_s|_{N(\partial M)} = h_s$, where $h_s$ is a family of radial functions in $r$ that satisfies $\partial_s \partial_r h_s \leq 0$.
	\item Let $u: \IR \times S^1 \to M$ be a smooth map that satisfies
	\[0 = (du-X_{H} \otimes dt)^{0,1}.\]
\end{enumerate}
\end{notn}

We prove the following integrated maximum principle in \cref{sec:WeaklyConvexDomains}.

\begin{prop}\label{prop:MaximumPrincipleForConvexDomains}
If
\[ \lim_{s \to \pm \infty} r \circ u(s,t) = c_{\pm} < 1, \]
then $r \circ u \leq \max(c_\pm)$.  Moreover, if $\partial_r \partial_r h_s \geq 0$, then $r \circ u \leq c_+$, and if $c_- = c_+$, then $r \circ u$ is constant.
\end{prop}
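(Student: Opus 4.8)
The plan is to establish the integrated maximum principle by the standard Stokes-theorem-plus-positivity argument adapted to the stable Hamiltonian setting on the collar. Suppose for contradiction that $r\circ u$ exceeds $c\coloneqq \max(c_\pm)$ somewhere. Pick a regular value $c<b<1$ of $r\circ u$ with $b$ close to $c$, and let $\Sigma\coloneqq (r\circ u)^{-1}([b,1])\subset \IR\times S^1$. By the asymptotic hypothesis $\lim_{s\to\pm\infty} r\circ u = c_\pm < b$, the set $\Sigma$ is compact, and (after perturbing $b$) it is a compact surface with smooth boundary $\partial\Sigma = (r\circ u)^{-1}(b)$. On $\Sigma$ the map $u$ lands in the collar $N(\partial M)\cong (0,1]\times\partial M$, where all the structures are the symplectization structures from \cref{defn:SymplectizationStableHamStructure} and \cref{defn:AdmissibleJOnSymplectization}, and $H_s = h_s$ is radial; the Hamiltonian vector field $X_{h_s}$ is then a multiple of the Reeb field $\sR$, so $dr(X_{h_s}) = 0$.

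**Key steps.** First I would write down the geometric energy of $u|_\Sigma$ in two ways. Using the Floer equation $0 = (du - X_H\otimes dt)^{0,1}$ and $\Omega$-compatibility of $J_s$, one gets the pointwise identity $\tfrac12|du - X_H\otimes dt|^2\, ds\wedge dt = u^*\Omega - u^*(d(H\,dt)) + (\text{terms involving } \partial_s H)$; integrating over $\Sigma$ and using $\partial_s\partial_r h_s \le 0$ to control the sign of the $\partial_s H$ term, the energy is bounded above by $\int_\Sigma u^*\Omega - \int_{\partial\Sigma} u^*(\text{primitive-type }1\text{-form})$ via Stokes. The point of choosing the region $\{r\ge b\}$ is that on the collar $\Omega = \omega + d(r\alpha)$ has the global primitive-adjusted form, so $\int_\Sigma u^*\Omega = \int_{\partial\Sigma} u^*(\lambda) + \int_\Sigma u^*\omega$; here $u^*\omega$ integrates to zero over $\Sigma$ because $d\omega = 0$ and, more carefully, because the $\omega$-component pulls back trivially once we use $\iota_{\sR}\omega = 0$ together with the structure of the Floer equation — this is where I would have to be careful. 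Second, I would compute the boundary integral $\int_{\partial\Sigma} u^*(\lambda - H\,dt)$ along $\partial\Sigma = (r\circ u)^{-1}(b)$ and show, using admissibility condition (2) of \cref{defn:AdmissibleJOnSymplectization} ($-dr\circ J = r\alpha$) and convexity of $h_s$ in $r$ where needed, that the resulting boundary term is $\le 0$, or more precisely that the total right-hand side is $\le 0$. Combining, the geometric energy of $u|_\Sigma$ is $\le 0$, hence $u|_\Sigma$ is constant (a solution of the Floer equation with zero energy is $s$-independent and equals a Reeb orbit), contradicting that $r\circ u$ was supposed to be strictly greater than $c$ somewhere while converging to $c_\pm < c$ at the ends unless $\Sigma$ is empty.

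**The refinements.** For the statement $r\circ u \le c_+$ under $\partial_r\partial_r h_s \ge 0$: the convexity of $h_s$ makes the boundary term along any level set $\{r\circ u = b\}$ with $b > c_+$ (not merely $b > \max(c_\pm)$) have the correct sign, so the same argument runs with $c_+$ in place of $\max(c_\pm)$ — I would redo the boundary estimate tracking the sign of $\partial_r h_s$ at the relevant level, using that $\partial_r h_s$ is increasing in $r$. For the rigidity statement when $c_- = c_+$: if $r\circ u \le c_+ = c_-$ everywhere and the ends both have $r\circ u \to c_-$, apply the maximum principle to the function $r\circ u$ directly — on the region where $u$ is in the collar, $r\circ u$ is subharmonic (this follows from the same energy identity localized, or from the standard fact that $r\circ u$ satisfies a differential inequality of the form $\Delta(r\circ u) \ge 0$ coming from $J$-holomorphicity in the $r$-direction after accounting for the radial Hamiltonian), so an interior maximum forces $r\circ u$ constant; and the asymptotic values pin the constant to $c_\pm$.

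**Main obstacle.** The hard part is the careful bookkeeping of signs in the energy identity in the \emph{non-exact, non-contact} setting: because $\omega$ need not be exact and the boundary is only a stable Hamiltonian structure (not contact), one cannot simply quote the Liouville-domain argument. One must verify that the $u^*\omega$ contribution over $\Sigma$ genuinely vanishes (or has the right sign) — this uses $\ker(\omega)\subset\ker(d\alpha)$ from \cref{defn:StableHamiltonianStructure}(3) together with $J(\xi)=\xi$ and the semi-positivity of $d\alpha|_\xi(\cdot,J\cdot)$ from \cref{defn:AdmissibleJOnSymplectization} — and that the curvature-type term from $\partial_s H$ is handled by the monotonicity hypothesis $\partial_s\partial_r h_s\le 0$. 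I expect this to be essentially the content of \cref{sec:WeaklyConvexDomains}, and the rest (the Stokes manipulation, the contradiction) is routine once that positivity is in hand.
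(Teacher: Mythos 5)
Your plan hinges on the step you yourself flag as the main obstacle, and that step does not go through as claimed. In the energy/Stokes argument over $\Sigma=(r\circ u)^{-1}([b,1])$ the non-exact piece contributes $\int_\Sigma u^*\omega$, and its integrand can be computed from the Floer equation: using $\partial_t u-X_{H}=J\partial_s u$, $X_{h_s}=\partial_r h_s\cdot\sR$ and $\iota_{\sR}\omega=0$ one gets $u^*\omega(\partial_s,\partial_t)=\omega(\partial_s u,J\partial_s u)$. Since admissibility forces $J$ to preserve both $\xi$ and the span of $\partial_r,\sR$, this equals $\omega|_\xi(v,Jv)$ for the $\xi$-component $v$ of $\partial_s u$, and
\[
\omega|_\xi(v,Jv)\;=\;\Omega|_\xi(v,Jv)\;-\;r\,d\alpha|_\xi(v,Jv).
\]
Here the axiom you invoke, $d\alpha|_\xi(\cdot,J\cdot)\geq 0$ from \cref{defn:AdmissibleJOnSymplectization}, enters with the \emph{wrong} sign, and nothing in \cref{defn:StableHamiltonianStructure} (in particular not $\ker\omega\subset\ker d\alpha$) makes $\omega|_\xi(\cdot,J\cdot)$ positive semi-definite; for a general stable Hamiltonian structure it need not be. So the asserted vanishing/positivity of $\int_\Sigma u^*\omega$ is unproved and in this generality false, and the contradiction $E_{geo}(u|_\Sigma)\le 0$ does not follow. (Two smaller gaps: for the refined bound $r\circ u\le c_+$ when $c_->c_+$, the region $\{r\circ u\ge b\}$ with $c_+<b<c_-$ is non-compact at the negative end, so your Stokes argument needs asymptotic terms you do not discuss; and for the rigidity statement, the supremum of $r\circ u$ may only be attained asymptotically, so "interior maximum forces constancy" does not apply directly.)

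The paper's proof (\cref{prop:MaximumPrincipleForWeaklyConvexDomains}) is designed precisely to avoid this $\omega$-term: it never uses the energy identity or $\Omega$ at all. Instead it uses $-dr\circ J=\lambda$ (more generally $-df\circ J=\lambda$) to compute $\Delta(f\circ u)=-d(df\circ du\circ j)$ purely in terms of $u^*d\lambda$ and the radial Hamiltonian; since $d\lambda=dr\wedge\alpha+r\,d\alpha$ and both $dr\wedge\alpha(\cdot,J\cdot)$ and $d\alpha|_\xi(\cdot,J\cdot)$ are non-negative for admissible $J$, together with $\partial_s\partial_r h_s\le 0$ this yields the differential inequality $\Delta(f\circ u)\ge \frac{-r\,\partial_r\partial_r h_s}{f'}\,\partial_s(f\circ u)$, and the first claim is a pointwise (classical) maximum principle. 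The refinements are then obtained not from a sublevel-region estimate but from a circle-wise integrated inequality: $\int_0^1\partial_s(f\circ u)\,dt=\int_0^1\lambda(\partial_t u-X_H)\,dt\ge 0$, using Stokes, positivity of $u^*d\alpha$, and both $\partial_s\partial_r h_s\le0$ and $\partial_r\partial_r h_s\ge0$, which rules out $c_->c_+$ and forces constancy when $c_-=c_+$. If you want to salvage an Abouzaid--Seidel-style sublevel argument here, you would have to integrate a primitive-type $1$-form built from $\lambda$ rather than the energy of $u$ with respect to $\Omega$, i.e.\ essentially reproduce the paper's use of $-dr\circ J=\lambda$.
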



\section{Admissible Floer data}\label{sec:AdmissibleFloerData}

We define two semisimplicial complexes associated to a convex symplectic domain $(M,\Omega,\lambda)$.  The sets of $\ell$-simplices will be $\Delta^\ell$-families of pairs of Hamiltonians and admissible almost complex structures on $(M,\Omega,\lambda)$ that satisfy admissibility conditions that allow their associated Floer theoretic objects to be well-defined.

\subsection{Morse flow lines on the simplex}\label{subsec:MorseSimplex}

We review the Morse theory of the standard simplex.  We give this discussion because we need to ``count'' possibly broken, possibly nodal Floer trajectories when defining our Floer differentials, continuations, and higher homotopies.  The Morse theory of the standard simplex gives a way to express our Floer trajectories of interest.

\begin{notn}\label{notn:MorseSimplex}
The standard $\ell$-simplex is
\[ \Delta^\ell = \{ \underline{s} \in \IR^{\ell+2} \mid 0 = s_0 \leq \cdots \leq s_\ell \leq s_{\ell+1} = 1 \}. \]
For $0 \leq i \leq \ell$, the $i$th vertex of $\Delta^\ell$, denoted $\underline{e_i}$, is the point with coordinates
\[ s_0 = \cdots = s_{\ell-i} \gap \mbox{and} \gap s_{\ell-i+1} = \cdots = s_{\ell+1}.\]
Let $f_\ell: \Delta^\ell \to \IR$ denote the Morse function
\[ f_\ell(\underline{s}) = \sum_{i=1}^\ell \cos(\pi s_i). \]
The vertex $\underline{e_i}$ is the unique index $\ell-i$ critical point of $f_\ell$.  The gradient of $f_\ell$ is 
\[ \nabla f_\ell(\underline{s}) = -\pi \cdot (0,\sin(\pi s_1),\dots,\sin(\pi s_\ell),0), \]
which is tangent to faces of $\Delta^\ell$.  Moreover, restrictions of these gradient vector fields are compatible with inclusions/restrictions of faces $\Delta^{\ell-1} \hookrightarrow \Delta^\ell$.
\end{notn}

\begin{defn}\label{defn:MorseFlowline}
A \emph{Morse flow line} from $\underline{e_0}$ to $\underline{e_\ell}$ in $\Delta^\ell$ is a map
\[ \gamma = \sqcup_{i=1}^k \gamma_i: \sqcup_{i = 1}^k \IR \to \Delta^\ell \]
such that $\gamma_i$ is a (possibly constant) negative gradient flow line of $f_\ell$ and
\[ \underline{e_0} = \lim_{s \to -\infty} \gamma_1(s), \dots, \lim_{s \to + \infty} \gamma_i(s) = \lim_{s \to - \infty} \gamma_{i+1}(s), \dots, \lim_{s \to + \infty} \gamma_{k}(s) = \underline{e_\ell}.\]
Two Morse flow lines $\gamma$ and $\gamma'$ are isomorphic if there exist translations $\varphi_i: \IR \to \IR$ such that $\gamma_i = \gamma_i' \circ \varphi_i$.
\end{defn}

\subsection{Kan complex of admissible Floer data}\label{subsec:KanAdmissibleStructures}

We define the semisimplicial sets of admissible Floer data.  Let $(M,\Omega,\lambda)$ be a convex symplectic domain with collar coordinate $r$.

\begin{defn}\label{defn:AdmissibleJSimplex}
A family of almost complex structures $J^\sigma: \Delta^\ell \to \End(TM)$ is \emph{admissible}
if for each $\underline{s} \in \Delta^\ell$, $J^\sigma_{\underline{s}}$ is an admissible almost complex structure for $(M,\Omega,\lambda)$.  Denote the set of $\Delta^\ell$-families of admissible almost complex structures by $\sJ_\ell(M,\Omega,\lambda)$.
\end{defn}

\begin{defn}\label{defn:AdmissibleFamiliesOfHam}
A family of Hamiltonians $H^\sigma: \Delta^\ell \times S^1 \times M \to \IR$ is \emph{admissible} if
\begin{itemize}
	\item the Hamiltonian $H^\sigma_{\underline{e_i}}$ associated to each vertex is non-degenerate, and
	\item $H^\sigma_{\underline{s}}$ is locally constant about each vertex $\underline{e_i}$ in $\Delta^\ell$,
\end{itemize}
and near $\partial M$
\begin{itemize}
	\item $H^\sigma_{\underline{s}}$ is a linear function in $r$, and
	\item $\partial_{\underline{s}} \partial_r H^\sigma_{\underline{s}} \circ \gamma_i'(s) \leq 0$ for each Morse flow line $\gamma = \sqcup \gamma_i$ in $\Delta^\ell$.
\end{itemize}
Denote the set of admissible $\Delta^\ell$-families of Hamiltonians by $\sH_\ell(M,\Omega,\lambda)$.
\end{defn}

To define action completed symplectic cohomology, we will need our continuation maps to be defined over the universal Novikov ring.  This will be ensured for continuations associated to families of the following Hamiltonians.

\begin{defn}\label{defn:MonotoneAdmissibleFamiliesOfHam}
A family of admissible Hamiltonians $H^\sigma: \Delta^\ell \times S^1 \times M \to \IR$ is \emph{monotonically admissible} if $\partial_{\underline{s}} H^\sigma_{\gamma_i(s)} \circ \gamma_i'(s) \leq 0$ for each Morse flow line $\gamma = \sqcup \gamma_i$ in $\Delta^\ell$.  Denote the set of $\Delta^\ell$ monotonically admissible families of Hamiltonians by $\sH^+_\ell(M,\Omega,\lambda)$.
\end{defn}

Ranging over $\ell$, the sets $\sJ_\ell(M,\Omega,\lambda)$, $\sH_\ell(M,\Omega,\lambda)$ and $\sH_\ell^+(M,\Omega,\lambda)$ assemble into semisimplicial sets, denote $\sJ(M,\Omega,\lambda)$, $\sH(M,\Omega,\lambda)$, and $\sH^+(M,\Omega,\lambda)$ respectively.  Define the product semisimplicial set $\sJ \sH(M,\Omega,\lambda)$ whose $\ell$-simplicies are pairs $(H^\sigma,J^\sigma) \in \sH_\ell(M,\Omega,\lambda) \times \sJ_\ell(M,\Omega,\lambda)$.  Similarly, define $\sJ \sH^+(M,\Omega,\lambda)$.

\begin{rem}
We explain when $\ell$-simplices of Floer data will be used in our constructions.
\begin{enumerate}
	\item As is standard, we need $0$-simplices and $1$-simplices to define Hamiltonian Floer cohomology groups and continuation maps between them respectively
	\item We need $1$-simplices to define action completed symplectic cohomology; however, $2$-simplices and $3$-simplices are further needed to show that action completed symplectic cohomology is well-defined and independent of all choices.
	\item We also use $2$-simplices and $3$-simplices to derive further invariance properties of action completed symplectic cohomology in \cref{subsec:FunctionInvarience}.
\end{enumerate}
So the reader can ignore $\ell$-simplices of Floer data for $\ell \geq 4$.
\end{rem}

\subsection{Connectivity}\label{subsec:Connectivity}
The semisimplicial sets $\sH(M,\Omega,\lambda)$ and $\sH^+(M,\Omega,\lambda)$ are not contractible (or even $0$-connected) because of the condition $\partial_{\underline{s}} \partial_r H^\sigma_{\underline{s},t} \circ \gamma_i'(s) \leq 0$ (and the monotone property for $\sH^+(M,\Omega,\lambda)$).  We point out that while these semisimplicial sets are not contractible, their geometric realizations are contractible.  This is because in forming the geometric realization one has to ``add in'' all of the ``opposite'' edges and simplices.  Nevertheless, to obtain invariance properties for our constructions, the connectivity given by \cref{lem:ConnectivityOfFloerData} suffices.

\begin{lem}\label{lem:ConnectivityOfFloerData}
Every map of $\partial \Delta^\ell$ to either $\sH(M,\Omega,\lambda)$ or $\sH^{+}(M,\Omega,\lambda)$ extends to $\Delta^\ell$ for $\ell \geq 2$.
\end{lem}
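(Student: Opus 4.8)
The plan is to prove the extension statement by an explicit convexity/averaging argument. The obstruction to contractibility of $\sH(M,\Omega,\lambda)$ and $\sH^+(M,\Omega,\lambda)$ is entirely the one-sided inequalities $\partial_{\underline{s}} \partial_r H^\sigma_{\underline{s}} \circ \gamma_i'(s) \le 0$ (and, for $\sH^+$, $\partial_{\underline{s}} H^\sigma_{\underline{s}} \circ \gamma_i'(s) \le 0$) along Morse flow lines of $f_\ell$. The key observation is that these are \emph{convex} constraints: if $H^\sigma_0$ and $H^\sigma_1$ are two admissible $\Delta^\ell$-families agreeing on $\partial\Delta^\ell$, any convex combination $tH^\sigma_1 + (1-t)H^\sigma_0$ again lies in $\sH_\ell$ (resp.\ $\sH^+_\ell$), since the defining inequalities are preserved under convex combinations, as is linearity in $r$ near $\partial M$; the only subtlety is the non-degeneracy requirement at the vertices, but the families are locally constant near each $\underline{e_i}$ and equal to the (non-degenerate) boundary data there, so non-degeneracy is automatic on the entire interior of the homotopy. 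This convexity is what will let me build the filler.

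Here is the order of the argument. First, given $\varphi: \partial\Delta^\ell \to \sH(M,\Omega,\lambda)$, i.e.\ a compatible system of admissible families $H^{\sigma,i}$ on the facets $\partial_i\Delta^\ell$, I extend it to a single admissible $\Delta^\ell$-family. Pick any smooth extension $\widetilde H$ of the boundary data to $\Delta^\ell \times S^1 \times M$ that is still linear in $r$ near $\partial M$, locally constant near the vertices, and equal to the given non-degenerate Hamiltonians there (such an extension exists because the constraints near vertices and near $\partial M$ are about disjoint regions and are themselves convex/closed). The extension $\widetilde H$ need not satisfy the monotonicity-in-$r$ inequality in the interior. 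To fix this, I will use that for $\ell \ge 2$ the Morse flow lines of $f_\ell$ in $\Delta^\ell$ all run ``downhill'' from $\underline{e_0}$ to $\underline{e_\ell}$ along the edges and faces, so the sign conditions along flow lines only constrain the derivative of $H^\sigma$ in the direction of $-\nabla f_\ell$; I then \emph{modify} $\widetilde H$ by adding a term of the form $-C\cdot\rho(r)\cdot g(f_\ell(\underline s))$ where $g$ is increasing, $C\gg 0$, and $\rho$ is a cutoff supported near $\partial M$ making the extra term linear in $r$ there — for $C$ large this forces $\partial_{\underline s}\partial_r H^\sigma \le 0$ along every flow line. Since the added term vanishes on $\partial\Delta^\ell$ only if we are careful, I instead localize: I multiply the correction by a function of the distance to $\partial\Delta^\ell$ that vanishes to infinite order on the boundary, and then observe that $\partial\Delta^\ell$ being where all flow lines already lie (by compatibility of the facet data) means the inequalities are already satisfied there; a partition-of-unity/convex-interpolation between the uncorrected $\widetilde H$ near $\partial\Delta^\ell$ and the corrected one in the interior finishes the construction, using the convexity observation above to guarantee the interpolant is admissible.

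For $\sH^+(M,\Omega,\lambda)$ the same strategy applies, now also needing $\partial_{\underline s} H^\sigma \circ \gamma_i'(s) \le 0$; here the correction term $-C\,g(f_\ell(\underline s))$ (without any $r$-cutoff, taken globally on $M$ but constant in $r$ near $\partial M$ so as not to disturb linearity) has $\partial_{\underline s}$ pointed along $-\nabla f_\ell$ equal to $-C g'(f_\ell)\,|\nabla f_\ell|^2 \le 0$, which for $C$ large dominates the derivative of $\widetilde H$, and simultaneously makes the $r$-inequality hold; one checks the combined correction keeps the family linear in $r$ near $\partial M$ and locally constant near vertices. The main obstacle I anticipate is bookkeeping the interaction between the three local conditions (near $\partial M$, near the vertices, and along flow lines in the interior) so that the single correction term respects all of them at once — in particular making sure the correction can be taken to vanish on $\partial\Delta^\ell$ (to match the prescribed facet data exactly) while still being large enough in the interior; the cleanest route is probably to phrase the whole extension as: (i) pick a crude smooth extension, (ii) note the ``defect'' in the inequalities is a continuous function vanishing on $\partial\Delta^\ell$, hence bounded on each compact piece, and (iii) add a boundary-vanishing correction whose flow-line derivative dominates that defect, invoking compactness of $M$ and of $\Delta^\ell$ to get uniform constants.
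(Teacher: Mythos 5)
Your convexity observation and your diagnosis that the monotonicity constraints are the only obstruction are correct, but the construction you propose does not work, and the problem is exactly the one you flagged at the end. First, a sign slip: along a flow line $\gamma'=-\nabla f_\ell$, the derivative of $-C\,g(f_\ell)$ is $+C\,g'(f_\ell)\,|\nabla f_\ell|^2\geq 0$, so as written your correction pushes the inequality the wrong way (this alone is fixable by flipping the sign). The genuine gap is structural: any correction $P$ that vanishes on $\partial\Delta^\ell$ satisfies $\int_{-\infty}^{+\infty} dP(\gamma'(s))\,ds=0$ along every complete interior flow line $\gamma$, because the asymptotic endpoints of $\gamma$ are vertices and all vertices lie in $\partial\Delta^\ell$. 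Hence for the candidate $H=\widetilde H+P$ one has $\int_\gamma d_{\underline s}H(\gamma') = H_{\underline{e_j}}(t,x)-H_{\underline{e_i}}(t,x)$, a quantity pinned by the prescribed vertex data. Whenever this vanishes for some $(t,x)$ --- which is not exotic but the typical case, e.g.\ when the boundary $1$-simplices are constant/identity continuations, since the monotonicity inequalities are non-strict --- the pointwise requirement $d_{\underline s}H(\gamma')\leq 0$ forces $H$ to be constant in $\underline s$ along the entire interior flow line at that $(t,x)$. No choice of ``large $C$'' can achieve this: enlarging $C$ does not change the zero integral, and the needed $P$ would have to cancel the variation of the crude extension $\widetilde H$ exactly along each such flow line, so it cannot be a universal term of the form $C\rho(r)g(f_\ell)$ times a boundary-vanishing cutoff. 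The same integrated obstruction applies verbatim to the slope condition $\partial_{\underline s}\partial_r H\circ\gamma'\leq 0$ near $\partial M$.

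Two smaller points. The final ``partition-of-unity/convex-interpolation'' step is not covered by your convexity remark: the weights depend on $\underline s$, and differentiating them along $\gamma'$ produces a term $(\partial_{\gamma'}\chi)\,(\widetilde H_1-\widetilde H_0)$ of uncontrolled sign. And the claim that all flow lines already lie in $\partial\Delta^\ell$ is false; the interior flow lines (e.g.\ the open family from $\underline{e_0}$ to $\underline{e_\ell}$) are precisely the ones carrying the obstruction above. For these reasons the extension must be built directly from the boundary data rather than by coercively correcting an arbitrary extension; this is what the paper does, by invoking the proof of Varolgunes's Proposition 3.2.13, where the interpolation is produced via a parameterized version of the Whitney extension construction so that monotonicity along flow lines is respected from the start.
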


\begin{proof}
This follows from the proof of \cite[Proposition 3.2.13]{Varolgunes_MayerVietorisPropertyForRelativeSymplecticCohomology} and the work leading up to it.  The only difference between our setup and the setup of \cite{Varolgunes_MayerVietorisPropertyForRelativeSymplecticCohomology} is that we need to incorporate the condition that $\partial_{\underline{s}} \partial_r H^\sigma_{\underline{s}} \circ \gamma_i'(s) \leq 0$ when interpolating families of Hamiltonians.  However, incorporating this condition is the same as incorporating the condition that families be monotone as in the proof of \cite[Proposition 3.2.13]{Varolgunes_MayerVietorisPropertyForRelativeSymplecticCohomology}.  Incorporating these conditions follows from a parameterized version of the Whitney Extention theorem (or rather a study of the construction used to prove the Whitney Extension theorem).
\end{proof}

\begin{rem}\label{rem:AdmissibleJNotContractible}
The analogue of \cref{lem:ConnectivityOfFloerData} for $\sJ\sH(M,\Omega,\lambda)$ and $\sJ\sH^+(M,\Omega,\lambda)$ need not hold, since the semisimplicial set $\sJ(M,\Omega,\lambda)$ could be non-connected.  To obtain an analogue of \cref{lem:ConnectivityOfFloerData} for $\sJ\sH(M,\Omega,\lambda)$ and $\sJ\sH^+(M,\Omega,\lambda)$, one could do one of the following.
\begin{enumerate}
	\item Begin with the assumption that $\sJ(M,\Omega,\lambda)$ is contractible (which occurs if $\partial M$ is of contact-type or is a mapping torus of a symplectomorphism).
	\item Fix an admissible almost complex structure $J$ near $\partial M$ and replace $\sJ(M,\Omega,\lambda)$ with the semisimplicial set of $\Delta^\ell$-families of admissible almost complex structures that agree with $J$ near the boundary of $M$.  This replacement semisimplicial set is contractible.
\end{enumerate}
When we define Hamiltonian Floer cohomology and action completed symplectic cohomology, we implicitly assume that either $\sJ(M,\Omega,\lambda)$ is contractible, or we have fixed data for the second item.  If $\sJ(M,\Omega,\lambda)$ is not contractible, then our constructions depend on this extra data.
\end{rem}


\section{Moduli spaces of Floer trajectories}\label{sec:FloerModuliSpaces}

Let $(M,\Omega,\lambda)$ be a convex symplectic domain with collar coordinate $r$.  We define moduli spaces of Floer trajectories associated to simplices in $\sJ\sH(M,\Omega,\lambda)$ (and $\sJ\sH^+(M,\Omega,\lambda)$).

\subsection{Moduli spaces of Floer trajectories}\label{subsec:FloerModuliSpaces}

The following definitions are extremely well-known.  So we give them in our notation with little discussion.

\begin{defn}\label{defn:FloersEquation}
Let $\gamma:\IR \to \Delta^\ell$ be a (possibly constant) Morse flow line and let $u: \IR \times S^1 \to M$ be a smooth map.  The \emph{continuation operator} associated to a simplex $\sigma$ in $\sJ \sH_\ell(M,\Omega,\lambda)$ is
\begin{align*}
 \overline{\partial}_{\sigma}(\gamma,u) = du + J^{\sigma}(\gamma,u) \circ du \circ j - X_{H^\sigma}(\gamma, u) \otimes dt - J^\sigma(\gamma,u) \circ X_{H^\sigma}(\gamma,u) \otimes ds.
\end{align*}
\end{defn}

\begin{defn}\label{defn:ParameterizedFloerTrajectories}
Let $\sigma$ be a simplex in $\sJ\sH_\ell(M,\Omega,\lambda)$.  Let $x_0$ and $x_\ell$ be $1$-periodic orbits of $H^\sigma_{\underline{e_0}}$ and $H^\sigma_{\underline{e_\ell}}$ respectively.  A \emph{Floer trajectory} of type $(\sigma,x_0,x_\ell)$ is a pair $(\gamma,u)$, where
\begin{enumerate}
	\item $\gamma = \sqcup_{i=1}^k \gamma_i: \sqcup_{i=1}^k \IR \to \Delta^\ell$ is a Morse flow line with
	\[ \underline{e_0} = \lim_{s \to -\infty} \gamma_1(s), \dots, \lim_{s \to + \infty} \gamma_i(s) = \lim_{s \to - \infty} \gamma_{i+1}(s), \dots, \lim_{s \to + \infty} \gamma_{k}(s) = \underline{e_\ell},\]
	(Write $v_i \coloneqq \lim_{s \to -\infty} \gamma_{i+1}(s)$ for $0 \leq i \leq k-1$ and $v_k \coloneqq \underline{e_\ell}$.) and
	\item $u : C \coloneqq \sqcup_{i=1}^k C_i \to M$ is a smooth map with each $C_i$ being a nodal curve of type $(0,2)$ with punctures $p_i^\pm$ both contained in the same irreducible component, denoted $C_i^0$.  (So $C_i^0 \cong \IR \times S^1$ and $C_i$ is $\IR \times S^1$ with bubble trees attached along a finite set of points in $\IR \times S^1$.)  Write $C^0 = \sqcup_{i=1}^k C_i^0$.
\end{enumerate}
The pair $(\gamma,u)$ satisfies the following.  There exists $1$-periodic orbits $x_i$ of $H^\sigma_{v_i}$ (for $1 \leq i \leq k-1$) such that
\begin{enumerate}
	\item $\lim_{s \to -\infty} u_{i+1}(s,t) = x_{i}(t)$, and
	\item $\lim_{s \to +\infty} u_i(s,t) = x_i(t)$.
\end{enumerate}
The components $u_i$ satisfy
\begin{enumerate}
	\item $\overline{\partial}_\sigma(\gamma_i,u_i|_{C_i^0}) = 0$, and
	\item $u_i|_{C_i \smallsetminus C_i^0}$ is holomorphic.
\end{enumerate}
\end{defn}

\begin{defn}\label{defn:IsomorphismOfParameterizedFloerTrajectories}
Two Floer trajectories $u$ and $u'$ are \emph{isomorphic} if there exist an isomorphism of curves of type $(0,2)$, $\sqcup_{i=1}^k  \psi_i: \sqcup_{i=1}^k C_i \to \sqcup_{i=1}^k C_i'$, and translations $\sqcup_{i=1}^k \phi_i: \sqcup_{i=1}^k \IR \to  \sqcup_{i=1}^k\IR$, such that
$u'_i \circ \psi_i = u_i$ and $\gamma_i' \circ \phi_i = \gamma_i$.
A Floer trajectory is \emph{stable} if its automorphism group (that is, the group of self-isomorphisms) is finite.
\end{defn}

\begin{defn}\label{defn:ModuliSpaceOfFloerTrajectories}
The \emph{moduli space of Floer trajectories of type $(\sigma,x_0,x_\ell)$}, denoted $\overline{\sM}(\sigma,x_0,x_\ell)$,
is the space of isomorphism classes of stable Floer trajectories of type $(\sigma,x_0,x_\ell)$, endowed with the Gromov topology.
\end{defn}

\begin{notn}\label{notn:NoGamma}
We will drop $\gamma$ from our notation for a Floer trajectory, and refer to the map $u$ as the Floer trajectory.
\end{notn}

\subsection{Energy}\label{subsec:Energy}

We discuss the geometric energy and topological energy of a Floer trajectory. 
\begin{defn}\label{defn:Energy}
The \emph{geometric energy} of a Floer trajectory $u$ of type $(\sigma,x_0,x_\ell)$ is
\begin{equation*}
E_{geo}(u) =  \frac{1}{2} \int_C  \|\partial_s u \|_{J^\sigma}^2  \ ds \wedge dt .
\end{equation*}
The \emph{topological energy} of $u$ is
\begin{equation*}
E_{top}(u) = \int_C u^*\Omega + \int_{S^1} H_{\underline{e_0}}^\sigma(x_0) \ dt - \int_{S^1} H_{\underline{e_\ell}}^\sigma(x_\ell) \ dt.
\end{equation*}
\end{defn}

\begin{lem}\label{lem:EnergyRelation}
Let $u$ be a Floer trajectory of type $(\sigma,x_0,x_\ell)$.
\begin{align*}
E_{geo}(u) = E_{top}(u) + \int_{C^0} ( \partial_{\underline{s}} H^{\sigma})(\gamma,u) \circ \gamma'  \ ds \wedge dt.
\end{align*}
In particular, if $\sigma \in \sJ \sH_\ell^+(M,\Omega,\lambda)$, then
\[ E_{top}(u) \geq E_{geo}(u) \geq 0.\]\qed
\end{lem}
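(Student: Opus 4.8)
The plan is to prove the energy identity by the standard Floer-theoretic computation, carried out component by component over the broken/nodal configuration. First I would fix a Floer trajectory $u$ of type $(\sigma,x_0,x_\ell)$ with its Morse flow line $\gamma = \sqcup_i \gamma_i$, and reduce the claim to a sum of contributions: on the bubble components $u_i|_{C_i \smallsetminus C_i^0}$ the map is honestly $J^\sigma$-holomorphic, so $E_{geo} = \int u^*\Omega$ there with no Hamiltonian terms, while on the cylindrical pieces $C_i^0 \cong \IR \times S^1$ we have the inhomogeneous equation $\overline{\partial}_\sigma(\gamma_i, u_i) = 0$. The core computation is the pointwise identity
\begin{equation*}
\tfrac12 \|\partial_s u_i\|^2_{J^\sigma}\, ds \wedge dt = u_i^*\Omega + d\bigl( H^\sigma_{\gamma_i(s)}(u_i)\, dt \bigr) + (\partial_{\underline{s}} H^\sigma)(\gamma_i(s), u_i) \circ \gamma_i'(s)\, ds\wedge dt
\end{equation*}
on each $C_i^0$, which follows by expanding $\|\partial_s u_i\|^2 = \Omega(\partial_s u_i, J^\sigma \partial_s u_i)$ using $J^\sigma \partial_s u_i = \partial_t u_i - X_{H^\sigma}$ (rearranging Floer's equation) and the standard manipulation $\Omega(\partial_s u, X_H) = dH(\partial_s u) = \partial_s(H\circ u) - (\partial_s H)\circ u$, being careful that here $H$ also varies along the simplex direction via $\gamma_i$, which is precisely the source of the extra $\partial_{\underline{s}}H$ term.

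Next I would integrate this identity over each $C_i^0$ and sum over $i$. The exact term $d(H^\sigma_{\gamma_i(s)}(u_i)\,dt)$ integrates to the boundary contributions $\int_{S^1} H^\sigma_{v_{i-1}}(x_{i-1})\,dt - \int_{S^1} H^\sigma_{v_i}(x_i)\,dt$ by Stokes, using the asymptotic convergence $\lim_{s\to\mp\infty} u_i(s,t) = x_{i-1}(t), x_i(t)$ from the definition of a Floer trajectory. Summing over the chain $i = 1,\dots,k$, all the intermediate orbit terms $H^\sigma_{v_i}(x_i)$ for $1 \le i \le k-1$ telescope and cancel, leaving only $\int_{S^1} H^\sigma_{\underline{e_0}}(x_0)\,dt - \int_{S^1} H^\sigma_{\underline{e_\ell}}(x_\ell)\,dt$. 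Combining with the $\int u^*\Omega$ terms (over all of $C$, including the bubbles) and the collected $\partial_{\underline{s}}H^\sigma$ terms (which are supported only on $C^0$ since $\gamma'$ is zero away from it, and anyway the bubbles are holomorphic) gives exactly $E_{geo}(u) = E_{top}(u) + \int_{C^0}(\partial_{\underline{s}}H^\sigma)(\gamma,u)\circ\gamma'\,ds\wedge dt$.

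For the final inequality, if $\sigma \in \sJ\sH^+_\ell(M,\Omega,\lambda)$ then by \cref{defn:MonotoneAdmissibleFamiliesOfHam} the family is monotonically admissible, i.e.\ $\partial_{\underline{s}} H^\sigma_{\gamma_i(s)} \circ \gamma_i'(s) \le 0$ pointwise; hence the correction integral is $\le 0$, giving $E_{top}(u) \ge E_{geo}(u)$, and $E_{geo}(u) \ge 0$ is immediate from its definition as an integral of $\tfrac12\|\partial_s u\|^2_{J^\sigma} \ge 0$. The main obstacle is bookkeeping rather than conceptual: one must handle the nodal bubble trees and the broken structure carefully so that the telescoping of intermediate orbit actions is clean, and one must make sure the sign conventions for $X_H$, $\Omega$, and the $(0,1)$-part in \cref{defn:FloersEquation} are consistent so that the Hamiltonian term appears with the correct sign — a sign error here would flip the direction of the final inequality. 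I expect no analytic subtlety beyond the already-assumed asymptotic decay of Floer trajectories at their punctures.
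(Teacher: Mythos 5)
Your proof is correct and is precisely the standard energy computation that the paper itself leaves implicit (the lemma is stated without proof): rewrite $\|\partial_s u\|^2$ via Floer's equation on each cylindrical component $C_i^0$, integrate and telescope the intermediate orbit action terms via Stokes, observe that the bubble components are honestly holomorphic and contribute only $\int u^*\Omega \geq 0$, and use monotone admissibility of $\sigma$ to fix the sign of the correction integral. One bookkeeping remark: with your own stated convention $\Omega(\partial_s u, X_{H}) = dH(\partial_s u)$ and the orientation $ds \wedge dt$, the exact term in your displayed pointwise identity should enter as $-\,d\bigl( H^\sigma_{\gamma_i(s)}(u_i)\, dt \bigr)$ rather than with a plus sign; your subsequent Stokes evaluation (negative end minus positive end) is the one consistent with that corrected sign, so the two slips cancel and the final identity and the inequality $E_{top}(u) \geq E_{geo}(u) \geq 0$ are unaffected.
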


We will typically apply energy estimates to families given by the following construction.

\begin{con}\label{con:TheBump}
Consider two Hamiltonians $H_0$ and $H_1$ with $H_0 \leq H_1$, globally, and $\partial_r H_0 \leq \partial_r H_1$, near $\partial M$.  Let $\ell: [0,1] \to [0,1]$ be a smooth function that satisfies:
\begin{enumerate}
	\item $\ell(s) \equiv 1$ near $0$,
	\item $\ell(s) \equiv 0$ near $1$, and
	\item $\ell' \leq 0$.
\end{enumerate}
Define $H^\sigma_s \coloneqq \ell(s) \cdot H_1 + (1-\ell(s)) \cdot H_0$.  It is straight-forward to verify that this family is monotonically admissible and satisfies $H^\sigma_{\underline{e_1}} = H_0$ and $H^\sigma_{\underline{e_0}} = H_1$.
\end{con}

\begin{cor}\label{cor:EnergyRelation1}
Let $u$ be a Floer trajectory of type $(\sigma,x_0,x_1)$, where $H^\sigma$ is as in \cref{con:TheBump} with $H_1 - H_0 \geq c$.
\[ E_{top}(u) \geq E_{geo}(u) + c.\]\qed
\end{cor}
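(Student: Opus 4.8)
The plan is to start from the energy identity in \cref{lem:EnergyRelation} and sharpen the sign estimate on its correction term using the extra hypothesis $H_1 - H_0 \geq c$. By \cref{lem:EnergyRelation},
\[ E_{top}(u) = E_{geo}(u) - \int_{C^0}(\partial_{\underline{s}}H^\sigma)(\gamma,u)\circ\gamma'\ ds\wedge dt, \]
so it suffices to show that $-\int_{C^0}(\partial_{\underline{s}}H^\sigma)(\gamma,u)\circ\gamma'\ ds\wedge dt \geq c$.

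First I would make \cref{con:TheBump} explicit on $\Delta^1$. Writing $s_1 \in [0,1]$ for the coordinate of \cref{notn:MorseSimplex}, we have $\underline{e_0} = \{s_1 = 0\}$, $\underline{e_1} = \{s_1 = 1\}$, and $H^\sigma_{\underline{s}} = H_0 + \ell(s_1)(H_1 - H_0)$. The negative gradient flow of $f_1 = \cos(\pi s_1)$ pushes $s_1$ from $0$ to $1$, and any Morse flow line $\gamma = \sqcup_i \gamma_i$ from $\underline{e_0}$ to $\underline{e_1}$ consists of constant segments at the two vertices together with a single non-constant segment (there are no other critical points, and every non-constant flow line of $f_1$ runs from $\underline{e_0}$ to $\underline{e_1}$). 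On the constant segments $\gamma_i' \equiv 0$, so they do not contribute to the correction term. Parameterizing the non-constant segment as $s \mapsto (0,\tau(s),1)$ with $\tau: \IR \to [0,1]$, $\tau(-\infty) = 0$, $\tau(+\infty) = 1$, $\tau' \geq 0$, and setting $g(s) \coloneqq \ell(\tau(s))$, we get $g(-\infty) = \ell(0) = 1$, $g(+\infty) = \ell(1) = 0$, $g' = \ell'(\tau)\tau' \leq 0$ (since $\ell' \leq 0$), and along this segment $(\partial_{\underline{s}}H^\sigma)(\gamma,u)\circ\gamma' = g'(s)\,(H_1 - H_0)(t,u(s,t))$.

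Then the estimate is a one-line integration. Since $g'(s) \leq 0$ and $(H_1 - H_0)(t,u) \geq c$ pointwise, we have $-g'(s)(H_1-H_0)(t,u) \geq -c\,g'(s)$, hence
\[ -\int_{C^0}(\partial_{\underline{s}}H^\sigma)(\gamma,u)\circ\gamma'\ ds\wedge dt = -\int_{S^1}\!\!\int_\IR g'(s)(H_1-H_0)(t,u)\ ds\,dt \geq -c\int_{S^1}\!\!\int_\IR g'(s)\ ds\,dt = c\big(g(-\infty)-g(+\infty)\big) = c, \]
using $\int_{S^1}dt = 1$. Feeding this back into the identity above gives $E_{top}(u) \geq E_{geo}(u) + c$.

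I do not anticipate a genuine obstacle; the only delicate points are keeping track of the orientation of the gradient flow on $\Delta^1$ (so that the boundary term $-\int_\IR g' = g(-\infty)-g(+\infty)$ comes out $+1$ rather than $-1$) and observing that broken configurations enter the correction term only through their unique non-constant gradient segment, leaving the computation unchanged.
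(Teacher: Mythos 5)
Your proof is correct and is exactly the argument the paper intends: the corollary is stated with the proof omitted, being an immediate consequence of \cref{lem:EnergyRelation} together with the explicit form of the family in \cref{con:TheBump}, which is precisely the computation you carry out (including the correct handling of broken flow lines on $\Delta^1$ and the orientation giving $g(-\infty)-g(+\infty)=1$).
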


\subsection{Compactness}\label{subsec:Compactness}

\begin{lem}\label{lem:ModuliWellDefined}
The image of every element of $\overline{\sM}(\sigma,x_0,x_\ell)$ in $M$ is disjoint from the boundary of $M$ and the subspace
\[ \left\{ u \in \overline{\sM}(\sigma,x_0,x_\ell) \mid E_{top}(u) \leq E_0 \right\} \]
is a compact, Hausdorff space for every constant $E_0$.
\end{lem}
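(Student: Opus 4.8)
The statement has two parts: (1) no Floer trajectory touches $\partial M$, and (2) the energy-bounded subspace is compact and Hausdorff. The plan is to deduce (1) directly from the integrated maximum principle \cref{prop:MaximumPrincipleForConvexDomains}, and to deduce (2) from a Gromov--Floer compactness argument together with a priori energy bounds coming from \cref{lem:EnergyRelation}.

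For part (1): Let $u$ be a Floer trajectory of type $(\sigma,x_0,x_\ell)$, and fix one component $u_i$ on $C_i$. On the main cylindrical component $C_i^0\cong\IR\times S^1$, the map satisfies $\overline\partial_\sigma(\gamma_i,u_i)=0$, which is Floer's equation for the $\IR$-family of admissible almost complex structures $J^\sigma_{\gamma_i(s)}$ and Hamiltonians $H^\sigma_{\gamma_i(s)}$. The asymptotics are $1$-periodic orbits of $H^\sigma$ at vertices of $\Delta^\ell$, and since admissible Hamiltonians are linear in $r$ near $\partial M$ with $\partial_r H^\sigma$ bounded, these orbits are Reeb orbits lying at some radius $c_\pm<1$ (indeed the nondegeneracy forces them into the interior where $H^\sigma$ is not radial, or onto Reeb orbits of period strictly less than the slope — in any case at radius $<1$). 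The admissibility condition $\partial_{\underline s}\partial_r H^\sigma_{\underline s}\circ\gamma_i'(s)\le 0$ along Morse flow lines is exactly the hypothesis in \cref{notn:MaximumPrincipleSetup}(iii), so \cref{prop:MaximumPrincipleForConvexDomains} applies and gives $r\circ u_i\le\max(c_\pm)<1$ on $C_i^0$. The bubble components $u_i|_{C_i\setminus C_i^0}$ are $J^\sigma$-holomorphic spheres; since $J^\sigma$ is admissible, the function $r$ is (weakly) plurisubharmonic along them near $\partial M$ (this is the content of the admissibility conditions on the symplectization), so the maximum principle for $J$-holomorphic curves forbids $r$ from attaining an interior maximum $\geq$ the boundary value along a sphere unless constant — and a constant sphere at $r=1$ is ruled out because it must attach to the cylindrical part which already satisfies $r<1$. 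Hence the entire image of $u$ is disjoint from $\partial M$. I would spell out the plurisubharmonicity claim by the same computation used to prove \cref{prop:MaximumPrincipleForConvexDomains}, or simply cite \cref{sec:WeaklyConvexDomains}.

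For part (2): Having confined all curves to the compact set $M_{c_0}$ for some fixed $c_0<1$ depending only on the (finitely many, since $\ell\le 3$ in practice, but in any case finitely many orbits of finitely many nondegenerate Hamiltonians) asymptotic orbits involved, the moduli space $\overline{\sM}(\sigma,x_0,x_\ell)$ becomes a moduli space of Floer trajectories with image in a \emph{compact} region, away from the boundary, so the analysis reduces to the standard closed case. One then invokes the usual Gromov--Floer compactness: any sequence of stable Floer trajectories with $E_{top}\le E_0$ has, by \cref{lem:EnergyRelation}, geometric energy bounded by $E_0$ plus the (uniformly bounded, by admissibility of $H^\sigma$ on the compact parameter simplex $\Delta^\ell$) integral term $\int_{C^0}(\partial_{\underline s}H^\sigma)(\gamma,u)\circ\gamma'$; hence uniformly bounded geometric energy. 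Standard elliptic bootstrapping, removal of singularities, and soft rescaling then produce a subsequence Gromov-converging to a stable Floer trajectory of the same type, which is the limit point in the Gromov topology. This shows sequential compactness; Hausdorffness is the standard statement that Gromov limits are unique, which follows from the injectivity of the relevant evaluation/asymptotic data. I would cite the literature (e.g.\ the structure theorems underlying \cite{Pardon_VFC}) for the precise compactness packaging rather than reprove it, since the only nonstandard input — escape to the boundary — has been excluded by part (1).

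\textbf{Main obstacle.} The genuinely substantive step is part (1): ensuring that \emph{every} piece of the (possibly broken, possibly nodal) configuration — including ghost-adjacent bubble trees near $\partial M$ — stays away from $\partial M$. The cylindrical components are handled cleanly by \cref{prop:MaximumPrincipleForConvexDomains}, but one must carefully argue that bubbles, which satisfy only the unparametrized $J$-holomorphic equation, cannot drift out to the boundary; this rests on the admissibility conditions in \cref{defn:AdmissibleJOnSymplectization} making $r$ suitably convex along $J$-curves, combined with the fact that bubble trees attach to parametrized pieces already known to lie in $\{r<1\}$. Once this confinement is in place, part (2) is routine Gromov--Floer compactness in a compact manifold, and the energy estimate \cref{lem:EnergyRelation} supplies exactly the uniform bound needed.
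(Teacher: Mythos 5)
Your proposal is correct and follows essentially the same route as the paper: the paper's proof also reduces to the standard contact-type argument once boundary escape is excluded, with the only non-standard point being exactly the one you isolate — since $\Omega$ need not be exact near $\partial M$, bubbles cannot be ruled out by Stokes, and instead the maximum-principle computation of \cref{sec:WeaklyConvexDomains} with $h_s \equiv 0$ shows any holomorphic sphere meeting the collar lies in a single $r$-slice, so connectivity with the cylindrical components (confined by \cref{prop:MaximumPrincipleForConvexDomains}) keeps the whole configuration away from $\partial M$. Your compactness step via the energy identity of \cref{lem:EnergyRelation} and standard Gromov--Floer theory matches the paper's "the standard proof carries through."
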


\begin{proof}
The proof when $\partial M$ is contact carries over almost identically to our setting.  We omit standard details; however, we mention where our proof would diverge from the standard proof.  If $\partial M$ is not necessarily of contact-type, then $\Omega$ need not be exact near $\partial M$.  So images of Floer trajectories near $\partial M$ could have non-trivial bubbles; however, the argument for \cref{prop:MaximumPrincipleForWeaklyConvexDomains} with $h_s \equiv 0$ shows that any holomorphic map $\IP^1 \to M$ that meets the collar $N(\partial M)$ must be contained in a single slice $\{r\} \times \partial M$ of the collar.  By continuity of $u$, the image of the bubbles of $u$ are disjoint from $\partial M$ if and only is the image of the main component of $u$ is disjoint from $\partial M$.  With this in mind, the standard proof carries through.
\end{proof}


\section{Hamiltonian Floer Cohomology}\label{sec:HamiltonianFloerCohomology}

To prove our main result, we need to define Hamiltonian Floer cohomology without assuming a relationship between the first Chern class and the symplectic form. So we need to use virtual techniques.  For a guide through the literature on virtual fundamental chains, see \cite{Pardon_VFC}, \cite{McDuffTehraniFukayaJoyce_VFC}, and the references therein.  Such techniques have been studied in the context of Hamiltonian Floer cohomology by Fukaya-Ono \cite{FukayaOno_ArnoldConjectureAndGromovWittenInvariant}, Liu-Tian \cite{LiuTian_FloerHomologyAndArnoldConjecture}, Pardon \cite{Pardon_VFC}, and Abouzaid-Blumberg \cite{AbouzaidBlumber_ArnoldConjectureAndMoravaKTheory}.  We use Pardon's virtual fundamental chains package \cite{Pardon_VFC} to define Hamiltonian Floer cohomology for convex symplectic domains.  After giving the definition, we discuss the usage of radial Hamiltonians to define Hamiltonian Floer cohomology.  Finally, we define action completed symplectic cohomology groups associated to compact subsets of convex symplectic domains.  More explicitly, we adapt the construction given by \cite{Varolgunes_MayerVietorisPropertyForRelativeSymplecticCohomology} to convex symplectic domains.  Other flavors of action completed symplectic cohomology in various settings were studied in \cite{Varolgunes_MayerVietorisPropertyForRelativeSymplecticCohomology}, \cite{Groman_FloerTheoryAndReducedCohomologyOnOpenManifolds}, \cite{Venkatesh_RabinowitzFloerHomologyAndMirrorSymmetry}, and
\cite{McLean_BirationalCalabiYauManifoldsHaveTheSameSmallQuantumProducts}.

\subsection{Pardon's virtual fundamental chains package}\label{subsec:PardonVFC}

Roughly, to define Hamiltonian Floer cohomology, we need a (contravariant) functor from $\sJ\sH(M,\Omega,\lambda)$ (respectively $\sJ\sH^+(M,\Omega,\lambda)$) to the $\infty$-category of chain complexes over the universal Novikov field $\Lambda$ (respectively universal Novikov ring $\Lambda_{\geq 0}$).  That is, to every vertex, we associate a chain complex.  To every $1$-simplex, we associated a chain map.  To every $2$-simplex, we associate a chain homotopy.  Etc..

By \cref{lem:ModuliWellDefined}, all Floer trajectories lie in the interior of $M$.  So the analysis performed in \cite{Pardon_VFC} goes through without change.  We provide a cursory outline of Pardon's construction and explain how we obtain our desired functor.

To begin, the moduli spaces $\overline{\sM}(\sigma,{x_0},{x_\ell})$, furnished with choices of implicit atlases $\sA$ and coherent orientations $\fo$, can be assembled into a flow category $\overline{\sM}$ over $\sJ\sH(M,\Omega,\lambda)$.  From this data, Pardon constructs a trivial Kan fibration
\[\wt{\sJ \sH}(M,\Omega,\lambda) \to \sJ\sH(M,\Omega,\lambda).\]
Roughly, a section of this fibration is a coherent choice of virtual fundamental chains for the above moduli spaces.  Pardon constructs a diagram
\[ \xymatrix@C=1in{ \widetilde{\sJ \sH}(M,\Omega,\lambda)^{\op} \ar[d]^\pi \ar[r]^{\widetilde{\IH}(\sM,\sA,\fo)} & \mbox{N}_{\dg}(\Ch(\Lambda)) \ar[d]^{\mbox{forget}} \\ \sJ\sH(M,\Omega,\lambda)^{\op} \ar@{.>}[r]^{\IH(\sM,\sA,\fo)} & H^0(\Ch(\Lambda)),} \]
where $\mbox{N}_{\dg}(\Ch(\Lambda))$ is the differential graded nerve of $\Ch(\Lambda)$ and $H^0(\Ch(\Lambda_{\geq 0}))$ is the associated homotopy category, see \cite[Construction 1.3.1.6]{Lurie_HigherAlgebra}.  To obtain the dashed arrow $\IH(\sM,\sA,\fo)$, one must choose a section of $\pi$.  The functor $\IH(\sM,\sA,\fo)$ does not depend on the choice of section.  Fixing a coherent choice of virtual fundamental chains (a section of $\pi$),
\[\fc:{\sJ\sH(M,\Omega,\lambda)^{\op} \to \widetilde{\sJ\sH}(M,\Omega,\lambda)^{\op}},\]
gives a contravariant functor
\[ \IH(\sM,\sA,\fo,\fc): \sJ \sH(M,\Omega,\lambda)^{\op} \to \mbox{N}_{\dg}(\Ch(\Lambda)), \]
where we have denoted all the dependencies.  A concrete description of $\IH(\sM,\sA,\fo,\fc)$ for simplices of dimensions $\ell = 0$ and $1$ is given below.  

In what follows, we do not show that our Floer theoretic objects are independent of the above choices, that is, the choice of implicit atlas $\sA$, the choice of coherent orientations $\fo$, and the choice of coherent virtual fundamental chains $\fc$.  We do not establish independence because we do not need it to prove our main result.  All we need is the existence of such data.  Consequently, in our constructions, we can either fix some universal choice of data associated to each simplex in $\sJ\sH(M,\Omega,\lambda)$ or we can fix such choices inductively as we work through our constructions.  The latter is the approach taken by Pardon in \cite{Pardon_VFC}.  The former and latter approaches are also both considered by Varolgunes in \cite{Varolgunes_MayerVietorisPropertyForRelativeSymplecticCohomology}.  Regardless, we omit such data from our notation.

\subsection{Hamiltonian Floer chain complexes}\label{subsec:CF}
We give a more explicit description of the functor from the above discussion and discuss some of its properties.

\begin{notn}\label{notn:HomotopyClassesOfFloerTrajectories}
Let $\sigma$ be a simplex in $\sJ \sH_\ell(M,\Omega, \lambda)$.  Given orbits $x_0$ and $x_\ell$, let $\pi(x_0,x_\ell)$ denote the set of homotopy classes of Floer trajectories of type $(\sigma,x_0,x_\ell)$.  Define
\[\overline{\sM}(\sigma,x_0,x_\ell;A) = \left\{ u \in \overline{\sM}(\sigma,x_0,x_\ell) \mid [u] = A \in \pi(x_0,x_\ell) \right\}. \]
If $u$ and $u'$ are both in $\overline{\sM}(\sigma,x_0,x_\ell;A)$, then $E_{top}(u) = E_{top}(u')$.  So given $A \in \pi(x_0,x_\ell)$, define $E_{top}(A) \coloneqq E_{top}(u)$, where $u$ is any Floer trajectory with $[u] = A$.
\end{notn}

\begin{defn}\label{defn:FloerchainComplex}
The \emph{Floer chain complex} associated to a $0$-simplex $(H,J) \in \sJ \sH_0(M,\Omega,\lambda)$ is the chain complex $(CF^\bullet(H,J;M,\Omega,\lambda), \partial)$ where
\begin{itemize}
	\item $(CF^\bullet(H,J;M,\Omega,\lambda)$ is the $\IZ$-graded, free $\Lambda$-module generated by the {contractible} $1$-periodic orbits of $H$, that is, 
	\[ CF^{\bullet}(H,J;M,\Omega,\lambda) \coloneqq \Lambda \cdot \langle x \mid x \mbox{ is a 1-periodic orbit of } H, \, \ind(x) = \bullet \mod 2 \rangle \]
	\item $\partial: CF^\bullet(H,J;M,\Omega,\lambda) \to CF^{\bullet+1}(H,J;M,\Omega,\lambda)$ is the $\Lambda$-linear map given by
	\[ \partial(x_+) = \sum_{\tiny{\begin{matrix}x_-\\ \ind(x_-)-\ind(x_+) = 1 \mod 2\\ \end{matrix}}} \left( \sum_{A \in \pi(x_-,x_+)} \#_{vir}(\overline{\sM}(\sigma,x_-,x_+;A)) \cdot x_- \cdot  T^{E_{top}(A)} \right)\]
	where $\#_{vir}(\overline{\sM}(\sigma,x_-,x_+;A)) \in \IQ$ is determined by Pardon's virtual fundamental chains package.
\end{itemize}
The \emph{continuation map} associated to a $1$-simplex $\sigma \in \sJ \sH_1(M,\Omega,\Lambda)$ is the $\Lambda$-linear map
\[ c(\sigma;M,\Omega,\lambda): CF^\bullet(H^\sigma_{\underline{e_1}}, J^\sigma_{\underline{e_1}};M,\Omega,\lambda) \to CF^\bullet(H^\sigma_{\underline{e_0}},J^\sigma_{\underline{e_0}};\Omega,\lambda) \]
given by
\[ c(\sigma;M,\Omega,\lambda)(x_+) = \sum_{\tiny{\begin{matrix}x_-\\ \ind(x_-)-\ind(x_+) = 0 \mod 2\\ \end{matrix}}} \left( \sum_{A \in \pi(x_-,x_+)} \#_{vir}(\overline{\sM}(\sigma,x_-,x_+;A)) \cdot x_- \cdot  T^{E_{top}(A)} \right)\]
where $\#_{vir}(\overline{\sM}(\sigma,x_-,x_+;A)) \in \IQ$ is determined by Pardon's virtual fundamental chains package.
\end{defn}

A word on grading conventions is in order.  $\ind(x)$ denotes the Conley-Zehnder index of the orbit $x$, which is only well-defined modulo $2$.  So $CF^\bullet(H,J;M,\Omega,\lambda)$ is, a priori, only a $\IZ/2$-graded complex.  Here we extend $2$-periodically to obtain a $\IZ$-graded complex.  When $(M,\Omega)$ has vanishing first Chern class, $\ind(x)$ is a well-defined integer and gives $CF^\bullet(H,J;M,\Omega)$ an honest $\IZ$-graded.  The ``$2$-periodification'' of this $\IZ$-grading agrees with the a fore mentioned $\IZ$-grading.  When the grading of $CF^\bullet(H,J;M,\Omega)$ is important, we will mention it; however, if there is no need for gradings, we will drop the grading notation from our chain complexes.

\begin{notn}\label{notn:SimplifyToCF}
When the context is clear, we will drop $M$, $\Omega$, $\lambda$, and sometimes $J$ from the notation for $CF(H,J;M,\Omega,\lambda)$ and $c(\sigma;M,\Omega,\lambda)$, writing $CF(H)$ and $c(\sigma)$.
\end{notn}

By \cref{lem:EnergyRelation}, if $\sigma$ lies in $\sJ \sH^+(M,\Omega,\lambda)$, one can replace $\Lambda$ in \cref{defn:FloerchainComplex} with $\Lambda_{\geq 0}$.  To conclude, we point out some features of Pardon's virtual fundamental chains package. 

\begin{thm}\label{thm:PardonEssentials}
\begin{enumerate}
	\item \cite[Lemma 5.2.6]{Pardon_VFC} If $\overline{\sM}(\sigma,x_0,x_\ell;A)$ is a single point and it is regular, then
	\[\#_{vir}(\overline{\sM}(\sigma,x_0,x_\ell;A)) \neq 0.\]
	\item If $\#_{vir}(\overline{\sM}(\sigma,x_0,x_\ell;A)) \neq 0$, then $\overline{\sM}(\sigma,x_0,x_\ell;A)$ is non-empty.\qed
\end{enumerate}
\end{thm}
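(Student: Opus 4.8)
The statement to prove is Theorem~\ref{thm:PardonEssentials}, which records two elementary consequences of Pardon's virtual fundamental chains package. Here is how I would approach it.

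\textbf{Part (i).} The plan is to invoke the structural properties of the virtual count $\#_{vir}$ as axiomatized in \cite{Pardon_VFC}. First I would recall that when $\overline{\sM}(\sigma,x_0,x_\ell;A)$ is a single \emph{regular} point, the associated implicit atlas $\sA$ has the property that the virtual fundamental chain localizes to the honest fundamental chain of the zero-dimensional manifold, so the virtual count agrees with the signed count $\pm 1$ determined by the coherent orientation $\fo$. In particular it is nonzero. Concretely this is exactly the content of \cite[Lemma 5.2.6]{Pardon_VFC}: regularity of the single point means the Kuranishi-type chart is trivial in the relevant degree, the section is transverse, and the virtual class is represented by the point itself with its sign. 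So the proof of (i) is really just a citation together with the observation that our moduli spaces (by \cref{lem:ModuliWellDefined}) lie in the interior of $M$ and hence fall within the scope of Pardon's construction, which was already noted in \cref{subsec:PardonVFC}.

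\textbf{Part (ii).} I would argue by contraposition: if $\overline{\sM}(\sigma,x_0,x_\ell;A)$ is \emph{empty}, then $\#_{vir}(\overline{\sM}(\sigma,x_0,x_\ell;A)) = 0$. This follows from the support axiom of virtual fundamental chains: the virtual count is obtained by pairing a virtual cochain against the virtual fundamental chain, and the latter is supported on the moduli space itself (it is built from a section of the trivial Kan fibration $\wt{\sJ\sH}(M,\Omega,\lambda) \to \sJ\sH(M,\Omega,\lambda)$ whose value on a vertex involves only the charts covering the actual moduli space). When the space is empty there are no charts, the virtual fundamental chain is the zero chain, and every pairing vanishes. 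Again this is an axiomatic property recorded in \cite{Pardon_VFC}, so the proof amounts to pointing to the relevant statement there. I would phrase it as: ``By construction, $\#_{vir}$ depends only on the moduli space $\overline{\sM}(\sigma,x_0,x_\ell;A)$ together with its implicit atlas and orientation; when the moduli space is empty, the implicit atlas is empty and the virtual count is zero by definition. The contrapositive is the claim.''

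\textbf{Main obstacle.} There is essentially no analytic obstacle here — both parts are formal consequences of Pardon's package. The only thing that requires a word of care is making sure the hypotheses under which Pardon's package applies are in force, namely that all Floer trajectories of type $(\sigma,x_0,x_\ell)$ lie in the interior of $M$ so that the compactness and gluing theory of \cite{Pardon_VFC} transfers verbatim; but this was already established in \cref{lem:ModuliWellDefined} and remarked upon at the start of \cref{subsec:PardonVFC}. So the proof is short: cite \cite[Lemma 5.2.6]{Pardon_VFC} for (i), and cite the support/definition property of the virtual count for (ii). I would not belabor it beyond a sentence or two for each part.
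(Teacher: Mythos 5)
Your proposal matches the paper's treatment: the paper offers no argument beyond citing \cite[Lemma 5.2.6]{Pardon_VFC} for (i) and treating (ii) as immediate from the construction of the virtual counts (an empty moduli space has vanishing virtual count by definition), which is exactly what you do. The only substantive point of care — that \cref{lem:ModuliWellDefined} keeps all trajectories in the interior so Pardon's package applies — is likewise the one the paper flags in \cref{subsec:PardonVFC}, so your write-up is correct and in line with the intended proof.
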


\subsection{Using radial Hamiltonians}\label{subsec:RadialHams}

It is convenient to work with Hamiltonians that only depend on $r$ near $\partial M$ to define Hamiltonian Floer chain complexes.  Let $(M,\Omega)$ be a convex symplectic domain whose boundary admits the stable Hamiltonian structure $(\Omega|_{\partial M},\alpha)$.  Recall, $\lambda = r \alpha$.  We assume that the Reeb vector field $\sR$ of the stable Hamiltonian structure is non-degenerate.  The $\Omega$-dual of $-dr$ is $\sR$.  So the Hamiltonian vector field $X_h$ of any radial Hamiltonian $h: (0,1] \times \partial M \to \IR$ satisfies: $X_{h} = \partial_rh \cdot \sR$.  So the $1$-periodic orbits of $X_h$ are of the form $(r_0,y(\partial_rh(r_0) \cdot t)) \in (0,1] \times \partial M$, where $y(t)$ is a periodic orbit of $\sR$ with period $\partial_r h(r_0)$.

\begin{defn}\label{defn:RadiallyAdHam}
For $a \in (0,1)$, a Hamiltonian $H: S^1 \times M \to \IR$ is \emph{$a$-radially admissible} if
\begin{enumerate}
	\item on $M \smallsetminus r^{-1}([0,a])$, $H$ is non-degenerate, and
	\item on $r^{-1}([a,1])$, $H = h$ for some radial function $h: [a,1] \to \IR$ that satisfies
	\begin{enumerate}
		\item $h'(r)$ is locally constant near $a$ and $1$, 
		\item $h'(a)>0$ is smaller than the smallest period of all the orbits of $\sR$,
		\item $h''(r) \geq 0$, and
		\item $h''(r) = 0$ implies that $h'(r)$ is not a period of an orbit of $\sR$.
	\end{enumerate}
\end{enumerate}
\end{defn}

\begin{defn}\label{defn:RadiallyAdFamHam}
A family of Hamiltonians $H^\sigma: \Delta^\ell \times S^1 \times M \to \IR$ is \emph{$a$-radially admissible} if
\begin{enumerate}
	\item each $H^\sigma_{\underline{e_i}}$ is $a$-radially admissible,
	\item $H^\sigma_{\underline{s}}$ is locally constant about each vertex $\underline{e_i}$ in $\Delta^\ell$, and
	\item $\partial_{\underline{s}} H^\sigma_{\gamma_i(s)} \circ \gamma_i'(s) \leq 0$ for each Morse flow line $\gamma = \sqcup \gamma_i$ in $\Delta^\ell$,
\end{enumerate}
and on $r^{-1}([a,1])$
\begin{enumerate}
	\item $H^\sigma_{\underline{s}}$ is a radial function in $r$,
	\item $\partial_{\underline{s}} \partial_r H^\sigma_{\underline{s}} \circ \gamma_i'(s) \leq 0$ for each Morse flow line $\gamma = \sqcup \gamma_i$ in $\Delta^\ell$.
\end{enumerate}
\end{defn}

\begin{rem}\label{rem:MorseBottBreakings}
A radially admissible Hamiltonian is a degenerate Hamiltonian as the orbits corresponding to Reeb orbits occur in $S^1$-families (parameterized by a choice of starting point).  As is standard, we introduce a time-dependent perturbation locally about each $S^1$-family of orbits.  The perturbed Hamiltonian has two non-degenerate orbits for each $S^1$-family of orbits of the unperturbed Hamiltonian.  Both of these orbits can be made to lie in the same $r$-slice as the original $S^1$-family.  This produces an admissible Hamiltonian (in the sense of \cref{defn:AdmissibleFamiliesOfHam}).  Similarly, perturbing near each of the vertices of a radially admissible family of Hamiltonians will yield a monotonically admissible family of Hamiltonians (in the sense of \cref{defn:MonotoneAdmissibleFamiliesOfHam}).  Whenever we refer to radially admissible data, we implicitly assume that we have introduced these needed perturbations and will not specify them.
\end{rem}

Radial Hamiltonians have filtrations on their Hamiltonian Floer chain complexes for purely topological reasons.  The filtrations follow via the integrated maximum principle, \cref{prop:MaximumPrincipleForConvexDomains}.

\begin{lem}\label{lem:TopologicalFiltration}
Let $H^\sigma$ be a radially admissible family of Hamiltonians.  If $r(x_\ell) < r(x_0)$, then $\overline{\sM}(\sigma,x_0,x_\ell)$ is empty.\qed
\end{lem}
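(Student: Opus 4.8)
The plan is to combine the integrated maximum principle, \cref{prop:MaximumPrincipleForConvexDomains}, with the structure of a radially admissible family of Hamiltonians. Let $(\gamma, u)$ be a Floer trajectory of type $(\sigma, x_0, x_\ell)$; write $\gamma = \sqcup_{i=1}^k \gamma_i$ and $u = \sqcup_{i=1}^k u_i$ with intermediate orbits $x_i$ at the breaking vertices $v_i$. It suffices to show $r(x_{i-1}) \le r(x_i)$ for each $i$ (orbits at the ends of each piece), since then $r(x_0) \le r(x_1) \le \cdots \le r(x_\ell)$, contradicting $r(x_\ell) < r(x_0)$. So fix one piece $(\gamma_i, u_i)$ and consider its main component $u_i|_{C_i^0} \colon \IR \times S^1 \to M$, which solves the continuation equation $\overline{\partial}_\sigma(\gamma_i, u_i|_{C_i^0}) = 0$ and converges to $x_{i-1}$ at $s \to -\infty$ and $x_i$ at $s \to +\infty$ (up to relabeling to match \cref{notn:MaximumPrincipleSetup}, where the convention has the Morse flow run the opposite way; I will be careful to orient the inequality correctly).

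First I would observe that the hypotheses of \cref{prop:MaximumPrincipleForConvexDomains} are satisfied: for a radially admissible family, on $r^{-1}([a,1])$ each $H^\sigma_{\underline{s}}$ is radial in $r$ and $\partial_{\underline{s}} \partial_r H^\sigma_{\underline{s}} \circ \gamma_i'(s) \le 0$ along Morse flow lines, which is exactly condition (iii) of \cref{notn:MaximumPrincipleSetup} after the standard rescaling of the collar so that the relevant slices sit inside $r < 1$. The orbits $x_{i-1}, x_i$ are $1$-periodic orbits of radially admissible Hamiltonians, hence (for the Reeb-type orbits near $\partial M$) lie in fixed $r$-slices $r = c_-$ and $r = c_+$ with $c_\pm < 1$; for orbits in the non-radial region $r \le a$ we have $r \equiv 0$ there by the extension convention in \cref{notn:rInverse}, which is still strictly below $1$. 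Then \cref{prop:MaximumPrincipleForConvexDomains} gives $r \circ u_i \le \max(c_-, c_+)$, and the refined statement — using that a radially admissible $H$ has $h''(r) \ge 0$, i.e.\ $\partial_r \partial_r h_s \ge 0$ — upgrades this to $r \circ u_i \le c_+$, where $c_+$ is the $r$-value of the \emph{outgoing} end. Choosing conventions so that the outgoing end of $u_i$ is $x_i$ (the one closer to $x_\ell$ in the Morse flow), this forces $r(x_{i-1}) \le \sup_{C_i^0} r \circ u_i \le r(x_i)$ on the main component. For the bubble components $u_i|_{C_i \smallsetminus C_i^0}$, which are honest $J$-holomorphic spheres, the argument recalled in the proof of \cref{lem:ModuliWellDefined} (the case $h_s \equiv 0$ of the maximum principle) shows any such sphere meeting the collar is confined to a single slice $\{r\} \times \partial M$ with $r \le \sup r \circ u_i|_{C_i^0}$ by continuity at the nodes, so the bubbles do not increase the maximal $r$-value either.

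The main obstacle I anticipate is purely bookkeeping rather than analytic: getting the direction of every inequality right. The Morse function $f_\ell$ flows from $\underline{e_0}$ (index $\ell$) toward $\underline{e_\ell}$ (index $0$), Floer trajectories are indexed so that $x_0$ sits at the $s \to -\infty$ end and $x_\ell$ at $s \to +\infty$, while \cref{prop:MaximumPrincipleForConvexDomains} is stated with the bound $r \circ u \le c_+$ tied to the $s \to +\infty$ limit \emph{under the monotonicity convention} $\partial_s \partial_r h_s \le 0$ of \cref{notn:MaximumPrincipleSetup}; one must check that the radial admissibility condition $\partial_{\underline{s}} \partial_r H^\sigma_{\underline{s}} \circ \gamma_i'(s) \le 0$ translates, along each flow line parameterized by its own $\IR$-coordinate, into precisely that convention, and that the composition of the per-piece inequalities chains up in the correct order through the matching conditions $\lim_{s\to+\infty}\gamma_i = \lim_{s\to-\infty}\gamma_{i+1}$. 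Once the conventions are pinned down, each piece contributes $r(x_{i-1}) \le r(x_i)$, the chain collapses to $r(x_0) \le r(x_\ell)$, and this contradicts the hypothesis $r(x_\ell) < r(x_0)$, so $\overline{\sM}(\sigma, x_0, x_\ell)$ is empty. \qed
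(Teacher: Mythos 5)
Your overall route is the one the paper intends (the paper gives no written argument beyond the pointer to \cref{prop:MaximumPrincipleForConvexDomains}): apply the integrated maximum principle piecewise, chain $r(x_0)\le r(x_1)\le\cdots\le r(x_\ell)$ through the breaking orbits, and confine sphere bubbles to slices via the $h_s\equiv 0$ argument recalled in \cref{lem:ModuliWellDefined}. Your inequality bookkeeping is also right: the refined conclusion $r\circ u\le c_+$ bounds each piece by the level of its $s\to+\infty$ end, which is the $x_\ell$ side.

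The genuine gap is not the orientation bookkeeping you worry about but a hypothesis mismatch you pass over. \cref{prop:MaximumPrincipleForConvexDomains} assumes $H_s$ is radial on the whole collar $N(\partial M)$ and that $r\circ u$ has constant limits $c_\pm$ at the ends, whereas a radially admissible family is radial only on $r^{-1}([a,1])$, and its orbits in $M\smallsetminus r^{-1}([a,1])$ may lie inside the collar at levels $0<r<a$ without being contained in $r$-slices; your assertion that such orbits have $r\equiv 0$ by the convention of \cref{notn:rInverse} is false (that convention extends $r$ by zero only outside the collar). Consequently you cannot invoke the proposition verbatim for a piece with an end below level $a$ — and no such bound can hold there, since below the radial region $H^\sigma$ is arbitrary and trajectories may decrease $r$; the statement is only meaningful (and only ever used) when the higher orbit $x_0$ sits in the radial region, at level $r(x_0)>a$. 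The repair is the standard localization: choose a regular value $a'$ of $r\circ u$ with $\max\bigl(a,\sup_t r(x_\ell(t))\bigr)<a'<r(x_0)$ and run the integrated maximum principle/Stokes argument of \cref{sec:WeaklyConvexDomains} on the compact portion $u^{-1}(\{r\ge a'\})$, whose boundary maps to $\{r=a'\}$ and which contains the negative end but not the positive one; this uses the Hamiltonian only where it is radial. A smaller point: the refined conclusion needs $\partial_r\partial_r H^\sigma_{\underline s}\ge 0$ for all $\underline s$ along the flow line, which \cref{defn:RadiallyAdFamHam} does not literally impose away from the vertices; it does hold for the interpolations of \cref{con:TheBump} actually used, but you should say so rather than cite only the vertex condition $h''\ge 0$.
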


\subsection{Action completed symplectic cohomology}\label{subsec:ActionCompletedCohomology}

We define action completed symplectic cohomology for convex symplectic domains.  The reader may wish to recall the notions of rays, mapping telescopes, and completions of mapping telescopes over $\Lambda_{\geq 0}$ in \cref{sec:AppendixHA}.

Let $K \subset M \smallsetminus \partial M$ be a compact subset and let $f: M \to \IR$ be a continuous function.  Define the lower semi-continuous function\footnote{Recall that a function $\phi: X \to \IR \cup \{\infty\}$ is \emph{lower semi-continuous} if $\phi^{-1}((c,\infty))$ is open for each $c \in \IR$.} $f_K: M \to \IR$ by
\[ f_K(p) = \begin{cases} f(p), & p \in K \\ \infty, & p \not \in K.\\ \end{cases} \]
Define $\sH^+(K \subset M, \Omega,\lambda; f)$ to be the full subcomplex of $\sH^+(M,\Omega,\lambda)$ spanned by Hamiltonians that are strictly less than $f_K$.  The opposite edge relation, denoted $\preccurlyeq$, on the vertices of $\sH^+(K \subset M, \Omega,\lambda;f)$ says $H_0 \preccurlyeq H_1$ if and only if pointwise $H_0 \leq H_1$ and (near $\partial M$) $\partial_r H_0 \leq \partial_r H_1$.

\begin{lem}\label{lem:Directedness}
The opposite edge relation endows $H_0^+(K \subset M, \Omega,\lambda; f)$ with the structure of a directed system that has countable cofinality\footnote{A \emph{directed system} is a poset $(\sD,\leq)$, which satisfies the \emph{common refinement condition}, that is, for each $A_0,A_1 \in \sD$ there exists $A_2 \in \sD$ such that $A_0,A_1 \leq A_2$.  A subposet $\sD'$ of a directed system $\sD$ is \emph{cofinal} if for each $A \in \sD$ there exists an $A' \in \sD'$ such that $A \leq A'$.  A directed system has \emph{countable cofinality} if there exists a cofinal subposet whose collection of objects is countable.}.
\end{lem}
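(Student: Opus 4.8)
The plan is to split the claim into three verifications: that $\preccurlyeq$ is a partial order on the vertex set of $\sH^+(K\subset M,\Omega,\lambda;f)$, that this poset satisfies the common refinement condition, and that it has a countable cofinal subset. The argument is the analogue for convex symplectic domains of the directedness discussion in \cite{Varolgunes_MayerVietorisPropertyForRelativeSymplecticCohomology}; the features that make it go through are the compactness of $M$ and of $K$ and the fact that admissible Hamiltonians are linear in the collar coordinate $r$ near $\partial M$.

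That $\preccurlyeq$ is a partial order is immediate: reflexivity and transitivity are clear, and antisymmetry holds because $H_0\preccurlyeq H_1\preccurlyeq H_0$ forces $H_0\le H_1\le H_0$ pointwise. For the common refinement condition, given vertices $H_0,H_1$ I would take $H_2$ to be a $C^\infty$-small non-degenerate perturbation of a mollification of $\max(H_0,H_1)$, altered near $\partial M$ to be linear in $r$ with slope $\max(\partial_rH_0,\partial_rH_1)$ (still not a Reeb period, as it equals one of $\partial_rH_0,\partial_rH_1$) and with intercept large enough to dominate $H_0$ and $H_1$ there; then $H_0,H_1\preccurlyeq H_2$ by construction. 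That $H_2$ lies in $\sH^+(K\subset M,\Omega,\lambda;f)$ follows because $\max(H_0,H_1)<f$ on the compact set $K$, so $\max(H_0,H_1)<f-\varepsilon$ there for some $\varepsilon>0$, and taking the mollification and perturbation of total size less than $\varepsilon$ keeps $H_2<f\le f_K$; off $K$ there is nothing to check.

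For countable cofinality, fix once and for all a decreasing sequence of compact neighborhoods $K\subset\cdots\subset W_{n+1}\subset\intrr(W_n)\subset\cdots$ of $K$ with $\bigcap_nW_n=K$, which exists since $K$ is a compact subset of the interior of $M$; and let $n_1$ be an integer larger than $\sup_M|f|$ (finite, as $M$ is compact). For $n\ge n_1$ I would construct admissible $H_n\in\sH^+(K\subset M,\Omega,\lambda;f)$ with: (i) $f-2/n<H_n$ on all of $W_n$ and $H_n<f$ on $K$; (ii) $H_n\ge n$ on $M\smallsetminus W_n$; and (iii) $H_n$ linear in $r$ near $\partial M$ with slope at least $n$ (and perturbed, as in \cref{rem:MorseBottBreakings}, to be non-degenerate). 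These conditions are jointly satisfiable for $n\ge n_1$, since on $W_n\smallsetminus K$ only the lower bound $f-2/n$ is imposed, so $H_n$ may climb from $\approx f$ near $K$ up to the value $\ge n$ required outside $W_n$, and near $\partial M$ the slope is unconstrained above. I then claim $\{H_n\}_{n\ge n_1}$ is cofinal. Let $H$ be an arbitrary vertex. Since $M$ is compact, $H\le C$ for some constant $C$; since $H<f$ on the compact set $K$ with $H,f$ continuous, $H<f-\delta$ on $W_{n_0}$ for some $n_0$ and some $\delta>0$; and near $\partial M$, $H$ is linear in $r$ with some slope $s_H$. For $n>\max(n_0,C,2/\delta,s_H,n_1)$ one then has $H\le C\le n\le H_n$ on $M\smallsetminus W_n$; $H<f-\delta\le f-2/n<H_n$ on $W_n\subseteq W_{n_0}$; and, since $\partial M$ lies in $M\smallsetminus W_n$ once $W_n$ is close enough to $K$, also $H\le H_n$ and $\partial_rH=s_H\le n\le\partial_rH_n$ near $\partial M$. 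Hence $H\preccurlyeq H_n$, and $\{H_n\}_{n\ge n_1}$ is a countable cofinal subset.

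The step I expect to be the crux is the cofinality argument, and the one point that might look like an obstruction is that an admissible $H$ can be large in $M\smallsetminus K$ at points arbitrarily close to $K$, where $H_n$ is still only of size $\approx f$. This is exactly why condition (i) asks for $H_n>f-2/n$ on the whole neighborhood $W_n$, not merely on $K$: for a fixed $H$, the inequality $H<f$ on $K$ propagates, by continuity and compactness, to $H<f-\delta$ on an entire neighborhood $W_{n_0}$ of $K$, and $W_n\subseteq W_{n_0}$ for $n\ge n_0$, so $H_n$ overtakes $H$ on $W_n$ as soon as $2/n\le\delta$. Everything else is routine, including the non-degeneracy perturbations of \cref{rem:MorseBottBreakings}, which are $C^0$-small and hence preserve all the inequalities above.
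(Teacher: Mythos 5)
Your proposal is correct and takes essentially the same route as the paper: the paper likewise builds an explicit countable increasing cofinal sequence (functions pinned just below $f$ on a shrinking band of compact neighborhoods of $K$, equal to $n$ outside it, and radial with slopes tending to infinity near $\partial M$), then verifies cofinality exactly as you do, using compactness of $M$ and the fact that $H<f$ on $K$ propagates to $H<f-\delta$ on a whole neighborhood. The only cosmetic differences are that the paper first reduces to $f\equiv 0$ and deduces the common refinement condition directly from the monotone cofinal sequence, whereas you treat general $f$ and give a separate (redundant but harmless) $\max$-based directedness argument.
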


\begin{proof}
It suffices to prove that the poset structure given by $\preccurlyeq$ admits a countable, cofinal sequence.  It further suffices to construct a cofinal sequence of admissible Hamiltonians $H_n$ such that pointwise $H_n < H_{n+1}$ and (near the boundary) $\partial_r H_0 < \partial_r H_1$.  Because we can introduce small time dependent perturbations to continuous functions to obtain non-degenerate Hamiltonians, it further suffices to construct a cofinal sequence of continuous functions $F_n: M \to \IR$ such that pointwise $F_n < F_{n+1}$ and (near $\partial M$) $\partial_r F_0 < \partial_r F_1$.  

Note, if $f$ and $g$ are two continuous functions on $M$ that both agree over $K$, then $f_K = g_K$.  So without loss of generality, assume that $f \equiv 0$ near $\partial M$.  Also any cofinal sequence $F_n$ for $(0)_K$ gives rise to a cofinal sequence for $f_K$ by considering the sequence $F_n + f$.  So it suffices to prove the claim for $f \equiv 0$.

We construct such $F_n$ by hand.  Fix a background metric on $M$ with associated distance function $dist$.  Fix $r_K \in (0,1)$ such that $r(K) < r_{K}$.  Fix an integer $N \in \IN$ such that $dist(K,r^{-1}(r_K)) > 1/N$.  For $n \geq N$, we define the following compact subsets:
\[ C_n = \{ p \in M \mid dist(K,p) \leq 1/n \} \]
and
\[ A_n = M \smallsetminus (\intrr(C_n) \cup r^{-1}((r_K,1])).\]
Define continuous functions $g_n: [0,1/n] \to \IR$ that satisfy:
\begin{enumerate}
	\item $g_n(0) = -1/n$,
	\item $g_n(1/n) = n$,
	\item $g_n$ is increasing, and
	\item $g_n < g_{n+1}$.
\end{enumerate}
Define continuous functions $f_n: [r_K,1] \to \IR$ that satisfy:
\begin{enumerate}
	\item $f_n \equiv n$ locally about $r_K$, 
	\item $f_n$ is a linear function of slope $m_n$ locally about $1$,
	\item $f'_n \geq 0$,
	\item $f_n < f_{n+1}$,
	\item $m_n < m_{n+1}$, and
	\item $\lim_{n \to \infty} m_n = +\infty$.
\end{enumerate}
Define continuous functions $F_n: M \to \IR$ by
\[F_n(p) = \begin{cases} g^n \circ d(K,p), & p \in C_n \\ n, & p \in A_n \\ f^n \circ r(p), & p \in [r_K,1]. \\ \end{cases} \]

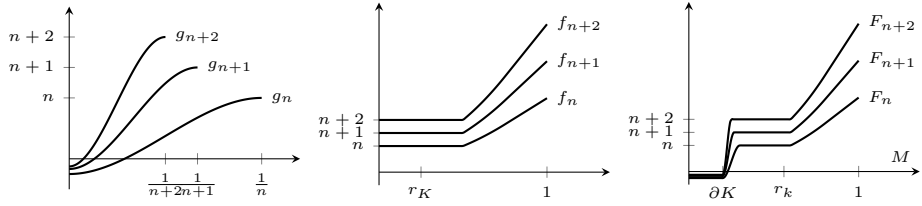
\begin{figure}
\begin{tikzpicture}
\begin{axis}[width=.333\linewidth, axis lines=center, xtick={1/4,1/3,1/2},xticklabels= {$\frac{1}{n+2}$,$\frac{1}{n+1}$, $\frac{1}{n}$}, ytick={2,3,4},yticklabels={$n$,$n+1$,$n+2$}, xmin=0, xmax=.6, ymin=-1, ymax=5,label style={font=\tiny}, tick label style={font=\tiny}]
\addplot[color=black, domain=0:1/2, samples=100, smooth, thick]{-40*(x)^3+30*(x)^2-1/2} node[right,pos=1] {\tiny $g_n$};
\addplot[color=black, domain=0:1/3, samples=100, smooth, thick]{-180*(x)^3+90*(x)^2-1/3} node[right,pos=1] {\tiny $g_{n+1}$};
\addplot[color=black, domain=0:1/4, samples=100, smooth, thick]{-8*68*(x)^3+3*68*(x)^2-1/4} node[right,pos=1] {\tiny $g_{n+2}$};
\end{axis}
\end{tikzpicture}
\begin{tikzpicture}[declare function={
    func0(\x)= (\x<=2) * (2)   +
     and(\x>2, \x<=3) * (\x*\x*2/6+2/3)     +
     and(\x>3,  \x<=4) * (2*\x-7*2/6);
  }, declare function={
    func1(\x)= (\x<=2) * (3)   +
     and(\x>2, \x<=3) * (\x*\x*3/6+3/3)     +
     and(\x>3,  \x<=4) * (3*\x-3*7/6);
  }, declare function={
    func2(\x)= (\x<=2) * (4)   +
     and(\x>2, \x<=3) * (\x*\x*4/6+4/3)     +
     and(\x>3,  \x<=4) * (4*\x-4*7/6);
  }]
\begin{axis}[width=0.333\linewidth, axis lines=center, xtick={1,4},xticklabels= {$r_K$,1}, ytick={2,3,4},yticklabels={$n$,$n+1$,$n+2$}, xmin=0, xmax=5.5, ymin=-1, ymax=13,label style={font=\tiny}, tick label style={font=\tiny}]
\addplot[color=black, domain=0:4, samples=100, smooth, thick]{func0(x)} node[right,pos=1] {\tiny $f_n$};
\addplot[color=black, domain=0:4, samples=100, smooth, thick]{func1(x)} node[right,pos=1] {\tiny $f_{n+1}$};
\addplot[color=black, domain=0:4, samples=100, smooth, thick]{func2(x)} node[right,pos=1] {\tiny $f_{n+2}$};
\end{axis}
\end{tikzpicture}
\begin{tikzpicture}[declare function={
    Func0(\x)= (\x<=1) * -1/2 +
    and(\x>1, \x<=3/2) * (-40*(\x-1)*(\x-1)*(\x-1)+30*(\x-1)*(\x-1)-1/2) +
     and(\x>3/2,  \x<=3) * (2)   +
     and(\x>3, \x<=4) * ((\x-1)*(\x-1)*2/6+2/3)     +
     and(\x>4,  \x<=5) * (2*(\x-1)-7*2/6);
  }, declare function={
    Func1(\x)= (\x<=1) * -1/3 +
    and(\x>1, \x<=4/3) * (-180*((\x-1))^3+90*((\x-1))^2-1/3) +
    and(\x>4/3, \x<=3) * (3)   +
     and(\x>3, \x<=4) * ((\x-1)*(\x-1)*3/6+3/3)     +
     and(\x>4,  \x<=5) * (3*(\x-1)-3*7/6);
  }, declare function={
    Func2(\x)=  (\x<=1) * -1/4 +
    and(\x>1, \x<=5/4) * (-8*68*((\x-1))^3+3*68*((\x-1))^2-1/4) +
    and(\x>5/4, \x<=3) * (4)   +
     and(\x>3, \x<=4) * ((\x-1)*(\x-1)*4/6+4/3)     +
     and(\x>4,  \x<=5) * (4*(\x-1)-4*7/6);
  }]
\begin{axis}[width=.333\linewidth,axis lines=center, xtick={1,2.8,5},xticklabels= {$\partial K$, $r_k$, 1}, xlabel={$M$}, ytick={2,3,4},yticklabels={$n$,$n+1$,$n+2$}, xmin=0, xmax=6.8, ymin=-1, ymax=13,label style={font=\tiny}, tick label style={font=\tiny}]
\addplot[color=black, domain=0:5, samples=100, smooth, thick]{Func0(x)} node[right,pos=1] {\tiny $F_n$};
\addplot[color=black, domain=0:5, samples=100, smooth, thick]{Func1(x)} node[right,pos=1] {\tiny $F_{n+1}$};
\addplot[color=black, domain=0:5, samples=100, smooth, thick]{Func2(x)} node[right,pos=1] {\tiny $F_{n+2}$};
\end{axis}
\end{tikzpicture}
\caption{An example of a choice of functions $g_n$, $f_n$, and $F_n$ for \cref{lem:Directedness}.}
\label{fig:Directednessgn}
\end{figure}

For all $n$, $F_n < F_{n+1} < (0)_K$ and near $\partial M$, $\partial_r F_n < \partial_r F_{n+1}$.  It remains to show that for each $H$ in $\sH_0^+(K \subset M, \Omega,\lambda; 0)$ there exists $n\gg 0$ so that pointwise $H < F_n$ and (near $\partial M$) $\partial_r H < \partial_r F_n$.  Since each $C_{n}$ is compact, there exists some fixed $n_0$ so that $H|_{C_{n_0}} < -1/n_1$ for all $n_1 \gg n_0$.  Since $M$ is compact, we have that $H \leq n_2$ for some $n_2$.  Again by compactness, there exists $n_3$ such that $(\partial_r H)(1) < m_{n_3}$.  Setting $N = n_2 + n_3 \gg n_0$ gives that pointwise $H < F_N$ and (near $\partial M$) $\partial_r H < \partial_r F_N$, as desired.
\end{proof}

Continuing our construction, let $(H_n,J_n)$ be a sequence of vertices in $\sJ \sH_0^+(K \subset M,\Omega,\lambda;f)$ such that
\begin{enumerate}
	\item there exists $1$-simplices $\sigma_n \in   \sJ \sH_1^+(M,\Omega,\lambda)$ with $(H^{\sigma_n}_{\underline{e_1}}, J^{\sigma_n}_{\underline{e_1}}) = (H_n,J_n)$ and $(H^{\sigma_n}_{\underline{e_0}}, J^{\sigma_n}_{\underline{e_0}}) = (H_{n+1},J_{n+1})$, and
	\item the sequence $H_n$ is cofinal.
\end{enumerate}
By \cref{lem:Directedness}, such a sequence exists.  This determines a ray of chain complexes over $\Lambda_{\geq 0}$,
\[ \sC\sF(\{H_n\}) \coloneqq \xymatrix{ CF(H_0,J_0) \ar[r]^{c(\sigma_0)} & CF(H_1,J_1) \ar[r]^{c(\sigma_1)} & CF(H_2,J_2) \ar[r]^{c(\sigma_2)} & \cdots}. \]
By \cref{lem:Directedness} and \cref{lem:ConnectivityOfFloerData} (In particular, one has to utilize $2$-simplices and $3$-simplices.), the quasi-isomorphism type of $\widehat{\Tel}(\sC\sF(\{H_n\}))$ is independent of the choice of sequence $\sigma_n$ (and thus also independent of the choice of sequence $(H_n,J_n)$), for example, see \cite[Section 3.3]{Varolgunes_MayerVietorisPropertyForRelativeSymplecticCohomology}.  So we can make the following definition.

\begin{defn}\label{defn:CompletedSH}
The \emph{action completed symplectic cohomology of $(M,\Omega,\lambda)$ with respect to $K$ and $f$} is the $\Lambda_{\geq 0}$-module 
\[\widehat{SH}(K \subset M; f) \coloneqq H(\widehat{\Tel}(\sC\sF(\{H_n\}))).\]
\end{defn}

The above independence gives flexibility to future arguments.  Different sequences of Hamiltonians will be more amenable to establishing different properties of action completed symplectic cohomology.

We conclude by discussing a variant of \cref{defn:CompletedSH}.  Let $\sH^+(K \subset M, \Omega,\lambda; f,\tau)$ to be the full subcomplex of $\sH^+(M,\Omega,\lambda)$ spanned by Hamiltonians that are strictly less than $f_K$ and have slope $\partial_r H = \tau $ near the boundary of $M$.  Analogous to \cref{lem:Directedness}, the opposite edge relation endows $\sH_0^+(K \subset M, \Omega,\lambda; f,\tau)$ with the structure of a directed system that has countable cofinality.\footnote{Indeed, the only element of the proof that changes is the definition of the functions $f_n$.  One would change $f_n: [r_K,1] \to \IR$ to now satisfy:
\begin{enumerate}
	\item $f_n \equiv n$ locally about $r_K$, 
	\item $f_n$ is a linear function of slope $\tau$ locally about $1$,
	\item $f'_n \geq 0$, and
	\item $f_n < f_{n+1}$.
\end{enumerate} }
As with $\sH^+(K \subset M, \Omega,\lambda;f)$, we can consider a cofinal sequence of Hamiltonians $H_{n,\tau}$ in $\sH^+(K \subset M, \Omega,\lambda;f,\tau)$ connected by monotonically admissible $1$-simplicies $\sigma_{n,\tau}$ and obtain a well-defined invariant.

\begin{defn}\label{defn:CompletedSH}
The \emph{action completed symplectic cohomology of $(M,\Omega,\lambda)$ of slope $\tau$ with respect to $K$ and $f$} is the $\Lambda_{\geq 0}$-module 
\[\widehat{SH}(K \subset M; f,\tau) \coloneqq H(\widehat{\Tel}(\sC\sF(\{H_{n,\tau}\}))).\]
\end{defn}

In \cref{prop:PropertiesOfSH} \cref{prop:InvarianceOfSlope}, we show that
\[ \widehat{SH}(K \subset M; f,\tau) \cong \widehat{SH}(K \subset M; f ). \]
We introduce $\widehat{SH}(K \subset M; f,\tau)$ (despite this isomorphism) because in the setting of our main result, we can construct cofinal sequences of Hamiltonians in $\sH^+(K \subset M, \Omega,\lambda;f,\tau)$ (but not in $\sH^+(K \subset M, \Omega,\lambda;f)$) that satisfy certain index-bounded assumptions.  Such sequences of Hamiltonians are used to establishing a rescaling isomorphism for action completed symplectic cohomology in the Calabi-Yau setting, see \cref{sec:RescalingIso}.


\section{Properties of action completed symplectic cohomology}\label{sec:PropertiesOfActionCompletedSH}

We establish the following properties of action completed symplectic cohomology.

\begin{prop}\label{prop:PropertiesOfSH}
\begin{enumerate}
	\item  \label{prop:InvarienceOfFunction}Given any two continuous functions $f: M \to \IR$ and $g: M \to \IR$, there is an isomorphism that is induced from a zig-zag of continuation maps
	\[ \widehat{SH}(K \subset M; f) \otimes \Lambda \cong \widehat{SH}(K \subset M; g) \otimes \Lambda.\]
	\item \label{prop:VanishingForStDisp}If $K$ is a stably displaceable subset of $(M,\Omega)$, then for every continuous function $f$
	\[ \widehat{SH}(K \subset M; f) \otimes \Lambda = 0.\footnote{The result also holds with $\widehat{SH}(K \subset M; f)$ replaced by $\widehat{SH}(K \subset M; f,\tau)$}\]
	\item \label{prop:LES}There is a function $f$ (see \cref{con:RadialCofinal}) so that there exists a long exact sequence
	\[ \xymatrix{ \cdots \ar[r] & H(M;\Lambda) \ar[r] & \widehat{SH}(M_a \subset M; f) \otimes \Lambda \ar[r] & \widehat{SH}_+(M_a \subset M;f) \ar[r] & \cdots }, \]
	where $\widehat{SH}_+(M_a \subset M;f)$ is a completion of a complex that is generated pairs of orbits that correspond to the Reeb orbits of $\partial M$.
	\item \label{prop:InvarianceOfSlope}There is an isomorphism for all $\tau > 0$
	\[ \widehat{SH}(K \subset M; f,\tau) \cong \widehat{SH}(K \subset M; f). \]
\end{enumerate}
\end{prop}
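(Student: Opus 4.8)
The plan for the isomorphism of part~(iv) is to compare, at the level of completed mapping telescopes, a slope-$\tau$ cofinal sequence with a cofinal sequence whose boundary slope tends to infinity, and to show that the only new phenomena — orbits clustered near $\partial M$ — are invisible after Novikov completion. Recall from \cref{subsec:ActionCompletedCohomology} that both $\widehat{SH}(K \subset M; f)$ and $\widehat{SH}(K \subset M; f,\tau)$ are computed, independently of all choices, by $\widehat{\Tel}$ of a cofinal sequence in the appropriate poset, and that the slope-$\tau$ poset is a (non-cofinal) subposet of the unconstrained one, so there is a natural comparison map obtained by interleaving a slope-$\tau$ sequence into an unconstrained one.

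Fix $r_K \in (0,1)$ with $r(K) < r_K$ and a level $b \in (r_K,1)$. First I would choose a cofinal sequence $(H_n,J_n)$ in $\sJ\sH_0^+(K \subset M,\Omega,\lambda;f,\tau)$ whose Hamiltonians are radial of slope identically $\tau$ on $r^{-1}([b,1])$. Then I would build a cofinal sequence $(G_n,\widetilde{J}_n)$ in $\sJ\sH_0^+(K \subset M,\Omega,\lambda;f)$ with: $(G_n,\widetilde{J}_n) = (H_n,J_n)$ on $M_b = r^{-1}([0,b])$; $G_n$ radial on $r^{-1}([r_K,1])$ of slope increasing to some $\tau_n \to \infty$ on $r^{-1}([b,1])$; and $G_{n+1}-G_n \geq 1$ on $r^{-1}([r_K,1])$. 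That such a $\{G_n\}$ is cofinal in $\sH_0^+(K \subset M,\Omega,\lambda;f)$ follows from the proof of \cref{lem:Directedness} (a sequence diverging to $+\infty$ on $M\smallsetminus K$ with boundary slope $\to\infty$ eventually dominates, in the opposite-edge order, any admissible Hamiltonian below $f_K$), and the connecting $1$-simplices $\sigma_n$ are chosen to agree on $M_b$ as well.

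By \cref{lem:TopologicalFiltration} (a consequence of the integrated maximum principle \cref{prop:MaximumPrincipleForConvexDomains}), for radial data the orbits with collar coordinate at most $b$ span a subcomplex of each $CF(G_n)$, preserved by the continuation maps; by the agreement on $M_b$ this subcomplex is exactly $CF(H_n)$. Hence $\sC\sF(\{H_n\})$ is a sub-ray of $\sC\sF(\{G_n\})$ with quotient ray $\sQ = \{Q_n\}$, generated by the ``collar orbits'' of $G_n$, i.e.\ those at radii in $(b,1)$ where the slope of $G_n$ meets a Reeb period of $(\Omega|_{\partial M},\alpha)$. Completing the short exact sequence of telescopes, and identifying the inclusion with the comparison map, reduces the claim to $\widehat{\Tel}(\sQ)\simeq 0$. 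The intended mechanism for the latter: every Floer solution contributing to the differential of $Q_n$ or to the map $Q_n \to Q_{n+1}$ is confined to $r^{-1}([r_K,1])$, where $G_{n+1}-G_n \geq 1$, so by the energy identity \cref{lem:EnergyRelation} (compare \cref{cor:EnergyRelation1}) the maps $Q_n \to Q_{n+1}$ are divisible by $T$; and a ray over $\Lambda_{\geq 0}$ whose connecting maps are all divisible by $T$ has acyclic completed telescope, a purely homological fact (see \cref{sec:AppendixHA}).

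The main obstacle is the confinement claim: the integrated maximum principle \cref{prop:MaximumPrincipleForConvexDomains} bounds $r\circ u$ above by the larger asymptotic radius but gives no a priori lower bound, so one must rule out (or separately estimate) Floer solutions that descend from collar orbits below $r_K$, into a neighborhood of $K$ where the gap $G_{n+1}-G_n$ may be arbitrarily small. I expect this to follow from a two-sided refinement of the integrated maximum principle for our convex domains — using convexity of the radial profile on all of $r^{-1}([r_K,1])$ to prevent solutions with both asymptotics in the collar from crossing down past $r_K$ — the analogue of the standard ``no-escape'' lemma; the machinery of \cref{sec:WeaklyConvexDomains} should suffice. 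With confinement in hand, the rest is bookkeeping: cofinality of $\{G_n\}$, exact agreement of all Floer data over $M_b$ (Hamiltonians, almost complex structures, interpolations, and Pardon's implicit atlases and virtual chains), and the unit slope gap on the collar.
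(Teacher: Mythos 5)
Your architecture for part (iv) is the same as the paper's (you do not address (i)--(iii)): realize the slope-$\tau$ complexes as a sub-ray of a cofinal unconstrained ray agreeing with it on $M_b$, invoke \cref{lem:TopologicalFiltration} to split off the quotient ray of collar orbits, take the short exact sequence of completed telescopes, and reduce to acyclicity of the completed quotient. The gap is in your mechanism for that acyclicity. You want every Floer solution contributing to the quotient differentials and connecting maps to be confined to $r^{-1}([r_K,1])$, so that the uniform gap $G_{n+1}-G_n\geq 1$ there forces divisibility by $T$; and you propose to get this from a ``two-sided refinement'' of the integrated maximum principle. No such minimum principle is available, and the step fails as stated: the machinery of \cref{sec:WeaklyConvexDomains} (\cref{prop:MaximumPrincipleForConvexDomains}, \cref{prop:MaximumPrincipleForWeaklyConvexDomains}) gives only an \emph{upper} bound on $r\circ u$, together with the rigidity statement that $r\circ u$ is constant when the two asymptotic radii coincide. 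A Floer cylinder running from a collar orbit to a collar orbit of strictly smaller radius can perfectly well descend below $r_K$ into the region near $K$, where the gap between consecutive Hamiltonians in a cofinal sequence necessarily shrinks to $0$; nothing in the convexity of the radial profile on $[r_K,1]$ rules this out (the level $\{r=r_K\}$ is a concave end for the collar cobordism, and curves cross concave ends). So the claim that \emph{all} entries of the quotient connecting maps have valuation $\geq 1$ is unjustified.

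The paper's proof is designed precisely to avoid needing any lower bound on $r\circ u$. By \cref{lem:TopologicalFiltration} the connecting maps $\varphi_n$ on the quotient are triangular with respect to the (finite) filtration by the radius of the output orbit: outputs at the \emph{same} radius as the input are handled by the $c_-=c_+$ case of \cref{prop:MaximumPrincipleForWeaklyConvexDomains}, which confines those trajectories to the $r$-slice where the gap is $\geq 1$, so by \cref{cor:EnergyRelation1} (and \cref{thm:PardonEssentials} for the diagonal constant trajectory) these coefficients have valuation $\geq 1$; outputs at strictly smaller radius may carry arbitrarily small valuation, but they are dealt with by induction over the finitely many radius levels of quotient generators: iterating $\varphi_{m+n}\circ\cdots\circ\varphi_n$, the valuation of the image of any fixed generator diverges with $m$, so the completed colimit of the quotient ray vanishes, and \cref{lem:TelToColimCompleteQI} transfers this to the completed telescope. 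If you replace your confinement step by this triangularity-plus-induction argument (keeping your arrangement $G_{n+1}-G_n\geq 1$ on the collar, which plays the role of the paper's unit gap), the rest of your outline goes through; as written, however, the key geometric input is false in general and is exactly the point where the proof must change.
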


We prove each item in turn.  We will assume that the boundaries of our convex symplectic domains have non-degenerate Reeb vector fields.

\subsection{Partial invariance under changing the function}\label{subsec:FunctionInvarience}

We discuss the dependence of $\widehat{SH}(K \subset M; f)$ on the function $f$, proving \cref{prop:PropertiesOfSH} \cref{prop:InvarienceOfFunction}.  We begin with the following.

\begin{lem}\label{lem:InvarienceOfFunction1}
Given a continuous function $f: M \to \IR$ and a constant $c\geq0$, there is an isomorphism induced from a continuation map
\[ \widehat{SH}(K \subset M; f) \otimes \Lambda  \to  \widehat{SH}(K \subset M; f+c) \otimes \Lambda. \]
\end{lem}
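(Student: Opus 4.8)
The plan is to exploit that adding a constant to a Hamiltonian (or to a $\Delta^\ell$-family of Hamiltonians) is invisible to Floer theory except for an overall shift of topological energy, which becomes invertible once one passes to $\Lambda$. Since $c \geq 0$ we have $f_K \leq (f+c)_K$, so every Hamiltonian strictly less than $f_K$ is strictly less than $(f+c)_K$. I would fix, as in \cref{lem:Directedness}, a cofinal sequence of vertices $(H_n,J_n)$ in $\sJ\sH_0^+(K \subset M,\Omega,\lambda;f)$ together with monotonically admissible connecting $1$-simplices $\sigma_n \in \sJ\sH_1^+(M,\Omega,\lambda)$, and write $\sigma_n + c$ for the $1$-simplex with almost complex structures $J^{\sigma_n}$ and Hamiltonian family $H^{\sigma_n} + c$. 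The first step is to check that $\{(H_n + c, J_n)\}$, with connecting simplices $\sigma_n + c$, is a cofinal sequence in $\sJ\sH_0^+(K \subset M,\Omega,\lambda;f+c)$: constant shifts preserve all of the (monotone) admissibility conditions of \cref{sec:AdmissibleFloerData}, and if $H$ is strictly less than $(f+c)_K$ then $H - c$ is strictly less than $f_K$, hence $H - c \preccurlyeq H_n$ for some $n$, and therefore $H \preccurlyeq H_n + c$.

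The heart of the matter is that $\sC\sF(\{H_n + c\})$ and $\sC\sF(\{H_n\})$ agree as rays of $\Lambda_{\geq 0}$-chain complexes. Indeed $X_{H+c} = X_H$, so replacing a family of Hamiltonians by its constant shift leaves Floer's equation unchanged; hence the moduli spaces $\overline{\sM}(\sigma,x_0,x_\ell)$ and their virtual counts are unchanged, once we transport the auxiliary data (implicit atlases, coherent orientations, coherent virtual fundamental chains) from $\sigma_n$ to $\sigma_n + c$. Moreover the topological energy of any differential or continuation trajectory is unchanged, since in the formula for $E_{top}$ the constant $c$ appears once with each sign and $\int_{S^1} dt = 1$. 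Thus the two rays have the same generators, differentials and continuation maps. Passing to completed telescopes and homology, and using (via \cref{lem:Directedness} and \cref{lem:ConnectivityOfFloerData}) that $\widehat{SH}(K\subset M;f+c)$ may be computed from any cofinal sequence, I obtain $\widehat{SH}(K \subset M; f+c) = \widehat{SH}(K \subset M; f)$, already over $\Lambda_{\geq 0}$.

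Finally, to realise this identification by a continuation map — and to see why the statement is phrased after $\otimes \Lambda$ — I would compare the two rays using the monotone homotopies $H_n + \ell(s)\,c$ of \cref{con:TheBump} from $H_n$ up to $H_n + c$, with the $s$-independent family $J_n$. Because such a homotopy changes $H_n$ only by an $s$-dependent constant, its Floer equation is the $s$-independent equation for $(H_n,J_n)$, whose only rigid solutions contributing to the continuation map are the constant cylinders; each constant cylinder at an orbit $x$ has topological energy exactly $c$ (again by the cancellation in $E_{top}$ together with $\int_{S^1} dt = 1$). Hence the resulting continuation map $\kappa_n \colon CF(H_n,J_n) \to CF(H_n + c,J_n)$ is $T^c$ times a chain map chain-homotopic to the identity — exactly $T^c$ times the identity for suitable choices of data, cf.\ \cref{thm:PardonEssentials}. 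Since $T^c$ is central, the $\kappa_n$ are compatible (up to coherent homotopy, using the $2$- and $3$-simplices furnished by \cref{lem:ConnectivityOfFloerData}) with the differentials and with the continuation maps $c(\sigma_n) = c(\sigma_n + c)$, so they assemble into a map of completed telescopes inducing multiplication by $T^c$ on $\widehat{SH}(K \subset M; f)$; this is an isomorphism precisely after inverting $T$, i.e.\ after $\otimes \Lambda$. I expect no real obstacle here: all the hard analysis — Gromov--Floer compactness (\cref{lem:ModuliWellDefined}), the virtual fundamental chains package, and the energy identity \cref{lem:EnergyRelation} — is already in place, and the only point that genuinely needs an argument is the cofinality of $\{H_n + c\}$; the conceptual content is simply that the dependence of $\widehat{SH}(K \subset M; f)$ on $f$ through an additive constant is a Novikov rescaling, trivialised over $\Lambda$.
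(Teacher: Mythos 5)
Your overall strategy is the same as the paper's: shift the cofinal sequence by $c$, note that $CF(H_n+c)$ has the same generators, differentials and continuation maps as $CF(H_n)$ (the added constant cancels in $E_{top}$), and then analyze the continuation map attached to the monotone homotopy $H_n+\ell(s)\cdot c$ of \cref{con:TheBump}, whose constant cylinders are isolated, regular, of topological energy exactly $c$, while every other contribution has energy $>c$ by \cref{cor:EnergyRelation1}. Your cofinality check for $\{H_n+c\}$ is correct, and the leading-order description of $\kappa_n$ matches the paper's, except that ``exactly $T^c$ times the identity for suitable choices of data'' is more than \cref{thm:PardonEssentials} gives you: the constant-trajectory count is only some element of $\IQ^\times$, and higher-energy classes may have nonzero virtual counts, so all you get (and all you need) is $\kappa_n(x)=\lambda_x\, x\, T^{c}+\text{(valuation}>c)$ with $\lambda_x\in\IQ^\times$.

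The genuine gap is in the assembly step. The squares $\kappa_{n+1}\circ c(\sigma_n)$ versus $c(\sigma_n+c)\circ\kappa_n$ only commute up to homotopy, and the map of (completed) telescopes produced from a homotopy-commutative ladder has the homotopies $\nu_n\colon CF(H_n)\to CF(H_{n+1}+c)[1]$ built into its formula (see the end of \cref{subsec:Conventions}). Invoking \cref{lem:ConnectivityOfFloerData} to fill the squares only guarantees that the $\nu_n$ are defined over $\Lambda_{\geq 0}$, i.e.\ have valuation $\geq 0$; with such fillers the induced map on $\widehat{\Tel}$ need not have the form $T^{c}\cdot(\mathrm{id}+\text{positive valuation})$, and your conclusion that it ``induces multiplication by $T^{c}$'', hence an isomorphism after $\otimes\Lambda$, does not follow. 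This is exactly the point the paper's proof spends its effort on: it constructs the homotopies explicitly from the $I\times I$ family $(1-\ell(s_1))H_n+\ell(s_1)H_{n+1}+\ell(s_2)\,c$, split into two $2$-simplices along the diagonal, and runs the energy estimate of \cref{cor:EnergyRelation1} to show $\nu_n$ takes values in $\Lambda_{>c}\cdot CF(H_{n+1}+c)[1]$; only with that bound is the telescope map of the form $T^{c}+\text{higher order in }T$, so that it becomes invertible on the completed telescope after tensoring with $\Lambda$. Your argument becomes complete once you replace the appeal to generic fillers by this explicit choice of homotopies together with the valuation estimate.
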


\begin{proof}
Let $H_n$ be a cofinal sequence of Hamiltonians for $f_K$.  Define $\sigma_n \in \sJ \sH_1^+(M,\Omega,\lambda)$ using \cref{con:TheBump},
\[ H^{\sigma_n}_s = \ell(s) \cdot H_{n+1} + (1- \ell(s)) \cdot H_n. \]
This data computes $\widehat{SH}(K \subset M; f)$.  The sequences $H_n' \coloneqq H_n + c$ and $\sigma_n' \in \sJ \sH_1^+(M,\Omega,\lambda)$ with associated families $H^{\sigma_n}+c$ compute $\widehat{SH}(K \subset M;f+c)$.  There is a bijection between the data of $CF(H_n)$ and the data of $CF(H_n')$.  We upgrade these bijections to isomorphisms.

Consider the continuation map of the $1$-simplex $\tau_n$ with $H^{\tau_n}_{\underline{e_0}} = H_{n}'$ and $H^{\tau_n}_{\underline{e_1}} = H_n$ constructed using \cref{con:TheBump}.  So $H^{\tau_n}_s = \ell(s) \cdot c + H_n$, and $X_{H^{\tau_n}}$ is independent of $s$.  Consider the constant Floer trajectories, which are isolated, regular, and have topological energy equal to $c$.  By \cref{cor:EnergyRelation1}, all other Floer trajectories have topological energy strictly greater than $c$.  So by \cref{thm:PardonEssentials},
\[c(\tau_n)(\cdot) = (\cdot) \cdot T^{c} + \mbox{higher order terms in }T\footnote{or rather this quantity scaled by an element of $\IQ^\times$ which depends on the input orbit.  For conciseness, we omit this constant here and in future arguments.}.\]
We used the chain level bijection from above to specify our map.  These continuation maps fit into a homotopy commutative diagram:
\begin{align*} \xymatrix{ CF(H_0) \ar[r]^{c(\sigma_0)} \ar[d]^{c(\tau_0)} \ar[rd]^{\nu_0} & CF(H_1) \ar[r]^{c(\sigma_1)} \ar[d]^{c(\tau_1)}  \ar[rd]^{\nu_1} & \cdots \\ 
CF(H_0') \ar[r]^{c(\sigma_0')} & CF(H_1') \ar[r]^{c(\sigma_1)} & \cdots.\\} \end{align*}
with homotopies $\nu_n: CF(H_n) \to CF(H_{n+1}')[1]$.  To get this homotopy commutative diagram, consider the $I \times I$ family of Hamiltonians
\[ H^{\nu_n}_{\underline{s}} = (1-\ell(s_1)) \cdot H_n + \ell(s_1) \cdot H_{n+1} + \ell(s_2) \cdot c \]
with $\underline{s} = (s_1,s_2) \in I \times I$ and $\ell$ as in \cref{con:TheBump}.  One divides this $I \times I$ family into two $2$-simplicies glued along the diagonal of $I \times I$.  As a matrix,
\[ \partial_{\underline{s}} H^{\nu_n}_{\underline{s}} = \begin{pmatrix} \ell'(s_1) \cdot (H_{n+1} - H_{n}) & \ell'(s_2) \cdot c \\ \end{pmatrix}. \]
So arguing as in \cref{cor:EnergyRelation1}, $\nu_n(\cdot)$ has valuation strictly greater than $c$.  Equivalently, $\nu_n(\cdot) \in \Lambda_{>c} \cdot CF(H_{n+1}')[1]$.  By the discussion at the end of \cref{subsec:Conventions}, we obtain a morphism of the associated mapping telescopes of the rays of $\sigma_n$ and $\sigma_n'$ that is given by
\[(\cdot) \mapsto (\cdot) \cdot T^{c} + \mbox{higher order terms in }T.\]
From this description, we see that after tensoring with $\Lambda$, this map induces an isomorphism on the associated completed mapping telescopes.  This proves the claim.
\end{proof}

\begin{lem}\label{lem:InvarienceOfFunction2}
Given a continuous function $f: M \to \IR$ and a constant $a$ such that $f< a$, there is an isomorphism that is induced from a continuation map
\[ \widehat{SH}(K \subset M; f) \otimes \Lambda \to \widehat{SH}(K \subset M; a) \otimes \Lambda. \]
\end{lem}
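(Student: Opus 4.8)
The plan is to reduce to the situation already handled by \cref{lem:InvarienceOfFunction1}, namely a shift of the defining function by a constant. The key observation is that $\widehat{SH}(K \subset M; f)$ depends only on the lower semi-continuous function $f_K$, which in turn depends only on the restriction $f|_K$. Since $K$ is compact, $f|_K$ is bounded, say $f|_K \leq b$ for some $b < a$. Now I would build a cofinal sequence of Hamiltonians $H_n$ for $f_K$ with the following extra structure: after passing to a tail of the sequence, each $H_n$ agrees over $K$ with a Hamiltonian that lies below the \emph{constant} function $b$ (viewed as $b_K$), and more importantly each $H_n$ is cofinal both for $f_K$ \emph{and} can be taken to literally be a cofinal sequence for $(b)_K = b$ over $K$. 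The point is that over $K$, ``$H < f_K$'' and ``$H < b$'' cut out sub-posets of $\sH^+$ with the same cofinality, since both just say $H|_K$ is eventually arbitrarily negative there while $H$ is unconstrained (and forced to $+\infty$) off $K$; the off-$K$ behaviour and the near-boundary behaviour are identical in the two systems.

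Concretely, first I would invoke the argument of \cref{lem:Directedness}: the cofinal sequences $F_n$ constructed there are built from a piece $g^n \circ \mathrm{dist}(K,-)$ near $K$, a plateau at height $n$ on $A_n$, and a radial piece near $\partial M$; none of these pieces ``sees'' the ambient function except through $f|_K$, which only enters via the requirement $g_n(0) = -1/n < \inf_K f$ eventually. Hence the very same sequence (after a harmless constant shift to ensure it lies strictly below $f_K$, respectively strictly below the constant $a$) is cofinal in \emph{both} $\sH_0^+(K \subset M, \Omega, \lambda; f)$ and $\sH_0^+(K \subset M, \Omega, \lambda; a)$. Choosing the $1$-simplices $\sigma_n$ via \cref{con:TheBump} using this common sequence produces literally the \emph{same} ray of Floer complexes $\sC\sF(\{H_n\})$ computing both sides, so the identity map on telescopes furnishes the desired isomorphism. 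The continuation-map statement is then immediate: the connecting data is a constant family, so the induced map is the identity (up to the usual $\IQ^\times$ scalars and the chain-level bijection of generators), which is visibly a continuation map.

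The main obstacle is the bookkeeping needed to make ``the same cofinal sequence works for both systems'' fully rigorous: one must check that a sequence cofinal for $(a)_K$ (constant function) is genuinely cofinal for $f_K$ and vice versa. For the forward direction this is trivial since $f_K \geq f$ and one can arrange $f < a$ gives $f_K \geq$ nothing useful directly — so instead I would argue: given any $H \in \sH_0^+(K \subset M;f)$, i.e. $H < f_K$, we have in particular $H|_K < f|_K \leq b < a$, so $H < a_K$ off nothing changes and $H|_K < a$, whence $H \in \sH_0^+(K \subset M; a)$; thus $\sH_0^+(K\subset M;f) \subseteq \sH_0^+(K\subset M;a)$ as sub-posets of $\sH^+(M,\Omega,\lambda)$. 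A cofinal sequence for the larger poset, chosen inside the smaller one (which is possible because the $F_n$ of \cref{lem:Directedness} can be taken with $F_n|_K \to -\infty$, hence eventually $F_n|_K < f|_K$, i.e. $F_n < f_K$), is then cofinal for both. Once this containment-of-posets-with-common-cofinal-sequence point is nailed down, the rest is the routine verification — identical to the end of the proof of \cref{lem:InvarienceOfFunction1} — that the resulting telescope map is (essentially) the identity and hence an isomorphism after $\otimes \Lambda$.
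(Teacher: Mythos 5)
There is a genuine gap, and it sits at the heart of your argument: the claim that a single cofinal sequence can serve both directed systems is false. Cofinality in $\sH_0^+(K \subset M;f)$ forces the Hamiltonians to converge to $f$ on $K$ from below (for any $\varepsilon>0$ there is an admissible $H<f_K$ with $H \geq f-\varepsilon$ on $K$, and a cofinal sequence must eventually dominate it), while cofinality in $\sH_0^+(K \subset M;a)$ forces convergence to the constant $a$ on $K$; since $\max_K f < a$, no sequence can do both. Your containment $\sH_0^+(K\subset M;f)\subseteq \sH_0^+(K\subset M;a)$ is correct, but this sub-poset is \emph{not} cofinal in the larger one, and your parenthetical justification (``the $F_n$ of \cref{lem:Directedness} can be taken with $F_n|_K \to -\infty$'') has the direction of the completion backwards: the $F_n$ there tend to $f$ on $K$, not to $-\infty$, and a sequence going to $-\infty$ on $K$ would be cofinal for neither system. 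A quick sanity check that the conclusion of your argument cannot be right: if the same ray computed both groups, then $\widehat{SH}(K\subset M;f)$ and $\widehat{SH}(K\subset M;a)$ would be identified already over $\Lambda_{\geq 0}$ by the identity; but the value of the bounding function on $K$ enters the $T$-adic valuations of the continuation maps (this is exactly why the constant-shift map in \cref{lem:InvarienceOfFunction1} is multiplication by $T^{c}$ plus higher order, an isomorphism only after $\otimes\Lambda$), so the two completed telescopes are genuinely different $\Lambda_{\geq 0}$-modules.

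The paper's proof instead runs an interleaving (two-out-of-six) argument: choose $b<c$ with $f < a < f+b < c$, take cofinal data for $f$, $a$, $f+b$, $c$ (the latter two obtained from the first two by adding constants, as in \cref{lem:InvarienceOfFunction1}), and build a homotopy commutative ladder of continuation maps
\[
CF(H_n) \to CF(\widetilde H_n) \to CF(H_n+b) \to CF(\widetilde H_n + (c-a)),
\]
inducing $\Psi_0,\Psi_1,\Psi_2$ on completed telescopes. Using \cref{lem:ConnectivityOfFloerData} one shows $\Psi_1\circ\Psi_0$ and $\Psi_2\circ\Psi_1$ are each homotopic to the constant-shift maps of \cref{lem:InvarienceOfFunction1}, hence isomorphisms after $\otimes\Lambda$, and therefore the middle map is an isomorphism after $\otimes\Lambda$. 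If you want to salvage your approach, you would need to replace ``same cofinal sequence'' with such a comparison of genuinely different cofinal sequences via continuation maps weighted by powers of $T$; the identity-map shortcut does not exist.
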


\begin{proof}
Fix constants $a<b<c$ so that $f < a < f + b < c$.  As in the proof of \cref{lem:InvarienceOfFunction1}, let $H_n$ be a cofinal sequence of Hamiltonians for $f_K$.  Define $\sigma_n \in \sJ \sH_1^+(M,\Omega,\lambda)$ using \cref{con:TheBump}.  So
\[ H^{\sigma_n}_s = \ell(s) \cdot H_{n+1} + (1- \ell(s)) \cdot H_n. \]
This data computes $\widehat{SH}(K \subset M; f)$.  Similarly, let $\widetilde{H}_n$ and $\widetilde{\sigma}_n$ denote the data that computes $\widehat{SH}(K \subset M; a)$, defining $\widetilde{\sigma}_n$ via \cref{con:TheBump}.  Assume without loss of generality that pointwise $H_n < H_n'$ and near $\partial M$, $\partial_r H_n = \partial_r H_n'$ for all $n$.  Again, as in the proof of \cref{lem:InvarienceOfFunction1}, the data $H_n' \coloneqq H_n+b$ and $H^{\sigma_n'} \coloneqq H^{\sigma_n} + b$ computes $\widehat{SH}(K \subset M,;f+b)$.  Similarly, the data $\widetilde{H}_n' \coloneqq \widetilde{H}_n+c-a$ and $H^{\widetilde{\sigma}_n'
} \coloneqq H^{\widetilde{\sigma}_n} + c-a$ computes $\widehat{SH}(K \subset M; c)$.

Using \cref{lem:ConnectivityOfFloerData}, we may fix continuations $\tau_n^i$ and homotopies $\nu_n^i$ for $i = 0,1,2$ and obtain a homotopy commutative diagram
\[ \xymatrix{  \ar[d] &  \ar[d] &  \ar[d] &  \ar[d] & \\
CF(H_n) \ar[r]^{c(\tau_n^0)} \ar[d]^{c(\sigma_n)} \ar[rd]^{\nu_n^0} & CF(\widetilde{H}_n) \ar[r]^{c(\tau_n^1)} \ar[d]^{c(\widetilde{\sigma}_n)} \ar[rd]^{\nu_n^1} & CF(H_n') \ar[r]^{c(\tau_n^2)} \ar[d]^{c(\sigma_n')} \ar[rd]^{\nu_n^2} & CF(\widetilde{H}_n') \ar[d]^{c(\widetilde{\sigma}_n')}\\
CF(H_{n+1}) \ar[r]^{c(\tau_{n+1}^0)} \ar[d] & CF(\widetilde{H}_{n+1}) \ar[r]^{c(\tau_{n+1}^1)} \ar[d] & CF({H}_{n+1}') \ar[r]^{c(\tau_{n+1}^2)} \ar[d] & CF(\widetilde{H}_{n+1}') .\ar[d] \\
 &  &  & \\} \]
One can fix $\tau_n^i$ and $\nu_n^i$ arbitrarily so long as they are monotonically admissible.  This diagram induces maps on completed mapping telescopes
\[ \xymatrix{\widehat{\Tel}(\sC\sF(\{H_n\})) \ar[r]^{\Psi_0} & \widehat{\Tel}(\sC\sF(\{\widetilde{H}_n\})) \ar[r]^{\Psi_1} & \widehat{\Tel}(\sC\sF(\{H_n'\})) \ar[r]^{\Psi_2}&  \widehat{\Tel}(\sC\sF(\{\widetilde{H}_n'\})) \\}. \]
To prove the claim, it suffices to show that $\Psi_1 \circ \Psi_0$ and $\Psi_2 \circ \Psi_1$ are both isomorphisms after tensoring with $\Lambda$ and passing to cohomology.  By symmetry, it suffices to prove this for $\Psi_1 \circ \Psi_0$.

Using \cref{lem:ConnectivityOfFloerData}, construct a homotopy commutative diagram
\[ \xymatrix{
& CF(H_0)  \ar[rr]^{c(\sigma_n)} \ar'[d][dd]_{\Ione} \ar[dl]_{c(\tau_0^{1}) \circ c(\tau_0^0)}
&& CF(H_1) \ar[rr]^{c(\sigma_1)} \ar'[d][dd]_{\Ione} \ar[dl]_{c(\tau_1^{1}) \circ c(\tau_1^0)}
&& \cdots 
\\
CF(H_0')  \ar[rr]^{c(\sigma_0')} \ar[dd]_{\Ione}
&& CF(H_1')  \ar[rr]^{c(\sigma_1')} \ar[dd]_{\Ione}
&& \cdots  
\\
& CF(H_0) \ar'[r][rr]^{c(\sigma_0)} \ar[dr]^{\nu_0} \ar[dl]_{c(\tau_0)}
&& CF(H_1)  \ar[rr]^{c(\sigma_1)} \ar[dr]^{\nu_1} \ar[dl]_{c(\tau_1)}
&& \cdots
\\
CF(H_0') \ar[rr]^{c(\sigma_0')}
&& CF(H_1')  \ar[rr]^{c(\sigma_1')}
&& \cdots, 
}\]
where, $\tau_i$ is the continuation considered in \cref{lem:InvarienceOfFunction1}, and our unspecified homotopies (and higher homotopies) can be chosen arbitrarily so long as they are monotonically admissible.  In this manner, we obtain a homotopy commutative diagram
\[ \xymatrix{\widehat{\Tel}(\sC\sF(\{H_n'\})) \ar[d]& \widehat{\Tel}(\sC\sF(\{H_n\})) \ar[l] \ar[d] & \\ \widehat{\Tel}(\sC\sF(\{H_n'\})) & \widehat{\Tel}(\sC\sF(\{H_n\})), \ar[l] \\} \]
where the top map is $\Psi_1 \circ \Psi_0$, the bottom map is the map from \cref{lem:InvarienceOfFunction1}, and the vertical maps are the identities.  So $\Psi_1 \circ \Psi_0$ must be an isomorphism after tensoring with $\Lambda$ and passing to cohomology.  This completes the proof.
\end{proof}

We now give the proof of \cref{prop:PropertiesOfSH} \cref{prop:InvarienceOfFunction}.

\begin{proof}
Fix a constant $c$ such that $c>f$ and $c>g$.  By \cref{lem:InvarienceOfFunction2}, we have a zig-zag of continuations maps, inducing isomorphisms
\[ \xymatrix{ \widehat{SH}(K \subset M; f)\otimes \Lambda \ar[r] & \widehat{SH}(K \subset M; c) \otimes \Lambda & \widehat{SH}(K \subset M; g) \otimes \Lambda. \ar[l]& } \]
\end{proof}

\begin{rem}
Notice that the proof of \cref{prop:PropertiesOfSH} \cref{prop:InvarienceOfFunction} also proves the natural invariance of $\widehat{SH}(K \subset M; f,\tau) \otimes \Lambda$ under changing the function $f$.
\end{rem}

\subsection{Vanishing for stably displaceable subsets}\label{subsec:VanishingForStDisp}

The group $\widehat{SH}(K \subset M; f)$ encodes dynamical information about the subset $K$ in $M$.  The key consequence is \cref{prop:PropertiesOfSH} \cref{prop:VanishingForStDisp}.  We elaborate below.

\begin{defn}\label{defn:HamiltonianDisplaceable}
A subset $B$ of a symplectic manifold $(M,\Omega)$ is \emph{Hamiltonian displaceable} if there exists a compactly supported Hamiltonian diffeomorphism $\phi: M \to M$ that is supported away from the boundary of $M$ such that $\phi(B) \cap B = \varnothing$.
\end{defn}
Let us fix polar coordinates $(\sigma, \tau)$ for $\IR \times  S^1$. 
\begin{defn}\label{defn:StablyDisplaceable}
A subset $B$ of a symplectic manifold $(M,\Omega)$ is \emph{stably displaceable} if the subset $B \times S^1$ of $(M \times \IR \times S^1, \Omega \oplus (d\sigma \wedge d\tau))$ is Hamiltonian displaceable.
\end{defn}

Using \cref{prop:PropertiesOfSH} \cref{prop:InvarienceOfFunction}, it suffices to prove \cref{prop:PropertiesOfSH} \cref{prop:VanishingForStDisp} $f \equiv 0$, which follows from the same line of arguing given in \cite[Section 4.2]{Varolgunes_Thesis}.

\subsection{A long exact sequence}\label{sec:LES}

For Liouville domains, the action long exact sequence relates the (Morse) cohomology and the symplectic cohomology of the Liouville domain.  Here we deduce the analogous long exact sequence for action completed symplectic cohomology, \cref{prop:PropertiesOfSH} \cref{prop:LES}.  To construct it, we construct a specific function $f$ on $M$ that we will use to relate $H(M;\Lambda)$ with $\widehat{SH}(K \subset M; f) \otimes \Lambda$ for $K = M_a$ as in \cref{notn:rInverse}.

\begin{con}\label{con:RadialCofinal}
We fix $a > \delta > 0$.  Define a function $\widetilde{f}: M \to \IR$ by
\[ \widetilde{f}(x) = \begin{cases} -\delta, & x \in M_{a-\delta} \\ -\sqrt{\delta^2-(r(x)-(a-\delta))^2}, & a-\delta \leq r(x) \leq a \\ 0, & r(x) \geq a. \\ \end{cases} \]
For $0 \leq s < \delta$, consider the family $\widetilde{f}_s: M \to \IR$ given by
\[ \widetilde{f}_s(x) = \begin{cases} \widetilde{f}(x), & x \in M_{a-\delta+s} \\ \widetilde{f}'(a-\delta+s) \cdot (r(x) - (a - \delta+s)) + \widetilde{f}(a-\delta+s), & r(x) \geq a- \delta +s. \\ \end{cases}\]
Fix $\delta > \varepsilon > 0$ so that for all $r \leq a-\delta+\varepsilon$, $\widetilde{f}'(r)$ is strictly less than all periods of the Reeb orbits of $\sR$.  Let $f: M \to \IR$ satisfy:
\begin{enumerate}
	\item $f$ is a negative $C^2$-small Morse function on $M_{a-\delta}$,
	\item $f \leq \widetilde{f}$, and
	\item $f \equiv \widetilde{f}$ for $r(x) \geq a-\delta+\varepsilon$.
\end{enumerate}
For $\varepsilon \leq s < \delta$, define a family
\[ f_s(x) = \begin{cases} f(x), & x \in M_{a-\delta+s} \\ \widetilde{f}_s(x), & r(x) \geq a - \delta + s. \\ \end{cases} \]
Fix a strictly increasing sequence of real numbers $\varepsilon < s_0 < s_1 < \cdots < s_n < \cdots$ that converges to $\delta$.  Define $H_n:M \to \IR$ by (or rather, we consider an appropriately small smoothing of these Hamiltonians) $H_n \coloneqq f_{s_n} - {1}/{n}$.

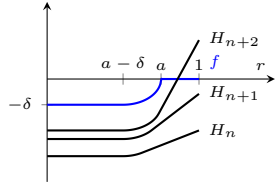
\begin{figure}
\begin{tikzpicture}[declare function={
     func(\x)= (\x<=1/2) * (-1/4)   +
      and(\x>1/2, \x<0.75) * -sqrt((1/4)^2-(\x-(1/2))^2)     +
      and(\x>=0.75,  \x<=1) * (0);
	}, declare function={
	    func0(\x)= (\x<=10/16) * (func(\x)-1/2)   +
     and(\x>10/16, \x<=1) * (func(10/16)-((2/16)/func(10/16))*(\x-10/16)-1/2);
  }, declare function={
	    func1(\x)= (\x<=11/16) * (func(\x)-1/3)   +
     and(\x>11/16, \x<=1) * (func(11/16)-((3/16)/func(11/16))*(\x-11/16)-1/3);
  }, declare function={
	    func2(\x)= (\x<=47/64) * (func(\x)-1/4)   +
     and(\x>47/64, \x<=1) * (func(47/64)-((15/64)/func(47/64))*(\x-47/64)-1/4);
  }]\begin{axis}[width=0.33\linewidth, axis lines=center, xtick={1/2,3/4,1}, xticklabels = {$a-\delta$,$a$,$1$}, xlabel={$r$}, ytick={-1/4}, yticklabels={$-\delta$}, xmin=0, xmax=1.5, ymin=-1, ymax=.75, label style={font=\tiny}, x tick label style={font=\tiny,above=2}, y tick label style={font=\tiny}]
  \addplot[color=blue, domain=0:1, samples=300, smooth, thick]{func(x)} node[xshift=6, above ,pos=1] {\tiny $f$};
\addplot[color=black, domain=0:1, samples=100, smooth, thick]{func0(x)} node[right,pos=1] {\tiny $H_n$};
\addplot[color=black, domain=0:1, samples=100, smooth, thick]{func1(x)} node[right,pos=1] {\tiny $H_{n+1}$};
\addplot[color=black, domain=0:1, samples=100, smooth, thick]{func2(x)} node[right,pos=1] {\tiny $H_{n+2}$};
\end{axis}
\end{tikzpicture}
\caption{Depictions of the choice of functions $f$, and $H_n$ for \cref{con:RadialCofinal}.}
\label{fig:Directednessgn}
\end{figure}

Notice that the sequence $H_n$ is cofinal sequence among admissible Hamiltonians that are less than $f_{M_a}$, each $H_n$ is $(a-\delta+\varepsilon)$-radially admissible (for a generic choice of sequence $s_n$), and $\partial_r H_{n+1} > \partial_r H_{n}$.  Using \cref{con:TheBump}, build $(a-\delta+\varepsilon)$-radially admissible families of Hamiltonians $H^{\sigma_n}$ with $H^{\sigma_n}_{\underline{e_1}} = H_n$ and $H^{\sigma_n}_{\underline{e_0}} = H_{n+1}$.
\end{con}

We now construct the desired long exact sequence.  Assume the notation and data from \cref{con:RadialCofinal}.  $CF(H_n)$ is generated by the critical points of $f$ on $M_{a-\delta}$ and the pairs of orbits that correspond to Reeb orbits of $\sR$ with periods less than $\widetilde{f}_{a-\delta+s_n}'(a-\delta+\varepsilon)$.  By \cref{lem:TopologicalFiltration}, the critical points form a subcomplex
\[ CF_0(H_n) \hookrightarrow CF(H_n) \]
with quotient
\[ CF_+(H_n) \coloneqq CF(H_n)/CF_0(H_n) \]
generated by the orbits that correspond to Reeb orbits.  Since each $\sigma_n$ is radially admissible, their associated continuation maps preserve this filtration.  So we obtain (a map of) short exact sequences
\[\xymatrix{ 0 \ar[r] & \Tel( \sC\sF_0(\{H_n\})) \ar[r] \ar[d]& \Tel(\sC\sF(\{H_n\}))  \ar[r]\ar[d] & \Tel( \sC\sF_+(\{H_n\})) \ar[r] \ar[d] & 0 \\ 0 \ar[r] & \colim( \sC\sF_0(\{H_n\})) \ar[r] & \colim(\sC\sF(\{H_n\}))  \ar[r] & \colim( \sC\sF_+(\{H_n\})) \ar[r] & 0. } \]
Here the $\colim$ means the ordinary colimit of our directed systems.\footnote{The reason that we consider both colimits and mapping telescopes (that is, homotopy colimits) is two fold.
\begin{itemize}
	\item There associated completions are quasi-isomorphic via \cref{lem:TelToColimQI}.  So when constructing our desired long exact sequence, we can choose to work with either.
	\item The chain complexes $\colim(\sC\sF_0(\{H_n\}))$ and $\colim(\sC\sF_+(\{H_n\}))$ are, actually, generated by the critical points of $f$ and the pairs of orbits that correspond to Reeb orbits of the boundary's stable Hamiltonian structure respectively.  However, these descriptions are not needed to construct our long exact sequence.  
\end{itemize}}
The right-hand-terms are filtered colimits of finitely generated, free $\Lambda_{\geq}$-modules.  So by \cite[\href{https://stacks.math.columbia.edu/tag/058G}{Tag 058G}]{StacksProject}, they are flat $\Lambda_{\geq 0}$-modules.  So by \cite[\href{https://stacks.math.columbia.edu/tag/0315}{Tag 0315}]{StacksProject}, we have short exact sequences of completions with vertical maps being quasi-isomorphisms (by \cref{lem:TelToColimCompleteQI}).
\[\xymatrix{ 0 \ar[r] & \widehat{\Tel}( \sC\sF_0(\{H_n\})) \ar[r] \ar[d]& \widehat{\Tel}(\sC\sF(\{H_n\}))  \ar[r]\ar[d] & \widehat{\Tel}( \sC\sF_+(\{H_n\})) \ar[r] \ar[d] & 0 \\ 0 \ar[r] & \widehat{\colim}( \sC\sF_0(\{H_n\})) \ar[r] & \widehat{\colim}(\sC\sF(\{H_n\}))  \ar[r] & \widehat{\colim}( \sC\sF_+(\{H_n\})) \ar[r] & 0. } \]
By similar arguments as in \cref{lem:InvarienceOfFunction1} and \cite[Theorem 10.7.1]{Pardon_VFC}, the cohomology of the left-hand-terms tensored with $\Lambda$ is the cohomology of $M_{a}$ with coefficients in $\Lambda$, which is also the cohomology of $M$ with coefficients in $\Lambda$.  From this discussion, we can deduce \cref{prop:PropertiesOfSH} \cref{prop:LES} by defining $\widehat{SH}_+(M_a \subset M)$ to be the cohomology of the right-hand-terms tensored with $\Lambda$.  The long exact sequence does not depend on any of our choices; however, we do not prove or further discuss this independence as it is not needed.

\begin{rem}
To explicitly construct the complexes in \cref{prop:PropertiesOfSH} \cref{prop:LES}, one needs the sequence of Hamiltonians $H_n$ to be connected via radially admissible families of Hamiltonians.  There are several possible choices of such sequences and we only give one here.  However, the reader should note that a naive construction with $f$ being given by the zero function will not necessarily produce the desired complexes as the values of the $r$-derivatives of the associated cofinal sequence of Hamiltonians might eventually decrease with increasing $n$.  Consequently, the associated continuation maps need not preserve the filtration.
\end{rem}

\subsection{Invariance of the slope near the boundary}

We show that $\widehat{SH}(K \subset M; f,\tau)$ does not depend on $\tau$, proving \cref{prop:PropertiesOfSH} \cref{prop:InvarianceOfSlope}.

\begin{proof}

Let $H_{n,\tau}$ be a cofinal sequence of Hamiltonians in $\sH^+(K \subset M; f,\tau)$.  Without loss of generality, assume that
\[H_{n,\tau} = \tau \cdot r + n\] 
on $\{ 1-\delta \leq r \leq 1\}$.  
Let $\sigma_{n,\tau}$ be $1$-simplices in $\sJ\sH^+(M,\Omega,\lambda)$ with $H^{\sigma_{n,\tau}}_{\underline{e_0}} = H_{n+1,\tau}$ and $H^{\sigma_{n,\tau}}_{\underline{e_1}} = H_{n,\tau}$ as in \cref{con:TheBump}.  This means
\[ H^{\sigma_{n,\tau}}_s = \ell(s) + n + \tau \cdot r \]
on $\{ 1 -\delta \leq r \leq 1\}$ where $\ell$ is the cut-off function in \cref{con:TheBump}.  Define $H_n$ and $\sigma_n$ as follows.  First, fix $s_n<\delta$ such that
\[ \widetilde{f}'(1-\delta+s_n) = \tau\cdot n, \]
where $\widetilde{f}$ is as in \cref{con:RadialCofinal} with $a = 1$.  For convenience, define
\[ c_n \coloneqq - \widetilde{f}(1-\delta+s_1) + \tau(1-\delta+s_1) + n.\]
Define
\[ H_n(x) = \begin{cases} H_{n,\tau}(x), & r(x) \leq 1 - \delta + s_1 \\
\widetilde{f}(x) + c_n, & 1-\delta+s_1 \leq r(x) \leq 1-\delta+s_n \\
n \cdot \tau \cdot r - n \cdot \tau \cdot(1-\delta+s_n) + \widetilde{f}(1-\delta+s_n) + c_n, & 1 - \delta +s_n \leq r(x). \\ \end{cases} \]

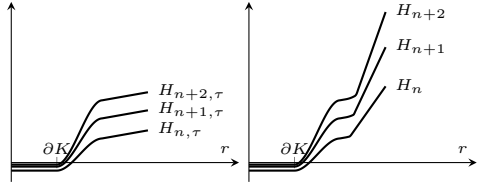
\begin{figure}
\begin{tikzpicture}[
	declare function={
     func(\x,\a,\b,\c,\d) = (\a*\x*\x*\x + \b*\x*\x + \c*\x + \d);},
     declare function={
     funcN(\x,\n) = func(\x,-\n,(3*\n)/2,0,-1/(2^(\n)));},
	declare function={
     midpoint(\n) = (1/2+sqrt(1-1/(12*\n))/2);},
     declare function={
     funcSlope(\x,\n) = (\x<=midpoint(\n))*funcN(\x,\n) + and(\x>midpoint(\n),x<=4)*(funcN(midpoint(\n),\n)+(\x-midpoint(\n))/4);},
     declare function={
	funcShift(\x,\n) = (\x<=1)*(-1/(2^(\n))) + and(\x>1,\x<=4)*(funcSlope(\x-1,\n);},
	declare function={
	funcCircle(\x) = (\x<.5)*(-sqrt(1/4-\x^2))+and(\x>=.5,\x<=4)*(0);},
	declare function={
	circleMid(\n) = sqrt(\n*\n/(64+4*\n*\n));}]
\begin{axis}[width=0.33\linewidth, axis lines=center, xtick={1}, xticklabels = {$\partial K$}, xlabel={$r$}, ytick={0},yticklabels={} xmin=0, xmax=5, ymin=-.6, ymax=5, label style={font=\tiny}, x tick label style={font=\tiny,above=2}]
\addplot[color=black, domain=0:3, samples=100, smooth, thick]{funcShift(x,2)} node[right ,pos=1] {\tiny $H_{n,\tau}$};
\addplot[color=black, domain=0:3, samples=100, smooth, thick]{funcShift(x,3)} node[right ,pos=1] {\tiny $H_{n+1,\tau}$};
\addplot[color=black, domain=0:3, samples=100, smooth, thick]{funcShift(x,4)} node[right ,pos=1] {\tiny $H_{n+2,\tau}$};
\end{axis}
\end{tikzpicture}
\begin{tikzpicture}[
	declare function={
     func(\x,\a,\b,\c,\d) = (\a*\x*\x*\x + \b*\x*\x + \c*\x + \d);},
     declare function={
     funcN(\x,\n) = func(\x,-\n,(3*\n)/2,0,-1/(2^(\n)));},
	declare function={
     midpoint(\n) = (1/2+sqrt(1-1/(12*\n))/2);},
     declare function={
     funcSlope(\x,\n) = (\x<=midpoint(\n))*funcN(\x,\n) + and(\x>midpoint(\n),x<=4)*(funcN(midpoint(\n),\n)+(\x-midpoint(\n))/4);},
     declare function={
	funcShift(\x,\n) = (\x<=1)*(-1/(2^(\n))) + and(\x>1,\x<=4)*(funcSlope(\x-1,\n);},
	declare function={
	funcCircle(\x) = (\x<.5)*(-sqrt(1/4-\x^2))+and(\x>=.5,\x<=4)*(0);},
	declare function={
	circleMid(\n) = sqrt(\n*\n/(64+4*\n*\n));},
	declare function={
	funcCircleSlope(\x,\n) = (\x<=circleMid(1))*((\x-circleMid(1))/4 + funcCircle(circleMid(1))) + 
	and(\x>circleMid(1),\x<=circleMid(\n))*funcCircle(\x) +
	and(\x>circleMid(\n),\x<=4)*((\x-circleMid(\n))*\n + funcCircle(circleMid(\n)));},
	declare function={
	funcInc(\x,\n) = (\x<2)*funcShift(\x,\n) +
	and(\x>=2,\x<=4)*(funcShift(2,\n)+.515+funcCircleSlope(\x-2,\n));}]

\begin{axis}[width=0.33\linewidth, axis lines=center, xtick={1}, xticklabels = {$\partial K$}, xlabel={$r$}, ytick={0},yticklabels={} xmin=0, xmax=5, ymin=-.6, ymax=5, label style={font=\tiny}, x tick label style={font=\tiny,above=2}]
\addplot[color=black, domain=0:3, samples=100, smooth, thick]{funcInc(x,2)} node[right ,pos=1] {\tiny $H_{n}$};
\addplot[color=black, domain=0:3, samples=100, smooth, thick]{funcInc(x,3)} node[right ,pos=1] {\tiny $H_{n+1}$};
\addplot[color=black, domain=0:3, samples=100, smooth, thick]{funcInc(x,4)} node[right ,pos=1] {\tiny $H_{n+2}$};
\end{axis}

\end{tikzpicture}
\caption{Depictions of the choice of functions $H_{n,\tau}$, and $H_n$ for \cref{prop:PropertiesOfSH} \cref{prop:InvarianceOfSlope}.}
\label{fig:InvarOfSlope}
\end{figure}

Define $H^{\sigma_n}$ via \cref{con:TheBump}.  Notice that $CF(H_{n})$ has two types of generators: The generators that correspond to the generators of $CF(H_{n,\tau})$ that have associated orbits lying in the region where $r \leq 1-\delta + s_1$, and the generators that have associated orbits lying in the region where $r \geq 1-\delta+s_1$.  By \cref{lem:TopologicalFiltration}, the first type of generators may be canonically identified with $CF(H_{n,\tau})$ and be realized as a subcomplex $i_n: CF(H_{n,\tau}) \hookrightarrow CF(H_n)$.  The map $i_n$ is not necessarily induced from a continuation.\footnote{As a result of this, our isomorphism will not be natural in the sense that it is induced from a continuation map.  Of course, we could consider continuations to construct this subcomplex; however, the strictly commutative diagram in this argument will become only homotopy commutative.  In light of this, we would need to consider the associated mapping cones (as opposed to quotients) and a more delicate homological analysis would be required (similar to, but different than the homological analysis given in the proof of \cref{lem:InvarienceOfFunction1}).}
In this manner, we obtain a strictly commutative diagram
\[ \xymatrix{ CF(H_{0,\tau}) \ar[r]^{c(\sigma_{\tau,0})} \ar[d]^{i_0} & CF(H_{1,\tau}) \ar[r]^{c(\sigma_{\tau,1})} \ar[d]^{i_1} & \cdots \\ CF(H_{0}) \ar[r]^{c(\sigma_0)} & CF(H_{1}) \ar[r]^{c(\sigma_1)} & \cdots.} \]
Indeed, when $r \leq 1- \delta+s_1$, the Floer data $H_{n,\tau}$ and $\sigma_{n,\tau}$ agrees identically with the Floer data $H_n$ and $\sigma_n$.  Moreover, by \cref{lem:TopologicalFiltration}, there is a bijection between their associated Floer trajectories that have ends lying in the region where $r \leq 1 - \delta +s_1$.  So with the above identifications of subcomplexes the diagram strictly commutes.

Consider the quotient
\[ C(i_n) = CF(H_n)/CF(H_{n,\tau}),\]
which is generated by the orbits that lie in the region where $r \geq 1-\delta+s_1$.  The continuation maps $c(\sigma_n)$ induce maps $\varphi_n: C(i_n) \to C(i_{n+1})$ as follows.  Let $x$ be a generator in $C(i_n)$, lift it to the corresponding generator $\widetilde{x}$ in $CF(H_n)$, define $\varphi_n(x)$ to be the composition of $c(\sigma_n)(\widetilde{x})$ with the quotient map from $CF(H_n) \to C(i_n)$.  We have a strictly commutative diagram of rays:
\[ \xymatrix{ 0 \ar[r] & CF(H_{\tau,0}) \ar[r]^{i_0} \ar[d]^{c(\sigma_{\tau,0})} & CF(H_0) \ar[r] \ar[d]^{c(\sigma_0)} & C(i_0) \ar[r] \ar[d]^{\varphi_0} & 0 \\ 0 \ar[r] & CF(H_{\tau,1}) \ar[r]^{i_1} \ar[d]^{c(\sigma_{\tau,1})} & CF(H_1) \ar[r] \ar[d]^{c(\sigma_1)} &C(i_1) \ar[d]^{\varphi_1} \ar[r] & 0. \\  &   &  &  &   \\ }\]
Notice that the complex $\Tel(\{C(i_n)\})$ is a filter colimit of finitely generated, free $\Lambda_{\geq 0}$-modules.  So by \cite[\href{https://stacks.math.columbia.edu/tag/058G}{Tag 058G}]{StacksProject}, it is a flat $\Lambda_{\geq 0}$-module.  So by \cite[\href{https://stacks.math.columbia.edu/tag/0315}{Tag 0315}]{StacksProject}, we have a short exact sequence of completed mapping telescopes:
\[ \xymatrix{ 0 \ar[r] & \widehat{\Tel}(\sC\sF(H_{n,\tau}\}) \ar[r] & \widehat{\Tel}(\sC\sF(H_n)) \ar[r] & \widehat{\Tel}(\{C(i_n)\}) \ar[r] & 0 \\ }.\]
The cohomology of the left complex is $\widehat{SH}(K \subset M; f, \tau)$.  The cohomology of the middle complex is $\widehat{SH}(K \subset M; f)$.  So to complete the proof, it suffices to show that $\widehat{\Tel}(\{C(i_n)\})$ is acyclic.  Using \cref{lem:TelToColimQI}, we prove the stronger claim that $\widehat{\colim}(\{C(i_n)\})$ is trivial.  We will show that for each generator $x \in C(i_n)$, the valuation of the sequence of elements $\varphi_{m+n} \circ \cdots \circ \varphi_n(x)$ diverges with $m$.  This implies that each element of the completed colimit is equivalent (as a Cauchy sequence) to the trivial element.  

The generators of $C(i_n)$ may be identified with a subset of the generators of $C(i_{n+1})$.  Namely, the generators of $C(i_{n+1})$ whose orbits lie in the region where $1 - \delta+s_1 \leq r \leq 1 - \delta+s_n$.  With this in mind, by \cref{lem:TopologicalFiltration}, we write
\[ \varphi_n(x) = \lambda_0 \cdot x + \sum_{i} \lambda_i \cdot y_i, \]
where $y_i$ is a generator (not equal to $x$) of $C(i_{n+1})$ whose orbits lie in the region where $1 - \delta+s_1 \leq r \leq 1 - \delta+s_n$.  Considering the constant Floer trajectory at $x$, which is regular, isolated, and has topological energy equal to $1$.  \cref{thm:PardonEssentials} and \cref{cor:EnergyRelation1} gives that $\val(\lambda_0) \geq 1$.  If $r(y_i) = r(x)$, then by \cref{prop:MaximumPrincipleForWeaklyConvexDomains} any Floer trajectory of type $(\sigma_n,y_i,x)$ must be contained in the $r$-slice $\{r = r(x)\}$.  Consequently, by \cref{cor:EnergyRelation1} and our description of $\sigma_n$ (which induces the map $\varphi_n$), we have that $\val(\lambda_i) \geq 1$.  If $r(y_i) \neq r(x)$, then $r(y_i) < r(x)$ by \cref{lem:TopologicalFiltration}.  However in this case, by induction, we may conclude that $\val(\varphi_{m+n} \circ \cdots \circ \varphi_{n+1}(y_i))$ diverges with $m$.  So we see that $\varphi_{m+n} \circ \cdots \circ \varphi_n(x)$ diverges with $m$, as desired.
\end{proof}


\section{Unirulings by disks}\label{sec:DiskUnirulings}

This section presents a Floer theoretic result that is needed to establish our main result.  It is divided into two parts. The first states the main result and gives its proof, assuming a technical lemma.  The second proves the technical lemma.

\subsection{Main result}
We consider a convex symplectic domain $(M,\Omega,\lambda)$ whose boundary has a non-degenerate Reeb vector field.  We also assume that the symplectic form $\Omega$ is integral\footnote{The symplectic form $\Omega$ is \emph{integral} if for every map $u: S^2 \to M$, the integral $\int u^*\Omega \in \IZ$.}.

\begin{prop}\label{prop:DiskUniruling}
Suppose that $\widehat{SH}(M_a \subset M; 0) \otimes \Lambda = 0$.  For each $\delta>0$ there exists a constant $E>0$ such that for each
\begin{itemize}
	\item $\Omega$-compatible almost complex structure $J$ on $M_{a-\delta}$, and
	\item $p \in \intrr(M_{a-\delta})$
\end{itemize}
there exists a non-empty open subset $\sI \subset (0,a-\delta]$ that depends on $J$ and $p$ such that for each $a' \in \sI$ there exists a (possibly disconnected) genus zero, compact, holomorphic curve $u: \Sigma \to M_{a'}$ with non-empty boundary that satisfies:
\begin{enumerate}
	\item $u(\partial \Sigma) \subset \partial M_{a'}$,
	\item $u$ is non-constant on each of its components,
	\item $u(\Sigma) \cap p \neq \varnothing$, and
	\item the energy of $u$ with respect to $\Omega$ and $J$ is bounded by $E$.
\end{enumerate}
\end{prop}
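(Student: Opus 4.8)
The plan is to feed the vanishing hypothesis into the long exact sequence of \cref{prop:PropertiesOfSH} \cref{prop:LES}, extract from the boundary map a low-energy Floer solution constrained to pass through $p$, and then turn off the Hamiltonian perturbation using the pearl compactness of \cref{sec:Pearls}.

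\emph{Reduction to the long exact sequence.} Fix $\delta>0$, and given $J$ and $p\in\intrr(M_{a-\delta})$, extend $J$ to an admissible almost complex structure on all of $M$. Run \cref{con:RadialCofinal}, arranging in addition that the $C^2$-small Morse function $f$ on $M_{a-\delta}$ has a unique minimum at $p$ and satisfies $-1\le f\le 0$. By \cref{prop:PropertiesOfSH} \cref{prop:InvarienceOfFunction}, $\widehat{SH}(M_a\subset M;f)\otimes\Lambda\cong\widehat{SH}(M_a\subset M;0)\otimes\Lambda=0$, so the boundary homomorphism of \cref{prop:PropertiesOfSH} \cref{prop:LES} is an isomorphism $\widehat{SH}_+(M_a\subset M;f)\xrightarrow{\ \cong\ }H(M;\Lambda)$. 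Under the identification in \cref{sec:LES} of the sub-telescope $\widehat{\Tel}(\sC\sF_0(\{H_n\}))$ with the Morse cohomology of $f$, the unit $1\in H^0(M;\Lambda)$ is represented by the minimum $p$, so $[p]$ lies in the image of this boundary map.

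\emph{A low-energy Floer trajectory (the technical lemma).} Unravelling the boundary homomorphism through the short exact sequence of completed mapping telescopes in \cref{sec:LES}: a preimage of $[p]$ is a cycle $\xi$ in $\widehat{\Tel}(\sC\sF_+(\{H_n\}))\otimes\Lambda$, and lifting $\xi$ to $\sC\sF(\{H_n\})$ and applying the telescope differential yields $[p]$, modulo coboundaries, inside $\sC\sF_0(\{H_n\})$. Since $[p]$ has valuation $0$ and, for the monotone families produced by \cref{con:RadialCofinal}, \cref{lem:EnergyRelation} gives $E_{top}\ge E_{geo}\ge 0$, inspecting the valuation-$0$ part of this identity together with \cref{thm:PardonEssentials} and \cref{cor:EnergyRelation1} produces a constant $E_0=E_0(M,\Omega,\lambda,a,\delta)$ and cofinally many $n$ for which there is a Floer trajectory $u_n$ (for $H_n$, or for one of the continuation simplices $\sigma_n$) with $\lim_{s\to-\infty}u_n=p$, with positive asymptote a $1$-periodic orbit $y_n$ corresponding to a Reeb orbit of $\partial M$, with non-zero virtual count, and with $E_{top}(u_n)\le E_0$. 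The substance of the technical lemma, and what I expect to be the main obstacle, is the uniformity of $E_0$ in $n$ and its independence of $J$ and $p$: the topological energy of such a trajectory is controlled by the action of $y_n$ together with $-\min f\le 1$, and one must check, from the explicit radial profile of the $H_n$ near $\partial M$, that the orbits $y_n$ that can contribute to the unit occur at radii in a fixed compact subinterval $[\rho_-,\rho_+]\subset(a-\delta,a)$ with $n$-independent action.

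\emph{Turning off the Hamiltonian.} By \cref{thm:PardonEssentials} the moduli space containing $u_n$ is non-empty; choose such a $u_n$ for the cofinal set of indices. The family $(u_n)$ has $E_{top}\le E_0$, and by the integrated maximum principle \cref{prop:MaximumPrincipleForConvexDomains} together with \cref{lem:TopologicalFiltration} the images all lie in $M_{\rho_+}$ with $\rho_+<a$ fixed. On $M_{\rho_+}$ the Hamiltonians $H_n$ converge to a fixed ``small'' function, and after a further degeneration of $f$ to $0$ the Gromov--Floer pearl compactness of \cref{sec:Pearls} applies: passing to a subsequence, $(u_n)$ converges to a pearl necklace, that is, a finite chain of $J$-holomorphic curves joined by negative gradient flow segments, running from the level $\{r\approx\rho_+\}$ down to $p$. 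Since $p$ is a minimum of $f$, the necklace cannot approach $p$ along a non-trivial gradient segment, so its lowest piece is a non-constant $J$-holomorphic curve through $p$; discard all constant components and all gradient segments, and call the resulting (possibly disconnected, genus zero) $J$-holomorphic curve $U$. As $\Omega$ is integral each surviving component carries $\Omega$-energy $\ge 1$, so $U$ has at most $E_0$ components; let $\sI$ be the set of values in $(r(p),a-\delta]$ that are common regular values of $r\circ U$ on all components, a non-empty open set. For $a'\in\sI$, the restriction $u\coloneqq U|_{U^{-1}(M_{a'})}$ is a genus zero, compact, $J$-holomorphic curve $u\colon\Sigma\to M_{a'}$ with $u(\partial\Sigma)\subset\partial M_{a'}$, non-constant on each component, with $p\in u(\Sigma)$ (the lowest component meets $p$ and $r(p)<a'<\rho_+$), and with $\Omega$-energy at most $E_0\eqqcolon E$. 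Varying $J$ and $p$ changes $U$, hence $\sI$, but leaves $E$ unchanged, since it depends only on $\delta$; this yields the stated dependences, modulo the technical lemma.
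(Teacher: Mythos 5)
Your overall architecture (vanishing of $\widehat{SH}$ $\Rightarrow$ a bounding cochain for the minimum $\Rightarrow$ a Floer trajectory from a Reeb-type orbit down to the point $\Rightarrow$ pearl compactness to turn off the Hamiltonian $\Rightarrow$ Sard to trim to $M_{a'}$) is the same as the paper's, but there is a genuine gap exactly where you flag "the main obstacle": the independence of $E$ from $J$ and $p$, and your proposed mechanism for it does not work. You set up the Floer data \emph{with} the given $J$ (extended to $M$) and with a Morse function whose minimum is at $p$. Then the only source of an energy bound is the valuation of the bounding cochain furnished by \cref{lem:TelKillingLemma}, and that scalar $\lambda'$ depends on the entire ray of Floer data --- hence on $J$ and on the Morse function, hence on $p$. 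Your suggested fix, bounding $E_{top}(u_n)$ by "the action of $y_n$" at radii in a fixed interval, is not available in this non-exact setting: the action functional is multivalued, and trajectories with the same asymptotes occur in infinitely many homotopy classes whose topological energies differ by arbitrary sphere areas, so the location/period of $y_n$ bounds nothing; the bound must come from the valuation of the cochain that kills $[p]$, which is precisely the quantity whose $(J,p)$-independence is in question.

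The paper resolves this differently, and none of that machinery appears in your sketch. First, $p$ is reduced to a fixed reference point $p_0$ by a compactly supported Hamiltonian diffeomorphism (pulling $J$ back), so the Floer input never sees $p$. Second, the bounding relation $d(y)=\lambda\cdot p_0+\cdots$ is extracted once for fixed reference data $(g_0,J_0)$, which pins down $E=\val(\lambda)+2$ before $J$ enters. Third --- and this is the real content of \cref{lem:FloerUniruling} --- that relation is transported to the $J$-dependent data $(g,J_1)$ by monotone continuation maps, and a chain-level valuation bookkeeping (rescaling $\Omega$ so that sphere areas lie in $5\cdot\IZ$, \cref{lem:LowEnergyFloer}, the normalization $g_0(p_0)/2-g_0(x)\le 0$, and the regular isolated constant trajectories via \cref{thm:PardonEssentials}) shows the coefficient of $p_0$ survives with valuation below $\val(\lambda)+2$ and cannot be cancelled by higher-order terms. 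Without such an argument (or a substitute), your $E_0$ has no claim to be independent of $J$ and $p$, and the proposition as stated is not established. The remaining steps of your proposal (constancy of $\gamma^0$ at the index-zero point, pearl compactness, trimming by Sard) do match the paper's proof and are fine.
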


We will use the curves from \cref{prop:DiskUniruling} to produce unirulings and (multi)sections.  To prove the above result, we apply the following technical lemma, the convergence result from \cref{prop:ConvergenceToPearls}, and the removal of singularities theorem.

\begin{lem}\label{lem:FloerUniruling}
Under the assumptions of \cref{prop:DiskUniruling}, there exist constants $E>0$, and $\varepsilon < \delta$, and $p_0 \in \intrr(M_{a-\delta})$ and a sequence of $(a-\delta+\varepsilon)$-radially admissible Hamiltonians $H_\nu$ on $M$ with
\begin{itemize}
	\item $H_\nu = H_{\nu+1}$ for $a-\delta+\varepsilon \leq r(x)$ , and
	\item $H_\nu = g/\nu$ on $M_{a-\delta}$, where $g$ is a $C^2$-small Morse function that is Morse-Smale with respect to $J$,
\end{itemize}
and a sequence of Floer trajectories $u_\nu \in \overline{\sM}((H_\nu,J),x_-,x_+)$, where
\begin{itemize}
	\item $r(x_-)$ is an index zero critical point of $g$ at $p_0$,
	\item $r(x_+) > a-\delta+\varepsilon$, and
	\item $E(u_\nu) < E$.
\end{itemize}
Moreover, $E$ is independent of $J$.
\end{lem}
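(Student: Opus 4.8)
The plan is to run the vanishing hypothesis through the long exact sequence of \cref{prop:PropertiesOfSH} \cref{prop:LES}, unravel the connecting homomorphism at the chain level, and extract the trajectories $u_\nu$ level by level from a cocycle mapping to the unit.

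First I would set up the cofinal data directly in the shape required by the statement. Extend the given $J$ from $M_{a-\delta}$ to an $\Omega$-compatible almost complex structure on $M$ that is admissible near $\partial M$, keep calling it $J$, and use $J$ constantly in all simplex parameters below. Fix a $C^2$-small Morse function $g\leq 0$ on $M_{a-\delta}$, constant near $\partial M_{a-\delta}$ and Morse--Smale for a metric compatible with $J$, with a chosen index zero critical point $p_0\in\intrr(M_{a-\delta})$. Adapting \cref{con:RadialCofinal} --- replacing its $C^2$-small Morse function by $g/\nu$ at level $\nu$, and fixing the radial collar model for $r\geq a-\delta+\varepsilon$ to have a single slope $\tau$ near $\partial M$ --- I obtain an $(a-\delta+\varepsilon)$-radially admissible sequence $H_\nu$ with $H_\nu=g/\nu$ on $M_{a-\delta}$ and $H_\nu=H_{\nu+1}$ on $r^{-1}([a-\delta+\varepsilon,1])$, connected by radially admissible $1$-simplices $\sigma_\nu$ built from \cref{con:TheBump}; here $\varepsilon<\delta$ is chosen, exactly as in \cref{con:RadialCofinal}, so that $\partial_r H_\nu$ stays below the smallest Reeb period of $\partial M$ on $r^{-1}([a-\delta,a-\delta+\varepsilon])$. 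This sequence is cofinal for $\sH^+(M_a\subset M,\Omega,\lambda;f,\tau)$, where $f$ denotes the pointwise limit of the $H_\nu$. By \cref{lem:TopologicalFiltration} the critical points of $g$ span a subcomplex $CF_0(H_\nu)\subset CF(H_\nu)$ preserved by the $c(\sigma_\nu)$, with quotient $CF_+(H_\nu)$ spanned by the orbits over Reeb orbits of $\partial M$, all of which lie in $r^{-1}((a-\delta+\varepsilon,1])$ and, since the $H_\nu$ agree there, are independent of $\nu$. The argument proving \cref{prop:PropertiesOfSH} \cref{prop:LES} then applies to this (slope-$\tau$) data, and combined with \cref{prop:PropertiesOfSH} \cref{prop:InvarienceOfFunction}, \cref{prop:PropertiesOfSH} \cref{prop:InvarianceOfSlope}, and the hypothesis $\widehat{SH}(M_a\subset M;0)\otimes\Lambda=0$, it shows the connecting homomorphism $\widehat{SH}_+(M_a\subset M)\to H(M;\Lambda)$ is an isomorphism; let $\xi$ be a preimage of the unit $1\in H^0(M;\Lambda)$.

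Next I would produce the energy bound and the trajectories. Any comparison between different admissible cofinal choices (in particular between the data attached to different $J$) is induced by continuation maps, which are valuation non-decreasing by \cref{lem:EnergyRelation}; two such comparisons compose to the identity, so $\val(\xi)$ is independent of $J$, and I set $E\coloneqq-\val(\xi)+4$ (the precise constant is immaterial). In the completed colimit model (equivalent to the telescope by \cref{lem:TelToColimCompleteQI}), a representative of $\xi$ is a Cauchy sequence of cocycles $\xi_{\nu_k}\in CF_+(H_{\nu_k})\otimes\Lambda$ at levels $\nu_k\to\infty$ of valuation $\geq\val(\xi)-1$. Lift each to $\widetilde\xi_{\nu_k}\in CF(H_{\nu_k})\otimes\Lambda$ of the same valuation; then $\partial\widetilde\xi_{\nu_k}\in CF_0(H_{\nu_k})\otimes\Lambda$ represents the unit, so for $k$ large, after subtracting a Morse primitive (which, $g/\nu$ being $C^2$-small, can be taken of valuation $\geq\val(\xi)-2$), I may assume $\partial\widetilde\xi_{\nu_k}$ is a cocycle representing the unit whose $p_0$-coefficient has valuation $\leq 1$. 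For an arbitrary level $\nu$, push $\widetilde\xi_{\nu_k}$ (with $\nu_k\leq\nu$ the nearest one) forward by $c(\sigma_{\nu-1})\circ\cdots\circ c(\sigma_{\nu_k})$: this remains a lift of a cocycle representing $\xi$, and by the constant Floer trajectory at $p_0$ together with \cref{thm:PardonEssentials} and \cref{cor:EnergyRelation1} its boundary has $p_0$-coefficient of valuation at most $1+|g(p_0)|/\nu_k\leq 2$. Expanding $\partial$ over homotopy classes and using that these lifts have valuation $\geq\val(\xi)-2$, there is a class $A\in\pi(x_+,p_0)$ with $x_+$ an orbit over a Reeb orbit --- hence $r(x_+)>a-\delta+\varepsilon$ --- such that $\#_{vir}(\overline{\sM}((H_\nu,J),p_0,x_+;A))\neq 0$ and $E_{top}(A)\leq-\val(\xi)+4=E$; by \cref{thm:PardonEssentials} the moduli space is nonempty, and any $u_\nu$ in it has $E(u_\nu)=E_{geo}(u_\nu)\leq E_{top}(A)<E$ by \cref{lem:EnergyRelation}. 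Only finitely many $x_+$ can occur (the Reeb field is non-degenerate and the slope is fixed), so after passing to a subsequence in $\nu$ and relabelling, $x_-=p_0$ and $x_+$ are independent of $\nu$; this is all the required data.

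The step I expect to be the main obstacle is the last one: arranging the completed-colimit bookkeeping so that a single class mapping to the unit yields a bounded-energy trajectory at \emph{every} level $\nu$ rather than at a single level, with the bound genuinely uniform in $J$, which hinges on the valuation estimates for the continuation and lifting maps; a secondary technical point is checking that the slope-$\tau$, function-$f$ version of \cref{prop:PropertiesOfSH} \cref{prop:LES} follows from the argument given in the excerpt.
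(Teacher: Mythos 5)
Your overall mechanism is the same one the paper uses (vanishing produces a bounding cochain for the minimum, which is pushed through the continuation tower so that the constant trajectory plus \cref{thm:PardonEssentials} forces a nonzero $p_0$-coefficient at every level), but the step you yourself flag as the obstacle is exactly where the argument has a genuine gap, and your valuation bookkeeping does not close it. After applying $c(\sigma_{\nu-1})\circ\cdots\circ c(\sigma_{\nu_k})$, the $p_0$-coefficient of the boundary is not just the image of the old $p_0$-term: every other generator $x$ appearing in $\partial\widetilde\xi_{\nu_k}$ contributes through $\overline{\sM}(\sigma,p_0,x)$, and these cross-terms can cancel the leading term. Your lower bound for them is only $\val(\widetilde\xi_{\nu_k})+E_{top}\geq(\val(\xi)-2)+O(1)$, while the term you want to survive has valuation $\leq 2$; since $\val(\xi)$ has no a priori relation to these constants, nonvanishing at level $\nu_k$ simply does not propagate to level $\nu$. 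This is precisely the ``higher order terms pairing off with the point class'' problem the paper isolates, and it is resolved there by a combination of choices you do not make: rescaling the integral form so that sphere classes contribute energy in $5\cdot\IZ$ (so \cref{lem:LowEnergyFloer} forces every ``wrong-sign'' trajectory between critical points to have energy $\geq 3$), normalizing the auxiliary Morse function $g_0$ to have a \emph{unique} index-zero point with $|g_0|\leq 1$ and $g_0(p_0)/2-g_0(x)\leq 0$ for $x\neq p_0$, and then an inductive valuation analysis (\cref{lem:InitialBE} and the computation following it, using \cref{cor:EnergyRelation1}) showing the leading $p_0$-term stays below $\val(\lambda)+2$ while all other contributions to the $p_0$-coefficient are pushed above $\val(\lambda)+3$. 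Your normalization step ``after subtracting a Morse primitive \dots the $p_0$-coefficient has valuation $\leq 1$'' is likewise unjustified: a representative of the unit need not have a controlled (or even nonzero) $p_0$-coefficient when $g$ has several minima, and bounding the primitive's valuation requires exactly the missing chain-level analysis.

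Two further structural differences are worth noting. First, the paper never needs the long exact sequence or any identification of the class of $p_0$: since $\widehat{SH}(M_{a-\delta}\subset M;0)\otimes\Lambda=0$, every class dies, so it suffices that $p_0$ is closed, and \cref{lem:TelKillingLemma} directly supplies the bounding cochain; your detour through a preimage $\xi$ of the unit under the connecting map adds the unproven claims about slope-$\tau$ versions of the LES and about which class $p_0$ represents without buying anything. Second, the $J$-independence of $E$ is obtained in the paper by running the entire homological analysis with a fixed auxiliary pair $(g_0,J_0)$, so that $E=\val(\lambda)+2$ is defined before $J$ enters, and only then deforming to $(g,J)$ through monotone continuations while preserving the estimates; your post hoc claim that $\val(\xi)$ is $J$-independent via comparison continuations is asserted rather than proved (the Hamiltonians themselves depend on $g$, hence on $J$, in your setup), and even granting it, it would not repair the cancellation gap above.
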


We prove \cref{lem:FloerUniruling} in the next subsection.  Assuming it, we prove \cref{prop:DiskUniruling}.

\begin{proof}
We immediately reduce to the case where $p = p_0$.  By \cite[Lemma 5.16]{McLean_GrowthRate}, there exists a Hamiltonian diffeomorphism $\varphi$ that is supported in $\intrr(M_{a-\delta})$ so that $\varphi(p_0) = p$.  Notice that the pull-back of $J$ along $\varphi$ is $\Omega$-compatible.  So in order to find the desired $J$-holomorphic curve through $p$, it suffices to find a $\varphi^*J$-holomorphic curve through $p_0$.  So we have reduced our work to proving the result for fixed $p_0 \in \intrr(M_{a-\delta})$.

Using the sequence of Floer trajectories from \cref{lem:FloerUniruling}, \cref{prop:ConvergenceToPearls} implies that for each $J$ there exists a Morse-Bott broken Floer trajectory of type $((H_\infty,J),x_-,x_+)$, say
\[(\gamma^0,u^1,\gamma^1,\dots,u^k, \gamma^k, u^{k+1},u^{k+2},\dots,u^m),\]
that satisfies:
 \begin{enumerate}
 	\item $\underline{ev}(\gamma^0) = x_-$,
 	\item $\lim_{s \to +\infty} u^m(s,t) = x_+$, and
 	\item $\sum_{i=1}^m E(u^i) \leq E$.
 \end{enumerate}
Since $x_- = p_0$ is an index zero critical point of the Morse function $g$, \cref{prop:ConvergenceToPearls} implies (without loss of generality) that $\gamma^0$ is constant and $u^1$ is non-constant.

Suppose that the image of $u^i$ is contained inside $\intrr(M_a)$ for all $i < \ell$.  It follows that $\lim_{s \to + \infty}u^{\ell}(s,t)$ lies in the region where $r \geq a-\delta + \varepsilon$.  By Sard's theorem, there exists a non-empty open subset $\sI \subset (0,a-\delta]$ that depends on $J$ such that for each $a' \in \sI$, $(u^\ell)^{-1}(M_{a'}) \cap C^\ell_0$ is a smooth Riemann surface with non-empty boundary.  There is an irreducible component of this Riemann surface with a single puncture whose image is (asymptotic to) a point on $\intrr(M_a)$.  Let $\Sigma'$ denote the surface composed of $C^1$ and the truncated curve $(u^\ell)^{-1}(M_{a'}) \cap C^\ell_0$.  Let $u': \Sigma' \to M_{a'}$ be the $J$-holomorphic (In this region, the Hamiltonian is zero and, thus, the maps are genuinely holomorphic.) curve given by $u' = \sqcup_{i=0, \ell} u^i$.  Notice that $u'$ has energy bounded by $E$.  So by the removal of singularities theorem, see \cite[Theorem 4.1.2]{McDuffSalamon_Jholo}, $u'$ may be extended to a holomorphic map over its punctures.  Write this extension as $u: \Sigma \to M_{a'}$.  

We wrap up.  First, the classification of surfaces with boundaries gives that $\Sigma$ is a genus zero curve with boundary.  Second, by construction, the image of boundary of $\Sigma$ under $u$ is contained in $\partial M_{a'}$.  Third, by construction, $u$ is non-constant on each of its components.  Fourth, since $\lim_{s \to -\infty} u^1(s,t) = p_0$, $u(\Sigma)$ meets the point $p_0$.  Fifth, the energy of $u$ with respect to $\Omega$ and $J$ is bounded by $E$ independently of $J$.  This completes the proof.  We mention that the image of $u$ does not need to be connected (however, it will be connected when the $\gamma^i$ are constant or $u^\ell = u^1$).
\end{proof}

\subsection{Proof of technical lemma}

To prove \cref{lem:FloerUniruling}, we consider the sequence of radially admissible Hamiltonians as in \cref{con:RadialCofinal} and explicitly exhibit a non-trivial differential in a Floer chain complex that computes $\widehat{SH}(M_{a-\delta} \subset M; f)$.  This non-trivial differential will give rise to the desired Floer trajectory.  

Put differently, the vanishing of action completed symplectic cohomology gives an isomorphism $H(M;\Lambda) \cong \widehat{SH}_+(M_{a-\delta} \subset M;f)$.  Under this isomorphism, a (completed) element of $\widehat{SH}_+(M_{a-\delta} \subset M;f)$ is mapped onto the point class in $H(M;\Lambda)$.  We show that the count of Floer trajectories that defines this mapping is non-zero and deduce that there exists a Floer trajectory connecting a Reeb orbit to the point class of the interior Morse function.  Of course, we need to preserve these (virtual) counts of Floer trajectories for varying $J$ and $g$ and show that the resulting Floer trajectories have energies bounded independent of these choices.  The main technical difficulty is that the point class will only be hit by a completed element, that is, we will only hit the point class plus higher order terms in the Novikov parameter.  When deforming $J$ or $g$, we need to ensure that these higher order terms do not pair off with the point class, leaving us with no differential hitting the point class.  This requires a chain level homological analysis of Floer complexes.  We use the integrality of the symplectic form to carry out this analysis.	

To begin, by rescaling $\Omega$, we may assume that for every map $u: S^2 \to M$, $\int u^*\Omega \in 5 \cdot \IZ$.  We point out two simple consequences of this integrality and \cref{lem:EnergyRelation}.

\begin{lem}\label{lem:LowEnergyFloer}
Consider a monotonically admissible family of Hamiltonians $H^\sigma$.  Let $u$ be a Floer trajectory of type $(\sigma,x_0,x_\ell)$ such that $x_0$, and $x_\ell$ are constant.  If
\[ 0 \leq H^{\sigma}_{\underline{e_0}}(x_0) - H^\sigma_{\underline{e_\ell}}(x_\ell) < 5,\]
then
\[ E_{top}(u) \geq  H^{\sigma}_{\underline{e_0}}(x_0) - H^\sigma_{\underline{e_\ell}}(x_\ell).\]
If 
\[ -5 \leq H^{\sigma}_{\underline{e_0}}(x_0) - H^\sigma_{\underline{e_\ell}}(x_\ell) < 0,\]
then
\[ E_{top}(u) \geq  5+ H^{\sigma}_{\underline{e_0}}(x_0) - H^\sigma_{\underline{e_\ell}}(x_\ell).\]\qed
\end{lem}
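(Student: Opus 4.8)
The plan is to combine the energy identity of \cref{lem:EnergyRelation} with the monotonicity hypothesis on $H^\sigma$ and the integrality of $\Omega$. First I would recall that, since $H^\sigma$ is monotonically admissible, \cref{lem:EnergyRelation} gives $E_{top}(u) \geq E_{geo}(u) \geq 0$, and moreover the topological energy satisfies
\[
E_{top}(u) = \int_C u^*\Omega + H^{\sigma}_{\underline{e_0}}(x_0) - H^{\sigma}_{\underline{e_\ell}}(x_\ell),
\]
where I have used that $x_0$ and $x_\ell$ are constant orbits, so the integrals over $S^1$ of the Hamiltonian terms reduce to point values (and here one should fix representatives so that $H^\sigma_{\underline{e_i}}(x_i)$ denotes the common value). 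Writing $\Delta H \coloneqq H^{\sigma}_{\underline{e_0}}(x_0) - H^{\sigma}_{\underline{e_\ell}}(x_\ell)$, this is $E_{top}(u) = \int_C u^*\Omega + \Delta H$.

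Next I would argue that $\int_C u^*\Omega$ is a nonnegative integer multiple of $5$. The Floer trajectory $u$ has domain $C = \sqcup_i C_i$, where each $C_i$ is an $\IR\times S^1$-cylinder with bubble trees attached; since both asymptotic orbits $x_0, x_\ell$ are \emph{constant}, one can cap off the cylindrical components to obtain (after gluing in the bubbles) a map from a nodal genus zero curve, i.e. effectively a sum of sphere classes, so $\int_C u^*\Omega \in 5\IZ$ by the rescaled integrality assumption. Since each $u^*\Omega$ contribution on a holomorphic bubble is $\geq 0$ and $E_{geo}(u)\geq 0$ forces $\int_C u^*\Omega \geq -\Delta H \geq -5 > -5$ in the second case and $\geq 0$-ish bounds in the first, combined with $\int_C u^* \Omega \in 5\IZ$ we conclude $\int_C u^*\Omega \geq 0$. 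Then: in the first case $0 \leq \Delta H < 5$, so $E_{top}(u) = \int_C u^*\Omega + \Delta H \geq \Delta H$ directly, giving the first inequality. In the second case $-5 \leq \Delta H < 0$; since $E_{top}(u) \geq 0$ and $E_{top}(u) \in 5\IZ + \Delta H$, the smallest possible nonnegative value of $E_{top}(u)$ is $5 + \Delta H$ (taking $\int_C u^*\Omega = 5$), hence $E_{top}(u) \geq 5 + \Delta H$, which is the second inequality.

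The main obstacle I anticipate is the bookkeeping needed to justify that $\int_C u^*\Omega \in 5\IZ$: one must check that, because the two ends are constant orbits, the cylindrical components together with their attached bubble trees represent a class for which $\int u^*\Omega$ is a sum of $\Omega$-areas of genuine $S^2$-classes (the continuation/Floer cylinders with constant ends close up to spheres), so the hypothesis ``$\int u^*\Omega \in 5\IZ$ for all $u: S^2 \to M$'' applies. This is routine but is the only place where constancy of $x_0, x_\ell$ is genuinely used, and it is what pins $E_{top}(u)$ to the coset $5\IZ + \Delta H$; everything else is immediate from \cref{lem:EnergyRelation} and $E_{top}(u)\geq 0$.
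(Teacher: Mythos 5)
Your proposal is correct and is essentially the paper's intended argument: the lemma carries only a \qed because it is meant to follow exactly as you argue, from $E_{top}(u)\geq E_{geo}(u)\geq 0$ for monotonically admissible $\sigma$ (\cref{lem:EnergyRelation}), the identity $E_{top}(u)=\int_C u^*\Omega + \bigl(H^{\sigma}_{\underline{e_0}}(x_0)-H^{\sigma}_{\underline{e_\ell}}(x_\ell)\bigr)$ for constant ends, and the observation that the constant asymptotics close the (possibly broken) cylinder together with its bubble trees into sphere classes, so $\int_C u^*\Omega\in 5\IZ$ and the two inequalities are the minimal nonnegative values in the coset $5\IZ+\Delta H$. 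The only blemish is the garbled intermediate chain ``$\geq -\Delta H \geq -5 > -5$''; your final case analysis (in the second case $\int_C u^*\Omega\geq 5$, hence $E_{top}(u)\geq 5+\Delta H$) is nonetheless correct as stated.
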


\begin{notn}
Let $J_0$ be an admissible almost complex structure on $M$.  Let $g_0: M_{a-\delta} \to \IR$ be a negative $C^2$-small Morse function as in \cref{con:RadialCofinal}.  We assume that $J_0$ and $g_0$ further satisfy:
\begin{itemize}
	\item $g_0$ is Morse-Smale with respect to the metric $\Omega(\cdot, J_0 \cdot)$,
	\item $p_0$ is the unique index zero critical point of $g_0$,
	\item $g_0(p_0)/2 - g_0(x) \leq 0$ for all critical points $x \neq p_0$, and
	\item $|g_0| \leq 1$.
\end{itemize}
Consider the cofinal sequence of Hamiltonians $H_n$ and connecting families $\sigma_n$ as in \cref{con:RadialCofinal}, where the Morse function part is $g_0$.
\end{notn}

\begin{lem}\label{lem:InitialBE}
There exists $\lambda' \in \Lambda_{\geq 0}$ and $n \in \IN$ such that
\[ \val(\lambda' \cdot c(\sigma_{n-1}) \circ \cdots \circ c(\sigma_0)(p_0) - d(y)) > \val(\lambda') + 3\]
for some $y \in CF(H_n,J_0)$, where $p_0$ denotes the critical point of $g_0$ at $p_0$.
\end{lem}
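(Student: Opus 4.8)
The plan is to exhibit $p_0$ as an approximate boundary in a complex computing the vanishing group $\widehat{SH}(M_a\subset M;f)\otimes\Lambda$, controlling all the resulting error terms below by $3$ by means of the integrality of $\Omega$. By \cref{prop:PropertiesOfSH} \cref{prop:InvarienceOfFunction}, the standing hypothesis gives $\widehat{SH}(M_a\subset M;f)\otimes\Lambda\cong\widehat{SH}(M_a\subset M;0)\otimes\Lambda=0$ for the function $f$ of \cref{con:RadialCofinal}, so the completed telescope $\widehat{\Tel}(\sC\sF(\{H_n\}))\otimes\Lambda$ attached to the cofinal data $\{H_n\},\{\sigma_n\}$ of that construction --- with interior Morse function $g_0$ and almost complex structure $J_0$ --- is acyclic. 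Recall the filtration subcomplexes $\sC\sF_0\hookrightarrow\sC\sF$ (critical points of $g_0$) with quotient $\sC\sF_+$ (generators coming from Reeb orbits), the induced short exact sequence of completed telescopes over $\Lambda$, and that $H^\bullet(\widehat{\Tel}(\sC\sF_0)\otimes\Lambda)\cong H^\bullet(M;\Lambda)$; since the middle term is acyclic the connecting map $\delta\colon H^{\bullet-1}(\widehat{\Tel}(\sC\sF_+)\otimes\Lambda)\to H^\bullet(M;\Lambda)$ is an isomorphism, so the class of $p_0$ in $H^0(M;\Lambda)$ lies in the image of $\delta$.

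Next I would compute $\partial p_0$ in $CF(H_0,J_0)$ and show $\partial p_0\in CF_0(H_0,J_0)$ with $\val(\partial p_0)\ge 5$. No Reeb generator can appear, by \cref{lem:TopologicalFiltration}, since every contributing orbit satisfies $r\le r(p_0)<a$. Splitting $\partial p_0$ according to the symplectic area $\int u^*\Omega$ of the contributing Floer trajectory: for $g_0$ sufficiently $C^2$-small the zero-area part is the Morse-cohomology differential of the unique index-zero critical point $p_0$, which vanishes because the signed count of negative gradient trajectories from an index-one critical point to the unique minimum is the signed boundary of a compact one-manifold; and by \cref{lem:EnergyRelation} each positive-area term carries $T^{E_{top}}$ with $E_{top}=\int u^*\Omega+(g_0(x_-)-g_0(p_0))\ge 5$, because $E_{top}\ge 0$ forces $\int u^*\Omega\ge 0$, the integrality hypothesis $\int_{S^2}u^*\Omega\in 5\IZ$ then forces $\int u^*\Omega\ge 5$, and $g_0(x_-)\ge g_0(p_0)$ by minimality. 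The same bound propagates under the monotone continuations $c(\sigma_i)$, which do not lower valuation, so $q_n:=c(\sigma_{n-1})\circ\cdots\circ c(\sigma_0)(p_0)$ also has $\val(\partial q_n)\ge 5$.

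For the crux I would feed $p_0$ through the connecting homomorphism: choose a cycle $\widehat\zeta\in\widehat{\Tel}(\sC\sF_+)\otimes\Lambda$ with $\delta[\widehat\zeta]$ the class of $p_0$, lift it to $\widehat{\Tel}(\sC\sF)\otimes\Lambda$, and unwind the definition of $\delta$; after absorbing a coboundary of $\widehat{\Tel}(\sC\sF_0)\otimes\Lambda$ into the lift this produces $\widehat y\in\widehat{\Tel}(\sC\sF)\otimes\Lambda$ with $d\widehat y=\iota_0(p_0)+e$, where $\iota_0$ is the stage-zero inclusion and $\val(e)\ge 5$ by the previous step. Since $\widehat{\Tel}(\sC\sF)$ is the $T$-adic completion of the honest telescope $\Tel(\sC\sF)$, truncate $\widehat y$ to a finitely supported $y'\in\Tel(\sC\sF)\otimes\Lambda$ with truncation error of valuation $>3$; then the standard collapse of a finite sub-telescope $\Tel^{\le n}(\sC\sF)\xrightarrow{\ \sim\ }CF(H_n,J_0)$ --- whose only failure to intertwine $\iota_0(p_0)$ with $q_n$ strictly is given by images of $\partial p_0$ under the $c(\sigma_i)$, of valuation $\ge 5$ by the previous step --- carries $y'$ to an element $y\in CF(H_n,J_0)$ with $\val(q_n-d(y))>3$. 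Taking $\lambda'=1$ (valuation $0$) then gives the lemma; a general $\lambda'\in\Lambda_{\geq 0}$ is retained only for uniformity with the deformation arguments to follow, where the whole identity gets multiplied through by a fixed power of $T$.

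The main obstacle is precisely the chain-level bookkeeping of this last step: the point class is hit only by a completed, infinite-sum boundary together with higher-order Novikov corrections, so one cannot literally assert that $p_0$ is a boundary, and one must instead verify that each of the three error sources --- the holomorphic-sphere corrections to $\partial p_0$, the truncation error from replacing $\widehat y$ by an honest telescope element, and the telescope-collapse error from sliding to a fixed stage --- has valuation exceeding $3$. The second is arbitrarily large by choice, while the first and third are controlled exactly by the hypothesis that $\Omega$ is integral (together with the minimality of $g_0$ at $p_0$); this is where integrality is used essentially, as advertised.
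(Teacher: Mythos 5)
The central gap is that you never establish that $p_0$ is closed in $CF(H_0,J_0)$ --- you only argue $\val(\partial p_0)\geq 5$ --- while every homological step you then invoke requires an honest cycle. Without $\partial p_0=0$, ``the class of $p_0$ in $H^0(M;\Lambda)$'' is not defined, so there is nothing to feed through the connecting homomorphism, and the key assertion that one can produce $\widehat y$ with $d\widehat y=\iota_0(p_0)+e$, $\val(e)\geq 5$, does not follow from acyclicity of the completed telescope: acyclicity after $\otimes\Lambda$ together with $\val(\partial p_0)\geq 5$ does not formally yield an approximate primitive. A toy example: take the ray whose every stage is $0\to\Lambda_{\geq 0}\xrightarrow{T^5}\Lambda_{\geq 0}\to 0$ with identity continuation maps; the completed telescope is acyclic after $\otimes\Lambda$, the degree-zero generator $x$ satisfies $\val(dx)=5$, yet there exist no $\lambda'\in\Lambda_{\geq 0}$ and $y$ with $\val(\lambda' x-dy)>\val(\lambda')+3$. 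So your route needs exactly the input the paper supplies first: $\partial p_0=0$ on the nose (the paper gets this from \cref{lem:TopologicalFiltration} together with \cite[Theorem 10.7.1]{Pardon_VFC}), after which the statement is immediate from the vanishing of $\widehat{SH}$ and the abstract telescope lemma \cref{lem:TelKillingLemma} applied with $R=\val(\lambda')+3$. Neither the long exact sequence, nor the integrality of $\Omega$, nor the constants $3$ and $5$ play any role in this lemma; integrality only enters afterwards, in the analysis of $d(y)$ and in the proof of \cref{lem:FloerUniruling}. Your identification of the zero-area part of $\partial p_0$ with the Morse differential is, moreover, precisely the chain-level low-energy comparison the paper deliberately avoids relying on (see the remark at the end of that subsection), and it would require its own justification (regularity of the zero-area trajectories for the virtual counts, and control of gradient lines of $g_0$ near $\partial M_{a-\delta}$).

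Two secondary points. Taking $\lambda'=1$ is unjustified: the vanishing holds only after $\otimes\Lambda$, and the cohomology of the completed telescope over $\Lambda_{\geq 0}$ may have torsion, which is exactly why \cref{lem:KillingLemma} and \cref{lem:TelKillingLemma} produce a possibly nontrivial $\lambda'$; it is not a cosmetic normalization for later deformation arguments. And the ``telescope-collapse error'' you budget for does not exist: the maps $F^n\Tel(\sA)\to A_n$ of \cref{subsec:Conventions} are genuine chain maps carrying the stage-zero copy of $p_0$ to $c(\sigma_{n-1})\circ\cdots\circ c(\sigma_0)(p_0)$ exactly, so once an approximate primitive in the completed telescope is in hand, only the truncation error remains --- which is the content of the paper's appendix lemmas that your argument is, in effect, re-deriving by hand.
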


\begin{proof}
By \cref{lem:TopologicalFiltration} and \cite[Theorem 10.7.1]{Pardon_VFC}, $p_0$ is closed in each $CF(H_i,J_0)$.  In particular, $p_0$ is closed in $CF(H_0,J_0)$.  By assumption (and \cref{prop:PropertiesOfSH} \cref{prop:InvarienceOfFunction}),
\[ 0 = \widehat{SH}(M_{a-\delta} \subset M; 0) \otimes \Lambda \cong \widehat{SH}(M_{a-\delta} \subset M ; f) \otimes \Lambda. \]
By \cref{lem:TelKillingLemma}, there exists $\lambda' \in \Lambda_{\geq 0}$ and $y \in CF(H_n,J_0)$ (for some $n$) such that
\[ \val(\lambda' \cdot c(\sigma_{n-1}) \circ \cdots \circ c(\sigma_0)(p_0) - d(y)) > \val(\lambda') + 3,\]
as desired.
\end{proof}

To understand \cref{lem:InitialBE}, consider elements of $\overline{\sM}(\sigma_i,x,y)$.  By \cref{lem:TopologicalFiltration}, all the elements lie in $\intrr(M_{a-\delta})$ when $y$ is a critical point of $g_0$.  Notice that the associated family of Hamiltonians on $\intrr(M_{a-\delta})$ is given by
\[H^{\sigma_i} = g_0 - \ell(s) \cdot \frac{1}{i+1} - (1-\ell(s)) \cdot \frac{1}{i}.\]
By our choice of $g_0$ and \cref{lem:LowEnergyFloer}, all elements of $\overline{\sM}(\sigma_i,x,y)$, where $x$ and $y$ are critical points of $g_0$, have energy greater than or equal to 
\[\begin{cases} g_0(x) - g_0(y) + 1/i-1/(i+1), & g_0(x) - g_0(y) + 1/i-1/(i+1) \geq 0 \\ 3, & \mbox{else.} \\ \end{cases} \]
So by considering the constant Floer trajectory in $\overline{\sM}(\sigma_i,p_0,p_0)$ (as we have done before), \cref{cor:EnergyRelation1} implies that
\[ c(\sigma_i)(p_0) = T^{\frac{1}{i} - \frac{1}{i+1}} \cdot ( \eta_i \cdot p_0 + C_i ),\]
where $\val(\eta_i) = 0$, and $C_i$ is a $\Lambda_{\geq 0}$-linear combination of critical points of $g_0$ (not including $p_0$) such that the valuation of $C_i$ along any critical point $x$ is at least $g_0(x) - g_0(p_0)$.  Define $\lambda = \lambda' \cdot T^{1-1/n} \cdot \prod_i \eta_i$.  Inductively (considering the constant Floer trajectories in the $\overline{\sM}(\sigma_i,x,x)$), we have that
\[d(y) = \lambda \cdot p_0 + C + D,\]
where $C$ is a $\Lambda_{\geq 0}$-linear combination of critical points of $g_0$ (not including $p_0$) such that the valuation of $C$ along any critical point $x$ is at least $g_0(x) - g_0(p_0) + \val(\lambda)$, and $D$ is a $\Lambda_{\geq 0}$-linear combination of orbits with $\val(D) > \val(\lambda)+2$.  In particular, we have shown that
\[d(y) = \lambda \cdot p_0 + \mbox{higher order terms in }T.\]

Consider $H_n$ from \cref{lem:InitialBE}.  We will modify it and the almost complex structure $J_0$ to a sequence of Hamiltonians $H_\nu$ and the desired almost complex structure $J$ that satisfy the conditions of the lemma.  We complete the proof of \cref{lem:FloerUniruling}.  

\begin{proof}
For $\nu \in \IN$, fix $(a-\delta+\varepsilon)$-radially admissible Hamiltonians $G_\nu$ that satisfy:
\begin{itemize}
	\item $G_\nu \leq G_{\nu+1}$
	\item for $r \leq a-\delta$, $G_\nu = g_0/2^{\nu}$, and
	\item for $r \leq a - \delta+\varepsilon$, $G_\nu = H_n+\delta+1/n$.
\end{itemize}
Using \cref{con:TheBump}, we have continuations $G^{\sigma_\nu}$ with $G^{\sigma_\nu}_{\underline{e_0}} = G_{\nu+1}$ and $G^{\sigma_\nu}_{\underline{e_1}} = G_{\nu}$.  We consider how $d(y) = \lambda \cdot p_0 + C+D$ transforms under these continuations.

To begin, the continuations $\sigma_\nu$ are monotone.  So 
\[\val( c(\sigma_{\nu}) \circ \cdots \circ c(\sigma_0)(D)) > \val(\lambda)+2.\]
Suppose that $x$ and $y$ are critical points of $g_0$.  When $g_0(x)/2 - g_0(y) \geq 0$, \cref{lem:LowEnergyFloer} implies that all elements of $\overline{\sM}(\sigma_\nu,x,y)$ have energy greater than or equal to $g_0(x)/2^{\nu+1} - g_0(y)/2^\nu$.  When $g_0(x)/2 - g_0(y) < 0$, \cref{lem:LowEnergyFloer} implies that all elements of $\overline{\sM}(\sigma_\nu,x,y)$ have energy greater than or equal to
\[ 5 + g_0(x)/2^{\nu+1} - g_0(y)/2^\nu \geq 3.\]
In particular, since $g_0(p_0)/2 - g_0(x) < 0$ for all critical points $x \neq p_0$, we (inductively) have that the valuation of 
\[ c(\sigma_{\nu}) \circ \cdots \circ c(\sigma_0)(C) \]
along $p_0$ is at least $\val(\lambda)+3$.  Also, we can (inductively) conclude that the valuation of
\[c(\sigma_{\nu}) \circ \cdots \circ c(\sigma_0)(C)\]
along any critical point $x \neq p_0$ is at least
\[ -g_0(p_0) + g_0(x)/2^{\nu+1} > -g_0(p_0) + g_0(p_0)/2^{\nu+1}\]
The only element of $\overline{\sM}(\sigma_\nu,p_0,p_0)$ with energy equal to $-g_0(p_0)(1/2^{\nu}-1/2^{\nu+1})$ is the constant Floer trajectory.  So inductively, combining the above observations, we have that
\[ c(\sigma_{\nu}) \circ \cdots \circ c(\sigma_0)(d(y)) = \lambda_\nu \cdot p_0  + \mbox{higher order terms in }T,\]
where $\lambda_\nu \in \Lambda_{\geq 0}$ has valuation given by $\val(\lambda) -g_0(p_0) + g_0(p_0)/2^\nu < \val(\lambda) +2$.
	
Fix a negative $C^2$-small Morse function $g: M_{a-\delta} \to \IR$ that satisfies:
\begin{itemize}
	\item $g$ is Morse-Smale with respect to $\Omega(\cdot,J\cdot)$,
	\item $g$ has a unique index zero critical point at $p_0 \in \intrr(M_{a-\delta})$, and
	\item $g \geq g_0$ with $g(p_0) = g_0(p_0)$.
\end{itemize}
Fix $(a-\delta+\varepsilon)$-radially admissible Hamiltonians $H_\nu$ that satisfy:
\begin{itemize}
	\item $H_\nu \leq H_{\nu+1}$
	\item for $r \leq a-\delta$, $H_\nu = g/2^\nu$, and
	\item for $r \leq a - \delta+\varepsilon$, $H_\nu = H_n+\delta+1/n$.
\end{itemize}
Let $J_1$ denote an admissible almost complex structure on $M$ that agrees with $J_0$ for $r \geq a-\delta+\varepsilon$ and agrees with $J$ on $M_{a-\delta}$.  Define simplices $\tau_\nu \in  \sJ\sH_1^+(M,\Omega,\lambda)$ connecting $G_\nu$ and $H_\nu$ via \cref{con:TheBump}, and by fixing a family of almost complex structures that connects $J_0$ to $J_1$ that is the identity for $r \geq a - \delta + \varepsilon$.  This gives continuation maps
\[ c(\tau_\nu): CF(G_{\nu},J_0) \to CF(H_\nu,J_1). \]
By considering the constant Floer trajectories in $\overline{\sM}(\sigma_\nu, p_0,p_0)$, which have zero energy, and the monotonicity of the $\tau_\nu$, the above work for the $\sigma_\nu$ gives that
\[ c(\tau_\nu) \circ c(\sigma_{\nu}) \circ \cdots \circ c(\sigma_0)(d(y)) = \lambda_\nu \cdot p_0  + \mbox{higher order terms in }T.\]

We wrap up.  First, the sequence of Hamiltonians $H_\nu$ defines a sequence of Hamiltonians as stated in the lemma modulo interchanging $\nu$ and $2^\nu$.  Set $E = \val(\lambda)+2$.  This bound is independent of the choice of almost complex structure $J$.  By \cref{thm:PardonEssentials}, \cref{lem:EnergyRelation}, and our explicit description of the differential for Floer chain complexes, there exists an element $u_\nu \in \overline{\sM}((H_\nu,J_1),p_0,x_+)$ with $E_{geo}(u_\nu) \leq E_{top}(u_\nu) \leq E$.  Finally, by \cref{prop:PropertiesOfSH} \cref{prop:LES} and \cite[Theorem 10.7.1]{Pardon_VFC}, $x_+$ corresponds to a Reeb orbit and thus satisfies $r(x_+) \geq a - \delta + \varepsilon$.
\end{proof}

\begin{rem}
In lieu of our above approach, one could observe that, since transversality for the associated moduli spaces of Morse flow lines can be achieved, the work of Pardon prior to \cite[Theorem 10.7.1]{Pardon_VFC} gives that (after fixing an appropriate choice of additional data) the only Floer trajectories needed to compute the above continuation maps are, in fact, the Morse flow lines.  This simplifies the homological analysis.  However, we have opted for our more elementary approach.  We also point out that if $c_1(M) = 0$, then using the resulting $\IZ$-grading on the Floer chain complexes greatly simplifies the above analysis.
\end{rem}

\subsection{An addendum on the product case}\label{subsec:ProductUniruling}

Consider $M = \IC \times F$, where $(F,\omega_F)$ is a closed symplectic manifold with symplectic form $\omega_\IC + \omega_F$, where $\omega_\IC$ is the standard symplectic form on $\IC$.  The associated stable Hamiltonian structure has a degenerate Reeb vector field.  However, with the radial coordinate, $|\cdot|^2$, on $\IC$, the orbits arise as $S^1 \times F$-families of orbits.  So as in \cref{rem:MorseBottBreakings}, we may appropriately define Hamiltonian Floer theory for these spaces by using radial Hamiltonians and introducing small perturbations near the families of orbits, which we now suppress.

Each $\ID_a \times F$ is displaceable in $\ID_b \times F$ for some $b \gg a$.  So \cref{prop:PropertiesOfSH} \cref{prop:VanishingForStDisp} implies that \cref{prop:DiskUniruling} gives a uniruling via disks.  However, we can conclude this result with a stronger conclusion.

\begin{prop}\label{prop:ProductUniruling}
In the above setup, \cref{lem:FloerUniruling} holds with the additional conclusion that $x_+$ corresponds to a Reeb orbit whose projection down to $\IC$ winds once around the origin.  
\end{prop}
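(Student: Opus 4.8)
The plan is to re-run the proof of \cref{lem:FloerUniruling} inside a sufficiently large product domain $M = \ID_b \times F$, but with the radial profile of \cref{con:RadialCofinal} replaced by one whose only non-constant $1$-periodic orbits wind exactly once around the origin of $\IC$. The starting point is that the boundary $\partial M = S^1 \times F$ carries the stable Hamiltonian structure induced by $\omega_\IC + \omega_F$, that every orbit of its Reeb vector field is a copy of $S^1 \times \{\mathrm{pt}\}$ traversed some number $w$ of times, and hence that the set of Reeb periods is $\{w\kappa : w \in \IN\}$, where $\kappa$ is the period of the primitive orbit (which winds once around $0 \in \IC$); the same holds at every level $\{r = r_0\}$ of the collar coordinate. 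Consequently, for a radially admissible $h(r)$ the $1$-periodic orbits in the collar occur precisely at the levels $r_0$ with $h'(r_0) = w\kappa$, they come in $S^1 \times F$-families, and---after the Morse--Bott perturbation of \cref{subsec:ProductUniruling}---each splits into finitely many non-degenerate orbits of winding number $w$. Thus if $h$ satisfies $h'(r_0) < 2\kappa$ throughout its radial region while still attaining the value $\kappa$ somewhere, then every non-constant $1$-periodic orbit of $h$ lying outside the interior region winds exactly once.

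First I would fix $b$ so large that $M_{a-\delta}$ is displaceable, hence stably displaceable, in $M = \ID_b \times F$ (as recalled above), and fix a slope $\tau \in (\kappa, 2\kappa)$. By \cref{prop:PropertiesOfSH} \cref{prop:VanishingForStDisp} and the footnote there, $\widehat{SH}(M_{a-\delta} \subset M; f,\tau) \otimes \Lambda = 0$ for every continuous $f$. Next I would build a cofinal sequence $H_\nu$ of $(a-\delta+\varepsilon)$-radially admissible Hamiltonians in $\sH^+(M_{a-\delta} \subset M; f,\tau)$ following the slope-$\tau$ variant of \cref{con:RadialCofinal} (cf.\ \cref{subsec:ActionCompletedCohomology}), with interior part $g/\nu$ exactly as in \cref{lem:FloerUniruling}, and with the fixed radial profile $h$ on $[a-\delta,1]$ chosen convex and increasing, of constant slope $< \kappa$ near $a-\delta+\varepsilon$, of slope $\tau$ near $1$, with $h'' > 0$ at the unique level where $h' = \kappa$, and with $h' < 2\kappa$ everywhere; such an $h$ exists since $\kappa$ does not depend on the level. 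By the previous paragraph, every non-constant $1$-periodic orbit of each $H_\nu$ with $r > a-\delta+\varepsilon$ then winds exactly once around the origin.

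With this data fixed, I expect the homological analysis of \cref{lem:InitialBE} and of \cref{lem:FloerUniruling} to carry over essentially verbatim. It uses only \cref{lem:TopologicalFiltration} (valid for radial Hamiltonians, and giving the subcomplex spanned by the critical points of $g$), the integrality of $\omega_\IC + \omega_F$ after rescaling, the energy estimates of \cref{lem:LowEnergyFloer} and \cref{cor:EnergyRelation1}, \cref{lem:TelKillingLemma} applied to the acyclic completed telescope computing the vanishing group, \cref{thm:PardonEssentials}, and the analysis of \cref{sec:LES} identifying the cohomology of that subcomplex with $H(M;\Lambda)$ over $\Lambda$ (which adapts to the present slope-$\tau$ product setting). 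The outcome is that, for every admissible $J$, there is a Floer trajectory $u_\nu \in \overline{\sM}((H_\nu, J), p_0, x_+)$ with $E_{geo}(u_\nu) \le E_{top}(u_\nu) \le E$ for some $E$ independent of $J$, and with $x_+$ a Reeb orbit satisfying $r(x_+) > a - \delta + \varepsilon$. By the choice of profile, such an $x_+$ winds exactly once around $0 \in \IC$, which is precisely the additional conclusion claimed.

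The step I expect to be the main obstacle---though it should be routine given the arguments already in the paper---is verifying the slope-$\tau$ variant of \cref{con:RadialCofinal}: one must confirm that a cofinal sequence of slope-$\tau$ radially admissible Hamiltonians below $f_{M_{a-\delta}}$ with the bounded-slope profile above exists, that its continuation maps preserve the $r$-filtration of \cref{lem:TopologicalFiltration}, and that the long exact sequence of \cref{prop:PropertiesOfSH} \cref{prop:LES} persists in this setting, so that $x_+$ is pinned down as a Reeb orbit with $r(x_+) > a-\delta+\varepsilon$ rather than a critical point of $g$. Everything else is a direct transcription of the proof of \cref{lem:FloerUniruling}.
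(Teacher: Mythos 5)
There is a genuine gap at the step where you feed the vanishing of $\widehat{SH}(M_{a-\delta} \subset M; f,\tau)\otimes\Lambda$ into the machinery of \cref{lem:InitialBE}/\cref{lem:TelKillingLemma}. That group is computed from a \emph{cofinal} sequence in $\sH^+(M_{a-\delta}\subset M;f,\tau)$, and cofinality means pointwise domination of every member of that family. Members are only constrained to lie below $f_{K}$ on $K=M_{a-\delta}$ and to have slope $\tau$ near $\partial M$; on the intermediate region $a-\delta<r<1$ they may be arbitrarily large (this is why the paper's own slope-$\tau$ cofinal sequences, see the footnote in \cref{subsec:ActionCompletedCohomology}, sit at the value $n$ on a plateau before bending to slope $\tau$). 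Your sequence $H_\nu$ has a \emph{fixed} radial profile with $h'<2\kappa$ on all of $[a-\delta,1]$, hence is uniformly bounded by roughly $2\kappa$, and therefore cannot be cofinal. Consequently $\widehat{\Tel}(\sC\sF(\{H_\nu\}))$ does not compute $\widehat{SH}(M_{a-\delta}\subset M;f,\tau)$, and \cref{lem:TelKillingLemma} gives you no cochain $y$ with $d(y)=\lambda\cdot p_0+\cdots$ in your complexes: the analogue of \cref{lem:InitialBE} is exactly what is missing. If you instead run the argument with a genuinely cofinal slope-$\tau$ sequence, its members must climb to arbitrarily large values between $\partial M_{a-\delta}$ and the boundary collar, so (if radial there) their slopes cross every multiple of $\kappa$ and their complexes contain orbits of arbitrarily high winding; \cref{lem:TopologicalFiltration} then pins only $r(x_+)>a-\delta+\varepsilon$, not the winding, and the extra conclusion you want no longer follows.

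The paper's proof takes a different route precisely to avoid this tension: it does not try to exclude high-winding orbits from the complex, but exhibits an explicit primitive of the point class supported on winding-one orbits. With the split structure $J_0=j\oplus J_F$ and $H=c\cdot\exp(|\cdot|^2)+f$ the Floer data decouples, $CF(H,J_0)\cong CF(f,J_F)\otimes CF(c\cdot\exp(|\cdot|^2),j)$, and an explicit analysis of the second factor shows the constant orbit at $0$ bounds an orbit winding once; by K\"unneth the point class of $M$ bounds a cochain $y$ made of winding-one orbits, and \cref{lem:TopologicalFiltration} keeps the winding from increasing under the continuations and deformations performed in \cref{lem:FloerUniruling}. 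To salvage your fixed-slope approach you would have to prove directly (for instance by this same K\"unneth computation at slope $\tau\in(\kappa,2\kappa)$) that $p_0$ is exact in your bounded-slope complex; the stable-displaceability vanishing of \cref{prop:PropertiesOfSH} \cref{prop:VanishingForStDisp} does not supply this, and once you do supply it you are essentially reproducing the paper's argument rather than deriving the statement from the vanishing theorem.
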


So the holomorphic curves in \cref{prop:DiskUniruling} all have an irreducible component (the one that contains the boundary) whose degree is one when projected to $\IC$ (since the Gromov-Floer compactness used in the proof of \cref{prop:ConvergenceToPearls} preserves this winding).

\begin{proof}
To establish this winding condition, one needs to understand the bounding cochain $y$ in the proof of \cref{lem:FloerUniruling}.  Let $J_0 = j \oplus J_F$ be the product almost complex structure (which is admissible), where $j$ is the standard complex structure on $\IC$ and $J_F$ is an $\Omega$-compatible almost complex structure on $F$.  Fix a $C^2$-small Morse function $f$ on $F$ that is Morse-Smale with respect to the metric $\omega_F(\cdot, J_F \cdot)$ and let $H = c \cdot \exp(|\cdot|^2) + f$, where $c$ is some small positive constant.  The associated Hamiltonian vector field is given by the sum of the $\omega_F$-dual of $-df$ and the $\omega_\IC$-dual of $-d (c \cdot \exp(|\cdot|^2))$.  So the Floer data decouples, and we obtain a Kunneth decomposition
\[ CF(H,J_0) = CF(f,J_F) \otimes CF(c \cdot \exp(|\cdot|^2), j).\]
Here, the left-hand-side computes the symplectic cohomology of $M$ as in \cite{Seidel_ABiasedViewOfSymplecticCohomology}.  But now an explicit analysis of the complex $CF(c \cdot \exp(|\cdot|^2), j)$ shows that it is acyclic and the orbit at $0$ is the boundary of an orbit that corresponds to a Reeb orbit (in $\IC$) that winds once around the origin.  So via Kunneth, the point class of $f$ is the boundary of an orbit $y$ that corresponds to a Reeb orbit (in $M$) that winds once around the origin.  Using this data, one runs \cref{lem:FloerUniruling} and studies the bounding cochain $y$.  One can arrange to ``turn off'' $H$ away from the Reeb orbits of the Hamiltonian $H$, and \cref{lem:TopologicalFiltration} ensures that the continuation of $y$ under our deformations never has image composed of orbits with winding numbers greater than $1$.  In this manner, we get the additional conclusion.
\end{proof}


\section{Proof of main theorem}\label{sec:ProofOfTheorem}

We do not delay giving the proofs of our main results any further.  We will pull results from later sections; however, these results can easily be black boxed for the moment.

\begin{notn}
\begin{enumerate}
	\item Let $\pi: \overline{M} \to \IP^1$ be a proper morphism of smooth projective varieties that is smooth over $\IP^1 \smallsetminus \{0,\infty\}$.
	\item Let $M \coloneqq \overline{M} \smallsetminus \pi^{-1}(\infty)$ and also write $\pi: M \to \IC$ for the restriction of $\pi$ to $M$.
	\item Let $p$ be a point in $M$.
	\item Let $M_a \coloneqq \pi^{-1}(\ID_a)$, where $\ID_a$ is the disc of radius $a$ in $\IC$.
\end{enumerate}
\end{notn}

To orient the reader: We prove \cref{thm:ActualMainTheorem} and \cref{thm:ActualMainTheoremCY} by using \cref{prop:DiskUniruling} to produce holomorphic curves with boundaries in $M$ and then apply \cref{lem:FishCompactness} to turn these holomorphic curves with boundaries into closed holomorphic curves in $\overline{M}$ that have the desired properties.  We divide this into a sequence of lemmas, which we discuss below; however, we defer the proofs until the later subsections.

We first define a space in which we will apply \cref{lem:FishCompactness}.  We will use the degeneration of $\overline{M}$ to the normal cone of $\pi^{-1}(\infty)$ (or rather a resolution of this possibly singular space.).  This will be the $P$ given in the lemma below. 

\begin{lem}\label{lem:DegenerationSpace}
There exists a smooth, quasi-projective variety $P$, equipped with a proper, surjective, holomorphic map $\pi_{P}: P \to \IC$ which satisfies:
\begin{enumerate}
	\item $\pi_{P}^{-1}(\IC^\times)$ is isomorphic to $\IC^\times \times \overline{M}$ with $P_z \coloneqq \pi_{P}^{-1}(z) \cong \overline{M}$ for $z \neq 0$, and
	\item $\pi_{P}^{-1}(0) = F \cup E$, where $E$ is a (possibly singular) subscheme and $F$ is a (possibly singular) proper subscheme that is birational to $\overline{M}$ via a morphism that is an isomorphism over $F \smallsetminus (E \cap F) \to M$,
\end{enumerate}
and a map $h: P \to \IP^1$, which satisfies:
\begin{enumerate}
	\item $F \subset h^{-1}(0)$, and
	\item $h$ restricted to $\pi_P^{-1}(\IC^\times) \cong \IC^\times \times \overline{M}$ is identified with the composition
	\[ \xymatrix{\IC^\times \times \overline{M} \ar[r]^{pr_{\overline{M}}} & \overline{M} \ar[r]^{\pi} & \IP^1}.\]
\end{enumerate}
We denote the restriction of $h$ to $P_z$ by $h_z: P_z \to \IP^1$. 
\end{lem}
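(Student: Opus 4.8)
The construction proceeds by degenerating $\overline{M}$ to the normal cone of the divisor $D \coloneqq \pi^{-1}(\infty)$. Concretely, I would first pass to a log resolution so that $D$ becomes a simple normal crossings divisor in a smooth projective variety $\overline{M}'$; since $\pi$ is already smooth away from $\infty$, this resolution changes nothing over $\IC$, so $\overline{M}' \smallsetminus (\text{resolution of }D) \cong M$ and the composite map to $\IP^1$ is unchanged on $M$. Then form the deformation to the normal cone $\mathcal{P}_0 \coloneqq \mathrm{Bl}_{D \times \{0\}}(\overline{M}' \times \IC)$, with its projection $q\colon \mathcal{P}_0 \to \IC$ obtained by composing the blow-down with the second projection. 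Standard properties of the deformation to the normal cone give $q^{-1}(\IC^\times) \cong \IC^\times \times \overline{M}'$ and $q^{-1}(0) = \widetilde{\overline{M}'} \cup \mathbb{P}(N \oplus \mathcal{O})$, where $\widetilde{\overline{M}'}$ is the blow-up of $\overline{M}'$ along $D$ and the other component is the projective completion of the normal cone of $D$. Finally, resolve singularities of $\mathcal{P}_0$ away from $q^{-1}(\IC^\times)$ to obtain a smooth quasi-projective $P$ with $\pi_P \colon P \to \IC$; the central fibre becomes $F \cup E$, with $F$ the proper transform of $\widetilde{\overline{M}'}$ (birational to $\overline{M}$, and an isomorphism over $M$) and $E$ the rest. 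This establishes the first pair of requirements.

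\textbf{Construction of $h$.} The key point is to build $h\colon P \to \IP^1$ with $F \subset h^{-1}(0)$. I would define $h$ as a rational map and then check it extends to a genuine morphism on $P$. On $q^{-1}(\IC^\times) \cong \IC^\times \times \overline{M}'$, set $h = \pi \circ pr_{\overline{M}}$ (via $\overline{M}' \to \overline{M}$); this already has the required form on the $\IC^\times$-part. To force $F \subset h^{-1}(0)$, I would instead take $h$ to be the rational function $t^{-k} \cdot (\pi \circ pr_{\overline{M}})$ for a suitable positive integer $k$, where $t$ is the coordinate on the $\IC$-base: the divisor of $t$ in $P$ contains $F$ with positive multiplicity (since $F$ is a component of the central fibre), while $t$ is a unit away from $t = 0$, so multiplying by $t^{-k}$ does not change $h$ on $q^{-1}(\IC^\times)$. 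For $k$ chosen large enough that the pole of $(\pi \circ pr_{\overline{M}})$ along $D$ (and hence along its proper transforms inside $P$) is absorbed, $h$ has no poles, i.e.\ lands in $\IP^1$, and vanishes along $F$. After a further blow-up to eliminate indeterminacy loci (all lying in $\pi_P^{-1}(0)$, which can be absorbed into $E$), $h$ becomes a morphism with the stated properties.

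\textbf{Main obstacle.} The delicate step is verifying that $h$ extends to an honest morphism $P \to \IP^1$ (not merely a rational map) with indeterminacy confined to the central fibre, and simultaneously confirming that $F \subseteq h^{-1}(0)$ scheme-theoretically. The issue is that $\pi$ blows up along $D$, so the rational map $\pi \circ pr_{\overline{M}}$ has poles along the proper transform of $D \times \IC$; one must compute the orders of vanishing/poles of the pullback of the coordinate functions on $\IP^1$ along each exceptional divisor introduced by the deformation to the normal cone and by the resolution, then choose $k$ and perform enough additional blow-ups so that the two sections $(t^{-k}\pi_0 : t^{-k}\pi_1)$ cutting out $h$ (where $[\pi_0:\pi_1]$ are homogeneous coordinates pulling back $\pi$) become base-point-free off $\pi_P^{-1}(0)$. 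The bookkeeping here is essentially the intersection theory of the normal-cone degeneration, and absorbing all new exceptional divisors into $E$ (checking they are disjoint from $F \smallsetminus (E\cap F) \cong M$) is what makes the second and third clauses of the lemma hold. Everything else — properness of $\pi_P$, the product structure over $\IC^\times$, and the identification of $h$ on $q^{-1}(\IC^\times)$ with $\pi \circ pr_{\overline{M}}$ — follows formally from standard facts about blow-ups and the deformation to the normal cone.
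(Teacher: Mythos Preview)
Your construction of $P$ via the deformation to the normal cone followed by a resolution of singularities matches the paper's approach (the preliminary log resolution of $D$ is unnecessary, since the paper simply resolves the total space $P'$ at the end, but this is harmless). The genuine gap is in your construction of $h$.

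The requirement is that $h$ restrict to $\pi\circ pr_{\overline{M}}$ on $\IC^\times\times\overline{M}$ \emph{and} send $F$ to $0$. Your proposed map $t^{-k}\cdot(\pi\circ pr_{\overline{M}})$ does not satisfy the first condition: if $\pi$ is read as an affine coordinate, then on $\IC^\times\times\overline{M}$ the map $(z,m)\mapsto z^{-k}\pi(m)$ is not $\pi\circ pr_{\overline{M}}$; and if you mean scaling both homogeneous coordinates $[\pi_0:\pi_1]$ by $t^{-k}$, nothing changes at all. So the twist by $t^{-k}$ cannot simultaneously preserve $h$ over $\IC^\times$ and force $F\subset h^{-1}(0)$. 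Moreover, the naive composition $\pi\circ pr_{\overline{M}}\circ\beta$ is already a morphism on $P'$ (no indeterminacy to resolve), but it sends $F'$ onto all of $\IP^1$, not to a point.

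The paper resolves this by an auxiliary construction: it also forms the deformation to the normal cone of $\{\infty\}\subset\IP^1$, namely $B=\mathrm{Bl}_{(0,\infty)}(\IC\times\IP^1)$, and observes (via the universal property of blow-ups, \cite[II.7.15]{Hartshorne_AG}) that there is a morphism $\pi_{Bl}\colon P'\to B$ lying over $\Ione\times\pi$. The central fibre of $B$ is $F_0\cup E_0$ with $F_0$ a $(-1)$-curve, which can be blown down via $\beta_1\colon B\to \IC\times\IP^1$. Then $h\coloneqq pr_{\IP^1}\circ\beta_1\circ\pi_{Bl}\circ S$ is automatically a morphism, agrees with $\pi\circ pr_{\overline{M}}$ over $\IC^\times$ (since $\beta_1$ is an isomorphism there), and collapses $F$ to a point because $\pi_{Bl}(F)\subset F_0$ and $\beta_1$ contracts $F_0$. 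This contraction step is the key idea your proposal is missing.
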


For each $k>0$ and any K\"{a}hler form $\Omega_P$ on $P$, $h^{-1}(\ID_k) \subset P$ satisfies the setup \cref{lem:FishCompactness}.  So if there exists a sequence of holomorphic curves with boundaries in the fibres $h_{1/\nu}^{-1}(\ID_k)$ that satisfy the conditions of \cref{lem:FishCompactness}, then this compactness will give closed holomorphic curves in $\overline{M}$.  The precise sequences of holomorphic curves that we construct are below.

\begin{lem}\label{lem:SequenceOfCurves}
Given a K\"{a}hler form $\Omega_P$ on $P$, there exists a constant $k>0$ and sequences of (possibly disconnected) genus zero, compact holomorphic curves $u_\nu: \Sigma_\nu \to Q$ with boundaries, where $Q \coloneqq h^{-1}(\ID_k)$.  These curves satisfy:
\begin{enumerate}
	\item $\pi_P \circ u_\nu \equiv z_\nu \in \IC$ with $z_\nu \to 0$,
	\item $u_\nu(x_\nu) \to p \in F \smallsetminus E \cap F$ for some $x_\nu \in \Sigma_\nu$,
	\item $u_\nu$ is non-constant on each of its components,
	\item $u_\nu(\partial \Sigma_\nu) \subset \partial Q$, and
	\item the energies of the $u_\nu$ are uniformly bounded in $\nu$ independent of $p$.
\end{enumerate}
Moreover, if $\pi$ is smooth over $0$, then a component of $u_\nu$ with boundary projects to $\ID_k$ via a degree one map.
\end{lem}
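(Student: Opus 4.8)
The plan is to obtain each $u_\nu$ by transporting into the fibre $P_{z_\nu}$ of $\pi_P$ a holomorphic disk supplied by \cref{prop:DiskUniruling}, and to use the $\Omega_P$-symplectic parallel transport of $\pi_P$ to force all of the energies to be bounded by one constant. \emph{Step~1.} Let $\omega$ be the K\"ahler form induced by $\Omega_P$ on $P_1\cong\overline{M}$ and $\omega_M$ its restriction to $M=\overline{M}\smallsetminus\pi^{-1}(\infty)$. First I would use the self-embedding result of \cref{sec:ComplementEmbeddings} to deform $\omega_M$ within its cohomology class, pushing $M$ away from $\pi^{-1}(\infty)$, and then the Hamiltonian-fibration picture of \cref{sec:HamFibs} to straighten a collar of the boundary into a symplectic mapping cylinder; this produces a convex symplectic domain $(\sM_c,\Omega_c,\lambda_c)$ diffeomorphic to $\pi^{-1}(\ID_c)$ together with, for each $z\in\IC^\times$, a symplectic embedding $\iota_z$ of the core $\pi^{-1}(\overline{\ID_{a-\delta}})\subseteq P_z$ onto the subdomain $\sM_{a-\delta}\subseteq\sM_c$ intertwining $|h|=|\pi|$ with the radial coordinate $r$, obtained by composing the $\Omega_P$-parallel transport of $\pi_P$ (from $P_z$ to $P_1$ along a fixed path) with the collar-straightened identification; here $0<\delta<a<c$ are suitable constants with $a$ large enough that $\pi(p)\in\ID_{a-\delta}$. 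In the setting of \cref{thm:ActualMainTheorem}, where $\pi$ is smooth over $0$, one may further take $\sM_c=\ID_c\times F$ with $F$ the smooth fibre. The crucial input is the vanishing $\widehat{SH}(\sM_a\subset\sM_c;0)\otimes\Lambda=0$: in the product case it is immediate from \cref{prop:PropertiesOfSH} \cref{prop:VanishingForStDisp}, since every compact subset of $\IC\times F$ is displaceable in $\ID_c\times F$ for $c$ large, and in the Calabi--Yau case it follows by combining the stable displaceability of $\pi^{-1}(0)$ (\cref{sec:DisDiv}), \cref{prop:PropertiesOfSH} \cref{prop:VanishingForStDisp}, and the rescaling isomorphism of \cref{sec:RescalingIso}.

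\emph{Step~2.} Let $E>0$ be the constant \cref{prop:DiskUniruling} attaches to $(\sM_c,\Omega_c,\lambda_c)$ and $\delta$; the point is that $E$ depends on neither the compatible almost complex structure nor the point. Choose $z_\nu\to0$ in $\IC^\times$, let $J_\nu$ be the push-forward under $\iota_{z_\nu}$ of the complex structure of $P_{z_\nu}$ restricted to the core (an $\Omega_c$-compatible almost complex structure on $\sM_{a-\delta}$), and let $p_\nu:=\iota_{z_\nu}\big((z_\nu,p)\big)\in\intrr(\sM_{a-\delta})$, where $(z_\nu,p)\in\{z_\nu\}\times M\subset P_{z_\nu}$. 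Each set $\sI(J_\nu,p_\nu)\subseteq(0,a-\delta]$ furnished by \cref{prop:DiskUniruling} is open, and, being a set of regular values of the radial coordinate on a holomorphic curve (Sard's theorem, in the proof of \cref{prop:DiskUniruling}), it is dense in a sub-interval containing a fixed $[c_0,a-\delta]$ with $c_0>|\pi(p)|$; so by the Baire category theorem there is a common $a^{\ast}\in\bigcap_\nu\sI(J_\nu,p_\nu)$. Applying \cref{prop:DiskUniruling} with $a'=a^{\ast}$ yields, for each $\nu$, a genus zero, compact, $J_\nu$-holomorphic curve $v_\nu\colon\Sigma_\nu\to\sM_{a^{\ast}}$, non-constant on each component, with non-empty boundary on $\partial\sM_{a^{\ast}}$, meeting $p_\nu$, and of $\Omega_c$-energy at most $E$.

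\emph{Step~3.} Set $k:=a^{\ast}$, $Q:=h^{-1}(\ID_k)$, and $u_\nu:=\iota_{z_\nu}^{-1}\circ v_\nu$, a curve in $P_{z_\nu}$ valued in $P_{z_\nu}\cap h^{-1}(\ID_k)\subseteq Q$ since $\iota_{z_\nu}$ intertwines $|h|$ with $r$ on the core. As $\iota_{z_\nu}$ is a symplectomorphism onto its image carrying the complex structure of $P_{z_\nu}$ to $J_\nu$, the map $u_\nu$ is genuinely holomorphic; it is genus zero, non-constant on each component, $\pi_P\circ u_\nu\equiv z_\nu$, $u_\nu(\partial\Sigma_\nu)\subseteq h^{-1}(\partial\ID_k)=\partial Q$, and its $\Omega_P$-energy equals the $\Omega_c$-energy of $v_\nu$, hence is at most $E$ uniformly in $\nu$ and independently of $p$. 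Finally $u_\nu$ passes through $(z_\nu,p)$, which tends to the point of $F\smallsetminus E\cap F$ corresponding to $p$ under the isomorphism $F\smallsetminus E\cap F\cong M$ of \cref{lem:DegenerationSpace}, so there is $x_\nu\in\Sigma_\nu$ with $u_\nu(x_\nu)\to p$. This gives (1)--(5). For the last clause, when $\pi$ is smooth over $0$ one runs \cref{prop:ProductUniruling} in place of \cref{prop:DiskUniruling}: the positive asymptote of the underlying Floer trajectory then corresponds to a Reeb orbit winding once around the origin of $\IC$, so the component of $v_\nu$ carrying the boundary, and hence the corresponding component of $u_\nu$, projects to $\ID_k$ via $h=\pi$ by a degree one map, the winding being preserved by the Gromov--Floer compactness underlying \cref{prop:ConvergenceToPearls}.

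\emph{Main obstacle.} The technical heart is the uniformity of Step~1: one must choose $\Omega_P$ and carry out the push-away and collar-straightening deformations so that the embeddings $\iota_z$ exist for all $z\in\IC^\times$, land in one fixed convex domain $\sM_c$, and intertwine $|h|$-sublevels with radial sublevels uniformly as $z\to0$ --- equivalently, so that symplectic parallel transport along $\pi_P$ does not distort the sublevels of $|h|$ uncontrollably as one approaches the singular fibre. This is precisely what the constructions of \cref{sec:ComplementEmbeddings} and \cref{sec:HamFibs} are designed to provide; granting it, the uniform energy bound is free from the $J$- and $p$-independence of $E$ in \cref{prop:DiskUniruling}, and the remainder is bookkeeping.
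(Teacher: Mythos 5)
Your route is the paper's: build the symplectic embeddings of the fibres into one fixed convex domain (this is exactly the content of \cref{lem:ModifiedParallelTransport} and \cref{lem:ModifiedParallelTransportCY}, which the paper's proof simply quotes), push the fibre complex structures forward, apply \cref{prop:DiskUniruling} (and \cref{prop:ProductUniruling} for the degree-one clause), and pull the curves back. The gap is in how you produce a single truncation radius valid for all $\nu$. You need a common point $a^\ast\in\bigcap_\nu\sI(J_\nu,p_\nu)$, but \cref{prop:DiskUniruling} only asserts that each $\sI(J,p)$ is a non-empty open subset of $(0,a-\delta]$ depending on $J$ and $p$; nothing forces these sets to intersect as $\nu$ varies. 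Your density-plus-Baire repair is not justified: inside the proof of \cref{prop:DiskUniruling}, $\sI$ comes from Sard applied to the radial coordinate along the curve component $u^\ell$ that escapes toward the collar, and that component need not pass anywhere near the marked point (the point constraint sits on a different component $u^1$), so its radial image need not contain a fixed interval $[c_0,a-\delta]$ with $c_0$ independent of $\nu$; in particular the condition ``$c_0>|\pi(p)|$'' conflates the $h$-coordinate of $p$ with the radial coordinate governing the truncation. A second unsupported step is the claimed intertwining of $|h|$-sublevel sets with $r$-sublevel sets by the embeddings: \cref{lem:ModifiedParallelTransport} and \cref{lem:ModifiedParallelTransportCY} only give symplectic embeddings whose image lies in $M_a$ (plus, in the smooth case, preservation of the zero fibre); they are assembled from parallel transport, the self-embedding of \cref{sec:ComplementEmbeddings}, and the flattening of \cref{sec:HamFibs}, none of which carries $h$-disks to radial disks. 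Without that intertwining, cutting $v_\nu$ at the radial level $r=a^\ast$ does not place the boundary of $u_\nu$ on $\partial Q=\partial h^{-1}(\ID_k)$, so conclusion (iv) fails.

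The paper sidesteps both issues by truncating in the $h$-variable rather than the radial one. For each $\nu$ it takes the curve $u_\nu'$ from \cref{prop:DiskUniruling}, arranged (via the constants $a<b<c$ of \cref{lem:ModifiedParallelTransport} and \cref{lem:ModifiedParallelTransportCY}) so that its boundary lies outside $M_a$ while $\psi_{z_\nu}$ has image inside $M_a$, and then applies Sard to the level sets of $k\mapsto(u_\nu')^{-1}\bigl(\psi_{z_\nu}(h_{z_\nu}^{-1}(\ID_k))\bigr)$ for $k\in[1,2]$: for each $\nu$ the good values of $k$ form a full-measure subset of $[1,2]$, so the countable intersection over $\nu$ is still of full measure, and one fixes a single $k$ there. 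The boundary of the trimmed curve then automatically maps to $h^{-1}(\partial\ID_k)\subset\partial Q$, with no need for a common element of the sets $\sI$ or any compatibility between $|h|$ and $r$. If you replace your uniformization in Steps 2--3 by this Sard-plus-countable-intersection argument in the $h$-variable, the rest of your proposal (the energy bound from the $J$- and $p$-independence of $E$, the point constraint, and the degree-one clause via \cref{prop:ProductUniruling} with the winding preserved) matches the paper's proof.
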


To construct these curves, we construct a symplectic form $\Omega$ on $M$ for which the end $M \smallsetminus M_a$ is modeled after a symplectic mapping cylinder for some $a>0$.  This gives the subsets $M_c$ for $c>a$ the structures of convex symplectic domains, see \cref{subsec:HamFibs_MappingTori}.  Given $\Omega$, we may define action complete symplectic cohomology for $(M_c,\Omega)$.  Under the additional assumption of either $\pi$ being smooth over $0$ or $M$ having vanishing first Chern class,
\[\widehat{SH}(M_b \subset M_c;0) \otimes \Lambda = 0,\]
where $c>b>a$.  We also construct a symplectic embedding of each fibre $(h_z^{-1}(\IC), \Omega_P|_{h_z^{-1}(\IC)})$ into $(M,\Omega)$ with image in $M_a$.  Via these embeddings, we push forward the almost complex structures of the fibres $h_z^{-1}(\IC)$ to $\Omega$-compatible almost complex structures on $M_a$.  Applying \cref{prop:DiskUniruling}, we produce holomorphic curves for these pushed forward almost complex structures.  Finally, to produce the afore mentioned curves in the fibres $h_z^{-1}(\IC)$, we pull-back the curves in $M_a$ along the embeddings and appropriately ``trim'' their domains using Sard's theorem to lie in some appropriate $h^{-1}(\ID_k)$.  The explicit description of the symplectic form $\Omega$ and the symplectic embeddings, which the reader should think of as some type of modified parallel transport maps, is given in the following two lemmas.  They correspond to \cref{thm:ActualMainTheorem} and \cref{thm:ActualMainTheoremCY} respectively.  We prove \cref{lem:ModifiedParallelTransport} in a later subsection of this section; however, we defer the proof of \cref{lem:ModifiedParallelTransportCY} to a later part of this exposition as it is quite a bit more involved.

\begin{lem}\label{lem:ModifiedParallelTransport}
Suppose that $\pi$ is smooth over $0$ with fibre $F$.  There exists a symplectic form $\Omega = \omega_{\IC} + \omega_F$ on $M \cong \IC \times F$, where $\omega_\IC$ is the standard symplectic form on $\IC$ and $\omega_F$ is a symplectic form on $F$, constants $c > b > a > 0$, and symplectic embeddings
\[\psi_z: (h_z^{-1}(\IC), \Omega_P|_{P_z}) \hookrightarrow (M, \Omega)\]
for each $z \in \IC^\times$ that satisfy
\begin{enumerate}
	\item $\Omega$ is integral,
	\item $\widehat{SH}(M_b \subset M_c; 0) \otimes \Lambda = 0$,
	\item $\psi_z(h_z^{-1}(\IC)) \subset M_a$, and
	\item $\psi_z(h_z^{-1}(0)) = \pi^{-1}(0) \cong 0 \times F$.
\end{enumerate}
\end{lem}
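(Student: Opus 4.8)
The plan is to take $\Omega$ to be an honest product form on $M$, to obtain the vanishing of action completed symplectic cohomology from a displaceability argument, and to build the maps $\psi_z$ in two stages: first push $(M,\Omega_P|_{P_z})$ away from $\pi^{-1}(\infty)$ using the self-embedding result of \cref{sec:ComplementEmbeddings}, then reconcile the resulting K\"{a}hler form with $\Omega$ by a fibred Moser argument as in \cref{sec:HamFibs}. For the target form: since $\pi$ is smooth away from $\infty$, the restriction $\pi\colon M\to\IC$ is a proper holomorphic submersion, hence by Ehresmann a smooth fibre bundle, and as $\IC$ is contractible I would fix a diffeomorphism $M\cong\IC\times F$ with $\pi=\mathrm{pr}_\IC$, under which $M_c=\ID_c\times F$. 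The forms $\Omega_P|_{P_z}$ for $z\neq0$ all represent one class $\kappa\in H^2(\overline M;\IR)$ (their fibres are homologous and $\IC^\times$ is connected), and $\kappa$ restricts to a K\"{a}hler class on $F$; taking $\Omega_P$ integral (as one may, $P$ being quasi-projective, and as is harmless since in the applications $\Omega_P$ only feeds into the compactness input \cref{lem:FishCompactness}) I would fix an integral K\"{a}hler form $\omega_F$ on $F$ with $[\omega_F]=\kappa|_F$, let $\omega_\IC$ be the standard form on $\IC$, and set $\Omega=\omega_\IC+\omega_F$. Since $H^2(M)\cong H^2(F)$ by restriction to a fibre, $[\Omega|_M]$ corresponds to $[\omega_F]$ and $\Omega$ is integral, giving (1); and near $\{|w|=c\}$ the form $\Omega$ exhibits a collar of $\partial M_c=S^1\times F$ as the symplectization of the stable Hamiltonian structure $(\omega_F,d\theta)$, i.e.\ of the mapping cylinder of $\mathrm{id}_F$, so each $(M_c,\Omega)$ is a convex symplectic domain (\cref{defn:ConvexSymplecticDomain}; cf.\ \cref{subsec:HamFibs_MappingTori}).

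For (2), fix $0<a<b$ arbitrarily and choose $c>b$ large enough that the disc $\ID_b$ is Hamiltonianly displaceable inside $\ID_c$ (true for $c\gg b$, by cutting off a translation of $\IC$). Then $M_b=\ID_b\times F$ is displaceable in $M_c=\ID_c\times F$ by such a Hamiltonian diffeomorphism times $\mathrm{id}_F$, hence stably displaceable, and \cref{prop:PropertiesOfSH} \cref{prop:VanishingForStDisp} gives $\widehat{SH}(M_b\subset M_c;0)\otimes\Lambda=0$.

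For (3) and (4), fix $z\in\IC^\times$, identify $P_z\cong\overline M$ so that $h_z=\pi$ and $h_z^{-1}(\IC)=M$, and write $\omega_z\coloneqq\Omega_P|_{P_z}$. First I would apply the self-embedding result of \cref{sec:ComplementEmbeddings} to $(\overline M,\omega_z)$ and a normal crossings resolution of $\pi^{-1}(\infty)$ to get a symplectic embedding $j_z\colon(M,\omega_z)\hookrightarrow(M,\omega_z)$ with $j_z(M)$ relatively compact inside $M_{a/2}$ and with $j_z$ equal to the identity near $\pi^{-1}(0)$ (which lies in the region it leaves untouched). Next, regard $\pi\colon M_a\to\ID_a$ as a Hamiltonian fibration with fibre $F$ for both $\Omega$ and $\omega_z$: they are cohomologous on $M_a$, and restrict to symplectic forms on each fibre in the common class $\kappa|_F=[\omega_F]$. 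By the stability of Hamiltonian fibrations over a disc---a fibred Moser argument, cf.\ \cref{sec:HamFibs}---there is a fibre-preserving diffeomorphism $\Psi$ of $M_a$ with $\Psi^*\Omega=\omega_z$, so $\pi\circ\Psi=\pi$ and in particular $\Psi(\pi^{-1}(0))=0\times F$. Setting $\psi_z\coloneqq\Psi\circ j_z$ then gives $\psi_z(M)\subset\Psi(M_{a/2})=M_{a/2}\subset M_a$, which is (3), and $\psi_z(\pi^{-1}(0))=\Psi(\pi^{-1}(0))=0\times F=\pi^{-1}(0)$, which is (4).

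The displaceability step and the product bookkeeping are routine; the real content lies in the two imported ingredients. The self-embedding theorem of \cref{sec:ComplementEmbeddings} is what allows the image of $\psi_z$ to be squeezed into the relatively compact $M_a$ while still controlling the central fibre, and the recognition of the end of $(M_c,\Omega)$ as a symplectic mapping cylinder, together with the fibred Moser step of \cref{sec:HamFibs}, is what reconciles the $z$-dependent K\"{a}hler form $\omega_z$ with the single product form $\Omega$. I expect the most delicate point to be arranging both of these deformations to restrict to the identity (or at worst a symplectomorphism) on $\pi^{-1}(0)$, so that (4) can be met simultaneously with the squeezing and the matching; the choice $[\omega_F]=\kappa|_F$ is forced precisely by this requirement together with integrality of $\Omega$.
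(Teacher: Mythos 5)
Your overall architecture (product form $\Omega=\omega_\IC+\omega_F$, displaceability of $M_b$ in $M_c=\ID_c\times F$ for $c\gg b$, push $M$ away from $\pi^{-1}(\infty)$ and then match forms) is the paper's, but the step where you reconcile the forms contains a genuine error. You claim that, since $\Omega$ and $\omega_z$ are cohomologous on $M_a$ and fibrewise in the same class, a ``fibred Moser / stability'' argument produces a fibre-preserving diffeomorphism $\Psi$ of $M_a$ with $\Psi^*\Omega=\omega_z$. No such diffeomorphism exists in general: you fixed $a$ in advance, so the total volumes $\int_{M_a}\Omega^n=n\pi a^2\int_F\omega_F^{n-1}$ and $\int_{M_a}\omega_z^{\,n}$ have no reason to agree, which already rules out a diffeomorphism of $M_a$ pulling one form back to the other. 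Moreover the linear interpolation between $\omega_z$ and $\Omega$ need not be fibrewise symplectic (the fibrewise complex structures vary, and $\omega_F$ need not tame them), the total interpolating forms can degenerate, and on a manifold with boundary the Moser flow can escape. What is actually true, and what the paper proves in \cref{sec:HamFibs} (\cref{prop:FlatteningOverC} and the lemmas feeding \cref{prop:FlatteningOverCStar}), is only a symplectic \emph{embedding} of the whole fibration into the product $(F\times\IC,\omega_F+\omega_\IC)$ fixing the central fibre, built from explicit radial deformations whose Moser flows must be checked by hand to be complete; it is not fibre-preserving onto $M_a$, and the constant $a$ is chosen (enlarged) \emph{after} this embedding so that the relevant compact image lands in $M_a$. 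Relatedly, your first stage misquotes \cref{prop:KahlerComplementEmbedding}: it gives an embedding of $(M,\omega_z)$ into $(M,\widetilde\Omega_z)$ for a \emph{modified} K\"ahler form, not a self-embedding into $(M,\omega_z)$, and it only guarantees the image avoids \emph{some} neighborhood of the divisor — you cannot prescribe that it lies in $M_{a/2}$ for an $a$ fixed beforehand.

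A second gap is uniformity in $z$. The lemma asks for a single $a$ with $\psi_z(h_z^{-1}(\IC))\subset M_a$ for \emph{all} $z\in\IC^\times$, but you run the squeezing and matching separately for each $z$ with the $z$-dependent forms $\omega_z$ (which degenerate as $z\to0$ under the product identification of $\pi_P^{-1}(\IC^\times)$ with $\IC^\times\times\overline M$), so your constants a priori depend on $z$. The paper sidesteps this by first applying symplectic parallel transport, corrected via \cite[Lemma 5.16]{McLean_GrowthRate} so that $h_z^{-1}(0)$ and $h_z^{-1}(\infty)$ are carried to $h_1^{-1}(0)$ and $h_1^{-1}(\infty)$, to identify every $(P_z,\Omega_z)$ with the single fibre $(\overline M,\Omega_1)$; the complement embedding and the flattening embedding are then performed once, and $\psi_z=\mu\circ\psi\circ\varphi_z$ has image independent of $z$. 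To repair your argument you should adopt this order of operations: reduce to one fibre by corrected parallel transport, apply \cref{prop:KahlerComplementEmbedding} and \cref{prop:FlatteningOverC} once (taking $\omega_F=\widetilde\Omega|_F$), and only then choose $a<b<c$.
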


\begin{lem}\label{lem:ModifiedParallelTransportCY}
Suppose that $M$ has vanishing first Chern class.  There exists a symplectic form $\Omega$ on $M$, constants $c > b > a > 0$, and symplectic embeddings
\[\psi_z: (h_z^{-1}(\IC), \Omega_P|_{P_z}) \hookrightarrow (M, \Omega)\]
for each $z \in \IC^\times$ that satisfy
\begin{enumerate}
	\item $\Omega$ is integral,
	\item $(M \smallsetminus M_a, \Omega)$ is symplectomorphic to a symplectic mapping cylinder, whose associated Reeb vector field is non-degenerate,
	\item $\widehat{SH}(M_b \subset M_c; 0) \otimes \Lambda = 0$, and
	\item $\psi_z(h_z^{-1}(\IC)) \subset M_a$.
\end{enumerate}
\end{lem}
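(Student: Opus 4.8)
The plan is to assemble the geometric inputs of \cref{sec:ComplementEmbeddings}, \cref{sec:HamFibs}, \cref{sec:RescalingIso}, and \cref{sec:DivisorModels}, together with McLean's stable displaceability result: the statement is essentially the combination of a complement self-embedding, the Hamiltonian-fibration/mapping-cylinder correspondence, and a rescaling isomorphism forcing the vanishing of action completed symplectic cohomology.

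First I would resolve $\pi^{-1}(\infty)$ so that it becomes a simple normal crossings divisor $D$ in a smooth projective variety birational to $\overline{M}$; since this modification is an isomorphism away from $\pi^{-1}(\infty)$, neither $M$ nor the hypothesis $c_1(M)=0$ changes. Picking a K\"{a}hler form of rational class and applying the self-embedding of \cref{sec:ComplementEmbeddings} to the pair consisting of the variety and $D$, I get a symplectic form on $M$ in the same class together with a symplectic isotopy pushing $M$ into $\pi^{-1}(\ID_c)$ for some $c>0$. Using \cref{sec:HamFibs} I would then realize $\pi\colon\pi^{-1}(\ID_c)\to\ID_c$ as a Hamiltonian fibration with a single singular fibre over $0$, and, applying the straightening construction of \cref{subsec:HamFibs_MappingTori} over an annulus $\{a\le|z|\le c\}$, deform the symplectic form on a collar of the boundary so that $M\smallsetminus M_a$ becomes the symplectic mapping cylinder of the monodromy symplectomorphism $\phi$ of the generic fibre; a further $C^\infty$-small perturbation of $\phi$ (equivalently of the form in the collar) makes the associated Reeb vector field non-degenerate. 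This yields a symplectic form $\Omega$ on $M$ satisfying (2) and making each $M_{c'}$, $c'\in(a,c]$, a convex symplectic domain in the sense of \cref{defn:ConvexSymplecticDomain}; rescaling the K\"{a}hler data at the outset arranges integrality of the resulting forms, giving (1). Fix $a<b<c$. For the embeddings, \cref{lem:DegenerationSpace} gives that for $z\neq0$ the fibre $h_z^{-1}(\IC)$ is biholomorphic to $M$; symplectic parallel transport for $\Omega_P$ over $\IC^\times$ identifies $(h_z^{-1}(\IC),\Omega_P|_{P_z})$ with $(h_1^{-1}(\IC),\Omega_P|_{P_1})$, and composing with the self-embedding above produces symplectic embeddings $\psi_z\colon(h_z^{-1}(\IC),\Omega_P|_{P_z})\hookrightarrow(M,\Omega)$ with image in $M_a$, which is (4).

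It remains to establish the vanishing (3). By \cite[Corollary~6.21]{McLean_BirationalCalabiYauManifoldsHaveTheSameSmallQuantumProducts} and \cref{sec:DisDiv}, since $c_1(M)=0$ the fibre $\pi^{-1}(0)$ is stably displaceable inside $M_c$ (after enlarging $c$ if necessary), hence so is $M_\varepsilon$ for $\varepsilon>0$ small, and \cref{prop:PropertiesOfSH} \cref{prop:VanishingForStDisp} gives $\widehat{SH}(M_\varepsilon\subset M_c;0)\otimes\Lambda=0$. To pass from $M_\varepsilon$ to $M_b$ I would use the rescaling morphism of \cref{sec:RescalingIso},
\[ \widehat{SH}(M_b\subset M_{c+b-\varepsilon};0)\otimes\Lambda\longrightarrow\widehat{SH}(M_\varepsilon\subset M_c;0)\otimes\Lambda, \]
which radially rescales the Hamiltonian vector fields and sends an orbit $x$ to $x\cdot T^{(b-\varepsilon)w(x)}$, where $w(x)$ is the winding number about the origin. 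Equipping the Floer complexes with the honest $\IZ$-grading afforded by $c_1(M)=0$ and using the divisor-adapted Hamiltonians of \cref{sec:DivisorModels}, whose $1$-periodic orbits satisfy $w(x)\le C_n$ whenever $\ind(x)\le n$ for constants $C_n$ independent of $x$, this rescaling morphism becomes degree-wise bounded, hence an isomorphism after completing and tensoring with $\Lambda$. Thus $\widehat{SH}(M_b\subset M_{c+b-\varepsilon};0)\otimes\Lambda=0$; relabeling $c+b-\varepsilon$ as $c$ gives (3).

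The hard part is this last step, and within it the construction in \cref{sec:DivisorModels} of Hamiltonians adapted to a normal crossings resolution of the singular fibre $\pi^{-1}(0)$ whose orbits have winding number bounded by a function of their Conley--Zehnder index, the index itself being controlled by the winding numbers about the components of the resolution together with the discrepancies. This is the only place where the Calabi--Yau hypothesis is genuinely used: without the $\IZ$-grading the rescaling morphism is not a bounded map of uncompleted complexes and fails to be an isomorphism after completion. The remaining work, namely choosing $a<b<c$ so that (1)--(4) hold simultaneously and tracking the rescaling, is routine once the mapping cylinder model and the self-embedding are in place.
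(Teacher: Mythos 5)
Your overall architecture (complement self-embedding, flattening to a mapping cylinder, stable displaceability near the zero fibre, rescaling isomorphism driven by divisor-adapted index-bounded Hamiltonians) matches the paper's, but the vanishing step has a genuine gap: you apply the rescaling isomorphism to the pairs $(M_b\subset M_{c+b-\varepsilon})$ and $(M_\varepsilon\subset M_c)$ and justify its degree-wise boundedness with the Hamiltonians of \cref{sec:DivisorModels}. Those Hamiltonians are cofinal only for the divisor-model neighborhood $K=H^{-1}((-\infty,\delta])$ of $\pi^{-1}(0)$ --- they converge to $0$ on $K$ and to $+\infty$ on its complement --- so they compute $\widehat{SH}(K\subset\cdot)$, not $\widehat{SH}(M_\varepsilon\subset\cdot)$ or $\widehat{SH}(M_b\subset\cdot)$. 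Moreover \cref{lem:SHRescalingIso} only yields an isomorphism for the pair that admits a $\varphi$-scalable, index-bounded cofinal family, and it moves that same compact set: it identifies $\widehat{SH}(K\subset M_c;f,\tau)$ with $\widehat{SH}(K_\delta\subset M_{c+\delta};\widetilde{f},\tau)$, where $K_\delta$ is the outward translate of $K$ in the collar coordinate, which is not a sublevel set $M_b$. A cofinal family for $M_\varepsilon$ or $M_b$ would have to consist of radial Hamiltonians whose nonconstant orbits correspond to Reeb orbits of the mapping-torus stable Hamiltonian structure, and for those no index bound is available --- this is precisely the point of the remark closing the paper's proof --- so the morphism you display cannot be shown to be an isomorphism by the argument you give.

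The paper closes this gap with steps your proposal omits. It takes the stably displaceable set to be (a shrinking of) the divisor neighborhood $K$ itself, pushes the divisor Hamiltonians forward under the flattening and perturbs them (using the pseudo-Morse--Bott analysis of \cref{lem:PerturbWithCZ}) so that they become $\varphi$-scalable, have a fixed slope $\tau$ near the boundary, and are cofinal in $\sH^+(\mu(K)\subset M_{r_3},\Omega'';0,\tau)$ while retaining the Conley--Zehnder/winding bounds; it then applies \cref{lem:SHRescalingIso} to the pair $(\mu(K)\subset M_{r_3})$ to get vanishing for $((\mu(K))_\delta\subset M_{r_3+\delta})$, and finally transfers this to the sublevel pair $M_b\subset M_c$ by sandwiching $M_{r_3+1}\subset(\mu(K))_{\delta+1}$ and invoking unitality of restriction maps \cite[Proposition 2.5]{tonkonog2020superrigidity} together with the slope invariance of \cref{prop:PropertiesOfSH} \cref{prop:InvarianceOfSlope}. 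Your write-up follows the introduction's heuristic sketch, which elides exactly these points; without them the passage from stable displaceability of $\pi^{-1}(0)$ to $\widehat{SH}(M_b\subset M_c;0)\otimes\Lambda=0$ is not justified.
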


Assuming the above lemmas, we give the proofs of \cref{thm:ActualMainTheorem} and \cref{thm:ActualMainTheoremCY}.  They are analogous.  So we give them simultaneously.

\begin{proof}
Let $k$, $Q$, and $u_\nu$ be as in \cref{lem:SequenceOfCurves}.  We first discuss unirulings, and then discuss (multi)sections.

Each $u_\nu$ has a component with the point constrain at $x_\nu$ with $u_\nu(x_\nu) \to p$.  If for $\nu \gg 0$, these components are genus zero curves with boundaries in $\partial Q$, then by \cref{lem:FishCompactness}, there exists a non-constant rational curve $u: \Sigma \to \overline{M}$ with $u(x) = p$ for some $x \in \Sigma$.  If for $\nu \gg 0$, these components have empty boundaries, then applying the classical Gromov compactness to the $u_\nu$ gives a non-constant rational curve $u: \Sigma \to \overline{M}$ with $u(x) = p$ for some $x \in \Sigma$.  To obtain non-constant rational curves for points $p \in \pi^{-1}(\infty) = \overline{M} \smallsetminus M$, one observes that the curves produced above all have uniformly bounded energies.  Consequently, by the classical Gromov compactness theorem, we can obtain rational curves through all $p \in \overline{M}$.  This produces a uniruling.

Each $u_\nu$ has a component with boundary.  After fixing a marked point for $u_\nu$ on this component that contains the boundary that passes through $\pi^{-1}(0)$ (which occurs for winding number considerations), \cref{lem:DegenerationSpace} and \cref{lem:SequenceOfCurves} imply that $Q$ and $u_\nu$ satisfy the hypotheses of \cref{lem:FishCompactness}.  So by \cref{lem:FishCompactness}, there exists genus zero, compact, holomorphic curve $u: \Sigma \to \overline{M}$ with empty boundary such that the image of $u$ is connected and intersects both $\pi^{-1}(0)$ and $\pi^{-1}(\infty)$.  In particular, there exists an irreducible component of the domain of $u$ such that $\pi \circ u$ restricted to this irreducible component is non-constant.  This produces a (multi)section.  

Finally, when $\pi$ is smooth over $0$ as in \cref{thm:ActualMainTheorem}, we need to show that $u$ restricted to an irreducible component is an actual section.  It suffices to show that $u$ restricted to some irreducible component has degree one.  This follows from the additional conclusion of \cref{lem:SequenceOfCurves} and by noting that the degree is preserved in the limiting curve $u$.
\end{proof}

The remainder of this section is divided into three additional subsections, each of which contains a proof of one of the above three lemmas.

\subsection{Defining the degeneration space}

We prove \cref{lem:DegenerationSpace}.  But first, we discuss the degeneration of $\overline{M}$ to the normal cone of $\pi^{-1}(\infty) \subset \overline{M}$, see \cite[Chapter 5]{Fulton_IntersectionTheory}.

\begin{con}\label{con:DeformationToNormalCone}
Define
\[Z \coloneqq \{0\} \times \pi^{-1}(\infty) \subset \IC \times \overline{M}.\]
Define the blowup
\[\beta: P' \coloneqq Bl_{Z} ( \IC \times \overline{M}) \to \IC \times \overline{M}.\]
$P'$ is a (possibly singular) quasi-projective variety.  The map $\beta$ is referred to as the degeneration of $\overline{M}$ to the normal cone $C$ of $\pi^{-1}(\infty) \subset \overline{M}$.  It is an isomorphism over $(\IC \times \overline{M}) \smallsetminus Z$.  Post-composing $\beta$ with the projection to the first factor $\IC \times \overline{M} \to \IC$ gives a surjective, holomorphic map $\pi_{P'}: P' \to \IC$ such that $\pi_{P'}^{-1}(\IC^\times) \cong \IC^\times \times \overline{M}$.  In particular, $\pi^{-1}_{P'}(z) \cong \overline{M}$ for $z \in \IC^\times$.  Over $0$, $\pi_{P'}^{-1}(0) = F' \cup E'$ and the following holds.
\begin{enumerate}
	\item $E'$ is the exceptional divisor of the blowup.  It is isomorphic to the projective completion of the normal cone of $\pi^{-1}(\infty)$ in $\overline{M}$, that is, $E' \cong \IP(C\oplus \IC)$.
	\item $F'$ is isomorphic to the blow up of $\overline{M}$ along $\pi^{-1}(\infty)$.  So $F'$ is birational to $\overline{M}$ via $\beta|_{F'}: F' \to \overline{M}$, which is an isomorphism over $F' \smallsetminus (E' \cap F') \to M$.
	\item $E' \cap F'$ is given by the projectivization of the normal cone of $\pi^{-1}(\infty)$ in $\overline{M}$, that is, $E' \cap F' \cong \IP(C)$.  The inclusion $E' \cap F' \hookrightarrow E'$ is the inclusion of the hyperplane at infinity in $\IP(C \oplus \IC)$.  The inclusion $E' \cap F' \hookrightarrow F'$ is the inclusion of the exceptional divisor of the blowup $\beta|_{F'}: F' \to \overline{M}$.
\end{enumerate}
\end{con}

\begin{con}
Apply \cref{con:DeformationToNormalCone} to the identity $\IP^1 \to \IP^1$.  Define the blowup
\[ \beta_0: {B} \coloneqq Bl_{(0,\infty)}(\IC \times \IP^1) \to \IC \times \IP^1 \]
and the projection $\pi_{{B}}: {B} \to \IC$ as in \cref{con:DeformationToNormalCone}.

Write $\pi_B^{-1}(0) = F_0 \cup E_0$.  $F_0$ is a $(-1)$-curve.  So we may blow down $F_0$ to obtain a smooth, quasi-projective variety, which in this case is isomorphic to a Hirzeburch surface with the fibre over infinity removed.  This space is isomorphic $\IC \times \IP^1$.  Denote this blow-down map by $\beta_1: B \to \IC \times \IP^1$.
\end{con}

We now prove \cref{lem:DegenerationSpace}.

\begin{proof}
By \cite[II.7.15]{Hartshorne_AG}, we have a morphism $\pi_{Bl}: {P'} \to {B}$ and a commutative diagram
\[ \xymatrix{ \IC^\times \times \overline{M} \ar@{^{(}->}[r]  \ar[d]^{\Ione \times \pi} & {P'} \ar[r]^\beta \ar[d]^{\pi_{Bl}} & \IC \times \overline{M} \ar[d]^{\Ione \times \pi} \\ \IC^\times \times \IP^1 \ar@{^{(}->}[r]&  {B} \ar[r]^{\beta_0} & \IC \times \IP^1. \\ } \]
Since $\pi$ is proper, $\pi_{Bl}$ is proper.

${P'}$ is not necessarily smooth (smoothness could fail in $E'$).  By Hironaka's resolution of singularities \cite{Hironaka_ResolutionOfSingularities}, there exists a smooth, quasi-projective variety $P$ (which is the $P$ in the statement of the lemma) and a birational morphism $S: P \to P'$ that is an isomorphism over ${P'} \smallsetminus E'$.  This gives a commutative diagram
\[ \xymatrix{ \IC^\times \times \overline{M} \ar@{^{(}->}[r]  \ar[d]^{\Ione \times \pi} & {P} \ar[r]^{\beta \circ S} \ar[d]^{\pi_{Bl} \circ S} & \IC \times \overline{M} \ar[d]^{\Ione \times \pi} \\ \IC^\times \times \IP^1 \ar@{^{(}->}[r]&  {B} \ar[r]^{\beta_0} & \IC \times \IP^1. \\ } \]
Define:
\begin{enumerate}
	\item $\pi_{P} \coloneqq \pi_{B} \circ \pi_{Bl} \circ S = \pi_{{P'}} \circ S$,
	\item $h \coloneqq pr_{\IP^1} \circ \beta_1 \circ \pi_{Bl} \circ S$,
	\item $F \coloneqq \overline{S^{-1}(F' \smallsetminus E')}$, and
	\item $E \coloneqq S^{-1}(E')$.
\end{enumerate}
By \cite[II.7.16]{Hartshorne_AG}, $S|_{F}: F \to F'$ is a blowup of $F'$ that is supported away from $(F' \smallsetminus E' \cap F') \cong M$.  By \cref{con:DeformationToNormalCone}, $\beta|_{F'} \circ S|_{F}: F \to \overline{M}$ is a birational map that is an isomorphism over $F \smallsetminus (F \cap E) \to M$, as desired.  The remaining claims of the lemma are easily verified.
\end{proof}

\begin{figure}[h]
\centering
\includegraphics[width=.8\linewidth]{./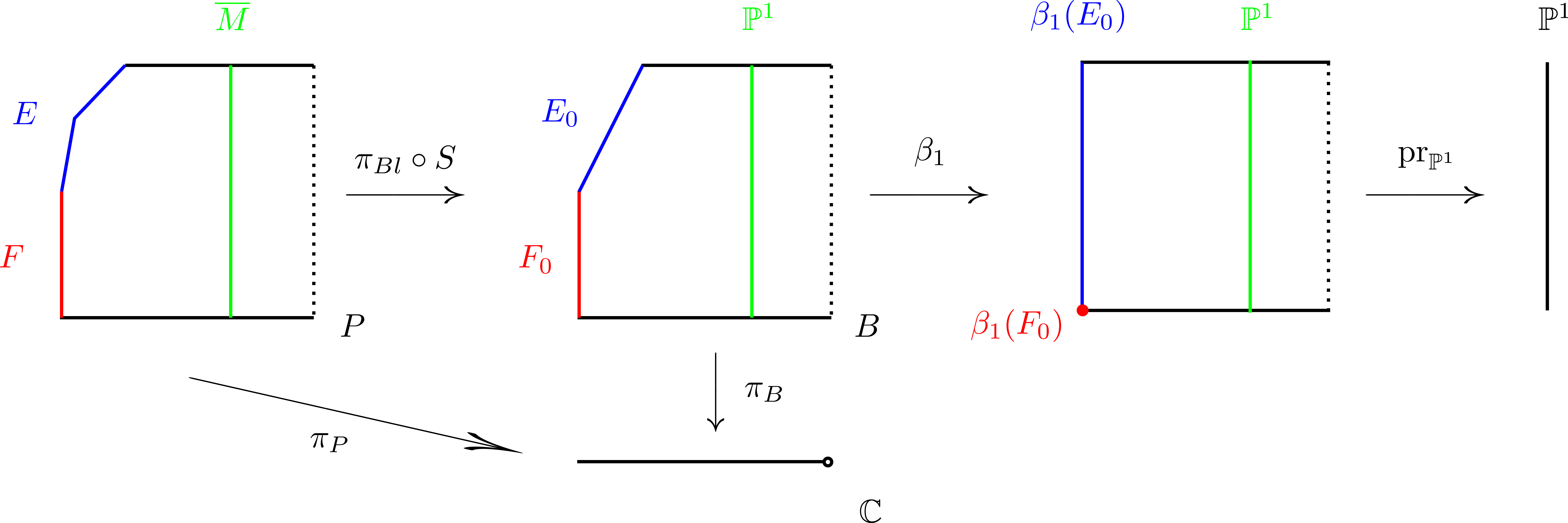}
\label{fig:DegenerationToNormalWithh}
\caption{Depictions of the spaces and maps from \cref{lem:DegenerationSpace}.}
\end{figure}

\subsection{Finding symplectic embeddings}

We prove \cref{lem:ModifiedParallelTransport}, using work in \cref{sec:ComplementEmbeddings} and \cref{prop:FlatteningOverC}.

\begin{proof}
Let $\Omega_P$ be any integral K\"{a}hler form on $P$ (which exists since $P$ is projective).  Let $\Omega_z \coloneqq \Omega_P|_{P_z}$.  For each $z \in \IC^\times$, we have a symplectic parallel transport map from $(P_z, \Omega_z)$ to $(P_1,\Omega_1)$.  By \cite[Lemma 5.16]{McLean_GrowthRate}, we may pre-compose this parallel transport map with a symplectomorphism such that the composition, $\varphi_z: (P_z, \Omega_z) \to (P_1,\Omega_1)$, maps $h_z^{-1}(\infty)$ to $h_1^{-1}(\infty)$ and $h_z^{-1}(0)$ to $h_1^{-1}(0)$.  We identify $(P_1,\Omega_1)$ and $h_1$ with $(\overline{M},\Omega_1)$ and $\pi$ respectively.  Applying \cref{prop:KahlerComplementEmbedding} to $\pi^{-1}(\infty)$ in $\overline{M}$, there exists a K\"{a}hler form $\widetilde{\Omega}$ on $M$ and a symplectic embedding $\psi: (M,\Omega_1) \hookrightarrow (M,\widetilde{\Omega})$ that is the identity near $\pi^{-1}(0)$ and whose image, $\psi(M)$, is a compact subset of $M$.  Since the subsets $M_a$ for $a>0$ give a compact exhaustion of the space $M$, we have that there exists a constant $a>0$ such that $\psi(M)$ is contained in $M_{a/2}$. 

Since $\widetilde{\Omega}$ is K\"{a}hler, it is compatible with the almost complex structure that makes the projection $\pi: M \to \IC$ holomorphic.  Consequently, the fibres of $\pi$ are symplectic submanifolds with respect to $\widetilde{\Omega}$, and $(M,\pi, \widetilde{\Omega})$ defines a Hamiltonian fibration over $\IC$.  By \cref{prop:FlatteningOverCStarNonDeg}, there exists a symplectic form ${\Omega} = \omega_{\IC} + \omega_F$ on $M$, where $\omega_F$ is the restriction of $\wt{\Omega}$ to $F$, and a symplectic embedding $\mu: (M,\widetilde{\Omega}) \hookrightarrow (M,{\Omega})$.  The image of $\mu \circ \psi(M)$ lands in $M_a$ (after we replace our old $a$ by a possibly larger $a$).  Moreover, $\mu$ fixes $\pi^{-1}(0) \cong 0 \times F$.

Now for each $c>0$, $(M_c,\Omega)$ admits the structure of a convex symplectic domain.  So action completed symplectic cohomology is well-defined for the subdomains $(M_c,\Omega)$.  Since $M_c$ is symplectically given by the product $\ID_c \times F$, for some $c\gg b> a$, $M_b$ is displaceable in $(M_c,{\Omega})$.  Thus, by \cref{prop:PropertiesOfSH} \cref{prop:VanishingForStDisp}, $\widehat{SH}(M_b \subset M_c;0) \otimes \Lambda = 0$.

Define
\[ \psi_z: (h_z^{-1}(\IC), \Omega_z) \to (M,\Omega) \]
by the composite $\psi_z \coloneqq \mu \circ \psi \circ \varphi_z$.  $\psi_z$ fixes $\pi^{-1}(0)$.  To see that $\Omega$ is integral, one can either notice that $\Omega$ is cohomologous to the original K\"{a}hler form, which is integral (as all of the above embeddings are produced via Moser's argument), or that $\omega_F$ is the restriction of an integral form, and, thus, is integral.  So these maps and the symplectic form $\Omega$ satisfy the conclusions of the lemma.
\end{proof}

\subsection{Producing a sequence of curves}

We prove \cref{lem:SequenceOfCurves}.

\begin{notn}
\begin{enumerate}
	\item Let $z_\nu \coloneqq 1/\nu \in \IC$ for $\nu \in \IZ_{\geq 1}$.
	\item Let $p_\nu \in h_{z_\nu}^{-1}(\ID_1)$ be a sequence of points such that $\lim_{\nu\to \infty} p_{\nu} = p \in M$.
	\item Let $A_\nu \coloneqq h_{z_\nu}^{-1}([0,3])$.
	\item Let $\Omega_\nu \coloneqq \Omega_P|_{P_{z_\nu}}$.
	\item Let $J_\nu \coloneqq J_P|_{A_{\nu}}$.
\end{enumerate}
\end{notn}

\begin{proof}
We assume the notation and results of \cref{lem:ModifiedParallelTransport} and \cref{lem:ModifiedParallelTransportCY}.  Since $\psi_{z_\nu}|_{A_\nu}$ is a symplectic embedding, we may extend the almost complex structure $(\psi_{z_\nu}|_{A_\nu})_*(J_\nu)$ to an admissible almost complex structures on $(M_c,\Omega)$, which we denote by $J_\nu'$.  Also, we set $p_\nu' \coloneqq  \psi_{z_\nu}(p_\nu)$. 

By \cref{prop:DiskUniruling}, there exists a constant $E>0$ and (possibly disconnected) genus zero, compact, $J_\nu'$-holomorphic curves $u_\nu': \Sigma_\nu' \to M$ with boundaries that satisfy:
\begin{enumerate}
	\item $u_\nu'(\partial \Sigma_\nu') \subset M \smallsetminus M_a$,
	\item $u_\nu'$ is nowhere constant,
	\item $u_\nu'(\Sigma') \cap p_\nu' \neq \varnothing$, and
	\item the energy of $u_\nu'$ with respect to $\Omega$ and $J_\nu'$ is bounded by $E$.
\end{enumerate}
The image of $\psi_{z_\nu}$ is contained in $M_a$.  So by Sard's theorem, there exists a measure $1$ subset of $[1,2]$ such that for $\nu$ and all $k$ in said subset
\[ \Sigma_\nu \coloneqq (u_\nu')^{-1}(\psi_{z_\nu}(h_{z_\nu}^{-1}(\ID_k))) \]
is a smooth Riemann surfaces with boundary.  Define $Q \coloneqq h^{-1}(\ID_k)$
and $u_\nu: \Sigma_\nu \to Q$ by $u_\nu \coloneqq (\psi_{z_\nu})^{-1} \circ u_\nu'|_{\Sigma_\nu}$.  The curves $u_\nu$ satisfy the conditions of the lemma by construction.  The degree one assumption on the $u_\nu$ in the case where $\pi$ is smooth over $0$ follows from \cref{lem:ModifiedParallelTransport},  \cref{prop:ProductUniruling}, and the fact that this degree is preserved by the maps $\psi_z$ (since they are homotopy equivalences that fix the central fibre $\pi^{-1}(0)$).
\end{proof}


\part{Discussion of the Calabi-Yau case}

In this part, we prove \cref{lem:ModifiedParallelTransportCY}.  First, we discuss Conley-Zehnder indices of orbits of Hamiltonians.  Second, we discuss a rescaling isomorphism for action complete symplectic cohomology groups of convex symplectic domains with vanishing first Chern class whose ends are modeled after symplectic mapping cylinders.  This isomorphism is established by assuming the existence of cofinal sequences of Hamiltonians that satisfy certain index bounded assumptions.  Thirdly, we discuss symplectic normal crossings divisors and a sequence of Hamiltonians whose dynamics are nicely captured by the normal crossings structure, and, in particular, satisfy an index bounded assumption.  Finally, we will combined our work from the above three sections with the symplectic deformations from \cref{part:SymplecticDeformations} to conclude \cref{lem:ModifiedParallelTransportCY}.


\section{Conley-Zehnder Indices}\label{sec:CZIndices}

We discuss Conley-Zehnder indices of contractible $1$-periodic orbits of Hamiltonians.

\begin{defn}\label{defn:CZIndex}
The \emph{Conley-Zehnder index} $CZ(A_t)$ is a half-integer associated to a path of symplectic matrices $(A_t)_{t \in [a,b]}$ \cite{RobbinSalamon_CZIndex}.  It satisfies the following properties \cite{Gut_GeneralizedCZIndex}:
\begin{enumerate}[label= (CZ\arabic*)]
	\item \label{CZ1} $CZ((\exp(i t))_{t \in [0,2\pi]}) = 2$,
	\item \label{CZ2} $CZ((A_t)_{t \in [a,b]} \oplus (B_t)_{t \in [a,b]}) = CZ(A_t) + CZ(B_t)$,
	\item \label{CZ3} the Conley-Zehnder index of a concatenation of two paths of symplectic matrics is the sum of the Conley-Zehnder indices of the individual paths,
	\item \label{CZ4} the Conley-Zehnder index is invariant under homotopies of paths of symplectic matrices relative to the end points, and
	\item \label{CZ5} if $CZ(A_t)$ is a loop of symplectic matrices, then $CZ(A_t)$ is equal to two times the Maslov class of the loop $A_t$.
\end{enumerate}
\end{defn}

Consider an arbitrary symplectic manifold $(M,\Omega)$.  When $c_1(M) \neq 0$, a capping of a period orbit defines a Conley-Zehnder index for the orbit, which depends on the choice of capping.

\begin{defn}
Let $\gamma: S^1 \to M$ be a contractible periodic orbit of a Hamiltonian $H$.  Let $v: \ID \to M$ be a smooth map with $v(\exp(it)) = \gamma(t)$.  Let $\tau_v: v^* TM \to \ID \times \IC^n$ be a symplectic trivialization.  The \emph{Conley-Zehnder index of $\gamma$ with respect to $v$}, $CZ(\gamma,v)$, is the Conley-Zhender index of the path of symplectic matrices
\[ (\tau_v|_{\gamma(t)}) \circ (d \phi_t^H|_{\gamma(0)}) \circ (\tau_v|_{\gamma(0)})^{-1}. \]
\end{defn}

The index $CZ(\gamma,v)$ is independent of the choice of trivialization $\tau_v$.  Moreover, given any two cappings of $\gamma$, say $v_0$ and $v_1$,
\[ CZ(\gamma,v_0) = CZ(\gamma,v_1) + 2 \langle c_1(M), v_0 \# (-v_1) \rangle. \]

Now suppose $c_1(M)=0$.  A trivialization of the anti-canonical bundle of $M$ defines a Conley-Zehnder index for a periodic orbit.

\begin{notn}
Let $\sK^*$ denote the anti-canonical bundle of $M$.  We assume that $c_1(M) = 0$.  So there is a trivialization $\tau: \sK^* \to M \times \IC$.
\end{notn}

\begin{defn}
Let $\gamma: S^1 \to M$ be a contractible periodic orbit of a Hamiltonian $H$.  Fix a trivialization $\tau_\gamma: \gamma^*TM \to M \times \IC^n$ such that the top exterior power of $\tau_\gamma$ agrees with $\tau$.  The \emph{Conley-Zehnder index of $\gamma$ with respect to $\tau$}, $CZ(\gamma,\tau)$, is the Conley-Zhender index of the path of symplectic matrices
\[ (\tau_\gamma|_{\gamma(t)}) \circ (d \phi_t^H|_{\gamma(0)}) \circ (\tau_\gamma|_{\gamma(0)})^{-1}. \]
\end{defn}

Extending the argument of \cite[Lemma 4.3]{McLean_MinimalDiscrepancy} from Reeb orbits to contractible Hamiltonian orbits gives:

\begin{lem}\label{lem:IndOfCZc1Vanishing}
Suppose $c_1(M)=0$.  If $\gamma$ is a contractible periodic orbit of a Hamiltonian $H$ on $M$, then
\[ CZ(\gamma,\tau) = CZ(\gamma,v)\] 
for any choice of capping $v$ and any choice of trivialization $\tau$.  In particular, both definitions agree and are independent of all choices.\qed
\end{lem}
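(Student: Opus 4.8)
The plan is to follow the argument of \cite[Lemma 4.3]{McLean_MinimalDiscrepancy}, transposed from Reeb orbits to Hamiltonian orbits: reduce the asserted equality to a winding-number computation, and kill that winding number using the trivialization $\tau$ of $\sK^*$. First I would record that $CZ(\gamma,v)$ is already independent of the capping $v$: for two cappings $v_0,v_1$ the stated transformation rule gives $CZ(\gamma,v_0)-CZ(\gamma,v_1)=2\langle c_1(M),v_0\#(-v_1)\rangle$, which vanishes since $c_1(M)=0$. So it is enough to fix one capping $v$ and one trivialization $\tau$ of $\sK^*$ and prove $CZ(\gamma,\tau)=CZ(\gamma,v)$; the independence of all choices for both indices then follows from this identity together with the previous sentence.

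Next, fix a symplectic trivialization $\tau_v\colon v^*TM\to\ID\times\IC^n$ computing $CZ(\gamma,v)$ and a symplectic trivialization $\tau_\gamma\colon\gamma^*TM\to S^1\times\IC^n$ with $\det_\IC\tau_\gamma=\tau|_\gamma$ computing $CZ(\gamma,\tau)$ (we may take both to be unitary with respect to a compatible $J$, up to a homotopy that does not change the Conley--Zehnder indices by property (CZ4)). The two trivializations of $\gamma^*TM$ over $S^1$ are related by the loop $\Phi_t:=(\tau_\gamma|_{\gamma(t)})\circ(\tau_v|_{\gamma(t)})^{-1}\in U(n)\subset Sp(2n,\IR)$, $t\in S^1$, and under this change of trivialization the path of symplectic matrices used to define the index is conjugated and left-multiplied by $\Phi_t$. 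By the standard change-of-trivialization property of the Conley--Zehnder index — a formal consequence of properties (CZ3), (CZ4), (CZ5) — one gets $CZ(\gamma,\tau)-CZ(\gamma,v)=2\mu(\Phi)$, where $\mu(\Phi)\in\IZ$ is the Maslov index of the loop $\Phi$, equivalently the winding number of $t\mapsto\det_\IC\Phi_t\in\IC^\times$.

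Finally I would show this winding number vanishes. Taking top complex exterior powers, $\det_\IC\Phi_t$ is exactly the transition loop between the trivialization $(\det_\IC\tau_v)|_\gamma$ and the trivialization $\det_\IC\tau_\gamma=\tau|_\gamma$ of the complex line bundle $\gamma^*\sK^*$ over $S^1$. But $\det_\IC\tau_v$ trivializes $v^*\sK^*$ over the entire disk $\ID$, and so does $v^*\tau$, with $(v^*\tau)|_{\partial\ID}=\tau|_\gamma$; since any two trivializations of a complex line bundle over the contractible base $\ID$ differ by a map $\ID\to\IC^\times$, the loop $\det_\IC\Phi_t$ extends over $\ID$ and therefore has winding number zero. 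Hence $CZ(\gamma,\tau)=CZ(\gamma,v)$, and by the first paragraph neither side depends on any choices. The one point requiring care is keeping the two notions of trivialization straight (symplectic/unitary trivializations of $\gamma^*TM$ versus the induced trivialization of the line bundle $\sK^*$) and invoking the precise form of the change-of-trivialization formula for the Conley--Zehnder index; the rest is formal.
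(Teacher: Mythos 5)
Your argument is correct and is essentially the paper's own proof: the paper simply invokes the argument of \cite[Lemma 4.3]{McLean_MinimalDiscrepancy} extended from Reeb orbits to contractible Hamiltonian orbits, and that argument is exactly your change-of-trivialization computation, in which the discrepancy $2\mu(\Phi)$ is the winding of $\det_\IC\Phi_t$ and vanishes because both trivializations of $\gamma^*\sK^*$ extend over the capping disk. Your preliminary reduction via the capping-change formula and the care about homotoping to unitary trivializations (using \ref{CZ4} and conjugation-invariance of the endpoint behavior) are exactly the right bookkeeping.
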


\begin{notn}
In light of \cref{lem:IndOfCZc1Vanishing}, if $\gamma$ is a contractible periodic orbit of a Hamiltonian $H$ on $M$ and $c_1(M)=0$, write
\[ CZ(\gamma) \coloneqq CZ(\gamma,\tau). \] 
\end{notn}

We now discuss Conley-Zehnder indices of pseudo Morse-Bott families of $1$-periodic orbits of Hamiltonians and how these indices change under small perturbations.  Our discussion is analogous to that of McLean's \cite{McLean_MinimalDiscrepancy}, who defined pseudo Morse-Bott families of Reeb orbits.

\begin{defn}
Let $\Gamma$ be a collection of $1$-periodic orbits of a Hamiltonian $H$.  The \emph{fixed points} of $\Gamma$ is the set
\[ \Gamma(0) = \{ \gamma(0) \in M \mid \gamma \in \Gamma\} .\]
The collection $\Gamma$ is \emph{isolated} if there exists an open neighborhood $U_\Gamma$ of $\Gamma(0)$ such that if $\gamma(0)$ is in $U_\gamma$, then $\gamma \in \Gamma$.
\end{defn}

\begin{defn}\label{defn:PseudoMorseBott}
A \emph{pseudo Morse-Bott family} of $1$-periodic orbits of a Hamiltonian $H$ is an isolated collection of $1$-periodic orbits $\Gamma$ such that
\begin{enumerate}
	\item $\dim(\ker(d \varphi_1^H - \Ione))$ is  constant on $\Gamma(0)$, and
	\item $\Gamma(0)$ is path connected.
\end{enumerate}
The \emph{size} of $\Gamma$ is defined to be $\dim( \ker( d \varphi_t^H -\Ione))$.
\end{defn}

We now assume for the remainder of this subsection that $c_1(M)=0$.

\begin{defn}
The \emph{Conley-Zehnder index} of a pseudo Morse-Bott family $\Gamma$ is $CZ(\Gamma) \coloneqq CZ(\gamma)$ for any $\gamma \in \Gamma$.
\end{defn}

If a Hamiltonian has a pseudo Morse-Bott family (that is not comprised of a single isolated orbit), then the Hamiltonian will be degenerate.  So we would like to perturb the Hamiltonian to obtain a non-degenerate Hamiltonian.  The following lemma relates the Conley-Zehnder index of the pseudo Morse-Bott family and the Conley-Zehnder indices of the orbits of the perturbed Hamiltonian.  It follows from extending the argument of \cite[Lemma 4.10]{McLean_MinimalDiscrepancy} from Reeb orbits to contractible Hamiltonian orbits.

\begin{lem}\label{lem:PerturbWithCZ}
Let $\Gamma$ be a pseudo Morse-Bott family of size $k$.  There exists a constant $\delta > 0$ and a neighborhood $U$ of $\Gamma(0)$ such that if $\wt{H}$ is a Hamiltonian that satisfies
\begin{enumerate}
	\item $\wt{H}|_{U}$ is non-degenerate, and
	\item $|\wt{H} - H|_{C^2} < \delta$,
\end{enumerate}
and $\wt{\gamma}$ is a $1$-periodic orbit of $\wt{H}$ with $\wt{\gamma}(0) \in U$, then
\[ CZ(\Gamma) - k/2 \leq CZ(\wt{\gamma}) \leq  CZ(\Gamma) + k/2. \]\qed
\end{lem}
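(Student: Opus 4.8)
The plan is to isolate the statement's content as a linear-algebra fact about the Robbin--Salamon index of a path of symplectic matrices with a degenerate endpoint, and then to transport it to the Hamiltonian setting by a comparison of linearised flows. The linear-algebra input is: if $(\Phi_t)_{t\in[0,1]}$ is a path of symplectic matrices with $\Phi_0=\mathbb 1$ and $\dim\ker(\Phi_1-\mathbb 1)=k$, then there is a $C^0$-neighbourhood of $(\Phi_t)$ so that every path $(\Psi_t)$ in it with $\Psi_0=\mathbb 1$ and $\Psi_1-\mathbb 1$ invertible satisfies $CZ(\Phi)-k/2\le CZ(\Psi)\le CZ(\Phi)+k/2$. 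This is proved from the crossing-form description of the index and its properties \ref{CZ1}--\ref{CZ5}: a small perturbation does not change the crossings of $(\Phi_t)$ on $[0,1-\varepsilon]$ up to homotopy (apply \ref{CZ4}), while it replaces the half-weighted crossing at $t=1$, whose contribution is $\tfrac12$ the signature of a quadratic form on a $k$-dimensional space, by nearby crossings in $(1-\varepsilon,1]$ whose total signed contribution differs from it by at most $k/2$. For Reeb orbits this is \cite[Lemma 4.10]{McLean_MinimalDiscrepancy}, and the argument is insensitive to whether the path arises as a linearised Reeb flow or a linearised Hamiltonian flow; in the time-dependent Hamiltonian case there is no automatic $+1$-eigenvector, so the situation is if anything cleaner.

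Given this, fix the isolating neighbourhood $U_\Gamma$ of $\Gamma(0)$ from \cref{defn:PseudoMorseBott}. I would then choose $\delta>0$ and an open set $U\ni\Gamma(0)$ with $\overline U\subset U_\Gamma$, small enough that every orbit $\wt\gamma$ of a Hamiltonian $\wt H$ with $|\wt H-H|_{C^2}<\delta$ and $\wt\gamma(0)\in U$ is $C^1$-close to some orbit of $\Gamma$ (in particular contractible, since $\Gamma(0)$ is path connected), and moreover $\mathrm{dist}(\wt\gamma(0),\Gamma(0))$ is as small as we wish. The point of passing to a nearby $\gamma\in\Gamma$ is that $\wt\gamma(0)$ itself need not lie on $\Gamma(0)$, so the linearised $H$-flow based there need not have $k$-dimensional kernel; but $\Gamma$ is isolated in $U_\Gamma$, so picking $\gamma\in\Gamma$ with $\mathrm{dist}(\gamma(0),\wt\gamma(0))$ tiny gives an orbit along which $\dim\ker(d\varphi^H_1-\mathbb 1)=k$ by definition of the size.

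Next come two Gronwall estimates. Since $|\wt H-H|_{C^2}<\delta$, the vector fields $X_{\wt H},X_H$ and their derivatives $DX_{\wt H},DX_H$ are $C^0$-close, so the first Gronwall estimate gives $\sup_{t\in[0,1]}\mathrm{dist}(\wt\gamma(t),\gamma(t))$ small. Trivialise $TM$ symplectically over a tubular neighbourhood of $\gamma(S^1)$, chosen (using $c_1(M)=0$) so that the induced trivialisation of $\det_{\mathbb C}TM$ agrees with $\tau$; both $\wt\gamma$ and $\gamma$ lie in this tube. In this trivialisation $\Phi_t:=d\varphi^H_t|_{\gamma(0)}$ and $\Psi_t:=d\varphi^{\wt H}_t|_{\wt\gamma(0)}$ are paths of symplectic matrices starting at $\mathbb 1$, and a second Gronwall estimate applied to $\dot\Phi_t=DX_H(\gamma(t))\Phi_t$ versus $\dot\Psi_t=DX_{\wt H}(\wt\gamma(t))\Psi_t$ shows $(\Psi_t)$ is $C^0$-close to $(\Phi_t)$. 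Now $\dim\ker(\Phi_1-\mathbb 1)=k$ while $\Psi_1-\mathbb 1$ is invertible (as $\wt H|_U$ is non-degenerate), so the linear-algebra fact yields $CZ(\Phi)-k/2\le CZ(\Psi)\le CZ(\Phi)+k/2$. By the choice of trivialisation, $CZ(\Phi)=CZ(\gamma,\tau)=CZ(\gamma)=CZ(\Gamma)$ and $CZ(\Psi)=CZ(\wt\gamma,\tau)=CZ(\wt\gamma)$, where \cref{lem:IndOfCZc1Vanishing} guarantees these are the intrinsic indices independent of all choices; this is exactly the asserted inequality.

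The main obstacle is the linear-algebra input --- proving the sharp $k/2$ bound for a non-degenerate perturbation of a path with a rank-$k$ degenerate endpoint --- together with the bookkeeping that ensures the perturbation estimate is applied to $\Phi$ and $\Psi$ in \emph{one and the same} trivialisation compatible with $\tau$, so that the inequality extracted is between $CZ(\Gamma)$ and $CZ(\wt\gamma)$ and not between trivialisation-dependent quantities. The two Gronwall comparisons, and the passage from $\wt\gamma(0)$ to a nearby basepoint on $\Gamma(0)$, are routine once the neighbourhood $U$ and constant $\delta$ have been fixed.
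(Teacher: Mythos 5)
Your proposal is correct and follows essentially the same route as the paper, which simply asserts that the lemma ``follows from extending the argument of \cite[Lemma 4.10]{McLean_MinimalDiscrepancy} from Reeb orbits to contractible Hamiltonian orbits'': your linear-algebra statement about the Robbin--Salamon index of a path with a rank-$k$ degenerate endpoint is exactly the content of that lemma, and your Gronwall comparisons and $\tau$-compatible trivialisation (using $c_1(M)=0$, which is in force in that subsection) are the routine transport to the Hamiltonian setting that the paper leaves implicit. No gaps worth flagging; you have merely written out what the paper cites.
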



\section{A rescaling isomorphism}\label{sec:RescalingIso}

We assume $(M,\Omega, \lambda)$ is a convex domain with stable Hamiltonian boundary given by the mapping torus of a symplectomorphism.  We will be using some of the discussion on these spaces from \cref{subsec:HamFibs_MappingTori}.

Fix polar coordinates for the collar of the boundary, $(0,1] \times \partial M$, and write $\Omega = \eta + dr \wedge d \theta$, where $\eta = \Omega|_{\partial M}$, as in \cref{lem:1FormDeterminesMappingTorus}.  With this stable Hamiltonian structure, $\lambda = r d\theta$.  Let $J$ be an admissible almost complex structure for $(M,\Omega,\lambda)$.  As in \cref{lem:MappingTorusAdmissibleJ}, write
\[ J = \begin{pmatrix} 0 & -1 & 0 \\ 1 & 0 & 0 \\ 0 & 0 & J_F \end{pmatrix} \]
with respect to the splitting $r \cdot \partial_r \oplus \widetilde{\partial_\theta} \oplus TF$.
We assume that the family $J_F$ is independent of $r$.  Define $M_c$ for $c>1$ by
\[ M_c = M \cup ([1,-1+c) \times \partial M). \]
The data $\Omega$, $\lambda$, and $J$ on $(0,1] \times \partial M$ extend to data on $M_c$ in the obvious manner.

Suppose that $K$ is a compact, codimension zero submanifold of $M_c$ with connected boundary such that $\partial K$ is properly contained in the collar neighborhood of the boundary of $M_c$.  Let
\[ K_\delta \coloneqq \{ (r,x) \in M_c \mid (r-\delta,x) \in K\},\]
{with the convention that if $r - \delta < 0$, then $(r-\delta,x) \in K$.

\begin{figure}
\centering
\includegraphics[width=.25\linewidth]{./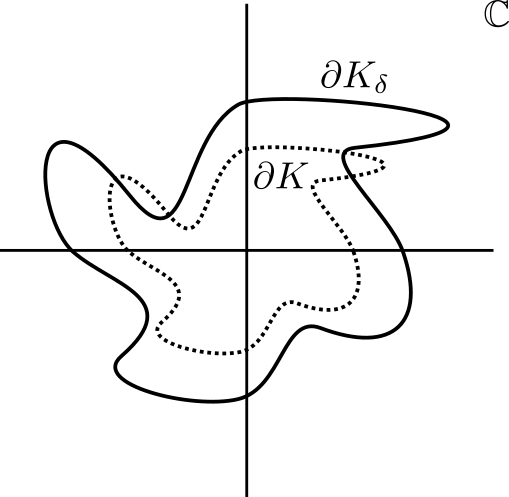}
\caption{An example of subsets $K$ and $K_\delta$ when $M = \IC$.}
\label{fig:RadialStretching}
\end{figure}

For example, $(M_b)_\delta = M_{b+\delta}$.  The goal is to establish sufficient conditions for an isomorphism
\[\widehat{SH}(K \subset M_c; 0, \tau) \otimes \Lambda \cong \widehat{SH}(K_{\delta} \subset M_{c+\delta}; 0, \tau) \otimes \Lambda. \]
We study how the Floer theory of our spaces behaves with respect to rescaling of the collar coordinate.

Given constants $0 < a < b < c$ and $\delta>0$, let $\ell: (0,+\infty) \to (0,+\infty)$ be a smooth function that satisfies:
\begin{itemize}
	\item $\ell(r) = r$ for $r \leq a$,
	\item $\ell(r) = \delta + r$ for $r \geq b$, and
	\item $\ell'(r) \geq 1$ for all $r$.
\end{itemize}
We have a diffeomorphism $(0,c] \times \partial M \to (0,\delta + c] \times \partial M$ given by
\[ \varphi(r,x) = (\ell(r),x). \]
As $\varphi$ is the identity for $r \leq a$, it extends to a diffeomorphism $M_{c} \to M_{c+\delta}$.  Notice that $\varphi$ is not a symplectomorphism; nevertheless, the almost complex structure $J$ transforms nicely with respect to $\varphi$.

\begin{lem}
The almost complex structure $\varphi_* J = \varphi_* \circ J \circ \varphi_*^{-1}$ is $\Omega$-compatible.
\end{lem}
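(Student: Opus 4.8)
The assertion is that $\varphi_*J$ is $\Omega$-compatible, where $\varphi(r,x)=(\ell(r),x)$ on the collar and is the identity elsewhere. Since $\varphi$ is the identity outside the collar, there is nothing to check on $M_{a}$, so the work is entirely in the collar region $(0,c]\times\partial M$, where we have the explicit description $\Omega=\eta+dr\wedge d\theta$ and the block form of $J$ with respect to the splitting $r\partial_r\oplus\widetilde{\partial_\theta}\oplus TF$ from \cref{lem:MappingTorusAdmissibleJ}. Recall $\Omega$-compatibility means $\Omega(\varphi_*J\cdot,\varphi_*J\cdot)=\Omega(\cdot,\cdot)$ together with $\Omega(\cdot,\varphi_*J\cdot)>0$.

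First I would compute $\varphi_*$ on the three summands of the splitting. Since $\varphi$ only moves the $r$-coordinate by $r\mapsto\ell(r)$ and fixes $\theta$ and the $F$-directions, we have $\varphi_*(r\partial_r)=\ell(r)\ell'(r)/\ell(r)\cdot(\text{new }\partial_r\text{-scaling})$ — more precisely $\varphi_*(\partial_r)=\ell'(r)\partial_{\rho}$ where $\rho$ is the new radial coordinate, while $\varphi_*(\widetilde{\partial_\theta})=\widetilde{\partial_\theta}$ and $\varphi_*$ is the identity on $TF$. So conjugating the block matrix, $\varphi_*J$ sends $\widetilde{\partial_\theta}\mapsto \ell(r)\ell'(r)/\rho\cdot\partial_\rho$-type expression and $\partial_\rho\mapsto -(\rho/(\ell(r)\ell'(r)))\widetilde{\partial_\theta}$, with $J_F$ unchanged on the fibre. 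The key point is that the pair $(dr\wedge d\theta, J|_{\text{base}})$ scales in a compensating way: $\varphi$ stretches the radial direction by $\ell'$, and $J$ rotates radial into angular, so the factors of $\ell'(r)$ appear symmetrically when we pair two $\varphi_*J$-rotated vectors against $\Omega$.

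The main step is then the direct verification. On the base summand $r\partial_r\oplus\widetilde{\partial_\theta}$: $\Omega$ restricted here is (up to the identification in \cref{lem:1FormDeterminesMappingTorus}) $dr\wedge d\theta$, and one checks $\Omega(\varphi_*J\, v,\varphi_*J\, w)=\Omega(v,w)$ for $v,w$ in this plane because the two $\ell'$ factors coming from the two applications of $\varphi_*J$ cancel against the Jacobian of $\varphi$ acting on the area form — this is exactly the statement that a radial reparametrization composed with the standard rotation is area-preserving as seen through $\Omega$. On the fibre summand $\varphi_*$ and $J$ are both unchanged, so compatibility is inherited from $J_F$ being $\Omega|_F$-compatible. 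The cross terms between base and fibre vanish because $\Omega$ has no mixed component in this splitting (it is $\eta+dr\wedge d\theta$ with $\eta$ supported on $\partial M$ and the decomposition respects $d\theta$) and $\varphi_*J$ preserves the splitting. Finally positivity $\Omega(v,\varphi_*J\,v)>0$ follows since $\ell'(r)>0$ everywhere, so the base contribution $\ell'(r)^{-1}\,\Omega(v,Jv)>0$ (or $\ell'(r)\,\Omega(v,Jv)$, depending on which summand), and the fibre contribution is positive from $J_F$.

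**Expected obstacle.** The only real subtlety is bookkeeping: making sure the powers of $\ell'(r)$ come out symmetric so that the antisymmetric form is genuinely preserved rather than merely conformally rescaled, and confirming that the splitting $r\partial_r\oplus\widetilde{\partial_\theta}\oplus TF$ really is $\Omega$-orthogonal (so there are no cross terms to worry about) — both of these trace back to the normal forms in \cref{lem:1FormDeterminesMappingTorus} and \cref{lem:MappingTorusAdmissibleJ}, so once those are invoked the computation is short. A cleaner alternative, which I would mention, is to observe that $\varphi_*J$ being $\Omega$-compatible is equivalent to $\varphi^*\Omega$ being $J$-compatible, and $\varphi^*\Omega=\ell'(r)\,dr\wedge d\theta+\eta$; then one checks directly that the block-diagonal $J$ is compatible with this new form, which is the same computation but phrased without conjugating $J$.
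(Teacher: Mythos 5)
Your proposal is correct, but your primary argument runs along a different track than the paper's. You conjugate $J$ explicitly in the splitting $r\partial_r\oplus\widetilde{\partial_\theta}\oplus TF$, observe that $\varphi_*J$ is block diagonal with the fibre block equal to $J_F$ and the base block given by $\partial_\rho\mapsto\tfrac{1}{r\ell'(r)}\widetilde{\partial_\theta}$, $\widetilde{\partial_\theta}\mapsto -r\ell'(r)\partial_\rho$ (this is exactly the matrix the paper records right after the lemma), and then check symmetry and positivity of $\Omega(\cdot,\varphi_*J\,\cdot)$ block by block; the reciprocal factors $r\ell'$ and $(r\ell')^{-1}$ cancel on the base two-plane, and only $\ell'>0$ is used. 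The paper instead never computes $\varphi_*J$ for this step: it pulls the two-tensor back, writes $\varphi^*\bigl(\Omega(\cdot,\varphi_*J\varphi_*^{-1}\cdot)\bigr)=(\varphi^*\Omega)(\cdot,J\cdot)=\Omega(\cdot,J\cdot)+(\ell'-1)\,dr\wedge d\theta(\cdot,J\cdot)$, and uses $\ell'\geq 1$ together with holomorphicity of the collar projection to $(0,c]\times S^1$ to see the correction term is symmetric and positive semi-definite, so the sum is a metric. Your closing ``cleaner alternative'' (check that $J$ is compatible with $\varphi^*\Omega=\eta+\ell'\,dr\wedge d\theta$) is in fact precisely the paper's proof. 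The trade-off: your direct computation works for any $\ell'>0$ and produces the formula for $\varphi_*J$ that the paper needs anyway when defining $\widetilde J$, while the paper's pullback argument is shorter and sidesteps the splitting bookkeeping. One small imprecision to tighten: the absence of cross terms is not because ``$\eta$ has no mixed component'' --- $\eta$ can pair $\partial_\theta$ with vertical vectors, which is exactly why $\widetilde{\partial_\theta}\neq\partial_\theta$ --- but because $\Hor$ is by definition the $\Omega$-orthogonal complement of $\Ver$ (\cref{defn:VerticalHorizontalDistributions}) and both $\varphi_*$ and $J$ preserve the splitting; with that said, your verification goes through as written.
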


\begin{proof}
We need to show that $\Omega(\cdot, \varphi_* \circ J \circ \varphi_*^{-1} \cdot)$ is a metric.  It suffices to show that pulling-back this $2$-tensor along $\varphi$ produces a metric.  Computing
\[ \varphi^*(\Omega(\cdot, \varphi_* \circ J \circ \varphi_*^{-1} \cdot)) = (\varphi^* \Omega)(\cdot, J \cdot) = \eta(\cdot, J \cdot) + \ell' \cdot dr \wedge d\theta (\cdot,J \cdot) = \Omega(\cdot, J \cdot) + (\ell'-1) \cdot dr \wedge d\theta(\cdot, J \cdot)\]
$\Omega(\cdot,J \cdot)$ is a metric since $J$ is $\Omega$-compatible.  Also, since $\ell' \geq 1$ and the local projection $N(\partial M) \to (0,c] \times S^1$ is holomorphic, $(\ell'-1) \cdot dr \wedge d\theta (\cdot, J \cdot)$ is symmetric and non-negative definite.  So the pull-back is a metric, as desired.
\end{proof}

In the collar neighborhood, $\varphi_* J$ is not an admissible almost complex structure in the sense of \cref{defn:AdmissibleJOnSymplectization}.  Indeed, for $r \geq c + \delta - \varepsilon$ with $\varepsilon >0$ sufficiently small,
\[ \varphi_* J = \begin{pmatrix} 0 & \frac{-(r-\delta)}{r} & 0 \\ \frac{r}{r-\delta} & 0 & 0 \\ 0 & 0 & J_F \\ \end{pmatrix} \]
with respect to the basis obtained from $r \cdot \partial_r$ and $\widetilde{\partial_\theta}$ and a basis for the vertical distribution.  Define $\kappa: [c+\delta-\varepsilon, c+\delta] \to \IR$ by
\begin{itemize}
	\item $\kappa(r) = 1$ locally near $r = c+\delta-\varepsilon$,
	\item $\kappa(r) = 0$ locally near $r = c+\delta$, and
	\item $\kappa' \leq 0$.
\end{itemize}
Define $\widetilde{J}$ via
\[ \wt{J} \coloneqq \begin{pmatrix} 0 & \frac{-(r-\kappa(r) \cdot \delta)}{r} & 0 \\ \frac{r}{r-\kappa(r) \cdot \delta} & 0 & 0 \\ 0 & 0 & J_F \\ \end{pmatrix}\]
for $r \geq c+\delta - \varepsilon$ and $\widetilde{J} = \varphi_* J$ for $r \leq c + \delta - \varepsilon$.  $\widetilde{J}$ is admissible for $(M_{c+\delta},\Omega,\lambda)$ in the sense of \cref{defn:AdmissibleJOnSymplectization} with collar taken sufficiently close to $\partial M_{c+\delta}$.  Moreover, $\widetilde{J}$ is weakly admissible for $(M_{c+\delta},\Omega,\lambda)$ in the sense of \cref{defn:WeaklyConvexSymplecticDomain} with the collar taken to be $r \geq c+\delta - \varepsilon$.  Indeed, one sets $f$ to be the anti-derivative of $r/(r-\delta \cdot \kappa(r))$.

We now discuss rescalings of Hamiltonians.

\begin{defn}\label{defn:PhiScalable}
A (monotonically) admissible family of Hamiltonians $H^\sigma$ is \emph{$\varphi$-scalable} if
\begin{enumerate}
	\item $H^\sigma_{\underline{s}} = m_{\underline{s}}^\sigma \cdot r + k_{\underline{s}}^\sigma$ for some family of constants $m_{\underline{s}}^\sigma$ and $k_{\underline{s}}^\sigma$ for $a \leq r \leq b$, and
	\item $H^\sigma_{\underline{s}}$ is radial with constant slope (independent of $\underline{s}$) for $r \geq c-\varepsilon$.
\end{enumerate}
\end{defn}

\begin{rem}
We could work with more general Hamiltonians, but it is hardly worth the effort.
\end{rem}

\begin{lem}\label{lem:Scaling}
If $H^\sigma$ is $\varphi$-scalable, then there exists a (monotonically) admissible family of Hamiltonians $H^{\widetilde{\sigma}}$ on $M_{c+\delta}$ such that $\varphi_* X_{H^\sigma} = X_{{H}^{\widetilde{\sigma}}}$.
\end{lem}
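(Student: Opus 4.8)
The plan is to construct $H^{\widetilde{\sigma}}$ by hand, piece by piece, exploiting that $\varphi$ is a symplectomorphism everywhere except on the interpolation region $\{a\le r\le b\}$, and that $\varphi$ interacts simply with radial dynamics. Recall from \cref{lem:1FormDeterminesMappingTorus} that on the collar $\Omega=\eta+dr\wedge d\theta$ and the Reeb vector field $\sR$ is independent of $r$ and tangent to the slices $\{r\}\times\partial M$ (it is $\widetilde{\partial_\theta}$ in the notation of \cref{lem:MappingTorusAdmissibleJ}). Since $\varphi(r,y)=(\ell(r),y)$ only rescales the radial coordinate, $\varphi_*\sR=\sR$. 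Moreover, a radial Hamiltonian $h$ on the collar satisfies $X_h=(\partial_r h)\,\sR$, so $\bigl(\varphi_*X_h\bigr)_{(\ell(r),y)}=(\partial_r h)(r)\cdot\sR$; and on $\{r\ge b\}$, where $\ell$ is a translation by $\delta$ and hence a symplectomorphism of $(\eta+dr\wedge d\theta)$, the pushforward $\varphi_*X_{H^\sigma_{\underline{s}}}$ is simply $X_{H^\sigma_{\underline{s}}\circ\varphi^{-1}}$.

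Guided by this, I would define $H^{\widetilde{\sigma}}$ on $M_{c+\delta}$ by:
\[
H^{\widetilde{\sigma}}_{\underline{s}}:=\begin{cases} H^\sigma_{\underline{s}}, & r\le a,\\[2pt] m^\sigma_{\underline{s}}\cdot r+k^\sigma_{\underline{s}}, & a\le r\le \delta+b,\\[2pt] H^\sigma_{\underline{s}}(\,\cdot-\delta\,)+m^\sigma_{\underline{s}}\cdot\delta, & r\ge \delta+b.\end{cases}
\]
Here the $\underline{s}$-dependent constant $m^\sigma_{\underline{s}}\delta$ is forced so that the outer piece glues $C^\infty$ to the middle linear piece along $\{r=\delta+b\}$: since $H^\sigma_{\underline{s}}$ equals the linear function $m^\sigma_{\underline{s}}r+k^\sigma_{\underline{s}}$ on all of $[a,b]$, all its derivatives at $b$ (radial, mixed, and in $\underline{s}$) are those of that linear function, so the matching is automatic; the same observation gives $C^\infty$-gluing at $\{r=a\}$. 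With these definitions, combining the two displayed facts of the previous paragraph region by region yields $\varphi_*X_{H^\sigma}=X_{H^{\widetilde{\sigma}}}$ on $M_{c+\delta}$. Near $\partial M_{c+\delta}$ the Hamiltonian $H^{\widetilde{\sigma}}_{\underline{s}}$ is linear in $r$ with the same (boundary) slope as $H^\sigma_{\underline{s}}$, and it is locally constant near each vertex $\underline{e_i}$ because the construction uses only $\varphi$ (which is $\underline{s}$-independent) and the families $m^\sigma_{\underline{s}},k^\sigma_{\underline{s}},H^\sigma_{\underline{s}}$.

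Next I would check that $H^{\widetilde{\sigma}}$ is (monotonically) admissible. Non-degeneracy of each $H^{\widetilde{\sigma}}_{\underline{e_i}}$ follows from $\varphi_*X_{H^\sigma}=X_{H^{\widetilde{\sigma}}}$: $\varphi$ conjugates the time-$1$ flows, so the $1$-periodic orbits correspond and their linearized return maps are conjugate via $d\varphi$, hence share eigenvalues. For the derivative conditions along a Morse flow line $\gamma_i$: on $\{a\le r\le\delta+b\}$ one has $\partial_r H^{\widetilde{\sigma}}_{\underline{s}}=m^\sigma_{\underline{s}}=\partial_r H^\sigma_{\underline{s}}$ and $H^{\widetilde{\sigma}}_{\underline{s}}(r)=H^\sigma_{\underline{s}}(a)+m^\sigma_{\underline{s}}(r-a)$, so
\[
\partial_{\underline{s}}\partial_r H^{\widetilde{\sigma}}_{\underline{s}}\circ\gamma_i'(s)=\partial_{\underline{s}}\partial_r H^\sigma_{\underline{s}}\circ\gamma_i'(s)\le 0,
\]
\[
\partial_{\underline{s}}H^{\widetilde{\sigma}}_{\underline{s}}(r)\circ\gamma_i'(s)=(r-a)\,\partial_{\underline{s}}m^\sigma_{\underline{s}}\circ\gamma_i'(s)+\partial_{\underline{s}}H^\sigma_{\underline{s}}(a)\circ\gamma_i'(s)\le 0,
\]
using $r-a\ge0$ and, respectively, the admissibility and monotone admissibility of $H^\sigma$ (note $m^\sigma_{\underline{s}}=\partial_r H^\sigma_{\underline{s}}$ on $[a,b]$); on $\{r\ge\delta+b\}$ the extra constant $m^\sigma_{\underline{s}}\delta$ contributes $\delta\,\partial_{\underline{s}}m^\sigma_{\underline{s}}\circ\gamma_i'(s)\le0$ to $\partial_{\underline{s}}H^{\widetilde{\sigma}}$ and nothing to $\partial_{\underline{s}}\partial_r H^{\widetilde{\sigma}}$, so both conditions persist; on $\{r\le a\}$ they hold because $H^{\widetilde{\sigma}}=H^\sigma$.

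The main (and essentially only) obstacle is this last bit of bookkeeping: because $\varphi$ is not a symplectomorphism one cannot simply transport $H^\sigma$, and the constant $m^\sigma_{\underline{s}}\delta$ needed for smooth gluing genuinely depends on $\underline{s}$. The point is that it does not spoil monotonicity precisely because $m^\sigma_{\underline{s}}=\partial_r H^\sigma_{\underline{s}}$ is itself nonincreasing along Morse flow lines, which is built into the admissibility of $H^\sigma$ — so the sign works out for free.
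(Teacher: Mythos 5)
Your proposal is correct and follows essentially the same route as the paper: the formula you give for $H^{\widetilde{\sigma}}$ is exactly the paper's (your $H^\sigma_{\underline{s}}(\cdot-\delta)+m^\sigma_{\underline{s}}\delta$ is $H^\sigma\circ\varphi^{-1}+\delta m^\sigma$), and your region-by-region check of $\varphi_*X_{H^\sigma}=X_{H^{\widetilde{\sigma}}}$ — the interpolation region via $X_{H^\sigma}=m^\sigma\widetilde{\partial_\theta}$ and $\varphi_*\sR=\sR$, the outer regions via $\varphi$ being the identity, respectively a translation, hence symplectic there — is the paper's computation. The only difference is that you spell out the (monotonic) admissibility bookkeeping (including the sign of the $\underline{s}$-dependent constant $m^\sigma_{\underline{s}}\delta$), which the paper asserts in a single sentence, so your write-up is if anything slightly more detailed while resting on the same observations.
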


\begin{proof}
Define
\[ H^{\widetilde{\sigma}} = \begin{cases} H^\sigma, & r \leq a \\ m^\sigma \cdot r + k^\sigma, & a \leq r \leq b+\delta \\ H^\sigma \circ \varphi^{-1}(r,x) + \delta\cdot m^\sigma, & b+\delta \leq r \\ \end{cases}.\]
By \cref{lem:HorizontalLiftOfHamiltonianVectorField}, over $[a,b] \times \partial M$,
\[ X_{H^\sigma} = \partial_r H^\sigma \cdot \widetilde{\partial_\theta} = m^\sigma \cdot \widetilde{\partial_\theta}. \]
So $\varphi_* X_{H^\sigma} = m^\sigma \cdot \widetilde{\partial_\theta} = X_{H^{\widetilde{\sigma}}}$ for $a \leq r \leq b+\delta$.  Also,
\[ (\varphi_* X_{H^\sigma})(r,x) = X_{H^\sigma}(\varphi^{-1}(r,x)) = X_{H^{\widetilde{\sigma}}}(r,x) \]
for $r \leq a$ and $r \geq b+\delta$ since
\begin{align*}
\Omega(X_{H^{\widetilde{\sigma}}}(r,x),\cdot) & = -dH^{\widetilde{\sigma}}(r,x)(\cdot) = -dH^{\sigma} (\varphi^{-1}(r,x)) \circ (\varphi^{-1})_*(r,x)(\cdot) 
 = \Omega(X_{H^{\sigma}}(\varphi^{-1}(r,x)),\cdot).
\end{align*}
Finally, the (monotonic) admissibility of $H^{\widetilde{\sigma}}$ follows from its construction and the (monotonic) admissibility of $H^\sigma$.
\end{proof}

Suppose that $H^\sigma$ is $\varphi$-scalable.  Since $\varphi_* X_{H^\sigma} = X_{H^{\widetilde{\sigma}}}$, we obtain a bijection between the $1$-periodic orbits of $H^\sigma_{\underline{e_i}}$ and the $1$-periodic orbits of $H^{\widetilde{\sigma}}_{\underline{e_i}}$ for all $i$.  Denote this by $x \leftrightarrow \widetilde{x}$.

If $u$ is a Floer trajectory of type $((H^\sigma,J),x_0,x_\ell)$, then $\varphi \circ u$ is a Floer trajectory of type $((H^{\widetilde{\sigma}}, \widetilde{J}),\widetilde{x}_0,\widetilde{x}_\ell)$.  Moreover, every Floer trajectory of type $((H^{\widetilde{\sigma}}, \widetilde{J}),\widetilde{x}_0,\widetilde{x}_\ell)$ arises in this manner.  Indeed, since $H^{\widetilde{\sigma}}$ is radial with constant slope for $r \geq c+\delta-\varepsilon$, all of its orbits lie in the region where $r < c-\varepsilon$.  Since $\widetilde{J}$ is admissible in the sense of \cref{defn:WeaklyConvexSymplecticDomain} for $r \geq c+\delta-\varepsilon$, \cref{prop:MaximumPrincipleForWeaklyConvexDomains} implies that any Floer trajectory of type $((H^{\widetilde{\sigma}},\widetilde{J}), \widetilde{x}_0, \widetilde{x}_\ell)$ lies completely in $M_{c+\delta-\varepsilon}$.  Consequently, if $v$ is of type $((H^{\widetilde{\sigma}}, \widetilde{J}),\widetilde{x}_0,\widetilde{x}_\ell)$, then $\varphi^{-1} \circ v$ is of type $((H^\sigma,J),x_0,x_\ell)$.  We have used that $\varphi^{-1}$ is $(\widetilde{J},J)$-holomorphic over the image of $v$.  This gives a canonical identification of $\overline{\sM}((H^\sigma,J),x_0,x_\ell)$ and $\overline{\sM}(({H}^{\widetilde{\sigma}}, \widetilde{J}),\widetilde{x}_0,\widetilde{x}_\ell)$.  We denote it by $u \leftrightarrow \widetilde{u} = \varphi \circ u$.

\begin{lem}\label{lem:CFRescalingIso}
For every $\varphi$-scalable Hamiltonian $H$, there is an isomorphism over $\Lambda$,
\[ CF(H,J) \to CF(\widetilde{H},\widetilde{J}), \hspace{10pt} x \mapsto \widetilde{x} \cdot T^{C(x)}, \]
where
\[ C(x) = -\int_{S^1} x^*((\ell(r) - r) d\theta) + \int_{S^1} \widetilde{H}(\widetilde{x}(t)) - H(x(t)) \ dt.\]
Moreover, for each (monotonically) admissible family $H^\sigma$, there exists a strictly commutative diagram
\[ \xymatrix{CF(H_{\underline{e_1}}^\sigma, J) \ar[r] \ar[d]^{c(\sigma)} & CF({H}_{\underline{e_1}}^{\widetilde{\sigma}}, \widetilde{J}) \ar[d]^{c(\widetilde{\sigma})} \\ CF(H_{\underline{e_0}}^\sigma, J) \ar[r] & CF({H}_{\underline{e_0}}^{\widetilde{\sigma}},\widetilde{J}). \\} \]
\end{lem}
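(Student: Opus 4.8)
The plan is to build the chain-level map directly from the canonical identifications established just above the statement, and then to verify that the claimed Novikov weights $T^{C(x)}$ are exactly the amounts by which topological energies change under $\varphi$, so that the differential and continuation counts transport to each other after twisting. First I would fix a $\varphi$-scalable non-degenerate Hamiltonian $H$ and use $\varphi_* X_H = X_{\widetilde H}$ (which is \cref{lem:Scaling} with $\ell = 0$, applied to the constant simplex) to get the bijection $x \leftrightarrow \widetilde x$ on $1$-periodic orbits, together with the canonical identification $u \leftrightarrow \widetilde u = \varphi \circ u$ of moduli spaces $\overline{\sM}((H,J),x_0,x_\ell) \cong \overline{\sM}((\widetilde H,\widetilde J),\widetilde x_0,\widetilde x_\ell)$. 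Because this identification is a homeomorphism of compactified moduli spaces compatible with the implicit atlas and orientation data (the curves differ only by post-composition with the diffeomorphism $\varphi$, which is $(\widetilde J, J)$-holomorphic over their images), the virtual counts $\#_{vir}$ agree term by term. Thus the only discrepancy between the two Floer differentials is the Novikov exponent recording $E_{top}$, and I must compute $E_{top}(\widetilde u) - E_{top}(u)$.

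The key computation is the energy comparison. Writing out $E_{top}(\widetilde u) = \int_{\widetilde u} \Omega + \int_{S^1} \widetilde H(\widetilde x_0) - \int_{S^1} \widetilde H(\widetilde x_\ell)$ and $\widetilde u = \varphi \circ u$, I would expand $\int_{\varphi \circ u} \Omega = \int_u \varphi^* \Omega = \int_u \Omega + \int_u (\varphi^*\Omega - \Omega)$. Since $\varphi(r,x) = (\ell(r),x)$ is the identity on the base factor $F$ and acts only on the collar coordinates, $\varphi^*\Omega - \Omega = (\ell'(r) - 1)\, dr \wedge d\theta = d\big((\ell(r)-r)\, d\theta\big)$ on the collar, and this one-form vanishes outside the collar, so by Stokes the correction $\int_u d\big((\ell(r)-r)d\theta\big)$ equals $\int_{S^1} x_\ell^*((\ell(r)-r)d\theta) - \int_{S^1} x_0^*((\ell(r)-r)d\theta)$ (being careful with the $\IR\times S^1$ boundary orientation conventions used throughout, and noting the $1$-periodic orbits lie in the collar where the primitive is defined). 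Collecting terms, $E_{top}(\widetilde u) - E_{top}(u)$ becomes exactly $C(x_\ell) - C(x_0)$ with $C(x) = -\int_{S^1} x^*((\ell(r)-r)d\theta) + \int_{S^1} \widetilde H(\widetilde x) - H(x)\, dt$, which is precisely the claimed weight difference; hence the twisted map $x \mapsto \widetilde x \cdot T^{C(x)}$ intertwines $\partial$ with $\widetilde\partial$. It is a $\Lambda$-linear isomorphism because it is a weighted relabelling of basis vectors (after tensoring with $\Lambda$ the weights are units). For the commuting square, I would run the identical argument for a $\varphi$-scalable family $H^\sigma$: \cref{lem:Scaling} gives $H^{\widetilde\sigma}$ with $\varphi_* X_{H^\sigma} = X_{H^{\widetilde\sigma}}$, the moduli spaces of parametrized Floer trajectories (for $\ell = 1$, over the Morse flow lines of the $1$-simplex) are identified by $u \leftrightarrow \varphi \circ u$ exactly as before, and the same energy bookkeeping shows $c(\widetilde\sigma)\circ(\text{twist by } T^{C(\cdot)}) = (\text{twist by } T^{C(\cdot)})\circ c(\sigma)$; the point that makes this \emph{strictly} (not just homotopy) commutative is that $\varphi$ identifies the moduli spaces on the nose, so no chain homotopy is needed.

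The main obstacle I anticipate is not the formal structure but two technical points in the energy computation: first, that the corrected almost complex structure $\widetilde J$ (which differs from $\varphi_* J$ near $\partial M_{c+\delta}$ via the cutoff $\kappa$) does not actually affect the moduli spaces, which is handled by the integrated maximum principle — since $H^{\widetilde\sigma}$ is radial of constant slope for $r \geq c+\delta-\varepsilon$ and $\widetilde J$ is weakly admissible there (as noted in the text, with $f$ the antiderivative of $r/(r-\delta\kappa(r))$), \cref{prop:MaximumPrincipleForWeaklyConvexDomains} confines all trajectories to $M_{c+\delta-\varepsilon}$ where $\widetilde J = \varphi_* J$, so $\varphi^{-1}\circ v$ is genuinely $J$-holomorphic; and second, keeping careful track of the orientation of the ends of the cylinder $\IR\times S^1$ so that the Stokes term comes out with the correct sign matching $C(x_\ell) - C(x_0)$ rather than its negative. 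Beyond that, the argument is a matched-term transport, and I expect the write-up to consist mainly of the Stokes computation for $\varphi^*\Omega - \Omega$ and a remark that the virtual counts are preserved because $\varphi$ is a diffeomorphism intertwining the Floer data.
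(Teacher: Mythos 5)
Your proposal is correct and follows essentially the same route as the paper: the canonical identification $u \leftrightarrow \varphi\circ u$ of orbits and moduli spaces (with the integrated maximum principle confining trajectories to the region where $\widetilde J = \varphi_* J$), plus the Stokes computation for $\varphi^*\Omega - \Omega = d\bigl((\ell(r)-r)\,d\theta\bigr)$, is exactly how the paper verifies the chain-map and strict-commutativity claims. The only blemish is a sign slip at the end: with $x_0$ the negative-end (output) orbit and $x_\ell$ the positive-end (input) orbit, collecting your terms gives $E_{top}(\widetilde u)-E_{top}(u)=C(x_0)-C(x_\ell)$ — which is precisely what the twisted map needs to intertwine the differentials — rather than $C(x_\ell)-C(x_0)$ as written.
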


\begin{rem}
This type of rescaling isomorphisms for action completed symplectic cohomology groups is not new.  Similar ideas appeared in \cite{McLean_BirationalCalabiYauManifoldsHaveTheSameSmallQuantumProducts}, and were later systematically studied in \cite{tonkonog2020superrigidity}.  The novelty in our approach lies in that we establish the rescaling isomorphism for boundaries given by mapping tori of symplectomorphisms as opposed to contact-type boundaries.  In light of this, our analogue of an index-bounded type assumption that appears in the above papers is also different.
\end{rem}

\begin{proof}
Define
\[ \Phi: CF(H,J) \to CF(\widetilde{H},\widetilde{J}), \hspace{10pt} \Phi(x) = \widetilde{x} \cdot T^{C(x)},\]
where $C(x)$ is given in the statement of the lemma.  Clearly, $\Phi$ is an isomorphism over $\Lambda$.  It just remains to show that it is a chain map.  It suffices to show that
\[ E_{top}(\widetilde{u}) - E_{top}(u) = C(x_-) - C(x_+), \]
where $u$ is of type $((H^\sigma,J),x_-,x_+)$.
\begin{align*}
E_{top}&(\widetilde{u}) - E_{top}(u) 
\\ & = \int \widetilde{u}^* \Omega - \int \widetilde{H}(\widetilde{x}_+) + \int \widetilde{H}(\wt{x}_-) - \int u^*\Omega + \int H(x_+) - \int H(x_-)
\\  & = \int u^*(\varphi^* \Omega - \Omega) - \int \widetilde{H}(\widetilde{x}_+) + \int \widetilde{H}(\wt{x}_-) + \int H(x_+) - \int H(x_-)
\\  & = \int u^*(\eta + \ell' dr \wedge d\theta - \eta - dr \wedge d\theta) - \int \widetilde{H}(\widetilde{x}_+) + \int \widetilde{H}(\wt{x}_-) + \int H(x_+) - \int H(x_-)
\\  & = \int x_+^*((\ell(r)-r) d\theta) - \int x_-^*((\ell(r)-r)d\theta) - \int \widetilde{H}(\widetilde{x}_+) + \int \widetilde{H}(\wt{x}_-) + \int H(x_+) - \int H(x_-)
\\ & = C(x_-) - C(x_+).
\end{align*}
An analogous computation shows the diagram for continuation maps is also commutative.
\end{proof}

Now we can establish the rescaling isomorphism that we will need to prove \cref{lem:ModifiedParallelTransportCY}.  Suppose that $K$ is a compact, codimension zero submanifold of $M_c$ with connected boundary such that $b< r(\partial K) < c$.

\begin{lem}
Let $H_n$ be a cofinal sequence of Hamiltonians in $\sH^+(K \subset M_c;f,\tau)$.  Suppose that each $H_n$ is $\varphi$-scalable.  Then $\widetilde{H}_n$ is a cofinal sequence of Hamiltonians in $\sH^+(K_{\delta} \subset M_{c+\delta}; \widetilde{f},\tau)$, where $\widetilde{f}$ is some function on $M_{c+\delta}$.
\end{lem}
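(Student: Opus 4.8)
\smallskip
\noindent\textbf{Proof idea.} First, $\varphi$ carries $K$ diffeomorphically onto $K_\delta$: since $r(\partial K)\in(b,c)$ we have $M_b\subset K$, so in the collar $K=\{r\le\rho\}$ for a function $\rho\colon\partial M\to(b,c)$, and because $\varphi=\mathrm{id}$ on $\{r\le a\}$ and $\varphi(r,x)=(r+\delta,x)$ on $\{r\ge b\}$ one checks slice by slice that $\varphi(K)=\{r\le\rho+\delta\}\cup(M\smallsetminus N(\partial M))=K_\delta$; in particular $r(\partial K_\delta)\in(b+\delta,c+\delta)$, so $\partial K_\delta$ lies properly in the collar of $M_{c+\delta}$. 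By \cref{lem:Scaling} each $\widetilde H_n$ is monotonically admissible on $M_{c+\delta}$, and since $H_n$ has slope $\tau$ for $r\ge c-\varepsilon$ while $\varphi^{-1}$ is the translation $r\mapsto r-\delta$ for $r\ge b+\delta$, the formula $\widetilde H_n=H_n\circ\varphi^{-1}+\delta m_n$ there shows $\widetilde H_n$ has slope $\tau$ for $r\ge c+\delta-\varepsilon$. Thus all that must be produced is a continuous function $\widetilde f$ on $M_{c+\delta}$ with $\widetilde H_n<\widetilde f_{K_\delta}$ for every $n$ and with $\{\widetilde H_n\}$ cofinal in $\sH^+_0(K_\delta\subset M_{c+\delta};\widetilde f,\tau)$.

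The key step is to bound the slopes $m_n$. Because each connecting simplex is monotonically admissible, $\{H_n\}$ is $\preccurlyeq$-increasing, and because it is cofinal one gets $\sup_n H_n=f$ pointwise on $\mathrm{int}(K)$: for $p\in\mathrm{int}(K)$ and $\epsilon>0$ there is an admissible Hamiltonian $G$ in the system with $G(p)\ge f(p)-\epsilon$ (take $G$ a small perturbation of $f$ near $p$, bent to slope $\tau$ near $\partial M_c$), and then $G\preccurlyeq H_n$ for some $n$ forces $H_n(p)\ge f(p)-\epsilon$. Applying this at $(a,x)$ and $(b,x)$, which lie in $\mathrm{int}(M_b)\subset\mathrm{int}(K)$, and using that $H_n$ is affine in $r$ with slope $m_n$ on $[a,b]$,
\[ m_n(b-a)=H_n(b,x)-H_n(a,x)\ \longrightarrow\ f(b,x)-f(a,x), \]
so $\{m_n\}$ converges; in particular $M:=\sup_n|m_n|<\infty$.

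Next, compare $\widetilde H_n$ with $H_n\circ\varphi^{-1}$ on $K_\delta$ using the explicit three–case formula for $\widetilde H_n$: the two agree on $\{r\le a\}$, differ by $m_n\bigl(r-\ell^{-1}(r)\bigr)$ on $[a,b+\delta]$, and differ by the constant $\delta m_n$ on $\{r\ge b+\delta\}$, so $|\widetilde H_n-H_n\circ\varphi^{-1}|\le\delta M$ on $K_\delta$. Since $H_n\circ\varphi^{-1}\uparrow f\circ\varphi^{-1}$ on $K_\delta$ (from $H_n\uparrow f$ on $K$) and $\{m_n\}$ converges, the function $\widetilde f|_{K_\delta}:=\sup_n\widetilde H_n=\lim_n\widetilde H_n$ is finite and continuous (it equals $f\circ\varphi^{-1}$ plus a continuous correction depending on $\lim m_n$); extend it continuously to $M_{c+\delta}$. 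As the cofinal sequence may be taken strictly $\preccurlyeq$-increasing, $\widetilde H_n<\widetilde f$ on $K_\delta$, i.e. $\widetilde H_n\in\sH^+_0(K_\delta\subset M_{c+\delta};\widetilde f,\tau)$. Finally, cofinality: given $G\in\sH^+_0(K_\delta\subset M_{c+\delta};\widetilde f,\tau)$, both $G$ and $\widetilde H_n$ have slope $\tau$ near $\partial M_{c+\delta}$, so $G\preccurlyeq\widetilde H_n$ reduces to $G\le\widetilde H_n$; on the compact set $K_\delta$ one has $\widetilde f-G\ge c_0>0$ and $\widetilde H_n\uparrow\widetilde f$, so by Dini $\widetilde H_n\ge\widetilde f-c_0\ge G$ for $n\gg0$, while outside $K_\delta$ the $\widetilde H_n$ tend to $+\infty$ locally uniformly (as $\{H_n\}$ is cofinal) and overtake the fixed function $G$, which is bounded there apart from its final linear piece of slope $\tau$.

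The only genuinely nontrivial point is the slope bound $M<\infty$ in the second step — equivalently, that the cofinal sequence exhausts $f$ on $\mathrm{int}(K)$, which pins the asymptotic slope of the $H_n$ on $[a,b]$. Without it $\sup_n\widetilde H_n$ would be infinite on the collar slab $\{a\le r\le b+\delta\}$ and no finite continuous $\widetilde f$ could dominate all the $\widetilde H_n$. Everything else is bookkeeping with the diffeomorphism $\varphi$ and the formula from \cref{lem:Scaling}.
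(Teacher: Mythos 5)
Your proposal is correct and follows essentially the same route as the paper: the $\varphi$-scalable structure forces $f$ to be affine in $r$ on the slab $[a,b]$ (your slope-convergence argument), $\widetilde{f}$ is then the correspondingly stretched function — your $\sup_n \widetilde{H}_n$ on $K_\delta$ coincides with the paper's explicit formula ($f$ for $r\le a$, $m\cdot r+k$ on $[a,b+\delta]$, $f\circ\varphi^{-1}+\delta\cdot m$ beyond) — and cofinality transfers by the standard monotonicity-plus-compactness argument. The paper's own proof is only two lines (it asserts the conclusion "by definition of $\widetilde{H}_n$"), so you have simply supplied the details it omits; the one implicit hypothesis you share with the paper is that the stretched sequence remains monotone (equivalently that the intermediate slopes $m_n$ increase to $m$), which holds for the cofinal sequences actually used in the application.
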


\begin{proof}
Since the $H_n$ are $\varphi$-scalable, this sequence must converge to a linear function when $a \leq r \leq b$.  Write $f = m \cdot r + k $ for $a \leq r \leq b$.  Define
\[ \widetilde{f} = \begin{cases} f, & r \leq a \\ m \cdot r +k, & a \leq r \leq b+\delta \\ f \circ \varphi^{-1}(r,x)+\delta \cdot m,& b +\delta \leq r. \\ \end{cases} \]
By definition of $\widetilde{H}_n$, it is a cofinal sequence of Hamiltonians in $\sH^+(K_{\delta} \subset M_{c+\delta}; \widetilde{f},\tau)$.
\end{proof}

\begin{lem}\label{lem:SHRescalingIso}
Suppose that $c_1(M) = 0$.  If there exists a cofinal sequence of $\varphi$-scalable Hamiltonians $H_n$ in $\sH^+(K \subset M_c; f,\tau)$ and constants $W_\ell > 0$ for each $\ell \in \IN$ such that all $1$-periodic orbits $x$ of the Hamiltonians $H_n$ satisfy
\[ |CZ(x)| \leq \ell \Longrightarrow \left| \int x^* d\theta \right| \leq W_\ell,\]
then $\widehat{SH}(K \subset M_c; f,\tau) \otimes \Lambda \cong \widehat{SH}(K_\delta \subset M_{c+\delta}; \widetilde{f},\tau) \otimes \Lambda$.
\end{lem}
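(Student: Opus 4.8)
The plan is to combine the chain-level rescaling isomorphism of \cref{lem:CFRescalingIso} with the grading-control hypothesis to show that the induced map on completed telescopes is an isomorphism after tensoring with $\Lambda$. First I would fix a cofinal sequence of $\varphi$-scalable Hamiltonians $H_n$ in $\sH^+(K \subset M_c; f,\tau)$ connected by $\varphi$-scalable monotonically admissible $1$-simplices $\sigma_n$ (one can arrange these using \cref{con:TheBump}, keeping the families linear in $r$ on $[a,b]$). By the previous lemma, $\widetilde{H}_n$ is then a cofinal sequence in $\sH^+(K_\delta \subset M_{c+\delta}; \widetilde{f},\tau)$, and $\widetilde{\sigma}_n$ connects them. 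So both sides of the claimed isomorphism are computed by the completed mapping telescopes of the rays $\sC\sF(\{H_n\})$ and $\sC\sF(\{\widetilde{H}_n\})$ respectively.

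Next I would assemble the maps $\Phi_n \colon CF(H_n,J) \to CF(\widetilde{H}_n,\widetilde{J})$ from \cref{lem:CFRescalingIso}. The strictly commutative squares in that lemma (applied to each $\sigma_n$) give a strictly commutative ladder between the two rays, hence a map $\Tel(\sC\sF(\{H_n\})) \to \Tel(\sC\sF(\{\widetilde{H}_n\}))$ of uncompleted telescopes. Each $\Phi_n$ is already an isomorphism over $\Lambda$ (it sends a generator $x$ to $\widetilde{x} \cdot T^{C(x)}$), so the map of uncompleted telescopes tensored with $\Lambda$ is an isomorphism. The content is to check that it descends to an isomorphism of the \emph{completed} telescopes. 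For that it suffices to show the map and a candidate inverse are both bounded with respect to the filtrations defining the completions — equivalently, that the exponents $C(x)$ are bounded below and above uniformly over all orbits $x$ appearing in all $CF(H_n,J)$ (after tensoring with $\Lambda$ one only needs boundedness from one side, since the Novikov completion over $\Lambda$ is insensitive to overall shifts, but I would track both directions to be safe).

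This is where the $c_1(M)=0$ hypothesis and the index bound enter, and it is the main obstacle. Unwinding $C(x) = -\int_{S^1} x^*((\ell(r)-r)\,d\theta) + \int_{S^1}(\widetilde{H}(\widetilde{x}) - H(x))\,dt$: the second term is controlled because $\widetilde{H}$ and $H$ differ by a bounded constant ($\delta \cdot m$, in the notation of the preceding lemma) on the regions where orbits live, and the first term is $-(\ell(r_x)-r_x)\cdot w(x)$ where $r_x$ is the (fixed) radial coordinate of the orbit $x$ and $w(x) = \int_{S^1} x^* d\theta$ is its winding number about the boundary direction. Since $\ell(r)-r$ is a bounded function of $r$, it remains to bound $|w(x)|$ uniformly. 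A priori $w(x)$ grows without bound as $n \to \infty$ (the slopes of cofinal Hamiltonians go to infinity), so the uncompleted rescaling map is genuinely unbounded and the naive argument fails — this is precisely the phenomenon flagged in the introduction. The grading rescues us: because $c_1(M) = 0$, the complexes $CF(H_n,J)$ are honestly $\IZ$-graded by $CZ(x)$, the differentials and continuation maps in the ray preserve this grading, and so the completed telescope splits as a product over degrees $\ell \in \IZ$. On each fixed degree $\ell$, the hypothesis gives $|w(x)| \leq W_{|\ell|}$ for every orbit $x$ of every $H_n$ contributing in that degree, hence $|C(x)|$ is bounded by a constant $C_\ell$ depending only on $\ell$. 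Therefore $\Phi$ restricted to degree $\ell$ is a bounded isomorphism with bounded inverse, inducing an isomorphism of the degree-$\ell$ pieces of the completed telescopes; taking the product over $\ell$ gives the desired isomorphism $\widehat{SH}(K \subset M_c; f,\tau) \otimes \Lambda \cong \widehat{SH}(K_\delta \subset M_{c+\delta}; \widetilde{f},\tau) \otimes \Lambda$. The one subtlety to handle carefully is the interplay between the completion (which mixes degrees a priori) and the degree splitting: I would invoke the fact that the Novikov completion of a graded complex whose differential is degree-preserving is the product of the completions of its graded pieces, so that a degreewise bounded quasi-isomorphism completes to a quasi-isomorphism.
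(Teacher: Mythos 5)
Your proposal is correct and follows essentially the same route as the paper: build $\varphi$-scalable connecting simplices via \cref{con:TheBump}, apply \cref{lem:CFRescalingIso} to get a strictly commutative ladder and hence an isomorphism of uncompleted telescopes over $\Lambda$, then use the honest $\IZ$-grading from $c_1(M)=0$ together with the index-bound hypothesis to bound $|C(x)|\leq \delta\cdot W_\ell+\delta\cdot m$ in each degree, which suffices because the completion of \cref{defn:CompletionFunctor} is degree-wise. (The only cosmetic slip is the remark that the differential preserves the grading — it shifts it by one — but this is irrelevant since only degree-wise boundedness of the degree-preserving map $\Phi$ is needed.)
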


\begin{proof}
By \cref{con:TheBump}, we can construct monotonically admissible families $H^{\sigma_n}$ such that $H^{\sigma_n}_{\underline{e_0}} = H_{n+1}$ and $H^{\sigma_n}_{\underline{e_1}} = H_n$.  We may assume that $H^{\sigma_n}$ is $\varphi$-scalable.  So by \cref{lem:CFRescalingIso}, we have a strictly commutative diagram
\[ \xymatrix{ \cdots \ar[r] & CF(H_n,J) \ar[r]^{c(\sigma_n)} \ar[d] & CF(H_{n+1},J) \ar[r] \ar[d] & \cdots \\ \cdots \ar[r] & CF(\widetilde{H}_n, \widetilde{J}) \ar[r]^{c(\wt{\sigma}_n)} & CF(\widetilde{H}_{n+1},\widetilde{J}) \ar[r] & \cdots. }\]
This induces an isomorphism of mapping telescopes
\[ \Tel(\sC\sF(\{H_n\})) \otimes \Lambda \to \Tel(\sC\sF(\{\widetilde{H}_n\}))\otimes \Lambda. \]
Since $c_1(M)=0$, we obtain a genuine $\IZ$-grading on our Floer chain complexes by the Conley-Zehnder index (not the $2$-periodic grading mentioned in \cref{subsec:CF}).  Since \cref{defn:CompletionFunctor} is a degree-wise completion, if the above isomorphism is degree-wise bounded, then it will give the desired isomorphism of action completed symplectic cohomology groups.

So we need to show that for each $\ell \in \IN$ there exists $C_\ell$ such that if $x$ is a $1$-periodic orbit of $H_n$ with $|CZ(x)| = \ell$, then $|C(x)| \leq C_\ell$.  Consider such an $x$.  By construction $r(x) \leq a$ or $r(x) \geq b$.  If $r(x) \leq a$, then $\widetilde{H}_n(\widetilde{x}) - H_n(x) = 0$.  If $r(x) \geq b$, then $\widetilde{H}_n(\widetilde{x}) - H_n(x) = \delta \cdot m_n \leq \delta \cdot m$, where $m$ is as in the construction of $\wt{f}$.  Setting $C_\ell = \delta \cdot W_\ell +\delta \cdot m$, we have that
\begin{align*}
|C(x)| & = \left| - \int x^*(\ell-r) d\theta + \int \widetilde{H}_n(\widetilde{x}) - \int H_n(x) \right| 
\leq \delta \left| \int x^* d\theta \right| + \delta \cdot m 
= C_\ell.
\end{align*}
\end{proof}


\section{Hamiltonians adapted to normal crossings divisors}\label{sec:DivisorModels}

We wish to use \cref{lem:SHRescalingIso} to establish \cref{lem:ModifiedParallelTransportCY}.  So we need to construct an appropriate sequence of cofinal Hamiltonians that satisfy the index-bounded assumptions of \cref{lem:SHRescalingIso}.  We consider the set up in \cref{sec:ProofOfTheorem} and note that \cite{Hironaka_ResolutionOfSingularities} gives a birational map $\wt{M} \to M$ that is an isomorphism away from $\pi^{-1}(0)$ such that its composition with $\pi$, $\wt{\pi}: \wt{M} \to \IC$, is smooth away from $\wt{\pi}^{-1}(0)$, and $\wt{\pi}^{-1}(0) \subset \wt{M}$ is a normal crossings divisor.  We will associate to a neighborhood of this normal crossings divisor a cofinal sequence of Hamiltonians whose Conley-Zehnder indices are controlled by the algebraic topology of the divisor.  These Hamiltonians will give rise to our desired sequence of index-bounded Hamiltonians.  To construct these Hamiltonians, we need to deform a neighborhood of the normal crossings divisor in such a manner that the neighborhoods of the open strata of the normal crossings divisor admit a system of compatible, symplectic tubular neighborhoods.  So the remainder of this subsection is divided up as follows.  First, we discuss a purely symplectic notion of normal crossings divisors, and the existence of a standard symplectic neighborhood of these divisors.  Our discussion closely follows that of \cite[Section 6.2]{McLean_BirationalCalabiYauManifoldsHaveTheSameSmallQuantumProducts}, and we claim no originality for this material.  Second, we construct the above mentioned Hamiltonians.

For now, we consider an arbitrary symplectic manifold $(M,\Omega)$.

\begin{defn}\cite[Definition 6.10]{McLean_BirationalCalabiYauManifoldsHaveTheSameSmallQuantumProducts}
A \emph{tubular neighborhood} of a smooth submanifold $Q \subset M$ is an open neighborhood $U$ of $Q$ in $M$ and a smooth fibration $\pi: U \to Q$ such that
\begin{enumerate}
	\item for a Riemannian metric $g$ on $M$, $U = \exp^{-1}(DQ)$, where
	\[ DQ \coloneqq \{ (x,v) \in TM \mid x \in Q, g(v,v) < 1, g(v,w) = 0 \mbox{ for all }w \in T_x Q\}, \]
	and
	\item $\pi = \pi_{DQ} \circ \exp^{-1}$, where $\pi_{DQ}: DQ \to Q$ is the natural projection.
\end{enumerate}
\end{defn}

\begin{notn}
Let $\sI$ be a finite indexing set.  Given $I \subset \sI$, set $U(1)^I \coloneqq \prod_{i \in I} U(1)^{\{i\}}$ and $\ID^I \coloneqq \prod_{i \in I} \ID^{\{i\}}$, where $\ID$ is the unit disk.
\end{notn}

\begin{defn}\cite[Section 6.2]{McLean_BirationalCalabiYauManifoldsHaveTheSameSmallQuantumProducts}
For $I \subset \sI$, a \emph{symplectic $U(1)^I$  neighborhood} of a symplectic submanifold $Q \subset (M,\Omega)$ is a tubular neighborhood $\pi: U \to Q$ of $Q$ such that
\begin{enumerate}
	\item $\pi^{-1}(x)$ is a symplectic submanifold, symplectomorphic to $\ID^I$,
	\item $\pi$ has structure group $U(1)^I \coloneqq \prod_{i \in I} U(1)$ given by acting on the $\ID^I$ diagonally, and
	\item the symplectic parallel transport map of $(U,\pi,\Omega|_U)$ is well-defined\footnote{This tuple defines a Hamiltonian fibration (with open fibre) and thus determines a horizontal distribution with respect to which we can parallel transport.  Given that the fibre is open, this parallel transport need not be well-defined.} and has holonomy lying in $U(1)^I$\footnote{that is, the symplectic parallel transport maps respect the $U^I$ structure groups}.
\end{enumerate}
\end{defn}

Let $\pi: U \to Q$ be a symplectic $U(1)^I$ neighborhood.  Given $J \subset I$, the action of $U(1)^J \subset U(1)^I$ on $U$ gives rise to a $U(1)^J$-bundle $\pi^J: U \to U^J$, where $U^J$ is the fixed locus of the $U(1)^J \subset U(1)^I$ action.  The restriction of $\pi^J$ to a fibre of $\pi$ is the projection $\ID^I \to \ID^J$.  This gives rise to a symplectic $U(1)^J$ neighborhood of $U^J$ inside of $U$.

\begin{defn}\cite[Section 6.2]{McLean_BirationalCalabiYauManifoldsHaveTheSameSmallQuantumProducts}
A \emph{symplectic crossings divisor} in $(M,\Omega)$ is a finite collection $(D_i)_{i \in \sI}$ of transversally intersection submanifolds of $(M,\Omega)$ such that
\begin{enumerate}
	\item $D_I = \cap_{i \in I} D_i$ is symplectic in $(M,\Omega)$,
	\item the orientation of $D_I$ from $\Omega|_{D_I}$ agrees with the orientation of $D_I$ from the orientation of $M$ from $\Omega$ and the orientation of the normal bundle of $D_I$, denoted $ND_I$, induced from the splitting $ND_I = \oplus_{i \in I} N D_i|_{D_I}$.
\end{enumerate}
\end{defn}

\begin{defn}\label{defn:StandardTubularNeighborhood}\cite[Section 6.2]{McLean_BirationalCalabiYauManifoldsHaveTheSameSmallQuantumProducts}
A \emph{standard tubular neighborhood} of a symplectic crossing divisor $(D_i)_{i \in \sI}$ is a collection of symplectic $U(1)^I$ neighborhoods $\pi_I: U_I \to D_I$ for each $I \subset \sI$ such that
\begin{enumerate}
	\item $U_I \cap U_J = U_{I \cup J}$, and
	\item $\pi_J(U_I) = U_I \cap D_J$ for $J \subset I \subset \sI$, and
	\item $\pi_I^J: U_I \to U_I^J$ is equal to $\pi_J|_{UI}: U_I \to U_I \cap D_J$ as $U(1)^J$ bundles for $J \subset I \subset \sI$.
\end{enumerate}
\end{defn}

\begin{notn}
\begin{itemize}
	\item Given a $U(1)^{\{i\}}$ trivialization of $\pi_i$, let $(r_i,\theta_i)$ denote the polar coordinates on the fibre $\ID^{\{i\}}$.	\item Let $\rho_i: U_i \to \IR$ denote the map whose restriction to a $U(1)^{\{i\}}$ trivialization of $\pi_i$ is $\rho_i = r_i^2/2$.
	\item Let $\alpha_i$ denote the $1$-form on $U_i$ whose restriction to a $U(1)^{\{i\}}$ trivialization of $\pi_i$ is $\alpha_i = d\theta_i$.
	\item Define $N^\delta = \cup_i \rho_i^{-1}((-\infty,\delta])$, and $V_I = U_I \smallsetminus \cup_{j \in \sI \smallsetminus I} U_j$.
\end{itemize}
\end{notn}

By \cite[Lemma 5.3 and Lemma 5.14]{McLean_GrowthRate} or by \cite[Theorem 2.12]{tehrani2017normal}), for $\delta>0$ sufficiently small, there exists an open neighborhood $N(\cup_i D_i)$ of $\cup_i D_i$ in $M$ and a symplectomorphism
\[ \psi: (N(\cup_i D_i), \Omega) \to (N^\varepsilon,\Omega_0) \]
such that $(N^\epsilon,\Omega_0)$ admits a standard tubular neighborhood as in \cref{defn:StandardTubularNeighborhood}.  In particular, 
\begin{enumerate}
	\item $\Omega_0|_{V_I} = \Omega|_{D_I} + \sum_i d(\rho_i \cdot \alpha_i)$, and
	\item for all $x \in \cup_i D_i$, $\psi(x) = x$.
\end{enumerate}
After rescaling, we can (and do) assume that $\varepsilon = 1$.

Now our discussion diverges from the discussion in \cite[Section 6.2]{McLean_BirationalCalabiYauManifoldsHaveTheSameSmallQuantumProducts}.  We now move on to construct a cofinal sequence of Hamiltonians that are adapted to our standard tubular neighborhood.  To be precise, we fix the following.

\begin{notn}\label{notn:SlicesOfDivisorNhood}
Let $f:  [0,+\infty) \to \IR$ be the $C^1$-function given by
\[ f(r) = \begin{cases} -(1-r)^2/2, & r \leq 1 \\  0, &  r \geq 1. \\ \end{cases} \]

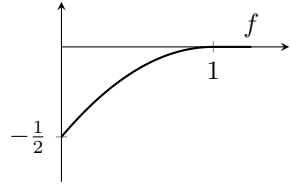
\begin{figure}
\begin{tikzpicture}[declare function={
     func(\x)= (\x<=1) * (-((1-\x)^2)/2)   +
      and(\x>1,\x<=2 ) * (0) ;}]
\begin{axis}[width=.33\linewidth, axis lines=center, xtick={1},xticklabels= {$1$}, xlabel={}, ytick={-1/2},yticklabels={$-\frac{1}{2}$}, xmin=0, xmax=1.5, ymin=-.75, ymax=.25]
  \addplot[color=black, domain=0:1.25, samples=100, smooth, thick]{func(x)} node[above,pos=1] {$f$};
\end{axis}
\end{tikzpicture}
\caption{Depiction of the function $f$ for \cref{notn:SlicesOfDivisorNhood}.}
\label{fig:Directednessgn}
\end{figure}

Let $H: N^1 \to \IR$ be given by $H|_{V_I} = \sum_i f(\rho_i)$.  Note, $dH|_{V_I} = \sum_{i \in I} f'(\rho_i) d \rho_i$.  So $H^{-1}(\delta)$ is a regular submanifold of codimension $1$ in $N^1$ for each $\delta < 0$.  For each $\delta > -1/2$, $N^\varepsilon \subset H^{-1}((-\infty,\delta])$ for some $\varepsilon>0$ sufficiently small.  Indeed, in $V_I \cap N^\varepsilon$, $H$ is bounded by $\sum_{i \in I} f(\varepsilon)$.  As $\varepsilon \to 0$, $\sum_{i \in I} f(\varepsilon)$ convergences to $-|I|/2 \leq -1/2 < \delta$.  So for $\varepsilon_I$ sufficiently small, $H|_{V_I \cap N^{\varepsilon_I}}< \delta$.  Take $\varepsilon \coloneqq \min_{I \subset \sI} \{\varepsilon_I\}$ to obtain the result.
\end{notn}

\begin{figure}
\centering
\includegraphics[width=.25\linewidth]{./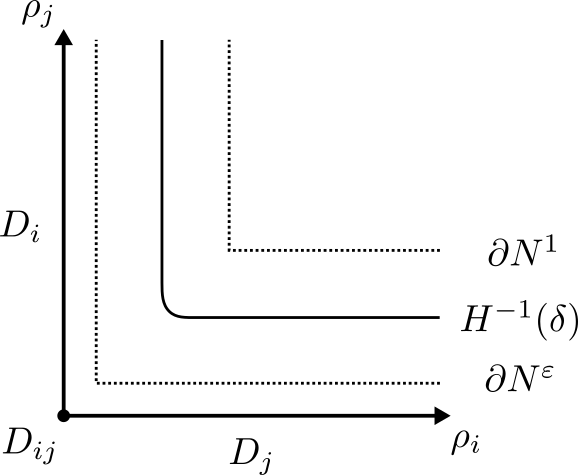}
\caption{A toric representation of our neighborhoods associated to a symplectic crossings divisor with two components}
\label{fig:DivisorNhoods}
\end{figure} 

\begin{lem}\label{rem:InitialCofinalProperties}
There exists a sequence of Hamiltonians $H_n: N^1 \to \IR$ that satisfy:
\begin{enumerate}
	\item If $y \in H^{-1}((\delta,0])$, then $\lim_{n \to \infty} H_n(y) = +\infty$.  
	\item If $x \in H^{-1}((-\infty,\delta])$, then $\lim_{n \to \infty} H_n(x) = 0$.
	\item $H_n(x) > H_m(x)$ for all $x$ and $n > m$.
\end{enumerate}
\end{lem}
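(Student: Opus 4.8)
The plan is to take $H_n \coloneqq \phi_n \circ H$, where $H$ is the function of \cref{notn:SlicesOfDivisorNhood} and $\phi_n \colon \IR \to \IR$ is a suitably chosen sequence of smooth functions. Since $f \le 0$, the function $H$ takes values in a bounded subinterval of $(-\infty,0]$, so only the behaviour of the $\phi_n$ there is relevant. Pushing the three desired conclusions through the composition reduces the lemma to producing $\phi_n$ with three properties: $\phi_n < \phi_{n+1}$ everywhere on $\IR$ (which gives $H_n = \phi_n\circ H < \phi_{n+1}\circ H = H_{n+1}$ pointwise on $N^1$, hence conclusion (iii)); $\phi_n(t) \to 0$ as $n\to\infty$ for every $t \le \delta$ (which gives conclusion (ii), since $H(x)\le\delta$ there); and $\phi_n(t)\to +\infty$ as $n\to\infty$ for every $t \in (\delta,0]$ (which gives conclusion (i)). Here $\delta < 0$ is the constant fixed in \cref{notn:SlicesOfDivisorNhood}.

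Such $\phi_n$ are built by hand, in essentially the same way as the functions $g_n$ and $f_n$ in the proof of \cref{lem:Directedness}. One fixes a strictly decreasing sequence $\varepsilon_n \searrow 0$ with $\delta + \varepsilon_1 < 0$ and chooses smooth, non-decreasing $\phi_n$ with $\phi_n \equiv -1/n$ on $(-\infty,\delta]$, $\phi_n \equiv n$ on $[\delta+\varepsilon_n,+\infty)$, and $\phi_n$ strictly increasing on $(\delta, \delta+\varepsilon_n)$; the rising parts over $(\delta,\delta+\varepsilon_1)$ are arranged to be nested and correctly ordered (the ``nested bump'' picture in the proof of \cref{lem:Directedness}) so that in fact $\phi_n < \phi_{n+1}$ on all of $\IR$. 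If one insists on $C^\infty$ Hamiltonians, then since $f$ and hence $H$ is only $C^1$ one first replaces $H$ by an arbitrarily $C^2$-small smoothing, as is done for similar constructions elsewhere (e.g.\ the parenthetical remark in \cref{con:RadialCofinal}); this affects nothing below and we suppress it.

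Finally one checks the three conclusions directly. Conclusion (iii) is immediate from $\phi_n < \phi_{n+1}$. For (ii): if $H(x) \le \delta$ then $H_n(x) = \phi_n(H(x)) = -1/n \to 0$. For (i): if $H(y) \in (\delta, 0]$, then $H(y) \ge \delta + \varepsilon_n$ for all $n$ sufficiently large (because $\varepsilon_n \to 0$), so $H_n(y) = \phi_n(H(y)) = n \to +\infty$. The only step that requires any thought — and it is the main, albeit minor, obstacle — is obtaining the $\phi_n$ with strict monotonicity in $n$ simultaneously with the two pointwise limits, i.e.\ the nested-bump construction, which is routine and entirely parallel to \cref{lem:Directedness}. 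I note that the later refinements in this section will replace these $H_n$ by Hamiltonians of the stratified form $H_n|_{V_I} = \sum_{i\in I} f_n(\rho_i)$, adapted to the standard tubular neighbourhood so as to control Conley-Zehnder indices; but for the present statement the elementary choice $H_n = \phi_n \circ H$ is all that is needed.
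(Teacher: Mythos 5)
Your core construction is sound and is in fact the same idea as the paper's: the paper's $\wt{\ell}_n$ is exactly such a cutoff (constant $0$ below $\delta+1/n$, constant $n$ above $\delta+2/n$), applied to (a smoothing of) $H$ and shifted by $-1/n$, and your verification of (i)--(iii) for the unsmoothed composition $\phi_n\circ H$ is correct. The gap is in the step you dismiss in one line: ``replace $H$ by an arbitrarily $C^2$-small smoothing \dots this affects nothing below.'' It does affect things. Conditions (i) and (ii) are phrased with a sharp threshold at the level $\delta$ of the \emph{original} $C^1$ function $H$, and the non-smooth locus of $H$ (where some $\rho_i=1$) meets $H^{-1}(\delta)$ as soon as $|\sI|\ge 2$ (take $\rho_1=1$ and $f(\rho_2)=\delta$), so the smoothing cannot be supported away from that level set; moreover $H^{-1}(\delta)$ is only a $C^1$ hypersurface, so one cannot simply demand a smooth $H'$ with the same $\delta$-sublevel set. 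A fixed, $n$-independent smoothing $H'$ can therefore produce points $x_0,y_0$ with $H(x_0)\le\delta<H(y_0)$ but $H'(y_0)\le H'(x_0)$; since your $\phi_n$ are nondecreasing, $\phi_n(H'(y_0))\le\phi_n(H'(x_0))\to 0$, so (i) fails at $y_0$ no matter how the cutoffs are chosen. In short, once you smooth, the thresholds and/or the smoothing accuracy must depend on $n$, and the three properties must be re-verified quantitatively.

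That re-verification is precisely what the paper's proof consists of: in \cref{con:DivisorHamiltonians} the inner function and the cutoff are both smoothed with $n$-dependent accuracy ($|f-f_n|_{C^1}<\varepsilon_n=1/n^5$, $|\wt{\ell}_n-\ell_n|_{C^2}<\varepsilon_n$, thresholds at $\delta+1/n$ and $\delta+2/n$, and the extra $-1/n$ shift), and the proof of the lemma is the triangle-inequality estimate showing (i), (ii) and, most delicately, the strict monotonicity (iii) survive these perturbations. A workable patch for your version is either to adopt exactly this ($n$-dependent smoothing with error $o(1/n)$ relative to the band width, then redo (iii)), or to leave $H$ alone and instead perturb $H_n=\phi_n\circ H$ directly: its failure of smoothness is confined to $\{\exists i:\rho_i=1\}\cap H^{-1}([\delta,\delta+\varepsilon_n])$, and a perturbation supported near that set of size much smaller than $1/(n(n+1))$ preserves (iii), while (i)--(ii) survive because the support shrinks into $H^{-1}(\{\delta\})$-neighborhoods and the errors tend to $0$. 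Either way the smoothing step needs an actual argument; it is the whole content of the paper's proof. (Your closing remark is also the right caveat: for the literal statement your $H_n$ would suffice once smoothed correctly, but the later Conley--Zehnder analysis needs the specific stratified form $\ell_n\circ(\sum_i f_n(\rho_i))-1/n$, which is why the paper builds the sequence that way.)
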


\begin{con}\label{con:DivisorHamiltonians}
Let $\wt{\ell}_n: \IR \to \IR$ be the $C^2$-function given by
\[ \wt{\ell}_n(r) \coloneqq \begin{cases} 0, & r \leq \delta + 1/n \\ -6n^4 \left( \frac{(r-\delta-1/n)^3}{3} - \frac{(r-\delta-1/n)^2}{2n} \right), & \delta + 1/n \leq r \leq \delta +2/n\\ n, & r \geq \delta +2/n .\\ \end{cases}\]
Consider the $C^1$-function $\wt{H}_n \coloneqq \wt{\ell}_n \circ H$.  The $\Omega_0$-dual of $-d\wt{H}_n$ is
\[ X_{\wt{H}_n} = \sum_i \wt{\ell}_n'(H) \circ f'(\rho_i) \cdot \partial_{\theta_i}. \]
Since $f'(\rho_i)$ goes to zero as $\rho_i$ goes to $1$ and since $\wt{\ell}_n'(H)$ goes to zero as $H$ goes to either $\delta+1/n$ or $\delta+2/n$, the $1$-periodic orbits of $\wt{H}_n$ occur in the regions where $f$ and $\wt{\ell}_n(H)$ are smooth.  So we may perturb $f$ and $\wt{\ell}_n$ to smooth functions $f_n$, and $\ell_n$ such that they and the composition $H_n \coloneqq \ell_n \circ (\sum_i f_n(\rho_i)) - 1/n$ satisfy the following.  First, $f_n$ satisfies:\begin{enumerate}
	\item $|f-f_n|_{C^1} < \varepsilon_n$, and
	\item $f_n(r) \equiv 1$ for $r \geq 1$,
\end{enumerate}
where $\varepsilon_n>0$ is some small constant that depends on $n$.  In our estimates below, it suffices to take $\varepsilon_n = 1/n^5$.  Second, $\ell_n$ satisfies:
\begin{enumerate}
	\item $|\wt{\ell}_n - \ell_n|_{C^2} < \varepsilon_n$,
	\item $\ell_n(r) \equiv 0$ for $r \leq \delta+1/n$, and
	\item $\ell_n(r) \equiv n$ for $r \geq \delta+2/n$.
\end{enumerate}
Third $H_n$ satisfies:
\begin{enumerate}
	\item the collections of orbits and fixed points agree: $\Gamma_{H_n} = \Gamma_{\wt{H}_n}$ with $\Gamma_{H_n}(0) = \Gamma_{\wt{H}_n}(0)$, and
	\item in an open neighborhood of $\Gamma_{H_n}(0) = \Gamma_{\wt{H}_n}(0)$, the flows agree $\varphi_t^{\wt{H}_n} = \varphi_t^{{H_n}}$.
\end{enumerate}
\end{con}

We prove \cref{rem:InitialCofinalProperties}.

\begin{proof}
We handle each of the three statements individually.
\begin{enumerate}
	\item Suppose that $H(y) = \delta+\varepsilon$ for $\varepsilon>0$.  Then for $n\gg 0$,
	\[ \sum_i f_n(\rho_i(y)) > H(y) - \varepsilon_n \cdot |\sI| = \delta+\varepsilon - \varepsilon_n \cdot |\sI| > \delta + \frac{2}{n}. \]
	So
	\[ H_n(y) = \ell_n\left(\sum_i f_n(\rho_i(y))\right) - 1/n = n - 1/n.\]

	\item Suppose that $H(x) \leq \delta$.  Then
	\[ \sum_i f_n(\rho_i(x)) \leq \delta + \varepsilon_n \cdot |\sI| \leq \delta + 1/n.\]
	So
	\[ H_n(x) = \ell_n\left(\sum_i f_n(\rho_i(x))\right) - 1/n = -1/n.\]
	
	\item Suppose that $n>m$.  Set $F_n(x) = \sum_i f_n(\rho_i(x))$, and $F(x) = \sum_i f(\rho_i(x))$.  So $\left|F_n(x) - F(x)\right| \leq |\sI|  \cdot \varepsilon_n$.  Via the triangle inequality,
	\begin{align*}
	\left|\ell_n(F_n(x)) - \wt{\ell}_n(F(x)) \right| & \leq \left| \ell_n(F_n(x)) - \wt{\ell}_n(F_n(x)) \right| + \left| \wt{\ell}_n(F_n(x)) - \wt{\ell}_n(F(x)) \right|
	\\ & \leq \varepsilon_n + | \wt{\ell}_n'|_{\sup} \cdot |F_n(x) - F(x)|
	\\ & = \varepsilon_n + \frac{3n^2}{2} \cdot |F_n(x) - F(x)|
	\\ & \leq \varepsilon_n + \frac{3n^2}{2} \cdot |\sI| \cdot \varepsilon_n
	\\ & \leq \varepsilon_n \cdot (1+2n^2 \cdot |\sI|).
	\end{align*}
	Continuing to estimate coarsely,
	\begin{align*}
	\ell_n(F_n(x)) & - \ell_m(F_m(x)) 
	\\ & = \ell_n(F_n(x)) - \wt{\ell}_n(F(x))+\wt{\ell}_n(F(x)) - \wt{\ell}_m(F(x)) + \wt{\ell}_m(F_m(x)) - \ell_m(F_m(x))
	\\ & \geq - \varepsilon_n(1+2n^2  \cdot |\sI|) + \wt{\ell}_n(F(x)) - \wt{\ell}_m(F(x)) - \varepsilon_m(1+2m^2 \cdot|\sI|)
	\\ & \geq - \varepsilon_n \cdot (1+2n^2 \cdot |\sI|)-\varepsilon_m \cdot (1 + 2m^2 \cdot |\sI|)
	\\ & \geq - \varepsilon_n \cdot (4n^2 \cdot |\sI|) - \varepsilon_m \cdot (4m^2 \cdot |\sI|)
	\\ & \geq -8 |\sI| \cdot (1/m^3).
	\end{align*}
	So
	\begin{align*}
	H_n - H_m & = \ell_n(F_n(x)) - 1/n - \ell_m(F_m) + 1/m
	\\ &  \geq -8 |\sI| \cdot (1/m^3) + 1/m - 1/n
	\\ & \geq -8 |\sI| \cdot (1/m^3) + 1/m^2
	\\ & > 0
	\end{align*}
	with the last line holding when $m$ and $n$ are greater than $8 \cdot | \sI |$.
\end{enumerate}
This completes the proofs of all the items.
\end{proof}

We now study the orbits of the Hamiltonians $H_n$.  The associated Hamiltonian vector field of $H_n$ is
\[ X_{H_n} = \sum_i \ell_n'\left( \sum_i f_n(\rho_i) \right) \cdot f_n'(\rho_i) \cdot \partial_{\theta_i}. \]

\begin{notn}
\begin{enumerate}
	\item Let $\underline{d} = (d_1,\dots,d_{|\sI|}) \in \IZ^{|\sI|}_{\geq 0}$.
	\item Let $I(\underline{d}) = \{ i \in \sI \mid d_i \neq 0\}$.
	\item Let 
	\begin{align*}
	\Gamma^n_{\underline{d}}(0) 
	& = \left\{ x \in N^1 \mid \ell_n'\left(\sum_i f_n(\rho_i)\right) \cdot f_n'(\rho_i) = 2\pi \cdot d_i\right\} 
	\\ & = \left\{ x \in N^1 \mid \wt{\ell}_n'\left(\sum_i f(\rho_i)\right) \cdot f'(\rho_i) = 2\pi \cdot d_i\right\}.
	\end{align*}
	This is the fixed points of a collection of $1$-periodic orbits of $H_n$, which we denote by $\Gamma_{\underline{d}}^n$.  $\Gamma_{\underline{d}}^n$ lies completely in $V_{I(\underline{d})}$.  Geometrically, $\Gamma_{\underline{d}}^n$ is a $T^{|I(\underline{d})|}$ torus bundle over the $D_{I(\underline{d})} \smallsetminus \cup_{j \not \in I(\underline{d})} V_i$, which is a manifold with corners.  The flow $\varphi_t^{H_n}$ is given by rotation in the fibres.  As $n$ increases, the flow rotates the fibres by a greater and greater amount.
\end{enumerate}
\end{notn}

\begin{lem}\label{lem:CZEstimate1}
The (path components of) $\Gamma_{\underline{d}}^n(0)$ are pseudo Morse-Bott families of size
\[ \dim(\ker(d \varphi_1^{H_n} - \Ione)|_{\Gamma_{\underline{d}}^n(0)}) = 2n - |I(\underline{d})|.\]
Also, for each $\gamma \in \Gamma_{\underline{d}}^n$, there exists a capping $v$ in $N^1$ such that
\[ 2 \sum_i d_i - 2n \leq CZ(\gamma,v) \leq 2 \sum_i d_i + 2n.\]
\end{lem}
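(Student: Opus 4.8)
The plan is to do everything in the local normal form provided by the standard tubular neighbourhood of the symplectic crossings divisor (\cref{defn:StandardTubularNeighborhood}). Fix $\gamma\in\Gamma^n_{\underline d}$ and put $I:=I(\underline d)$. Since $H_n$ is a function of the radial coordinates $\rho_i$ only and is invariant under the fibrewise $U(1)^I$-action, its Hamiltonian flow preserves the fibration $\pi_I\colon U_I\to D_I$ and is the identity on the base $D_I$; hence $\gamma$ lies in a single fibre over a point $p\in D_I$ and is contained in one local $U(1)^I$-trivialisation $W\cong B\times\prod_{i\in I}\ID$, $B\subset D_I$, on which $\Omega_0=\pi_I^{*}(\Omega|_{D_I})\oplus\bigoplus_{i\in I}d\rho_i\wedge d\theta_i$ and $H_n=\ell_n\big(\sum_{i\in I}f_n(\rho_i)\big)-1/n$. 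First I would observe that the defining equations $\partial_{\rho_i}H_n=2\pi d_i$ force $(\rho_i)_{i\in I}$ to be the critical points of $H_n-2\pi\underline d\cdot\underline\rho$ in the fibre directions; after arranging (by the $C^\infty$-small freedom already built into \cref{con:DivisorHamiltonians}, passing to a single path component if needed) that these are nondegenerate, the radii are locally constant, so $\Gamma^n_{\underline d}(0)$ is the asserted $T^{|I|}$-bundle over $D_I$ minus its deeper strata. It is closed, and since $|\partial_{\rho_i}H_n|$ is bounded for fixed $n$ only finitely many $\underline d$ give a nonempty $\Gamma^n_{\underline d}$, so distinct families are disjoint closed sets and the isolatedness required by \cref{defn:PseudoMorseBott} holds.

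Next I would linearise the time-one flow at $\gamma(0)$. In the $\Omega_0$-orthogonal splitting $T_{\gamma(0)}W=T_pB\oplus\bigoplus_{i\in I}T\ID^{\{i\}}$, using a symplectic frame of $T_pB$ and the frame $(\partial_{\rho_i},\partial_{\theta_i})$ on the fibre, the flow is $\Ione$ on $T_pB$ and $(\underline\rho,\underline\theta)\mapsto(\underline\rho,\underline\theta+\nabla_{\underline\rho}H_n)$ on the fibre, whence
\[
d\varphi_1^{H_n}-\Ione \;=\; 0_{T_pB}\ \oplus\ \begin{pmatrix} 0 & 0\\ \operatorname{Hess}_{\underline\rho}H_n & 0\end{pmatrix}.
\]
With $\operatorname{Hess}_{\underline\rho}H_n$ invertible (nondegeneracy) and constant along the family (constant radii), $\dim\ker(d\varphi_1^{H_n}-\Ione)$ is constant along $\Gamma^n_{\underline d}(0)$ and equals $\dim D_I+|I|=(\dim M-2|I|)+|I|=\dim M-|I(\underline d)|$; combined with path-connectedness of each component, this is precisely the pseudo Morse-Bott assertion.

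For the index estimate I would cap $\gamma$ inside its fibre: keep the $B$-coordinate constant equal to $p$ and, in each factor $\ID^{\{i\}}$, fill the degree-$d_i$ loop $t\mapsto z_i^0 e^{2\pi i d_i t}$ by a radial degree-$d_i$ disc, which stays in $W\subset N^1$. In the corresponding trivialisation of $v^{*}TM$ the path of symplectic matrices splits, by \ref{CZ2}, as the constant path $\Ione$ on $T_pB$ (index $0$) and, on each $\ID^{\{i\}}$-summand, the rotation $R_{2\pi d_i t}$ post-composed with the shear $\begin{pmatrix}1&0\\ t\,\partial_{\rho_i}^2H_n&1\end{pmatrix}$; by \ref{CZ1}, \ref{CZ3}, \ref{CZ5} the rotation contributes exactly $2d_i$ and the shear part contributes something of absolute value at most $1$. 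Summing over $i\in I$ gives $|CZ(\gamma,v)-2\sum_i d_i|\le|I(\underline d)|$, and since $D_{I(\underline d)}\neq\varnothing$ forces $|I(\underline d)|\le\dim_{\IC}M$, this yields the stated bound. The only genuinely delicate point is the bookkeeping that extracts the clean block form above from the standard-neighbourhood normal form — in particular checking that the flow is the identity on $TD_I$ in a frame compatible with the chosen capping, so that nothing is lost there — together with verifying that the perturbations of \cref{con:DivisorHamiltonians} can be taken so that $\operatorname{Hess}_{\underline\rho}H_n$ is nondegenerate along each family; the Conley–Zehnder computation itself is a standard application of the axioms \ref{CZ1}–\ref{CZ5}.
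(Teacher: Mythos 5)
Your overall strategy (work in the local $U(1)^I$ normal form, observe the flow is fibrewise rotation, read off the kernel of $d\varphi_1^{H_n}-\Ione$ from the block form, cap inside a fibre and estimate $CZ$ by the axioms) is the same as the paper's, but there is a genuine gap at the crux of the size computation. Your argument needs $\operatorname{Hess}_{\underline\rho}H_n$ (the paper's matrix $F_{ij}(1)$) to be invertible along $\Gamma^n_{\underline d}(0)$, and you propose to get this from ``the $C^\infty$-small freedom already built into \cref{con:DivisorHamiltonians}.'' That freedom is not available for this purpose: the construction explicitly requires that in a neighborhood of $\Gamma_{H_n}(0)=\Gamma_{\wt{H}_n}(0)$ the flows of $H_n$ and $\wt{H}_n=\wt{\ell}_n\circ\big(\sum_i f(\rho_i)\big)$ coincide, so the linearized return map, and in particular the radial Hessian at the orbits, is pinned to the specific unperturbed data $\wt\ell_n,f$; moreover the later lemmas use exactly these $H_n$, so you cannot quietly replace them by a further generic perturbation. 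Nondegeneracy is therefore a nontrivial fact to be proved, not arranged. The paper proves it directly: from $\wt\ell_n'\cdot f'(\rho_i)=2\pi d_i$ one gets $f'(\rho_i)/d_i=f'(\rho_j)/d_j$, and since $f'$ is linear near the orbits the radii satisfy rational relations $\rho_i=a_{ij}\rho_j$; if $\det F_{ij}(1)=0$ these equations would force the $\rho_i$ (and hence $\pi$) to be algebraic, a contradiction with the transcendence of $\pi$. Some argument of this kind (or an honest re-engineering of \cref{con:DivisorHamiltonians}) is needed before your block computation of the kernel dimension is justified.

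A secondary inaccuracy: in the Conley--Zehnder estimate you split the linearized flow as a direct sum over $i\in I$ of a rotation $R_{2\pi d_i t}$ composed with a $2\times 2$ shear involving $\partial_{\rho_i}^2 H_n$. Because $H_n=\ell_n\big(\sum_i f_n(\rho_i)\big)$ is a composition, the radial Hessian has off-diagonal terms $\ell_n''\,f_n'(\rho_i)f_n'(\rho_j)$ coupling the factors, so the path of symplectic matrices does not split factorwise and \ref{CZ2} cannot be applied factor by factor as you do. The correct statement is that the whole fibre block is a product of the rotations with a single coupled shear determined by the symmetric matrix $F_{ij}(1)$, whose contribution to $CZ$ is bounded in absolute value by $|I(\underline d)|\leq 2n$; this is exactly the computation the paper outsources to McLean's argument (the proof of Proposition 6.17 in the cited paper). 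Your final bound is fine, but the route to it needs this repair.
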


\begin{rem}
At this point in our construction, $c_1(N^1)$ does not necessarily vanish.  So $CZ(\Gamma_{\underline{d}}^n)$ is not well-defined, which is why \cref{lem:CZEstimate1} is stated in terms of cappings.
\end{rem}

\begin{proof}
Since the fixed points $\Gamma_{\underline{d}}^n(0)$ lie completely in $V_{I(\underline{d})}$, we work in $V_{I(\underline{d})}$.  Suppose that $I(\underline{d}) = \{1,\dots,m\}$.  In this region, $\varphi_t^{H_n}$ is given fibrewise over $\pi_{I(\underline{d})}$ by
\[ \left( \dots, \exp\left(\wt{\ell}_n'\left(\sum_i f(\rho_i)\right) \cdot f'(\rho_i) \cdot it\right) \cdot z_i,\dots\right) \]
where $(z_1,\dots,z_{|I(\underline{d})|})$ are the complex coordinates for the fibre.
The return map $d \varphi_t^{H_n}$ with respect to the basis $\partial_{\rho_i}$, $\partial_{\theta_i}$, and the horizontal distribution determined by $\pi_{I(\underline{d})}$ is
\[ \begin{pmatrix} \Ione & 0 & 0 \\ F_{ij}(t) & \Ione & 0 \\ 0 & 0 & \Ione \\ \end{pmatrix} \]
where
\[ F_{ij}(t) = t \cdot \left(\ell''\left(\sum_i f(\rho_i)\right) \cdot f'(\rho_j) \cdot f'(\rho_i) + \delta_{ij} \cdot \ell'\left(\sum_i f(\rho_i)\right) \cdot f''(\rho_i)\right).\]
To prove the first claim, we just need to show that the matrix $F_{ij}(1)$ is non-singular.  Notice that ${\ell}_n' \cdot f'(\rho_i) = 2\pi \cdot d_i$.  So $f'(\rho_i)/d_i = f'(\rho_j)/ d_j$ for all $i$ and $j$.  Since $f'$ is linear near our orbits, $\rho_i = a_{ij} \cdot \rho_j$ for some $a_{ij} \in \IQ$.  If $\det(F_{ij}(1)) =0$, then the values of the $\rho_i$ are algebraic for all $i$, which implies that $\pi$ is algebraic, a contradiction.

To estimate the Conley-Zehnder index, notice that if $\gamma$ is a $1$-periodic orbit in $\Gamma_{\underline{d}}^n$, then $\gamma$ is completely contained in a fibre $\pi_{I(\underline{d})}^{-1}(p)$ for some $p \in D_{I(\underline{d})}$.  So there exists a map $v: \ID \to \pi_{I(\underline{d})}^{-1}(p)$ such that $v(\exp(2\pi it)) = \gamma(t)$.  To show that $CZ(\gamma,v)$ satisfies the statement of the lemma, one argues as in \cite[Proof of Proposition 6.17 following equation (15)]{McLean_BirationalCalabiYauManifoldsHaveTheSameSmallQuantumProducts}.
\end{proof}

Now we consider $\wt{\pi}: \wt{M} \to \IC$ from the beginning of this section.  Let $\beta: \wt{M} \to M$ denote the resolution map.  For convenience, set $U = M \smallsetminus \pi^{-1}(0)$, and $\wt{U} = \wt{M} \smallsetminus \wt{\pi}^{-1}(0)$.  So $\beta$ is an isomorphism from $\wt{U}$ to $U$ and $\wt{M} \smallsetminus \wt{U}$ is a collection of transversally intersecting complex hypersurfaces $D_1,\dots,D_n$ that comprise our normal crossings divisor.  Suppose $c_1(M)=0$.

Let $\tau: \sK^*_{M} \to M \times \IC$ be a trivialization of the anti-canonical bundle of $M$, and let $\wt{\tau} = \tau \circ \beta$ be the induced trivialization of the anti-canonical bundle of $\wt{U}$.  Fix a generic section $s$ of $\sK^*_{\wt{M}}$ that satisfies $s(x) \coloneqq \wt{\tau}^{-1}(x,1)$ for all $x$ outside of a compact subset that contains an open neighborhood of $\cup_i D_i$.

$\wt{M}$ smoothly deformation retracts onto $\cup_i D_i$.  So $H_{2n-2}(\wt{M};\IZ) \cong \oplus_i [D_i]$.  Write $[s^{-1}(0)] = -\sum_i a_i \cdot [D_i]$.  These terms are the \emph{discrepancy} of the resolution $\beta$.  By \cite[Lemma 6.15]{McLean_BirationalCalabiYauManifoldsHaveTheSameSmallQuantumProducts}, each $a_i$ is non-negative.  (\cite{McLean_BirationalCalabiYauManifoldsHaveTheSameSmallQuantumProducts} works in the setting where $U$ and $\wt{U}$ are affine; however, this assumption is not used in the presented argument.)  Similarly, consider the $1$-form $d\theta$ that is defined on $\IC^\times$.  Consider a smooth bump function $\varphi$ that is one near the divisors and zero outside of a compact subset that contains an open neighborhood of the divisor.  Then $d(\varphi \cdot d \theta) \in H^2_{c}(\wt{M};\IZ)$.  So by Poincar\'{e} duality, it may be written as $\sum_i w_i \cdot [D_i]$.

The $H_n: \wt{M} \to \IR$ in \cref{con:DivisorHamiltonians} descend to Hamiltonians on $M$ since they are constant near the divisors and since $\beta$ is an isomorphism over the complement of $\pi^{-1}(0)$.  Abusively write $H_n: M \to \IR$ for the descended Hamiltonian.  Using the discrepancy and \cref{CZ2,CZ3,CZ5}, we obtain the following.

\begin{lem}\label{lem:CZEstimate2}
For each $\gamma \in \Gamma_{\underline{d}}^n$ with capping $v$ as in \cref{lem:CZEstimate1}, we have
\[ CZ(\gamma) = CZ(\gamma,v) + 2 \sum_i d_i a_i\]
where the left most Conley-Zehnder index is computed in $M$.  Consequently,
\[ 2\sum_i d_i(a_i+1) - 2n \leq CZ(\gamma) \leq 2 \sum_i d_i (a_i+1) +2n .\]\qed
\end{lem}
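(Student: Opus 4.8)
\textbf{Proof proposal for \cref{lem:CZEstimate2}.}

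The plan is to transport the Conley--Zehnder computation from the resolution $\wt{M}$, where the orbits $\Gamma_{\underline d}^n$ actually live, down to $M$, and to account for the difference between the two ambient manifolds via the discrepancy class. The key point is that $CZ(\gamma)$ in \cref{lem:CZEstimate2} is computed using a trivialization of $\sK^*_M$ (via \cref{lem:IndOfCZc1Vanishing}, since $c_1(M)=0$), whereas $CZ(\gamma,v)$ from \cref{lem:CZEstimate1} is computed using a capping disk $v$ that lies in a fibre of $\pi_{I(\underline d)}$ inside $\wt M$. So I would first compare these two recipes on $\wt U \cong U$, where both the pullback trivialization $\wt\tau$ and the generic section $s$ of $\sK^*_{\wt M}$ are available. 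The section $s$ agrees with $\wt\tau^{-1}(\cdot,1)$ away from a compact neighborhood of $\cup_i D_i$, hence on all of $\wt U$ near infinity; its zero divisor is $[s^{-1}(0)] = -\sum_i a_i [D_i]$ by definition of the discrepancy.

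The main step is the change-of-capping formula. Recall from the discussion after \cref{defn:CZIndex} that for any two cappings $v_0, v_1$ of $\gamma$,
\[
CZ(\gamma,v_0) = CZ(\gamma,v_1) + 2 \langle c_1(\wt M), v_0 \# (-v_1)\rangle,
\]
and more generally that the discrepancy between ``$CZ$ relative to a capping in $\wt M$'' and ``$CZ$ relative to the trivialization $\tau$ of $\sK^*_M$ pulled back to $\wt U$'' is measured by the intersection number of the capping disk $v$ with the zero divisor of the section $s$ (this is exactly the mechanism in \cite[Lemma 4.3]{McLean_MinimalDiscrepancy} / \cref{lem:IndOfCZc1Vanishing}, applied on $\wt U$ rather than on all of $\wt M$: the trivialization $\tau$ exists on $M$ but on $\wt M$ it is only a section of $\sK^*$ with zeros along the divisor). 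Concretely, since $\gamma$ lies in the fibre $\pi_{I(\underline d)}^{-1}(p)$ and $v$ is the obvious disk in that fibre, the capped disk $v$ meets $D_i$ transversally in a number of points equal to the winding number $d_i$ of $\gamma$ about the $i$-th coordinate (each loop $\theta_i \mapsto e^{2\pi i d_i \theta_i}\cdot z_i$, as $z_i \to 0$, caps off across $D_i$ with intersection multiplicity $d_i$). Hence $\langle v, s^{-1}(0)\rangle = -\sum_i a_i d_i$, and the twisting formula \ref{CZ2}--\ref{CZ5} gives
\[
CZ(\gamma) = CZ(\gamma,v) + 2\sum_i d_i a_i,
\]
which is the first displayed equation. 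I would verify the sign and the factor of $2$ by the model computation in a single $\ID$-factor: the path $t\mapsto e^{2\pi i d_i t}$ of symplectic matrices on $\IC$ has $CZ = 2d_i$ by \ref{CZ1} and \ref{CZ3}, and crossing the divisor $D_i$ once (capping in $\wt M$ versus reading off the $\sK^*_M$-trivialization) shifts the index by $2a_i$ per unit intersection, because $s$ has a zero of order $a_i$ along $D_i$; this is precisely the bookkeeping carried out after equation~(15) in \cite[Proof of Proposition 6.17]{McLean_BirationalCalabiYauManifoldsHaveTheSameSmallQuantumProducts}, which \cref{lem:CZEstimate1} already invokes.

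The ``consequently'' is then immediate: substitute the two-sided bound $2\sum_i d_i - 2n \leq CZ(\gamma,v) \leq 2\sum_i d_i + 2n$ from \cref{lem:CZEstimate1} into the identity, obtaining
\[
2\sum_i d_i(a_i+1) - 2n \;\leq\; CZ(\gamma) \;\leq\; 2\sum_i d_i(a_i+1) + 2n.
\]
The one genuinely delicate point I anticipate is making the intersection-number argument rigorous when the capping disk $v$ is not in general position with respect to $\cup_i D_i$, and when $\gamma$ sits over a point $p \in D_{I(\underline d)}$ in the interior of a stratum but near lower strata: one must check that $v$ (which lies entirely in the single fibre $\pi_{I(\underline d)}^{-1}(p)$, a polydisk $\ID^{I(\underline d)}$) meets only the divisors $D_i$ with $i \in I(\underline d)$, and meets $D_i$ in exactly $d_i$ points counted with sign, using that $f'$ is linear (hence monotone) near the orbits so that each $\rho_i$ is pinned to a positive value and the $z_i$-coordinate genuinely winds $d_i$ times. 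Here the non-degeneracy established in the first half of the proof of \cref{lem:CZEstimate1} (the Vandermonde-type argument showing $\det F_{ij}(1)\neq 0$, which forces the $\rho_i$ to be transcendentally related and in particular nonzero) is what guarantees $v$ is transverse to each relevant $D_i$. Beyond this, everything is a formal consequence of \cref{CZ2,CZ3,CZ5} together with Poincar\'e duality identifying $[s^{-1}(0)]$ with $-\sum_i a_i[D_i]$ in $H_{2n-2}(\wt M;\IZ)\cong \bigoplus_i \IZ[D_i]$.
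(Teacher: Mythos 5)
Your proposal is correct and follows essentially the same route the paper takes: the identity $CZ(\gamma)=CZ(\gamma,v)+2\sum_i d_i a_i$ is obtained by comparing the capping-induced trivialization with the pulled-back trivialization $\wt\tau$ of $\sK^*_M$, with the difference measured by the intersection of $v$ with $[s^{-1}(0)]=-\sum_i a_i[D_i]$ (i.e.\ the discrepancy) via \cref{CZ2,CZ3,CZ5}, and the bounds then follow by substituting the estimate of \cref{lem:CZEstimate1}. The only superfluous element is your appeal to the non-degeneracy of $F_{ij}(1)$ for transversality of $v$ with the $D_i$: the intersection numbers $v\cdot D_i=d_i$ are purely homological winding numbers, so no genericity of $v$ is needed.
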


As a corollary of \cref{lem:CZEstimate2} and our construction, we have the following.

\begin{lem}\label{lem:CZEstimate2}
There exists perturbations $H'_n$ as in \cref{lem:PerturbWithCZ} of our Hamiltonians $H_n$ on $M$ with the following property: for each $n \in \IZ$, there exists a constant $C_n$ such that if $\gamma$ is a $1$-periodic orbit of any $H_n'$ and $CZ(\gamma) \leq n$, then
\[ \int \gamma^*(d\theta) \leq C_n. \]
\end{lem}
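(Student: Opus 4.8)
The plan is to combine the Conley–Zehnder estimate of the previous \cref{lem:CZEstimate2} (the first one, relating $CZ(\gamma)$ to $2\sum_i d_i(a_i+1)$) with the perturbation control of \cref{lem:PerturbWithCZ}, and then to reinterpret the quantity $\int\gamma^*(d\theta)$ purely in terms of the winding numbers $\underline d$ and the topological data $w_i$ introduced just before the statement. First I would recall that each unperturbed orbit $\gamma\in\Gamma^n_{\underline d}$ is, by the explicit description in \cref{con:DivisorHamiltonians} and the notation preceding \cref{lem:CZEstimate1}, contained in a fibre of $\pi_{I(\underline d)}$ and rotates in the $i$-th factor with winding number exactly $d_i$. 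Since $d\theta$ restricted to $M\smallsetminus\pi^{-1}(0)$ is cohomologous (in compactly supported cohomology of $\wt M$) to $\sum_i w_i[D_i]$, and since $\gamma$ links the divisor $D_i$ with multiplicity $d_i$, one gets $\int\gamma^*(d\theta) = \sum_i w_i d_i$ (up to the bounded contribution coming from where $\varphi$ is not yet equal to $1$, which is a fixed constant independent of $n$ and $\underline d$ because $\gamma$ lies in the region where $f_n$, $\ell_n$ are smooth and the orbit sits in a compact neighborhood of the divisor). This identity is the crux: it converts the analytic integral into a nonnegative-integer linear combination of the $d_i$.

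Next I would run the bookkeeping. By \cref{lem:CZEstimate2} (the first one), $CZ(\gamma)\le n$ forces $2\sum_i d_i(a_i+1) \le n + 2n = 3n$ for the orbits of $H_n$; since each $a_i\ge 0$ (discrepancies are nonnegative, cited from \cite[Lemma 6.15]{McLean_BirationalCalabiYauManifoldsHaveTheSameSmallQuantumProducts}) we get $a_i+1\ge 1$, hence $\sum_i d_i \le 3n/2$. Therefore $\int\gamma^*(d\theta) = \sum_i w_i d_i \le (\max_i |w_i|)\cdot\sum_i d_i \le (\max_i|w_i|)\cdot 3n/2 =: C_n$, which is a constant depending only on $n$ and the fixed topology of the resolution. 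For the perturbed Hamiltonians $H'_n$ furnished by \cref{lem:PerturbWithCZ}: each $1$-periodic orbit $\wt\gamma$ of $H'_n$ with $\wt\gamma(0)$ near $\Gamma^n_{\underline d}(0)$ satisfies $CZ(\Gamma^n_{\underline d}) - k/2 \le CZ(\wt\gamma) \le CZ(\Gamma^n_{\underline d}) + k/2$ where $k = 2n - |I(\underline d)|$ is the size of the pseudo Morse–Bott family by \cref{lem:CZEstimate1}; so $CZ(\wt\gamma)\le n$ implies $CZ(\Gamma^n_{\underline d}) \le n + k/2 \le 2n$, hence again (via \cref{lem:CZEstimate2}) $\sum_i d_i \le 2n/(\min_i(a_i+1)) \le 2n$, and the winding number of $\wt\gamma$ equals the winding number of the underlying family $\Gamma^n_{\underline d}$ (the perturbation is $C^2$-small and supported near $\Gamma^n_{\underline d}(0)$, so it cannot change the linking class), giving $\int\wt\gamma^*(d\theta)\le 2n\max_i|w_i| =: C_n$. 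Finitely many $\underline d$ with $\sum d_i\le 2n$ need be considered, and only finitely many additional orbits lie outside all the standard tubular neighborhoods; those contribute at most a fixed constant, absorbed into $C_n$.

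The main obstacle, and the step I expect to require the most care, is the identification $\int\gamma^*(d\theta) = \sum_i w_i d_i + (\text{fixed constant})$ with a constant genuinely independent of $n$. The subtlety is that the bump function $\varphi$ defining the compactly supported class of $d\theta$ and the cutoff functions $f_n,\ell_n$ in \cref{con:DivisorHamiltonians} both live near the divisor, so one must check that the orbits $\gamma\in\Gamma^n_{\underline d}$ contributing to low Conley–Zehnder index all sit inside the region $\{\rho_i \le \text{const}\}$ where $\varphi\equiv 1$, uniformly in $n$ — this follows because the orbit condition $\wt\ell_n'(\sum f(\rho_i))\cdot f'(\rho_i) = 2\pi d_i$ with $d_i$ bounded (from the index bound) pins $\rho_i$ to a bounded region; for $d_i$ large the constraint $\sum d_i\le 2n$ coupled with the shape of $\wt\ell_n$ keeps things in a compact piece of $N^1$. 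Once that uniform localization is in hand, Stokes' theorem applied to the disk $v$ capping $\gamma$ inside the fibre $\pi_{I(\underline d)}^{-1}(p)$ gives the winding-number formula directly, since $d\theta$ is closed and exact away from the divisor and the intersection of $v$ with $D_i$ is transverse of multiplicity $d_i$ by construction. The remaining estimates are the routine coarse arithmetic already modelled in the proof of \cref{rem:InitialCofinalProperties}.
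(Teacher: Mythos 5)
Your proposal is correct and follows essentially the same route as the paper's proof: a warm-up for the unperturbed $H_n$ (the winding equals $\sum_i d_i w_i$, and the Conley--Zehnder estimate together with nonnegativity of the discrepancies $a_i$ bounds the $d_i$), followed by an explicit Morse--Bott--breaking perturbation handled via \cref{lem:PerturbWithCZ} together with the observation that the perturbed orbits are $C^0$-close to the unperturbed ones and hence have the same winding. One reading correction: the $\pm 2n$ in \cref{lem:CZEstimate1} and the family size $2n-|I(\underline{d})|$ refer to the fixed dimension of $M$, not to the Hamiltonian index or the Conley--Zehnder bound, so the resulting constant is uniform over the whole sequence of perturbed Hamiltonians --- which is what \cref{lem:SHRescalingIso} actually requires --- rather than the $3n/2$-type bounds in your arithmetic that conflate these three roles of $n$.
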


\begin{proof}
We begin with a warm-up.  Suppose that we do not perturb $H_n$.  In this case, an orbit in $\Gamma^n_{\underline{d}}$ wraps $\sum_i d_iw_i$ around the origin when projected to $\IC$, that is,
\[ \int \gamma^* d\theta = \sum_i d_i w_i. \]
By \cref{lem:CZEstimate2}, since the $a_i$ are all non-negative, any bound on $CZ(\gamma)$ implies a bound on the possible values of each $d_i$, and, thus, a bound on $\int \gamma^* d\theta$.  This proves the result when we do not perturb our Hamiltonians.

To prove the result for the perturbed Hamiltonians, it is easiest to fix a very explicit perturbation.  In particular, we perturb each $H_n$ about the collections of orbits $\Gamma^n_{\underline{d}}$, which are torus bundles over manifolds with corners.  First, one perturbs the Hamiltonian by adding a $C^2$-small Morse function near the manifold with corners along $D_{I(\underline{d})}$, in such a manner that the Hamiltonian vector field has no orbits point near the corners.  After this perturbation, the orbits of $\Gamma^n_{\underline{d}}$ break up as torus families worth of orbits, with a torus family over each critical point of the added Morse function.  Finally, one further perturbs these torus families to break them up into $2|I(\underline{d})|$ individual orbits.  This is analogous to the types of perturbations we discussed in \cref{rem:MorseBottBreakings}.  Each resulting orbit of these perturbed Hamiltonians is $C^0$ close to an orbit from the associated unperturbed Hamiltonian.  This perturbation scheme is carried out in \cite[Proof of Lemma 6.8.]{McLean_GrowthRate}.  In particular, the proof there works equally as well for our setting.  From this, \cref{lem:PerturbWithCZ}, and the warm-up above, we conclude the lemma.
\end{proof}


\section{Proof of \cref{lem:ModifiedParallelTransportCY}}

We prove \cref{lem:ModifiedParallelTransportCY}.  We assume the notation from the setup of \cref{sec:ProofOfTheorem}.

\begin{proof}
Let $\Omega_P$ be any integral K\"{a}hler form on $P$.  Let $\Omega_z \coloneqq \Omega_P|_{P_z}$.  For each $z \in \IC^\times$, we have a symplectic parallel transport map from $(P_z, \Omega_z)$ to $(P_1,\Omega_1)$.  By \cite[Lemma 5.16]{McLean_GrowthRate}, we may pre-compose this parallel transport map with a symplectomorphism such that the composition $\varphi_z: (P_z, \Omega_z) \to (P_1,\Omega_1)$ maps $h_z^{-1}(\infty)$ to $h_1^{-1}(\infty)$ and $h_z^{-1}(0)$ to $h_1^{-1}(0)$.  We may identify $(P_1,\Omega_1)$ and $h_1$ with $(\overline{M},\Omega_1)$ and $\pi$ respectively.  Applying \cref{prop:KahlerComplementEmbedding} to $\pi^{-1}(\infty)$ in $\overline{M}$, there exists a K\"{a}hler form $\Omega'$ on $M$ and a symplectic embedding
\[ \psi: (M,\Omega_1) \hookrightarrow (M,\Omega') \]
that is the identity near $\pi^{-1}(0)$ and maps $\psi(M)$ into a compact subset of $M$, which we can choose to be $M_{r_1}$.

Let $\beta: \wt{M} \to M$ and $\wt{\pi}: \wt{M} \to \IC$ denote the resolution data of $\pi$ as in \cref{sec:DivisorModels}.  As in \cref{sec:DivisorModels}, after deforming a small open neighborhood of the divisor $\wt{\pi}^{-1}(0)$ in $\wt{M}$, we obtain a compact, codimension zero submanifold of $\wt{M}$, say $\wt{K}$, such that $\wt{\pi}^{-1}(0) \subset \wt{K}$, and $\wt{K} \subset (\wt{M})_{r_1}$.  Here $\wt{K}$ is the space $H^{-1}((-\infty,\delta])$ given in \cref{notn:SlicesOfDivisorNhood}.  So $K \coloneqq \beta(\wt{K})$ is 
a compact, codimension zero submanifold of ${M}$ with ${\pi}^{-1}(0) \subset {K}$, and ${K} \subset {M}_{r_1}$.  By \cref{lem:ExtendingOpenMaps}, there exists $r_0>0$ such that $r_0 < r(\partial K) < r_1$.  By \cref{lem:DisplacingFibres}, after shrinking $K$, it is stably displaceable inside of $(M_{r_1},\Omega')$.

We obtain a sequence of Hamiltonians $H_n': M \to \IR$ from \cref{lem:CZEstimate2} (where we have extended by constancy), that satisfy $H_n' < H_{n+1}'$ and $H_n' \to (0)_K$.  Without loss of generality, assume that $H_n' \equiv -1/n$ for $r \leq r_0$.

Since $\Omega'$ is K\"{a}hler, it is compatible with the almost complex structure that makes the projection $\pi: M \to \IC$ holomorphic.  So the fibres of $\pi$ are symplectic submanifolds with respect to $\Omega'$, and $(M,\pi, \Omega')$ defines a Hamiltonian fibration over $\IC^\times$.  By \cref{prop:FlatteningOverCStarNonDeg} with the constants $a,b,c$ in the statement being chosen such that $0<a< b< c\ll r_0$, we obtain a symplectic form ${\Omega''}$ on $M$, and a symplectic embedding $\mu: (M,{\Omega'}) \hookrightarrow (M,{\Omega''})$.
We may push-forward (and extend by constancy) the $H_n'$ to $(M,\Omega'')$ via $H_n'' \coloneqq (\mu^{-1})^*H_n'$.  The conclusions of \cref{lem:CZEstimate2} still hold for these $H_n''$ since $\mu$ is a smooth isotopy that is the identity near $\pi^{-1}(0)$.

After relabeling, by our application of \cref{prop:FlatteningOverCStarNonDeg} above, we can fix $0 < r_0 < r_1 < r_2 < r_3$ such that
\begin{itemize}
	\item $\mu \circ \psi(M)$ and $\mu(K)$ are both properly contained in the interior of $M_{r_3}$,
	\item $M \smallsetminus M_{r_0}$ is symplectomorphic to a symplectic mapping cylinder, whose associated Reeb vector field is non-degenerate,
	\item $r_2 < r(\partial\mu(K)) < r_3$, and
	\item $H_n'' \equiv -1/n$ for $r \leq r_2$.
\end{itemize}

The $H_n''$ are degenerate Hamiltonians with families of constant orbits $\{ H_n'' \equiv -1/n\}$ and $\{H_n'' \equiv n\}$.  Both of these are pseudo Morse-Bott families as in \cref{defn:PseudoMorseBott}.  Moreover, by \cref{CZ1}, the Conley-Zehnder indices of these constant orbits are zero.  So by \cref{lem:PerturbWithCZ} and \cref{lem:CZEstimate2}, we may pick $C^2$-small perturbation of the $H_n''$, say to obtain $H_n'''$, such that
\begin{itemize}
	\item for $r_1 \leq r \leq r_2$, $H_n'''$ is a linear function in $r$ (and thus $\varphi$-scalable as in \cref{defn:PhiScalable}),
	\item near $r = r_3$, $H_n'''$ is a linear function in $r$ with fixed slope $\tau$ that is independent of $n$, 
	\item the $H_n'''$ satisfy the hypotheses of \cref{lem:SHRescalingIso} (with $a = r_1$, $b = r_2$, and $c = r_3$),
	\item $H_n''' < H_{n+1}'''$, and
	\item the sequence $H_n'''$ is cofinal in $\sH^+(\mu(K) \subset M_{r_3}, \Omega''; 0,\tau)$.
\end{itemize}

Since $K$ was stably displaceable, $\mu(K)$ is stably displaceable inside of $M_{r_3}$.  So by \cref{lem:SHRescalingIso} and \cref{prop:PropertiesOfSH} \cref{prop:VanishingForStDisp},
\[ 0 = \widehat{SH}(\mu(K) \subset M_{r_3};0,\tau) \otimes \Lambda \cong \widehat{SH}((\mu(K))_\delta \subset M_{r_3+\delta}; 0,\tau) \otimes \Lambda\]
for all $\delta>0$.  For $\delta\gg 0$ sufficiently large, $\mu \circ \psi(M)$ is contained in $M_{r_3} \subset M_{r_3+1} \subset (\mu(K))_{\delta+1} \subset M_{r_3+1+\delta}$.  Now by \cite[Proposition 2.5]{tonkonog2020superrigidity} and \cref{prop:PropertiesOfSH} \cref{prop:InvarianceOfSlope},
\[ \widehat{SH}(M_{r_3+1} \subset M_{r_3+1+\delta};0) \otimes \Lambda \cong 0. \]
Setting $\psi_z = \mu \circ \psi \circ \varphi_z$, $a = r_3$, $b = r_3+1$, and $c = r_3+1+\delta$, we almost have completed the proof of the lemma.  It just remains to explain why $\Omega$ is integral.  To see this, one can either notice that $\Omega$ is cohomologous to the original K\"{a}hler form, which is integral (as all of the above embeddings are produced via Moser's argument), or that $\psi_z$ preserves the K\"{a}hler form near $\pi^{-1}(0)$ and so any sphere in $M$ is homotopy equivalent to a sphere in $\pi^{-1}(0)$, and thus, its integral of $\Omega$ will be an integer.
\end{proof}

\begin{rem}
Notice that in the proof above, the Reeb orbits of the stable Hamiltonian structure arising from the symplectic mapping cylinder like end do not need to satisfy any index bounded assumption.  Consequently, if we tried to construct our isomorphism by using radial Hamiltonians whose slopes go to infinity as $n$ increases, then a priori, we would not know if we could apply \cref{lem:SHRescalingIso}.  It was the divisor model that explicitly gave us the index bounded assumption.  Typically, one has a neighborhood of a divisor whose boundary is a contact manifold and one deforms the contact form to have the nice dynamical properties that our Hamiltonians have above.  In our case, there is no contact form to be found.  So we take the novel approach of working directly with Hamiltonians that are adapted to divisors.
\end{rem}


\part{Hamiltonian fibrations and symplectic deformations}\label{part:SymplecticDeformations}

In this part, we establish requisite symplectic embedding/deformation results that are used to construct the ``modified parallel transport maps" in the proof of our main result, see \cref{lem:ModifiedParallelTransport} and \cref{lem:ModifiedParallelTransportCY}.  We also discuss Hamiltonian fibrations and their relationships with symplectic mapping cylinders and convex symplectic domains.


\section{Symplectic self-embeddings of divisor complements}\label{sec:ComplementEmbeddings}

We discuss a K\"{a}hler embedding result, which allows one to symplectically push divisor complements away from divisors via a symplectic deformation that lies in a fixed K\"{a}hler class.

\begin{prop}\label{prop:KahlerComplementEmbedding}
Let $(X,\Omega,J)$ be a K\"{a}hler manifold and let $D$ be an effective normal crossings divisor in $X$.  There exists a K\"{a}hler form $\widetilde{\Omega}$ for $(X,J)$ and a symplectic embedding
\[ \psi: (X\smallsetminus D, \Omega|_{X \smallsetminus D}) \hookrightarrow (X\smallsetminus D, \widetilde{\Omega}|_{X \smallsetminus D}) \]
that satisfy:
\begin{enumerate}
	\item $\psi(X \smallsetminus D) \subset X \smallsetminus N(D)$ for an open neighborhood $N(D)$ of $D$ in $X$, and
	\item $\psi$ can be made the identity away from any fixed open neighborhood of $D$.
\end{enumerate}
\end{prop}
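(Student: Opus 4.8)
The plan is to build $\widetilde{\Omega}$ by adding to $\Omega$ a large positive multiple of a $J$-plurisubharmonic bump supported near $D$, so that the new form has enormous volume concentrated in a shrinking neighborhood of $D$; then a Moser-type argument on $X\smallsetminus D$ (where the two forms are cohomologous after rescaling, or at least where the relevant de Rham class vanishes on the complement) produces the embedding. More precisely, let $N(D)\supset N'(D)\supset N''(D)$ be nested open neighborhoods of $D$ with $\overline{N'(D)}\subset N(D)$, etc. Choose a smooth function $\varphi:X\to[0,1]$ that equals $1$ on $N''(D)$ and vanishes outside $N'(D)$. Locally near $D$ one has $\Omega = dd^c\rho$ for a plurisubharmonic potential $\rho$ (or one uses $-\log$ of the square-norm of the defining sections of the components of $D$, which is psh and $\to -\infty$ along $D$). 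Define $\widetilde{\Omega} = \Omega + K\, dd^c(\chi\circ\rho)$ for a convex increasing cutoff $\chi$ and a large constant $K$; on the region where $\varphi$ transitions one checks the $(1,1)$-form $dd^c(\chi\circ\rho)$ is still dominated by $K^{-1}\Omega$ after choosing $\chi$ with small enough derivative there, so $\widetilde{\Omega}$ remains a positive $(1,1)$-form, i.e. a K\"ahler form for $(X,J)$. By construction $[\widetilde{\Omega}] = [\Omega]$ in $H^2(X;\mathbb{R})$ since we added an exact form, so in particular $\widetilde\Omega$ and $\Omega$ have the same K\"ahler class; and $\widetilde\Omega$ assigns arbitrarily large (as $K\to\infty$) symplectic area/volume to any neighborhood of $D$.

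Next I would run Moser on the open manifold $X\smallsetminus D$. The key point is that $\widetilde{\Omega} - \Omega = d\beta$ for a $1$-form $\beta$ supported in $N'(D)\smallsetminus D$, so $\Omega_t := \Omega + t\,d\beta$ interpolates through symplectic forms on $X\smallsetminus D$ provided each $\Omega_t$ is nondegenerate — which holds on $X\smallsetminus N'(D)$ trivially (there $\Omega_t=\Omega$) and near $D$ by the positivity of the added psh term, so genuinely we want $\beta$ chosen so that $\Omega_t>0$ throughout; this is where the convexity of $\chi$ and size of $K$ get used. The time-dependent vector field $V_t$ solving $\iota_{V_t}\Omega_t = -\beta$ is then supported in $N'(D)\smallsetminus D$. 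The subtlety is that $X\smallsetminus D$ is noncompact along $D$, so the flow of $V_t$ need not be complete; I would instead argue in the reverse direction, flowing $X\smallsetminus D$ \emph{away} from $D$. Concretely, one wants the flow $\phi_t$ of a suitable (negative) $V_t$ to push $X\smallsetminus N(D)$ into itself and the total flow $\phi_1$ to be a symplectomorphism $(X\smallsetminus D,\Omega)\to(X\smallsetminus D,\widetilde\Omega)$ with image contained in $X\smallsetminus N''(D)$; completeness in this direction is automatic because the vector field points inward, away from the region where things could escape to infinity, and is zero outside a compact set of $X\smallsetminus D$. Identity away from a prescribed neighborhood of $D$ follows because $\beta$, hence $V_t$, is supported in $N'(D)$, which we may take inside any given neighborhood.

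The main obstacle, and the step deserving the most care, is reconciling two competing demands on the added term: it must be large enough (in the normal directions to $D$) that the Moser flow actually displaces the complement off of $D$ — i.e. the ``extra room'' near $D$ must dominate the volume of $X\smallsetminus D$ in a suitable sense — yet it must keep $\widetilde\Omega$ a positive $(1,1)$-form globally, which constrains its derivatives in the transition region $N'(D)\smallsetminus N''(D)$. I expect this to be handled exactly as in the standard ``inflation along a divisor'' constructions: pick the potential to be $\rho = \max(-\log\|s_i\|^2)$ or a smoothing thereof near $D$ (so that its $dd^c$ is a genuine K\"ahler form degenerating only tangent to $D$), take $\chi$ linear on the bulk of the neighborhood and convexly rounded near the endpoints, and choose $K$ after fixing $\chi$; the positivity in the transition annulus is then a compactness statement (a continuous positive-definite form stays positive-definite after adding something with small enough sup-norm of derivatives). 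Once the right $\widetilde\Omega$ is in hand, the Moser argument is routine. I would also remark that the hypothesis ``effective normal crossings divisor'' is used precisely to have the local psh potentials $-\log\|s_i\|^2$ and to control their $dd^c$ on overlaps; no smoothness of $D$ beyond normal crossings is needed.
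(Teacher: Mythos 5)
Your overall shape (add $dd^c$ of a function of $\log\|\sigma\|$ near $D$, run Moser on $X\smallsetminus D$, use support of the primitive to get (ii)) matches the paper, but two points in your write-up are genuine gaps, and the first is fatal as stated. You claim $[\widetilde\Omega]=[\Omega]$ in $H^2(X;\IR)$ because "we added an exact form." If that were true the proposition's conclusion (i) would be impossible: equality of K\"ahler classes gives $\int_X\widetilde\Omega^n=\int_X\Omega^n$, while a symplectic embedding of $(X\smallsetminus D,\Omega)$ into $(X\smallsetminus N(D),\widetilde\Omega)$ forces $\mathrm{vol}_{\widetilde\Omega}(X\smallsetminus N(D))\geq \mathrm{vol}_\Omega(X\smallsetminus D)=\mathrm{vol}_\Omega(X)$, contradicting positivity of $\widetilde\Omega$ on $N(D)$. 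The construction only works because the added term is exact on $X\smallsetminus D$ but \emph{not} on $X$: the correct potential is (a cutoff of) $\log\|\sigma\|$, which tends to $-\infty$ along $D$, so its $d^c$ lives only on the complement and carries nontrivial winding ($\sum_i\alpha_i\,d\theta_i$ in the local model), and $[\widetilde\Omega]$ shifts by a multiple of the class dual to $D$ — this is precisely where the "extra room" in the complement of a small $N(D)$ comes from. Cutting the potential off to something bounded (as your "convex increasing cutoff $\chi$" suggests) destroys exactly this mechanism. Relatedly, your tension between "$K$ large" and positivity in the transition annulus is self-imposed: $dd^c\log\|\sigma\|$ extends smoothly across $D$ (it is, up to sign, the curvature of the Hermitian metric on $\sO(D)$), so one takes the coefficient \emph{small} to keep $\widetilde\Omega$ K\"ahler and instead shrinks $N(D)$; no inflation is needed.

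The second gap is the missing quantitative estimate on the Moser flow. Completeness is not "automatic" — the support of the primitive, $N'(D)\smallsetminus D$, is not relatively compact in $X\smallsetminus D$, so one must rule out trajectories reaching $D$ in finite time — and, more importantly, completeness alone does not give (i): you must show the time-one flow pushes every point, including points arbitrarily close to $D$, uniformly out of a fixed neighborhood of $D$. In the paper this is the content of the local-model computation: with primitive $-c\,d^cf$ the Moser field is comparable to $\sum_i(\alpha_i/r_i)\partial_{r_i}$, the cross terms coming from the smooth part $\rho$ of the potential are dominated by $\sum_i\alpha_i^2/r_i^2$, and one obtains $d(\log\|\sigma\|)(V_s)\geq a\,(\log\|\sigma\|)^2$ near $D$; integrating this Riccati-type inequality shows both that no trajectory hits $D$ in finite time and that after time one every trajectory satisfies $\|\sigma\|\geq e^{-1/a}$ (after shrinking $\varepsilon$). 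Your proposal contains no substitute for this estimate; without it the flow could exist for all time yet displace points near $D$ by arbitrarily little, and $\psi(X\smallsetminus D)\subset X\smallsetminus N(D)$ would not follow. (Minor further slips: $-\log\|s\|^2$ tends to $+\infty$, not $-\infty$, along $D$, and the vector field is not "zero outside a compact set of $X\smallsetminus D$" — its support accumulates on $D$.)
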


We begin with a local model for the embedding in \cref{subsec:LocalKahlerEmb}.  Using this local model, we prove \cref{prop:KahlerComplementEmbedding} in \cref{subsec:ProofKahlerEmb}.

\subsection{Computing in a local model}\label{subsec:LocalKahlerEmb}

Here we understand the local model for the symplectic embedding introduced above.

\begin{notn}
\begin{enumerate}
	\item Let $\Omega_{std}$ denote the standard symplectic form on $\IC^n$ with respect to the holomorphic coordinates $z_1,\dots,z_n$,
	\[ \Omega_{std} \coloneqq \sum_i \frac{i}{2} dz_i \wedge d \overline{z_i} = \sum_i r_i d r_i \wedge d \theta_i,\]
	where $z_i = r_i e^{\theta_i}$ in polar coordinates.  
	\item Let $\sigma: \IC^n \to \IC$ be given by
	\[ \sigma(z_1,\dots,z_n) = z_1^{\alpha_1} \cdots z_n^{\alpha_n},\]
	where $\alpha = (\alpha_1,\dots,\alpha_n)$ is a tuple of non-negative integers.  
	\item Let $D \coloneqq \sigma^{-1}(0)$ be the associated normal crossings divisor.
	\item Let $V_{std}$ denote the vector field on $\IC^n \smallsetminus D$ given by
	\[ \Omega_{std}( V_{std},\cdot) = \sum_i \alpha_i d \theta_i = -d^c \log|\sigma|(\cdot),\]
	where $|\cdot|$ is the standard metric on $\IC$.  Equivalently,
	\[ \Omega_{std}(V_{std},J \cdot) = \sum_i \frac{\alpha_i}{r_i} d r_i  = \sum_i d(\log(r_i^{\alpha_i})) = d(\log|\sigma|)(\cdot).\]
	Explicitly,
	\[ V_{std} = \sum_i \frac{\alpha_i}{r_i} \cdot \partial_{r_i}. \]
	\item Let $\rho: \IC^n \to \IR$ be a smooth function whose gradient vector field with respect to $\Omega_{std}( \cdot, J \cdot)$ is denoted by $\nabla \rho$,
	\[ \Omega_{std}(\nabla \rho, J \cdot) = d \rho(\cdot).\]
	\item For $0 \leq s \leq 1$, let $\Omega_s$ be a family of K\"{a}hler forms on $\IC^n$.
	\item Let $V_s$ denote the family of vector fields on $\IC^n \smallsetminus D$ given by
	\[ \Omega_s(V_s,J\cdot) = d(\log|\sigma|)(\cdot). \]
	\item Let $(\nabla \rho)_s$ denote the family of vector fields on $\IC^n$ given by
	\[ \Omega_s((\nabla \rho)_s,J \cdot) = d \rho(\cdot) .\]
\end{enumerate}
\end{notn}

We want the following estimate.

\begin{lem}\label{lem:LocalKahlerEmb3}
There exists $\varepsilon>0$ and $a>0$ so that
\[ d(\log|\sigma| + \rho)(V_s + (\nabla \rho)_s) \geq a \cdot (\log|\sigma|+\rho)^2\]
on $ \ID_\varepsilon^n \subset \IC^n$.
\end{lem}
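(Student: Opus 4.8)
The strategy is to compute everything explicitly in polar coordinates, exploiting the product/sum structure of $\sigma = z_1^{\alpha_1}\cdots z_n^{\alpha_n}$ and the fact that near the origin the Kähler forms $\Omega_s$ are uniformly close to $\Omega_{std}$ in an appropriate sense. First I would observe that $\log|\sigma| = \sum_i \alpha_i \log r_i$, so its differential is $d(\log|\sigma|) = \sum_i (\alpha_i/r_i)\, dr_i$, which is exactly what defines $V_s$ via $\Omega_s(V_s, J\cdot) = d(\log|\sigma|)$. The key point is that $V_s$ and $(\nabla\rho)_s$ are, by definition, the $\Omega_s(\cdot, J\cdot)$-gradients of $\log|\sigma|$ and $\rho$ respectively. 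Writing $g_s \coloneqq \Omega_s(\cdot, J\cdot)$ for the associated family of Riemannian metrics, we have
\[ d(\log|\sigma| + \rho)(V_s + (\nabla\rho)_s) = g_s\big(\nabla^{g_s}(\log|\sigma|+\rho),\, \nabla^{g_s}(\log|\sigma|+\rho)\big) = \|\nabla^{g_s}(\log|\sigma|+\rho)\|_{g_s}^2 \geq 0, \]
so the left-hand side is automatically nonnegative; the content of the lemma is the quantitative lower bound by $a(\log|\sigma|+\rho)^2$.

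The next step is to bound $\|\nabla^{g_s}\phi\|_{g_s}^2$ from below in terms of $\phi \coloneqq \log|\sigma| + \rho$ on a small polydisk. Since the family $\{\Omega_s\}_{s\in[0,1]}$ is a compact family of Kähler forms, there is a uniform constant $C>0$ with $C^{-1} g_{std} \leq g_s \leq C g_{std}$ on, say, $\overline{\ID_{1/2}^n}$; hence $\|\nabla^{g_s}\phi\|_{g_s}^2 \geq C^{-2}\|\nabla^{g_{std}}\phi\|_{g_{std}}^2$, and it suffices to bound the Euclidean gradient. Now $\nabla^{g_{std}}\phi = \sum_i (\alpha_i/r_i + \partial_{r_i}\rho)\,\partial_{r_i} + (\text{angular terms from }\rho)$, and the dominant contribution near the origin comes from the radial terms $\alpha_i/r_i$, which blow up, while $\rho$ and its derivatives stay bounded. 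Concretely, on $\ID_\varepsilon^n$ with $\varepsilon$ small, $\log|\sigma| = \sum_i \alpha_i \log r_i \to -\infty$ as we approach $D$, so $\phi \approx \log|\sigma|$ is large and negative there, while $\|\nabla^{g_{std}}\phi\|^2 \gtrsim \sum_i (\alpha_i/r_i)^2$. I would then verify the elementary inequality that for $r_i \in (0,\varepsilon]$ one has $\sum_i (\alpha_i/r_i)^2 \geq a' \big(\sum_i \alpha_i \log r_i\big)^2$ for a suitable $a' > 0$ depending only on $\varepsilon$ and the $\alpha_i$ (using that $1/r \gg (\log r)$ in absolute value as $r\to 0$, after controlling the bounded perturbation $\rho$), possibly restricting to the indices $i$ with $\alpha_i \neq 0$ since the others do not affect $\log|\sigma|$.

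The main obstacle I anticipate is handling the interaction between the singular part $\log|\sigma|$ and the smooth perturbation $\rho$: one must ensure that $\rho$ (and its bounded gradient) cannot cancel the blow-up of $\log|\sigma|$ near $D$, and that on the part of $\ID_\varepsilon^n$ bounded away from $D$ — where $\log|\sigma|$ is actually small — the bound still holds (there $(\log|\sigma|+\rho)^2$ is bounded, so one only needs a positive lower bound on $\|\nabla^{g_s}\phi\|^2$, which could in principle fail at critical points of $\phi$). I would address this either by shrinking $\varepsilon$ so that $\log|\sigma| \leq -K$ for $K$ large compared to $\sup|\rho|$ throughout $\ID_\varepsilon^n$ — which forces $\phi < 0$ with $|\phi|$ comparable to $|\log|\sigma||$ everywhere on the polydisk, eliminating the critical-point issue — or by choosing $\rho$ in the eventual application (in \cref{subsec:ProofKahlerEmb}) to be monotone in the appropriate radial sense. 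A final technical check is that the implicit constants are uniform in $s\in[0,1]$, which follows from compactness of $[0,1]$ and smoothness of the family $\Omega_s$. I expect the whole argument to reduce, after these reductions, to the one-variable estimate comparing $1/r^2$ with $(\log r)^2$ near $r = 0$.
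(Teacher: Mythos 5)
Your proposal is correct and is essentially the paper's own argument: interpreting $V_s+(\nabla\rho)_s$ as the $g_s$-gradient of $\log|\sigma|+\rho$ and invoking uniform comparability of $g_s$ with the standard metric on a compact polydisk is exactly the paper's matrix estimate $\beta\,A_s^{-1}\beta^T\ge a\,\beta\,\beta^T$ reducing the claim to the standard-metric case, and your Euclidean lower bound by $\sum_i\alpha_i^2/r_i^2$ together with the comparison of $1/r^2$ against $(\log r)^2$ reproduces \cref{lem:LocalKahlerEmb1} and \cref{lem:LocalKahlerEmb2}. Your worry about the part of $\ID_\varepsilon^n$ away from $D$ resolves itself via your first fix: on the polydisk every $r_i\le\varepsilon$, so $\log|\sigma|\le\bigl(\textstyle\sum_i\alpha_i\bigr)\log\varepsilon$ is uniformly very negative once some $\alpha_i\neq 0$, and no separate critical-point analysis is needed.
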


Our strategy is to relate $V_{s}$ to $V_{std}$ via simple matrix algebra and bootstrap off of the analogue of \cref{lem:LocalKahlerEmb3} for $V_{std}$, which follows from a computation:

\begin{lem}\label{lem:LocalKahlerEmb1}
There exists $\varepsilon>0$ so that
\[ d(\log|\sigma|+\rho)(V_{std} + \nabla \rho) \geq (\log|\sigma|+\rho)^2\]
on $\ID_\varepsilon^n \subset \IC^n$.
\end{lem}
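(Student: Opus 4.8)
We want to prove Lemma~\ref{lem:LocalKahlerEmb1}, an estimate purely about the standard K\"ahler structure on $\IC^n$.

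\textbf{Plan of proof.}  The plan is to compute the left-hand side explicitly in polar coordinates and compare it to the right-hand side near the origin.  Write $\phi \coloneqq \log|\sigma| + \rho$, so that $\phi = \sum_i \alpha_i \log r_i + \rho$.  First I would record that by construction $\Omega_{std}(V_{std}, J\cdot) = d(\log|\sigma|)$ and $\Omega_{std}(\nabla\rho, J\cdot) = d\rho$, hence $\Omega_{std}(V_{std} + \nabla\rho, J\cdot) = d\phi$; applying $d\phi$ to the vector field $V_{std} + \nabla\rho$ and using that $J$ is $\Omega_{std}$-compatible gives
\[
 d\phi(V_{std} + \nabla\rho) = \Omega_{std}\bigl(V_{std} + \nabla\rho, J(V_{std}+\nabla\rho)\bigr) = \|V_{std} + \nabla\rho\|_{g_{std}}^2 = \|\nabla_{g_{std}}\phi\|_{g_{std}}^2,
\]
where $g_{std} = \Omega_{std}(\cdot, J\cdot)$ is the flat metric.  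So the claim reduces to the pointwise inequality $\|\nabla\phi\|^2 \ge \phi^2$ on a small polydisc.

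\textbf{Key steps.}  The second step is to estimate $\|\nabla\phi\|^2$ from below.  In the flat metric, $\|\nabla\phi\|^2 = \sum_i \bigl( (\partial_{r_i}\phi)^2 + r_i^{-2}(\partial_{\theta_i}\phi)^2 \bigr) \ge \sum_i (\partial_{r_i}\phi)^2$, and $\partial_{r_i}\phi = \alpha_i/r_i + \partial_{r_i}\rho$.  Since $\rho$ is smooth, $|\partial_{r_i}\rho|$ is bounded by some constant $C$ on a fixed compact polydisc, so for $r_i \le \varepsilon$ small we have $\partial_{r_i}\phi \ge \alpha_i/r_i - C$; for those $i$ with $\alpha_i \ne 0$ this is at least $\tfrac12 \alpha_i/r_i$ once $\varepsilon$ is small enough.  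Hence $\|\nabla\phi\|^2 \ge \sum_{i:\alpha_i\ne 0} \tfrac14 \alpha_i^2 / r_i^2$, which blows up like $1/r_i^2$ near $D$.  The third step is to bound $\phi^2$ from above: $|\phi| \le \sum_i \alpha_i |\log r_i| + |\rho| \le \sum_{i:\alpha_i\ne 0} \alpha_i |\log r_i| + C'$, and since $|\log r_i| = o(1/r_i)$ as $r_i \to 0$, we get $\phi^2 = o\bigl( \sum_{i:\alpha_i\ne 0} 1/r_i^2 \bigr)$ plus lower-order terms.  Combining, for $\varepsilon$ sufficiently small we obtain $\|\nabla\phi\|^2 \ge \phi^2$ on $\ID_\varepsilon^n$.  (One should be slightly careful: if $\alpha = 0$ then $D = \varnothing$ and $\phi = \rho$ is a fixed smooth function, in which case the statement is vacuous or trivial after shrinking; and one must check the inequality remains valid when some but not all $\alpha_i$ vanish, where the ``bad'' directions $r_i \to 0$ with $\alpha_i = 0$ contribute only bounded quantities to both sides, so the argument still closes.)

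\textbf{Main obstacle.}  The only real subtlety is the interplay between the smooth term $\rho$ and the singular term $\log|\sigma|$: one must ensure that the cross terms and the $\rho$-contributions, which are $O(1)$ or $O(|\log r_i|)$, are genuinely dominated by the $\Theta(1/r_i^2)$ leading behavior of $\|\nabla\phi\|^2$, uniformly down to $D$.  This is really just the elementary fact that $1/r^2$ beats $(\log r)^2$ and constants as $r \to 0$, but it needs to be carried out with the sum over the crossing components and with $\varepsilon$ chosen after fixing the $C^1$-bound on $\rho$ on a compact polydisc.  Everything else is a direct computation with the flat metric.
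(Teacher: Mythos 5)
Your proof is correct and follows essentially the same route as the paper: the identity $d(\log|\sigma|+\rho)(V_{std}+\nabla\rho)=\|\nabla\phi\|^2$ is the paper's term-by-term expansion in disguise, the lower bound by $c\sum_{\alpha_i\neq 0}\alpha_i^2/r_i^2$ is obtained the same way (absorbing the bounded $\rho$-contributions for $r_i$ small), and your inline comparison of $(\log|\sigma|+\rho)^2$ against $\sum_i \alpha_i^2/r_i^2$ is exactly the content of the paper's \cref{lem:LocalKahlerEmb2}. (Your aside about $\alpha=0$ is immaterial: in the application the chart is centered at a point of $D$, so some $\alpha_i\geq 1$, and the lemma --- like the paper's own proof --- implicitly assumes this.)
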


We need the following estimate.

\begin{lem}\label{lem:LocalKahlerEmb2}
Given  non-negative constants $b_1$, $b_2$, $c_1$, and $c_2$, there exists $\varepsilon>0$ such that if $0 \leq x \leq \varepsilon$ and $0 \leq y \leq \varepsilon$, then
\[ \log(c_1\cdot x) \cdot \log(c_2 \cdot y) \leq \frac{b_1}{x^2} + \frac{b_2}{y^2}.\]
\end{lem}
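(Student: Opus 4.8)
\textbf{Proof proposal for \cref{lem:LocalKahlerEmb2}.}

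The plan is to reduce to the single-variable inequality $|\log(c\cdot t)|\le C_b/t$ valid for all sufficiently small $t>0$, where $C_b$ can be chosen as small as one likes by shrinking the range of $t$. First I would note that for the inequality we want to prove, it suffices to handle the case where the left-hand side is positive; if $\log(c_1 x)\le 0$ or $\log(c_2 y)\le 0$ the product is $\le 0$ (after possibly splitting into cases according to signs) while the right-hand side is manifestly $\ge 0$, so the inequality is immediate. Thus I may assume $c_1 x > 1$ and $c_2 y > 1$, in which case $\log(c_1 x)$ and $\log(c_2 y)$ are both positive, so it is enough to bound $\log(c_1 x)\cdot \log(c_2 y)$.

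The key estimate is elementary: for any constant $c>0$ and any $\eta>0$, there exists $\varepsilon>0$ so that $0<t\le\varepsilon$ implies $0 < \log(c\,t) \le \eta/t$ whenever $\log(ct)>0$. This follows because $t\log(ct)\to 0$ as $t\to 0^+$ (standard), so $t\log(ct)\le \eta$ for $t$ small, i.e. $\log(ct)\le \eta/t$. Applying this twice — once with $c=c_1$, $\eta=\sqrt{b_1}$ and once with $c=c_2$, $\eta=\sqrt{b_2}$ — and taking $\varepsilon$ to be the minimum of the two resulting thresholds, we get, for $0\le x\le\varepsilon$ and $0\le y\le\varepsilon$ with both logarithms positive,
\[ \log(c_1 x)\cdot\log(c_2 y) \le \frac{\sqrt{b_1}}{x}\cdot\frac{\sqrt{b_2}}{y} = \frac{\sqrt{b_1 b_2}}{xy} \le \frac{b_1}{x^2}+\frac{b_2}{y^2}, \]
where the last step is the AM--GM inequality $\tfrac{p+q}{2}\ge\sqrt{pq}$ applied to $p=b_1/x^2$, $q=b_2/y^2$ (giving $b_1/x^2 + b_2/y^2 \ge 2\sqrt{b_1 b_2}/(xy) \ge \sqrt{b_1 b_2}/(xy)$). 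One small wrinkle: if either $b_1$ or $b_2$ is zero the bound $\sqrt{b_i}/x\cdot(\cdots)$ degenerates, but in that case the corresponding logarithm must itself be controlled differently — however, if say $b_1 = 0$, then we need $\log(c_1 x)\log(c_2 y) \le b_2/y^2$, and since $\log(c_1 x) \le \eta/x$ for any $\eta$, choosing $\eta$ and $\varepsilon$ appropriately still forces the product below $b_2/y^2$ as long as $b_2 > 0$; if both $b_1 = b_2 = 0$ we would need the product to be $\le 0$, which holds precisely on the region where one of the logarithms is nonpositive, and that region is all of $\ID_\varepsilon^2$ once $\varepsilon < 1/\max(c_1,c_2)$. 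So the degenerate cases are handled by shrinking $\varepsilon$ further.

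I do not anticipate a genuine obstacle here — this is a soft calculus lemma. The only point requiring minor care is the bookkeeping of signs (when the logarithms are negative the statement is trivial but must be dispatched explicitly) and the degenerate cases $b_i = 0$; the substantive content is just the two applications of $t\log(ct)\to 0$ together with AM--GM. I would write the proof in three short paragraphs: (i) dispose of the sign cases; (ii) state and prove the one-variable bound $\log(ct)\le\eta/t$; (iii) combine via AM--GM and take $\varepsilon$ minimal.
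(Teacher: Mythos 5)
There is a genuine gap, and it is in the sign analysis: you have the hard and easy cases of the lemma reversed. You dismiss the situation ``$\log(c_1 x)\le 0$ or $\log(c_2 y)\le 0$'' as trivial on the grounds that the product is then $\le 0$, and you reduce to $c_1x>1$, $c_2y>1$. But when \emph{both} logarithms are negative the product is \emph{positive} (and large, of order $\log x\cdot\log y$), so it is not dominated by $0$; and this is the only case that can occur once $\varepsilon$ is small enough that $c_1\varepsilon\le 1$ and $c_2\varepsilon\le 1$ --- which is also exactly the regime in which the lemma is applied in \cref{lem:LocalKahlerEmb1}, where $x,y$ are small radii near the divisor. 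So the case you treat carefully is vacuous for small $\varepsilon$, while the substantive case is the one you discard. Your one-variable estimate suffers from the same issue: for small $t$ the hypothesis $\log(ct)>0$ never holds, and the bound $\log(ct)\le \eta/t$ is true but useless because the left side is negative; what is needed is a bound on $|\log(ct)|$. (Relatedly, your claim that for $b_1=b_2=0$ the inequality holds on all of $\ID_\varepsilon^2$ once $\varepsilon<1/\max(c_1,c_2)$ is wrong for the same reason: there both logarithms are negative, so the left side is strictly positive while the right side is $0$.)

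The repair is immediate and uses exactly your tools, applied to absolute values: since $t^2\log(c_i t)^2\to 0$ as $t\to 0^+$, one has $|\log(c_i t)|\le\sqrt{b_i}/t$ for $t$ small (assuming $b_i>0$), and then your AM--GM step gives
\[
\log(c_1x)\log(c_2y)\;\le\;|\log(c_1x)|\,|\log(c_2y)|\;\le\;\frac{\sqrt{b_1b_2}}{xy}\;\le\;\frac{b_1}{x^2}+\frac{b_2}{y^2},
\]
which handles the both-negative case; the mixed-sign cases really are trivial. Once corrected this is essentially the paper's argument in a symmetrized form: the paper instead assumes without loss of generality $c_1x\le c_2y\le 1$, so that $\log(c_1x)\log(c_2y)\le\log(c_1x)^2$, and then uses L'Hopital to get $\log(c_1x)^2\le b_1/x^2$ for $x$ small, avoiding AM--GM altogether.
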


\begin{proof}
Without loss of generality, we may assume that
\[ 0 \leq c_1 \cdot x \leq c_2 \cdot y \leq 1.\]
By L'Hopital's rule, there exists $\varepsilon >0$ such that if $c_1 \cdot x \leq \varepsilon$, then
\[ \log(c_1 \cdot x)^2 \leq \frac{b_1}{x^2}. \]
So
\[ \log(c_1 \cdot x) \log(c_2 \cdot y) \leq \log(c_1 \cdot x_1)^2 \leq \frac{b_1}{x^2} \leq \frac{b_1}{x^2} + \frac{b_2}{y^2}, \]
as desired.
\end{proof}

We prove \cref{lem:LocalKahlerEmb1}.

\begin{proof}
\begin{align*}
d(\log|\sigma| + \rho) (V_{std}+\nabla \rho) & = \left( \frac{d |\sigma|}{|\sigma|} (V_{std}) \ + \frac{d |\sigma|}{|\sigma|}(\nabla \rho) + d\rho(V_{std}) + d \rho(\nabla \rho)	 \right)
\\ & = \sum_{i} \frac{\alpha_i^2}{r_i^2} + 2 \sum_{i} \frac{\alpha_i}{r_i} d r_i(\nabla \rho) + | \nabla \rho|^2.
\end{align*}
For each $\varepsilon >0$, $dr_i(\nabla \rho)$ and $|\nabla \rho|^2$ are bounded on $\ID_\varepsilon^n \subset \IC^n$.  So for $\varepsilon>0$ sufficiently small, there exists $c>0$ such that
\[ d(\log|\sigma| + \rho) (V_{std} + \nabla \rho) \geq c \cdot \sum_i \frac{\alpha_i^2}{r_i^2}\]
on $\ID_\varepsilon^n$.  Since
\[ \log(e^\rho \cdot |\sigma|) = \log|\sigma| + \rho,\]
\cref{lem:LocalKahlerEmb2} implies that (after shrinking $\varepsilon$)
\[ d (\log|\sigma| + \rho)(V_{std} + \nabla \rho) \geq c \cdot \sum_i \frac{\alpha_i^2}{r_i^2}\geq ( \log|\sigma| + \rho)^2\]
on $\ID_\varepsilon^n$, as desired.
\end{proof}

We prove \cref{lem:LocalKahlerEmb3}.

\begin{proof}
Relate $V_{s}$ to $V_{std}$ via some simple matrix algebra:  Consider the standard basis of $T \IC^n \cong T \IR^{2n}$ and use the standard metric $\Omega_{std}(\cdot, J \cdot)$ to extend it to a dual basis of $T^* \IC^n \cong T ^* \IR^{2n}$.  With respect to these bases, we may write
\begin{enumerate}
	\item $d( \log|\sigma| + \rho) = \beta$, a covector, and
	\item $\Omega_s(\cdot, J \cdot) = A_s$, a family of symmetric, positive definite matrices.
\end{enumerate}
Note that $A_s$ and $\beta$ both vary with respect to points in $\IC^n$.  With these identifications,
\[ V_{std} + (\nabla \rho) = \beta^T\]
and
\[ V_s + (\nabla \rho)_s = A_s^{-1} \cdot \beta^T.\]
Since $A_s^{-1}$ is also symmetric and positive definite,
\[v \cdot A_s^{-1}\cdot v^T \geq a \cdot v \cdot v^T\] 
for all $v \in T^* \ID_1^n$ and some $a>0$ (which is independent of $v$).  So
\begin{align*}
d(\log|\sigma| + \rho)(V_s + (\nabla \rho)_s) & = (V_s+(\nabla \rho)_s)^T \cdot A_s \cdot (V_s + (\nabla \rho)_s) 
\\ & = \beta \cdot (A_s^{-1})^T \cdot A_s \cdot A_s^{-1} \cdot \beta^T 
\\ & = \beta \cdot (A_s^{-1})^T \cdot \beta^T 
\\ & \geq a \cdot \beta \cdot \beta^T 
\\ & = a \cdot d(\log|\sigma|+\rho)(V_{std}+\nabla \rho)
\end{align*}
on $\ID_1^n$.  The lemma follows from this inequality and \cref{lem:LocalKahlerEmb1}.
\end{proof}

\subsection{Proof of proposition}\label{subsec:ProofKahlerEmb}
Here we apply the local computations from \cref{subsec:LocalKahlerEmb} to prove \cref{prop:KahlerComplementEmbedding}.

\begin{proof}
Consider the line bundle $\sO(D)$, a Hermitian metric $\|\cdot\|$ on $\sO(D)$, and a holomorphic section $\sigma$ with $\sigma^{-1}(0) = D$.  Fix a smooth function $\ell: \IR_{\geq 0} \to \IR_{\geq 0}$ that satisfies:
\begin{enumerate}
	\item $\ell(r) = r$ for $0 \leq r \leq \delta/2$,
	\item $\ell(r)$ is constant for $\delta \leq r$, and
	\item $\ell'(r) \geq 0$.
\end{enumerate}
Consider $f: X \smallsetminus D \to \IR$ given by $f = \log(\ell(\|\sigma\|))$.  This gives rise to a $(1,1)$-form on $X$, $dd^c f$.  Fix $c>0$ sufficiently small so that
\[ \widetilde{\Omega} \coloneqq \Omega + c \cdot dd^cf\]
is non-degenerate (and consequently K\"{a}hler).

$d^c f$ is not globally defined.  So $dd^cf$ is not globally exact on $X$ and $\widetilde{\Omega}$ is not cohomologous to $\Omega$.  However, on the complement of $D$, $d^c f$ is well-defined and $dd^c f$ is exact.  So we have a family of cohomologous K\"{a}hler forms on $X \smallsetminus D$ given by
\[ \Omega_s \coloneqq \Omega + s \cdot c \cdot dd^c f.\]
We would like to use Moser's argument, to determine a symplectic embedding
\[ \psi: (X\smallsetminus D,  \Omega) \hookrightarrow (X \smallsetminus D, \widetilde{\Omega}) \]
obtained by taking the time $1$ flow along $V_s$ determined by
\[ \Omega_s(V_s,\cdot) = -c \cdot d^c f.\]
The goal of the rest of this proof is to bound $V_s$ in order to show that its time $s$ flow, $\psi_s$, is well-defined and satisfies the desired properties.

We claim there exists a neighborhood $N(D)$ of $D$ in $X$ and $a>0$ such that
\[ d(\log\|\sigma\|)(V_s) \geq a \cdot (\log\|\sigma\|)^2 \]
on $N(D)$.  To see this, fix $p \in D$ and local holomorphic coordinates $z_1,\dots,z_n$ such that
\[ \sigma = z_1^{\alpha_1}\cdots z_n^{\alpha_n}.\]
In this trivialization, the Hermitian metric $\|\cdot\|$ may be written as $e^\rho\cdot|\cdot|$ where $|\cdot|$ is the standard metric on $\IC$.  So sufficiently close to $p$,
\[ \log(\ell(\|\sigma\|)) = \log \|\sigma\| = \log|\sigma|+\rho. \]
Moreover, in this local chart, $V_s$ is determined by
\[ \Omega_s(V_s,J \cdot) = c \cdot d(\log|\sigma|+\rho).\]
So our setup in this local chart (or rather a rescaling of it by $c$) agrees with our local model from \cref{subsec:LocalKahlerEmb}.  By \cref{lem:LocalKahlerEmb3}, we have that there exists $a_p>0$ and a neighborhood $N(p)$ of $p$ in $X$ such that
\[ d(\log\|\sigma\|)(V_s) \geq  a_p \cdot (\log\|\sigma\|)^2 \]
on $N(p)$.  Since $D$ is compact, we may find a neighborhood $N(D)$ of $D$ in $X$ and a single $a>0$ such that
\[ d(\log\|\sigma\|)(V_s) \geq a \cdot (\log\|\sigma\|)^2 \]
on $N(D)$, as desired.

We now show that the flow $\psi_s$ of $V_s$ is well-defined for all $s$ and address the claims in the proposition.  Let $\gamma$ be a flow line of $V_s$.  Fix $1/2> \varepsilon > 0$ such that $\|\sigma\|^{-1}([0,2\varepsilon]) \subset N(D)$.  By the above inequality,
\[ \frac{d}{ds} \left( \log\|\sigma\| \circ \gamma(s) \right) = d(\log\|\sigma\|)_{\gamma(s)} \circ (V_s)_{\gamma(s)} \geq a \cdot (\log\|\sigma\|)^2 \circ \gamma(s)\]
when $\|\sigma \| \circ \gamma(s) \leq 2 \varepsilon$.  So when $\|\sigma \| \circ \gamma(s_0) \leq 2 \varepsilon$, we may solve this differential inequality to get that
\[ \log \|\sigma\| \circ \gamma(s) \geq \frac{\log \|\sigma\| \circ \gamma(s_0)}{1 - a \cdot (s-s_0) \cdot \log \|\sigma\| \circ \gamma(s_0)} \]
when $\|\sigma\| \circ \gamma(s) \leq 2 \varepsilon$ and $s \geq s_0$.  From this, we see when $\|\sigma\| \circ \gamma(s_0) \leq 2\varepsilon$ that
\begin{enumerate}
	\item $\|\sigma \| \circ \gamma(s) \geq \|\sigma\| \circ \gamma(s_0)$, and
	\item $\|\sigma \| \circ \gamma(s) \geq \exp\left( \frac{-1}{a \cdot (s-s_0)} \right)$
\end{enumerate}
for $s_0 \leq s \leq s_1$, where
\[ s_1 = \min\{s \geq s_0 \mid \gamma([s_0,s]) \subset \|\sigma\|^{-1}([0,2\varepsilon]) \}. \]

To show that $\psi_s$ is well-defined, it suffices to show that, first, no flow line runs into $D$ in finite time, and, second, $\psi_s$ is the identity outside of a compact neighborhood of $D$.  To see that no flow line runs into $D$ in finite time, suppose by way of contradiction that $\gamma$ is not bounded away from $D$ for all $s$.  This implies that there exists an increasing sequence $s_\nu$ such that 
\begin{enumerate}
	\item $\|\sigma\| \circ \gamma(s_\nu) \leq 2 \varepsilon$, and
	\item $\lim_{\nu \to \infty} \gamma(s_\nu) \in D$.  
\end{enumerate}
The second item implies that
\[\lim_{\nu \to \infty} \|\sigma\| \circ \gamma(s_\nu) = 0.\]
The first item combined with the fact that
\[\|\sigma \| \circ \gamma(s_\nu) \geq \|\sigma\| \circ \gamma(s_0)\]
implies that
\[\lim_{\nu \to \infty} \|\sigma\| \circ \gamma(s_\nu)>0,\]
a contradiction.  To see that $\psi_s$ is the identity outside of a compact neighborhood of $D$ notice that $\ell$ is constant on $\|\sigma\|^{-1}([\delta,+\infty)$.  So $d^cf = 0$ on $\|\sigma\|^{-1}([\delta,\infty))$.  This implies that $V_s = 0$ on $\|\sigma\|^{-1}([\delta,\infty))$ and thus that $\psi_s$ is the identity on $\|\sigma\|^{-1}([\delta,\infty))$.

To show that $\psi(X \smallsetminus D)$ is contained in the complement of an open neighborhood of $D$ in $X$, it suffices to show that $\|\sigma\|^{-1} \circ \gamma(1) > \varepsilon$.  The above work implies that if $\|\sigma\| \circ \gamma(s_0) > \varepsilon$, then $\|\sigma\| \circ \gamma(s) > \varepsilon$ for all $s \geq s_0$.  So it suffices to show that if $\|\sigma\| \circ \gamma(0) \leq \varepsilon$, then for some $s_0$, $\|\sigma\| \circ \gamma(s_0) > \varepsilon$.  However, when $\varepsilon < \exp(-1/a)$, the above work implies that such an $s_0$ must exist.  So after shrinking $\varepsilon$, we obtain the desired inclusion.

Finally, to see that we can make the support of $\psi$ be contained in any arbitrarily small neighborhood of $D$, notice that (when we showed that $\psi_s$ was well-defined) we showed that $\psi_s$ is the identity on $\|\sigma\|^{-1}([\delta,\infty))$.  So taking $\delta$ to be arbitrarily small forces the support of $\psi$ to be contained in any arbitrarily small neighborhood of $D$, as desired.
\end{proof}


\section{Hamiltonian Fibrations}\label{sec:HamFibs}

\subsection{Definitions}\label{subsec:HamFibs_Definitions}

We give basic definitions that pertain to Hamiltonian fibrations.

\begin{defn}\label{defn:HamiltonianFibration}
Let $(F,\omega_F)$ be a closed symplectic manifold.  A \emph{Hamiltonian fibration} over (a possibly open) Riemann surface $\Sigma$ with fibre $(F,\omega_F)$ is a tuple $(M,\pi,\Omega)$ where
\begin{enumerate}
	\item $M$ is a manifold,
	\item $\pi: M \to \Sigma$ is a proper submersion, and
	\item $\Omega$ is a closed $2$-form on $M$ whose restriction to each fibre $F_z = \pi^{-1}(z)$ is symplectic and agrees with $\omega_F$, that is, $(F_z,\Omega|_{F_z}) \cong (F,\omega_F)$.
\end{enumerate}
$\Omega$ is the \emph{coupling form}.  If the coupling form $\Omega$ is non-degenerate, then $(M,\pi,\Omega)$ is called a \emph{non-degenerate Hamiltonian fibration}.
\end{defn}

\begin{defn}\label{defn:VerticalHorizontalDistributions}
The \emph{vertical distribution} of a Hamiltonian fibration $(M,\pi,\Omega)$ is
\[ \Ver_p = \ker(d\pi(p)). \]
If a vector field is contained in $\Ver$, then it is called \emph{vertical}.  The \emph{horizontal distribution} of a Hamiltonian fibration $(M,\pi,\Omega)$ is
\[ \Hor_p = \left\{ v \in T_pM \mid \Omega(v,\cdot)|_{\Ver_p} \equiv 0 \right\}. \]
If a vector field is contained in $\Hor$, then it is called \emph{horizontal}.
\end{defn}

There is a splitting $T_pM \cong \Hor_p \oplus \Ver_p.$

\begin{defn}\label{defn:HorizontalLift}
The \emph{horizontal lift} of a vector field $W$ on $\Sigma$ is the horizontal vector field $\widetilde{W}$ that satisfies
\[d\pi(p) \circ \widetilde{W}(p) = W(\pi(p)).\]
\end{defn}

\begin{defn}\label{defn:Curvature}
The \emph{curvature} of a Hamiltonian fibration $(M,\pi,\Omega)$ with respect to the symplectic form $\omega_{\Sigma}$ on $\Sigma$ is the function $R: M \to \IR$ determined by
\[R(p) \cdot \omega_{\Sigma}(v,w) = \Omega(\widetilde{v},\widetilde{w}), \]
where $v$ and $w$ are vectors in $T_{\pi(p)}\Sigma$ and $\wt{v}$ and $\wt{w}$ are their horizontal lifts in $T_pM$ respectively.
\end{defn}

A Hamiltonian fibration is non-degenerate if and only if the curvature with respect to any symplectic form on the base $\Sigma$ is everywhere positive.  So the non-degenerate Hamiltonian fibrations are precisely the positively curved Hamiltonian fibrations.  Any Hamiltonian fibration which is degenerate can be made non-degenerate by adding $\pi^*(h \cdot \omega_{\Sigma})$ to $\Omega$ for any smooth function $h: \Sigma \to \IR$ that satisfies $R+\pi^{*}h>0$, where $R$ is the curvature with respect to $\omega_{\Sigma}$.  This produces a new coupling form that is non-degenerate.

It will be useful to consider the following situation.  Let $(M,\Omega,\pi)$ be a non-degenerate Hamiltonian fibration and let $f: M \to \IR$ be a smooth function on $M$.  Given a $1$-form $\alpha$ on $\Sigma$, consider the $\omega_\Sigma$-dual of $\alpha$, denoted by $X_\alpha$.  We wish to compute the $\Omega$-dual of $f \cdot \pi^* \alpha$ in terms of $X_\alpha$ and the curvature $R$ of this Hamiltonian fibration with respect to $\omega_\Sigma$.

\begin{lem}\label{lem:HorizontalLiftOfHamiltonianVectorField}
The $\Omega$-dual of the $1$-form $f \cdot \pi^* \alpha$ is the vector field
\[ X = \frac{f}{R} \cdot \widetilde{X_{\alpha}}.\]
\end{lem}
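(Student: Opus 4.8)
The plan is to verify directly that the vector field $X = \tfrac{f}{R}\widetilde{X_\alpha}$ satisfies $\iota_X \Omega = f\cdot \pi^*\alpha$ by testing against an arbitrary tangent vector, split according to the decomposition $T_pM \cong \Hor_p \oplus \Ver_p$. Since $X$ is horizontal (being a multiple of a horizontal lift), and since $\Hor$ is by definition the $\Omega$-orthogonal complement of $\Ver$, pairing $X$ with a vertical vector $w \in \Ver_p$ gives $\Omega(X,w) = 0$; on the other side, $(f\cdot\pi^*\alpha)(w) = f\cdot\alpha(d\pi(w)) = 0$ because $d\pi$ kills vertical vectors. So the two $1$-forms agree on $\Ver$.

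Next I would check agreement on $\Hor$. Let $\widetilde v \in \Hor_p$ be the horizontal lift of $v = d\pi(p)(\widetilde v) \in T_{\pi(p)}\Sigma$. Then by $\IR$-bilinearity and the definition of curvature (\cref{defn:Curvature}),
\[
\Omega(X, \widetilde v) = \frac{f}{R}\,\Omega(\widetilde{X_\alpha}, \widetilde v) = \frac{f}{R}\cdot R\cdot \omega_\Sigma(X_\alpha, v) = f\cdot \omega_\Sigma(X_\alpha, v) = f\cdot \alpha(v),
\]
where the last equality is the defining property of $X_\alpha$ as the $\omega_\Sigma$-dual of $\alpha$. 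Meanwhile $(f\cdot\pi^*\alpha)(\widetilde v) = f\cdot\alpha(d\pi(\widetilde v)) = f\cdot\alpha(v)$. So the forms agree on $\Hor$ as well, and since every tangent vector is a sum of a horizontal and a vertical vector, they agree everywhere, which is exactly the claim. One should note at the outset that $R$ is nowhere zero (indeed everywhere positive) because the fibration is non-degenerate, so $f/R$ is a well-defined smooth function and $X$ makes sense.

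There is no real obstacle here; the only thing to be careful about is bookkeeping with the two duality conventions in play — the $\omega_\Sigma$-dual on the base and the $\Omega$-dual on the total space — and making sure the curvature identity from \cref{defn:Curvature} is applied with the horizontal lifts in the correct slots. I would also remark that $X$ is automatically the unique such vector field since $\Omega$ is non-degenerate, so "the $\Omega$-dual" is well-posed. The computation is short enough that I would simply present it in the two cases above rather than invoking any machinery.
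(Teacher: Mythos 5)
Your proof is correct and is essentially the paper's argument: both reduce the identity $\iota_X\Omega = f\cdot\pi^*\alpha$ to a check on vertical vectors (trivial, since $X$ is horizontal and $d\pi$ kills $\Ver$) and on horizontal lifts, where the defining property of the curvature $R$ and of $X_\alpha$ as the $\omega_\Sigma$-dual of $\alpha$ give the result in one line. The only difference is presentational — the paper phrases the vertical step as the observation that the $\Omega$-dual of $f\cdot\pi^*\alpha$ is horizontal — so there is nothing to add.
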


\begin{proof}
Let $X$ denote the $\Omega$-dual of $f \cdot \alpha$.  Notice that $X$ is horizontal since
\[ \Omega\left(X,\cdot\right)|_{\Ver} = f \cdot \alpha \circ d\pi(\cdot)|_{\Ver} = 0. \]
So we need to show that
\[ \Omega\left(f \cdot \widetilde{X_\alpha}/R,\widetilde{W}\right) = f \cdot \alpha(W),\]
where $W$ is an arbitrary vector field on $\Sigma$.  But this is easy:
\[ \Omega\left(f \cdot \widetilde{X_\alpha}/R, \widetilde{W}\right) = \frac{f}{R} \cdot \Omega\left(\widetilde{X_\alpha},\widetilde{W}\right) = f \cdot \omega_{\Sigma}(X_\alpha,W) = f \cdot \alpha(W).\]
\end{proof}

\subsection{Symplectic mapping cylinders}\label{subsec:HamFibs_MappingTori}
We discuss Hamiltonian fibrations given by symplectizations of stable Hamiltonian structures associated to mapping tori of symplectomorphisms.  These Hamiltonian fibrations naturally arise in the context of our main result.

Let $(F,\omega_F)$ be a closed symplectic manifold of dimension $2n-2$ and let $\psi: (F,\omega_F) \to (F,\omega_F)$ be a symplectomorphism.  The \emph{mapping torus of the symplectomorphism $\psi$} is
\[ M_\psi \coloneqq [0,2\pi] \times F / ((0,x) \sim (2\pi,\psi(x))).\]
Equivalently, $M_\psi$ is the quotient of $\IR \times F$ by the action $(t,x) \mapsto (t+2\pi, \psi(x))$.  The projection $[0,2\pi] \times F \to [0,2\pi]$ gives a projection $M_\psi \to S^1$.  So the pull-back of the angular $1$-form $d \theta$ on $S^1$ is a non-vanishing $1$-form on $M_\psi$, which we will also denote by $d\theta$.

\begin{lem}\label{lem:MappingTorusSHS}
Let $M_\psi$ be the mapping torus of the symplectomorphism $\psi$ with a closed $2$-form $\eta$ that restricts to $\omega_F$ on each fibre.  The pair $(\eta,d \theta)$ defines a stable Hamiltonian structure on $M_\psi$.\qed
\end{lem}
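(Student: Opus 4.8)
The plan is to verify the three conditions of Definition~\ref{defn:StableHamiltonianStructure} directly for the pair $(\eta, d\theta)$ on $M_\psi$, which has dimension $2n-1$. The first condition, $d\eta = 0$, is immediate since $\eta$ is assumed closed. The bulk of the argument is a local computation using the natural coordinates on $M_\psi$ coming from the quotient $\IR \times F \to M_\psi$.

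First I would set up local coordinates: near any point of $M_\psi$ we have coordinates $(t, x)$ with $t$ the $\IR$-coordinate (descending to the $d\theta$ direction) and $x$ local coordinates on $F$. In these coordinates $d\theta = dt$, and $\eta$ restricted to each slice $\{t\} \times F$ equals $\omega_F$ (pulled back via the identification of the slice with $F$), so we may write $\eta = \omega_F + dt \wedge \beta_t$ for some $t$-dependent $1$-form $\beta_t$ on $F$ (the precise form of the "mixed" term; this is the general shape of a closed $2$-form whose restriction to fibres is $\omega_F$). For the second condition, I would compute $d\theta \wedge \eta^{n-1} = dt \wedge (\omega_F + dt\wedge\beta_t)^{n-1} = dt \wedge \omega_F^{n-1}$, since all terms involving $dt\wedge\beta_t$ wedged with another $dt$ vanish. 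Because $\omega_F$ is symplectic on the $(2n-2)$-dimensional $F$, $\omega_F^{n-1}$ is a volume form on each fibre, so $dt \wedge \omega_F^{n-1}$ is a positive volume form on $M_\psi$ (with the appropriate orientation convention), giving $\alpha \wedge \omega^{n-1} = d\theta \wedge \eta^{n-1} > 0$.

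For the third condition, $\ker(\eta) \subset \ker(d\alpha) = \ker(d(d\theta)) = \ker(0) = TM_\psi$, which holds trivially since $d(d\theta) = 0$ (note $d\theta$ is genuinely closed on $M_\psi$, being the pullback of the angular form on $S^1$). So in fact the third condition is automatic here. I would remark that one should also check $\ker(\eta)$ is $1$-dimensional so that $\eta$ genuinely defines a stable Hamiltonian structure in the intended sense — this follows because $\eta$ has rank $2n-2$ (its restriction to fibres is nondegenerate, and the fibre distribution together with one more direction spans, forcing corank exactly $1$); concretely, the Reeb-type vector field is the horizontal lift of $\partial_t$, which is the unique (up to scale) vector in $\ker\eta$ transverse to the fibres.

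The main obstacle — really the only nontrivial point — is confirming the sign/orientation in the second condition and making precise the claim that a closed $2$-form restricting fibrewise to $\omega_F$ has the stated local normal form; this is where a little care is needed, but it is routine once one fixes the quotient coordinates and uses that $\partial_t \lrcorner\, dt = 1$ while $(dt\wedge\beta_t)\wedge(dt\wedge\beta_t) = 0$. I would also note that the existence of such a global closed $\eta$ with $\eta|_{F_z} = \omega_F$ is part of the hypothesis, so no construction is needed here (though it exists whenever $[\omega_F]$ is $\psi$-invariant, which it is since $\psi$ is a symplectomorphism). Thus the proof is essentially the three-line verification above, and I would present it as such.
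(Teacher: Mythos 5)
Your proof is correct: the paper states this lemma with the proof omitted (the \qed after the statement), precisely because the intended argument is the routine verification you give — closedness of $\eta$ and $d\theta$, the wedge computation $d\theta\wedge\eta^{n-1}=d\theta\wedge\omega_F^{n-1}>0$ using the decomposition $\eta=\omega_F+dt\wedge\beta_t$ on the cover $\IR\times F$, and the trivial kernel condition since $d(d\theta)=0$. Your extra remarks on the corank of $\eta$ and the orientation convention are fine but not required by \cref{defn:StableHamiltonianStructure}.
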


The hyperplane distribution of $(\eta,d \theta)$ is $\ker(d\theta)$, which is the tangent spaces of the fibres of the projection to $S^1$.  Similarly, the Reeb vector field is the horizontal lift of the angular vector field $\partial_\theta$ on $S^1$.

The stable Hamiltonian structure obtained in \cref{lem:MappingTorusSHS} could have a degenerate Reeb vector field.  However, this can be rectified by adding a generic perturbation to $\eta$.

\begin{lem}\label{lem:PerturbMappingTori}
Consider a stable Hamiltonian structure $(\eta,d\theta)$ on $M_\psi$ as in \cref{lem:MappingTorusSHS}.  For a generic choice of Hamiltonian, $H: M_\psi \to \IR$, the pair $(\eta + d(H \cdot d\theta), d \theta)$ defines a stable Hamiltonian structure on $M_\psi$ with non-degenerate Reeb vector field.
\end{lem}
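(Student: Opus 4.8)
The plan is to realize the perturbed pair $(\eta + d(H\cdot d\theta),\, d\theta)$ as a stable Hamiltonian structure for every smooth $H$, and then to verify that the Reeb vector field is non-degenerate for generic $H$. First I would check the three axioms of \cref{defn:StableHamiltonianStructure} for the pair $(\eta_H, d\theta)$ with $\eta_H \coloneqq \eta + d(H\cdot d\theta)$. Closedness of $\eta_H$ is immediate since $\eta$ is closed and $d(H\cdot d\theta)$ is exact. For the positivity condition $d\theta \wedge \eta_H^{n-1} > 0$: write $d(H\cdot d\theta) = dH\wedge d\theta$, and observe that $d\theta \wedge (dH\wedge d\theta) = 0$, so $d\theta \wedge \eta_H^{n-1} = d\theta\wedge \eta^{n-1}$ up to terms each containing a factor of $d\theta$ wedged with something proportional to $d\theta$, hence $d\theta\wedge\eta_H^{n-1} = d\theta\wedge\eta^{n-1} > 0$; thus positivity is preserved for \emph{all} $H$, not just small ones. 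For the third axiom, $\ker(\eta_H)\subset\ker(d(d\theta)) = \ker 0$; here I would argue that $\ker(\eta_H)$ is spanned by the horizontal lift of $\partial_\theta$ with respect to the fibration structure determined by $\eta_H$ (which restricts to $\omega_F$ on fibres, so fibres stay symplectic and $\eta_H$ is a coupling form), and $d\theta$ is obviously closed so the condition is automatic. Concretely: $\ker(\eta_H)$ is one-dimensional and transverse to the fibres, so any vector in it projects nontrivially to $S^1$; since $d(d\theta)=0$, condition (iii) holds trivially.

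Second, I would identify the Reeb vector field $\sR_H$ of $(\eta_H, d\theta)$: it is characterized by $\iota_{\sR_H}\eta_H = 0$ and $\iota_{\sR_H} d\theta = 1$, i.e.\ it is the unique horizontal lift (with respect to the $\eta_H$-horizontal distribution) of $\partial_\theta$ normalized to have $d\theta$-period one. By \cref{lem:HorizontalLiftOfHamiltonianVectorField}-type reasoning (or directly), adding the exact term $dH\wedge d\theta$ to the coupling form tilts the horizontal distribution, and the time-$2\pi$ return map of $\sR_H$ along a fibre becomes $\psi$ composed with the time-one flow of the fibrewise Hamiltonian isotopy generated by (the fibrewise restrictions of) $H$. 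So the closed Reeb orbits of period $2\pi$ correspond to fixed points of $\psi_H \coloneqq \phi_H^1\circ\psi$ for a suitable Hamiltonian diffeomorphism $\phi_H^1$ of $(F,\omega_F)$ built from $H$, and non-degeneracy of $\sR_H$ amounts to asking that the graph of $\psi_H$ in $(F\times F, (-\omega_F)\oplus\omega_F)$ meets the diagonal transversally, and similarly for iterates.

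Third, I would invoke a standard genericity argument: as $H$ ranges over smooth functions on $M_\psi$ (equivalently, as the fibrewise generating Hamiltonians range over an infinite-dimensional family), the resulting $\psi_H$ ranges over an open set in the group of symplectomorphisms of $F$ Hamiltonian-isotopic to $\psi$, and the condition that all periodic points of $\psi_H$ (of all periods) be non-degenerate is a countable intersection of open dense conditions in the $C^\infty$ topology, hence residual. This is exactly the content of the classical fact that a generic (time-dependent) perturbation of a symplectomorphism has only non-degenerate periodic orbits; I would cite the analogous statement in \cite{McDuffSalamon_Jholo} or the standard Floer-theory literature rather than reprove it. One subtlety to handle: the Reeb orbits can a priori have any period, not just $2\pi$, so I should phrase non-degeneracy of the Reeb field as in \cref{defn:HyperplaneDistribution} (no eigenvalue $1$ for the linearized return map along \emph{every} orbit), and note that orbits of period $2\pi k$ correspond to fixed points of $\psi_H^k$; the genericity statement must be applied to all iterates simultaneously, which is still a residual condition.

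\textbf{Main obstacle.} The only real work is the genericity step — specifically, ensuring that a \emph{single} choice of $H$ (a function on the total space $M_\psi$, constrained to restrict compatibly on fibres) can be used to make all iterates $\psi_H^k$ simultaneously non-degenerate, rather than needing a different perturbation for each $k$. This is handled by the usual diagonal/Baire-category argument (the set of "good" $H$ is a countable intersection of open dense sets, one for each period and each orbit, hence nonempty), together with the observation that perturbing $\eta$ by $dH\wedge d\theta$ genuinely surjects onto fibrewise Hamiltonian perturbations of the monodromy. Everything else is the routine verification that the three stable-Hamiltonian axioms survive an exact modification of the coupling form that does not touch $d\theta$.
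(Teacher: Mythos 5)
Your proposal is correct and follows essentially the same route as the paper: trivialize over $\IR \times F$, identify the Reeb vector field as $\partial_\theta$ plus a fibrewise Hamiltonian vector field (the paper writes this explicitly as $\omega_F(V_\theta,\cdot) = dG_\theta + dH_\theta$, so the unperturbed $\eta$ contributes a term your description of the return map silently absorbs), reduce non-degeneracy to the linearized return map $\psi$ composed with a Hamiltonian diffeomorphism having no eigenvalue $1$ at fixed points, and conclude by genericity of $H$. Your explicit verification of the stable-Hamiltonian axioms and the Baire-category treatment of all periods are details the paper leaves implicit, but they are consistent with its argument.
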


\begin{proof}
View $M_\psi$ as the quotient of $\IR \times F$ by the action $(\theta,x) \mapsto (\theta+2\pi, \psi(x))$, where $\theta$ denotes both the angular coordinate on $S^1$ and the Cartesian coordinate on $\IR$.  Fix a Hamiltonian $H: M_{\psi} \to \IR$.  By \cite[Example 4.2.3]{Oh_SymplecticTopology}, pulling-back $\eta + d(H \cdot d\theta)$ to $\IR \times F$ gives
\[\eta + d(H \cdot d\theta) = \omega_F + dG_\theta \wedge d{\theta} + d{H}_{{\theta}} \wedge d {\theta}, \] 
where $dG_\theta$ is an $\IR$-family of exact $1$-forms on $F$ that satisfy $\psi^* dG_{\theta+2\pi} = d(G_\theta)$ and $H_\theta$ is the pull-back of $H$ (so $\psi^* {H}_{{\theta}+2\pi} = {H}_{{\theta}}$).
In this trivialization, the Reeb vector field is $\partial_{\theta} + V_{\theta}$, where
\[ \omega_F(V_{{\theta}},\cdot) = dG_\theta(\cdot) + d{H}_{{\theta}}( \cdot) .\]
If $\psi_{\theta}: F \to F$ is the time $\theta$ flow along $V_{\theta}$, then the Reeb flow is given by $(\cdot + \theta,\psi_{\theta}(\cdot))$.  So the Reeb orbits have periods in $2\pi \cdot \IZ$ and correspond to fixed points of $\psi \circ \psi_{\theta}$.  So Reeb vector field will be non-degenerate if the derivative of $\psi \circ \psi_{\theta}$ at any fixed point has no eigenvalues equal to $1$; however, $\psi_\theta$ is a Hamiltonian diffeomorphism.  So for generic $H$, at the fixed points, the derivative of $\psi \circ \psi_\theta$ will have no eigenvalues equal to $1$, as desired.
\end{proof}

We now discuss the relationship between mapping tori of symplectomorphisms and Hamiltonian fibrations.  Considering the product $(0,\infty) \times M_\psi$, we obtain a proper submersion $\pi: (0,\infty) \times M_\psi \to (0,\infty) \times S^1$.  Fix polar coordinates $(r,\theta)$ for $(0,\infty)\times S^1$. 

\begin{defn}\label{defn:SymplectizationMappingTorus}
Given a closed $2$-form $\eta$ on $M_\psi$ that restricts to $\omega_F$ on each fibre, the \emph{symplectization of $M_\psi$ with respect to $\eta$} is the Hamiltonian fibration
\[((0,\infty) \times M_\psi,\pi,\eta + \pi^* (dr \wedge d\theta)).\]
Such a Hamiltonian fibration is a \emph{symplectic mapping cylinder}.
\end{defn}

The following characterizes the relationship between symplectic mapping cylinders and Hamiltonian fibrations over $(0,\infty) \times S^1$.

\begin{lem}\label{lem:1FormDeterminesMappingTorus}
The symplectic mapping cylinder $((0,\infty) \times M_\psi,\pi,\eta + \pi^* (dr \wedge d\theta))$ defines a non-degenerate Hamiltonian fibration whose curvature with respect to $\pi^*(dr \wedge d\theta)$ is constant and equal to $1$.  Conversely, if a Hamiltonian fibration $(M,\pi, \eta + \pi^{*}(dr \wedge d\theta))$ satisfies $\pi: M \to (0,\infty)\times S^1$, $\eta(\partial_r,\cdot)|_{\Ver} \equiv 0$, and $\partial_r(\eta) \equiv 0$, then it is given by a symplectic mapping cylinder.
\end{lem}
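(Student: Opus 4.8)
The plan is to treat the two directions separately, since the forward implication is a short curvature computation while the converse carries the real content and requires an $r$-trivialization argument.

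For the forward direction, I would first check that $((0,\infty)\times M_\psi,\pi,\eta+\pi^*(dr\wedge d\theta))$ is a Hamiltonian fibration in the sense of \cref{defn:HamiltonianFibration}: $\pi$ is a proper submersion onto $(0,\infty)\times S^1$ with compact fibre $F$, the coupling form is closed, and on a fibre $F_z$ one has $dr|_{F_z}=0$, so $\Omega|_{F_z}=\eta|_{F_z}=\omega_F$. To compute the curvature with respect to $\omega_\Sigma=dr\wedge d\theta$ I would exhibit the horizontal lifts of $\partial_r$ and $\partial_\theta$. Since $\eta$ is pulled back from $M_\psi$, the coordinate vector field $\partial_r$ of the $(0,\infty)$-factor satisfies $\iota_{\partial_r}\eta\equiv 0$, hence $\iota_{\partial_r}\Omega|_{\Ver}\equiv 0$ because $\pi^*(dr\wedge d\theta)$ already vanishes on $\Ver$; thus $\widetilde{\partial_r}=\partial_r$. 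The lift $\widetilde{\partial_\theta}$ is the unique horizontal lift of $\partial_\theta$, characterized by $\eta(\widetilde{\partial_\theta},\cdot)|_{\Ver}\equiv 0$, which exists and is unique because $\eta$ restricts to the symplectic form $\omega_F$ on each fibre. Then
\[ R\cdot(dr\wedge d\theta)(\partial_r,\partial_\theta)=\Omega(\widetilde{\partial_r},\widetilde{\partial_\theta})=\eta(\partial_r,\widetilde{\partial_\theta})+\pi^*(dr\wedge d\theta)(\partial_r,\widetilde{\partial_\theta})=0+(dr\wedge d\theta)(\partial_r,\partial_\theta), \]
so $R\equiv 1$; since a Hamiltonian fibration is non-degenerate exactly when its curvature is everywhere positive, this also yields non-degeneracy.

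For the converse I would trivialize the $r$-direction. Composing $\pi$ with the projection $(0,\infty)\times S^1\to(0,\infty)$ gives a proper submersion $\rho\colon M\to(0,\infty)$, and the given lift $\partial_r$ of the radial vector field projects to the coordinate field on $(0,\infty)$; flowing along it produces a diffeomorphism $\Theta\colon(0,\infty)\times N\to M$ with $\rho\circ\Theta=\mathrm{pr}_1$, intertwining $\partial_r$ with the $(0,\infty)$-coordinate field, where $N=\rho^{-1}(r_0)$ for a fixed basepoint and $\pi_N\colon N\to S^1$ (the restriction of $\pi$) is a fibration with fibre $F$. Because $\partial_r(\eta)=\mathcal{L}_{\partial_r}\eta\equiv 0$, the form $\Theta^*\eta$ is independent of $r$, so $\Theta^*\eta=\eta_0+dr\wedge\beta$ with $\eta_0$ a $2$-form and $\beta$ a $1$-form on $N$, both closed (from $d\eta=0$). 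The hypothesis $\eta(\partial_r,\cdot)|_{\Ver}\equiv 0$ says $\beta$ vanishes on $\ker d\pi_N$, and as above $\eta_0$ restricts to $\omega_F$ on the fibres of $\pi_N$; hence $\Theta^*\Omega=\eta_0+dr\wedge\gamma$ with $\gamma:=\beta+\pi_N^*d\theta$ closed and vanishing on $\ker d\pi_N$. It remains to straighten $\gamma$ and recognize $N$ as a mapping torus: since $[\gamma]$ lies in $\ker(H^1(N;\IR)\to H^1(F;\IR))$, which by the Wang sequence of $N\to S^1$ is the line spanned by $[\pi_N^*d\theta]$, one may write $\gamma=c\,\pi_N^*d\theta+dg$; non-degeneracy of $\Omega$ forces $\gamma$ to be positive transverse to the fibres, after which a fibration-compatible diffeomorphism of $(0,\infty)\times N$ absorbing $dg$ and rescaling reduces to $\gamma=\pi_N^*d\theta$ (modifying $\eta_0$ by a closed $2$-form still restricting to $\omega_F$ on fibres). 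Finally $(N,\eta_0,\pi_N)$ is a Hamiltonian fibration over $S^1$ with fibre $(F,\omega_F)$; its symplectic parallel transport is globally defined (compact fibres) with monodromy a symplectomorphism $\psi\colon(F,\omega_F)\to(F,\omega_F)$, and the standard identification of Hamiltonian fibrations over $S^1$ with mapping tori identifies $(N,\pi_N)$ with $(M_\psi,\text{projection})$, carrying $\eta_0$ to a closed $2$-form on $M_\psi$ restricting to $\omega_F$ fibrewise and $\pi_N^*d\theta$ to $d\theta$. Transporting $\Omega$ through $\Theta$ and this identification exhibits it as $\eta_0+\pi^*(dr\wedge d\theta)$, the symplectic mapping cylinder of $\psi$.

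The main obstacle is the converse, and within it the bookkeeping around $\gamma$: showing that the exact correction $dg$ can be removed by an ambient diffeomorphism compatible with the fibration, that non-degeneracy forces the required positivity of $\gamma$, and then invoking the identification of $N$ with a mapping torus cleanly. The curvature computation of the forward direction and the $r$-translation trivialization of the converse are routine by comparison.
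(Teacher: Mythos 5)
Your forward direction coincides with the paper's argument verbatim (horizontal lift of $\partial_r$ is $\partial_r$, then the curvature computation), so there is nothing to add there. For the converse the paper is much terser: it simply performs the parallel-transport trivialization along the circle $\{1\}\times S^1$ and delegates the normal form to Oh's Theorem 4.2.4, whereas you trivialize in the $r$-direction first, decompose $\Theta^*\Omega=\eta_0+dr\wedge\gamma$, straighten $\gamma$, and only then invoke monodromy to identify $N\to S^1$ with a mapping torus. That last identification is exactly where your argument rejoins the paper's, and your route has the virtue of being self-contained; the decomposition steps (closedness of $\eta_0,\beta$, $\beta$ vanishing on $\ker d\pi_N$, $\eta_0$ fibrewise $\omega_F$) are all correct.

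The one step that does not work as you justify it is the straightening. Passing to cohomology via the Wang sequence only gives $\gamma=c\,\pi_N^*d\theta+dg$ with $g$ an \emph{arbitrary} function on $N$, and for such a $g$ no fibration-compatible diffeomorphism can absorb $dg$: the curvature of $\eta_0+dr\wedge\gamma$ with respect to $dr\wedge d\theta$ is $c+dg(\widetilde{\partial_\theta})$, which for general $g$ is non-constant along fibres, while a symplectic mapping cylinder (even after reparameterizing the base) has fibrewise-constant curvature by the first half of the lemma. The repair is information you already recorded but did not use at this point: $\gamma$ is closed \emph{and vanishes pointwise on} $\ker d\pi_N$, hence is basic ($\iota_v\gamma=0$ and $\mathcal{L}_v\gamma=d\iota_v\gamma+\iota_vd\gamma=0$ for vertical $v$), so with connected fibres $\gamma=\pi_N^*\alpha$ for a closed $1$-form $\alpha$ on $S^1$; no Wang sequence is needed, $dg$ is automatically pulled back from $S^1$, and the straightening is just a reparameterization of the base circle together with an $r$-rescaling when $c\neq 1$, after which your monodromy argument finishes the proof. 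Two smaller caveats: non-degeneracy of $\Omega$, which you use to get the transversality/positivity of $\gamma$ (equivalently $c\neq 0$), is not literally among the hypotheses of the converse (the paper's Definition \ref{defn:HamiltonianFibration} does not require it), though it holds in every application and is genuinely needed; and when $c\neq 1$ the identification necessarily moves the base coordinates, which is consistent with how the lemma is used but worth stating explicitly.
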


\begin{proof}
To prove the first claim, it suffices to check that the curvature is always equal to one (the other elements of the first claim being clear).  Let  $\partial_r$ and $\partial_\theta$ be the vector fields associated to coordinates $(r,\theta)$.  The horizontal lift of $\partial_r$ is $\partial_r$:
\[ (\eta+\pi^*(dr \wedge d\theta))\left({\partial_r},\cdot\right)|_{\Ver} = \left(\eta({\partial_r},\cdot\right) + (dr \wedge d\theta)(\partial_r,\cdot))|_{\Ver} = \eta(\partial_r,\cdot)|_{\Ver} \equiv 0.\]
So the curvature is always one:
\[ R \cdot (dr \wedge d\theta)(\partial_r,\partial_\theta)=(\eta+\pi^*(dr \wedge d\theta))(\widetilde{\partial_r},\widetilde{\partial_\theta}) = \eta({\partial_r},\widetilde{\partial_\theta}) + (dr \wedge d\theta)(\partial_r,\partial_\theta) = (dr \wedge d\theta)(\partial_r,\partial_{\theta}).\]
Consequently, $R \equiv 1$.

The converse follows from \cite[Theorem 4.2.4]{Oh_SymplecticTopology}.  Briefly, one considers the family of symplectomorphisms
\[\psi_\theta: (\pi^{-1}(1,\theta), \eta|_{\pi^{-1}(1,\theta)}) \to (\pi^{-1}(1,\theta), \eta|_{\pi^{-1}(1,\theta)})\]
obtained from parallel transporting along $\gamma(\theta) = (1,\theta) \in (0,\infty) \times S^1$.  These parallel transport maps give trivializations of $M$ that agree with trivializations of $(0,\infty)\times M_{\psi_{2\pi}}$.  So the two spaces are fibrewise diffeomorphic, giving the result.
\end{proof}

\begin{rem}
It should be clear from \cref{defn:SymplectizationStableHamStructure} and \cref{defn:SymplectizationMappingTorus} that these two definitions of symplectizations agree for mapping tori of symplectomorphisms.
\end{rem}

We conclude by discussing admissible almost complex structures.

\begin{lem}\label{lem:MappingTorusAdmissibleJ}
The space of admissible almost complex structures of a convex symplectic domain with boundary given by a mapping torus of a symplectomorphism is non-empty and contractible.
\end{lem}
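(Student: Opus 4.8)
The plan is to work in the collar neighborhood, where by \cref{defn:ConvexSymplecticDomain} and \cref{lem:1FormDeterminesMappingTorus} the symplectic form is $\Omega = \eta + dr\wedge d\theta$ on $(0,1]\times M_\psi$, and to use the explicit block description of admissible almost complex structures near a mapping-torus boundary. First I would record, following \cref{lem:MappingTorusAdmissibleJ}'s companion \cref{lem:MappingTorusAdmissibleJ}-type computation (the block form displayed in \cref{sec:RescalingIso}), that an almost complex structure $J$ on the symplectization is admissible in the sense of \cref{defn:AdmissibleJOnSymplectization} precisely when, with respect to the splitting $r\cdot\partial_r \oplus \widetilde{\partial_\theta}\oplus TF$ of $T((0,1]\times M_\psi)$ coming from the stable Hamiltonian structure $(\eta,d\theta)$, one has
\[
J = \begin{pmatrix} 0 & -1 & 0 \\ 1 & 0 & 0 \\ 0 & 0 & J_F \end{pmatrix},
\]
where $J_F$ is an $\omega_F$-compatible almost complex structure on the fibre $F$ (possibly $\theta$-dependent), with the extra positivity requirements of \cref{defn:AdmissibleJOnSymplectization} being automatic here because $d\alpha|_\xi = d(r\,d\theta)|_\xi$ vanishes on $\xi = \ker(d\theta) = TF$. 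The point is that admissibility near the boundary rigidly fixes the $\partial_r,\widetilde{\partial_\theta}$ block and only leaves $J_F$ free.

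Next I would establish non-emptiness: choose any $\omega_F$-compatible $J_F$ on the fibre (the space of such is non-empty, a standard fact), build $J$ on the collar by the block formula above, and then extend $J$ to all of $M$ as an $\Omega$-compatible almost complex structure using the contractibility (hence non-emptiness and the relative extension property) of the space of $\Omega$-compatible almost complex structures on a symplectic manifold relative to a closed subset where one is already chosen. This is exactly condition (ii) in \cref{defn:ConvexSymplecticDomain}, so such a convex symplectic domain admits at least one admissible $J$.

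For contractibility, the strategy is to exhibit the space $\sJ(M,\Omega,\lambda)$ of admissible almost complex structures as (homotopy equivalent to) a fibration-like object whose base and fibre are both contractible. Concretely: restricting to the collar gives a map from $\sJ(M,\Omega,\lambda)$ to the space $\sJ^{bdy}$ of admissible almost complex structures on a fixed collar of $\partial M$; by the block description, $\sJ^{bdy}$ is homeomorphic to the space of ($\theta$-parametrized families of) $\omega_F$-compatible almost complex structures on $F$, which is convex in the appropriate sense and hence contractible (the Gromov/Sévennec polar-decomposition contraction of $\mathcal{J}(F,\omega_F)$ applies fibrewise, and here — unlike the general stable-Hamiltonian situation flagged in \cref{rem:NonContractibilityOfJ} — the third admissibility condition is vacuous, so the contraction stays inside the admissible locus). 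The restriction map is a Serre fibration (or at least has the relative-extension property), and its fibre is the space of $\Omega$-compatible almost complex structures on $M$ extending a fixed admissible one on the collar, which is contractible by the standard relative contractibility of $\mathcal{J}(M,\Omega)$ rel a closed subset. A contractible total space then follows from the long exact sequence of the fibration (or directly: combine the two contractions, being careful that the collar contraction extends inward).

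The main obstacle is the bookkeeping around the third admissibility condition and the interface between the collar and the interior. I expect the genuinely delicate point to be verifying that the polar-decomposition contraction of $\mathcal{J}(F,\omega_F)$ — performed fibrewise over $\partial M \cong M_\psi$ and over the radial coordinate — produces a path that is at every time still of the required block form \emph{and} that it can be interpolated with the interior contraction without destroying $\Omega$-compatibility; this is where \cref{rem:NonContractibilityOfJ}'s warning has to be confronted and shown to be harmless in the mapping-torus case. Everything else (non-emptiness, the fibration property, the LES argument) is routine once the block normal form is in hand, so I would spend most of the write-up pinning down that normal form and the fibrewise contraction.
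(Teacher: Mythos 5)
Your proposal is correct and takes essentially the same route as the paper's proof: restrict to the collar, where conditions (ii) and (iv) of \cref{defn:AdmissibleJOnSymplectization} together with $\Omega$-compatibility force the block normal form, condition (iii) is vacuous since $d\alpha = d(d\theta) = 0$ (note it is $d\alpha$, not $d(r\,d\theta)$, that matters, though both vanish on $\xi$), and admissible structures thus correspond to families of $\omega_F$-compatible almost complex structures on the vertical distribution, a non-empty contractible space. Your extra bookkeeping (extension to the interior via relative contractibility of compatible structures, and the fibration/long-exact-sequence packaging) is sound but more than the paper spells out, and the worry about \cref{rem:NonContractibilityOfJ} is resolved exactly as in the paper, by the vacuousness of condition (iii) in the mapping-torus case.
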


\begin{proof}
Fix a collar: $((0,1] \times M_\psi, \eta + dr \wedge d\theta)$.  The condition that $d(d\theta)(\cdot, J \cdot)$ is non-negative definite on all $J$-complex lines in $T(M_\psi)$ is vacuously satisfied.  The conditions that $J(\ker(d\theta)) = \ker(d\theta)$ and $-dr \circ J = rd\theta$ imply that $J$ (with respect to the basis $(r \cdot \partial_r) \oplus \widetilde{\partial_\theta} \otimes T F$) is
\[ \begin{pmatrix} 0 & -1 & 0 \\ 1 & 0 & 0 \\ 0 & 0 & J_F \end{pmatrix} \]
where $J_F$ is an $(0,1] \times S^1$ family of almost complex structures on the fibre.  The condition that $J$ be $(\eta + dr \wedge d\theta))$-compatible implies that $J_F$ is $\omega_F$-compatible.  So a choice of admissible almost complex structure (in the collar) corresponds to a choice of $\omega_F$-compatible almost complex structures on the vertical distribution.  The space of such choices is non-empty and contractible.  This gives the desired result.
\end{proof}

\begin{rem}\label{rem:MappingToriIntegrated}
As an upshot of the proof of \cref{lem:MappingTorusAdmissibleJ}, any admissible almost complex structure makes the projection $(0,1] \times M_\psi \to (0,1] \times S^1$ holomorphic.  Moreover, for a radially admissible choice of Hamiltonian, Floer trajectories in the collar will project to Floer trajectories in $(0,1] \times S^1$.
\end{rem}

\subsection{Flattening Hamiltonian fibrations}\label{subsec:HamFibs_FlatteningHamFib}

We investigate the relationship between symplectic mapping cylinders and (more general) non-degenerate Hamiltonian fibrations over $(0,\infty)\times S^1$.  Let $(M,\Omega)$ be a symplectic manifold.  Let $\pi: M \to \IC$ be a smooth, proper map that is a submersion over $\IC^\times$.  Assume that $(\pi^{-1}(\IC^\times), \Omega, \pi)$ is a non-degenerate Hamiltonian fibration.  For example, this holds when $M$ admits an $\Omega$-compatible almost complex structure that makes $\pi$ holomorphic.  Finally, let $\omega \coloneqq d(f d\theta)$ be a symplectic form on $(0,\infty) \times S^1$, where $f$ is a smooth function in $r$ with non-negative second derivative.  Similarly as in \cref{notn:rInverse}, let $M_a \coloneqq \pi^{-1}(\ID_a)$, where $\ID_a$ denotes the disk of radius $a$ in $\IC$.

\begin{prop}\label{prop:FlatteningOverCStarNonDeg}
Given $c>b>a>0$, there exists a symplectic form $\overline{\Omega}$ on $M$ and a symplectic embedding
\[\varphi: (M,\Omega) \hookrightarrow (M,\overline{\Omega})\]
that satisfy:
\begin{enumerate}
	\item $\varphi$ is the identity on $M_a$,
	\item $M_b \subset \varphi(\intrr(M_c))$, and 
	\item on $M \smallsetminus M_b$, $\overline{\Omega} = \overline{\eta} + \pi^{*}\omega$, 
\end{enumerate}
where $\overline{\eta}$ is a closed $2$-form that satisfies:
\begin{itemize}
	\item the restriction to each fibre agrees with $\omega_F$,
	\item $\overline{\eta}(\partial_r,\cdot)|_{\Ver} \equiv 0$, 
	\item $\partial_r(\overline{\eta}) \equiv 0$, and
	\item the Reeb vector field of the stable Hamiltonian structure $(\overline{\eta},d\theta)$ on $\partial M_b$ is non-degenerate.
\end{itemize}
\end{prop}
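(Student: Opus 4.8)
The strategy is to produce $\varphi$ as the time-one flow of a Moser-type vector field, carefully controlled in the collar region, that simultaneously (a) deforms $\Omega$ so that near $\partial M_b$ its horizontal part becomes $r$-independent with vertical curvature equal to $1$ (so $(M\smallsetminus M_b,\overline\Omega)$ is literally a symplectic mapping cylinder in the sense of \cref{lem:1FormDeterminesMappingTorus}), and (b) leaves $\Omega$ untouched on $M_a$ while pushing the region $M_b$ into the interior of $\varphi(M_c)$. First I would set up coordinates: fix polar coordinates $(r,\theta)$ on $\IC^\times$, and on $\pi^{-1}(\IC^\times)$ write $\Omega = \eta + R\,\pi^{*}(dr\wedge d\theta) + \pi^{*}(\beta\wedge d\theta) + \ldots$ relative to the horizontal/vertical splitting of the Hamiltonian fibration — more precisely, record that $\Omega|_{\Ver}=\omega_F$ on each fibre, that $\Omega$ has a well-defined curvature function $R\colon \pi^{-1}(\IC^\times)\to\IR_{>0}$ with respect to $\omega$ by non-degeneracy, and a horizontal-mixing term. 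The target form in the outer region is $\overline\eta + \pi^{*}\omega$ with $\overline\eta$ $r$-independent, $\overline\eta(\partial_r,\cdot)|_{\Ver}\equiv 0$, restricting to $\omega_F$ fibrewise.

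The core construction proceeds in two stages. In the first stage, over an annular region $r\in[b-\epsilon,c]$, I interpolate the coupling form: use parallel transport along radial rays $\theta=\mathrm{const}$ to trivialize the fibration over this annulus, thereby identifying $\eta$ (restricted to slices $\{r\}\times M_\psi$, where $M_\psi=\pi^{-1}(\{r_0\}\times S^1)$ for a base point $r_0$) with a family of closed $2$-forms, and use a cohomological interpolation (Moser on the annulus, exploiting that the class of $\eta$ on each fibre is fixed $=[\omega_F]$) to a form $\overline\eta$ that is $r$-independent, kills $\partial_r$ against verticals, and has $\partial_r(\overline\eta)\equiv 0$; the curvature can be normalized to $1$ by absorbing the discrepancy $R-1$ into the $\pi^{*}\omega$ term via reparametrizing the base (or by adjusting $f$), which is where the hypothesis that $f$ has non-negative second derivative and the construction's freedom in $\omega$ gets used. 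In the second stage, having fixed the two symplectic forms $\Omega$ and $\overline\Omega$ (now agreeing on $M_a$ and with $\overline\Omega$ of mapping-cylinder type outside $M_b$), I invoke Moser's argument: connect them by the path $\Omega_t = (1-t)\Omega + t\overline\Omega$, check this is non-degenerate throughout (this requires the interpolation to stay in the cone of positive-curvature Hamiltonian fibration forms, which holds because both endpoints are, and positivity of curvature is convex in the right sense), solve $\iota_{X_t}\Omega_t = -\dot\alpha_t$ for the primitive $\dot\alpha_t$ of $\dot\Omega_t$ chosen to vanish on $M_a$, and take $\varphi=\varphi^{X}_1$. A radial cutoff ensures $X_t$ is supported in the collar, hence $\varphi$ is the identity on $M_a$; tracking the flow shows $M_b\subset\varphi(\intrr M_c)$ for appropriate choice of the interpolation region. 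Finally, the non-degeneracy of the Reeb vector field of $(\overline\eta,d\theta)$ on $\partial M_b$ is arranged by a generic perturbation of $\overline\eta$, exactly as in \cref{lem:PerturbMappingTori} — add $d(H\cdot d\theta)$ for generic $H$ supported near $\partial M_b$, which does not disturb the mapping-cylinder structure (it only changes $\overline\eta$ within its constraints) and makes the return maps $\psi\circ\psi_\theta$ have no eigenvalue $1$ at fixed points.

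\textbf{Main obstacle.} The hard part is the first stage: arranging the interpolation of coupling forms so that \emph{all} of the structural conditions on $\overline\eta$ hold simultaneously ($r$-independence, $\overline\eta(\partial_r,\cdot)|_{\Ver}\equiv 0$, fibrewise $=\omega_F$, curvature $1$) \emph{and} the whole family $\Omega_t$ remains non-degenerate — i.e. remains a positively-curved Hamiltonian fibration form — throughout the homotopy. Naively interpolating closed $2$-forms need not preserve non-degeneracy, so one must be careful to do the interpolation within the space of Hamiltonian fibration coupling forms (using the convexity of this space once the fibrewise class and the base symplectic form are fixed, together with the freedom to enlarge the curvature by adding $\pi^{*}(h\cdot\omega)$ as recalled after \cref{defn:Curvature}). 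Managing the bookkeeping of the horizontal-mixing terms under radial parallel transport — ensuring they can be killed without destroying closedness — is the technical heart; once $\Omega$ and $\overline\Omega$ are both fixed and compatible on $M_a$, the Moser step is routine.
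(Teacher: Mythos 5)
There is a genuine gap, and it is exactly the point the paper identifies as the heart of the matter: the completeness of the Moser flow on the non-compact end. The target form must equal $\overline{\eta} + \pi^{*}\omega$ on \emph{all} of $M \smallsetminus M_b$, i.e.\ for every $r \geq b$, so any construction necessarily modifies $\Omega$ on the entire end $\{r \geq b\}$; the Moser vector field is then supported on a non-compact region and its flow can leave every compact subset in finite time, so the time-one map $\varphi$ you want need not exist. Your proposal dismisses this step as ``routine,'' whereas the paper's proof of \cref{prop:FlatteningOverCStar} is organized precisely around it: the deformation is broken into three explicit stages (\cref{lem:Flattening1}, \cref{lem:Flattening2}, \cref{lem:Flattening3}), in which $\Omega$ is pulled back along explicit radial collapse maps $\phi_s$, base terms $d(H\cdot\pi^{*}d\theta)$ are added with constants $c_n$ chosen large on a sequence of annuli $U_n$ so that the Moser field points inward there, and the last stage reduces to an area-preserving computation in the base which uses that $f'\geq 0$, $f''\geq 0$ make the end exhaustive. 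Relatedly, your stage 1 cannot produce the required target at all: an interpolation supported in the annulus $r\in[b-\epsilon,c]$ leaves the original $\Omega$ untouched for $r>c$, which is not of mapping-cylinder type; and ``normalizing the curvature to $1$ by reparametrizing the base or adjusting $f$'' is not available, since $\omega = d(f\,d\theta)$ is fixed data appearing in the statement. In the paper the structural conditions on $\overline{\eta}$ ($r$-independence, $\overline{\eta}(\partial_r,\cdot)|_{\Ver}\equiv 0$, fibrewise $\omega_F$) hold by fiat, because $\eta$ is defined as the pullback of $\Omega|_S$ under the projection $(a,\infty)\times S \to S$ in a chosen trivialization; the entire difficulty is then transferred to building a well-defined embedding onto $\eta+\pi^{*}\omega$.

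Two further problems. First, the claim that the linear path $\Omega_t=(1-t)\Omega+t\overline{\Omega}$ stays non-degenerate because ``positivity of curvature is convex'' is unjustified: with the fibrewise restriction fixed, the top power of a coupling form involves the horizontal--vertical mixed terms through a pairing that is quadratic and not sign-definite, so convex combinations of non-degenerate coupling forms can degenerate; the paper never interpolates the two global forms directly, and adding $\pi^{*}(h\cdot\omega)$ to restore positivity would again change the form at infinity and spoil the prescribed shape $\overline{\eta}+\pi^{*}\omega$. Second, for the Reeb non-degeneracy, a perturbation $d(H\cdot d\theta)$ with $H$ supported near $\partial M_b$ would violate $\partial_r\overline{\eta}\equiv 0$ on the end; one needs $H$ to be $r$-independent for $r\geq b-\delta$ (hence not compactly supported), non-negative, and vanishing for $r\leq a+\delta$, and then one more Moser flow -- complete because this $H$ makes the field inward-pointing where the curvature is $1$ -- to carry the embedding onto the perturbed form, which is how the paper passes from \cref{prop:FlatteningOverCStar} to \cref{prop:FlatteningOverCStarNonDeg} via \cref{lem:PerturbMappingTori}.
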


\begin{rem}
When $\omega = dr \wedge d\theta$, \cref{lem:1FormDeterminesMappingTorus} combined with \cref{prop:FlatteningOverCStarNonDeg} implies that $(M,\Omega)$ may be symplectically embedded inside a symplectic manifold whose end is modeled after a symplectic mapping cylinder with non-degenerate Reeb vector field.  Condition (i) implies that one can arrange for this embedding to be supported in the end of $M$.  Condition (ii) implies that one can further arrange for the image of a fixed slice of the end of $M$ under $\varphi$ to be properly contained inside the symplectization region.
\end{rem}

To prove \cref{prop:FlatteningOverCStarNonDeg}, we study \cref{prop:FlatteningOverCStar} and combine it with \cref{lem:PerturbMappingTori}.

\begin{prop}\label{prop:FlatteningOverCStar}
Given $c>b>a>0$, there exists a symplectic form $\widetilde{\Omega}$ on $M$ and a symplectic embedding
\[\psi: (M,\Omega) \hookrightarrow (M,\widetilde{\Omega})\]
that satisfy:
\begin{enumerate}
	\item $\psi$ is the identity on $M_a$,
	\item $M_b \subset \psi(\intrr(M_c))$, and 
	\item on $M \smallsetminus M_b$, $\widetilde{\Omega} = \eta + \pi^{*}\omega$, where $\eta = \Omega|_{M_b}$.
\end{enumerate}
\end{prop}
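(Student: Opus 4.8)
\textbf{Proof proposal for \cref{prop:FlatteningOverCStar}.}

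The plan is to deform $\Omega$ in the region $M\smallsetminus M_a$ by a Moser-type argument, interpolating between the original coupling form and the ``flat'' form $\eta+\pi^*\omega$ on the end, and to use the positivity of the curvature to control where the Moser flow pushes the slices $\partial M_r$. First I would split the problem by the fibration structure. On $\pi^{-1}(\IC^\times)$ write $\Omega=\Omega^{vert}+(\text{horizontal and mixed terms})$; more usefully, since $(\pi^{-1}(\IC^\times),\Omega,\pi)$ is a non-degenerate Hamiltonian fibration, for any symplectic form $\omega_\Sigma$ on $(0,\infty)\times S^1$ the curvature $R_{\omega_\Sigma}$ is a positive function on $M$ (this is exactly the characterization recalled after \cref{defn:Curvature}). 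I would choose a smooth cutoff $\chi(r)$ that is $0$ for $r\le a$ and $1$ for $r\ge b'$ (with $a<b'<b$), and consider the family of closed $2$-forms
\[
\Omega_t \coloneqq \Omega + t\,\chi(r)\,\pi^*\big(\omega - R\,(dr\wedge d\theta)\big),\qquad t\in[0,1],
\]
where $R$ is the curvature of $\Omega$ with respect to $dr\wedge d\theta$ and I first arrange (by precomposing $\Omega$ with a fibre-preserving diffeomorphism supported near the end, or by absorbing into $\omega$) that $\Omega$ restricted to $M\smallsetminus M_a$ already has the ``product'' shape $\eta+R\,\pi^*(dr\wedge d\theta)$ with $\eta$ a closed extension of $\omega_F$ satisfying $\eta(\partial_r,\cdot)|_{\mathrm{Ver}}\equiv 0$ and $\partial_r\eta\equiv 0$ — this normalization is the technical heart and I expect to invoke \cref{lem:1FormDeterminesMappingTorus}'s converse criterion together with a parallel-transport trivialization of the fibration near the end, as in the proof of that lemma via \cite[Theorem 4.2.4]{Oh_SymplecticTopology}. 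Once $\Omega$ is in this normal form, $\Omega_1 = \eta+\pi^*\omega$ on $r\ge b'$ and $\Omega_t$ is closed; non-degeneracy of each $\Omega_t$ follows because adding $t\chi(r)\pi^*\omega$ only changes the horizontal part and keeps curvature positive (here one uses $f''\ge 0$ and $\chi'\ge 0$ to see the total horizontal $2$-form stays positive on the base directions), while the vertical part is untouched.

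Next I would run Moser: since $\Omega_t-\Omega_0 = t\,\chi(r)\,\pi^*(\omega-R\,dr\wedge d\theta)$ is exact on $\pi^{-1}(\IC^\times)$ (it is $\pi^*$ of an exact form once we note $\omega - R\,dr\wedge d\theta$ represents the zero class on the annulus after the normalization, or more simply $\omega = d(f\,d\theta)$ and $R\,dr\wedge d\theta = d(g\,d\theta)$ locally, patched using that both are cohomologous pullbacks), write $\tfrac{d}{dt}\Omega_t = d\beta_t$ with $\beta_t$ a primitive supported in $\{r\ge a\}$ and pulled back from the base plus a controllable fibre correction. Let $X_t$ be the time-dependent vector field with $\iota_{X_t}\Omega_t = -\beta_t$; by construction $X_t$ is horizontal and $\pi_*X_t$ depends only on $r$, so its flow preserves the circles $\{r=\text{const}\}/$ in the base and merely reparametrizes the radial coordinate. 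The key quantitative point — and the main obstacle — is to show the flow $\psi_t$ is defined for all $t\in[0,1]$ on a large enough region and that $\psi_1$ satisfies $M_b\subset\psi_1(\mathrm{int}(M_c))$: this reduces to a one-variable ODE estimate $\dot r = -\beta_t(\partial_\theta)/R_{\Omega_t}$ on the base, and I would choose the cutoff $\chi$ and the interpolation so that the radial drift is bounded, hence by shrinking the support (taking $b'$ close to $b$) and enlarging $c$ I can guarantee no point leaves $M\smallsetminus M_a$ through $\partial M$ in finite time and that $\partial M_c$ is pushed past $\partial M_b$. Since $X_t\equiv 0$ for $r\le a$, $\psi_1$ is the identity on $M_a$, giving (i); (iii) holds because for $r\ge b$ the flow is supported where $\Omega_t$ already equals $\eta+\pi^*\omega$ (choosing $b'<b$) so $\psi_1^*\widetilde\Omega = \Omega$ there with $\widetilde\Omega\coloneqq\Omega_1$; and (ii) is the ODE estimate just described.

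The hard part will be the normalization step: genuinely arranging that $\Omega$ restricted to the end is $\partial_r$-invariant with $\eta(\partial_r,\cdot)|_{\mathrm{Ver}}=0$ before starting Moser. Without this, $\Omega_t$ need not interpolate cleanly to a symplectic mapping cylinder form, and the primitives $\beta_t$ pick up fibrewise terms that could destroy the ``$\pi_*X_t$ depends only on $r$'' property that makes the radial ODE tractable. I would handle this exactly as in \cref{lem:1FormDeterminesMappingTorus}: parallel transport along radial rays in the base using the horizontal distribution of $\Omega$ trivializes $\pi$ over a collar $\{r\ge a'\}$ as $(a',\infty)\times M_\psi$ for the monodromy symplectomorphism $\psi$ around the circle, and in this trivialization $\Omega$ takes the form $\eta_0 + \pi^*(dr\wedge d\theta) + (\text{a }dr\text{-free correction})$, which an additional fibre-preserving diffeomorphism (Moser again, on each fibre, depending smoothly on $r$) removes. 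After this preparation the argument above goes through; finally \cref{lem:PerturbMappingTori} is invoked in the proof of \cref{prop:FlatteningOverCStarNonDeg} to perturb $\eta$ generically near $\partial M_b$ so the Reeb field becomes non-degenerate, which does not disturb any of the above since the perturbation is $C^\infty$-small and supported in the flat region.
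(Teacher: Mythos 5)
There is a genuine gap, and it sits exactly where you flagged ``the technical heart'': the normalization step is not achievable in general. You want to arrange, by a fibre-preserving diffeomorphism near the end, that $\Omega = \eta + R\,\pi^*(dr\wedge d\theta)$ with $\eta$ closed, $\partial_r\eta \equiv 0$ and $\eta(\partial_r,\cdot)|_{\Ver}\equiv 0$. But if such an identity held, then $d\eta = 0$ would force $dR \wedge dr \wedge d\theta = 0$, i.e.\ the curvature $R$ would have to be pulled back from the base; for a general non-degenerate Hamiltonian fibration $R$ is a genuine function on the total space, and no diffeomorphism covering the identity on the base changes that. (Concretely, in the radial parallel-transport trivialization one gets $\Omega = \omega_\theta + d\theta\wedge\gamma_{r,\theta} + R\,dr\wedge d\theta$ with $\partial_r\gamma = d_{\mathrm{fib}}R$, and the mixed term cannot be removed fibrewise precisely when $R$ is not basic.) The converse direction of \cref{lem:1FormDeterminesMappingTorus} \emph{assumes} this normal form rather than producing it, so it cannot be invoked here; also note that the expression $\pi^*(\omega - R\,dr\wedge d\theta)$ in your interpolation does not parse as a pullback unless $R$ is basic, so the exactness and ``$\pi_*X_t$ depends only on $r$'' claims that drive your Moser argument collapse with the normalization. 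The paper's proof is structured to avoid ever normalizing $\Omega$: it only uses the soft decomposition $\Omega = \eta + d\xi$ with $\eta$ the pullback of $\Omega|_{\pi^{-1}(b\times S^1)}$ under the radial retraction and $\xi|_S = 0$, and reaches the $r$-invariant form as the \emph{endpoint} of a deformation obtained by pulling $\Omega$ back along explicit radial collapsing maps $\phi_s$ (first collapsing a small annulus onto the slice, then the whole end), so that an explicit primitive for $\tfrac{d}{ds}\Omega_s$ is available at every stage (\cref{lem:Flattening1}, \cref{lem:Flattening2}, \cref{lem:Flattening3}).

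The second, related gap is the completeness of the Moser flow on the noncompact end, which you reduce to ``a one-variable ODE estimate'' with bounded radial drift obtained by choosing the cutoff. This is not a matter of cutoff choice: the primitive of the added base form grows like $f$ along the end while the curvature in the denominator is not under your control, and without further input the time-one flow can escape to infinity (or hit $\partial M$ through the collapsed directions). In the paper this is exactly where the work goes: in \cref{lem:Flattening2} one adds a base form $H_{\{c_n\}}\,d\theta$ with constants $c_n$ chosen divergent on a sequence of annuli $U_n$ precisely to make the Moser field inward-pointing there, and in \cref{lem:Flattening3} well-definedness is proved by an explicit area-preservation computation in the base that uses exhaustiveness of $f$ (positivity of $f'$ and $f''$). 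Your conclusion (ii), $M_b\subset\psi(\intrr(M_c))$, also depends on this quantitative control (in the paper it comes from the $\varepsilon\to 0$ limit in \cref{lem:Flattening1} plus the fact that the later flows are the identity for $r\le b+\delta/3$), so as written both the existence of $\psi_1$ and property (ii) are unsupported.
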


The proof of \cref{prop:FlatteningOverCStar} is technical and quite involved.  The idea is the following.  Consider the obvious radial deformation retraction $\varphi: M \to M_b$.  The pull-back $\varphi^*\Omega$ is cohomologous to $\Omega$; however, it is not symplectic.  Nevertheless, $\varphi^*\Omega + \omega$ is symplectic and still cohomologous to $\Omega$.  So by Moser's argument, we expect a symplectic embedding
\[ (M, \Omega) \to (M, \varphi^*\Omega+\omega). \]
Given this, condition (iii) in \cref{prop:FlatteningOverCStar} would be satisfied by construction.  However, this map is not the identity over $M_a$ and, a priori, need not satisfy condition (ii) in \cref{prop:FlatteningOverCStar}.  So to achieve conditions (i) and (ii), above, one could replace $\varphi^*\Omega+\omega$ with $\varphi^*\Omega+\ell \cdot \omega$, where $\ell$ is an appropriately defined radial cut-off function on $M$ that vanishes on $M_a$.  

There are issues that arise when trying to make this rigorous.  First, the symplectic embedding obtain from Moser's argument need not be well-defined.  The flow that gives this symplectic embedding may have solutions that exit all compact subsets of $M$ in finite time.  Second, the retraction $\varphi$ is never smooth and thus $\varphi^*\Omega$ is not a differential $2$-form.  To deal with these complications, we break up the desired symplectic embedding into three separate symplectic deformations, each of which is explicit enough for us to control the solutions to the flow equation given by Moser's argument and, thus, obtain well-defined symplectic mappings as well as our desired properties.  But first, we fix coordinates and a description of $\Omega$.

\begin{notn}\label{notn:Flattening}
Using polar coordinates $(r,\theta)$ for $(a,\infty) \times S^1 \subset \IC^\times$, the composition $r \circ \pi$ is a proper submersion (and thus a fibre bundle)
\[ \pi^{-1}((a,\infty) \times S^1) \to (a,\infty). \]
Trivialize $\pi^{-1}((a,\infty) \times S^1)$ over $(a,\infty)$:
\[ \xymatrix{  (a,\infty) \times \pi^{-1}( b \times S^1) \ar[r] \ar[d] & \pi^{-1}((a,\infty) \times S^1) \ar[d] \\ (a,\infty) \ar[r] & (a,\infty).} \]
Define $S \coloneqq \pi^{-1}(b \times S^1)$.  So $\pi^{-1}((a,\infty) \times S^1) \cong (a,\infty) \times S$ with the first factoring agreeing with $r \circ \pi$.  We will refer to this coordinate as $r$.  The projection $(a,\infty) \times S \to S$ is a (smooth) homotopy equivalence with (smooth) homotopy inverse given by the inclusion of $S$.  So over $(a,\infty)$, $\Omega$ is cohomologous to the pull-back of a closed 2-form on $S$.  More precisely, $\Omega =  \eta + d(\xi)$, where $\eta$ is the closed $2$-form given by pulling-back $\Omega$ along the composition
\[\pi^{-1}((a,\infty) \times S^1) \cong (a,\infty) \times S \twoheadrightarrow  S \hookrightarrow \pi^{-1}((a,\infty) \times S^1).\]
By this description, $\eta(\partial_r,\cdot)$ and $\partial_r(\eta)$ are both zero and $\xi$ may be chosen to be a $1$-form whose restriction to $S$ is zero.  Indeed, if $\xi$ did not vanish along $S$, then restricting $\eta + d(\xi)$ to $S$ gives that $\xi$ restricted to $S$ is closed.  So we can replace $\xi$ by $\xi - (\xi|_{S})$ and the resulting form will now vanish along $S$.
\end{notn}

\begin{lem}\label{lem:Flattening1}(The first symplectic deformation)
Given $c>b>a>0$ as in \cref{prop:FlatteningOverCStar}, for $\delta>0$ sufficiently small, there exists a symplectic form $\Omega'$ on $(a,\infty) \times S$ and a symplectic embedding
\[\psi': ((a,\infty)\times S,\Omega) \hookrightarrow ((a,\infty)\times S,\Omega')\]
that satisfy:
\begin{enumerate}
	\item $\psi'$ is the identity for $r \leq a-(b-a)/2$,
	\item $\Omega' = \eta + d(\xi')$ with
	\[\xi' = \left(f+c_\delta\right) \pi^* d\theta\]
	for $b-\delta \leq r \leq b+\delta$, where $c_\delta$ is a fixed constant, and
	\item $r^{-1}((a,b+\delta]) \subset \psi'(r^{-1}((a,c)))$.
\end{enumerate}
\end{lem}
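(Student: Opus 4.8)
\textbf{Proof proposal for Lemma \ref{lem:Flattening1}.}

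The plan is to exhibit $\psi'$ explicitly as the time-one flow of a Moser vector field associated to an interpolation of $1$-forms that is supported in the collar $r > a - (b-a)/2$. Write $\Omega = \eta + d(\xi)$ as in \cref{notn:Flattening}, with $\eta$ pulled back from $S$ and $\xi$ vanishing along $S = \pi^{-1}(b \times S^1)$. Near $r = b$ the Hamiltonian fibration structure, together with \cref{lem:HorizontalLiftOfHamiltonianVectorField}, shows that $\Omega$ differs from the ``mapping cylinder normal form'' $\eta + d(g \cdot \pi^*d\theta)$ only in higher-order terms in $(r-b)$; one first performs a preliminary symplectomorphism (Moser again, but trivially, since the forms are cohomologous with primitives vanishing at $b$) so that on a small window $b - \delta \le r \le b + \delta$ we have $\xi = g \cdot \pi^*d\theta$ for some function $g$ of $r$ with $g(b)$ and $g'(b)$ prescribed by the curvature being positive. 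The point of condition (ii) is then to further deform $g$ on this window to the prescribed affine profile $f + c_\delta$, which we can do because $f$ has non-negative second derivative and $g' > 0$ (positive curvature), so a convexity/interpolation argument produces a monotone homotopy of radial functions from $g$ to $f + c_\delta$ with everywhere-positive derivative; choosing $c_\delta$ correctly matches the endpoint values.

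Concretely, fix a smooth family $\xi_s$, $s \in [0,1]$, of $1$-forms on $(a,\infty)\times S$ with $\xi_0 = \xi$, with $\xi_1 = (f + c_\delta)\pi^*d\theta$ for $b - \delta \le r \le b + \delta$, with $\xi_s = \xi$ for $r \le a - (b-a)/2$, and such that each $\Omega_s := \eta + d(\xi_s)$ is symplectic — the latter holding because $d(\xi_s)$ restricted to the vertical distribution is always $\omega_F$ and the mixed term keeps the curvature positive, which one arranges by keeping $\partial_r$ of the coefficient of $\pi^*d\theta$ positive throughout the homotopy. Let $X_s$ solve $\iota_{X_s}\Omega_s = -\dot\xi_s$; then $X_s$ is supported in the collar and is \emph{horizontal}, so by \cref{lem:HorizontalLiftOfHamiltonianVectorField} it equals $(\text{coefficient})\cdot\widetilde{X_{d\theta}}/R$ — in particular $X_s$ is tangent to the level sets of $\pi$ composed with the angular projection only up to the radial component, and, crucially, its $r$-component is bounded in terms of $\dot\xi_s$ and $1/R$, both of which are controlled on compact $r$-intervals. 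Integrating, the flow $\psi'_s$ stays within a fixed compact set of $r$-values and hence is globally defined; $\psi' := \psi'_1$ satisfies (i) and (ii) by construction.

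The main obstacle is condition (iii): ensuring $r^{-1}((a, b+\delta]) \subset \psi'(r^{-1}((a,c)))$, i.e. that the flow does not push the slice $\{r = c\}$ inward past $\{r = b + \delta\}$. Since $X_s$ has a nonzero radial component in general, one must estimate how far points are displaced in the $r$-direction over $s \in [0,1]$; the bound comes from $|X_s^{r}| \le \sup |\dot\xi_s| / (\inf R \cdot \inf|\partial_r f|\text{-type quantity})$, so by choosing $\delta$ small (which shrinks the window on which $\xi_s$ must change, hence shrinks $\sup|\dot\xi_s|$) and by choosing the homotopy $\xi_s$ to have total radial displacement less than $c - (b+\delta)$, we force $\psi'$ to move the relevant slices by an arbitrarily small amount in $r$. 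This is where the freedom to take $\delta$ ``sufficiently small'' in the statement is used. I would handle this by first fixing the target profile $f + c_\delta$ and the homotopy on the $\delta$-window, then computing the $C^0$-radial displacement explicitly and shrinking $\delta$ until it is below $c - b$, and finally absorbing the small remaining displacement into the choice of $c_\delta$ and the precise cutoff region. The remaining verifications — that $\Omega'$ has the claimed form, that $\psi'$ is the identity below $a - (b-a)/2$, and that everything is smooth — are routine given the explicit Moser setup.
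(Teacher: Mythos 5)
Your strategy — first normalize $\xi$ to a radial profile near $r=b$, then run Moser on an interpolation of primitives cut off outside the window — is genuinely different from the paper's, but as written it has a gap at its central step. You assume the existence of a family $\xi_s$ with $\xi_0=\xi$, with $\xi_1=(f+c_\delta)\pi^*d\theta$ on the window, equal to $\xi$ below $a-(b-a)/2$, and with every $\Omega_s=\eta+d(\xi_s)$ symplectic; but the criterion you offer for non-degeneracy (``keep $\partial_r$ of the coefficient of $\pi^*d\theta$ positive'') only makes sense where $\xi_s$ is already a radial multiple of $\pi^*d\theta$, i.e.\ on the window itself. In the transition annulus where a cutoff $\rho$ interpolates between $(f+c_\delta)\pi^*d\theta$ and the original $\xi$, the perturbation $s\,d(\rho\,\zeta)$ with $\zeta=(f+c_\delta)\pi^*d\theta-\xi$ contains the term $s\,\rho'\,dr\wedge\zeta$; on an annulus of width $w$ one has $\rho'\sim 1/w$ while $\|\zeta\|_{C^0}\sim w$ (variation of $\xi$, which vanishes on $S$, plus variation of $f$, after choosing $c_\delta\approx -f(b)$), so this term is of size comparable to $d\xi=\Omega-\eta$ itself, with uncontrolled sign, no matter how small you take $\delta$. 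Hence non-degeneracy along your path is essentially as hard as the statement being proved, and your smallness mechanism for (iii) also fails: $\sup|\dot\xi_s|$ does not shrink with $\delta$. Two further points: completeness of the Moser flow is not established by ``bounded on compact $r$-intervals'' — that does not rule out finite-time escape to $r=\infty$ unless you also arrange $\xi_s=\xi$ for large $r$ or make the field inward-pointing there; and the claim that $X_s$ is horizontal (so that \cref{lem:HorizontalLiftOfHamiltonianVectorField} applies) is only valid where $\dot\xi_s$ is a multiple of $\pi^*d\theta$, which fails in the transition region.

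The paper sidesteps all of this by never interpolating primitives directly. It sets $\Omega_s=(\phi^\delta_s)^*\Omega+s\,d(H_\varepsilon\,\pi^*d\theta)$, where $\phi^\delta_s$ radially collapses the band $[b-\delta,b+\delta]$ onto $S$ (so at $s=1$ the pullback there is exactly $\eta$, killing $\xi$ with no cutoff error), and $H_\varepsilon$ is a nondecreasing radial function equal to $f+c_\varepsilon$ on the band and to the small constant $\varepsilon$ on the outer end. Non-degeneracy of every $\Omega_s$ is then manifest from curvature positivity ($R_s\geq 0$, degenerating only where the added term contributes curvature $1$); the Moser field vanishes for $r\leq a-(b-a)/2$ and equals the inward field $-\varepsilon/(f'R_s)\,\widetilde{\partial_r}$ on the outer end, which simultaneously gives completeness of the flow and, letting $\varepsilon$ and $\delta$ be small, the containment (iii). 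To salvage your route you would need to build a curvature-type positivity into the family $\xi_s$ in the transition region, which in effect reproduces the paper's $H_\varepsilon$ device.
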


The idea for \cref{lem:Flattening1} is to pull-back $\Omega$ along a radial self-map of $(a,\infty) \times S$ that smoothly collapses a small annular neighborhood of $\pi^{-1}(b \times S^1)$ onto $\pi^{-1}(b \times S^1)$.  One then add a small multiple of $\omega$ (multiplied by an appropriate radial cut-off function) to this pull-back to obtain a symplectic form that is cohomologous to the original $\Omega$.  One then applies Moser's argument to obtain the desired symplectic embedding.  If this self-map is the identity near $a \times S$ and one collapses a sufficiently small annular neighborhood of $\pi^{-1}(b \times S^1)$ onto $\pi^{-1}(b \times S^1)$, then by adding a sufficiently small multiple of $\omega$ as above, we can obtain the additional conditions of the lemma.

\begin{proof}
For each $\delta>0$ sufficiently small, fix a smooth function $\ell_\delta: \IR_{\geq 0} \to \IR_{\geq 0}$ that satisfies:
\begin{itemize}
	\item $\ell_\delta(r) = r$ for $0 \leq r \leq a + (b-a)/2$ and $c-(c-b)/2 \leq r$,
	\item $\ell_\delta(r) = b$ for $b-\delta \leq r \leq b+\delta$, and
	\item $\ell_\delta'(r) \geq 0$ for all $r$ with $\ell'(r) = 0$ if and only if $b-\delta \leq r \leq b + \delta$.
\end{itemize}
For each $\delta$, we have a smooth family of maps
\[ \phi^\delta_s: (a,\infty) \times S \to (a,\infty) \times S, \hspace{10pt} \phi^\delta_s(r,x) = ( (1-s)\cdot r + s \cdot \ell_\delta(r), x) \]
for $0 \leq s \leq 1$.  This gives the family $(\phi^\delta_s)^* \Omega$ of cohomologous $2$-forms on $(a,\infty) \times S$.  Since $d \phi^\delta_s|_{\Ver} = \Ione$ for each $s$, each $(\phi^\delta_s)^*\Omega$ is a coupling form for $(a,\infty) \times S$ with respect to $\pi$.  However, each form is not necessarily non-degenerate and thus is not necessarily symplectic.  Note, $\det(d\phi^\delta_s) \geq 0$ and $\det(d \phi^\delta_s) = 0$ if and only if $b-\delta \leq r \leq b+\delta$ and $s = 1$.  So $(\phi^\delta_s)^* \Omega$ is degenerate if and only if $b-\delta \leq r \leq b+\delta$ and $s = 1$.  Let $R_s$ denote the curvature of $\Omega_s$ with respect to $\omega$.  We have $R_s \geq 0$ with $R_s = 0$ if and only if $b-\delta \leq r \leq b+\delta$ and $s = 1$.  

To rectify the degeneracy, for each $\varepsilon>2b\delta$ sufficient small, fix a smooth function $H_\varepsilon: \IR_{\geq 0} \to \IR_{\geq 0}$ that satisfies:
\begin{itemize}
	\item $H_\varepsilon(r) = 0$ for $0 \leq r \leq a +(b-a)/2$,
	\item $H_\varepsilon(r) = f+c_\varepsilon$ for $b-\delta \leq r \leq b + \delta$, where $c_\varepsilon$ is some constant,
	\item $H_\varepsilon(r) = \varepsilon$ for $c - (c-b)/2 \leq r$, and
	\item $H_\varepsilon'(r) \geq 0$ for all $r$.
\end{itemize}
Define a smooth family of $2$-forms on $(a,\infty) \times S$,
\[ \Omega_s \coloneqq (\phi^\delta_s)^* \Omega + s \cdot d(H_\varepsilon \cdot \pi^*d\theta).\]
By construction, $\Omega_s$ is a smooth family of cohomologous symplectic forms on $(a,\infty)\times S$.

We wish to apply Moser's argument to this symplectic deformation to obtain our desired symplectic embedding.  Using our description of $\Omega = \eta + d(\xi)$ and the fact that $(\phi_s^\delta)^*\eta = \eta$ ($\eta$ is independent of $r$),
\[ \frac{d}{ds}\left(\Omega_s\right) = d \left( \frac{d}{ds} \left( (\phi_s^\delta)^* \xi \right) + H_\varepsilon \cdot \pi^*d\theta \right) \eqqcolon d(\xi_s).\]
Define $V_s$ by $\Omega_s(V_s,\cdot) = -\xi_s$.  Let $\psi_s$ denote the time $s$ flow of $V_s$.  By Moser's argument, if $\psi_s$ is well-defined, then it is a symplectic embedding.  We claim that $\psi' \coloneqq \psi_1$ and $\Omega' = \Omega_1$ define the desired pieces of data for $\delta$ and $\varepsilon$ sufficiently small.

First, we show that $\psi_s$ is well-defined for $0\leq s \leq 1$ and is the identity for $r \leq a-(b-a)/2$.  Since $\phi_s^\delta$ is the identity for $r \leq a - (b-a)/2$ and $c - (c-b)/2 \leq r$, we have for said values of $r$ that
\[\frac{d}{ds} \left( (\phi_s^\delta)^* \xi \right) \equiv 0. \]
Also for $r \leq a - (b-a)/2$, $H_\varepsilon = 0$ and for $c - (c-b)/2 \leq r$, $H_\varepsilon$ is constant and equal to $\varepsilon$.  So for $r \leq a - (b-a)/2$, $V_s = 0$ and, consequently, $\psi_s$ is the identity and well-defined for $0\leq s \leq 1$ on this end.  On the other end, by \cref{lem:HorizontalLiftOfHamiltonianVectorField}, for $c - (c-b)/2 \leq r$,
\[ V_s = \frac{-\varepsilon}{f' \cdot R_s} \cdot \widetilde{\partial_r}.\]
So $V_s$ is always pointing inward with respect to $r$ and thus $\psi_s$ is well-defined for $0\leq s \leq 1$ on this end.  It follows that $\psi_s$ is well-defined for $0\leq s \leq 1$. 

Second, we show $\Omega' = \eta + d(\xi')$ with $\xi' = \left(f+c_\varepsilon\right) \pi^* d\theta$ for $b-\delta \leq r \leq b+\delta$.  Notice that for $b-\delta \leq r \leq b+\delta$, the map $\phi^\delta_1$ agrees with the projection $(a,\infty) \times S \to S$.  So $(\phi_1^\delta)^* \Omega = \eta$ and $(\phi_1^\delta)^*\xi = 0$ for $b-\delta \leq r \leq b+\delta$.  Setting
\[ \xi' = \xi_1 = d\left((\phi^\delta_1)^*\xi+ H_\varepsilon \cdot \pi^* d\theta\right),\]
the desired result follows.

Finally, we explain the inclusion $r^{-1}((a,b+\delta]) \subset \psi_s(r^{-1}((a,c)))$.  For $c - (c-b)/2 \leq r$, our description of $V_s$ (and compactness) implies that
\[ \lim_{\varepsilon \to 0} |r(\psi_s) - r| = 0 \]
on all compact subsets.  So the inclusion holds by taking $\varepsilon$ and $\delta$ sufficiently small.
\end{proof}

\begin{lem}\label{lem:Flattening2}(The second symplectic deformation)
Assuming the notation of \cref{lem:Flattening1}, there exists a symplectic form $\Omega''$ on $(a,\infty) \times S$ and a symplectic embedding
\[\psi'': ((a,\infty)\times S,\Omega') \hookrightarrow ((a,\infty)\times S,\Omega'')\]
that satisfy:
\begin{enumerate}
	\item $\psi''$ is the identity for $r \leq b+\delta/3$, and
	\item $\Omega'' = \eta + d(\xi'')$ with
	\[\xi'' = F \cdot \pi^*d\theta\] 
	for $b-\delta \leq r$, where $F$ is a smooth function in $r$ that satisfies $F(r) =  f+C$ for $b-\delta \leq r \leq b+\delta/3$ with $C$ being some fixed constant.
\end{enumerate}
\end{lem}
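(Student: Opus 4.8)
The plan is to deform the symplectic form $\Omega'$ only in the region $r\ge b+\delta/3$, leaving everything untouched below that level, so that on the larger region $r\ge b-\delta$ the Liouville-type primitive becomes a genuine radial function $F(r)\,\pi^*d\theta$ of the same shape as $f+C$ near $b+\delta/3$. First I would record the situation produced by \cref{lem:Flattening1}: on $b-\delta\le r\le b+\delta$ we have $\Omega'=\eta+d\bigl((f+c_\delta)\pi^*d\theta\bigr)$, while for $r\ge b+\delta$ the primitive $\xi'$ (obtained there from $(\phi_1^\delta)^*\xi + H_\varepsilon\cdot\pi^*d\theta$ with $H_\varepsilon$ no longer radial) is only cohomologous on $S$ to a radial primitive. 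Since $(a,\infty)\times S\to S$ is a homotopy equivalence, on the end $r\ge b+\delta/3$ the closed $1$-form $\xi'-(f+c_\delta)\pi^*d\theta$ is exact modulo the pull-back of a closed form on $S$; by the same argument as in \cref{notn:Flattening} (subtracting off its restriction to a slice) we may write $\xi'=\xi''_0 + d k$ for a function $k$, where $\xi''_0=F\cdot\pi^*d\theta$ with $F$ smooth, radial, nondecreasing, equal to $f+C$ near $b+\delta/3$, and constant for $r$ large.

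Next I would set up the Moser interpolation. Fix a cut-off $\chi(r)$ with $\chi\equiv 0$ for $r\le b+\delta/3$ and $\chi\equiv 1$ for $r\ge b+\delta/2$, and define the family
\[
\Omega_s \coloneqq \eta + d\bigl(\xi' + s\,\chi\,(\xi''_0-\xi')\bigr),\qquad 0\le s\le 1.
\]
Because $\xi''_0-\xi'$ is (near the end) $dk$ up to a horizontal radial term, and both $\xi'$ and $\xi''_0$ give coupling forms with the same fibrewise restriction $\omega_F$, each $\Omega_s$ restricts to $\omega_F$ on the fibres, has $\eta$ as its fibre-part, and has curvature with respect to $\omega=d(f\,d\theta)$ that is a positive multiple of the curvature of $\Omega'$ plus a nonnegative contribution from $\chi'(\xi''_0-\xi')$; one checks positivity of the curvature by choosing the $C^1$-size of $\xi''_0-\xi'$ small on the support of $\chi'$ (this is where I would import \cref{lem:HorizontalLiftOfHamiltonianVectorField} to express $X_{\chi'(\dots)}$ as a controlled horizontal lift). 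Thus $\Omega_s$ is a path of cohomologous symplectic forms, constant in $s$ for $r\le b+\delta/3$.

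Then I would run Moser: write $\tfrac{d}{ds}\Omega_s=d\xi_s$ with $\xi_s=\chi(\xi''_0-\xi')$, define $V_s$ by $\Omega_s(V_s,\cdot)=-\xi_s$, and let $\psi''=\psi_1$ be the time-$1$ flow. Well-definedness of the flow is the step I expect to be the main obstacle, exactly as in \cref{lem:Flattening1}: one must prevent trajectories from escaping to $r=\infty$ in finite time. Here this is easier than before because on the end $\chi\equiv 1$, so $\xi_s=\xi''_0-\xi'$ is independent of $s$ and (up to an exact term $dk$ which does not affect $V_s$ after the form is symplectic) is radial; by \cref{lem:HorizontalLiftOfHamiltonianVectorField} the vector field $V_s$ is then a function times $\widetilde{\partial_r}$, and by choosing the sign conventions in the construction of $F$ (i.e.\ arranging $\xi''_0-\xi'$ to point the flow inward, or at worst to vanish for $r$ large where both primitives are already radial and constant) one gets a complete flow. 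Since $V_s\equiv 0$ for $r\le b+\delta/3$, the map $\psi''$ is the identity there, giving (i); and $\Omega''\coloneqq\Omega_1=\eta+d(\xi''_0)$ on $r\ge b+\delta/2$, while on $b-\delta\le r\le b+\delta/3$ nothing changed, so $\Omega''=\eta+d\bigl((f+c_\delta)\pi^*d\theta\bigr)$ there; redefining $\xi''\coloneqq F\cdot\pi^*d\theta$ with $F$ the radial function equal to $f+c_\delta=f+C$ for $b-\delta\le r\le b+\delta/3$ and equal to the large-$r$ profile of $\xi''_0$ beyond $b+\delta/2$, and interpolating monotonically in between, yields (ii). Finally I would note that, just as in \cref{lem:Flattening1}, all the constructed objects are compactly-supported modifications of the end, so nothing is disturbed on $M_a$ or, here, on $M_{b+\delta/3}$, which is what the statement asserts.
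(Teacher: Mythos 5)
There is a genuine gap at the very first step, and it undermines the rest of the argument. You claim that on the end the $1$-form $\xi'-(f+c_\delta)\pi^*d\theta$ is closed (exact up to pull-backs from $S$), so that $\xi'=\xi''_0+dk$ with $\xi''_0=F\cdot\pi^*d\theta$ radial. This is false: for $r$ large, $\ell_\delta(r)=r$, so $\xi'=\xi+\varepsilon\,\pi^*d\theta$ and hence $d\xi'=d\xi=\Omega-\eta$, which is \emph{not} of the product form $F'\,dr\wedge\pi^*d\theta$; consequently $\xi'-F\,\pi^*d\theta$ is not closed and no homotopy-equivalence argument applies. You are conflating ``the $2$-forms are cohomologous'' with ``the primitives differ by a closed form''; the latter would mean $\Omega'=\eta+d(F\,\pi^*d\theta)$ already holds on the end, i.e.\ you would be assuming exactly the conclusion of the lemma. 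Because this decomposition fails, your key simplification --- that on the end $\xi_s=\xi''_0-\xi'$ is radial up to $dk$, so the Moser field is a multiple of $\widetilde{\partial_r}$ and the flow is complete --- also fails: $\xi''_0-\xi'$ retains the genuinely non-radial summand $-\xi$, whose $\Omega_s$-dual has uncontrolled horizontal and vertical components, and preventing escape to $r=\infty$ in finite time is precisely the hard point of this lemma. (Two secondary problems: you cannot ``choose the $C^1$-size of $\xi''_0-\xi'$ small'' since both forms are fixed data, and taking $F$ constant for large $r$ makes $\eta+d(F\,\pi^*d\theta)$ degenerate, so your target form would not even be symplectic there.)

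For comparison, the paper does not interpolate primitives directly. It pulls $\Omega'$ back along the family $\phi'_s(r,x)=((1-s)r+s\ell(r),x)$ that collapses the entire end onto the slice $r=b+\delta/2$; at $s=1$ the composition with the retraction from \cref{lem:Flattening1} lands in $S$, where $\xi$ vanishes, so the non-radial part of the primitive is killed outright (rather than absorbed into an exact term). The resulting degeneracy and, crucially, the completeness of the Moser flow on the noncompact end are then handled by adding $s\,d(H_{\{c_n\}}\cdot\pi^*d\theta)$, where $H_{\{c_n\}}$ equals $r+c_n$ on a sequence of checkpoint annuli $U_n\to\infty$ and the constants $c_n$ are chosen large enough (using compactness of each $r^{-1}(U_n)$) that the Moser vector field points inward over every $U_n$; this is the mechanism your proposal would need and currently lacks.
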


The idea of \cref{lem:Flattening2} is to pull-back $\Omega$ along a radial self-map of $(a,\infty) \times S$ that smoothly collapses the entire positive end of $(a,\infty) \times S$ onto $\pi^{-1}((b+\delta/2) \times S^1)$ for some small $\delta$.  One then add a sufficiently large, radially varying, symplectic form from the base $(a,\infty) \times S^1$ (again, multiplied by an appropriate radial cut-off function) to this pull-back to obtain a symplectic form that is cohomologous to the original $\Omega$.  When the symplectic area of this base form is sufficiently large, one can show that the flow obtained from Moser's argument is well-defined.  The additional conditions of the lemma will follow as in \cref{lem:Flattening2}.

\begin{proof}
Fix a smooth function $\ell: \IR_{\geq 0} \to \IR_{\geq 0}$ that satisfies
\begin{itemize}
	\item $\ell(r) = r$ for $0 \leq r \leq b+\delta/3$,
	\item $\ell(r) = b+\delta/2$ for $b+\delta/2 \leq r$, and
	\item $\ell'(r) \geq 0$ for all $r$ with $\ell'(r) = 0$ if and only if $b+\delta/2 \leq r$.
\end{itemize}
As in \cref{lem:Flattening1}, we have a smooth family of maps
\[ \phi'_s: (a,\infty) \times S \to (a,\infty) \times S, \hspace{10pt} \phi_s'(r,x) = ( (1-s)\cdot r + s \cdot \ell(r), x) \]
for $0 \leq s \leq 1$, which gives rise to the family $(\phi_s')^* \Omega'$ of cohomologous coupling forms (with respect to $\pi$) on $(a,\infty) \times S$.  Let $R_s'$ denote the curvature of the coupling form $(\phi_s')^* \Omega'$ with respect to $\omega$.  As in \cref{lem:Flattening1}, $R_s' \geq 0$ with $R_s' = 0$ if and only if $b+\delta/2 \leq r$ and $s = 1$.  To run Moser's argument, we again need to rectify this degeneracy; however, there is the complication that the support of $\phi_s'$ is not compact.  So we need a more delicate construction. 

Let $r_n$ be a strictly increasing sequence of real numbers that diverges to infinity and satisfies $r_n > c$ for each $n$.  Let $U_n$ be a collection of connected, pairwise disjoint open intervals in $\IR_{>c}$ such that $r_n \in U_n$.  For each increasing sequence of positive real numbers $c_n$, fix a smooth function $H_{\{c_n\}}: \IR_{\geq 0} \to \IR_{\geq 0}$ that satisfies:
\begin{itemize}
	\item $H_{\{c_n\}}(r) = 0$ for $0 \leq r \leq b + \delta/3$,
	\item $H_{\{c_n\}}'(r) \geq 0$ with $H_{\{c_n\}}'(r) > 0$ for $b+\delta/2 \leq r$, and
	\item $H_{\{c_n\}}(r) = r+c_n$ for $r \in U_n$.
\end{itemize}
Define a smooth family of $2$-forms on $(a,\infty) \times S$ by
\[ \Omega_s' \coloneqq (\phi_s')^* \Omega' + s \cdot d\left(H_{\{c_n\}} \cdot \pi^*d\theta\right).\]
$\Omega_s'$ is a smooth family of cohomologous symplectic forms on $(a,\infty)\times S$.

As in \cref{lem:Flattening1}, we apply Moser's argument to obtain our symplectic embedding.  Like before,
\[ \frac{d}{ds}\left(\Omega_s'\right) = d \left( \frac{d}{ds} \left( (\phi_s')^* \xi' \right) + H_{\{c_n\}} \cdot \pi^*d\theta \right) \eqqcolon d(\xi_s').\]
Define $V_s'$ by $\Omega_s'(V_s',\cdot) = -\xi_s'$.  Let $\psi_s'$ denote the time $s$ flow of $V_s'$.  We claim that for a particular choice of sequence $c_n$, the flow $\psi_s'$ is well-defined and is the identity for $r \leq b+\delta/3$.

We first show that $\psi_s'$ is the identity (and thus well-defined) for $r \leq b+\delta/3$.  Notice that $\phi_s'$ is the identity for $r \leq b+\delta/3$.  So for said values of $r$,
\[\frac{d}{ds} \left( (\phi_s')^* \xi' \right) \equiv 0. \]
Also for $r \leq b+\delta/3$, $H_{\{c_n\}} = 0$.  So for $r \leq b + \delta/3$, $V_s' = 0$ and, consequently, $\psi_s'$ is the identity on this end.

We now show that $\psi_s'$ is well-defined for $c\leq r$.  By \cref{lem:HorizontalLiftOfHamiltonianVectorField}, for $b+\delta/2 \leq r$, the $\Omega_s'$-dual of $-H_{\{c_n\}} \cdot \pi^* d\theta$ is the vector field
\[ W_s = \frac{-H_{\{c_n\}}}{\left(f' \cdot R_s' + s \cdot H_{\{c_n\}}'\right)} \cdot \widetilde{\partial_r}.\]
Let $Z_s$ denote the vector field determined by
\[ \Omega_s'(Z_s,\cdot) = -\frac{d}{ds} \left( (\phi_s')^* \xi' \right).\]
Using the splitting of the tangent space into vertical and horizontal components, write $Z_s = f_s \cdot \widetilde{\partial_r} + Y_s$, where $Y_s$ is orthogonal to $\widetilde{\partial_r}$.  Notice that if
\[ f_s + \frac{-H_{\{c_n\}}}{\left(f'\cdot R_s' + s \cdot H_{\{c_n\}}'\right)} \leq 0,\]
then the flow of $V_s'$ is inward pointing.  Notice that over each $U_n$,
\[ \Omega_s' = (\phi_s')^* \Omega' + s \cdot dr \wedge d\theta.\]
So $f_s$ is independent of $c_n$ on $U_n$.  On $r^{-1}(U_n)$, the above equation is
\[ f_s(r,x) - \frac{r+c_n}{(f' \cdot R_s'(r,x) + s)} \leq 0.\]
By compactness, pick $c_n$ sufficiently large so that this inequality holds over $U_n$.  So the flow $\psi_s'$ is inward pointing over each $U_n$.  It follows that the flow of $\psi_s'$ is well-defined for $0\leq s \leq 1$ on every compact subset of $(a,\infty)\times S$ and thus is globally well-defined for $0\leq s \leq 1$, as desired.

Fix $c_n$ so that the above flow $\psi_s'$ is well-defined.  Let $\psi'' \coloneqq \psi_1'$ and let $\Omega'' \coloneqq \Omega_1'$.  To complete the proof, we need to show that $\Omega'' = \eta + d(\xi'')$ with $\xi'' = F \cdot \pi^*d\theta$ for $b-\delta \leq r$, where $F$ is a smooth function in $r$ that satisfies $F(r) = f+C$ for $b-\delta \leq r \leq b+\delta/3$ with $C$ being some fixed constant.  As before, we have 
\[ \xi'' = (\phi_1')^* \xi' + H_{\{c_n\}} \cdot \pi^*d\theta = (\phi_1')^*(\phi_1^*\xi) + (\phi_1')^*H_\varepsilon \cdot \pi^*d \theta + H_{\{c_n\}} \cdot \pi^* d \theta. \]
Notice that for $b-\delta \leq r$, the composition, $\phi_1 \circ \phi_1'$, is given by the projection onto $S$.  So $(\phi_1')^*(\phi_1^*\xi) = 0$.  Since $\phi_1'$ only depends on the variable $r$, $F \coloneqq (\phi_1')^*H_\varepsilon+H_{\{c_n\}}$ is a smooth function only depending on $r$ and $\xi'' = F \cdot \pi^*d\theta$ for $b-\delta \leq r$.  Finally, $F$ agrees with $f+C$ for $b-\delta \leq r \leq b+\delta/3$.  Indeed, $\phi_1'$ is the identity for $r \leq b+\delta/3$, $H_{\{c_n\}} \equiv 0$ for $r \leq b+\delta/3$, and $H_\varepsilon = f+C$ for $b-\delta \leq r \leq b+\delta$.  So for $b-\delta \leq r \leq b+\delta/3$,
\[ F(r) = (\phi_1'^*H_\varepsilon+H_{\{c_n\}})(r) = H_\varepsilon(r)+H_{\{c_n\}}(r) = f+C.\]
\end{proof}

\begin{lem}\label{lem:Flattening3}(The third symplectic deformation)
Assuming the notation of \cref{lem:Flattening1} and \cref{lem:Flattening2}, there exists a symplectic form $\Omega'''$ on $(b-\delta,\infty) \times S$ and a symplectic embedding
\[\psi''': ((b-\delta,\infty)\times S,\Omega'') \hookrightarrow ((b-\delta,\infty)\times S,\Omega''')\]
that satisfy:
\begin{enumerate}
	\item $\psi'''$ is the identity for $r \leq b+\delta/3$, and
	\item $\Omega''' = \eta + d(\xi''')$ with $\xi''' = (f+C) \cdot \pi^*d\theta$, where $C$ is some constant.
\end{enumerate}
\end{lem}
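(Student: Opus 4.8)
The plan is to recognize that, over $\{r \ge b-\delta\}$, both $\Omega''$ and the target form are coupling forms of the very special shape $\eta + d\bigl(G(r)\,\pi^*d\theta\bigr)$ for a function $G$ of the single variable $r$: by \cref{lem:Flattening2}(ii) we have $\Omega'' = \eta + d(\xi'')$ with $\xi'' = F(r)\,\pi^*d\theta$, and we want to reach $\Omega''' := \eta + d\bigl((f+C)\,\pi^*d\theta\bigr)$. Since $F$ and $f+C$ already agree on $b-\delta \le r \le b+\delta/3$, moving from the former to the latter is nothing more than a reparametrization of the $r$-coordinate, supported in $\{r > b+\delta/3\}$. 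So, unlike in \cref{lem:Flattening1} and \cref{lem:Flattening2}, no Moser argument is needed here: I would write down an explicit diffeomorphism $\Phi$ of $(b-\delta,\infty)\times S$ with $\Phi^*\Omega'' = \Omega'''$ and then set $\psi''' := \Phi^{-1}$.

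The steps are: (1)~A short computation, using $\eta(\partial_r,\cdot)\equiv 0$ and $\partial_r\eta\equiv 0$ from \cref{notn:Flattening}, shows that over $\{r \ge b-\delta\}$ one has $\Omega'' = \eta + F'(r)\,\pi^*(dr\wedge d\theta)$, with $\Omega''$ restricting to $\omega_F$ on the vertical distribution and pairing the horizontal lifts of $\partial_r$ and $\partial_\theta$ to $F'(r)$; hence non-degeneracy of $\Omega''$ is equivalent to $F'(r) > 0$. Together with $F(b-\delta) = f(b-\delta)+C$ and $F(r) \ge r - O(1)$ along the exhausting intervals $U_n$ from the proof of \cref{lem:Flattening2}, this makes $F$ a smooth diffeomorphism of $[b-\delta,\infty)$ onto $[f(b-\delta)+C,\infty)$. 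Since $\omega = f'\,dr\wedge d\theta$ is symplectic we have $f' > 0$, and $f$ is convex, so $f+C$ is likewise a diffeomorphism of $[b-\delta,\infty)$ onto the same ray. (2)~Put $g := F^{-1}\circ(f+C)$ — a smooth increasing diffeomorphism of $(b-\delta,\infty)$ that restricts to the identity on $(b-\delta,\,b+\delta/3]$, where $F = f+C$ — and $\Phi(r,x) := (g(r),x)$. Because $\eta$ and $\pi^*d\theta$ are both pulled back along the projection $(b-\delta,\infty)\times S \to S$, which $\Phi$ respects, one gets $\Phi^*\eta = \eta$ and $\Phi^*\xi'' = (F\circ g)\,\pi^*d\theta = (f+C)\,\pi^*d\theta =: \xi'''$, hence $\Phi^*\Omega'' = \eta + d\xi''' =: \Omega'''$. (3)~The form $\Omega'''$ is symplectic, being the pullback of the symplectic form $\Omega''$ by a diffeomorphism; consequently $\psi''' := \Phi^{-1}$ is a symplectomorphism from $\bigl((b-\delta,\infty)\times S,\Omega''\bigr)$ to $\bigl((b-\delta,\infty)\times S,\Omega'''\bigr)$ — in particular a symplectic embedding — it is the identity for $r \le b+\delta/3$ because $\Phi$ is there, and $\Omega''' = \eta + d(\xi''')$ with $\xi''' = (f+C)\,\pi^*d\theta$. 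These are exactly conditions (i) and (ii).

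The ``hard part'' here is genuinely soft compared with the first two deformations: the only real verification is that $F$ is an honest diffeomorphism of $[b-\delta,\infty)$ onto the ray $[f(b-\delta)+C,\infty)$ — strict monotonicity coming from $\eta(\partial_r,\cdot)\equiv 0$ together with non-degeneracy of $\Omega''$, and surjectivity from the behaviour of $F$ along the intervals $U_n$ — after which the reparametrization and the identity $\Phi^*\Omega'' = \Omega'''$ are immediate. If one wished instead to argue uniformly with \cref{lem:Flattening1} and \cref{lem:Flattening2} by Moser's method, one would interpolate $\Omega''_s := \eta + d\bigl(((1-s)F + s(f+C))\,\pi^*d\theta\bigr)$, which stays symplectic since $(1-s)F' + sf' > 0$ for all $s\in[0,1]$; the generating vector field is then a purely radial horizontal field (computable via \cref{lem:HorizontalLiftOfHamiltonianVectorField}) whose flow one must still check is defined for $s\in[0,1]$, so the explicit reparametrization above is the cleaner route to \cref{prop:FlatteningOverCStar}.
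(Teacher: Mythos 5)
Your proof is correct, and it reaches the same conclusion by a more elementary route than the paper. The paper stays consistent with \cref{lem:Flattening1} and \cref{lem:Flattening2} and runs Moser's argument on the interpolation $\eta + d\bigl(((1-s)F+s(f+C))\,\pi^*d\theta\bigr)$, computing the radial Moser field via \cref{lem:HorizontalLiftOfHamiltonianVectorField} and proving the flow is defined for $s\in[0,1]$ by reducing to the area-preserving flow on $(b-\delta,\infty)\times S^1$ and solving $F(r)=(1-s)F(\varphi_s(r))+s\,(f+C)(\varphi_s(r))$; at $s=1$ this gives exactly $\varphi_1=(f+C)^{-1}\circ F$, so your $\psi'''=\Phi^{-1}$ is literally the paper's time-one Moser map, obtained without any flow analysis. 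What your route buys is that the only thing to check is the identity $\Phi^*\Omega''=\Omega'''$, which is immediate since $\Phi$ fixes the $S$-factor and $\eta$, $\pi^*d\theta$ are pulled back along the projection; what it costs is that building $\Phi=F^{-1}\circ(f+C)$ globally requires $F$ to be surjective onto $[f(b-\delta)+C,\infty)$, i.e.\ unbounded, which is a feature of the proof (the intervals $U_n$ and $H_{\{c_n\}}(r)=r+c_n$), not of the statement, of \cref{lem:Flattening2} — you cite it correctly, and likewise your $F'>0$ comes from non-degeneracy of $\Omega''$, which is legitimate. You could even drop the surjectivity input entirely by defining $\psi'''(r,x)=\bigl((f+C)^{-1}(F(r)),x\bigr)$ directly (an embedding, not a priori onto), needing only that $F$ is increasing with $F=f+C$ near $r=b-\delta$ and that $f+C$ is exhausting, which is precisely the range containment the paper uses; the paper's Moser presentation, by contrast, buys uniformity with the first two deformations at the price of the well-definedness check you avoid.
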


We indicate the idea.  The form $\Omega''$ in \cref{lem:Flattening2} almost satisfies the conditions of \cref{lem:Flattening3}.  For $r \geq b + \delta$, $\Omega'' = \eta + d(F \cdot \pi^* d\theta)$.  So we need to alter the form so that for $r \geq b + \delta$, $\Omega''' = \eta + d(f \cdot \pi^* d\theta)$.  In the case where $\eta = 0$ (that is, when our total space is $\IC$), we can always construct (the obvious) symplectic deformation between these two forms.  The associated flow from Moser's argument can be explicitly computed using that it is radially determined and area preserving.  When one explicitly solves, one sees that the flow is well-defined when the symplectic areas of $d(f \cdot \pi^* d\theta)$ and $d(F \cdot \pi^* d\theta)$ are infinite, which is ensured by our construction.  So to produce the result for $\eta \neq 0$, we relate its associated flow from Moser's argument to the flow in case where $\eta =0$.

\begin{proof}
Consider the smooth function $G: \IR_{\geq b-\delta} \to \IR_{\geq 0}$, $G(r) = f+C$, where $C$ is the constant in \cref{lem:Flattening2}.  We have a smooth family of symplectic forms
\[ \Omega_s'' = \eta + d( ( (1-s)\cdot F + s \cdot G) \cdot \pi^*d\theta)\]
for $0 \leq s \leq 1$.  By \cref{lem:HorizontalLiftOfHamiltonianVectorField} and the discussion in the proof of \cref{lem:1FormDeterminesMappingTorus}, the vector field $V_s''$,
\[\Omega_s''(V_s'',\cdot) = (F-G) \cdot d\theta,\]
is
\[ V_s'' = \frac{F-G}{(1-s) \cdot F' + s \cdot G'} \cdot \partial_r.\]
Let $\psi_s''$ denote the time $s$ flow along $V_s''$.  We claim that $\psi_s''$ is well-defined and the identity for $r \leq b+\delta/3$.

Notice that the flow of $V_s''$ is well-defined if and only if the flow of $W_s$ is well-defined, where $W_s$ is the vector field on $(b-\delta,\infty)\times S^1$ determined by
\[ d( ( (1-s)\cdot F + s \cdot G) \cdot d\theta)(W_s,\cdot) = (F-G) \cdot d\theta,\]
that is,
\[ W_s = \frac{F-G}{(1-s) \cdot F' + s \cdot G'} \cdot \partial_r.\]
By Moser's argument, the flow along $W_s$ is a symplectomorphism and thus an area preserving map.  The time $s$ flow of $W_s$ may be written as $(r,\theta) \mapsto (\varphi_s(r),\theta)$, where $\varphi_s: \IR_{ \geq 0} \to \IR_{\geq 0}$.  By the area preserving property, we have that
\begin{align*}
F(r) - F(b) & = \int_{b}^{r} F'\, dr 
\\ & = \int_{b}^{\varphi_s(r)} ((1-s) \cdot F' + s \cdot G')\, dr 
\\ & = (1-s)\cdot (F(\varphi_s(r)) - F(b)) + s \cdot (G(\varphi_s(r)) - G(b)).
\end{align*}
Canceling terms (recall that $F(b) = G(b)$),
\[ F(r) = (1-s) \cdot F(\varphi_s(r)) + s \cdot G(\varphi_s(r)).\]
The derivative of the right-hand-side is positive for $0\leq s \leq 1$.  So for $0 \leq s \leq 1$, the right-hand-side is invertible.  So we can solve for $\psi_s''$ when the range of the left-hand-side is contained in the range of the right-hand-side.  Notice that when $s > 0$, the range of the right-hand-side is $[f(b)+C,\infty)$, because $G(r) = f(r)+C$ is exhaustive (since $f'$ and $f''$ are both positive) and $F(b) = G(b)$.  Since $F'$ is positive and $F(b) = f(b)+C$, the range of the left-hand-side is contained in $[f(b)+C,\infty)$.  So we can solve for $\psi_s''$ and the flow is well-defined.  

Finally, since $F = G$ for $r \leq b+\delta/3$, $V_s'' = 0$ and thus $\psi_s''$ is the identity for said values of $r$.  Setting $\psi''' \coloneqq \psi_1''$ and
\[\Omega''' \coloneqq \Omega_1'' = \eta + d( G \cdot \pi^* d\theta) = \eta + d\left( \left(f+C \right) \cdot \pi^* d\theta\right)\] 
yields the desired data.
\end{proof}

We now chain together the above deformations and prove \cref{prop:FlatteningOverCStar}.

\begin{proof}
First, the symplectic embeddings and symplectic forms constructed in the above lemmas all extend to symplectic (self) embeddings and symplectic forms on all of $M$, because $\psi'$, $\psi''$, and $\psi'''$ are all the identity near $a$, $a$, and $b-\delta$ respectively.  We do not distinguish between the original maps and forms and their respective extensions to $M$.  Set $\psi = \psi''' \circ \psi'' \circ \psi'$ and $\widetilde{\Omega} = \Omega'''$.  Notice that for $b-\delta \leq r$, $\Omega''' = \eta + \pi^*\omega$. $\psi$ is the identity over $\ID_a \subset \IC$ because $\psi'$, $\psi''$, and $\psi'''$ are each the identity in this region.  Finally, the inclusion, $M_b \subset \psi(\intrr(M_c))$, follows from the inclusion property of $\psi'$ from \cref{lem:Flattening1} and the fact that $\psi''$ and $\psi'''$ are both the identity for $r \leq b+\delta/3$.  This finally completes the proof.
\end{proof}

Now we produce one final deformation to prove \cref{prop:FlatteningOverCStarNonDeg}.  The idea is to introduce a small perturbation as in \cref{lem:PerturbMappingTori} to the symplectic form $\widetilde{\Omega}$ from \cref{prop:FlatteningOverCStar}.  If we make the perturbation sufficiently small, then applying Moser's argument will produce a symplectomorphism from $\widetilde{\Omega}$ to the perturbed form that still preserves the additional conditions of the propositions.

\begin{proof}
Assume the notation and conclusions of \cref{prop:FlatteningOverCStar}.  For $\delta>0$ sufficiently small, consider a generic Hamiltonian $H: M \to \IR$ that satisfies:
\begin{enumerate}
	\item $H \equiv 0$ for $r \leq a+\delta$,
	\item $H$ is independent of $r$ for $r \geq b-\delta$, and
	\item $H \geq 0$.
\end{enumerate}
Define
\[ \overline{\Omega}_s \coloneqq \widetilde{\Omega} + s \cdot d(H \cdot d\theta). \]
For $|H|$ sufficiently small, $\overline{\Omega}_s$ will be symplectic.
Consider the vector field $V_s$,
\[ \overline{\Omega}_s(V_s,\cdot) = -H \cdot d \theta. \]
By \cref{lem:HorizontalLiftOfHamiltonianVectorField}, for $r >0$,
\[ V_s = \frac{-H}{f' \cdot R_s} \cdot \partial_r, \]
where $R_s$ is the curvature of $\overline{\Omega}_s$ with respect to $\omega$.  Let $\psi_s$ be the time $s$ flow along $V_s$.  We claim that the symplectic form $\overline{\Omega} \coloneqq \overline{\Omega}_1$ and the symplectic mapping
\[ \psi_1 \circ \psi: (M,\Omega) \to (M,\overline{\Omega}) \]
define the desired data.  

We argue that $\psi_s $ is well-defined for all $0 \leq s \leq 1$.  For $r \leq a+\delta$, $H \equiv 0$.  So for $r \leq a +\delta$, $V_s$ vanishes and $\psi_s$ is the identity in this region.  For $r \geq b-\delta$, $H$ is independent of $r$.  So by \cref{lem:1FormDeterminesMappingTorus}, $R_s \equiv 1$.  Since $H \geq 0$, for $r \geq b-\delta$, $V_s$ is inward pointing along $\partial_r$.  So the flow $\psi_s$ is globally well-defined for $0 \leq s \leq 1$.

Next, by \cref{prop:FlatteningOverCStar} and the discussion above, $\psi_1 \circ \psi$ is the identity on $M_a$.  To obtain the inclusion $M_b \subset (\psi_1 \circ \psi)(\intrr(M_c))$, we argue as follows.  Over the support of $\psi_s$, the curvature $R_s$ is bounded below independently of $s$.  Consequently, as $|H| \to 0$, $\psi_s$ converges to the identity map.  So using that $M_b \subset \psi(\intrr(M_c))$, we may take $H$ to be sufficiently small so that $\psi_1$ preserves this inclusion.  Finally, on $M \smallsetminus M_b$, we have that
\[ \overline{\Omega} = \eta+d(H \cdot d \theta) + \pi^* \omega. \]
Define $\overline{\eta} = \eta+d(H \cdot d \theta)$.  By \cref{lem:PerturbMappingTori}, for a generic choice of $H$, $\overline{\eta}$ will satisfy the conditions of the lemma.
\end{proof}

\subsection{Flattening more Hamiltonian fibrations}

We show that every Hamiltonian fibration over $\IC$ can be embedded into the product Hamiltonian fibration over $\IC$.  Let $(M,\Omega)$ be a symplectic manifold.  Let $\pi: M \to \IC$ be a smooth, proper map that is a submersion over all of $\IC$, which we assume gives a Hamiltonian fibration.  Let $F = \pi^{-1}(0)$ denote the fibre of $\pi$ over the origin.  Let $\Omega_F$ denote the restriction of $\Omega$ to $F$.  Let $\omega \coloneqq d(f d\theta)$ be a symplectic form on $(0,\infty) \times S^1$, where $f$ is a smooth function in $r$ that vanishes at the origin and has non-negative second derivative. 

\begin{prop}\label{prop:FlatteningOverC}
There exists a symplectic embedding
\[\varphi: (M,\Omega) \hookrightarrow (F \times \IC , \Omega_F + \omega ),\]
which is a smooth homotopy equivalence and is the identity from $\pi^{-1}(0)$ to $F \times \{0\}$.
\end{prop}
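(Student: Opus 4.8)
The plan is to reduce \cref{prop:FlatteningOverC} to \cref{prop:FlatteningOverCStar} by trivializing the Hamiltonian fibration over the interior. Since $\pi\colon M\to\IC$ is a proper submersion over all of $\IC$ (not just $\IC^\times$), it is globally a fibre bundle with fibre $F$; choosing a trivialization over $\ID_1$ that is the identity on $\pi^{-1}(0)=F\times\{0\}$, and then gluing with the conclusion of \cref{prop:FlatteningOverCStar} applied on the end, we may assume after a symplectic self-embedding (supported away from $M_a$ for some small $a>0$) that on $M\smallsetminus M_b$ the symplectic form has the product-like shape $\Omega = \eta + \pi^*\omega$ where $\eta$ restricts to $\omega_F$ on each fibre, $\eta(\partial_r,\cdot)|_{\Ver}\equiv 0$, and $\partial_r\eta\equiv 0$, i.e.\ the end is a symplectic mapping cylinder of a symplectomorphism $\psi$ of $F$. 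This is exactly the normal form produced by \cref{lem:1FormDeterminesMappingTorus} together with \cref{prop:FlatteningOverCStar}.

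Next I would show that a Hamiltonian fibration over $\IC$ whose end is the symplectic mapping cylinder of $\psi$ forces $\psi$ to be Hamiltonian isotopic to the identity, and in fact (after a further deformation) $\eta$ to be literally the product form $\omega_F$. The point is that the clutching symplectomorphism of the bundle $M\to\IC$ is trivial because $\IC$ is contractible: parallel transport around a small circle in the interior is Hamiltonian isotopic to the identity (it bounds a disk, and the holonomy of a Hamiltonian fibration around a contractible loop is Hamiltonian), and this holonomy is conjugate to $\psi\circ\psi_\theta$ as in the proof of \cref{lem:PerturbMappingTori}. So one constructs a symplectic isotopy of the end that straightens the mapping-torus monodromy to the identity; concretely, pick a Hamiltonian isotopy $\psi_t$ with $\psi_0=\mathrm{id}$, $\psi_1=\psi$, use it to build a diffeomorphism of $(0,\infty)\times M_\psi$ onto $(0,\infty)\times(S^1\times F)$, and correct by a Moser deformation as in \cref{lem:Flattening3} — the flow is well-defined here because the area of $\pi^*\omega$ in the $r$-direction is infinite (since $f'$ is eventually increasing), so any bounded perturbation of the fibrewise form can be absorbed. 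After this step the form on $M\smallsetminus M_{b'}$ is exactly $\Omega_F + \pi^*\omega$ on $F\times((0,\infty)\times S^1)$.

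Finally I would globalize: the open set $M\smallsetminus M_{b'}$ is now symplectomorphic to an open subset of $F\times\IC$ (namely $F\times(\IC\smallsetminus\ID_{b'})$), and the compact piece $M_{b'}$, being a Hamiltonian fibration over $\ID_{b'}$ with fibre $(F,\Omega_F)$, embeds symplectically into $F\times\ID_R$ for $R$ large by the standard fact that a Hamiltonian fibration over a disk is, after a symplectic deformation fixing the zero fibre, the trivial one $F\times\ID$ with coupling form $\Omega_F + \pi^*\omega|_{\ID_R}$ (this is again a Moser/symplectic-neighborhood argument over the contractible base $\ID_{b'}$; one enlarges the base form enough that the relevant vector field is inward-pointing). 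Matching these two embeddings along the collar $\{b' \le r \le b''\}$ — where by construction both are the identity in the $F$-factor and a radial reparametrization in the $\IC$-factor — yields the desired $\varphi\colon (M,\Omega)\hookrightarrow (F\times\IC,\Omega_F+\omega)$, which is a smooth homotopy equivalence (each piece is) and is the identity on $\pi^{-1}(0)\to F\times\{0\}$ by construction.

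The main obstacle I expect is the middle step: showing the monodromy of the end can be straightened to the identity \emph{compatibly} with the already-fixed normal form on the interior, and ensuring the associated Moser flow is globally defined on the noncompact cylinder. This is where one must exploit, as in the proofs of \cref{lem:Flattening2} and \cref{lem:Flattening3}, that the base symplectic form $\omega$ has infinite total area and that its radial component can be chosen to dominate the (bounded) error introduced by the monodromy correction, forcing the deformation vector field to point inward at large $r$ on an exhausting sequence of slices. All the other steps are either bundle-theoretic triviality over contractible bases or routine applications of Moser's theorem already carried out in the preceding lemmas.
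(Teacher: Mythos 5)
Your route has a genuine gap at the assembly stage, and it also makes the problem harder than it is. The two halves of your construction are produced by independent Moser flows: the straightening of the end (after passing through \cref{prop:FlatteningOverCStar} and the monodromy correction) and the trivialization of the compact piece $M_{b'}$ inside $F\times\ID_R$. You then assert that the two embeddings can be ``matched along the collar'' because ``both are the identity in the $F$-factor and a radial reparametrization in the $\IC$-factor,'' but nothing in the sketched constructions delivers this: the Moser vector fields dual to the primitives $\xi_s$ have nontrivial fibre components in general (this is already visible in the proofs of \cref{lem:Flattening1} and \cref{lem:FlatteningC1}), so neither embedding is a radial reparametrization on the overlap, and two symplectic embeddings that merely both exist on an annulus need not agree there. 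Worse, the two pieces cannot even coexist inside the target: to make the Moser flow on the compact piece well defined you must enlarge the base area (your ``$R$ large''), so the image of $M_{b'}$ spills into the region $F\times(\IC\smallsetminus\ID_{b'})$ that your end-embedding is supposed to occupy with the radial coordinate essentially preserved. Reconciling this forces a globally coordinated radial stretching, i.e.\ exactly the kind of single global deformation your proposal does not set up; the ``standard fact'' you invoke for the disk piece is, in effect, the proposition itself restricted to a compact region, quoted without proof. The monodromy-straightening step you flag as the main obstacle is also not free: after conjugating by an isotopy $\psi_t$ the fibrewise form acquires non-radial terms $dK_\theta\wedge d\theta$ of definite size on a noncompact end, and absorbing them requires adding a dominating radial term, which then has to be removed again --- a circularity your sketch does not resolve.

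All of this is avoidable, and the paper's proof avoids it: since $\pi$ is a proper submersion over \emph{all} of $\IC$ (not just $\IC^\times$), the bundle is smoothly trivial over the contractible base, so one fixes a single global trivialization $M\cong F\times\IC$ restricting to the identity on $\pi^{-1}(0)$ (\cref{notn:FlatteningC}). In that trivialization $\Omega=\Omega_F+d\xi$ with $\xi|_{F\times\{0\}}=0$, and two global Moser deformations, each fixing $F\times\{0\}$, finish the argument: first interpolate to $\Omega_F+d(H(r)\,d\theta)$ for a sufficiently large radial $H$ vanishing at the origin (\cref{lem:FlatteningC1}, with well-definedness of the flow exactly as in \cref{lem:Flattening2}), then deform $H$ radially to $f$ (\cref{lem:FlatteningC2}, as in \cref{lem:Flattening3}, using that $f$ is exhausting). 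There is no mapping-torus normal form, no monodromy to straighten, and no gluing of separately built embeddings --- the detour through \cref{prop:FlatteningOverCStar} re-imports precisely the difficulties that the hypothesis ``submersive over all of $\IC$'' lets one bypass.
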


The construction is very similar to the construction of the embedding in \cref{prop:FlatteningOverCStarNonDeg}.  So when elements of the construction are entirely analogous to elements of the construction in \cref{prop:FlatteningOverCStarNonDeg}, we will leave it to the reader to check that the previous arguments carry over.  We now fix a nice system of coordinates and a nice description of $\Omega$.

\begin{notn}\label{notn:FlatteningC}
Fix polar coordinates $(r,\theta)$ for $\IC$.  Since $\pi$ is everywhere a proper submersion, fix a trivialization of $\pi$.  Namely, identify $M$ with  $F \times \IC$ and $\pi$ with the projection to the second component, $\pi: F \times \IC \to \IC$.  Let $\pi_F: F \times \IC \to F$ denote the projection to the first component.  This is a smooth homotopy equivalence with smooth homotopy inverse given by the inclusion of $F$ as $F \times \{0\}$.  So the closed $2$-form $\Omega$ is cohomologous to the pull-back of a closed 2-form $\Omega_F$ on $F$.  We have $\Omega =  \Omega_F + d(\xi)$.  Moreover, $\xi$ may be chosen to be a $1$-form whose restriction to $F$ is zero.  Indeed, if $\xi$ does not vanish along $F$, then restricting $\Omega_F + d(\xi)$ to $F$ gives that $\xi$ restricted to $F$ is closed.  Consequently, we can replace $\xi$ by $\xi - (\xi|_{F})$ and the resulting form will now vanish along $F$, as desired.
\end{notn}

\begin{lem}\label{lem:FlatteningC1}
There exists a function $H: \IR \to \IR$ such that $d(H(r) d \theta)$ is symplectic on $\IC$ and there exists a symplectic embedding
\[ \psi': (F\times \IC, \Omega_F + d(\xi)) \hookrightarrow (F \times \IC, \Omega_F + d(H(r)d \theta)), \]
which is a smooth homotopy equivalence and is the identity from $\pi^{-1}(0)$ to $F \times \{0\}$.
\end{lem}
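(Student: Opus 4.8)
\textbf{Proof proposal for \cref{lem:FlatteningC1}.}

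The plan is to mimic the first deformation step used in the proof of \cref{prop:FlatteningOverCStar}, but now over all of $\IC$ rather than over an annular end. The key point is that $\Omega = \Omega_F + d(\xi)$ is cohomologous to the \emph{degenerate} coupling form $\Omega_F$ (pulled back along $\pi_F$), so adding to $\Omega_F$ a base form $\pi^*\omega_0$ of sufficiently large total area — multiplied by nothing at all, since here there is no region we must leave untouched except the central fibre — produces a symplectic form cohomologous to $\Omega$, and Moser's argument should then produce the embedding. Concretely, I would fix a smooth family $\Omega_s \coloneqq \Omega_F + s\cdot\xi\text{-part} + s\cdot d(H(r)\,d\theta)$ interpolating from $\Omega$ (at $s=0$, after reorganizing) to $\Omega_F + d(H(r)\,d\theta)$ (at $s=1$), where $H$ is an increasing function with $H(0)=0$, $H''\geq 0$, and $H'(r)$ large for $r$ bounded away from $0$. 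Since $d\pi|_{\Ver}$ is unaffected by adding $\pi^*$-forms, each $\Omega_s$ is a coupling form for $\pi$; non-degeneracy is arranged because the curvature of $\Omega_F + s\,d(H(r)\,d\theta)$ with respect to $\omega$ is bounded below by a positive multiple of $s\cdot H'$ plus the (possibly negative) contribution of the $\xi$-term, and this is positive once $H'$ is large enough. Because $\xi|_F = 0$, the whole deformation is stationary at $F\times\{0\}$, which will give the ``identity on $\pi^{-1}(0)$'' conclusion.

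The one genuinely new ingredient, compared to \cref{prop:FlatteningOverCStar}, is that the support of the deformation is noncompact (it is all of $M$ minus a neighborhood of the central fibre), so one must choose $H$ carefully to guarantee that the Moser flow is complete. As in the proof of \cref{lem:Flattening2}, I would use \cref{lem:HorizontalLiftOfHamiltonianVectorField} to write the Moser vector field $V_s$ in the region $r\geq\varepsilon$ as $V_s = \frac{f_s - H/(\,\cdots\,)}{\,\cdots\,}\cdot\widetilde{\partial_r} + (\text{bounded vertical part})$, where the denominators involve $f'\cdot R_s$ and $s\cdot H'$; choosing $H$ so that $H(r)$ grows fast enough (e.g.\ along a sequence of intervals $U_n$ escaping to infinity, as in \cref{lem:Flattening2}) makes the radial component of $V_s$ inward-pointing on each $U_n$, hence the flow cannot escape to $r=\infty$ in finite time, and it cannot reach $r=0$ because $V_s$ vanishes near $r=0$. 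This is the step I expect to be the main obstacle: verifying completeness of the flow and simultaneously keeping the resulting $H$ such that $d(H(r)\,d\theta)$ is symplectic on all of $\IC$ (including at the origin, which requires $H(r)\sim c r^2$ near $0$) and such that the curvature stays positive throughout the deformation.

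Once completeness is established, Moser's argument gives that $\psi' \coloneqq \psi_1$ is a symplectic embedding from $(F\times\IC, \Omega)$ into $(F\times\IC, \Omega_F + d(H(r)\,d\theta))$. That it is a smooth homotopy equivalence is automatic since it is isotopic to the identity through the (not necessarily symplectic but smooth) flow $\psi_s$, and the central-fibre normalization follows from $\xi|_F = 0$ together with $H(0)=0$, $H'(0)=0$ forcing $V_s|_{F\times\{0\}} = 0$. The remaining bookkeeping — that $d(H(r)\,d\theta)$ extends smoothly and symplectically over the origin, and that $H$ can be arranged to agree with the $f$ appearing in $\omega$ after a subsequent deformation (which is presumably handled in a later lemma analogous to \cref{lem:Flattening3}) — I would defer, noting only that the freedom to choose $H$ large leaves ample room.
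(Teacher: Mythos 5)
There is a genuine gap in the deformation family you choose. You interpolate linearly between the two endpoint forms: whether one reads your family literally ($\Omega_F + s\,d\xi + s\,d(H\,d\theta)$, which at $s=0$ is the degenerate form $\Omega_F$, not $\Omega$) or with the natural fix ($\Omega_F + (1-s)\,d\xi + s\,d(H\,d\theta)$), the restriction of $\Omega_s$ to a fibre $F\times\{z\}$ with $z\neq 0$ is the convex combination $(1-s)\,\Omega|_{F\times\{z\}} + s\,\Omega_F|_{F\times\{z\}}$ of two cohomologous symplectic forms on $F$. A convex combination of symplectic forms need not be non-degenerate, and enlarging $H'$ cannot repair this: adding $s\,d(H(r)\,d\theta)$ only contributes positivity in the base (curvature) direction, while the potential degeneracy here is vertical. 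So your intermediate forms $\Omega_s$ are not known to be symplectic, and Moser's argument cannot be started, independently of the completeness issue you correctly flag.

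The paper avoids exactly this problem by not interpolating linearly. It pulls $\Omega$ back along the fibre-preserving radial contraction $\phi_s(x,z) = (x,(1-s)\cdot z)$ (the same device as in \cref{lem:Flattening1} and \cref{lem:Flattening2}), so that $(\phi_s^*\Omega)|_{F\times\{z\}} = \Omega|_{F\times\{(1-s)z\}}$ is symplectic for every $s$; the only degeneracy that develops is horizontal (the curvature $R_s\geq 0$ vanishes precisely at $s=1$), and this is corrected by setting $\Omega_s = \phi_s^*\Omega + s\,d(H(r)\,d\theta)$, whose curvature with respect to $\omega$ is $R_s + s\,H'(r)/f'(r) > 0$. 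From that point on your plan matches the paper: $H$ is chosen large along intervals escaping to infinity as in \cref{lem:Flattening2} to make the Moser vector field inward-pointing and the flow complete, $H$ vanishes at the origin (quadratically, so that $d(H(r)\,d\theta)$ is a smooth symplectic form at $0$), and since $\xi_s|_{F\times\{0\}} = 0$ the flow fixes the central fibre. To repair your argument, replace the linear interpolation of primitives by the pullback family $\phi_s^*\Omega$; as written, the key non-degeneracy claim fails.
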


\begin{proof}
Consider the smooth family of maps $\psi_s: F \times \IC \to F \times \IC$ given by $\psi_s(x,z) = (x,s \cdot z)$.  Notice that $\psi_1$ is the projection to $F$ and $\psi_s(x,0) = (x,0)$ for all $x \in F$.  This gives rise to the family $(\phi_s)^* \Omega$ of cohomologous coupling forms (with respect to $\pi$).  Let $R_s$ denote the curvature of the coupling form $(\phi_s)^* \Omega$ with respect to $\omega$.  Analogous to the argument in \cref{lem:Flattening1}, $R_s \geq 0$ with $R_s = 0$ if and only if $s = 1$.  We consider the family of cohomologous symplectic forms
\[ \Omega_s \coloneqq \phi_s^*\Omega + s \cdot d(H(r)d\theta) \]
for some radial function $H$ to be determined.  The curvature of the coupling forms $\Omega_s$ is
\[ R_s + s \cdot \frac{H'(r)}{f'(r)}. \]
As in \cref{lem:Flattening1}, we apply Moser's argument to obtain our symplectic embedding.  We have
\[ \frac{d}{ds}\left(\Omega_s\right) = d \left( \frac{d}{ds} \left( (\phi_s)^* \xi \right) + H \cdot d\theta \right) \eqqcolon d(\xi_s).\]
Define the vector field $V_s$ by $\Omega_s(V_s,\cdot) = -\xi_s$.
Let $\psi_s$ denote the time $s$ flow of $V_s$.  For a particular choice of function $H$, the flow $\psi_s$ is well-defined and is the identity on $F \times \{0\}$.  To see this, one constructs a sufficiently large function $H$ as in the proof of \cref{lem:Flattening2} that vanishes at the origin.  As in \cref{lem:Flattening2}, this will ensure that the flow $\psi_s$ is well-defined.  To see that $\psi_s$ fixes $F \times \{0\}$ for all $s$, observe that $\xi_s|_{F \times \{0\}} = \xi|_{F \times \{0\}} = 0$.  So $V_s$ vanishes along $F \times \{0\}$ and $\psi_s$ is the identity along $F \times \{0\}$.  Setting $\psi' \coloneqq \psi_1$ proves the lemma.
\end{proof}

\begin{lem}\label{lem:FlatteningC2}
Let $f$ be as above.  Given $H: \IR \to \IR$ such that $d(H(r)d\theta)$ is symplectic on $\IC$ and $H(0) = 0$, there exists a symplectic embedding
\[ \psi'': (F\times \IC, \Omega_F+d(H(r)d\theta)) \hookrightarrow (F \times \IC, \Omega_F + d(f d\theta)) \]
such that $\psi''$ is a smooth homotopy equivalence that is the identity from $\pi^{-1}(0)$ to $F \times \{0\}$.
\end{lem}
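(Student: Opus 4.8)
The plan is to reduce \cref{lem:FlatteningC2} to the deformations already constructed in \cref{lem:Flattening2} and \cref{lem:Flattening3}, adapted to the setting where the base is all of $\IC$ rather than a cylinder $(a,\infty)\times S^1$. The main point is that $\Omega_F + d(H(r)\,d\theta)$ and $\Omega_F + d(f\,d\theta)$ are cohomologous (both restrict to $\Omega_F$ on $F$ and differ by an exact form on the product), and that $f$ has infinite symplectic area in the base direction (since $f'$ and $f''$ are non-negative and $f$ is a genuine symplectic potential, $\int_0^\infty f'\,dr = \infty$, or at least $f$ is exhausting), which is exactly the hypothesis that made the flows in \cref{lem:Flattening2,lem:Flattening3} globally well-defined.

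First I would fix, as in \cref{notn:FlatteningC}, the trivialization $M \cong F \times \IC$ and work with the linear family $\Omega_s'' = \Omega_F + d\big(((1-s)\cdot H + s\cdot f)\cdot d\theta\big)$ for $0 \le s \le 1$. The curvature of this coupling form with respect to $\omega = d(f\,d\theta)$ is $\big((1-s)H' + s f'\big)/f'$, which is positive for all $s$ provided $H' \ge 0$ (this is part of what it means for $d(H\,d\theta)$ to be symplectic on $\IC$ away from the origin, together with a matching condition at $r=0$); so each $\Omega_s''$ is symplectic. Applying Moser's argument, the relevant vector field is, by \cref{lem:HorizontalLiftOfHamiltonianVectorField} and the computation in the proof of \cref{lem:1FormDeterminesMappingTorus}, radially determined:
\[ V_s = \frac{H - f}{(1-s)\cdot H' + s\cdot f'}\cdot \partial_r. \]
As in the proof of \cref{lem:Flattening3}, the flow of $V_s$ is conjugate to an area-preserving radial flow $(r,\theta)\mapsto(\varphi_s(r),\theta)$ on $\IC$, and the area-preservation identity gives $H(r) = (1-s)\cdot H(\varphi_s(r)) + s\cdot f(\varphi_s(r))$ (after cancelling the $r=0$ values, using $H(0)=0=f(0)$). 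Since $f$ is exhausting, the right-hand side has range containing $[0,\infty)$ for $s>0$, while the left-hand side takes values in $[0,\infty)$, so one can solve for $\varphi_s(r)$ and conclude the flow $\psi_s''$ is globally well-defined for $0\le s\le 1$.

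The key structural point — and the step I expect to need the most care — is whether $H$ itself is already tame enough near infinity that a single deformation $\Omega_s''$ suffices, or whether (as in the cylinder case) one must first collapse the positive end via an intermediate deformation à la \cref{lem:Flattening2} to bring $\Omega_F + d(H\,d\theta)$ into a form where $\eta$-part is independent of $r$ before running the radial area-preserving argument. In the product setting $\Omega_F + d(H\,d\theta)$ already has its fibrewise part ($\Omega_F$) independent of $r$, so I expect the intermediate collapse is unnecessary and the single deformation above does the job directly; the one subtlety is ensuring the symplectic condition on $d(H\,d\theta)$ at $r=0$ (non-degeneracy at the origin) matches that of $d(f\,d\theta)$, so that the interpolation stays symplectic there and the flow fixes $F\times\{0\}$. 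Finally, since $V_s = 0$ wherever $H = f$ — in particular wherever the two potentials already agree, and at $r=0$ since $H(0)=f(0)=0$ and $\xi_s|_{F\times\{0\}}=0$ — the resulting symplectomorphism $\psi'' := \psi_1''$ is the identity from $\pi^{-1}(0)$ to $F\times\{0\}$ and is a smooth homotopy equivalence (being isotopic to the identity through the flow, post-composed with the collapse $\psi_s$ which is itself a homotopy equivalence). Combining $\psi''$ with $\psi'$ from \cref{lem:FlatteningC1} then yields the embedding asserted in \cref{prop:FlatteningOverC}.
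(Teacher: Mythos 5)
Your proposal is correct and follows essentially the same route as the paper: interpolate linearly between the potentials via $\Omega_s = \Omega_F + d(((1-s)H + s f)\,d\theta)$, run Moser's argument with the radial vector field $V_s = \tfrac{H-f}{(1-s)H' + s f'}\,\partial_r$, verify global well-definedness of the flow exactly as in the area-preserving argument of the cylinder case (using $H(0)=f(0)=0$ and that $f$ is exhausting), and note $V_s$ vanishes along $F\times\{0\}$ so the flow fixes the central fibre. Your observation that no intermediate collapse is needed is also consistent with the paper, since that step is already absorbed into the preceding lemma producing $H$.
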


\begin{proof}
We have a smooth family of symplectic forms
\[ \Omega_s = \Omega_F + d( ( (1-s) \cdot H + s \cdot f) \cdot d\theta)\]
for $0 \leq s \leq 1$.  By \cref{lem:HorizontalLiftOfHamiltonianVectorField} and the discussion in the proof of \cref{lem:1FormDeterminesMappingTorus}, the vector field $V_s$ determined by $\Omega_s(V_s,\cdot) = (H-f) \cdot d\theta$ is
\[ V_s = \frac{H-f}{(1-s) \cdot H' + s \cdot f'} \cdot \partial_r.\]
Let $\psi_s$ denote the time $s$ flow along $V_s$.  To see that the flow $\psi_s$ is well-defined, one argues almost identically to the proof of \cref{lem:Flattening3}.  To see that $\psi_s$ is the identity on $F \times \{0\}$, one observes that $V_s$ vanishes along $F\times \{0\}$ and thus the flow is the identity along $F \times \{0\}$, as desired.  Setting $\psi'' \coloneqq \psi_1$ proves the lemma.
\end{proof}

\cref{prop:FlatteningOverC} now follows immediately from composing the embeddings constructed in \cref{lem:FlatteningC1} and \cref{lem:FlatteningC2}.


\part{Gromov-type compactness results}

In this part, we discuss Gromov-type compactness results that are necessary for our main results.  We with proving a compactness result for sequences of holomorphic curves with boundaries that arise in the proof of our main result; namely, the sequences of curves that degenerate to the center fibre of our degeneration to the normal cone.  We also prove a Gromov-Floer-type compactness result for sequences of Floer trajectories associated to sequences of Hamiltonians that uniformly converge to the zero Hamiltonian on a compact subset.  This is the key ingredient that allows us to produce holomorphic curves from Floer trajectories.


\section{Compactness for degenerations \`{a} la Fish}\label{sec:CompactnessForDegenerationsALaFish}

We establish a compactness result (specific to our setting) for holomorphic curves with boundaries.

\begin{notn}
Let $(Q,\Omega,J)$ be an almost K\"{a}hler, symplectic manifold with boundary such that $\intrr(Q)$ is an open complex analytic manifold.  Let $\pi_Q: Q \to \IC$ be a proper, surjective map with $\pi_Q^{-1}(0) = F \cup E$, where $E$ is a compact subset of $Q$, and $F$ is a (possibly singular) proper, complex analytic subscheme of the interior of $Q$ that is not strictly contained in $E$.

Let $M$ be a smooth quasi-projective variety with a smooth, projective compactification $\overline{M}$.  We assume that there exists a birational map $\varphi: F \to \overline{M}$ that gives an isomorphism $\varphi: (F \smallsetminus E \cap F) \to M$.
\end{notn}

To orient the reader, $Q$ will be a ``trimming'' of the degeneration of the normal cone of $\pi^{-1}(\infty)$ to $\overline{M}$ in \cref{lem:SequenceOfCurves}.  The below lemma is inspired by a result of McLean \cite[Lemma 4.6]{McLean_DukeMathJournal}.  As with McLean's result, \cref{lem:FishCompactness} relies on a compactness result due to Fish \cite[Theorem A]{Fish_TargetLocalGromovCompactness}.

\begin{lem}\label{lem:FishCompactness}
Suppose that there is a sequence of (possibly disconnected) genus zero, compact, holomorphic curves $u_\nu: \Sigma_\nu \to Q$ with non-empty boundaries\footnote{We do not place any constraints on the topological type of the boundaries.} that satisfy:
	\begin{itemize}
		\item $\pi_Q \circ u_\nu \equiv z_\nu \in \IC$ with $z_\nu \to 0$,
		\item $u_\nu(x_\nu) \to p \in F \smallsetminus E \cap F$ for some $x_\nu \in \Sigma_\nu$,
		\item $u_\nu(\partial \Sigma_\nu) \subset \partial Q$,
		\item $u_\nu(\Sigma_\nu)$ is connected, and
		\item the energies of the $u_\nu$ (with respect to $\Omega$ and $J$) are uniformly bounded in $\nu$.
	\end{itemize}
There exists a genus zero, compact, holomorphic curve $u: \Sigma \to \overline{M}$ with empty boundary such that $u(\Sigma)$ is connected, and the image of $u$ intersects $\varphi(p)$ and $\overline{M} \smallsetminus M$.
\end{lem}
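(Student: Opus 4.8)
The plan is to apply Fish's target-local Gromov compactness theorem \cite[Theorem A]{Fish_TargetLocalGromovCompactness} to the sequence $u_\nu$ and then analyze the limit. First I would choose an open exhausting collection of relatively compact subsets of $\intrr(Q)$ together with the fact that the $u_\nu$ have images meeting a fixed point $p \in F \smallsetminus (E\cap F)$ with uniformly bounded energy; applying Fish's theorem on suitable relatively compact open subsets of $\intrr(Q)$ (the boundary of $Q$ being handled because the $u_\nu$ have boundary mapping to $\partial Q$, and by the asymptotic-holomorphicity / convexity near $\partial Q$ one can push the analysis into the interior as in the proof of \cref{lem:ModuliWellDefined}), one extracts a subsequence converging in the Gromov topology, on compact subsets of $\intrr(Q)$, to a nodal holomorphic curve. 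Since $\pi_Q \circ u_\nu \equiv z_\nu \to 0$ and $\pi_Q$ is continuous and proper, the limit curve $v$ has image contained in $\pi_Q^{-1}(0) = F \cup E$. The point constraint $u_\nu(x_\nu) \to p \in F \smallsetminus (E \cap F)$ forces at least one irreducible component of $v$ to pass through $p$, hence to have image in $F$ and not contained in $E$; call it $v_0$.

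Next I would argue that the limit curve $v$ contains a ``chain'' of components, at least one of which must reach the region corresponding to $\overline{M}\smallsetminus M$ under the birational identification. The key is that each $u_\nu(\Sigma_\nu)$ is connected and has non-empty boundary lying in $\partial Q$, while the limit $v$ (obtained on compact subsets of the interior) a priori loses the boundary; energy quantization and connectedness of the $u_\nu$ guarantee that the limit components assemble into a connected configuration whose closure in $Q$ meets both a neighborhood of $p$ and the ``boundary region'' — concretely, because $u_\nu(x_\nu)\to p$ while $u_\nu(\partial\Sigma_\nu)\subset\partial Q$ and the $u_\nu(\Sigma_\nu)$ are connected, no subsequence can have its image eventually contained in a fixed compact subset of $\intrr(Q)$ avoiding $\partial Q$; combined with Fish's theorem this produces components of $v$ whose union is connected and is not contained in any compact subset avoiding the locus over which $F\to\overline M$ fails to be an isomorphism onto $M$, i.e. $E \cap F$. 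Here I use that $F \smallsetminus (E\cap F) \cong M$ and that under the compactification $M \hookrightarrow \overline{M}$ the ``missing'' locus $\overline{M}\smallsetminus M$ is exactly the image of (a resolution of) $E\cap F$.

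Then I would transfer the limit configuration across the birational map $\varphi\colon F \to \overline{M}$. The components of $v$ lying in $F$ map, via $\varphi$ (which is defined everywhere on $F$ and is an isomorphism over $M$), to holomorphic (rational) curves in $\overline{M}$; components that happen to lie entirely in $E$ or in $E\cap F$ can be discarded or, where needed, their presence is recorded because the adjacent $F$-components must then limit onto $E\cap F$, i.e. onto $\overline M \smallsetminus M$. Removal of singularities at the nodes and at the punctures (using the removal of singularities theorem, as in \cite[Theorem 4.1.2]{McDuffSalamon_Jholo}) together with the classification of genus zero stable curves yields a compact, genus zero holomorphic curve $u\colon \Sigma \to \overline{M}$ with empty boundary and connected image. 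By construction the image passes through $\varphi(p)$, since $v_0$ passes through $p$ and $\varphi$ fixes $p\in M$; and the image meets $\overline{M}\smallsetminus M$, since the connected configuration $v$ reaches $E\cap F$.

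\textbf{Main obstacle.} The delicate point — and the place where I expect to spend the real effort — is controlling the behavior near $\partial Q$: Fish's theorem gives convergence on compact subsets of the \emph{interior}, but the $u_\nu$ carry their boundary to $\partial Q$, and I must rule out the scenario where all the ``interesting'' geometry escapes into the collar near $\partial Q$ and is lost in the limit. This requires a maximum-principle / monotonicity argument (in the spirit of the integrated maximum principle of \cref{sec:WeaklyConvexDomains} and the argument in \cref{lem:ModuliWellDefined}) showing that a definite amount of the curves' image, anchored by the point constraint at $p$ and the connectedness hypothesis, stays in a fixed compact subset of $\intrr(Q)$, so that the limit is non-trivial and reaches from $\varphi(p)$ to $\overline M\smallsetminus M$. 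A secondary technical point is that $F$ and $E$ may be singular, so ``holomorphic curve in $F$'' must be interpreted via the normalization/resolution and the birational morphism $\varphi$; this is routine but must be stated carefully.
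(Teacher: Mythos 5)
Your overall strategy is the paper's: apply Fish's target-local compactness \cite[Theorem A]{Fish_TargetLocalGromovCompactness}, observe that the limit lies in $\pi_Q^{-1}(0)=F\cup E$, use the point constraint to get a component through $p$ contained in $F$, use connectedness to chain from that component out to $E\cap F$, and push forward by $\varphi$. However, your self-identified ``main obstacle'' is misplaced, and in its place you would need an argument you have not supplied. There is no convexity hypothesis on $Q$ near $\partial Q$ in this lemma -- $Q$ is just an almost K\"ahler manifold with boundary -- so the integrated maximum principle of \cref{sec:WeaklyConvexDomains} and the argument of \cref{lem:ModuliWellDefined} (which concern the convex symplectic domain $M$, not $Q$) are simply not available here; if your plan genuinely leaned on them it would fail. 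Fortunately nothing of the sort is needed: the whole point of invoking Fish rather than Gromov is that one truncates the \emph{domains} along $\Sigma_\nu^\delta=\{x\in\Sigma_\nu\mid \dist(u_\nu(x),\partial Q)\ge\delta\}$ for $\delta$ in a dense set of good parameters, chosen so that moreover $F\subset\intrr(Q^\delta)$; the interior point constraint at $p$ together with connectedness of $u_\nu(\Sigma_\nu)$ and $u_\nu(\partial\Sigma_\nu)\subset\partial Q$ then supplies, for each $\nu$, a connected union of components of $\Sigma_\nu^\delta$ whose image joins $p$ to $\partial Q^\delta$, and these truncated curves Gromov-converge. No monotonicity or boundary convexity enters.

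The genuine gap is your treatment of the boundary of the limit and hence of why the final curve in $\overline M$ is closed. Fish's limit does not ``lose'' the boundary: the truncated curves carry boundary on the level set $\partial Q^\delta$, and the limit $v:C\to Q^\delta$ has $v(\partial C)\subset \pi_Q^{-1}(0)\cap\partial Q^\delta\subset E\smallsetminus(E\cap F)$ precisely because $\delta$ was chosen with $F\subset\intrr(Q^\delta)$. Your appeal to removal of singularities cannot produce a curve with empty boundary -- that theorem removes punctures, not boundary circles, and indeed the paper's proof never uses it. What is needed (and what the paper does) is a selection argument: starting from the component through $p$, take a chain of components of $C$ with consecutively intersecting images and stop at the \emph{first} component that lies in $F$ and meets $E\cap F$ (using that any component whose image meets $F\smallsetminus(E\cap F)$ lies entirely in $F$, since $F$ is analytic and the image is contained in $F\cup E$); the retained subcurve $\Sigma$ then maps into $F$, so since $v(\partial C)\subset E\smallsetminus(E\cap F)$ it must have $\partial\Sigma=\varnothing$, its image is still connected, contains $p$, and meets $E\cap F$, and composing with $\varphi$ gives the closed genus zero curve through $\varphi(p)$ meeting $\overline M\smallsetminus M$. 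Your ``discard the $E$-components'' remark gestures at this, but without the minimal-index (or equivalent) selection and the boundary-location argument above, the connectedness, the empty-boundary claim, and the fact that you have not thrown away the part meeting $E\cap F$ are all unjustified.
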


\begin{proof}
Fix a background metric on $Q$ with distance function $\dist$.  After passing to a subsequence of the $u_\nu$, assume the images of the $u_\nu$ lie in a fixed compact subset of $Q$.  So by \cite[Theorem A]{Fish_TargetLocalGromovCompactness}, there exists a subsequence of the $u_\nu$, $\varepsilon>0$, and a dense open subset $\sI \subset [0,\varepsilon)$ such that for each $\delta \in \sI$, the sequence of curves $u_\nu^\delta: \Sigma_\nu^\delta \to Q^\delta$, where
 \[ Q^\delta = \left\{ q \in Q \mid \dist(p,\partial Q) \geq \delta \right\} \]
 and
 \[ \Sigma_\nu^\delta \coloneqq  \left\{ x \in \Sigma_\nu \mid \dist(u_\nu(z), \partial Q) \geq \delta \right\},\]
 Gromov converges (see \cite[Definition 2.11]{Fish_TargetLocalGromovCompactness}).  For each $\delta \in \sI$ and each $\nu$, $\Sigma_\nu^\delta$ is a compact surface with boundary.  Now fix $\delta \in \sI$ such that the $u_{\nu}^\delta$ Gromov converges and such that $F$ is a proper subset of $\intrr(Q^\delta)$.  Since $u_\nu(\Sigma_\nu)$ is connected for each $\nu$, there exists a (possibly non-unique) subset of connected components ${\Sigma_\nu^\delta}' \subset \Sigma_\nu^\delta$ such that $u_\nu^\delta({\Sigma_\nu^\delta}')$ is connected and intersects both $\widetilde{p}$ and $\partial Q^\delta$.  After passing to a subsequence, assume that the subsequence $u_\nu^\delta|_{{\Sigma_\nu^\delta}'}: \Sigma_\nu^{\delta'} \to Q^\delta$ Gromov converges.  Denote the limiting stable holomorphic curve by $v: C \to Q^\delta$.

The map $v$ will not satisfy the conclusions of the lemma.  However, restricting $v$ to a subdomain will yield the desired curve.  To this end, denote the associated map with ``smoothed'' domain by $\widetilde{v}^r: \widetilde{C}^r \to Q^\delta$ (see \cite[Discussion before Definition 2.11]{Fish_TargetLocalGromovCompactness}).  By Gromov convergence, there exist diffeomorphisms $\phi_\nu: {\Sigma_\nu^\delta}' \to \widetilde{C}^r$ such that the maps $v_\nu \coloneqq u_\nu^\delta \circ \phi_\nu^{-1}: \widetilde{C}^r \to Q^\delta$ converge to $\widetilde{v}^r$ in $C^0(\widetilde{\Sigma}^r)$.  Using this and our hypotheses, we deduce that
\begin{enumerate}
 	\item $v(C)$ is contained in $E \cup F$,
	\item $v(\partial C) \subset E \smallsetminus (E \cap F)$, 
	\item $v(C) \cap E \neq \varnothing$,
	\item there exists $x \in C \smallsetminus \partial C$ such that $v(x) = p$, and 
	\item $v(C)$ is connected.
\end{enumerate}

Now we define our desired curve.  By items (i), (iii), (iv), and (v) above, there exists a sequence of connected components $C_0,\dots,C_k \subset C$ such that $x \in C_0$, $v(C_i) \cap v(C_{i+1}) \neq \varnothing$, and $v(C_k) \cap E \neq \varnothing$.

Since $F$ is a proper, complex analytic subscheme of $\intrr(Q)$ and $v: C \to F \cup E$ is a holomorphic map, if $v(C_i) \cap (F \smallsetminus (E \cap F)) \neq \varnothing$, then $v(C_i) \subset F$.  Let $0 \leq \ell \leq k$ be the smallest integer such that $v(C_\ell) \cap (F \smallsetminus (E \cap F)) \neq \varnothing$ and $v(C_\ell) \cap (F \cap E) \neq \varnothing$.  So $C_\ell$ is the first component in the sequence that is completely contained in $F$ and non-trivially intersects $E$.  Given this $\ell$, if $i \leq \ell$, then $v(C_i) \subset F$.  Set $\Sigma \coloneqq \sqcup_{i=0}^\ell C_i$ and define $u: \Sigma \to \overline{M}$ by composing $v|_{\Sigma}$ with $\varphi: F \to \overline{M}$.  

It remains to show that $u: \Sigma \to \overline{M}$ satisfies the conclusions of the lemma.  To see that $u$ defines a genus zero, compact, holomorphic curve with empty boundary, we proceed as follows.  First, $\Sigma$ is compact by construction.  Second, the genus is zero by the classification of surfaces with boundaries.  Third, to see that the boundary of $\Sigma$ is empty, argue as follows:  Since $\partial \Sigma \subset \partial C$, $v|_{\Sigma}(\partial \Sigma) \subset v(\partial C) \subset E \smallsetminus (E \cup F)$ (using item (ii) above).  But $v|_{\Sigma}(\Sigma) \subset F$.  So $v|_{\Sigma}(\partial \Sigma) = \varnothing$, that is, $\partial \Sigma = \varnothing$.  Finally, that $u(\Sigma)$ is connected and intersects $\varphi(p)$ and $\overline{M} \smallsetminus M$ is immediate from our construction.  This completes the proof.
\end{proof}


\section{Gromov-Floer compactness with turning off Hamiltonians}\label{sec:Pearls}

Let $(M,\Omega,\lambda)$ be a convex symplectic domain with admissible almost complex structure $J$.  We also assume that the boundary of $M$ has non-degenerate Reeb vector field.  We consider how to produce holomorphic curves from sequences of Floer trajectories associated to Hamiltonians that become zero on the interiors of domains via a Gromov-Floer-type compactness result.  Consider a degenerating sequence of Hamiltonians.

\begin{notn}
Let $H_\nu$ be a sequence of $(a-\delta+\varepsilon)$-radially admissible Hamiltonians on $M$ with
\begin{itemize}
	\item $H_\nu = H_{\nu+1}$ for $a-\delta+\varepsilon \geq r(x)$ , and
	\item $H_\nu = \varepsilon_\nu \cdot g$ on $M_{a-\delta}$, where $g$ is a negative $C^2$-small Morse function that is Morse-Smale with respect to $J$ and $\lim_{\nu \to \infty} \varepsilon_\nu = 0$.
\end{itemize}
Let $H_\infty$ be the limiting Hamiltonian, which we assume to be smooth.  There is an obvious identification of the orbits of $H_\nu$ with the orbits of $H_{\nu+1}$.  So we do not distinguish between the orbits of these different Hamiltonians.
\end{notn}

Given $H_\infty$, we define a notion of a Morse-Bott broken Floer trajectory, which is an appropriate gluing of Floer trajectories associated to $(H_\infty,J)$ with segments of negative gradient trajectories of $g$.  Similar definitions appear in \cite{DiogoLisi_MorseBottSplitSymplecticHomology} and \cite{BourgeoisOancea_Autonomous}.  The main difference is that our Hamiltonian $H_\infty$ is non-degenerate in the collar neighborhood of the boundary.  We added in perturbations near our $S^1$-families of orbits (see \cref{rem:MorseBottBreakings}).  Consequently, we do not need to consider ``cascades'' that arise when one ``turns off'' the Hamiltonian perturbation term about these orbits.

\begin{defn}
A \emph{Morse-Bott broken Floer trajectory} of type $((H_\infty,J,g),x_-,x_+)$ is a tuple $(\gamma^0,u^1,\gamma^1,\dots,u^k, \gamma^k, u^{k+1},u^{k+2},\dots,u^m)$ where
\begin{enumerate}
	\item $u^i: C_i \to M$ is a smooth map with each $C_i$ being a nodal curve of type $(0,2)$ with punctures $p_i^{\pm}$ both contained in the same irreducible component, denoted $C_i^0$, and
	\item $\gamma^i: I_i \to M$ is a smooth map with $I_i$ being a closed, connected subset of $\IR$\footnote{In other words, $I_i$ is either of the form $[a,b]$, $[a,+\infty)$, or $(-\infty,b]$ for some constants $a$ and $b$.}.
\end{enumerate}
These maps satisfy
\begin{enumerate}
	\item $\lim_{s \to \pm \infty} u^i|_{C_i^0} = x_{\pm}^i$ are orbits of $H_\infty$,
	\item $\lim_{s \to -\infty} \gamma^0(s) = x_-$,
	\item $\lim_{s \to +\infty} u^m(s,t) = x_+$,
	\item $\underline{ev}(\gamma^i) = x_+^i$ for $1 \leq i \leq k$,
	\item $\overline{ev}(\gamma^i) = x_-^{i+1}$ for $0 \leq i \leq k$, and
	\item $x_+^i = x_{-}^{i+1}$ for $k+1 \leq i \leq m-1$,
\end{enumerate}
and
\begin{enumerate}
	\item $\overline{\partial}_{(H_\infty,J)} u^i|_{C_i^0} \equiv 0$,
	\item $u^i|_{C_i \smallsetminus C_i^0}$ is holomorphic, and
	\item $-\nabla g \circ \gamma^i = (\gamma^i)'$.
\end{enumerate}
\end{defn}

\begin{figure}[h]
\centering
\includegraphics[width=.8\linewidth]{./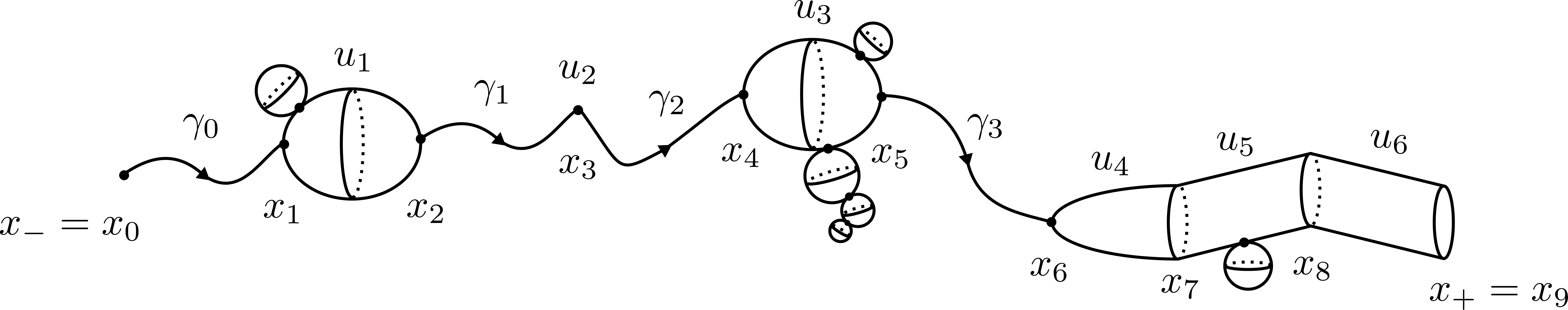}
\caption{A heuristic picture of a Morse-Bott broken Floer trajectory.}
\label{fig:MorseBottTrajectory}
\end{figure}

We want the following compactness result.

\begin{prop}\label{prop:ConvergenceToPearls}
Let $u_\nu \in \overline{\sM}((H_\nu,J),x_-,x_+)$ be a sequence of Floer trajectories with energies uniformly bounded by a constant $E$, and suppose that each $u_\nu$ has a single irreducible component.  There exists a Morse-Bott broken Floer trajectory of type $((H_\infty,J), x_-,x_+)$, $(\gamma^0,u^1,\gamma^1,\dots,u^k, \gamma^k, u^{k+1},u^{k+2},\dots,u^m)$, and shifts $s_\nu^i \in \IR$ so that $u_\nu(\cdot + s_\nu^i,\cdot)$ converges to $u^i$ in the Gromov-Floer sense, and $\sum_{i=0}^m E(u^i) = E$.
\end{prop}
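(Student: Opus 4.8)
The statement is a Gromov--Floer compactness result for a sequence of Floer trajectories $u_\nu$ whose defining Hamiltonians $H_\nu$ degenerate to $H_\infty$, and which become the small Morse function $\varepsilon_\nu g$ on the fixed interior region $M_{a-\delta}$. The plan is to combine three standard ingredients: (1) a priori $C^0_{loc}$ and energy bounds that confine all the curves to a compact region of $\intrr(M)$; (2) the usual Gromov--Floer limiting procedure for a \emph{fixed} Floer datum $(H_\infty, J)$ away from the degenerating region; and (3) an analysis of the ``adiabatic'' degeneration $\varepsilon_\nu g \to 0$ on $M_{a-\delta}$, which is where gradient-flow segments of $g$ appear in the limit. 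I would organize the argument so that steps (1) and (2) are cited from the literature (e.g. the Floer-theoretic version of \cite[Chapter 4]{McDuffSalamon_Jholo}, together with the cylinder-with-cascades analysis of \cite{DiogoLisi_MorseBottSplitSymplecticHomology} and \cite{BourgeoisOancea_Autonomous}), and step (3) is the part that requires genuine work in our setting.

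First I would establish the a priori bounds. By \cref{lem:ModuliWellDefined} (applied to each $H_\nu$, which is radially admissible, so \cref{prop:MaximumPrincipleForConvexDomains} applies with the appropriate monotonicity hypothesis), every $u_\nu$ has image in a fixed compact subset $K_0 \subset \intrr(M)$ independent of $\nu$; indeed the radial coordinate of $u_\nu$ is bounded above by $\max(r(x_\pm))< 1$, and the $H_\nu$ agree with a fixed $(a-\delta+\varepsilon)$-radially admissible Hamiltonian for $r \ge a-\delta+\varepsilon$. The geometric energies are uniformly bounded by $E$ via \cref{lem:EnergyRelation} (using that the $H_\nu$ are monotone along the relevant families — here the ``family'' is trivial, a single Hamiltonian, so $E_{geo}(u_\nu)=E_{top}(u_\nu)$). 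Since $J$ is fixed and the target is compact, standard elliptic bootstrapping gives uniform $C^\infty_{loc}$ bounds on the regions where no energy concentrates, and the usual bubbling-off analysis handles the finitely many points of concentration; here holomorphic spheres may bubble off (we are in the non-exact setting), and their images lie in $K_0$ by the same maximum principle. One then extracts the standard broken configuration of Floer cylinders for $(H_\infty,J)$ connected at asymptotic orbits, with bubble trees attached, and with total energy equal to $E$ by the usual no-energy-loss argument (the $C^0$ convergence of the Hamiltonian terms $H_\nu \to H_\infty$ in $C^\infty_{loc}$ makes the energy identity pass to the limit).

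The main obstacle, and the heart of the argument, is the behavior in the region $M_{a-\delta}$ where $H_\nu = \varepsilon_\nu g \to 0$. On a cylindrical piece $u_\nu(\cdot + s^i_\nu, \cdot)$ whose limit image lies in $M_{a-\delta}$, the perturbation term $X_{H_\nu} = \varepsilon_\nu X_g$ goes to zero, so the limit equation becomes $\bar\partial_J u = 0$ — a genuine $J$-holomorphic curve — \emph{unless} one reparametrizes by a diverging amount of time, in which case one sees instead a segment of the negative gradient flow of $g$ (this is exactly the ``slow'' scale in the adiabatic limit, and it is why the limiting object is a \emph{Morse-Bott broken Floer trajectory} with interspersed maps $\gamma^i$ satisfying $-\nabla g \circ \gamma^i = (\gamma^i)'$ rather than simply a nodal holomorphic curve). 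I would carry this out by the now-standard splitting-off analysis: partition the domain cylinders into regions of ``fast'' behavior (bounded energy, giving $J$-holomorphic components $u^{k+1},\dots,u^m$ after removal of singularities, or Floer components near $\partial M$ where $H_\infty$ is still nondegenerate) and ``slow'' behavior (long necks of small energy along which the curve shadows a $g$-gradient trajectory, giving the $\gamma^i$). The crucial input that makes the two ends match up is that $g$ is $C^2$-small and Morse--Smale with respect to $J$, so the moduli of gradient trajectories are cut out transversally and the asymptotic orbits $x^i_\pm$ of the Floer/holomorphic pieces are exactly the critical points at which the $\gamma^i$ begin and end; the evaluation conditions $\underline{ev}(\gamma^i)=x^i_+$, $\overline{ev}(\gamma^i)=x^{i+1}_-$ are then automatic. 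Finally, the ordering in the tuple — all the $\gamma^i$ (the ``pearls'') sitting at the negative end, followed by the pure holomorphic/Floer pieces $u^{k+1},\dots,u^m$ — reflects that once the trajectory leaves the region $\{H_\nu = \varepsilon_\nu g\}$ and enters $\{r \ge a-\delta\}$ where $H_\infty$ is a fixed nondegenerate radial Hamiltonian, no further gradient-flow degeneration can occur; this monotonicity in $r$ along Floer trajectories is exactly \cref{lem:TopologicalFiltration}, which I would invoke to rule out $\gamma$-segments appearing after a piece whose ends have $r > a-\delta$. Assembling these pieces, relabeling the shifts $s^i_\nu$, and reading off $\sum_i E(u^i)=E$ from the energy identity completes the proof.
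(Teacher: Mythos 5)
Your overall skeleton agrees with the paper's: confine the curves by the maximum principle, run the classical Floer breaking argument in the collar region where $H_\infty$ is nondegenerate and its orbits are isolated (the paper does this by an explicit Audin--Damian-style construction of shift sequences), and isolate the new phenomenon in the region $M_{a-\delta}$ where $H_\nu=\varepsilon_\nu g\to 0$. The problem is that the part you yourself identify as ``the heart of the argument'' is where your proposal both hand-waves and misdescribes the mechanism. First, the claim that the ends match up because $g$ is Morse--Smale, with ``the asymptotic orbits $x^i_\pm$ of the Floer/holomorphic pieces \ldots exactly the critical points at which the $\gamma^i$ begin and end,'' is wrong: on $M_{a-\delta}$ the limit Hamiltonian vanishes identically, so \emph{every} point is a constant $1$-periodic orbit, and in the limit object the $\gamma^i$ are finite-length gradient \emph{fragments} whose endpoints $\underline{ev}(\gamma^i)=x^i_+$, $\overline{ev}(\gamma^i)=x^{i+1}_-$ are arbitrary points of $M_{a-\delta}$; only $\gamma^0$ has a semi-infinite end converging to the critical point $x_-$. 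Transversality of gradient moduli spaces plays no role in the compactness statement at all --- the matching of ends is produced by the choice of shifts and Gromov--Floer convergence, not by the Morse--Smale condition.

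Second, the genuinely new analytic step --- that a stretch of the cylinder whose energy tends to zero but whose $s$-length tends to infinity shadows, after rescaling time by $\varepsilon_\nu$, a possibly broken negative gradient fragment of $g$ of length $\lim \varepsilon_\nu\cdot(\text{length of the stretch})$ --- is only asserted as ``the now-standard splitting-off analysis.'' The paper makes this precise: it constructs three families of shifts (down to the last orbit with $r\geq a-\delta+\varepsilon$, then to a trajectory trapped in $M_{a-\delta}$ via \cref{lem:TopologicalFiltration}, then to constant limits on the spheres $\partial B_{1/n}(x_-^\ell)$), verifies the intermediate energies vanish, and then invokes Oh's thick--thin/adiabatic result (\cite{Oh_SpectralInvariants}, statement in \cite{OhZue_ThickThin}) to produce the gradient fragment, followed by a diagonalization in $n$ and an induction. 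The cascade references you lean on (\cite{BourgeoisOancea_Autonomous}, \cite{DiogoLisi_MorseBottSplitSymplecticHomology}) treat autonomous Hamiltonians with Morse--Bott families of nonconstant orbits and do not apply off the shelf to this ``Hamiltonian turned off on an entire domain'' degeneration; to close the gap you either need to carry out the slow/fast neck analysis yourself or cite a result of the Oh type that does it. Your use of \cref{lem:TopologicalFiltration} to explain why no gradient pieces occur after the trajectory enters the region $r\geq a-\delta$ is, on the other hand, exactly how the paper argues.
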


To drop the assumption that $u_\nu$ has a single irreducible component and still obtain a Morse-Bott broken Floer trajectory, one simply inducts, keeping track of bubbles.  The proof of \cref{prop:ConvergenceToPearls} is long and slightly tedious.  We give it as a sequence of claims.  Now we fix notation.

\begin{notn}
Fix a background metric on $M$ with injectivity radius less than $1$.  Let $dist$ denote its associated distance function.  Let
\[ B_c(x) = \{ y \in M \mid dist(x,y) < c \} \]
denote the open metric ball of radius $c$ about a $1$-periodic orbit $x$ of $H_\nu$.  Without loss of generality, we may assume that
\begin{itemize}
	\item the $B_1(x)$ are pairwise disjoint among $1$-periodic orbits of $H_\nu$, and
	\item if $r(x) \geq a- \delta +\varepsilon$, then $r(B_1(x)) \geq a - \delta +\varepsilon$.
\end{itemize}
\end{notn}

We construct the Morse-Bott broken Floer trajectory inductively.  We will start at the orbit $x_+$ and construct our way down to the orbit $x_-$.

\begin{lem}\label{lem:ConvergenceToPearls1}
If $r(x_+) \geq a - \delta+\varepsilon$, then there exists sequences of shifts $s_\nu^i$ for $1 \leq i \leq \ell$ (for some $\ell$) such that (after extracting a subsequence) $u_\nu(\cdot+ s_\nu^i,\cdot)$ converges in the Gromov topology to an element $u^i$ of $\overline{\sM}((H_\infty,J),x_-^i,x_+^i)$, where $x_-^i$ and $x_+^i$ are $1$-periodic orbits of $H_\infty$ that satisfy:
\begin{enumerate}
	\item $x_+^1 = x_+$,
	\item $x_-^{i} = x_+^{i+1}$ for $1 \leq i \leq \ell-1$,
	\item $r(x_+^i) \geq a - \delta+\varepsilon$ for $1 \leq i \leq \ell$,
	\item $r(x_-^i) \geq a - \delta+\varepsilon$ for $1 \leq i \leq \ell-1$, and
	\item $r(x_-^{\ell}) \leq a - \delta$.
\end{enumerate}
\end{lem}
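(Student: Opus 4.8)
The statement \cref{lem:ConvergenceToPearls1} is the first step in the inductive construction of the Morse-Bott broken Floer trajectory; it handles the ``collar'' portion, where $H_\infty$ is genuinely non-degenerate (being $(a-\delta+\varepsilon)$-radially admissible after the perturbations of \cref{rem:MorseBottBreakings}), so that no cascades occur and one gets honest broken Floer trajectories. The approach is the standard Gromov--Floer compactness argument combined with the integrated maximum principle of \cref{prop:MaximumPrincipleForConvexDomains}, run iteratively along the cylinder. First I would note that, since $H_\nu = H_{\nu+1} = H_\infty$ on the region $\{r \geq a-\delta+\varepsilon\}$, on that region we are doing honest Floer theory for the fixed non-degenerate pair $(H_\infty, J)$; the energies $E(u_\nu) \leq E$ are uniformly bounded by hypothesis, and \cref{lem:ModuliWellDefined} (applied to the convex domain $M$) ensures that all the $u_\nu$ stay in a fixed compact set away from $\partial M$. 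Standard elliptic bootstrapping and the monotonicity lemma for $J$-holomorphic curves then give $C^\infty_{loc}$ convergence after bubbling, so that we may extract a broken Floer trajectory for $(H_\infty, J)$ in the usual way.

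\textbf{The iteration.} Starting from the positive end $x_+^1 := x_+$, which by hypothesis lies in $\{r \geq a-\delta+\varepsilon\}$: near the positive puncture the restriction of $u_\nu$ is asymptotic to $x_+$; by Gromov--Floer compactness there is a shift $s_\nu^1$ and a subsequence so that $u_\nu(\cdot + s_\nu^1, \cdot)$ converges to a nonconstant Floer trajectory $u^1 \in \overline{\sM}((H_\infty,J), x_-^1, x_+^1)$. Since $u_\nu$ has a single irreducible component, the bubbling that occurs in the limit only attaches sphere bubbles to this main cylinder, which is why the limit is still an element of $\overline{\sM}$. Now I distinguish: if $r(x_-^1) \leq a-\delta$, set $\ell = 1$ and stop. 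Otherwise $r(x_-^1) \geq a-\delta+\varepsilon$ (there are no orbits with $a-\delta < r < a-\delta+\varepsilon$ because on that collar annulus $H_\infty$ is radial of slope smaller than any Reeb period, by the choice of $\varepsilon$ in \cref{con:RadialCofinal}; alternatively the relevant orbits all sit either in $M_{a-\delta}$ where $H_\infty = 0$ -- or rather where the limiting Hamiltonian is smooth -- or in the collar proper). Then $u_\nu$ still has energy left over ``below'' $x_-^1$, and I repeat the extraction with a new shift $s_\nu^2$ so that $u_\nu(\cdot + s_\nu^2,\cdot)$ converges to $u^2 \in \overline{\sM}((H_\infty,J), x_-^2, x_+^2)$ with $x_+^2 = x_-^1$. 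Because the total energy is bounded by $E$ and each $u^i$ is nonconstant, hence has energy bounded below by a uniform positive constant (the minimal energy of a Floer trajectory or holomorphic sphere for $(H_\infty, J)$, which is positive by \cref{lem:EnergyRelation} and the asymptotic convergence rate estimates), the process terminates after finitely many steps, say $\ell$ of them, at which point $r(x_-^\ell) \leq a-\delta$. The conditions (i)--(v) then hold by construction.

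\textbf{The main obstacle.} The delicate point -- and the reason the integrated maximum principle is needed -- is to guarantee that while the sequence of radial coordinates $r(x_+^i)$ is nonincreasing as $i$ grows, it never ``jumps back up,'' i.e.\ that once a Floer trajectory $u^i$ connects down past a given level, subsequent pieces stay below, and in particular that the whole extraction terminates with $r(x_-^\ell)\le a-\delta$ rather than oscillating. This is exactly \cref{lem:TopologicalFiltration}: since the $H_\nu$ (and $H_\infty$) are $(a-\delta+\varepsilon)$-radially admissible, $\overline{\sM}(\sigma,x_0,x_\ell)$ is empty when $r(x_\ell) < r(x_0)$, so $r(x_-^i) \leq r(x_+^i)$ for each piece. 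One also needs that each $u_\nu(\cdot + s_\nu^i, \cdot)$ with $r(x_+^i) \geq a-\delta+\varepsilon$ has its image confined to $\{r \geq a-\delta\}$ -- or at least that the portion asymptotic to orbits in the collar does not escape uncontrollably into the interior -- which is again \cref{prop:MaximumPrincipleForConvexDomains} applied on the collar where $H_\nu$ is radial with $\partial_r\partial_r h \geq 0$. Finally, the bookkeeping that the shifts $s_\nu^i$ can be chosen so that the convergences are compatible (the usual ``no energy lost in the necks'' argument, giving $\sum_i E(u^i) \le E$ with equality deferred to the end of the full proof of \cref{prop:ConvergenceToPearls}) is routine once the a priori $C^0$ bound and the monotonicity lemma are in hand. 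So the proof is: confine via \cref{lem:ModuliWellDefined} and \cref{prop:MaximumPrincipleForConvexDomains}; extract iteratively via Gromov--Floer compactness for the fixed pair $(H_\infty,J)$ on the region $\{r \geq a-\delta+\varepsilon\}$ together with its smooth extension inward; use \cref{lem:TopologicalFiltration} to force the radial coordinate down monotonically; and use the uniform energy lower bound on nonconstant pieces to terminate in finitely many steps.
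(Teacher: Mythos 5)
Your proposal is correct and follows essentially the same route as the paper: the classical iterated-shift Gromov--Floer compactness argument of Floer (as in Audin--Damian), extracting one translated limit at a time starting from $x_+$, using the dichotomy that orbits of $H_\infty$ lie either in $\{r\geq a-\delta+\varepsilon\}$ or in $M_{a-\delta}$, and terminating by the uniform energy bound together with energy quantization for the non-constant pieces. The only differences are cosmetic: the paper pins down the shifts $s_\nu^i$ explicitly as last exit times of unit balls around the orbits (this explicit choice, and the resulting divergence $s_\nu^i-s_\nu^{i+1}\to\infty$, is what it reuses in the subsequent lemmas), and it does not actually need \cref{prop:MaximumPrincipleForConvexDomains} or \cref{lem:TopologicalFiltration} in this step, since conditions (iii)--(v) follow automatically from the stopping rule of the iteration.
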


The proof of \cref{lem:ConvergenceToPearls1} follows the classical proof of Floer's original global compactness result \cite[Proposition 3b]{Floer_FixedPoints}.  We closely follow the proof in \cite[Theorem 9.1.7]{AudinDamian_MorseTheoryAndFloerHomology} in order to fix notation and establish some properties for our sequences of shifts, both of which will be used later.

\begin{proof}
Define
\[ s_\nu^1 \coloneqq \sup \{ s \mid u_\nu(s, \cdot) \in M \smallsetminus B_1(x_+) \}. \]
Consider $u_\nu(\cdot + s_\nu^1,\cdot)$.  After extracting a subsequence, it converges in the Gromov topology to an element $u^1$ of $\overline{\sM}((H_\infty,J),x_-^1,x_+^1)$ by Floer's original compactness theorem \cite[Proposition 3c]{Floer_FixedPoints}.  By construction,
\[u_\nu(s+s_\nu^1, \cdot) \cap B_1(x_+) \neq \varnothing\]
for all $s > 0$.  Also, $u_\nu(s_\nu^1,t) \in \partial B_1(x_+)$ for some $t$.  So $u^1$ must satisfy:
\begin{itemize}
	\item $u^1(s,\cdot) \cap B_1(x_+) \neq \varnothing$ for $s>0$, and
	\item $u^1(0,\cdot) \cap \partial B_1(x_+) \neq \varnothing$.
\end{itemize}
Since the $1$-periodic orbits of $H_\infty$ are isolated for $r \geq a - \delta + \varepsilon$,
\[ x_+^1 = \lim_{s \to +\infty} u^1(s,t) = x_+.\]
If $r(x_-^1) \leq r - \delta$, then we are done.  If not, then $r(x_-^1) \geq a - \delta + \varepsilon$, and we iterate the following argument below until we do obtain a $1$-periodic orbit $x_-^\ell$ with $r(x_-^\ell) \leq a - \delta$.

Since $\lim_{s \to -\infty} u^1(s,t) = x_-^1$, there exists $s^-$ so that $u^1(s,\cdot) \in B_1(x_-^1)$ for all $s \leq s^-$.  Since $u_\nu(s^- +s_\nu^1,\cdot)$ converges to $u^1(s^-,\cdot)$, we have that
\[ u_\nu(s^-+s^1_\nu,\cdot) \in B_1(x_-^1),\]
for $\nu \gg 0$.  Since $x_-^1 \neq x_-$, $u_\nu$ must exit $B_1(x_-^1)$ for $s\ll s^-+s_\nu^1$.  So define
\[ s_\nu^2 \coloneqq \sup \{ s \leq s^- + s_\nu^1 \mid u_\nu(s,\cdot) \in M \smallsetminus B_1(x_-^1)\}. \]
Consider $u_\nu(\cdot + s_\nu^2, \cdot)$.  After extracting a subsequence, this sequence converges in the Gromov topology to an element $u^2$ of $\overline{\sM}((H_\infty,J),x_-^2,x_+^2)$.  We claim that $x_+^2 = x_-^1$, and $x_-^2 \neq x_-^1$.

We first observe that $s_\nu^1 - s_\nu^2 \to + \infty$.  Indeed, suppose by way of contradiction that $s_\nu^1 - s_\nu^2 \leq C$ for some constant $C$.  Then $[s_\nu^2 - s_\nu^1,s^-] \subset [-C,s^-]$.  Since $u_\nu(\cdot + s_\nu^1,\cdot) \to u^1$ uniformly on all compact subsets modulo bubbling, it does so on $[-C,s^-]\times S^1$.  By definition of $s^-$, $u^1([-C,s^-],\cdot) \subset B_1(x_-^1)$.  So for $\nu \gg 0$,
\[ u_\nu( [s_\nu^2,s^-+s_\nu^1], \cdot) \subset u_\nu([-C,s^-] + s_\nu^1,\cdot)  \subset B_1(x_-^1).   \]
In particular, $u(s_\nu^2,\cdot) \subset B_1(x_-^1)$, which is a contradiction since $u_\nu(s_\nu^2,\cdot) \cap \partial B_{1}(x_-^1) \neq \varnothing$.

Now for fixed $s>0$, there exists $\nu \gg 0$ such that
\[ s_\nu^2 \leq s_\nu^2+s \leq s^-+s_\nu^1.\]
So for this fixed $s>0$ and $\nu\gg0$,
\[ u_\nu(s_\nu^2+s,\cdot) \cap B_1(x_-^1) \neq \varnothing. \]
So $u^2(s,\cdot) \cap B_1(x_-^1) \neq \varnothing$ for all $s>0$.  Since $x_-^1$ is an isolated orbit,
\[ x_+^2 = \lim_{s \to +\infty} u^2(s,t) = x_-^1. \]
Also, $u^2(0,\cdot) \cap \partial B_1(x_+^2) \neq \varnothing$.  So $u^2$ must exit this ball, and, consequently, $x_-^2 \neq x_+^2$.

We iterate this argument to obtain shifts $s_\nu^i$ for $1 \leq i \leq \ell$ and Floer trajectories $u^i \in \overline{\sM}((H_\infty,J),x_-^i,x_+^i)$ that satisfy the conclusions of the lemma.
\end{proof}

Given \cref{lem:ConvergenceToPearls1}, either $x_-^\ell = x_-$, or $r(x_-^\ell) \leq a - \delta$.  In the former case, we are done.  So we assume that $r(x_-^\ell) \leq a - \delta$ and continue our argument.  Now our argument starts to differ from the standard argument presented in \cite{AudinDamian_MorseTheoryAndFloerHomology}.

\begin{lem}\label{lem:ConvergenceToPearls2}
There exists a sequence of shifts $s_\nu^{\ell+1}$ such that (after extracting a subsequence) $u_\nu(\cdot+ s_\nu^{\ell+1},\cdot)$ convergences in the Gromov topology to an element $u^{\ell+1}$ of $\overline{\sM}((H_\infty,J),x_-^{\ell+1},x_+^{\ell+1})$, where $x_-^{\ell+1}$ and $x_+^{\ell+1}$ are $1$-periodic orbits of $H_\infty$ with
\begin{enumerate}
	\item $s_\nu^{\ell} - s_\nu^{\ell+1} \to +\infty$,
	\item $x_+^{\ell+1} \in \overline{B_1(x_-^{\ell})}$, and
	\item $r(u^{\ell+1}) \leq a - \delta$.
\end{enumerate}
\end{lem}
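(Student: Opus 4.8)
\textbf{Proof proposal for \cref{lem:ConvergenceToPearls2}.}

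The plan is to mimic the inductive ``sup-of-exit-times'' bookkeeping from the proof of \cref{lem:ConvergenceToPearls1}, but now working in the region $M_{a-\delta}$, where the Hamiltonian $H_\infty$ restricts to the fixed Morse function $g$ rescaled to zero. Since $x_-^\ell$ satisfies $r(x_-^\ell)\le a-\delta$, the limiting Floer trajectory $u^\ell$ from \cref{lem:ConvergenceToPearls1} has its negative end in the interior region. The idea is that as $\nu\to\infty$ the sequence $u_\nu$, after the shift $s_\nu^\ell$, enters the ball $B_1(x_-^\ell)$ and then, because the orbit $x_-^\ell$ is (after the small perturbation of \cref{rem:MorseBottBreakings}) an isolated nondegenerate orbit of $H_\infty$ lying over a critical point of $g$, the curve $u_\nu$ must eventually leave $B_1(x_-^\ell)$ further down (since $x_-^\ell\ne x_-$, which is what we are assuming). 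So first I would fix $s^-$ with $u^\ell(s,\cdot)\subset B_1(x_-^\ell)$ for all $s\le s^-$, use Gromov convergence of $u_\nu(\cdot+s_\nu^\ell,\cdot)\to u^\ell$ to get $u_\nu(s^-+s_\nu^\ell,\cdot)\subset B_1(x_-^\ell)$ for $\nu\gg 0$, and then set
\[ s_\nu^{\ell+1}\coloneqq \sup\{ s\le s^- + s_\nu^\ell \mid u_\nu(s,\cdot)\in M\smallsetminus B_1(x_-^\ell)\}. \]
Exactly as in \cref{lem:ConvergenceToPearls1} one checks $s_\nu^\ell - s_\nu^{\ell+1}\to +\infty$ (otherwise the interval $[s_\nu^{\ell+1}-s_\nu^\ell, s^-]$ would be bounded, forcing $u_\nu(s_\nu^{\ell+1},\cdot)\subset B_1(x_-^\ell)$, contradicting that $u_\nu(s_\nu^{\ell+1},\cdot)$ meets $\partial B_1(x_-^\ell)$). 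This gives (i).

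Next I would invoke Floer's global compactness (as in \cite[Proposition 3c]{Floer_FixedPoints} / \cite[Theorem 9.1.7]{AudinDamian_MorseTheoryAndFloerHomology}) applied to the sequence $u_\nu(\cdot+s_\nu^{\ell+1},\cdot)$, whose energies are uniformly bounded by $E$, to extract a subsequence converging in the Gromov topology to a Floer trajectory $u^{\ell+1}$ of $(H_\infty,J)$ with asymptotics $x_-^{\ell+1}$, $x_+^{\ell+1}$ that are $1$-periodic orbits of $H_\infty$. (One needs here that the relevant orbits are isolated/nondegenerate, which is guaranteed in the collar by radial admissibility and in the interior by the fact that $g$ is Morse-Smale for $J$ with nondegenerate critical points, after the standard perturbation — but note $H_\infty$ itself restricted to $M_{a-\delta}$ is just $g$ which is already nondegenerate, since $\varepsilon_\nu\to 0$ does not appear in $H_\infty$; here I would be careful about whether $H_\infty\equiv 0$ on $M_{a-\delta}$ or $H_\infty = g$ — in the normalization of \cref{prop:ConvergenceToPearls} we take $H_\infty$ to be the pointwise limit, which on $M_{a-\delta}$ is $0$, so in fact the limiting objects in the interior are $J$-holomorphic curves with constant asymptotics, i.e. points, and the ``Floer trajectory'' $u^{\ell+1}$ is really a holomorphic sphere/cylinder; I would phrase the statement so that $u^{\ell+1}\in\overline{\sM}((H_\infty,J),x_-^{\ell+1},x_+^{\ell+1})$ still makes sense with $H_\infty=0$ there and constant ends.) For (ii): by construction $u_\nu(s_\nu^{\ell+1}+s,\cdot)$ meets $B_1(x_-^\ell)$ for every $s>0$ and all large $\nu$ (since $s_\nu^{\ell+1}+s\le s^-+s_\nu^\ell$ for $\nu$ large), so the limit $u^{\ell+1}(s,\cdot)$ meets $\overline{B_1(x_-^\ell)}$ for all $s>0$; since the positive asymptotic $x_+^{\ell+1}$ is the limit $\lim_{s\to+\infty} u^{\ell+1}(s,t)$ and the relevant orbits are isolated, this forces $x_+^{\ell+1}\in\overline{B_1(x_-^\ell)}$. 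For (iii): by the integrated maximum principle \cref{prop:MaximumPrincipleForConvexDomains} (or rather its version for $h_s\equiv 0$ applied in the collar $r\ge a-\delta+\varepsilon$, together with \cref{lem:TopologicalFiltration} / \cref{lem:ModuliWellDefined}), any Floer trajectory limiting from curves whose relevant portion stays in the region $r\le a-\delta$ must itself have image in $M_{a-\delta}$; more directly, since $x_-^\ell$ has $r$-value $\le a-\delta$ and the ball $B_1(x_-^\ell)$ was chosen so that balls around interior orbits stay in the interior, the portion of $u_\nu$ between $s_\nu^{\ell+1}$ and $s^-+s_\nu^\ell$ stays in $M_{a-\delta}$, and \cref{prop:MaximumPrincipleForConvexDomains} (or the local-to-global argument in \cref{lem:ModuliWellDefined}) forces $r\circ u^{\ell+1}\le a-\delta$ everywhere on the limit, giving (iii).

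The main obstacle I anticipate is (iii), i.e. ensuring the limiting curve $u^{\ell+1}$ does not escape back up into the collar region $r > a-\delta$: a priori the curve $u_\nu$ could dip into $M_{a-\delta}$, come back out, and the exit-time bookkeeping only controls the endpoints, not the whole trajectory. The fix is precisely the integrated maximum principle: \cref{prop:MaximumPrincipleForConvexDomains} applied with $h_s\equiv 0$ (valid since $H_\infty$ is radial — in fact constant — near $\partial M$ and certainly satisfies $\partial_s\partial_r h_s\le 0$ and $\partial_r\partial_r h_s\ge 0$ there) shows that a Floer trajectory of $(H_\infty,J)$ with both ends at $r$-value $c_\pm = r(x_-^\ell)$ (or at least $\le a-\delta$) has $r\circ u^{\ell+1}\le a-\delta < 1$, so it cannot re-enter the genuine collar. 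One must also rule out that the sequence $u_\nu(\cdot+s_\nu^{\ell+1},\cdot)$ has $s_\nu^{\ell+1}$ tending to $\pm\infty$ in a way that degenerates it to a constant; but the construction $u_\nu(s_\nu^{\ell+1},\cdot)\in\partial B_1(x_-^\ell)$ prevents this, exactly as in the proof of \cref{lem:ConvergenceToPearls1}. I would also remark that energy is non-increasing under taking these Gromov limits and that no energy is lost to the ``neck'' between $x_+^{\ell+1}$ and $x_-^\ell$ because $x_+^{\ell+1}$ will in fact be identified with (a point on the negative gradient trajectory emanating from) $x_-^\ell$ in the subsequent steps of \cref{prop:ConvergenceToPearls}; the precise energy accounting $\sum E(u^i) = E$ is deferred to the global statement and does not need to be established here.
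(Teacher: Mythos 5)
Your proposal follows essentially the same route as the paper's proof: the same exit-time shifts $s_\nu^{\ell+1}=\sup\{s\le s^-+s_\nu^\ell \mid u_\nu(s,\cdot)\subset M\smallsetminus B_1(x_-^\ell)\}$, the same argument that $s_\nu^\ell-s_\nu^{\ell+1}\to+\infty$, Gromov--Floer compactness for the shifted sequence, and the integrated maximum principle / topological filtration to trap $u^{\ell+1}$ in $\{r\le a-\delta\}$ once its positive end lies there. One caveat: your passing claim that $x_-^\ell$ is an isolated nondegenerate orbit of $H_\infty$ is false (on $M_{a-\delta}$ the limiting Hamiltonian vanishes, so its orbits there are non-isolated constant orbits --- which is precisely why the lemma only asserts $x_+^{\ell+1}\in\overline{B_1(x_-^\ell)}$ rather than $x_+^{\ell+1}=x_-^\ell$), but this claim is not actually needed at the two places you invoke isolation, since the exit of $u_\nu$ from the ball follows from $x_-\neq x_-^\ell$ alone and the containment in $\overline{B_1(x_-^\ell)}$ follows directly from the construction of the shifts.
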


\begin{proof}
As in \cref{lem:ConvergenceToPearls1}, since $\lim_{s \to -\infty} u^{\ell}(s,t) = x_-^\ell$, there exists $s_n^-$ such that $u^\ell(s,t) \in B_{1/n}(x_-^\ell)$ for all $s \leq s_n^-$.  Since $u_\nu(s_n^-+s_\nu^\ell,t)$ converges to $u^\ell(s_n^-,t)$,
\[ u_\nu(s_n^-+s_\nu^\ell,t) \in B_{1/n}(x_-^\ell)\]
for $\nu\gg0$.  Since $\lim_{s \to -\infty} u_\nu(s,t) = x_- \neq x_-^\ell$, $u_\nu$ must exit $B_{1/n}(x_-^\ell)$ for $s \ll  s_n^-+s_\nu^\ell$.  So define
\begin{equation}\label{eqn:snun}
s_{\nu,n}^{\ell+1} = \sup\{s \leq s^-+s_\nu^\ell \mid u_\nu(s,\cdot) \in B_{1/1}(x_-^\ell)\}
\end{equation}
and set $s_\nu^{\ell+1} \coloneqq s_{\nu,1}^{\ell+1}$.  Consider the sequence of Floer trajectories $u_\nu(\cdot+s_\nu^{\ell+1},\cdot)$.  After extracting a subsequence, this sequence converges in the Gromov topology to an element $u^{\ell+1}$ of $\overline{\sM}((H_\infty,J),x_-^{\ell+1},x_+^{\ell+1})$.  To wrap up the proof, we make remarks.
\begin{enumerate}
	\item As in \cref{lem:ConvergenceToPearls1}, $s_\nu^\ell - s_\nu^{\ell+1} \to +\infty$.
	\item As in \cref{lem:ConvergenceToPearls1}, $u^{\ell+1}(s) \in \overline{B_1(x_-^\ell)}$ for $s \geq 0$.  In particular, $x_+^\ell \in \overline{B_1(x_-^\ell)}$.  However, since the orbits of $H_\infty$ are non-isolated in $r \leq a -\delta$, we can not conclude that $x_+^{\ell+1}$ is $x_-^\ell$.
	\item A priori, $u^{\ell+1}$ could be constant.  However, if it is constant, then $x_-^{\ell+1} \neq x_-^\ell$ because $u^{\ell+1}(0,\cdot)$ meets $\partial B_1(x_-^\ell)$.
	\item Since the positive end of $u^{\ell+1}$ is contained in $\{r \leq a - \delta\}$, \cref{lem:TopologicalFiltration} implies that all of $u^{\ell+1}$ is contained in $\{r \leq a - \delta\}$, as desired.
\end{enumerate}
\end{proof}

We now construct a possibly broken negative gradient fragment of $g$ that starts at $x_+^{\ell+1}$ and ends at $x_-^\ell$.

\begin{lem}\label{lem:ConvergenceToPearls3}
There exists a sequence of shifts $s_\nu^{top}$ such that (after extracting a subsequence) $u_\nu(\cdot+ s_\nu^{\ell+1},\cdot)$ converges in the Gromov topology to the constant curve at $x_+^{\ell+1}$.
\end{lem}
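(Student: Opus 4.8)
The plan is to zoom in on the positive end of the curve $u^{\ell+1}$ produced by \cref{lem:ConvergenceToPearls2}. By that lemma $u^{\ell+1}$ lies entirely in $\{r\le a-\delta\}$, where $H_\nu=\varepsilon_\nu g$ with $\varepsilon_\nu\to0$ and hence $H_\infty\equiv0$; in particular the asymptote $x_+^{\ell+1}$ is a constant orbit, so the ``constant curve at $x_+^{\ell+1}$'' is the genuine constant map at the point $x_+^{\ell+1}\in M$. First I would use $\lim_{s\to+\infty}u^{\ell+1}(s,\cdot)=x_+^{\ell+1}$ together with the decay of $E_{geo}$ of $u^{\ell+1}$ over its positive end to fix levels $\sigma_n\uparrow\infty$ (spaced by more than $n$, say) with $u^{\ell+1}([\sigma_n,\infty)\times S^1)\subset B_{1/(2n)}(x_+^{\ell+1})$ and $E_{geo}$ of $u^{\ell+1}$ over $[\sigma_n,\infty)\times S^1$ less than $1/n$; for $n$ large this also means $u^{\ell+1}$ has no bubble components over $[\sigma_n,\infty)$. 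Gromov-Floer convergence of $u_\nu(\cdot+s_\nu^{\ell+1},\cdot)$ to $u^{\ell+1}$, with no energy loss on fixed $s$-windows, then produces $N_n$ such that, for all $\nu\ge N_n$, one has $u_\nu(\sigma_n+s_\nu^{\ell+1},\cdot)\subset B_{1/n}(x_+^{\ell+1})$ and $E_{geo}$ of $u_\nu$ over $[\sigma_n,\sigma_n+n]+s_\nu^{\ell+1}$ is less than $2/n$.

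The crucial step is to promote the pointwise bound at $s=\sigma_n+s_\nu^{\ell+1}$ to the statement that $u_\nu(s,\cdot)\subset B_{1/n}(x_+^{\ell+1})$ for all $s\in[\sigma_n,\sigma_n+n]+s_\nu^{\ell+1}$ once $\nu$ is large. On the region where $H_\nu=\varepsilon_\nu g$, Floer's equation reads $\partial_s u_\nu+J\partial_t u_\nu=\varepsilon_\nu\,JX_g$, so $u_\nu$ is $J$-holomorphic up to an error of size $O(\varepsilon_\nu)$, and in the rescaled variable $\varepsilon_\nu s$ it is, to leading order, a negative gradient trajectory of $g$ (the standard adiabatic/``turning off the Hamiltonian'' picture); consequently, away from bubbling, its $s$-displacement is $O(\varepsilon_\nu)$ per unit $s$-length. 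I would then argue by continuity from $s=\sigma_n+s_\nu^{\ell+1}$: so long as $u_\nu$ stays inside $B_{1/n}(x_+^{\ell+1})$ the energy bound $<2/n$ rules out (for $n$ large, by energy quantization) any bubbling on that sub-interval, so there $u_\nu$ is an honest almost-gradient trajectory and hence moves a distance at most $O(n\varepsilon_\nu)<1/(2n)$ once $\nu$ is large, so it cannot reach $\partial B_{1/n}(x_+^{\ell+1})$; this closes the continuity argument and yields the asserted confinement. The collar plays no role, since \cref{lem:TopologicalFiltration} already confines $u^{\ell+1}$, and therefore $u_\nu$ on these windows, to $\{r\le a-\delta\}$.

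With the ``plateau'' established, I would choose $n(\nu)\to\infty$ slowly enough that $\nu\ge N_{n(\nu)}$ and set $s_\nu^{top}\coloneqq\sigma_{n(\nu)}+s_\nu^{\ell+1}$. For any fixed $R>0$ and all large $\nu$, the window $[s_\nu^{top}-R,\,s_\nu^{top}+R]$ lies inside $[\sigma_{n(\nu)-1},\sigma_{n(\nu)-1}+(n(\nu)-1)]+s_\nu^{\ell+1}$ (this is why the $\sigma_n$ were spaced out), so $u_\nu$ maps it into $B_{1/(n(\nu)-1)}(x_+^{\ell+1})$, a ball collapsing onto $\{x_+^{\ell+1}\}$, with energy tending to $0$. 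Hence $u_\nu(\cdot+s_\nu^{top},\cdot)$ converges locally uniformly to the constant map at $x_+^{\ell+1}$, and the vanishing energy over the windows excludes bubbling, so this is Gromov-Floer convergence to the constant stable curve at $x_+^{\ell+1}$, as required.

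The step I expect to be the main obstacle is the second one: making rigorous that, as the Hamiltonian perturbation is turned off, the $u_\nu$ on $M_{a-\delta}$ really do behave like almost-gradient trajectories with the quantitative $O(\varepsilon_\nu)$ displacement bound, together with the ``no energy loss'' bookkeeping that prevents a bubble from escaping to $s=+\infty$ inside a plateau window. This is the analytic heart of the ``turning off the Hamiltonian'' mechanism and runs parallel to the Morse-Bott/cascade compactness arguments of \cite{DiogoLisi_MorseBottSplitSymplecticHomology,BourgeoisOancea_Autonomous}, with the simplification---noted in the text preceding the proposition---that the boundary perturbations already make $H_\infty$ non-degenerate in the collar, so no cascades near those orbits are needed.
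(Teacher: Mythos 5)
Your overall scheme (build longer and longer ``plateaus'' near the positive asymptote of $u^{\ell+1}$ and center $s_\nu^{top}$ in them) can be made to work, but the argument you give for the crucial confinement step has a genuine gap. For fixed $n$, the energy of $u_\nu$ over the window $[\sigma_n,\sigma_n+n]+s_\nu^{\ell+1}$ converges to the energy of $u^{\ell+1}$ over $[\sigma_n,\sigma_n+n]$, which is a fixed positive quantity of size roughly $1/n$ --- it does \emph{not} tend to $0$ as $\nu\to\infty$. The adiabatic (``turning off the Hamiltonian'') results you appeal to require energies going to zero along the sequence, so they give no $O(\varepsilon_\nu)$-per-unit-length drift bound here; and no such bound follows from the inputs you list: a segment of an honest $J$-holomorphic cylinder with energy $2/n$ can perfectly well leave a ball of radius $1/n$ (monotonicity only forces an energy of order $1/n^2$ for such an excursion, far below your budget), so ``small energy plus small Hamiltonian term'' does not by itself confine the curve. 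Moreover, ``energy quantization rules out bubbling'' is beside the point in your continuity argument: each $u_\nu$ is a fixed smooth solution with a single component, so there is nothing to bubble at fixed $\nu$; the danger is a large excursion, which your cited facts do not exclude.

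The good news is that the plateau you want is true for a much softer reason, which makes the adiabatic machinery unnecessary: $[\sigma_n,\sigma_n+n]$ is a \emph{fixed} compact window containing no bubble point of the limit (for $n$ large, since the limit's energy over $[\sigma_n,\infty)\times S^1$ is below the quantization threshold), so Gromov--Floer convergence gives uniform convergence of $u_\nu(\cdot+s_\nu^{\ell+1},\cdot)$ to $u^{\ell+1}$ on it, and $u^{\ell+1}([\sigma_n,\infty)\times S^1)\subset B_{1/(2n)}(x_+^{\ell+1})$ already; enlarging $N_n$ yields the confinement in $B_{1/n}(x_+^{\ell+1})$. With that repair, and after fixing a small bookkeeping slip at the end (with spacing $\sigma_n-\sigma_{n-1}>n$ the window $[s_\nu^{top}-R,s_\nu^{top}+R]$ is \emph{not} contained in the $(n(\nu)-1)$-window; simply center $s_\nu^{top}$ in its own window, e.g.\ $s_\nu^{top}=\sigma_{n(\nu)}+n(\nu)/2+s_\nu^{\ell+1}$), your diagonal argument does prove the lemma, with vanishing energy on fixed windows excluding bubbles in the limit. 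For comparison, the paper's proof avoids plateaus and energy bookkeeping altogether: it sets $s_\nu^{top}=s_\nu^{\ell+1}+s_\nu^{+}$ by diagonalizing over times $s_n^{+}$ at which $u^{\ell+1}$ has entered $B_{1/n}(x_+^{\ell+1})$, extracts a Gromov limit $u^{top}$ of the re-shifted sequence, and observes that for each fixed $s$ one eventually has $s_\nu^{\ell+1}\le s+s_\nu^{top}\le s^{-}+s_\nu^{\ell}$, so $u^{top}$ is trapped in $\overline{B_1(x_-^{\ell})}$ and hence constant, the pinning $u_\nu(s_\nu^{top},\cdot)\in B_{1/\nu}(x_+^{\ell+1})$ identifying the constant as $x_+^{\ell+1}$; the genuinely adiabatic input (Oh's thick--thin results) is reserved for \cref{lem:ConvergenceToPearls5}.
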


\begin{proof}
We will need to extract a subsequence and relabel our sequence in order to construct the $s_\nu^{top}$.  First, if $u^{\ell+1}$ is constant, then we are done and set $s_\nu^{top} = s_\nu^{\ell+1}$.  Now assume that $u^{\ell+1}$ is non-constant.  

As before, there exists constants $s_n^+>0$ such that $u^{\ell+1}(s,t) \in B_{1/n}(x_+^{\ell+1})$ for all $s \geq s_n^+$, and there exists $\nu_n$ such that if $\nu \geq \nu_n$, then
\[ u_\nu(s_\nu^{\ell+1}+s_n^+,t) \in B_{1/n}(x_+^{\ell+1}). \]
Since $s_\nu^{\ell} - s_\nu^{\ell+1} \to \infty$, we may assume that if $\nu \geq {\nu_n}$, then $s_\nu^\ell - s_\nu^{\ell+1} > n+s_n^+$.  Now diagonalize and relabel.  Set $\widetilde{u}_\nu \coloneqq u_{\nu_\nu}$ and $\widetilde{s}_\nu^{i} \coloneqq s_{\nu_\nu}^i$ for all $i$.  Notice that 
\begin{itemize}
	\item $\widetilde{u}_\nu(\cdot+\widetilde{s}_{\nu}^i, \cdot)$ converges to $u^i$,
	\item $\widetilde{u}_\nu(\widetilde{s}_\nu^{\ell+1}+s_\nu^+,\cdot) \in B_{1/\nu}(x_+^{\ell+1})$,
	\item $\widetilde{s}_\nu^\ell - (\widetilde{s}_\nu^{\ell+1}+{s}_\nu^+) \to \infty$, and
	\item ${s}_\nu^+ - \widetilde{s}_\nu^{\ell+1} \to \infty$\footnote{Indeed, if $s_\nu^+ - s_{\nu_\nu}^{\ell+1}$ is bounded, then $u^{\ell+1}$ would have to be constant, which we have assumed not to be the case. }.
\end{itemize}
Write $u_\nu$ for $\widetilde{u}_\nu$ and $s_\nu^i$ for $\widetilde{s}_\nu^i$.  We define $s_\nu^{top} \coloneqq s_\nu^{\ell+1}+s_\nu^{+}$.  Consider $u_\nu(\cdot+s_\nu^{top},\cdot)$.  After extracting a subsequence, this converges in the Gromov topology to some $u^{top}$.  For each $s$ fixed,
\[ s^-+s_\nu^\ell > s+s_\nu^{top} \geq s_\nu^{\ell+1}\]
for $\nu \gg 0$, where $s^-$ is as in \cref{lem:ConvergenceToPearls2}.  It follows that $u^{top}(s) \in B_1(x_-^\ell)$ for all $s$ and, consequently, $u^{top}$ is constant.  In particular, $u^{top}$ converges to the constant curve at $x_+^{\ell+1}$.
\end{proof}

\begin{lem}\label{lem:ConvergenceToPearls4}
For each $n$ sufficiently large, there exists a sequence of shifts $s_{\nu,n}^{\ell+1}$ such that (after extracting a subsequence) $u_\nu(\cdot+ s_{\nu,n}^{\ell+1},\cdot)$ converges in the Gromov topology to a constant curve $v^n$ at a point $x_n \in \partial B_{1/n}(x_-^{\ell})$.  Moreover, $s_{\nu,n}^{\ell+1} - s_\nu^{top} \to \infty$, and $s_\nu^\ell - s_{\nu,n}^{\ell+1} \to \infty$.
\end{lem}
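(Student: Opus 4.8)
\textbf{Proof plan for \cref{lem:ConvergenceToPearls4}.}
The plan is to iterate the construction used in \cref{lem:ConvergenceToPearls2} and \cref{lem:ConvergenceToPearls3}, but now using the balls $B_{1/n}(x_-^\ell)$ of shrinking radius, and to extract a single diagonal subsequence that works for all large $n$ simultaneously. First I would fix $n$ large enough that $B_{1/n}(x_-^\ell) \subset B_1(x_-^\ell)$ and so that $x_-^\ell$ is the only $1$-periodic orbit of $H_\infty$ meeting $\overline{B_{1/n}(x_-^\ell)}$ inside $\{r \le a-\delta\}$ (possible since the perturbation in \cref{rem:MorseBottBreakings} makes the relevant orbits isolated, or at worst we use that $x_-^\ell$ is the index-zero critical point and shrink so that the only critical point in the ball is $x_-^\ell$ itself). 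Recall the auxiliary quantities $s_{\nu,n}^{\ell+1}$ already introduced in \cref{eqn:snun}: $s_{\nu,n}^{\ell+1} = \sup\{ s \le s^- + s_\nu^\ell \mid u_\nu(s,\cdot) \in B_{1/n}(x_-^\ell)\}$; by construction $u_\nu(s_{\nu,n}^{\ell+1},\cdot)$ meets $\partial B_{1/n}(x_-^\ell)$, and $u_\nu(s,\cdot) \subset B_{1/n}(x_-^\ell)$ for $s_{\nu,n}^{\ell+1} < s \le s^- + s_\nu^\ell$ only after possibly a further sup-adjustment — so I would instead define $s_{\nu,n}^{\ell+1}$ to be the supremum over $s$ for which $u_\nu$ lies \emph{outside} $B_{1/n}(x_-^\ell)$ and is $\le$ some reference point guaranteed to land inside the ball. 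The reference point is supplied by \cref{lem:ConvergenceToPearls3}: since $u^{top}$ is the constant curve at $x_+^{\ell+1}$ and (from the proof of \cref{lem:ConvergenceToPearls2}) $x_+^{\ell+1} \in \overline{B_1(x_-^\ell)}$, after passing to the diagonal subsequence built there we have $u_\nu(s_\nu^{top},\cdot) \to x_+^{\ell+1}$; but $x_+^{\ell+1}$ need not lie in $B_{1/n}(x_-^\ell)$. To remedy this, I would use that $u^{\ell+1}$ has positive end $x_+^{\ell+1}$ and negative end $x_-^{\ell+1}$, both in $\{r \le a - \delta\}$, and that the whole necklace we are building must eventually descend toward $x_-$; the point is that for each fixed $n$ the Floer trajectory $u_\nu$, run backward from the region where it is $C^0$-close to $x_+^{\ell+1}$, must pass through $\partial B_{1/n}(x_-^\ell)$ at some later (more negative) parameter before it can escape to $x_-$, precisely because $x_-^\ell$ is an accumulation point of $\lim_{s\to-\infty}u^\ell$ and the $u_\nu(\cdot + s_\nu^\ell,\cdot)$ converge to $u^\ell$.

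Concretely, the key steps in order:
\begin{enumerate}
\item \emph{Set up the shifts.} For each large $n$, using that $u^\ell(s,\cdot) \in B_{1/n}(x_-^\ell)$ for $s \le s_n^-$ and that $u_\nu(\cdot + s_\nu^\ell,\cdot) \to u^\ell$ uniformly on compacts (modulo bubbling), get $\nu_n$ so that $u_\nu(s_n^- + s_\nu^\ell, \cdot) \in B_{1/n}(x_-^\ell)$ for $\nu \ge \nu_n$. Define
\[ s_{\nu,n}^{\ell+1} \coloneqq \sup\{ s \le s_n^- + s_\nu^\ell \mid u_\nu(s,\cdot) \notin B_{1/n}(x_-^\ell)\}, \]
which is finite since $u_\nu \to x_-$ as $s \to -\infty$ and $x_- \ne x_-^\ell$ (here we use that the background metric has injectivity radius $<1$ so the balls are honest and $x_-$ is not in $B_1(x_-^\ell)$).
\item \emph{Gromov limit.} Translate by $s_{\nu,n}^{\ell+1}$ and apply Floer's compactness \cite[Proposition 3c]{Floer_FixedPoints}: a subsequence of $u_\nu(\cdot + s_{\nu,n}^{\ell+1},\cdot)$ Gromov-converges to some $v^n$. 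By construction $v^n(0,\cdot)$ meets $\partial B_{1/n}(x_-^\ell)$, and $v^n(s,\cdot) \subset \overline{B_{1/n}(x_-^\ell)}$ for all $s \ge 0$ (same argument as in \cref{lem:ConvergenceToPearls2}, remark (2)). Since the only orbit of $H_\infty$ meeting $\overline{B_{1/n}(x_-^\ell)}$ (in the relevant region) is $x_-^\ell$ and $B_{1/n}$ has radius $< $ injectivity radius, $v^n$ must be the constant curve at some $x_n \in \partial B_{1/n}(x_-^\ell)$; indeed if $v^n$ were nonconstant it would have energy bounded below by the minimal energy of a nonconstant Floer cylinder / sphere, which cannot fit in a ball of radius $1/n$ once $n$ is large by the standard monotonicity/isoperimetric estimate. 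Hence $v^n \equiv x_n$ with $x_n \in \partial B_{1/n}(x_-^\ell)$.
\item \emph{Separation of the shifts.} Show $s_{\nu,n}^{\ell+1} - s_\nu^{top} \to \infty$ and $s_\nu^\ell - s_{\nu,n}^{\ell+1} \to \infty$ as $\nu \to \infty$ (for each fixed $n$). The second is immediate from $s_{\nu,n}^{\ell+1} \le s_n^- + s_\nu^\ell$ together with the observation that $s_{\nu,n}^{\ell+1}$ cannot stay within a bounded distance of $s_n^- + s_\nu^\ell$: if it did, then on a fixed compact $s$-interval $u_\nu(\cdot + s_\nu^\ell,\cdot)$ would (by convergence to $u^\ell$, whose tail lies in $B_{1/n}$) lie in $B_{1/n}(x_-^\ell)$, contradicting that $u_\nu(s_{\nu,n}^{\ell+1},\cdot) \in \partial B_{1/n}(x_-^\ell)$. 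For the first, argue by contradiction as in the proof of \cref{lem:ConvergenceToPearls2}: if $s_{\nu,n}^{\ell+1} - s_\nu^{top}$ were bounded along a subsequence, then since $u_\nu(\cdot + s_\nu^{top},\cdot) \to x_+^{\ell+1}$ on compacts, $u_\nu(s_{\nu,n}^{\ell+1},\cdot)$ would converge to $x_+^{\ell+1}$; but $u_\nu(s_{\nu,n}^{\ell+1},\cdot)$ lies on $\partial B_{1/n}(x_-^\ell)$, so its limit is at distance exactly $1/n$ from $x_-^\ell$, forcing $x_+^{\ell+1} \in \partial B_{1/n}(x_-^\ell)$ — which can be excluded by choosing $n$ outside the (at most countable) set of radii realizing $x_+^{\ell+1}$, or by noting $x_+^{\ell+1}$ is an orbit and the argument in step 2 puts it at $x_-^\ell$ once it is close enough, a contradiction for $n$ large.
\end{enumerate}
Finally, pass to a single diagonal subsequence of $(u_\nu)$ that simultaneously realizes the Gromov limits $v^n$ for all $n$ in a fixed countable cofinal set, so the statement holds for that subsequence and every sufficiently large $n$; relabel as $u_\nu$ in the ambient induction.

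\textbf{Main obstacle.} The delicate point is step 3, the simultaneous separation $s_{\nu,n}^{\ell+1} - s_\nu^{top} \to \infty$ and $s_\nu^\ell - s_{\nu,n}^{\ell+1} \to \infty$, and the interaction of the "inner" diagonalization over $n$ with the "outer" diagonalization over $\nu$ already performed in \cref{lem:ConvergenceToPearls3}: one must be careful that relabelling does not destroy the convergence $u_\nu(\cdot + s_\nu^i,\cdot) \to u^i$ for the earlier indices $i$. The standard fix is to do all extractions on a fixed countable exhausting family of scales/compact sets and diagonalize once at the end, which is routine but needs to be stated carefully. The second subtle point is ruling out a nonconstant bubble in step 2; this is exactly where one invokes the uniform energy bound $E$ together with the monotonicity lemma to say no nonconstant $J$-holomorphic piece of positive energy can be confined to a metric ball of radius $1/n$ for $n$ large, so the limiting curve in the ball is forced to be constant. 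Both of these are standard ingredients in Gromov--Floer compactness arguments, so I expect the proof to be bookkeeping-heavy but not conceptually hard.
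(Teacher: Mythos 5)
Your scaffolding matches the paper's: you reuse the shifts from \cref{eqn:snun}, translate and extract Gromov limits, and prove the two divergence statements by the same contradiction arguments (mirroring the proof that $s_\nu^\ell - s_\nu^{\ell+1} \to \infty$ for the second, and the convergence of $u_\nu(\cdot+s_\nu^{top},\cdot)$ to $x_+^{\ell+1}$ on compact sets for the first; your trick of simply avoiding the one radius with $1/n = \dist(x_+^{\ell+1},x_-^\ell)$ is an acceptable substitute for the paper's choice of $N$ with $\dist(x_+^{\ell+1},x_-^\ell)>1/N$). The genuine gap is in your Step 2, the constancy of $v^n$, and both of your justifications fail. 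First, the claim that $x_-^\ell$ is the only $1$-periodic orbit of $H_\infty$ meeting $\overline{B_{1/n}(x_-^\ell)}$ is false: on $M_{a-\delta}$ the limit Hamiltonian vanishes identically, so every point is a constant orbit -- the paper flags precisely this non-isolatedness in \cref{lem:ConvergenceToPearls2} -- and $x_-^\ell$ need not be a critical point of $g$; the perturbations of \cref{rem:MorseBottBreakings} concern only the $S^1$-families in the collar, not this region. Second, the monotonicity/minimal-energy argument needs the \emph{entire} image of $v^n$ to lie in a ball of radius $1/n$, but at that stage you only know $v^n(s,\cdot)\subset\overline{B_{1/n}(x_-^\ell)}$ for $s\ge 0$; nothing you say excludes a non-constant cylinder whose forward half sits in the tiny ball and which escapes for $s<0$ -- this is exactly the behaviour of $u^{\ell+1}$ relative to $B_1(x_-^\ell)$ in \cref{lem:ConvergenceToPearls2}, so it cannot be dismissed.

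The missing ingredient is confinement for $s<0$, and it forces the opposite order of argument from yours: the divergences $s_{\nu,n}^{\ell+1}-s_\nu^{top}\to\infty$ and $s_\nu^\ell-s_{\nu,n}^{\ell+1}\to\infty$ must be established \emph{before} constancy (indeed even your $s\ge 0$ confinement quietly uses the latter to guarantee $s_{\nu,n}^{\ell+1}+s\le s_n^-+s_\nu^\ell$ eventually). With them in hand, for each fixed $s<0$ and $\nu\gg0$ one has
\[ s_\nu^\ell+s_n^- \;\ge\; s_{\nu,n}^{\ell+1} \;\ge\; s_{\nu,n}^{\ell+1}+s \;\ge\; s_\nu^{\ell+1}+s_\nu^{+} \;\ge\; s_\nu^{\ell+1}, \]
the third inequality being where $s_{\nu,n}^{\ell+1}-s_\nu^{top}\to\infty$ enters; hence $s+s_{\nu,n}^{\ell+1}$ lies in the window of times where, by the definition of $s_\nu^{\ell+1}=s_{\nu,1}^{\ell+1}$, the curve $u_\nu$ is trapped in $B_1(x_-^\ell)$. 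This gives $v^n(s,\cdot)\in\overline{B_1(x_-^\ell)}$ for $s<0$ as well as $s\ge0$, and only with the whole image of $v^n$ confined in a ball of radius $1$ (below the injectivity radius) can one conclude that $v^n$ is constant, necessarily at a point of $\partial B_{1/n}(x_-^\ell)$ since $v^n(0,\cdot)$ meets that sphere. Your plan never establishes this $s<0$ confinement, so the central conclusion of the lemma is left unproven.
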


\begin{proof}
Recall the shifts $s_{\nu,n}^{\ell+1}$ from \cref{eqn:snun}.  Fix $N\gg0$ such that $dist(x_+^{\ell+1},x_-^\ell) > 1/N$.  Define $v^n$ to be the Gromov limit of the curves $u_\nu(s_{\nu,n}^{\ell+1}+\cdot, \cdot)$ for $n \geq N$.  We first show that for each $n$, $s_{\nu,n}^{\ell+1} - s_\nu^{top} \to \infty$, and $s_\nu^\ell - s_{\nu,n}^{\ell+1} \to \infty$.  
The second item follows from the same argument used to show that $s_\nu^\ell - s_\nu^{\ell+1} \to \infty$.  For the first item, suppose by way of contradiction that $0 \leq s_{\nu,n}^{\ell+1} - s_\nu^{top} \leq C$.  Since $u_\nu(\cdot+ s_\nu^{top},\cdot)$ converges to $x_+^{\ell+1}$ on all compact subsets, it does so on $[0,C]$.  So
\[v^n(0,\cdot) = \lim_\nu u_\nu(s_{\nu,n}^{\ell+1}-s_\nu^{top}+s_\nu^{top},\cdot) = x_+^{\ell+1}.\]
But $v^n(0,\cdot)$ meets the boundary of $B_{1/n}(x_-^\ell)$, a contradiction.

As before, $v^n(s,t)$ is contained in $B_{1/n}(x_-^\ell)$ for $s>0$; however, we further claim that $v^n(s)$ is contained in $B_1(x_-^\ell)$ for $s < 0$.  For fixed $s<0$ and $\nu\gg0$,
\[ s_\nu^\ell+s_n^- \geq s_{\nu,n}^{\ell+1} \geq s_{\nu,n}^{\ell+1}+s \geq s_\nu^{\ell+1} + s_\nu^+ \geq s_\nu^{\ell+1}. \]
It follows that $u_\nu(s+s_{\nu,n}^{\ell},t) \in B_1(x_-^{\ell})$.  Consequently, $v^n$ must be constant and contained in the boundary of $B_{1/n}(x_-^\ell)$.
\end{proof}

\begin{lem}\label{lem:ConvergenceToPearls5}
There exists a possibly broken negative gradient fragment of $g$ that starts at $x_+^{\ell+1}$ and ends at $x_-^\ell$ with length $\lim_n \lim_{\nu} \varepsilon_\nu \cdot (s_\nu^{top} - s_{\nu,n}^{\ell+1})$.
\end{lem}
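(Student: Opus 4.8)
The plan is to run the standard adiabatic (``$\varepsilon\to 0$'') limit that relates Floer trajectories of $\varepsilon_\nu g$ to negative gradient flow lines of $g$, applied to the long neck that appears between the shifts $s_\nu^{top}$ and $s_{\nu,n}^{\ell+1}$ produced in \cref{lem:ConvergenceToPearls3} and \cref{lem:ConvergenceToPearls4}.

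First I would fix $n$ and record the picture from the previous lemmas: for $\nu$ large one has $s_\nu^{\ell+1}<s_\nu^{top}<s_{\nu,n}^{\ell+1}<s_\nu^{\ell}$, the gap $s_{\nu,n}^{\ell+1}-s_\nu^{top}$ tends to $+\infty$, and $u_\nu(\cdot+s_\nu^{top},\cdot)$ and $u_\nu(\cdot+s_{\nu,n}^{\ell+1},\cdot)$ converge in the Gromov topology to the constant curves at $x_+^{\ell+1}$ and at $x_n\in\partial B_{1/n}(x_-^\ell)$ respectively. Arguing exactly as in the proof of \cref{lem:ConvergenceToPearls2} --- via \cref{lem:TopologicalFiltration} and \cref{prop:MaximumPrincipleForConvexDomains}, using that both ends of the neck take $r$-value arbitrarily close to $r(x_-^\ell)\le a-\delta$ --- the image $u_\nu([s_\nu^{top},s_{\nu,n}^{\ell+1}]\times S^1)$ lies in $M_{a-\delta}$ for $\nu$ large, where $H_\nu=\varepsilon_\nu g$ and Floer's equation reads $\partial_s u_\nu+J(\partial_t u_\nu-\varepsilon_\nu X_g(u_\nu))=0$.

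Next I would rescale. Passing to a subsequence, assume that $\lambda_n:=\lim_\nu\varepsilon_\nu(s_{\nu,n}^{\ell+1}-s_\nu^{top})\in[0,+\infty]$ exists; its finiteness (when it holds) comes from the $C^2$-bound on $g$ and the energy bound $E$ via the usual a priori estimates on the neck. Set $w_\nu(\sigma,t):=u_\nu\big(s_\nu^{top}+\sigma/\varepsilon_\nu,t\big)$ for $\sigma\in[0,\varepsilon_\nu(s_{\nu,n}^{\ell+1}-s_\nu^{top})]$, so that $\partial_\sigma w_\nu+\tfrac1{\varepsilon_\nu}J\,\partial_t w_\nu=J X_g(w_\nu)$. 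Since $J X_g=-\nabla g$ in the sign conventions of \cref{subsec:RadialHams}, the standard analysis of this singular limit --- uniform gradient bounds, no energy concentration away from finitely many values of $\sigma$, and $C^\infty_{loc}$-convergence of $w_\nu$ to a $t$-independent negative gradient flow line of $g$ between those values, together with the Morse-theoretic breaking near critical points of $g$ extracted by one more round of shifts --- produces, after a diagonal argument in $\nu$, a possibly broken negative gradient fragment $\gamma_n$ of $g$ starting at $x_+^{\ell+1}$, ending at $x_n$, and of parameter length $\lambda_n$. Floer bubbles, if any, appear as constant-with-bubble-tree beads and are irrelevant to the fragment; by the way $s_\nu^{\ell+1}$ and $s_{\nu,n}^{\ell+1}$ were chosen, no non-constant Floer bead survives strictly inside the neck, and when $\lambda_n=+\infty$ the fragment has a half-infinite domain limiting to a critical point of $g$, which is permitted.

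Finally I would let $n\to\infty$: the $s_{\nu,n}^{\ell+1}$ are non-increasing in $n$, so $\lambda_n$ is monotone and converges to $\lambda:=\lim_n\lim_\nu\varepsilon_\nu(s_{\nu,n}^{\ell+1}-s_\nu^{top})$, i.e.\ the absolute value of the quantity in the statement, and $x_n\to x_-^\ell$. By continuous dependence of the negative gradient flow of the Morse--Smale pair $(g,\Omega(\cdot,J\cdot))$ on its data, together with breaking at any critical points met in the limit, the $\gamma_n$ converge to a possibly broken negative gradient fragment $\gamma$ of $g$ from $x_+^{\ell+1}$ to $x_-^\ell$ of length $\lambda$, as required. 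I expect the main obstacle to be the adiabatic-limit elliptic analysis on the long thin neck: as $\varepsilon_\nu\to0$ the rescaled equation degenerates (the $\partial_t$-term carries the factor $1/\varepsilon_\nu\to\infty$), so one must combine $\varepsilon$-independent gradient and monotonicity estimates controlling energy concentration with the interaction of Floer bubbling and the reparametrization; this is essentially the content of the proof that the Floer cohomology of a $C^2$-small Morse Hamiltonian computes Morse cohomology, upgraded here to a genuine convergence statement for trajectory fragments.
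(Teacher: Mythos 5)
Your proposal follows essentially the same route as the paper: for each fixed $n$ you extract a (possibly broken) negative gradient fragment of $g$ from $x_+^{\ell+1}$ to $x_n$ out of the long neck between the shifts $s_\nu^{top}$ and $s_{\nu,n}^{\ell+1}$, with length $\lim_\nu \varepsilon_\nu\,|s_{\nu,n}^{\ell+1}-s_\nu^{top}|$, and then let $n\to\infty$, diagonalizing and using $x_n\to x_-^\ell$ to obtain the fragment ending at $x_-^\ell$. The only real difference is that where you sketch the adiabatic rescaling analysis by hand, the paper observes that the energies of the $u_\nu$ over these neck intervals tend to zero (their Gromov limits there being constant) and then cites Oh's thick--thin/adiabatic convergence result (\cite{Oh_SpectralInvariants}, see \cite[Theorem 4.3]{OhZue_ThickThin}) to produce the gradient fragment with the stated length.
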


\begin{proof}
We will prove that there exists a possibly broken negative gradient fragment of $g$ from $x_+^{\ell+1}$ to $x_n$.  As the $x_n$ converge to $x_-^\ell$, a diagonalization produces the desired trajectory from $x_+^{\ell+1}$ to $x_-^{\ell}$.

The energies of the $u_\nu(\cdot+s_{\nu,n}^{\ell+1},\cdot)$ over the intervals $[s_\nu^{top} - s_{\nu,n}^{\ell+1},0]$ converge to zero since their Gromov limits over these intervals are constant.  So by the work in \cite{Oh_SpectralInvariants} (see \cite[Theorem 4.3]{OhZue_ThickThin} for a statement), there exists a possibly broken negative gradient fragment of $g$ from $x_+^{\ell+1}$ to $x_n$ of length $\lim_{\nu} \varepsilon_\nu \cdot (s_\nu^{top} - s_{\nu,n}^{\ell+1})$.
\end{proof}

One inducts upon the arguments in \cref{lem:ConvergenceToPearls2}, \cref{lem:ConvergenceToPearls3}, \cref{lem:ConvergenceToPearls4}, and \cref{lem:ConvergenceToPearls5} to obtain the desired Morse-Bott broken Floer trajectory in \cref{prop:ConvergenceToPearls}.


\part{Appendix}


\section{Algebraic Preliminaries}\label{sec:AppendixHA}

\subsection{Chain complexes and conventions}\label{subsec:Conventions}

We fix conventions for chain complexes and mapping telescopes.

\begin{defn}\label{defn:ChainComplex}
Let $\IK$ be a unital, commutative ring of characteristic zero.  Let $\Ch(\IK)$ denote the category whose objects are $\IZ$-graded chain complexes over $\IK$ and whose morphisms are chain maps.  We use cohomological grading conventions for our chain complexes.  So the differentials increase the grading by $+1$.
\end{defn}

\begin{defn}\label{defn:Rays}
A \emph{ray of chain complexes} (a \emph{ray}) is a sequence of chain maps
\[ \sA = \left\{ \xymatrix{ A_0 \ar[r] ^{\alpha_0} & A_1 \ar[r]^{\alpha_1} & A_2 \ar[r]^{\alpha_2} & \cdots} \right\}.\]
\end{defn}

\begin{defn}\label{defn:MappingTelescope}
The \emph{mapping telescope} of a ray $\sA$
is the chain complex with terms
\[ \Tel(\sA) = \bigoplus_{i \in \IN} A_{i}[1] \oplus A_{i} \]
and differential
\begin{align*}
d(\dots, (a_i',a_i),\dots) & = (\dots,d_{A_i[1]}(a_i'),\Ione[1](a_i')+d_{A_i}(a_i)+\alpha_{i-1}[1](a_{i-1}'),\dots)
\\ & = (\dots,-d_{A_i}(a_i'), a_i'+d_{A_i}(a_i)+ \alpha_{i-1}(a_{i-1}'),\dots).
\end{align*}
\end{defn}

\begin{rem}\label{rem:PicDifferentialMappingTelescope}
Pictorially, the differential of a mapping telescope is represented by the following diagram.  Notice that all maps have degree $+1$.
\[ \xymatrix{ A_0 \ar@(ul,ur)^{d_{A_0}} & & A_1 \ar@(ul,ur)^{d_{A_1}} & & A_2 \ar@(ul,ur)^{d_{A_2}} & & \cdots \\
A_0[1] \ar[u]^{\Ione[1]} \ar[urr]^{\alpha_0[1]} \ar@(dl,dr)_{d_{A_0[1]}} & & A_1[1] \ar[u]^{\Ione[1]} \ar[urr]^{\alpha_1[1]} \ar@(dl,dr)_{d_{A_1[1]}} & & A_2[1] \ar[u]^{\Ione[1]} \ar[urr]^{\alpha_2[1]} \ar@(dl,dr)_{d_{A_2[1]}} && \cdots. \\}\]
\end{rem}

\begin{rem}
The mapping telescope of a ray gives a representative for the homotopy colimit of the associated diagram.
\end{rem}

There is a filtration
\[ F^n \Tel(\sA) = \left( \bigoplus_{i=0}^{n-1} A_i[1] \oplus A_i \right) \oplus A_n, \gap \Tel(\sA) = \colim_n F^n \Tel(\sA). \]
We have induced maps $F^n \Tel(\sA) \to A_n$,
\[ \left( \bigoplus_{i=0}^{n-1} \begin{pmatrix} 0 & \alpha_n \circ \cdots \circ \alpha_{i} \\ \end{pmatrix} \right) \oplus \Ione_{A_n}. \]
The maps $F^n \Tel(\sA) \to A_n$ are quasi-isomorphisms and fit into a strictly commutative diagram
\[ \xymatrix{  F^0 \Tel(\sA) \ar[r] \ar[d] & F^1 \Tel(\sA) \ar[r] \ar[d] & F^2 \Tel(\sA) \ar[r] \ar[d] & \cdots \\ A_0 \ar[r]^{\alpha_0} & A_1 \ar[r]^{\alpha_1} & A_2 \ar[r]^{\alpha_2} & \cdots. \\}\]
This induces a map \[ \xymatrix{ \Tel(\sA) \cong \colim_n F^n \Tel(\sA) \ar[r] & \colim_n A_n }.\]
Since colimits commute with cohomology, we can deduced the following.

\begin{lem}\label{lem:TelToColimQI}
The canonical map $\Tel(\sA) \to \colim_n A_n$ is a quasi-isomorphism.\qed
\end{lem}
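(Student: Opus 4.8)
\textbf{Proof plan for \cref{lem:TelToColimQI}.}
The plan is to exhibit the canonical map $\Tel(\sA) \to \colim_n A_n$ as a filtered colimit of the quasi-isomorphisms $F^n\Tel(\sA) \to A_n$, and then invoke the fact that filtered colimits of chain complexes are exact (so they commute with taking cohomology). First I would verify that the maps $F^n\Tel(\sA) \to A_n$ displayed just above the lemma are indeed quasi-isomorphisms: each inclusion $A_n \hookrightarrow F^n\Tel(\sA)$ as the last summand is a chain map, and the composite $A_n \hookrightarrow F^n\Tel(\sA) \to A_n$ is the identity, so it suffices to check that the inclusion is a quasi-isomorphism. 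This is the standard mapping-telescope computation: $F^n\Tel(\sA)$ is built from $A_n$ by attaching, for each $0 \le i \le n-1$, an acyclic ``mapping cylinder piece'' $A_i[1]\oplus A_i$ whose differential $d(a_i',a_i) = (-d_{A_i}a_i',\, a_i' + d_{A_i}a_i + \alpha_{i-1}a_{i-1}')$ contributes nothing to cohomology because the $\Ione[1]\colon A_i[1]\to A_i$ term makes the subquotient $A_i[1]\oplus A_i$ contractible. Concretely, one filters $F^n\Tel(\sA)$ by the sub-telescopes $F^k$ for $k\le n$ and argues by induction on $n$ using the short exact sequence $0 \to F^{n-1}\Tel(\sA) \to F^n\Tel(\sA) \to A_n[1]\oplus A_n \to 0$ together with the long exact sequence in cohomology; the connecting map is an isomorphism onto $H(A_n[1])$, forcing $H(F^{n-1}\Tel(\sA)) \xrightarrow{\sim} H(F^n\Tel(\sA))$ to be compatible with the projections to $H(A_{n-1}) \to H(A_n)$.

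Next I would assemble these into the strictly commutative ladder displayed before the lemma statement, whose vertical arrows $F^n\Tel(\sA)\to A_n$ are the quasi-isomorphisms just established and whose horizontal arrows are the structure maps of the two direct systems. Passing to the colimit over $n$ in the category of chain complexes gives the canonical map $\Tel(\sA) = \colim_n F^n\Tel(\sA) \to \colim_n A_n$. Since each vertical arrow induces an isomorphism on cohomology, and since cohomology commutes with filtered colimits of chain complexes (the colimit functor over the filtered poset $\IN$ is exact), the induced map on cohomology $H(\Tel(\sA)) = \colim_n H(F^n\Tel(\sA)) \to \colim_n H(A_n) = H(\colim_n A_n)$ is an isomorphism. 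This is exactly the assertion of the lemma.

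I do not expect any serious obstacle here; the only point requiring a little care is making sure the identification $\Tel(\sA) \cong \colim_n F^n\Tel(\sA)$ is a genuine isomorphism of chain complexes (not merely a quasi-isomorphism) so that the exactness of filtered colimits applies verbatim, and that the filtration $F^n$ is exhaustive, both of which are immediate from \cref{defn:MappingTelescope}. One should also note that nothing about $\IK$ beyond its being a commutative ring is used, so the lemma holds over $\Lambda_{\geq 0}$ as needed in the applications; in particular one does not need flatness hypotheses here, in contrast to the completed versions appearing later.
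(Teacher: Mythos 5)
Your proposal follows essentially the same route as the paper, which proves the lemma by the discussion preceding it: the projections $F^n\Tel(\sA)\to A_n$ are quasi-isomorphisms, they fit into the strictly commutative ladder, and cohomology commutes with the (exhaustive, filtered) colimit over $n$; your filling-in of the unproved quasi-isomorphism claim via the acyclic pieces $A_i[1]\oplus A_i$ (cones of identity maps) is exactly the standard argument intended there, and your remarks about $\Tel(\sA)=\colim_n F^n\Tel(\sA)$ being an honest isomorphism of complexes and about no flatness being needed are correct. One local correction, though: in your alternative inductive justification, the quotient $F^n\Tel(\sA)/F^{n-1}\Tel(\sA)$ is $A_{n-1}[1]\oplus A_n$ with differential $(a',a)\mapsto(-d_{A_{n-1}}a',\,\alpha_{n-1}(a')+d_{A_n}a)$, i.e.\ the mapping cone of $\alpha_{n-1}$, not $A_n[1]\oplus A_n$, and it is not acyclic in general; consequently the map $H(F^{n-1}\Tel(\sA))\to H(F^n\Tel(\sA))$ is (under the projection isomorphisms) just $\alpha_{n-1,*}\colon H(A_{n-1})\to H(A_n)$ and need not be an isomorphism, so the induction as you phrased it would fail. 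The correct version of your argument is the first one you gave: use the degreewise-split short exact sequence $0\to A_n\to F^n\Tel(\sA)\to \bigoplus_{i=0}^{n-1}(A_i[1]\oplus A_i)\to 0$, observe that the quotient carries a finite filtration whose associated graded pieces are the cones $\Cone(\Ione_{A_i})$, hence is acyclic, and conclude from the long exact sequence that the inclusion $A_n\hookrightarrow F^n\Tel(\sA)$, and therefore the retraction $F^n\Tel(\sA)\to A_n$, is a quasi-isomorphism.
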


Using the filtration, we describe how homotopy commutative diagrams of rays induce morphisms of the associated mapping telescopes.  Consider a homotopy commutative diagram of rays
\[ \xymatrix{ A_0 \ar[r] \ar[d]^{\varphi_0} \ar[rd]^{\psi_0} & A_1 \ar[d]^{\varphi_1} \ar[r] \ar[rd]^{\psi_1} & \cdots \\ B_0 \ar[r] & B_1 \ar[r] & \cdots. \\ } \]
This gives rise to a strictly commutative diagram of the filtered complexes
\[ \xymatrix{ F^0 \Tel(\sA) \ar[r] \ar[d]^{F^0 \Tel(\varphi_0)} & F^1 \Tel(\sA) \ar[d]^{F^1 \Tel(\varphi_1)} \ar[r] & \cdots \\ F^0 \Tel(\sB) \ar[r] & F^1 \Tel(\sB) \ar[r] & \cdots \\ } \]
where
\[ F^n \Tel(\varphi_n)(\cdots, a_i',a_i,\cdots) = (\cdots, \varphi_i[1](a_i'), \psi_{i-1}(a_{i-1}')+\varphi_i(a_i),\cdots). \]
Passing to colimits, one obtains the desired morphism of the associated mapping telescopes.

\subsection{Completions of chain complexes over the Novikov Ring}\label{subsec:NovikovCompletions}

We discuss completions of chain complexes over the universal Novikov ring and how mapping telescopes and colimits of rays behave with respect to such completions.

\begin{defn}\label{defn:UniNovikovField}
The \emph{universal Novikov field} over\footnote{For the discussion in this section, one could replace $\IQ$ with any unitial, commutiative ring.  For the discussion of the total exposition, one could replace $\IQ$ with any field of characteristic zero.} $\IQ$ is
\[ \Lambda = \left\{ \sum_{i=0}^\infty q_i T^{R_i} \mid q_i \in \IQ, R_i \in \IR, \mbox{ and for each }R \in \IR \mbox{ the set } \{ i \mid q_i \neq 0, R_i < R\} \mbox{ is finite.}\right\}\]
with addition and multiplication defined as they are for power series.
\end{defn}

\begin{defn}\label{defn:valuationmap}
The \emph{valuation map} $\val: \Lambda \to \IR \cup \{\infty\}$ is
\[ \val \left( \sum_{i=0}^\infty q_i T^{R_i} \right) = \begin{cases} \min_{i} \{ R_i \mid q_i \neq 0 \}, & \sum_{i=0}^\infty q_i T^{R_i} \neq 0 \\ \infty, & \mbox{else}. \\ \end{cases} \]
\end{defn}

\begin{defn}\label{defn:UniNovikovRing}
The \emph{universal Novikov ring} over $\IQ$ is $\Lambda_{\geq 0} \coloneqq \val^{-1}([0,\infty))$.  Define $\Lambda_{\geq R} \coloneqq \val^{-1}([R,\infty))$ and $\Lambda_{> R} \coloneqq \val^{-1}((R,\infty))$, which are ideals in $\Lambda_{\geq 0}$.
\end{defn}

Given a chain complex $A$ over $\Lambda_{\geq 0}$ that is a free $\Lambda_{\geq 0}$-module, the valuation map extends to a valuation map $\val: A \to \IR \cup \{\infty\}$ by
\[ \val(a) = \begin{cases} \min \left\{ R \mid 0 \neq a \in A \otimes_{\Lambda_{\geq 0}} (\Lambda_{\geq 0} / \Lambda_{\geq R} ) \right\}, & a \neq 0 \\ \infty, & \mbox{else}. \\ \end{cases} \]
If $A$ is finitely generated with basis $a_0,\dots,a_k$, the valuation of $a = \sum_i \lambda_i \cdot a_i$ is $\min\{ \val(\lambda_i)\}.$

\begin{defn}\label{defn:CompletionFunctor}
The \emph{completion functor} $\widehat{\cdot}: \Ch(\Lambda_{\geq 0}) \to \Ch(\Lambda_{\geq 0})$ sends chain complexes to their degree-wise completions,
\[ \widehat{A}^\bullet = \lim_{R} A^\bullet \otimes_{\Lambda_{\geq 0}} \Lambda / \Lambda_{\geq R}, \]
with degree-wise completed differentials $\widehat{d}$.
\end{defn}

\begin{rem}\label{rem:CompletionsAndCauchySequences}
It is useful to think of completions in terms of Cauchy sequences.  A \emph{Cauchy sequence} in $A$ is a sequence $(a_1,a_2,\dots)$ of elements of $A$ such that for every $R \geq 0$ there exists $N \in \IN$ so that if $n,m \geq N$, then $\val(a_n-a_m) \geq R$.  Two Cauchy sequences $(a_1,a_2,\dots,)$ and $(a_1',a_2',\dots,)$ are \emph{equivalent} if and only if for every $R \geq 0$ there exists $N \in \IN$ so that if $n,m \geq N$, then $\val(a_n-a_m') \geq R$.  The completion $\widehat{A}$ is given by the set of Cauchy sequences in $A$.  The completion of a map $\varphi: A \to B$ is given by the induced map on Cauchy sequences,
\[\widehat{\varphi}(a_1,a_2,\dots) = (\varphi(a_1),\varphi(a_2),\dots).\]
There is a canonical map $A \to \widehat{A}$ given by the inclusion of constant Cauchy sequences, $a \mapsto \widehat{a} \coloneqq (a,a,a,\dots)$. 

For the case where $A$ is a free $\Lambda_{\geq 0}$-module, we have the following description.
\[ \widehat{A}^\bullet = \left\{ \sum_{j = 0}^\infty \lambda_j \cdot a_j \mid \lambda_i \in \Lambda_{\geq 0},\, a_j \in A^\bullet,\, \mbox{and for each }R \in \IR_{\geq 0} \,\mbox{ the set }\{ i \mid \val(\lambda_i) < R\} \mbox{ is finite} \right\}\]
and
\[ \widehat{d}\left(\sum_{j \in \IN} \lambda_j \cdot a_j\right) = \sum_{j \in \IN} \lambda_j \cdot d(a_j). \]
From this description, it becomes clear that if $A$ is a degree-wise finitely generated, free $\Lambda_{\geq 0}$-module, then the induced map $A \to \widehat{A}$ is an isomorphism, that is, $A$ is complete.
\end{rem}

In general, the functor $\widehat{\cdot}: \Ch(\Lambda_{\geq 0}) \to \Ch(\Lambda_{\geq 0})$ is not exact.  It is a limit and thus the obstruction to exactness is given by the non-vanishing of the associated $\lim^1$ functor.  Nevertheless, we still have the following analogue of \cref{lem:TelToColimQI} for completions, for example, see \cite[Lemma 2.3.7]{Varolgunes_MayerVietorisPropertyForRelativeSymplecticCohomology}.

\begin{lem}\label{lem:TelToColimCompleteQI}
The canonical map $\Tel(\sA) \to \colim \sA$ induces a quasi-isomorphism $\widehat{\Tel}(\sA) \to \widehat{\colim}\sA$. \qed
\end{lem}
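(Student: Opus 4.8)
\textbf{Proof proposal for \cref{lem:TelToColimCompleteQI}.}

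The plan is to combine the quasi-isomorphism $\Tel(\sA) \to \colim\sA$ from \cref{lem:TelToColimQI} with the fact that this particular map is especially well-behaved with respect to the degree-wise filtrations by $\Lambda_{\geq R}$-submodules, so that completing does not destroy the quasi-isomorphism. Concretely, recall from \cref{subsec:Conventions} that $\Tel(\sA)$ carries the exhaustive filtration $F^n\Tel(\sA) = \big(\bigoplus_{i=0}^{n-1} A_i[1]\oplus A_i\big)\oplus A_n$, and that each of the comparison maps $F^n\Tel(\sA) \to A_n$ is a quasi-isomorphism fitting into a strictly commutative ladder with the maps $\alpha_n$. First I would observe that the canonical map $\Tel(\sA)\to\colim_n A_n$ sits inside a short exact sequence of chain complexes
\[
\xymatrix{ 0 \ar[r] & \bigoplus_{i} A_i \ar[r]^-{\mathrm{id}-\alpha} & \bigoplus_i A_i \oplus \bigoplus_i A_i[1] \ar[r] & \Tel(\sA) \ar[r] & 0 }
\]
or, more usefully, identify $\Cone$ of the canonical map with an acyclic complex built from the telescope of the constant rays; alternatively one directly writes down a contracting chain homotopy for $\Cone\big(\Tel(\sA)\to\colim\sA\big)$ coming from the telescope structure (the ``shift by one stage'' homotopy). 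The key point is that all of these auxiliary maps and homotopies are $\Lambda_{\geq 0}$-linear and, crucially, \emph{strict} for the valuation filtration: they send $\Lambda_{\geq R}$-elements to $\Lambda_{\geq R}$-elements.

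The main step is then the following general principle, which I would state and prove as an intermediate lemma: if $\varphi\colon C \to D$ is a morphism of chain complexes of free $\Lambda_{\geq 0}$-modules which admits a chain-level decomposition witnessing that $\Cone(\varphi)$ is contractible via a homotopy $h$ with $\val(h(x))\geq \val(x)$, then $\widehat{\varphi}\colon \widehat{C}\to\widehat{D}$ is a quasi-isomorphism (indeed $\Cone(\widehat{\varphi})$ is still contractible, since a valuation-nonincreasing homotopy extends continuously to the completion by acting term-by-term on Cauchy sequences as in \cref{rem:CompletionsAndCauchySequences}). This is where the real content lies: one must check that the contracting homotopy on $\Cone\big(\Tel(\sA)\to\colim\sA\big)$ built from the telescope — essentially the map pushing a summand $A_i$ forward into the next telescope stage — is valuation-nonincreasing, which is immediate because it is built entirely from the structure maps $\alpha_i$ (which are $\Lambda_{\geq 0}$-linear, hence valuation-nonincreasing) and identity maps, with no division. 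Then $\widehat{h}$ contracts $\widehat{\Cone(\varphi)} = \Cone(\widehat{\varphi})$ (the completion functor commutes with taking mapping cones, being a degree-wise limit applied to a finite direct sum in each degree), so $\widehat{\varphi}$ is a quasi-isomorphism.

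The anticipated obstacle is purely bookkeeping rather than conceptual: one must be careful that $\widehat{\colim}\sA$ really is the completion of the naive colimit $\colim_n A_n$ and that $\widehat{\cdot}$ commutes with $\Cone$ and with the relevant (finite, degreewise) direct sums — this is true because in each cohomological degree everything in sight is a direct sum of finitely many copies of the $A_i^\bullet$ and completion is a degreewise operation, but it needs to be said. A secondary subtlety is that $\widehat{\cdot}$ is not exact in general (its failure is measured by $\varprojlim^1$), so I would avoid any argument that passes a short exact sequence of rays through the completion functor and instead keep everything at the level of an explicit contracting homotopy, which sidesteps the $\varprojlim^1$ issue entirely. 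This matches the style of \cite[Lemma 2.3.7]{Varolgunes_MayerVietorisPropertyForRelativeSymplecticCohomology}, which I would cite as the template and whose argument transfers verbatim; the only adaptation is confirming the homotopy is valuation-nonincreasing in our grading conventions, which I noted above is automatic.
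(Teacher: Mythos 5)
The central step of your argument fails: $\Cone\bigl(\Tel(\sA)\to\colim\sA\bigr)$ is in general \emph{not} contractible, so there is no ``shift by one stage'' contracting homotopy to complete, valuation-friendly or otherwise. Concretely, take $A_i=\Lambda_{\geq 0}$ concentrated in degree $0$ with zero differential and $\alpha_i$ given by multiplication by $T^{1/2^{i+1}}$. Then $\colim\sA\cong\bigcup_n T^{-(1-1/2^n)}\Lambda_{\geq 0}\subset\Lambda$, which is not free (it is a non-principal $\Lambda_{\geq 0}$-submodule of $\Lambda$, and any two of its elements are $\Lambda_{\geq 0}$-linearly dependent), hence not projective, since projective modules over the local ring $\Lambda_{\geq 0}$ are free by Kaplansky's theorem. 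On the other hand, writing $B=\bigoplus_i A_i$ and $u:B\to B$ for the injective map with $\Tel(\sA)=\Cone(u)$ and $\mathrm{coker}(u)=\colim\sA$, in this example $\Cone\bigl(\Tel(\sA)\to\colim\sA\bigr)$ is the three-term complex $B\xrightarrow{-u}B\xrightarrow{q}\colim\sA$, and in the top degree the contraction identity $dh+hd=\Ione$ reads $q\circ h=\Ione$. A contracting homotopy would therefore give a $\Lambda_{\geq 0}$-linear section of $q$, exhibiting $\colim\sA$ as a direct summand of the free module $B$ --- a contradiction. The point is precisely that $\Tel(\sA)\to\colim\sA$ is a quasi-isomorphism but \emph{not} a homotopy equivalence (its target need not be degreewise projective); your intermediate lemma (a valuation-nondecreasing contraction of the cone survives completion) is fine, but its hypothesis is never satisfied here. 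The ``shift'' homotopy you have in mind contracts the \emph{kernel} of $\Tel(\sA)\to\colim\sA$, not its cone.

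The correct route --- which is the argument of \cite[Lemma 2.3.7]{Varolgunes_MayerVietorisPropertyForRelativeSymplecticCohomology} that the paper cites --- runs through that kernel. The map $\Tel(\sA)\to\colim\sA$ is surjective with kernel $K=B[1]\oplus u(B)$, and $K$ \emph{is} contractible via $h(x,y)=(u^{-1}y,0)$; solving $u(a)=b$ inductively from the $0$th stage shows $\val(u^{-1}b)\geq\val(b)$, so $h$ extends to $\widehat{K}$ and $\widehat{K}$ is acyclic. One then completes the short exact sequence $0\to K\to\Tel(\sA)\to\colim\sA\to 0$: since $\colim\sA$ is degreewise a filtered colimit of finitely generated free modules it is flat \cite[Tag 058G]{StacksProject}, so reduction modulo $\Lambda_{\geq R}$ preserves exactness \cite[Tag 0315]{StacksProject}, and the inverse limit over $R$ remains exact because the transition maps of $\{K\otimes_{\Lambda_{\geq 0}}\Lambda_{\geq 0}/\Lambda_{\geq R}\}$ are surjective; this is the same mechanism the paper uses in the proof of \cref{prop:PropertiesOfSH} \cref{prop:LES}. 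The resulting exact sequence $0\to\widehat{K}\to\widehat{\Tel}(\sA)\to\widehat{\colim}\,\sA\to 0$ with $\widehat{K}$ acyclic gives the lemma. So the $\varprojlim^1$-type issue you hoped to sidestep cannot be avoided by contracting the cone; it is handled by flatness of the colimit.
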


\subsection{From completed differentials to uncompleted differentials}\label{subsec:BE}
We discuss how non-trivial differentials in completed complexes give rise to non-trivial differentials in the original (uncompleted) complexes.  This is spelled out in the following two lemmas.

\begin{lem}\label{lem:KillingLemma}
Let $A$ be a chain complex over $\Lambda_{\geq 0}$ with $x \in A$ closed.  If $[\widehat{x}] \in H(\widehat{A}) \otimes \Lambda$ is null-homologous, then there exists $\lambda \in \Lambda_{\geq 0}$ such that for each $R\geq 0$ there exists $y \in A$ with
\[ \val(\lambda \cdot x - d(y)) > R. \]
So if $\val(\lambda \cdot x) \leq R$, then $y$ is non-zero.
\end{lem}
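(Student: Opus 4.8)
The plan is to unwind the definition of "$[\widehat{x}]$ is null-homologous in $H(\widehat{A})\otimes\Lambda$" and translate it into a statement about Cauchy sequences in $A$, using the description of $\widehat{A}$ as the set of equivalence classes of Cauchy sequences from \cref{rem:CompletionsAndCauchySequences}. First I would observe that a class being null-homologous after $\otimes\Lambda$ means there is some $\lambda\in\Lambda_{\geq 0}$ (clearing the denominator coming from inverting $T$) and some $\widehat{z}\in\widehat{A}$ with $\widehat{d}(\widehat{z}) = \lambda\cdot\widehat{x}$ in $\widehat{A}$; here I would take $\lambda$ to be the single element that will appear in the conclusion. Then $\widehat{z}$ is represented by a Cauchy sequence $(z_1,z_2,\dots)$ of elements of $A$, and the equation $\widehat{d}(\widehat{z}) = \lambda\cdot\widehat{x}$ says precisely that the Cauchy sequences $(d(z_1),d(z_2),\dots)$ and $(\lambda x, \lambda x,\dots)$ are equivalent, i.e. for every $R\geq 0$ there is $N$ with $\val(\lambda x - d(z_n)) > R$ for all $n\geq N$.

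From there the conclusion is almost immediate: given $R\geq 0$, pick such an $N$ and set $y := z_N$; then $\val(\lambda x - d(y)) > R$ as required. The final sentence of the statement is then just the triangle inequality / ultrametric property of $\val$: if $\val(\lambda x - d(y)) > R \geq \val(\lambda x)$ and we had $y = 0$, then $\val(\lambda x) = \val(\lambda x - d(0)) > R \geq \val(\lambda x)$, a contradiction, so $y\neq 0$.

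I would present this in two short paragraphs: one extracting the bounding cochain as a Cauchy sequence and recording the equivalence-of-Cauchy-sequences reformulation of $\widehat{d}(\widehat{z}) = \lambda\widehat{x}$, and one reading off the $y$ and checking non-vanishing. The only point that needs a little care — and the main (mild) obstacle — is being precise about the role of $\otimes\Lambda$: strictly, a class in $H(\widehat{A})\otimes\Lambda$ that vanishes gives an element $\widehat{z}\in\widehat{A}\otimes\Lambda$ with $\widehat{d}\widehat{z} = \widehat{x}$, and one must multiply through by a suitable power $T^{\val(\cdot)}$ of the Novikov parameter (equivalently, by an element of $\Lambda_{\geq 0}$, which is exactly the $\lambda$ of the statement) to return to $\widehat{A}$ itself; one should also note $\widehat{A}\otimes\Lambda$ need not equal the completion of $A\otimes\Lambda$, but since we only need to clear a single scalar this causes no difficulty. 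Everything else is a direct application of the Cauchy-sequence picture of completion and the ultrametric inequality, so no genuine calculation is involved.
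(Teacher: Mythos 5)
Your proposal is correct and follows essentially the same route as the paper: both unwind the vanishing of $[\widehat{x}]$ in $H(\widehat{A})\otimes\Lambda$ into a scalar $\lambda\in\Lambda_{\geq 0}$ and a Cauchy sequence $(z_n)$ in $A$ with $(d(z_n))$ equivalent to the constant sequence $\lambda\cdot(x,x,\dots)$, and then take $y=z_{N_R}$ for each $R$. Your extra care about clearing the denominator from $\otimes\Lambda$ and the ultrametric check that $y\neq 0$ are fine elaborations of points the paper leaves implicit.
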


\begin{proof}
Using \cref{rem:CompletionsAndCauchySequences}, there exists a Cauchy sequence $(y_0,y_1,\dots)$ such that the Cauchy sequence $(d(y_0),d(y_1),\dots)$ is equivalent to the Cauchy sequence $\lambda \cdot (x,x,\dots)$ for some $\lambda \in \Lambda_{\geq 0}$.  So for each $R \in \IR_{\geq 0}$, there exists $N_R$ such that for all $n \geq N_R$, $\val(\lambda \cdot x - d(y_n)) > R$.  We take $y = y_{N_R}$.
\end{proof}

\begin{lem}\label{lem:TelKillingLemma}
Let $\sA$ be a ray of chain complexes over $\Lambda_{\geq 0}$ with $x \in A_0$ closed.  If $[\widehat{x}]$ maps to zero under
\[H(\widehat{A_0}) \otimes \Lambda \to H(\widehat{\Tel}(\varphi)) \otimes \Lambda,\]
then there exists $\lambda \in \Lambda_{\geq 0}$ such that for each $R\geq 0$ there exists $y \in A_n$ (for some $n$ dependent on $R$) with
\[\val(\lambda \cdot \alpha_{n-1} \circ \cdots \circ \alpha_0(x) -d(y)) > R.\]
So if $\val(\lambda \cdot \alpha_{n-1} \circ \cdots \circ \alpha_0(x)) \leq R$, then $y$ is non-zero.
\end{lem}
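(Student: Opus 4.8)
The plan is to reduce \cref{lem:TelKillingLemma} to \cref{lem:KillingLemma} by applying the latter to the chain complex $A = \Tel(\sA)$. First I would recall the explicit description of the mapping telescope from \cref{defn:MappingTelescope} and the filtration $F^n\Tel(\sA)$ with its quasi-isomorphisms $F^n\Tel(\sA) \to A_n$. The hypothesis says that the image of $[\widehat{x}]$ in $H(\widehat{\Tel}(\sA)) \otimes \Lambda$ is null-homologous, where $x \in A_0$ is regarded as an element of $\Tel(\sA)$ via the inclusion of the $A_0$-summand. Applying \cref{lem:KillingLemma} to $\Tel(\sA)$ and this closed element $x$, we obtain $\lambda \in \Lambda_{\geq 0}$ such that for each $R \geq 0$ there exists $z \in \Tel(\sA)$ with $\val(\lambda \cdot x - d_{\Tel}(z)) > R$. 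The task is then to convert this telescope-level bounding chain $z$ into a bounding chain $y$ living in a single $A_n$, with $d(y)$ approximating $\lambda \cdot \alpha_{n-1} \circ \cdots \circ \alpha_0(x)$ to valuation $> R$.

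The key step is to push $z$ forward along the telescope-to-colimit maps carefully while tracking valuations. Since $z$ has only finitely many components that matter up to valuation $R$ (this is exactly the meaning of the completion/Cauchy-sequence description in \cref{rem:CompletionsAndCauchySequences}, together with the fact that $\val$ only increases under the structure maps $\alpha_i$, which are defined over $\Lambda_{\geq 0}$), I would choose $n$ large enough that $z$ is $\val$-close (to order $> R$) to an element $z'$ of $F^n\Tel(\sA)$. Then applying the quasi-isomorphism $F^n\Tel(\sA) \to A_n$ sends $x$ (sitting in the $A_0$-slot, $0 \leq 0 \leq n$) to $\alpha_{n-1} \circ \cdots \circ \alpha_0(x)$ and commutes with differentials strictly; setting $y$ to be the image of the relevant piece of $z'$ in $A_n$ and using that these maps are $\Lambda_{\geq 0}$-linear (hence non-increasing on valuation), I get $\val(\lambda \cdot \alpha_{n-1}\circ \cdots \circ \alpha_0(x) - d(y)) > R$, as desired. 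The final sentence of the statement (non-vanishing of $y$ when $\val(\lambda \cdot \alpha_{n-1}\circ\cdots\circ\alpha_0(x)) \leq R$) is then immediate: if $y = 0$ then $\val(\lambda \cdot \alpha_{n-1}\circ\cdots\circ\alpha_0(x)) > R$, a contradiction.

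The main obstacle I anticipate is the bookkeeping in passing from a telescope element $z$ to an element of a finite stage $F^n\Tel(\sA)$ while controlling valuations: one must argue that truncating $z$ to its first $n$ summands changes $d_{\Tel}(z)$ by something of valuation $> R$ for $n$ sufficiently large, which requires knowing that the tail summands of $z$ (and their images under the differential, which involves the $\Ione[1]$ and $\alpha_i[1]$ maps) have valuation growing to infinity — this is precisely the defining property of elements of the completion $\widehat{\Tel}(\sA)$. A clean way to organize this is to note that $\widehat{\Tel}(\sA) \to \widehat{\colim}\,\sA$ is a quasi-isomorphism by \cref{lem:TelToColimCompleteQI}, transport the null-homology statement to $\widehat{\colim}\,\sA$, and then run the Cauchy-sequence argument there: a bounding chain in $\widehat{\colim}\,\sA$ for $[\widehat{x}]$ is (up to equivalence) a Cauchy sequence $(y_m)$ with $y_m \in \colim \sA$ and $d(y_m) \to \lambda \cdot (\text{image of } x)$; since $\colim \sA = \bigcup_n \mathrm{im}(A_n \to \colim\sA)$, each $y_m$ lifts to some $A_{n(m)}$, and choosing $m$ large for a given $R$ and then $n \geq n(m)$ large enough that the image of $x$ in $\colim\sA$ is already realized in $A_n$ by $\alpha_{n-1}\circ\cdots\circ\alpha_0(x)$ gives the desired $y \in A_n$. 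Either route works; the colimit route is slightly less computational, so I would likely write it that way.
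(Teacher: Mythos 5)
Your proposal is correct and is essentially the paper's proof: apply \cref{lem:KillingLemma} to $x$ viewed as an element of $\Tel(\sA)$, observe that the bounding chain lies in some finite stage $F^n\Tel(\sA)$, and push everything to $A_n$ along the chain map $F^n\Tel(\sA) \to A_n$, which sends $x$ to $\alpha_{n-1}\circ\cdots\circ\alpha_0(x)$ and, being $\Lambda_{\geq 0}$-linear, does not decrease valuation (your parenthetical ``non-increasing on valuation'' states the inequality backwards, but you use it in the correct direction). The only simplification worth noting is that no truncation or Cauchy-sequence bookkeeping is needed on the telescope route: \cref{lem:KillingLemma} yields a bounding chain in the \emph{uncompleted} telescope, which is a direct sum, so it already lies exactly in $F^n\Tel(\sA)$ for some $n$, just as in the paper's argument.
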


\begin{proof}
By \cref{lem:KillingLemma}, there exists ${\lambda} \in \Lambda_{\geq 0}$ such that for each $R \geq 0$, there exists $\widetilde{y} \in \Tel(\sA)$ with
\[ \val({\lambda} \cdot x - d(\widetilde{y})) > R,\]
where we have used $x$ to denote the image of $x$ in $\Tel(\sA)$.  This inequality is realized in, say, $F^n \Tel(\sA)$.  Write
\[ \widetilde{y} = (y_0,y_0',\dots,y_{n-1},y_{n-1}',y_n) \in F^n \Tel(\sA).\]
Applying the map $F^n \Tel(\sA) \to A_n$,
\[ \val \left( {\lambda} \cdot \alpha_{n-1} \circ \cdots \circ \alpha_0(x) - d \left( \sum_{i=0}^{n} \alpha_{n-1} \circ \cdots \circ \alpha_i(y_i) \right) \right) > R.\]
So we set
\[y = \sum_{i=0}^{n} \alpha_{n-1} \circ \cdots \circ \alpha_i(y_i).\]
\end{proof}


\section{An integrated maximum principle for convex symplectic domains}\label{sec:WeaklyConvexDomains}

We establish a maximum principle for compact symplectic manifolds $(M,\Omega)$ with boundaries of a slightly weaker form than those of convex symplectic domains.

\begin{defn}\label{defn:WeaklyConvexSymplecticDomain}
A compact symplectic manifold with boundary $(M,\Omega)$ is a \emph{weakly convex symplectic domain} if
\begin{enumerate}
	\item there exists a collar neighborhood $N(\partial M)$ of the boundary that is symplectomorphic to the (bottom half of the) symplectization of a stable Hamiltonian structure $(\Omega|_{\partial M}, \alpha)$ on $\partial M$ for some $1$-form $\alpha$ on $\partial M$, and
	\item there exists an $\Omega$-compatible almost complex structure $J$ on $M$ such that on $N(\partial M)$,
	\begin{enumerate}
	\item $-df \circ J = \lambda$ for some function $f$ (dependent on $J$) in $r$ with $f'(r) > 0$, and
	\item $d\alpha|_\xi(\cdot, J \cdot)$ is semi-positive definite on $\xi$.
	\end{enumerate}
\end{enumerate}
Using the identification with the symplectization, the \emph{radial/collar coordinate} is 
\[r: N(\partial M) \cong (0,1] \times \partial M \to (0,1].\]
The associated \emph{Liouville $1$-form} on $M$ is $\lambda = r \cdot \alpha$.  Denote a weakly convex symplectic domain by $(M,\Omega,\lambda)$.  Finally, an almost complex structure $J$ as above is \emph{weakly admissible} for the tuple $(M,\Omega,\lambda)$.
\end{defn} 

\begin{rem}
A convex symplectic domain is a weakly convex symplectic domain.  We could have phrased \emph{all} of our Floer theoretic constructions, arguments, and properties in terms of weakly convex symplectic domains.  In particular, we could define action completed symplectic cohomology for these domains.  The additional assumptions for convex symplectic domains are not (vitally) used in any of our constructions or arguments. We introduce these weakly convex symplectic domains now because we need this weaker notion of convexity and its associated integrated maximum principle when we deform our rescaled almost complex structure in our rescaling argument, see \cref{sec:RescalingIso}.
\end{rem}

The usage of some flavor of the convexity is standard in pseudo-holomorphic curve theory since it gives maximum principles for pseudo-holomorphic curves.  Our's is no exception.

\begin{notn}\label{notn:MaximumPrincipleSetup}
\begin{enumerate}
	\item Let $(M,\Omega,\lambda)$ be a weakly convex symplectic manifold.
	\item Let $J_s$ be an $\IR$-family of admissible almost complex structures for this tuple.  We assume that $-df \circ J_s = \lambda$ for a single function $f$ (as opposed to some family $f_s$).
	\item Let $H: \IR \times S^1 \times M \to \IR$ be a family of Hamiltonians such that $H_s|_{N(\partial M)} = h_s$, where $h_s$ is a family of radial functions in $r$ that satisfies $\partial_s \partial_r h_s \leq 0$.
	\item Let $u: \IR \times S^1 \to M$ be a smooth map that satisfies
	\[0 = (du-X_{H} \otimes dt)^{0,1}.\]
\end{enumerate}
\end{notn}

We now give our integrated maximum principle for weakly convex symplectic domains.  It is inspired by the integrated maximum principle in \cite{AbouzaidSeidel_OpenStringViterbo}.  In our case, the exterior derivative of the Liouville $1$-form need not be the symplectic form and so some variant of the argument in \cite{AbouzaidSeidel_OpenStringViterbo} is required.

\begin{prop}\label{prop:MaximumPrincipleForWeaklyConvexDomains}
If
\[ \lim_{s \to \pm \infty} r \circ u(s,t) = c_{\pm} < 1, \]
then $r \circ u \leq \max(c_\pm)$.  Moreover, if $\partial_r \partial_r h_s \geq 0$, then $r \circ u \leq c_+$, and if $c_- = c_+$, then $r \circ u$ is constant.
\end{prop}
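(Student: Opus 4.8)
The plan is to run a version of the integrated maximum principle of Abouzaid--Seidel, replacing the contact-type condition $d\lambda=\Omega$ with the weaker information that $-df\circ J_s=\lambda$ for a fixed primitive $f$ of a positive function in $r$ with $f'>0$, together with the positive semi-definiteness of $d\alpha|_\xi(\cdot,J\cdot)$. First I would argue, by way of contradiction, that if $\max(r\circ u)=c>\max(c_\pm)$ is not attained only in the interior strictly below level $c$, then the preimage $\Sigma\coloneqq (r\circ u)^{-1}([c,1])$ is a nonempty compact surface-with-boundary in $\mathbb{R}\times S^1$ whose boundary maps into the hypersurface $\{r=c\}\subset N(\partial M)$; since $r\circ u\to c_\pm<c$ at the ends, $\Sigma$ is genuinely compact. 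Pick $c$ a regular value of $r\circ u$ (Sard), so $\partial\Sigma$ is a disjoint union of smooth circles.

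\medskip

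The heart of the argument is Stokes applied to $u^*(d(f\circ r)\,d\theta)$-type forms. More precisely, following Abouzaid--Seidel, set $\phi\coloneqq f\circ r$ (a function on $N(\partial M)$), and consider on $\Sigma$ the $1$-form $\beta\coloneqq u^*\phi\cdot u^*\alpha - (\text{a correction involving } h_s)$; the correct object to integrate is $-d\big((\phi\circ u)\,u^*\alpha\big)$ adjusted by the Hamiltonian term, exactly as in the standard integrated maximum principle. The key computation splits $d\big((\phi\circ u)\,dt\big)$-type terms using Floer's equation $0=(du-X_H\otimes dt)^{0,1}$, i.e. $\partial_s u = -J_s(\partial_t u - X_H)$. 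Because $X_{h_s}=\partial_r h_s\cdot\mathcal{R}$ lies in $\ker(\alpha)^\perp$-direction and $-df\circ J_s=\lambda=r\alpha$, one gets that $d\phi(\partial_s u)$ and $\alpha(\partial_t u)$ combine into a sum of a manifestly nonnegative term (the $\|{\cdot}\|_{J_s}^2$ energy density, positive because $f'>0$) plus a term controlled by $d\alpha|_\xi(\cdot,J_s\cdot)\geq 0$ plus a term involving $\partial_s\partial_r h_s\leq 0$. The upshot is an inequality
\[
0 \;\leq\; \int_\Sigma u^*\Omega_{\mathrm{aux}} \;=\; \int_{\partial\Sigma} (\text{boundary integrand}),
\]
and on $\partial\Sigma$, where $r\circ u\equiv c$ and the outward normal derivative of $r\circ u$ is $\leq 0$ (since $c$ is a max), the boundary integrand is $\leq 0$. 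Hence everything vanishes, forcing the $J_s$-energy density to vanish on $\Sigma$, i.e. $\partial_s u\equiv 0$ on $\Sigma$; then $u$ is $t$-independent on $\Sigma$ and the Floer equation degenerates to $\partial_t u = X_{h_s}$ with $r\circ u\equiv c$, which (using that $u$ is closed under the $\mathbb{R}$-translation limits $c_\pm<c$) yields the contradiction that $\Sigma$ is empty.

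\medskip

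For the refinement: when additionally $\partial_r\partial_r h_s\geq 0$, I would instead integrate over $\Sigma_{c'}\coloneqq (r\circ u)^{-1}([c',1])$ for regular values $c'>c_+$ and extract a \emph{strict} sign on the relevant term (the convexity of $h_s$ makes an otherwise-neutral contribution negative unless $u$ is constant there), which upgrades the conclusion to $r\circ u\leq c_+$; this is the standard ``the $s=+\infty$ asymptotics dominate'' phenomenon for monotone continuation data. Finally, if $c_-=c_+$, feeding $c=c_+=c_-$ into the equality case of the first part shows $\partial_s u\equiv 0$ on all of $\mathbb{R}\times S^1$, and then $r\circ u$ is a function of $t$ alone satisfying $\partial_t(r\circ u)=dr(X_{h_s})=0$, so $r\circ u$ is constant.

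\medskip

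\textbf{Main obstacle.} The delicate point is bookkeeping in the Stokes computation: unlike the exact ($d\lambda=\Omega$) setting, here $d\lambda\neq\Omega$ in general, so the naive $1$-form one would integrate does \emph{not} have $u^*\Omega$ as its exterior derivative. I expect the real work is choosing the correct auxiliary $2$-form — it should be $d\alpha$ restricted appropriately plus a multiple of $\Omega|_{\partial M}$, i.e. exploiting $\ker(\Omega|_{\partial M})\subset\ker(d\alpha)$ from the stable Hamiltonian structure axioms — and verifying that each of the three error terms (energy density, the $d\alpha|_\xi(\cdot,J\cdot)$ term, and the $\partial_s\partial_r h_s$ term) has the claimed sign after pulling back by $u$ and using $-df\circ J_s=\lambda$. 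Once the pointwise inequality
\[
u^*(\text{auxiliary }2\text{-form}) \;\geq\; (\text{nonnegative density})\, ds\wedge dt
\]
is established on $\Sigma$, the rest is the routine Stokes/maximum-principle endgame.
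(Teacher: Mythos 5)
Your overall strategy---an Abouzaid--Seidel-style Stokes argument over superlevel regions of $r\circ u$---is genuinely different from the paper's, but as written it has a real gap, and the gap sits exactly at the step you defer. You acknowledge that $d\lambda\neq\Omega$ here and say the ``real work'' is to find the correct auxiliary $2$-form and to check the signs of the three error terms; that deferred computation \emph{is} the content of the proposition, and nothing in your sketch pins it down. Note that weak admissibility only gives $-df\circ J_s=\lambda$ and $d\alpha|_\xi(\cdot,J_s\cdot)\geq 0$; it gives no pointwise control of $u^*(\Omega|_{\partial M})$, which is precisely the term that obstructs the Liouville-type bookkeeping, and invoking $\ker(\Omega|_{\partial M})\subset\ker(d\alpha)$ is a hope, not an estimate. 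The paper sidesteps the issue by never touching $u^*\Omega$ at all: for the first statement it works pointwise, computing (as in Seidel's biased view) $\Delta(f\circ u)=-d(df\circ du\circ j)\geq \frac{-r\,\partial_r\partial_r h_s}{f'(r)}\,\partial_s(f\circ u)$, a differential inequality with no zeroth-order term, so the classical maximum principle gives $f\circ u\leq\max f(c_\pm)$ and hence $r\circ u\leq\max(c_\pm)$ since $f'>0$; for the refinements it uses the identity $\partial_s(f\circ u)=\lambda(\partial_t u-X_H)$ (Floer's equation plus $-df\circ J_s=\lambda$), integrated over each circle $\{s\}\times S^1$, with Stokes applied only to $u^*d\alpha\geq 0$ and the sign hypotheses $\partial_s\partial_r h_s\leq 0$, $\partial_r\partial_r h_s\geq 0$, to show the circle-average of $\partial_s(f\circ u)$ is nonnegative; this rules out $c_->c_+$ and forces $r\circ u$ constant when $c_-=c_+$.

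Beyond the deferred core inequality, your equality-case endgame is also off. Your region $\Sigma$ (and $\Sigma_{c'}$) is cut out at a regular value strictly above $\max(c_\pm)$ (resp.\ above $c_+$), so ``feeding $c=c_+=c_-$ into the equality case'' does not parse: once $r\circ u\leq c_+$ those superlevel regions are empty and tell you nothing about the rest of the cylinder, so they cannot yield $\partial_s u\equiv 0$ on all of $\mathbb{R}\times S^1$. Moreover $\partial_s u\equiv 0$ is stronger than what the proposition asserts (only that $r\circ u$ is constant), and nothing in your argument delivers it; also note the slip that $\partial_s u\equiv 0$ makes $u$ $s$-independent, not $t$-independent. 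Even in the first part, vanishing of the nonnegative density on $\Sigma$ alone does not immediately contradict $c_\pm<c$; one needs an extra step connecting the equality case to the asymptotics. If you want to keep your route, you must (i) actually produce the auxiliary form and the pointwise sign estimate in the stable-Hamiltonian setting, where only $d\alpha|_\xi(\cdot,J_s\cdot)\geq 0$ and $-df\circ J_s=\lambda$ are available, and (ii) replace the global $\partial_s u\equiv 0$ step by an argument at the level of $f\circ u$, for instance the circle-wise monotonicity used in the paper.
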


\begin{proof}
For the first statement, since $f$ satisfies $f'(r) > 0$, it suffices to show if $\lim_{s \to \pm \infty} f \circ u(s,t) \leq f(c_\pm)$,
then $f \circ u \leq \max(f(c_\pm))$.  As in \cite{Seidel_ABiasedViewOfSymplecticCohomology}, one finds 
\begin{align*}
\Delta(f \circ u) & = -d(df \circ du \circ j) \geq \frac{-r\cdot \partial_r \partial_r h_s}{f'(r)} \cdot \partial_s(f \circ u).
\end{align*}

So $f \circ u$ satisfies a maximum principle, giving the first claim.

For the second part, assume by way of contradiction that $c_- > c_+$.  So $f \circ u \leq f(c_-)$.  For each fixed $s$,
\begin{align*}
\int_{0}^1 \partial_s(f \circ u)(s) \ dt & = \int_0^1 \lambda(\partial_t u - X_H)(s) \ dt
\\ & = \int_0^1 \lambda(\partial_t u)(s) - (\partial_rh_s(u) \cdot r(u))(s) \ dt
\\ & \geq \int_0^1 r \circ u(s) \cdot \partial_rh_s(c_-) - (\partial_r h_s(u) \cdot r(u))(s) \ dt
\\ & \geq 0
\end{align*}
The first inequality follows from Stokes' Theorem, and the positivity of $u^* d \alpha$.  The second inequality follows from the assumptions that $\partial_s \partial_r h_s \leq 0$, $\partial_r \partial_r h_s \geq 0$, and the fact that $\partial_s(r \circ u)$ must be non-positive near $\lim_{s \to -\infty} u(s,t)$.  So the average value of $\partial_s(f \circ u)(s)$ on each $s$-slice of the domain of $u$ is positive.  Thus, $f \circ u$ must be constant, which implies that $c_- = c_+$, a contradiction.

For the final statement, if $c_- = c_+$, then the above argument (does not lead to a contradiction but instead) implies that $r\circ u$ is constant, as desired.
\end{proof}


\section{Stable displaceability of neighborhoods of divisors}\label{sec:DisDiv}

The reader should recall the definition of stably displaceability from \cref{defn:StablyDisplaceable}.  Here we will discuss and prove the following.

\begin{lem}\label{lem:StDispFibres}
Let $\pi: M \to \IC$ be a proper surjective morphism of smooth, quasi-projective varieties.  Given a K\"{a}hler form $\Omega$ on $M$, any fibre of $\pi$ is stably displaceable in $(M,\Omega)$.
\end{lem}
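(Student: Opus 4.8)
The statement asserts that any fibre $\pi^{-1}(z_0)$ of a proper surjective morphism $\pi\colon M\to\IC$ is stably displaceable in $(M,\Omega)$ for any Kähler form $\Omega$. The natural strategy is to produce an explicit Hamiltonian on the stabilization $M\times\IR\times S^1$ that displaces $\pi^{-1}(z_0)\times S^1$ off itself, exploiting that the fibre sits over a single point of $\IC$ and that there is room to ``push'' in the base direction. First I would reduce to the case $z_0=0$ after a translation (which is an automorphism of $\IC$ and can be covered by pulling back $\Omega$). Then I would choose a small disc $\ID_\rho\subset\IC$ around $0$ and restrict attention to $M_\rho=\pi^{-1}(\ID_\rho)$, since stable displaceability is monotone under inclusions of neighborhoods and it suffices to displace a neighborhood of the fibre. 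On such a neighborhood one wants a Hamiltonian whose flow moves $\pi^{-1}(0)$ to a fibre $\pi^{-1}(w)$ with $w\neq 0$; the obstruction is that the naive ``translate in $\IC$'' vector field is not Hamiltonian for $\Omega$ unless $\pi$ is flat/Kähler-compatible in a strong sense, which is exactly why the stabilization by $\IR\times S^1$ is needed.

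The key construction: since $\Omega$ is Kähler, $\pi$ is holomorphic, so away from the critical locus $(\pi^{-1}(\IC^\times),\Omega,\pi)$ is a (possibly degenerate) Hamiltonian fibration over $\IC^\times$ in the sense of \cref{defn:HamiltonianFibration}, and near a regular fibre we can use \cref{lem:HorizontalLiftOfHamiltonianVectorField} to compute the $\Omega$-dual of $\pi^*(\text{1-form on base})$. The cleanest route is to invoke \cref{prop:FlatteningOverCStar}/\cref{prop:FlatteningOverCStarNonDeg} (or \cref{prop:FlatteningOverC} when $0$ is a regular value): these give a symplectic embedding of a neighborhood of $\pi^{-1}(0)$ into a standard model where $\Omega$ splits as $\eta+\pi^*\omega$ with $\omega$ a product-type form on an annular region, and in that model the radial Hamiltonian $\pi^*(g(r))$ for suitable $g$ has Hamiltonian flow that rotates/translates in the base, displacing one fibre from another. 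Stable displaceability then follows because in the product region the relevant subset is literally of the form $(\text{fibre})\times(\text{disc in }\IC)$, and such subsets are Hamiltonian displaceable after crossing with $\IR\times S^1$ by the standard argument (a compactly supported Hamiltonian on $\IC\times\IR\times S^1$ that shifts the $\IR$-coordinate while winding appropriately) — this is the same mechanism used implicitly in \cref{subsec:ProductUniruling} and in the proof of \cref{lem:ModifiedParallelTransport}.

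Concretely the steps are: (1) translate so $z_0=0$; (2) pass to a small neighborhood $M_\rho$ of $\pi^{-1}(0)$, using that displacing a neighborhood displaces the fibre; (3) apply the flattening result (\cref{prop:FlatteningOverCStar} in the case of a degenerate/singular central fibre, or \cref{prop:FlatteningOverC} if $0$ is regular) to symplectically embed $M_\rho$, or an even smaller neighborhood, into a model whose symplectic form restricted to the end is $\eta+\pi^*\omega$ with $\omega$ a standard form on an annulus $\{\rho_1<|z|<\rho_2\}$; (4) inside this model, the fibre over $0$ lands in a region symplectomorphic to $F\times\ID$ for $F$ a smooth fibre (or a birational model of it), and $F\times\{0\}$ crossed with $S^1$ is Hamiltonian displaceable in $F\times\IC\times\IR\times S^1$ by the explicit shift-and-wind Hamiltonian, which is compactly supported because we only need to move within a bounded region of $\IC$; (5) transport this displacement back through the symplectic embedding. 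The main obstacle is step (3)–(4): one must be careful that the flattening embeddings, which are only symplectic embeddings and not symplectomorphisms, are compatible with the stabilization construction and that the compactly-supported displacing Hamiltonian can be chosen with support inside the image of the embedding; handling the singular-fibre case (where $\pi^{-1}(0)$ need not be smooth and the fibration structure degenerates at $0$) requires either passing to a resolution $\wt M\to M$ as in \cref{sec:DivisorModels} — noting displaceability descends along this isomorphism-away-from-the-fibre map — or directly quoting the stable displaceability statement of \cite[Corollary 6.21]{McLean_BirationalCalabiYauManifoldsHaveTheSameSmallQuantumProducts} and McLean's results in \cref{sec:DisDiv}, which already package exactly this kind of argument. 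I expect the write-up will in fact reduce \cref{lem:StDispFibres} to a citation plus the observation that a small neighborhood of the fibre, after flattening, looks like a displaceable product; the genuinely new content is minimal and the proof should be short.
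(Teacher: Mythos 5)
Your closing guess is the paper's entire proof: the fibre $\pi^{-1}(z_0)$ is a closed, proper, positive-codimension subvariety of the K\"ahler manifold $(M,\Omega)$ (properness of $\pi$ makes the fibre proper, surjectivity onto $\IC$ makes it positive codimension), so \cref{thm:SubvarietiesOfKahlerStablyDisplaceable} (McLean's Corollary 6.21) applies verbatim and the lemma follows in one line. No flattening, no explicit displacing Hamiltonian, and no neighborhood of the fibre is needed at this stage --- the passage from the fibre to a neighborhood $\pi^{-1}(\ID_\varepsilon)$ is deliberately separated out as \cref{lem:FibreToNhood} (a point-set argument) and only combined with the present lemma in \cref{lem:DisplacingFibres}. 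So the lemma you were asked to prove really is just the citation; the "genuinely new content is minimal" hunch was right, but it is the whole proof, not a fallback.

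The constructive route that occupies most of your proposal has concrete problems, so it could not stand on its own. First, \cref{prop:FlatteningOverCStar} and \cref{prop:FlatteningOverCStarNonDeg} only modify the symplectic form in the region $r\geq b$, i.e.\ far from the central fibre (the embedding is the identity on $M_a$ and the splitting $\overline{\Omega}=\overline{\eta}+\pi^*\omega$ holds only on $M\smallsetminus M_b$); they give no product-type model in a neighborhood of $\pi^{-1}(0)$, which is exactly where you want one. \cref{prop:FlatteningOverC} does give a global product model, but it requires $\pi$ to be a submersion over all of $\IC$, which fails precisely in the interesting case of a singular fibre; when $\pi^{-1}(z_0)$ is singular there is no Hamiltonian-fibration structure near it at all, so steps (3)--(4) of your plan break down, and "passing to a resolution and descending displaceability" is not a routine step (a Hamiltonian isotopy upstairs does not obviously push forward along a birational morphism that is only an isomorphism away from the fibre you are trying to displace). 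Second, even in the regular case your shift-and-wind Hamiltonian must be compactly supported inside the image of a symplectic embedding that you do not control quantitatively, which is the kind of issue McLean's theorem is engineered to avoid. The moral: when the target of the citation already covers the singular case, the explicit construction adds risk without adding content.
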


\begin{lem}\label{lem:FibreToNhood}
Let $(M,\Omega)$ be a symplectic manifold with a proper surjective map $\pi: M \to \IC$ that is submersive on a punctured neighborhood of the origin.  If $\pi^{-1}(0)$ is stably displaceable in $(M,\Omega)$, then there exists $\varepsilon>0$ such that $\pi^{-1}(\ID_\varepsilon)$ is stably displaceable in $(M,\Omega)$.
\end{lem}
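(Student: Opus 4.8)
\textbf{Proof plan for \cref{lem:FibreToNhood}.}

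The plan is to leverage the stable displaceability of $\pi^{-1}(0)$ together with the submersivity of $\pi$ near the puncture to ``thicken'' the displacing isotopy so that it displaces a small neighborhood of the central fibre as well. First I would unwind the definition: by hypothesis there is a compactly supported Hamiltonian diffeomorphism $\phi$ of $(M \times \IR \times S^1, \Omega \oplus d\sigma \wedge d\tau)$, supported away from the boundary, with $\phi((\pi^{-1}(0) \times S^1)) \cap (\pi^{-1}(0) \times S^1) = \varnothing$. Let $\Phi_t$ be a compactly supported Hamiltonian isotopy with $\Phi_0 = \mathrm{id}$ and $\Phi_1 = \phi$, generated by a compactly supported Hamiltonian $G: [0,1] \times M \times \IR \times S^1 \to \IR$. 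The set $K_0 := \pi^{-1}(0) \times S^1$ is compact, hence so is $L := \bigcup_{t \in [0,1]} \Phi_t(K_0)$, and by disjointness there is an open neighborhood $U$ of $K_0$ with $\phi(\overline U) \cap \overline U = \varnothing$.

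The key step is a continuity/compactness argument. Since $\pi$ is proper and submersive on a punctured neighborhood of $0$, the preimages $\pi^{-1}(\ID_\varepsilon)$ form a neighborhood basis of $\pi^{-1}(0)$: more precisely, for any open $U \supset \pi^{-1}(0)$ there is $\varepsilon > 0$ with $\pi^{-1}(\ID_\varepsilon) \subset U$. (If not, one extracts a sequence $x_n$ with $\pi(x_n) \to 0$ but $x_n \notin U$; properness of $\pi$ forces $x_n$ to accumulate, and any accumulation point lies in $\pi^{-1}(0) \subset U$, a contradiction.) Apply this with the $U$ above to obtain $\varepsilon_1$ with $\pi^{-1}(\ID_{\varepsilon_1}) \subset U$. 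Then $\pi^{-1}(\ID_{\varepsilon_1}) \times S^1 \subset U$, and $\phi$ displaces $U$ from itself, hence displaces $\pi^{-1}(\ID_{\varepsilon_1}) \times S^1$ from itself; that is exactly the assertion that $\pi^{-1}(\ID_{\varepsilon_1})$ is stably displaceable in $(M, \Omega)$. Take $\varepsilon = \varepsilon_1$.

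I expect the only genuinely delicate point to be making sure the ``Hamiltonian displaceable'' condition is honestly inherited by a subset of a displaceable set — but this is immediate from \cref{defn:HamiltonianDisplaceable}, since if $\phi(B) \cap B = \varnothing$ and $B' \subset B$ then $\phi(B') \cap B' \subset \phi(B) \cap B = \varnothing$, and $\phi$ is already supported away from the boundary. So in fact the lemma reduces entirely to the neighborhood-basis statement, which is a soft consequence of properness; no symplectic input beyond \cref{defn:StablyDisplaceable} is needed. The submersivity hypothesis is only there to guarantee that small $\pi^{-1}(\ID_\varepsilon)$ are themselves reasonable (e.g.\ have the expected interior/boundary structure for later use), but it is not strictly required for the displaceability conclusion itself. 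I would phrase the write-up so that this observation is explicit, keeping the proof to a few lines.
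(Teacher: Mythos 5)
Your argument is correct in substance, but it takes a genuinely different route from the paper, and the comparison is instructive. The paper also begins by thickening $\pi^{-1}(0)\times S^1$ to a Hamiltonian displaceable product neighborhood $U_1\times U_2\subset M\times T^*S^1$, but it then invokes the point-set \cref{lem:ExtendingOpenMaps} --- whose hypothesis that $\pi(U_1\smallsetminus\pi^{-1}(0))$ be open is exactly where submersivity enters, a submersion being an open map --- to conclude that $\pi(U_1)$ is open and hence contains some $\ID_\varepsilon$. You instead use only properness: the preimages $\pi^{-1}(\ID_\varepsilon)$ form a neighborhood basis of $\pi^{-1}(0)$, by the compactness/sequence argument you sketch (equivalently, a proper map to $\IC$ is closed, so $\pi(M\smallsetminus U_1)$ is closed and misses $0$). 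This is more elementary, bypasses \cref{lem:ExtendingOpenMaps} entirely, and, as you observe, shows that the submersivity hypothesis is not needed for this lemma (in the paper it costs nothing, since it holds automatically in \cref{lem:DisplacingFibres}, and \cref{lem:ExtendingOpenMaps} is reused elsewhere, e.g.\ in the proof of \cref{lem:ModifiedParallelTransportCY}). Your route also has the merit of producing the containment $\pi^{-1}(\ID_\varepsilon)\subset U_1$ directly, whereas knowing only $\ID_\varepsilon\subset\pi(U_1)$ does not by itself yield that containment unless $U_1$ is saturated; so the closedness/properness step you supply is in any case the cleanest way to finish the argument.

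One small repair to your write-up: the neighborhood $U$ you produce lives in $M\times\IR\times S^1$ and contains $\pi^{-1}(0)\times S^1$, so the statement $\pi^{-1}(\ID_{\varepsilon_1})\subset U$ does not typecheck, and $\pi^{-1}(\ID_{\varepsilon_1})\times S^1\subset U$ does not follow without an intermediate step. First shrink $U$ to a product $U_1\times U_2$, with $U_1$ an open neighborhood of $\pi^{-1}(0)$ in $M$ and $U_2$ an open neighborhood in $\IR\times S^1$ of the circle along which these sets are embedded (tube lemma, using compactness of $\pi^{-1}(0)$ and of $S^1$), exactly as the paper does; then apply your neighborhood-basis statement to $U_1$ to get $\pi^{-1}(\ID_{\varepsilon_1})\times S^1\subset U_1\times U_2$, which $\phi$ displaces from itself. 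With that adjustment the proof is complete.
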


Combining \cref{lem:StDispFibres} and \cref{lem:FibreToNhood}, we obtain the following.

\begin{cor}\label{lem:DisplacingFibres}
Let $\pi: {M} \to \IC$ be a proper surjective morphism of smooth, quasi-projective varieties.  Given a K\"{a}hler form $\Omega$ on $M$, there exists $\varepsilon>0$ such that $\pi^{-1}(\ID_\varepsilon)$ is stably displaceable in $(M,\Omega)$.
\end{cor}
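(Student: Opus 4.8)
\textbf{Proof proposal for \cref{lem:DisplacingFibres}.}

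The plan is simply to chain the two preceding results. First I would invoke \cref{lem:StDispFibres} with the given proper surjective morphism $\pi: M \to \IC$ of smooth quasi-projective varieties and the K\"{a}hler form $\Omega$; this tells us that every fibre of $\pi$ is stably displaceable in $(M,\Omega)$, in particular the central fibre $\pi^{-1}(0)$. Next I would check that the hypotheses of \cref{lem:FibreToNhood} are met: $(M,\Omega)$ is a symplectic manifold, $\pi: M \to \IC$ is proper and surjective, and — since $\pi$ is a morphism of smooth varieties — by generic smoothness it is submersive over a Zariski-dense open subset of $\IC$, hence over a punctured neighborhood of the origin (the only possible obstruction being finitely many singular values, which accumulate nowhere). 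With the conclusion of \cref{lem:StDispFibres} providing exactly the needed input ``$\pi^{-1}(0)$ is stably displaceable in $(M,\Omega)$'', \cref{lem:FibreToNhood} then yields $\varepsilon>0$ with $\pi^{-1}(\ID_\varepsilon)$ stably displaceable in $(M,\Omega)$, which is precisely the assertion.

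The only point requiring a sentence of justification is that $\pi$ is submersive on a punctured neighborhood of $0$, and this is immediate from the algebro-geometric setup: the critical locus of a morphism of smooth varieties is Zariski closed, so its image under the proper map $\pi$ is a proper Zariski-closed subset of the curve $\IC$, i.e.\ a finite set of points, and hence misses some punctured disk $\ID_\varepsilon^\times$ about the origin. I do not anticipate any genuine obstacle here — the lemma is a formal corollary, and all the real work has already been carried out in \cref{lem:StDispFibres} (the stable displaceability of fibres of proper holomorphic maps to Riemann surfaces) and \cref{lem:FibreToNhood} (the passage from a single fibre to a neighborhood, using that $\pi$ is a submersion nearby so that a collar of $\pi^{-1}(0)$ fibers over a small disk).

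If one wanted to be fully self-contained one could also note that shrinking $\varepsilon$ preserves the conclusion, since $\pi^{-1}(\ID_{\varepsilon'}) \subset \pi^{-1}(\ID_\varepsilon)$ and a subset of a stably displaceable set is stably displaceable (the displacing Hamiltonian for the larger set displaces the smaller one as well). This gives flexibility in how the statement is applied later, e.g.\ in the proof of \cref{lem:ModifiedParallelTransportCY} where one needs $\pi^{-1}(\ID_\varepsilon)$ to sit inside a prescribed compact piece of $M$.
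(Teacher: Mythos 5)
Your proposal is correct and follows essentially the same route as the paper: apply \cref{lem:StDispFibres} to the central fibre, note that surjectivity (generic smoothness) forces the critical values of $\pi$ to be isolated so that $\pi$ is submersive on a punctured neighborhood of $0$, and then conclude with \cref{lem:FibreToNhood}. Your extra remarks on properness of the image of the critical locus and on shrinking $\varepsilon$ are fine but not needed beyond what the paper records.
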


\begin{proof}
By \cref{lem:StDispFibres}, $\pi^{-1}(0)$ is stably displaceable in $(M,\Omega)$.  Since $\pi$ is surjective, the critical values of $\pi$ are isolated.  So $\pi$ is submersive in a punctured neighborhood of the origin.  Applying \cref{lem:FibreToNhood} gives the desired result.
\end{proof}

\cref{lem:StDispFibres} follows immediately from a result of McLean:

\begin{thm}\label{thm:SubvarietiesOfKahlerStablyDisplaceable}\cite[Corollary 6.21]{McLean_BirationalCalabiYauManifoldsHaveTheSameSmallQuantumProducts}
Every proper, positive codimension subvariety of a K\"{a}hler manifold is stably displaceable.\qed
\end{thm}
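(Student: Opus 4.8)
The plan is to prove \cref{lem:FibreToNhood} by a standard ``thickening'' argument: the hypothesis supplies a compactly supported Hamiltonian diffeomorphism of $(M \times \IR \times S^1, \Omega \oplus (d\sigma \wedge d\tau))$ that displaces $\pi^{-1}(0) \times S^1$, and I will show that, because displaceability is an open condition and $\pi$ is proper, the same diffeomorphism (or a small modification of it) also displaces $\pi^{-1}(\ID_\varepsilon) \times S^1$ for $\varepsilon$ small enough. Concretely, write $N \coloneqq \pi^{-1}(0)$ and let $\phi$ be the given Hamiltonian diffeomorphism of the stabilization with $\phi(N \times S^1) \cap (N \times S^1) = \varnothing$. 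The set $N \times S^1$ is compact (here I use that $\pi$ is proper, so $N$ is compact), hence so are $\phi(N \times S^1)$ and $N \times S^1$, and disjoint compact sets have disjoint open neighborhoods. I would take such neighborhoods of the form $U \times S^1$ with $U$ an open neighborhood of $N$ in $M$ that is ``saturated'' in the $\IR \times S^1$ directions; since $\phi$ is compactly supported, one can arrange the neighborhood of $\phi(N\times S^1)$ to also be of product form after shrinking. Then $\phi(U \times S^1) \cap (U \times S^1) = \varnothing$.

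The key step is to arrange that $\pi^{-1}(\ID_\varepsilon) \subset U$ for some $\varepsilon > 0$. This is where the submersivity hypothesis enters, but in fact only properness is needed: if no such $\varepsilon$ existed, there would be a sequence $x_n \in \pi^{-1}(\ID_{1/n}) \smallsetminus U$; by properness of $\pi$ (the $\pi^{-1}(\overline{\ID_1})$ is compact) a subsequence converges to some $x_\infty$ with $\pi(x_\infty) = 0$, i.e. $x_\infty \in N \subset U$, contradicting that $U$ is open and $x_n \notin U$. Hence $\pi^{-1}(\ID_\varepsilon) \subset U$, so $\phi$ displaces $\pi^{-1}(\ID_\varepsilon) \times S^1$ inside the stabilization, which is exactly the assertion that $\pi^{-1}(\ID_\varepsilon)$ is stably displaceable in $(M,\Omega)$. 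I note that the submersivity assumption on a punctured neighborhood of the origin is not logically required for this lemma, but it is harmless to keep it in the statement since it holds in all intended applications (and \cref{lem:DisplacingFibres} invokes it anyway to reduce to \cref{lem:FibreToNhood}).

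The only genuine subtlety — and the step I expect to need the most care — is the passage from ``disjoint open neighborhoods'' to ``disjoint open neighborhoods of product form $U \times S^1$''. The issue is that $\phi$ need not respect the $\IR \times S^1$ factor, so $\phi(N \times S^1)$ is a priori an arbitrary compact subset of $M \times \IR \times S^1$, not a product. I would handle this by first choosing an open neighborhood $W$ of $\phi(N \times S^1)$ with $\overline{W} \cap (N \times S^1) = \varnothing$, then using that $\phi$ is supported in $M \times K$ for some compact $K \subset \IR \times S^1$: outside $M \times K$, $\phi$ is the identity, so $\phi(N\times S^1) \cap (M \times (\IR \times S^1 \smallsetminus K)) = (N \times S^1) \cap (M \times (\IR\times S^1\smallsetminus K))$, which is already disjoint from a suitable product neighborhood of $N \times S^1$ cut down to $M \times K$. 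One then only has to separate the compact pieces $\phi(N \times S^1) \cap (M \times K)$ and $(N \times S^1)\cap (M\times K)$ inside the slab $M \times K$, and projecting to $M$ gives the desired $U$ — this works because $N$ is compact and $\phi(N\times S^1)\cap(M\times K)$ has image in $M$ that avoids a neighborhood of $N$ by the disjointness. This last point is routine but deserves a careful sentence or two; everything else in the argument is point-set topology together with the definition of stable displaceability.

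\textbf{Remark.} An alternative, essentially equivalent route: \cref{lem:FibreToNhood} follows from the general principle that stable displaceability is monotone and "open in the target" for proper maps, combined with the observation that $\{\pi^{-1}(\ID_\varepsilon)\}_{\varepsilon>0}$ is a neighborhood basis of $\pi^{-1}(0)$ in the topology generated by saturated opens. I would present whichever of the two is shorter once the product-neighborhood point is pinned down.
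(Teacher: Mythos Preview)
Your proposal does not address the stated theorem at all. The statement in question is \cref{thm:SubvarietiesOfKahlerStablyDisplaceable}, McLean's result that every proper positive-codimension subvariety of a K\"{a}hler manifold is stably displaceable. The paper does not prove this; it is quoted verbatim from \cite[Corollary 6.21]{McLean_BirationalCalabiYauManifoldsHaveTheSameSmallQuantumProducts} with a \texttt{\textbackslash qed} and no argument. A proof of that result requires genuine symplectic geometry input (McLean's argument uses $h$-principle type considerations and properties of isotropic submanifolds) and is not a point-set topology exercise.

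What you have written is instead a proof of \cref{lem:FibreToNhood}, the statement that stable displaceability of a fibre $\pi^{-1}(0)$ propagates to a small tube $\pi^{-1}(\ID_\varepsilon)$. For that lemma your argument is correct and is essentially the same as the paper's: both take the displacing Hamiltonian diffeomorphism $\phi$ on the stabilization, use compactness of $\pi^{-1}(0)\times S^1$ to get a displaced open neighborhood of product form, and then use properness of $\pi$ to fit some $\pi^{-1}(\ID_\varepsilon)$ inside that neighborhood. Your sequential compactness argument for the last step is exactly the content of the paper's \cref{lem:ExtendingOpenMaps}. Your observation that submersivity on the punctured neighborhood is not logically needed is also correct; the paper uses it only to invoke that $\pi$ is open on $U\smallsetminus \pi^{-1}(0)$, but your direct properness argument bypasses this.

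So: the write-up is fine as a proof of \cref{lem:FibreToNhood}, but it is not a proof of \cref{thm:SubvarietiesOfKahlerStablyDisplaceable}, and no amount of point-set topology will yield one.
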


\cref{lem:FibreToNhood} follows from point-set topology.

\begin{lem}\label{lem:ExtendingOpenMaps}
Let $f: Y \to X$ be a surjective proper map of metric spaces.  Let $U \subset Y$ be an open subset and suppose $D \subset U$ with $f^{-1}(f(D)) = D$.  If $f(U \smallsetminus D)$ is open, then $f(U)$ is open.
\end{lem}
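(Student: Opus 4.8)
\textbf{Proof proposal for \cref{lem:ExtendingOpenMaps}.}

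The plan is to show that $f(U)$ is a neighborhood of each of its points by exhibiting, around each such point, an open set contained in $f(U)$. Pick $x \in f(U)$, so $x = f(y)$ for some $y \in U$. There are two cases, according to whether $x$ lies in $f(D)$ or not.

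If $x \notin f(D)$, then $y \notin D$ (since $f^{-1}(f(D)) = D$ would force $y \in D$ otherwise), so in fact $y \in U \smallsetminus D$ and $x \in f(U \smallsetminus D)$. By hypothesis $f(U \smallsetminus D)$ is open, and it is contained in $f(U)$, so $f(U)$ contains an open neighborhood of $x$. If instead $x \in f(D)$, the key observation is that the fibre $f^{-1}(x)$ is entirely contained in $D$, hence in $U$: indeed $f^{-1}(x) \subset f^{-1}(f(D)) = D$. Now I would use properness to thicken this: the set $K := f^{-1}(X \smallsetminus f(U))$ is closed in $Y$ (preimage of a closed set — note $f(U)$ is, so far, only known to contain $x$ in a possibly non-open way, so one must be slightly careful; the cleaner route is below), and disjoint from $f^{-1}(x) \subset U$. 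The cleanest argument: since $f$ is proper and $\{y\}$'s fibre is compact and contained in the open set $U$, and since proper maps to metric spaces are closed, $f(Y \smallsetminus U)$ is closed in $X$; and $x \notin f(Y \smallsetminus U)$ because $f^{-1}(x) \subset U$. Therefore $V := X \smallsetminus f(Y \smallsetminus U)$ is an open neighborhood of $x$, and $V \subset f(U)$ since any point of $V$ has its entire fibre inside $U$ and (by surjectivity of $f$) a nonempty fibre. Thus $f(U)$ contains an open neighborhood of $x$ in this case as well.

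Combining the two cases, every point of $f(U)$ has an open neighborhood inside $f(U)$, so $f(U)$ is open. The main obstacle — really the only nontrivial point — is justifying that $f$ is a closed map, so that $f(Y \smallsetminus U)$ is closed and its complement is the desired open neighborhood; this is the standard fact that a proper continuous map into a locally compact (or, as here, metric / compactly generated Hausdorff) space is closed, which I would either cite or prove in a line using sequential compactness of fibres together with properness. The case split on $x \in f(D)$ versus $x \notin f(D)$ is what lets us invoke the hypothesis "$f(U \smallsetminus D)$ is open" precisely where the fibre trick is unavailable.
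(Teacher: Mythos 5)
Your proof is correct, and the first case (points of $f(U)\smallsetminus f(D)$) is identical to the paper's; the interesting difference is in how you handle points of $f(D)$. The paper argues there directly by contradiction: if $x\in f(D)$ were not interior, pick $x_n\in B_{1/n}(x)\smallsetminus f(U)$, lift to $y_n\notin U$ lying in the compact set $f^{-1}(\overline{B_1(x)})$, extract a convergent subsequence whose limit lies in $f^{-1}(x)\subset f^{-1}(f(D))=D\subset U$, contradicting $y_n\notin U$. You instead factor this through the general lemma that a proper continuous map between metric spaces is closed, so that $f(Y\smallsetminus U)$ is closed, and then use the saturation $f^{-1}(x)\subset D\subset U$ together with surjectivity to see that $V=X\smallsetminus f(Y\smallsetminus U)$ is an open neighborhood of $x$ contained in $f(U)$. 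These are the same compactness input packaged differently: the paper's sequential argument is essentially an inline proof of your closedness lemma specialized to the situation at hand, while your version is more modular and makes transparent that the hypothesis on $f(U\smallsetminus D)$ is only needed at points whose fibres need not be saturated. Two small points to tighten: the closedness of a proper map with a merely metric (not necessarily locally compact) target should be justified by the sequential argument you allude to, applying properness to the compact set $\{x_n\}\cup\{\lim_n x_n\}$ rather than only to individual fibres; and in your first case the appeal to $f^{-1}(f(D))=D$ is superfluous, since $y\in D$ would give $x=f(y)\in f(D)$ outright --- the saturation hypothesis is genuinely used only in the second case.
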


\begin{proof}
First, if $x \in f(U) \smallsetminus f(D) \subset f(U \smallsetminus D)$, then since $f(U \smallsetminus D)$ is open there exists an open subset $V \subset f(U \smallsetminus D) \subset f(U)$ that contains $x$.  Consequently, $x$ is an interior point of $f(U)$.

Second, suppose that $x \in f(D)$.  Suppose by way of contradiction that it is not an interior point of $f(U)$.  There must exist $x_n \in B_{1/n}(x) \smallsetminus f(U)$ for each $n \in \IN$.  Since $f$ is surjective, there exists $y_n \in f^{-1}(B_{1/n}(x)) \smallsetminus U$.  Since $f$ is proper, the $y_n$ are contained in the compact subset $f^{-1}(\overline{B_1(x))}$.  Passing to a subsequence and relabeling, the sequence $y_n$ converges to some point $y$.  By continuity, $f(y) = x$ and thus $y \in D$.  Since $U$ is an open subset containing $D$, we must have that $y_n \in U$ for $n\gg 0$, a contradiction.  Consequently, each $x \in f(D)$ is an interior point of $f(U)$.  So all the point of $f(U)$ are interior points.
\end{proof}

We prove \cref{lem:FibreToNhood}.

\begin{proof}
By assumption, $\pi^{-1}(0)$ is stably displaceable.  So  $\pi^{-1}(0) \times S^1 \subset M \times T^* S^1$ is Hamiltonian displaceable.  Since $\pi^{-1}(0) \times S^1 \subset M \times T^* S^1$ is a compact subset, it admits an open neighborhood which is Hamiltonian displaceable.  After shrinking said open neighborhood, assume it is a product $U_1 \times U_2 \subset M \times T^* S^1$.  By \cref{lem:ExtendingOpenMaps}, $\pi(U_1)$ is open.  So find $\varepsilon > 0$ such that $\ID_\varepsilon \subset \pi(U_1)$.  Now $\pi^{-1}(\ID_\varepsilon) \times S^1$ is contained in the Hamiltonian displaceable subset $U_1 \times U_2$ and thus $\pi^{-1}(\ID_\varepsilon)$ is stably displaceable.
\end{proof}

\bibliographystyle{alpha}
\bibliography{./sections/References.bib}

\begin{thebibliography}{Mc{L}16}

\bibitem[AB21]{AbouzaidBlumber_ArnoldConjectureAndMoravaKTheory}
Mohammed Abouzaid and Andrew Blumberg.
\newblock Arnold conjecture and morava k-theory.
\newblock {\em arXiv preprint arXiv:2103.01507}, 2021.

\bibitem[AD14]{AudinDamian_MorseTheoryAndFloerHomology}
Mich\`ele Audin and Mihai Damian.
\newblock {\em Morse theory and {F}loer homology}.
\newblock Universitext. Springer, London; EDP Sciences, Les Ulis, 2014.
\newblock Translated from the 2010 French original by Reinie Ern\'{e}.

\bibitem[AS10]{AbouzaidSeidel_OpenStringViterbo}
Mohammed Abouzaid and Paul Seidel.
\newblock An open string analogue of {V}iterbo functoriality.
\newblock {\em Geom. Topol.}, 14(2):627--718, 2010.

\bibitem[BO09]{BourgeoisOancea_Autonomous}
Fr\'{e}d\'{e}ric Bourgeois and Alexandru Oancea.
\newblock Symplectic homology, autonomous {H}amiltonians, and {M}orse-{B}ott
  moduli spaces.
\newblock {\em Duke Math. J.}, 146(1):71--174, 2009.

\bibitem[CdS01]{Silva_LecturesOnSymplecticGeometry}
Ana Cannas~da Silva.
\newblock {\em Lectures on symplectic geometry}, volume 1764 of {\em Lecture
  Notes in Mathematics}.
\newblock Springer-Verlag, Berlin, 2001.

\bibitem[dJS03]{deJongStarr_FamiliesOfRationallyConnectedVarietiesCharP}
A.~J. de~Jong and J.~Starr.
\newblock Every rationally connected variety over the function field of a curve
  has a rational point.
\newblock {\em Amer. J. Math.}, 125(3):567--580, 2003.

\bibitem[DL19]{DiogoLisi_MorseBottSplitSymplecticHomology}
Lu\'{\i}s Diogo and Samuel~T. Lisi.
\newblock Morse-{B}ott split symplectic homology.
\newblock {\em J. Fixed Point Theory Appl.}, 21(3):Paper No. 77, 77, 2019.

\bibitem[Fis11]{Fish_TargetLocalGromovCompactness}
Joel~W. Fish.
\newblock Target-local {G}romov compactness.
\newblock {\em Geom. Topol.}, 15(2):765--826, 2011.

\bibitem[Flo89]{Floer_FixedPoints}
Andreas Floer.
\newblock Symplectic fixed points and holomorphic spheres.
\newblock {\em Comm. Math. Phys.}, 120(4):575--611, 1989.

\bibitem[FO99]{FukayaOno_ArnoldConjectureAndGromovWittenInvariant}
Kenji Fukaya and Kaoru Ono.
\newblock Arnold conjecture and {G}romov-{W}itten invariant.
\newblock {\em Topology}, 38(5):933--1048, 1999.

\bibitem[Ful84]{Fulton_IntersectionTheory}
William Fulton.
\newblock {\em Intersection theory}, volume~2 of {\em Ergebnisse der Mathematik
  und ihrer Grenzgebiete (3) [Results in Mathematics and Related Areas (3)]}.
\newblock Springer-Verlag, Berlin, 1984.

\bibitem[GHS03]{GraberHarrisStarr_FamiliesOfRationallyConnectedVarieties}
Tom Graber, Joe Harris, and Jason Starr.
\newblock Families of rationally connected varieties.
\newblock {\em J. Amer. Math. Soc.}, 16(1):57--67, 2003.

\bibitem[Gri68]{Griffiths_PeriodMapHolomorphic}
Phillip~A. Griffiths.
\newblock Periods of integrals on algebraic manifolds. {II}. {L}ocal study of
  the period mapping.
\newblock {\em Amer. J. Math.}, 90:805--865, 1968.

\bibitem[Gri70]{Griffiths_DistanceDecreasingPrincipal}
Phillip~A. Griffiths.
\newblock Periods of integrals on algebraic manifolds. {III}. {S}ome global
  differential-geometric properties of the period mapping.
\newblock {\em Inst. Hautes \'{E}tudes Sci. Publ. Math.}, (38):125--180, 1970.

\bibitem[Gro20]{Groman_FloerTheoryAndReducedCohomologyOnOpenManifolds}
Yoel Groman.
\newblock Floer theory and reduced cohomology on open manifolds.
\newblock {\em arXiv preprint arXiv:1510.04265}, 2020.

\bibitem[Gut14]{Gut_GeneralizedCZIndex}
Jean Gutt.
\newblock Generalized {C}onley-{Z}ehnder index.
\newblock {\em Ann. Fac. Sci. Toulouse Math. (6)}, 23(4):907--932, 2014.

\bibitem[Har77]{Hartshorne_AG}
Robin Hartshorne.
\newblock {\em Algebraic geometry}.
\newblock Springer-Verlag, New York-Heidelberg, 1977.
\newblock Graduate Texts in Mathematics, No. 52.

\bibitem[Hir64]{Hironaka_ResolutionOfSingularities}
Heisuke Hironaka.
\newblock Resolution of singularities of an algebraic variety over a field of
  characteristic zero. {I}, {II}.
\newblock {\em Ann. of Math. (2) {\bf 79} (1964), 109--203; ibid. (2)},
  79:205--326, 1964.

\bibitem[HZ94]{HoferZehnder_StableHamDefinition}
Helmut Hofer and Eduard Zehnder.
\newblock {\em Symplectic invariants and {H}amiltonian dynamics}.
\newblock Birkh\"{a}user Advanced Texts: Basler Lehrb\"{u}cher. [Birkh\"{a}user
  Advanced Texts: Basel Textbooks]. Birkh\"{a}user Verlag, Basel, 1994.

\bibitem[KMM92]{KollarMiyaokaMori_RationalConnectednessAndBoundednessOfFanoManifolds}
J\'{a}nos Koll\'{a}r, Yoichi Miyaoka, and Shigefumi Mori.
\newblock Rational connectedness and boundedness of {F}ano manifolds.
\newblock {\em J. Differential Geom.}, 36(3):765--779, 1992.

\bibitem[LMP99]{LalondeMcDuffPolterovich_RigidityOfhamiltonians}
Fran\c{c}ois Lalonde, Dusa McDuff, and Leonid Polterovich.
\newblock Topological rigidity of {H}amiltonian loops and quantum homology.
\newblock {\em Invent. Math.}, 135(2):369--385, 1999.

\bibitem[LT98]{LiuTian_FloerHomologyAndArnoldConjecture}
Gang Liu and Gang Tian.
\newblock Floer homology and {A}rnold conjecture.
\newblock {\em J. Differential Geom.}, 49(1):1--74, 1998.

\bibitem[Lur17]{Lurie_HigherAlgebra}
Jacob Lurie.
\newblock Higher algebra.
\newblock {\em Available for download}, 2017.

\bibitem[McD00]{McDuff_VirtualSeidelRepresentation}
Dusa McDuff.
\newblock Quantum homology of fibrations over {$S^2$}.
\newblock {\em Internat. J. Math.}, 11(5):665--721, 2000.

\bibitem[McL12]{McLean_GrowthRate}
Mark McLean.
\newblock The growth rate of symplectic homology and affine varieties.
\newblock {\em Geom. Funct. Anal.}, 22(2):369--442, 2012.

\bibitem[McL14]{McLean_DukeMathJournal}
Mark McLean.
\newblock Symplectic invariance of uniruled affine varieties and log {K}odaira
  dimension.
\newblock {\em Duke Math. J.}, 163(10):1929--1964, 2014.

\bibitem[Mc{L}16]{McLean_MinimalDiscrepancy}
Mark Mc{L}ean.
\newblock Reeb orbits and the minimal discrepancy of an isolated singularity.
\newblock {\em Invent. Math.}, 204(2):505--594, 2016.

\bibitem[McL18]{McLean_BirationalCalabiYauManifoldsHaveTheSameSmallQuantumProducts}
Mark McLean.
\newblock Birational calabi-yau manifolds have the same small quantum products.
\newblock {\em arXiv preprint arXiv:1806.01752}, 2018.

\bibitem[MS12]{McDuffSalamon_Jholo}
Dusa McDuff and Dietmar Salamon.
\newblock {\em {$J$}-holomorphic curves and symplectic topology}, volume~52 of
  {\em American Mathematical Society Colloquium Publications}.
\newblock American Mathematical Society, Providence, RI, second edition, 2012.

\bibitem[MTFJ19]{McDuffTehraniFukayaJoyce_VFC}
Dusa McDuff, Mohammad Tehrani, Kenji Fukaya, and Dominic Joyce.
\newblock {\em Virtual fundamental cycles in symplectic topology}, volume 237
  of {\em Mathematical Surveys and Monographs}.
\newblock American Mathematical Society, Providence, RI, 2019.
\newblock Papers based on the lecture courses held at Simons Center for
  Geometry and Physics, 2013--2014, Edited by John W. Morgan.

\bibitem[Oh05]{Oh_SpectralInvariants}
Yong-Geun Oh.
\newblock Spectral invariants, analysis of the {F}loer moduli space, and
  geometry of the {H}amiltonian diffeomorphism group.
\newblock {\em Duke Math. J.}, 130(2):199--295, 2005.

\bibitem[Oh15]{Oh_SymplecticTopology}
Yong-Geun Oh.
\newblock {\em Symplectic topology and {F}loer homology. {V}ol. 1}, volume~28
  of {\em New Mathematical Monographs}.
\newblock Cambridge University Press, Cambridge, 2015.
\newblock Symplectic geometry and pseudoholomorphic curves.

\bibitem[OZ12]{OhZue_ThickThin}
Yong-Geun Oh and Ke~Zhu.
\newblock Thick-thin decomposition of floer trajectories and adiabatic gluing,
  2012.

\bibitem[Par16]{Pardon_VFC}
John Pardon.
\newblock An algebraic approach to virtual fundamental cycles on moduli spaces
  of pseudo-holomorphic curves.
\newblock {\em Geom. Topol.}, 20(2):779--1034, 2016.

\bibitem[RS93]{RobbinSalamon_CZIndex}
Joel Robbin and Dietmar Salamon.
\newblock The {M}aslov index for paths.
\newblock {\em Topology}, 32(4):827--844, 1993.

\bibitem[Sei97]{Seidel_SeidelRepresentation}
Paul Seidel.
\newblock {$\pi_1$} of symplectic automorphism groups and invertibles in
  quantum homology rings.
\newblock {\em Geom. Funct. Anal.}, 7(6):1046--1095, 1997.

\bibitem[Sei08]{Seidel_ABiasedViewOfSymplecticCohomology}
Paul Seidel.
\newblock A biased view of symplectic cohomology.
\newblock In {\em Current developments in mathematics, 2006}, pages 211--253.
  Int. Press, Somerville, MA, 2008.

\bibitem[{Sta}21]{StacksProject}
The {Stacks Project Authors}.
\newblock \textit{Stacks Project}.
\newblock \url{https://stacks.math.columbia.edu}, 2021.

\bibitem[TMZ17]{tehrani2017normal}
Mohammad~Farajzadeh Tehrani, Mark McLean, and Aleksey Zinger.
\newblock Normal crossings singularities for symplectic topology, 2017.

\bibitem[TV20]{tonkonog2020superrigidity}
Dmitry Tonkonog and Umut Varolgunes.
\newblock Super-rigidity of certain skeleta using relative symplectic
  cohomology, 2020.

\bibitem[Var18]{Varolgunes_Thesis}
Umut Varolgunes.
\newblock Mayer-vietoris property for relative symplectic cohomology.
\newblock
  \url{https://umutvg.people.stanford.edu/sites/g/files/sbiybj8196/f/thesis_0.pdf},
  2018.

\bibitem[Var20]{Varolgunes_MayerVietorisPropertyForRelativeSymplecticCohomology}
Umut Varolgunes.
\newblock Mayer-vietoris property for relative symplectic cohomology.
\newblock {\em arXiv preprint arXiv:1806.00684}, 2020.

\bibitem[Ven17]{Venkatesh_RabinowitzFloerHomologyAndMirrorSymmetry}
Sara Venkatesh.
\newblock Rabinowitz floer homology and mirror symmetry.
\newblock {\em arXiv preprint arXiv:1705.00032}, 2017.

\end{thebibliography}

\end{document}